\renewcommand\normalsize{%
    \@setfontsize\normalsize{11.7}{14pt plus .3pt minus .3pt}%
    \abovedisplayskip 10\p@ \@plus4\p@ \@minus4\p@
    \abovedisplayshortskip 6\p@ \@plus2\p@
    \belowdisplayshortskip 6\p@ \@plus2\p@
    \belowdisplayskip \abovedisplayskip}
\renewcommand\small{%
    \@setfontsize\small{9.5}{12\p@ plus .2\p@ minus .2\p@}%
    \abovedisplayskip 8.5\p@ \@plus4\p@ \@minus1\p@
    \belowdisplayskip \abovedisplayskip
    \abovedisplayshortskip \abovedisplayskip
    \belowdisplayshortskip \abovedisplayskip}
\renewcommand\footnotesize{%
    \@setfontsize\footnotesize{8.5}{9.25\p@ plus .1pt minus .1pt}
    \abovedisplayskip 6\p@ \@plus4\p@ \@minus1\p@
    \belowdisplayskip \abovedisplayskip
    \abovedisplayshortskip \abovedisplayskip
    \belowdisplayshortskip \abovedisplayskip}
\numberwithin{equation}{section}
\newtheorem{theorem}{Theorem}[section]
\newtheorem{corollary}[theorem]{Corollary}
\newtheorem{lemma}[theorem]{Lemma}
\newtheorem{proposition}[theorem]{Proposition}
\newtheorem{hypothesis}[theorem]{Hypothesis}
\newtheorem{remark}[theorem]{Remark}
\newtheorem{example}[theorem]{Example}
\newcommand{\hooklongleftarrow}{\longleftarrow\joinrel\rhook}
\newcommand{\hooklongrightarrow}{\lhook\joinrel\longrightarrow}
\newcommand{\twoheadlongrightarrow}{\relbar\joinrel\twoheadrightarrow}
\newcommand{\ra}{\rightarrow}
\newcommand{\lra}{\longrightarrow}
\newcommand{\ul}{\underline}
\newcommand{\bA}{\mathbb A}
\newcommand{\bG}{\mathbb G}
\newcommand{\Q}{\mathbb Q}
\newcommand{\bR}{\mathbb R}
\newcommand{\bT}{\mathbb T}
\newcommand{\Z}{\mathbb Z}
\newcommand{\cL}{\mathcal L}
\newcommand{\co}{\mathcal O}
\newcommand{\cR}{\mathcal R}
\newcommand{\cH}{\mathcal H}
\newcommand{\cC}{\mathcal C}
\newcommand{\cD}{\mathcal D}
\newcommand{\cI}{\mathcal I}
\newcommand{\cT}{\mathcal T}
\newcommand{\cM}{\mathcal M}
\newcommand{\cF}{\mathcal F}
\newcommand{\cE}{\mathcal E}
\newcommand{\cJ}{\mathcal J}
\newcommand{\cZ}{\mathcal Z}
\newcommand{\fn}{\mathfrak n}
\newcommand{\fh}{\mathfrak h}
\newcommand{\fm}{\mathfrak{m}}
\newcommand{\ub}{\mathfrak b}
\newcommand{\fp}{\mathfrak p}
\newcommand{\fT}{\mathfrak T}
\newcommand{\ug}{\mathfrak g}
\newcommand{\ft}{\mathfrak t}
\newcommand{\fa}{\mathfrak a}
\newcommand{\sC}{\mathscr C}
\newcommand{\sW}{\mathscr W}
\newcommand{\sF}{\mathscr F}
\newcommand{\sI}{\mathscr I}
\newcommand{\sG}{\mathscr G}
\newcommand{\sE}{\mathscr E}
\newcommand{\sT}{\mathscr T}
\newcommand{\lin}{\rule[2.5pt]{10pt}{0.5 pt}}
\newcommand{\lan}{\langle}
\newcommand{\ran}{\rangle}
\newcommand{\ol}{\overline}
\DeclareMathOperator{\cind}{c-\mathrm{ind}}
\DeclareMathOperator{\Sen}{\mathrm Sen}
\DeclareMathOperator{\gl}{\mathfrak gl}
\DeclareMathOperator{\tr}{\mathrm tr}
\DeclareMathOperator{\GL}{\mathrm GL}
\DeclareMathOperator{\gr}{\mathrm gr}
\DeclareMathOperator{\Fil}{\mathrm Fil}
\DeclareMathOperator{\Res}{\mathrm Res}
\DeclareMathOperator{\Gal}{\mathrm Gal}
\DeclareMathOperator{\Hom}{\mathrm Hom}
\DeclareMathOperator{\End}{\mathrm End}
\DeclareMathOperator{\cris}{\mathrm cris}
\DeclareMathOperator{\rig}{\mathrm rig}
\DeclareMathOperator{\an}{\mathrm an}
\DeclareMathOperator{\Spec}{\mathrm Spec}
\DeclareMathOperator{\dR}{\mathrm dR}
\DeclareMathOperator{\Frob}{\mathrm Frob}
\DeclareMathOperator{\Ind}{\mathrm Ind}
\DeclareMathOperator{\unr}{\mathrm unr}
\DeclareMathOperator{\Ker}{\mathrm Ker}
\DeclareMathOperator{\rank}{\mathrm rank}
\DeclareMathOperator{\Ext}{\mathrm Ext}
\DeclareMathOperator{\Spf}{\mathrm Spf}
\DeclareMathOperator{\Ima}{\mathrm Im}
\DeclareMathOperator{\SL}{\mathrm SL}
\DeclareMathOperator{\lalg}{\mathrm lalg}
\DeclareMathOperator{\id}{\mathrm id}
\DeclareMathOperator{\dett}{\mathrm det}
\DeclareMathOperator{\alg}{\mathrm alg}
\DeclareMathOperator{\soc}{\mathrm soc}
\DeclareMathOperator{\nc}{\mathrm nc}
\DeclareMathOperator{\diag}{\mathrm diag}
\DeclareMathOperator{\cont}{\mathrm cont}
\DeclareMathOperator{\pst}{\mathrm pst}
\DeclareMathOperator{\PS}{\mathrm PS} 
\DeclareMathOperator{\fss}{\mathrm fs}
\DeclareMathOperator{\sm}{\mathrm sm}
\DeclareMathOperator{\tri}{\mathrm tri}
\DeclareMathOperator{\HC}{\mathrm HC}
\DeclareMathOperator{\univ}{\mathrm univ}
\begin{document}
	\title{$p$-adic Hodge parameters in the crystabelline representations of $\GL_n$}
	\author{Yiwen Ding}
	\date{}
	\maketitle
	\begin{abstract}
		Let $K$ be a finite extension of $\Q_p$, and $\rho$ be an  $n$-dimensional (non-critical generic) crystabelline  representation of  the absolute Galois group of $K$ of regular Hodge-Tate weights. We associate to $\rho$ an explicit locally $\Q_p$-analytic representation $\pi_1(\rho)$ of $\GL_n(K)$, which encodes some $p$-adic Hodge parameters of $\rho$. When $K=\Q_p$, it encodes the full information hence reciprocally determines $\rho$. When $\rho$ is associated to $p$-adic automorphic representations, we show under mild hypotheses that $\pi_1(\rho)$ is a subrepresentation of the $\GL_n(K)$-representation globally associated to $\rho$. 
	\end{abstract}
	\tableofcontents
	\section{Introduction}
	
	The locally analytic $p$-adic Langlands program for $\GL_n(\Q_p)$ aims at building a correspondence between $n$-dimensional $p$-adic continuous representations of the abosulte Galois group $\Gal_{\Q_p}$ of $\Q_p$ and certain locally analytic representations of $\GL_n(\Q_p)$.  In particular, it is expected  to match the parameters on both sides via the conjectural correspondence.
	
	On the Galois side, the $p$-adic $\Gal_{\Q_p}$-representations are central objects in the $p$-adic Hodge theory, and are classified by Fontaine's theory. Among these representations, the de Rham ones are particularly important, as they  include those arising from geometry (\cite{Fon94}). The $p$-adic Langlands program for de Rham representations  is expected to be compatible with the classical local Langlands correspondence (e.g. see \cite{BS07}). More precisely, by Fontaine's theory, for a de Rham representation $\rho$ over a $p$-adic field $E$, one can associate an $n$-dimensional Weil-Deligne representation $\mathtt{r}$, which furthermore corresponds, via the classical local Langlands correspondence, to an irreducible smooth representation $\pi_{\sm}(\mathtt{r})$ of $\GL_n(\Q_p)$ over $E$. If $\rho$ has regular Hodge-Tate weights $\textbf{h}=(h_1, \cdots, h_n)$, then the locally algebraic representation 
	\begin{equation*}
		\pi_{\alg}(\mathtt{r}, \textbf{h}):=\pi_{\sm}(\mathtt{r}) \otimes_E L(\textbf{h}-\theta)
	\end{equation*}
	is expected to be the locally algebraic subrepresentation of the conjectural locally analytic representation $\pi^{?}(\rho)$ associated to $\rho$, where $\theta=(0,-1,\cdots, 1-n)$ and $L(\textbf{h}-\theta)$ is the algebraic representation of $\GL_n(\Q_p)$ of highest weight $\textbf{h}-\theta$. One can clearly recover $\mathtt{r}$ (up to $F$-semi-simplification) and $\textbf{h}$ from the representation of $\pi_{\alg}(\mathtt{r},\textbf{h})$. However, passing from $\rho$ to $(\mathtt{r}, \textbf{h})$, one loses the information of Hodge filtration of $\rho$. A fundamental question in the $p$-adic Langlands program is to find the missing information on Hodge filtration on the automorphic side, say, in the conjectural locally analytic representation $\pi^?(\rho)$. After the pioneer work of Breuil (\cite{Br04}\cite{Br10}), the question was settled for $\GL_2(\Q_p)$ by Colmez, establishing the $p$-adic Langlands correspondence  (\cite{Colm10a}). It remains quite mysterious for general $\GL_n(\Q_p)$. In this paper, we address the question for (non-critical generic) crystabelline $\Gal_{\Q_p}$-representations $\rho$, those that become crystalline when restricted to the absolute Galois group of a certain abelian extension of $\Q_p$. 
	
	For simplicity, we assume in the introduction that  $\rho$ itself is crystalline. Then by Fontaine's theory, $\rho$ is equivalent to the associated filtered $\varphi$-module $D_{\cris}(\rho)$. We assume the $\varphi$-action is \textit{generic} (and we simply call such $\rho$  generic), which means the $\varphi$-eigenvalues $\ul{\alpha}=(\alpha_i)$ on $D_{\cris}(\rho)$ are distinct, and $\alpha_i\alpha_j^{-1}\neq p$ for $i\neq j$. In this case, $\mathtt{r}\cong \oplus_{i=1}^n \unr(\alpha_i)$ and we denote $\mathtt{r}$ by $\ul{\alpha}$. The classical local Langlands correspondence in this case is simply given by $$\pi_{\sm}(\ul{\alpha})\cong (\Ind_{B^-(\Q_p)}^{\GL_n(\Q_p)} \unr(\ul{\alpha}) \eta)^{\infty}$$ where $\unr(\ul{\alpha})=\unr(\alpha_1) \boxtimes \cdots \unr(\alpha_n)$,  $\eta=1 \boxtimes |\cdot|^1 \boxtimes \cdots |\cdot|^{n-1}$ are unramified characters of $T(\Q_p)$, and $B^-$ is the Borel subgroup of lower triangular matrices. Let $\Fil_{H}^{\bullet}$ denote the Hodge filtration, which is a complete flag in $D_{\cris}(\rho)$ as $\textbf{h}$ is regular. Let $e_{i}\in D_{\cris}(\rho)$ be an eigenvector for $\alpha_i$. Under the basis $\{e_i\}$, $\Fil_H^{\bullet}$ is parametrized by an element in $T\backslash \GL_n/B$, which we call the  \textit{$p$-adic Hodge parameter} of $\rho$. Recall that $\rho$ is called \textit{non-critical} if $\Fil_H^{\bullet}$ is in a relative general position with respect to  all the $n!$ $\varphi$-stable (complete) flags. When $n=2$, $T\backslash \GL_2/B$ is a finite set of cardinality  $3$. So there are at most $3$ isomorphism classes\footnote{The \'etaleness of $\rho$ will imply that some of these classes may not occur. In most general cases, there is typically a unique isomorphism class. But note if we relax the \'etaleness condition,  and consider crystalline $(\varphi, \Gamma)$-modules instead of $\rho$, all these classes can appear.} of $\rho$, distinguished by the relative position of $\Fil_H^{\bullet}$ with the two $\varphi$-stable flags. The information  is reflected by the extra socle phenomenon on the $\GL_2(\Q_p)$-side. In this context, Breuil formulated a conjecture concerning the locally analytic socle of $\GL_n$, which characterizes  the relative positions of $\Fil^{\bullet}_H$ with the $\varphi$-stable flags. The conjecture was subsequently proved  (under Taylor-Wiles hypotheses) by Breuil-Hellmann-Schraen (\cite{BHS3}). However, a significant  difference between the cases $n=2$ and $n\geq 3$ lies in the extra parameters for non-critical $\rho$ (with fixed $(\ul{\alpha}, \textbf{h})$): when $n=2$, the non-critical $\rho$ is unique, whereas for $n\geq 3$, there are additional (new) parameters for non-critical $\rho$ (as  $T\backslash \GL_n/B$ is now an infinite set). We refer to Example \ref{Egl3} for a concrete example of $n=3$.

	In the paper, we reveal these  $p$-adic Hodge parameters on the $\GL_n(\Q_p)$-side. It turns out  it is convenient to work with  $(\varphi, \Gamma)$-modules over the Robba ring instead of Galois representations. Denote by $\Phi\Gamma_{\nc}(\ul{\alpha}, \textbf{h})$ the set of isomorphism classes of non-critical crystalline $(\varphi, \Gamma)$-modules overlying $\ul{\alpha}$ of regular Hodge-Tate weights $\textbf{h}$. Under the basis of $\varphi$-eigenvectors  $\{e_i\}$ in the precedent paragraph (noting that $D_{\cris}(D)\cong \oplus_{i=1}^n E e_i$, as $\varphi$-module, for all $D\in \Phi\Gamma_{\nc}(\ul{\alpha},\textbf{h})$), the set $\Phi\Gamma_{\nc}(\ul{\alpha}, \textbf{h})$ can be identified with a Zariski open subset of $T\backslash \GL_n/B$. For each $D\in \Phi\Gamma_{\nc}(\ul{\alpha}, \textbf{h})$, we associate an explicit locally analytic $\GL_n(\Q_p)$-representation $\pi_1(D)$ (see Theorem \ref{Tintro2} below for the construction). We have:

	\begin{theorem}\label{Tintro1}
		(1) (Local correspondence) For $D\in \Phi\Gamma_{\nc}(\ul{\alpha}, \textbf{h})$, $\soc_{\GL_n(\Q_p)} \pi_1(D)\cong \pi_{\alg}(\ul{\alpha},\textbf{h})$, and $\pi_1(D)\twoheadrightarrow  \pi_{\alg}(\ul{\alpha}, \textbf{h})^{\oplus (2^n-\frac{n(n+1)}{2}-1)}$. Moreover, for $D'\in  \Phi\Gamma_{\nc}(\ul{\alpha}, \textbf{h})$, $\pi_{1}(D)\cong \pi_{1}(D')$ if and only if $D'\cong D$.
		
		(2) (Local-global compatibility) Suppose $\rho$ is automorphic for the setting of \cite{CEGGPS1} (or the setting in \S~\ref{S422}), and let $\widehat{\pi}(\rho)$ be the unitary Banach representation of $\GL_n(\Q_p)$ (globally) associated to $\rho$. Assume $D_{\rig}(\rho) \in \Phi\Gamma_{\nc}(\ul{\alpha},\textbf{h})$. Then for $D\in \Phi\Gamma_{\nc}(\ul{\alpha}, \textbf{h})$,
		\begin{equation*}
			\pi_{1}(D)\hookrightarrow \widehat{\pi}(\rho)^{\an} \text{ if and only if } D\cong D_{\rig}(\rho).
		\end{equation*}
	In particular, $\widehat{\pi}(\rho)^{\an}$ determines $\rho$.\footnote{Note that the information that $D_{\rig}(\rho)$ is non-critical is determined by $\widehat{\pi}(\rho)^{\an}$ by \cite[Thm.~9.3]{Br13II}.}
	\end{theorem}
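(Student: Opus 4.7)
Granting the explicit construction of $\pi_1(D)$ (to be given in Theorem \ref{Tintro2}) as a successive extension of locally analytic principal series type representations indexed by the refinements $w \in S_n$ of $D$, the socle computation in part (1) should reduce to showing that each building block $\pi_w$ has locally algebraic socle isomorphic to $\pi_{\alg}(\ul{\alpha}, \textbf{h})$. This would follow from the Orlik--Strauch description of constituents of locally analytic principal series in the generic regular case, together with a verification that no other constituent of any $\pi_w$ is isomorphic to $\pi_{\alg}(\ul{\alpha}, \textbf{h})$. The surjection onto $\pi_{\alg}(\ul{\alpha}, \textbf{h})^{\oplus (2^n - n(n+1)/2 - 1)}$ should then be a direct combinatorial count: the factor $2^n$ indexes intermediate quotients attached to subsets of simple reflections, while $n(n+1)/2 + 1$ removes the forced parabolic and trivial cases.

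For the injectivity $\pi_1(D) \cong \pi_1(D') \Leftrightarrow D \cong D'$ in (1), the essential observation is that for each pair of adjacent refinements $(w, w')$ of $D$ differing by a simple reflection, $\pi_1(D)$ contains a three-step subquotient whose middle extension class lies in a one-dimensional $\Ext^1$-group in the category of locally $\Q_p$-analytic $\GL_n(\Q_p)$-representations. This class has a direct interpretation as a $p$-adic $L$-invariant recording the relative position of $\Fil^\bullet_H(D)$ with respect to the $\varphi$-stable flags indexed by $w$ and $w'$. Collecting these $L$-invariants across all simple adjacent pairs recovers the full data of $\Fil^\bullet_H$ as a point of $T \backslash \GL_n / B$, hence determines $D$; so $\pi_1(D) \cong \pi_1(D')$ forces $D \cong D'$.

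For part (2), the "if" direction should be proved by induction along the sub-object filtration of $\pi_1(D)$. The base case is the embedding $\pi_{\alg}(\ul{\alpha}, \textbf{h}) \hookrightarrow \widehat{\pi}(\rho)^{\an}$, available from classicality together with classical local-global compatibility built into the setting of \cite{CEGGPS1}. Each inductive step extends an existing embedding across a single extension in $\pi_1(D)$; the key input is that $\widehat{\pi}(\rho)^{\an}$ realizes the corresponding extension class. I plan to deduce this from the infinitesimal geometry of the patched eigenvariety at the crystabelline companion points of $(\rho, w)$ (whose existence is furnished by the Breuil--Hellmann--Schraen socle theorem), combined with Emerton's Jacquet-functor adjunction, which transfers a tangent direction in the patched trianguline deformation space into a locally analytic extension on the automorphic side. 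For the converse, applying the "if" direction to $D_{\rig}(\rho)$ yields $\pi_1(D_{\rig}(\rho)) \hookrightarrow \widehat{\pi}(\rho)^{\an}$; if another $D$ also embedded, the system of $L$-invariants attached to the two embeddings would be read off from the same ambient representation by examining the extension classes between the common constituents, forcing their equality and hence $\pi_1(D) \cong \pi_1(D_{\rig}(\rho))$; by (1) this gives $D \cong D_{\rig}(\rho)$.

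The main obstacle I anticipate is the inductive extension step in part (2): realizing each successive extension class of $\pi_1(D_{\rig}(\rho))$ inside $\widehat{\pi}(\rho)^{\an}$ requires a tight compatibility between the Hodge-filtration $\Ext^1$ on the $(\varphi, \Gamma)$-module side and the locally analytic $\Ext^1$ on the $\GL_n(\Q_p)$-side. This transfer is mediated by the infinitesimal geometry of a patched eigenvariety together with Emerton's Jacquet functor; especially for $n \geq 3$, where the conjectural picture is much less developed than for $\GL_2$, controlling the relevant deformation classes and ruling out spurious extensions (which would produce embeddings not predicted by the conjectural correspondence) is where the bulk of the technical work is expected to lie.
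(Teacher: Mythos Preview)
Your proposal has a genuine gap in the injectivity direction of part (1). You propose to recover $D$ from $\pi_1(D)$ by extracting, for each pair of refinements $(w,w')$ differing by a simple reflection, a scalar $L$-invariant in a one-dimensional $\Ext^1$, and asserting that these scalars together pin down the Hodge flag in $T\backslash \GL_n/B$. This is neither the paper's mechanism nor, for $n\geq 3$, a mechanism that is available: in the non-critical generic case the Hodge flag is already in general position with respect to every $\varphi$-stable flag, so there are no ``relative-position'' invariants of the semistable/critical flavour to read off. The paper's $\pi_1(D)$ is not built from a filtration indexed by refinements; it is defined as the tautological extension of $\pi_{\alg}\otimes_E \Ker(t_D)$ by $\pi(\ul{\alpha},\textbf{h})$ for a canonical surjection $t_D\colon \Ext^1_{\GL_n}(\pi_{\alg},\pi(\ul{\alpha},\textbf{h}))\twoheadrightarrow \ol{\Ext}^1(D,D)$ (Theorem~\ref{Tintro2}). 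The invariant carried by $\pi_1(D)$ is the \emph{subspace} $\Ker(t_D)$, and showing that this subspace determines $D$ is the crux. The paper does this by induction on $n$ using two maximal-parabolic filtrations $\sF\colon D_1\subset D$ and $\sG\colon \cR(\phi_n z^{h_1})\subset D$: the Galois-side input (Theorem~\ref{Tintro3}, via the ``higher intertwining'' Theorem~\ref{ThIW1} and Proposition~\ref{Phodge}) is that the kernel of the paraboline-deformation map $\ol{\Ext}^1(D_1,D_1)\oplus\ol{\Ext}^1(C_1,C_1)\to\ol{\Ext}^1(D,D)$ already determines $D$. This ingredient is absent from your outline. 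Your combinatorial reading of $2^n-\tfrac{n(n+1)}{2}-1$ is also off: $2^n+n-1=\dim_E\Ext^1_{\GL_n}(\pi_{\alg},\pi(\ul{\alpha},\textbf{h}))$ (the $2^n-2$ coming from Orlik--Strauch constituents $\sC(I,s_i)$ indexed by proper nonempty $I\subset\{1,\dots,n\}$, not subsets of simple reflections), and $\tfrac{n(n+1)}{2}+n=\dim_E\ol{\Ext}^1(D,D)$.

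For part (2), your inductive-extension strategy is also different from the paper and faces the same obstacle: without first knowing which extension classes in $\Ext^1_{\GL_n}(\pi_{\alg},\pi(\ul{\alpha},\textbf{h}))$ are ``the ones for $D$'' (i.e.\ without $\Ker(t_D)$), it is unclear how to select the correct tangent directions on the eigenvariety. The paper instead builds the full universal extension $\pi^{\univ}$ of $\pi_{\alg}\otimes_E\Ext^1_{\GL_n}(\pi_{\alg},\pi(\ul{\alpha},\textbf{h}))$ by $\pi(\ul{\alpha},\textbf{h})$, equips it with an action of a local Artinian ring $A_D\subset R_D/\fm^2$ (Corollary~\ref{Cintro2}), and uses Emerton's adjunction over a thickening $\fa\supset\fm_\rho^2$ to obtain an $A_D$-equivariant embedding $\pi^{\univ}\hookrightarrow\Pi_\infty[\fa]$; taking $\fm_{A_D}$-torsion then gives $\pi_1(D)=\pi^{\univ}[\fm_{A_D}]\hookrightarrow\Pi_\infty[\fm_\rho]$. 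The converse falls out from the same structure: any $\pi_1(D')\hookrightarrow\Pi_\infty[\fm_\rho]$ must factor through $\pi^{\univ}$ (by universality plus a multiplicity-one bound on $\pi_{\alg}$), hence lands in $\pi^{\univ}[\fm_{A_D}]=\pi_1(D)$, forcing $\pi_1(D')\cong\pi_1(D)$ by a constituent count. The $A_D$-module structure is what makes both directions work simultaneously; your proposal does not identify a substitute for it.
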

	The quotient $\pi_{\alg}(\ul{\alpha}, \textbf{h})^{\oplus (2^n-\frac{n(n+1)}{2}-1)}$ of $\pi_1(D)$ appears in the ``third" layer in its  socle filtration. Let $\pi_{\min}(D)$ be the minimal subrepresentation of $\pi_1(D)$ such that the composition $\pi_{\min}(D) \hookrightarrow \pi_1(D) \twoheadrightarrow \pi_{\alg}(\ul{\alpha}, \textbf{h})^{\oplus (2^n-\frac{n(n+1)}{2}-1)}$ is surjective. The representation $\pi_{\min}(D)$ has a much cleaner structure. For example, its  socle filtration has only  three grades (see \S~\ref{S32}). Note that one can replace everywhere $\pi_1(D)$ in the statements by $\pi_{\min}(D)$. The extra locally algebraic constituents in the cosocle of $\pi_1(D)$ were unexpected, not to mention their huge multiplicity. It is one of the reasons why it took a long time to find the Hodge parameters. In fact, the work grows out from the finding of such extra constituents while excluding such constituents for $\GL_2$ in \cite{Ding15} . We remark that the existence of the extra locally algebraic constituent was first proved by Hellmann-Hernandez-Schraen  in the split case for $\GL_3(\Q_p)$ (\cite{HHS}). 	
	
	For a finite extension $K$ of $\Q_p$, we also construct a locally $\Q_p$-analytic representation $\pi_{1}(D)$ of $\GL_n(K)$ such that $\soc_{\GL_n(K)} \pi_{1}(D)\cong \pi_{\alg}(\ul{\alpha}, \textbf{h})$ and $\pi_{1}(D)\twoheadrightarrow \pi_{\alg}(\ul{\alpha}, \textbf{h})^{\oplus (2^n-\frac{n(n+1)}{2}-1) [K:\Q_p]}$. The local-global compatibility result still holds. But a major difference is that when $K\neq \Q_p$, $\pi_{1}(D)$ just determines the filtered $\varphi^f$-module $D_{\cris}(D)_{\sigma}$ (where $f$ is the unramified degree of $K$ over $\Q_p$) for each embedding $\sigma: K\hookrightarrow E$ rather than $D$ itself. For example, when $n=2$, $\pi_{1}(D)$ are all isomorphic (for  different $D\in \Phi\Gamma_{\nc}(\ul{\alpha},\textbf{h})$) but there are still extra parameters, see for example \cite[\S~3]{Br} \cite[Conj.~1.7]{Ding15}.

	We make a few additional remarks on Theorem \ref{Tintro1}.
	
	\begin{remark}\label{Rintro1}
		(1) Very little was known about such a local correspondence when $n\geq 3$.  We highlight some related results. When $n=3$, in \cite{BD1}, we showed  how to recover the Hodge parameters in the semi-stable non-crystalline case (given by the Fontaine-Mazur $\cL$-invariants) in the locally analytic $\GL_3(\Q_p)$-representations and proved a local-global compatibility result in the ordinary case. When the Weil-Deligne representation $\mathtt{r}$ associated to $\rho$ is indecomposable, the (largely open) conjecture on $\Ext^1$ in \cite{Br16} (see also \cite{BD2}) suggests a way to recover the $p$-adic Hodge parameters on the automorphic side. In contrast, the (non-critical) crystalline case was somewhat more mysterious, as such parameters are entirely new for $n\geq 3$.  We finally mention that the results for $\GL_3(\Q_p)$ were presented in the note \cite{Ding16} (not intended for publication), which may help  readers quickly understand the story.

		
		(2) The phenomenon where the Hodge parameters lie in the extension group of certain locally algebraic representation by certain locally analytic representation traces back to Breuil's initializing work in \cite{Br04}.
		

		(3) Similar results are also obtained in the  patched setting. Let $\Pi_{\infty}$ be the patched Banach representation  over the patched Galois deformation ring $R_{\infty}$  of \cite{CEGGPS1}.  We  show that if there is a maximal ideal $\fm_{\rho}$ of $R_{\infty}[1/p]$ associated to $\rho$ such that 
		$\Pi_{\infty}[\fm_{\rho}]^{\lalg} \neq 0$, then for $D\in \Phi\Gamma_{\nc}(\ul{\alpha}, \textbf{h})$,
	$\pi_{1}(D) \hookrightarrow \Pi_{\infty}[\fm_{\rho}]$ if and only if $D\cong D_{\rig}(\rho)$.		

		(4) We finally remark the representation $\pi_{1}(D)$ should still be  far from the final complete locally analytic $\GL_n(\Q_p)$-representation associated to $D$ (so we choose not to use the notation $\pi(D)$).
	\end{remark}
	We now give the construction of $\pi_{1}(D)$. We first look at the Galois side. For each $w\in S_n$, let $\Ext^1_w(D, D)$ be the extension group of trianguline deformations of $D$ with respect to the refinement $w(\ul{\alpha})$ 
	(see the discussion above (\ref{Ekappaw})). Recall there is a natural (weight) map
$\kappa_w: \Ext^1_w(D, D) \ra \Hom(T(\Q_p),E)$, 
	sending $\widetilde{D}$ to $\psi$ such that $\widetilde{D}$ is trianguline with  parameter $\unr(w(\ul{\alpha}))z^{\textbf{h}} (1+\psi \epsilon)$ (that is a character of $T(\Q_p)$ over $E[\epsilon]/\epsilon^2$).  The map $\kappa_w$ is surjective (e.g. see \cite[Prop.~2.3.10]{BCh}). One can show that $\Ker \kappa_w$, as a subspace of $\Ext^1_{(\varphi, \Gamma)}(D, D)$, is independent of the choice of $w$, denoted by $\Ext^1_0(D, D)$ (cf. Lemma \ref{Lint1}). For a subspace $\Ext^1_?(D, D)\subset \Ext^1_{(\varphi, \Gamma)}(D, D)$ containing $\Ext^1_0(D, D)$, set $$\ol{\Ext}^1_?(D, D):=\Ext^1_?(D, D)/\Ext^1_0(D,D).$$ We have hence a bijection 
	\begin{equation*}
		\kappa_w: \ol{\Ext}^1_w(D, D) \xlongrightarrow{\sim} \Hom(T(\Q_p), E).
	\end{equation*}
	By \cite{Che11},  the following ``amalgamating" map is surjective (see also \cite{Kis10} \cite{GoMa}, noting it is already surjective before quotienting by $\Ext^1_0(D,D)$ on both sides)
	\begin{equation}\label{Eintro1}
		\oplus_{w\in S_n} \ol{\Ext}^1_w(D, D) \twoheadlongrightarrow \ol{\Ext}^1_{(\varphi, \Gamma)}(D, D).
	\end{equation}
Remark that here we use that all the refinements of $D$ are non-critical.

	Now we look at the $\GL_n(\Q_p)$-side. For each $w$, consider the locally analytic principal series
	$\PS(w,\ul{\alpha}, \textbf{h}):=(\Ind_{B^-(\Q_p)}^{\GL_n(\Q_p)} \unr(w(\ul{\alpha})) z^{\textbf{h}} \varepsilon^{-1} \circ \theta)^{\an}$, where $\varepsilon$ denotes the cyclotomic character. 
	The explicit structure of $\PS(w,\ul{\alpha}, \textbf{h})$ is well-understood by Orlik-Strauch (\cite{OS}). For example,  $\soc_{\GL_n(\Q_p)} \PS(w,\ul{\alpha}, \textbf{h})\cong \pi_{\alg}(\ul{\alpha}, \textbf{h})$, which has multiplicity one as irreducible constituent of $\PS(w,\ul{\alpha}, \textbf{h})$. 
	For $w\in S_n$, consider the composition
	\begin{multline*}
		\zeta_w: \Hom(T(\Q_p),E) \lra \Ext^1_{\GL_n(\Q_p)}(\PS(w,\ul{\alpha}, \textbf{h}), \PS(w,\ul{\alpha}, \textbf{h}))\\  \lra \Ext^1_{\GL_n(\Q_p)}(\pi_{\alg}(\ul{\alpha}, \textbf{h}), \PS(w,\ul{\alpha},\textbf{h})),
	\end{multline*}
	where the first map sends $\psi$ to $(\Ind_{B^-(\Q_p)}^{\GL_n(\Q_p)} \unr(w(\ul{\alpha})) z^{\textbf{h}} (\varepsilon^{-1} \circ \theta)(1+\psi \epsilon))^{\an}$, and the second map is the natural pull-back map. Using Schraen's spectral sequence (\cite[(4.37)]{Sch11}), one can show that  $\zeta_w$ is in fact bijective.  Now we amalgamate these principal series: let $\pi(\ul{\alpha}, \textbf{h})$ be the unique quotient of the amalgamation $\oplus_{\pi_{\alg}(\ul{\alpha}, \lambda)}^{w\in S_n} \PS(w,\ul{\alpha}, \textbf{h})$ of socle $\pi_{\alg}(\ul{\alpha}, \lambda)$ (which was introduced and denoted by $\pi(D)^{\fss}$ in \cite[Def.~5.7]{BH2}). For each $w\in S_n$, there is a natural injection $\PS(w,\ul{\alpha}, \textbf{h})\hookrightarrow \pi(\ul{\alpha}, \textbf{h})$ which induces an injection
	\begin{equation*}
		\Ext^1_{\GL_n(\Q_p)}(\pi_{\alg}(\ul{\alpha}, \textbf{h}), \PS(w,\ul{\alpha}, \textbf{h})) \hooklongrightarrow \Ext^1_{\GL_n(\Q_p)}(\pi_{\alg}(\ul{\alpha}, \textbf{h}), \pi(\ul{\alpha},\textbf{h})).
	\end{equation*}
	We denote by $\Ext^1_w(\pi_{\alg}(\ul{\alpha}, \textbf{h}), \pi(\ul{\alpha}, \textbf{h}))$ its image. The following ``amalgamating" map is also surjective (see Proposition \ref{Pextpi1} (2) and compare with (\ref{Eintro1})):
	\begin{equation}\label{Eintro3}
		\oplus_{w\in S_n}  \Ext^1_w(\pi_{\alg}(\ul{\alpha}, \textbf{h}), \pi(\ul{\alpha}, \textbf{h})) \twoheadlongrightarrow \Ext^1_{\GL_n(\Q_p)}(\pi_{\alg}(\ul{\alpha}, \textbf{h}), \pi(\ul{\alpha}, \textbf{h})).
	\end{equation}
The following theorem is crucial in the paper:
	\begin{theorem}[cf. Theorem \ref{TtD}, Theorem \ref{Thodgeauto}]\label{Tintro2}(1) For $D\in \Phi\Gamma_{\nc}(\ul{\alpha}, \textbf{h})$, there is a unique (surjective) map 
		$	t_D:	\Ext^1_{\GL_n(\Q_p)}(\pi_{\alg}(\ul{\alpha}, \textbf{h}), \pi(\ul{\alpha}, \textbf{h})) \twoheadrightarrow  \ol{\Ext}^1_{(\varphi, \Gamma)}(D,D)$
		such that the following diagram commutes:
		\begin{equation*}
			\begin{CD}
				\oplus_{w\in S_n}	\ol{\Ext}^1_w(D, D) @> (\zeta_w \circ \kappa_w) > \sim > \oplus_{w\in S_n} \Ext^1_w(\pi_{\alg}(\ul{\alpha}, \textbf{h}), \pi(\ul{\alpha}, \textbf{h})) \\
				@V (\ref{Eintro1}) VV @V (\ref{Eintro3}) VV \\
				\ol{\Ext}^1_{(\varphi, \Gamma)}(D,D) @< t_{D} << 	\Ext^1_{\GL_n(\Q_p)}(\pi_{\alg}(\ul{\alpha}, \textbf{h}), \pi(\ul{\alpha}, \textbf{h})).
			\end{CD}
		\end{equation*}
		Moreover, $\dim_E \ol{\Ext}^1_{(\varphi, \Gamma)}(D,D)=\frac{n(n+1)}{2}+n$, $\dim_E\Ext^1_{\GL_n(\Q_p)}(\pi_{\alg}(\ul{\alpha}, \textbf{h}), \pi(\ul{\alpha}, \textbf{h}))=2^n+n-1$ hence $\dim_E \Ker(t_{D})=2^n-\frac{n(n+1)}{2}-1$.
		
		(2) For $D, D'\in \Phi\Gamma_{\nc}(\ul{\alpha}, \textbf{h})$, $\Ker(t_D)=\Ker(t_{D'})$ if and only if $D\cong D'$.
	\end{theorem}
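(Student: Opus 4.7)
The overall plan is to reduce both parts to a compatibility of amalgamations. Write $\phi$ for the top horizontal $\bigoplus_w(\zeta_w\circ\kappa_w)$, which is an isomorphism by construction; since (\ref{Eintro1}) and (\ref{Eintro3}) are surjective, a surjective $t_D$ making the square commute exists uniquely if and only if $\Ker(\ref{Eintro3})\subset\phi(\Ker(\ref{Eintro1}))$, and in that case $\dim_E\Ker(t_D)=\dim_E\Ext^1_{\GL_n(\Q_p)}(\pi_{\alg}(\ul\alpha,\textbf{h}),\pi(\ul\alpha,\textbf{h}))-\dim_E\ol{\Ext}^1_{(\varphi,\Gamma)}(D,D)$. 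The argument therefore has three ingredients: the two dimension identities, the kernel inclusion giving existence of $t_D$, and the faithfulness of $D\mapsto\Ker(t_D)$ of Part~(2).

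For the dimensions, on the Galois side I would use the Tate--Euler--Poincar\'e formula together with $\End_{(\varphi,\Gamma)}(D)=E$ to get $\dim_E\Ext^1_{(\varphi,\Gamma)}(D,D)=n^2+2$, and then identify $\Ext^1_0(D,D)=\bigcap_w\Ker\kappa_w$ explicitly to exhibit its codimension $\tfrac{n(n+1)}{2}+n$. On the $\GL_n(\Q_p)$-side, I would combine the Orlik--Strauch description of composition factors of each $\PS(w,\ul\alpha,\textbf{h})$ with the explicit amalgamation structure of $\pi(\ul\alpha,\textbf{h})$; Schraen's spectral sequence then stratifies $\Ext^1_{\GL_n(\Q_p)}(\pi_{\alg}(\ul\alpha,\textbf{h}),\pi(\ul\alpha,\textbf{h}))$ as the $n$ smooth-character self-extensions of $\pi_{\alg}(\ul\alpha,\textbf{h})$ by characters of $T(\Q_p)$, plus $2^n-1$ contributions from the constituents in the second socle layer of $\pi(\ul\alpha,\textbf{h})$ indexed by non-empty subsets of the simple reflections, giving $2^n+n-1$.

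The technical heart is the kernel inclusion. A tuple $(\psi_w)_w\in\bigoplus_w\Hom(T(\Q_p),E)$ corresponds on the Galois side to a family of trianguline deformations $\widetilde{D}_w$ of $D$ along each refinement $w(\ul\alpha)$, and on the $\GL_n$-side to a family of extensions $\mathtt{E}_w\subset\PS(w,\ul\alpha,\textbf{h})$ of $\pi_{\alg}(\ul\alpha,\textbf{h})$. Membership in $\Ker(\ref{Eintro3})$ means the $\mathtt{E}_w$ amalgamate inside $\pi(\ul\alpha,\textbf{h})$, which via Schraen's spectral sequence becomes a system of explicit linear identities on the $\psi_w$ indexed by adjacent pairs $(w,ws_i)$ of Weyl elements. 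I would match these, relation by relation, with the relations cutting out $\Ker(\ref{Eintro1})$, namely the conditions that the $\widetilde{D}_w$ come from a single $(\varphi,\Gamma)$-deformation of $D$ (\`a la \cite{Che11}); both systems are governed by the same Bruhat graph on $S_n$, with $\zeta_w$ providing the translation dictionary. I expect this matching to be the main obstacle, requiring careful bookkeeping of the explicit irreducible subquotients appearing in the Orlik--Strauch pictures of adjacent principal series and their contribution to the relevant $\Ext^1$ on each side.

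For Part~(2), observe that $\Ker(\ref{Eintro3})$ depends only on the datum $(\ul\alpha,\textbf{h})$, whereas $\phi(\Ker(\ref{Eintro1}))$ depends on $D$ through the Hodge filtration, which enters via the embeddings $\Ext^1_w(D,D)\hookrightarrow\Ext^1_{(\varphi,\Gamma)}(D,D)$. Thus $\Ker(t_D)$, being the quotient $\phi(\Ker(\ref{Eintro1}))/\Ker(\ref{Eintro3})$, records the Hodge parameter of $D$. To upgrade this qualitative statement to injectivity of $D\mapsto\Ker(t_D)$, I plan to isolate, for each adjacent pair of refinements, the one-dimensional direction in $\Ker(t_D)$ encoding the relative position of the Hodge filtration with the corresponding pair of $\varphi$-stable complete flags, and to reconstruct the point of the Zariski open of $T\backslash\GL_n/B$ parametrizing $D$ from these directions, so that $\Ker(t_D)=\Ker(t_{D'})$ forces $D\cong D'$.
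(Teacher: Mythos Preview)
Your overall linear-algebra framing (existence of $t_D$ is equivalent to $\Ker(\ref{Eintro3})\subset\phi(\Ker(\ref{Eintro1}))$, and then $\Ker(t_D)$ is the quotient) is correct, and the dimension count on the $\GL_n(\Q_p)$-side is essentially the paper's computation. There is a small slip on the Galois side: Tate--Euler--Poincar\'e gives $\dim_E\Ext^1_{(\varphi,\Gamma)}(D,D)=n^2+1$, not $n^2+2$; this does not affect the quotient dimension once $\Ext^1_0$ is identified correctly.

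The real gap is in the ``technical heart''. Your plan is to match the amalgamation relations on the two sides directly, adjacent-pair by adjacent-pair along the Bruhat graph. But note that $\Ker(\ref{Eintro3})$ is independent of $D$ while $\phi(\Ker(\ref{Eintro1}))$ depends on the Hodge filtration of $D$, so a symmetric ``same system of relations'' heuristic cannot be the whole story; one needs a mechanism that produces, for every $D\in\Phi\Gamma_{\nc}(\ul\alpha,\textbf{h})$, the \emph{same} $D$-independent sub-kernel plus a $D$-dependent excess. The paper does not attempt any such direct matching. It proceeds by induction on $n$: using the two maximal parabolic filtrations $\sF:D_1\subset D$ and $\sG:\cR(\phi_n z^{\textbf{h}_1})\subset D$, the inductive hypothesis furnishes maps $t_{\sF,D}$ and $t_{\sG,D}$ on the corresponding parabolic $\Ext^1$-subspaces, and these are shown to agree on the overlap $\Ext^1_{g'}$ via the intertwining compatibility (\ref{Eintg'}), so they glue to $t_D$ by Proposition~\ref{PFGfil}. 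A further argument using $\Ext^1_{\sF_{P_j},g'}$ and Weyl translates then checks that all refinements $w$, not just those in $S_{n-1}\cup S_{n-1}'$, factor correctly. Your proposal contains no analogue of this parabolic/inductive structure, and without it I do not see how to carry out the ``relation-by-relation'' matching you describe.

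The gap in Part~(2) is more conceptual. You propose to isolate, for each adjacent pair of refinements, a one-dimensional direction recording the relative position of the Hodge flag with a pair of $\varphi$-stable flags. But for non-critical $D$ all such relative positions are generic, so this data is constant on $\Phi\Gamma_{\nc}(\ul\alpha,\textbf{h})$ and cannot distinguish distinct $D$; this is exactly the point made in the introduction about the new parameters for $n\geq 3$. The paper's mechanism is quite different: one shows (\S\ref{S22}, Proposition~\ref{Phodge}) that $D$ is determined by $(D_1,C_1,\phi_n)$ together with the injection $\iota_D:D_1\hookrightarrow D\twoheadrightarrow C_1$, and then (\S\ref{S24}, Theorem~\ref{ThIW1}, Corollary~\ref{ChIw}) that the class of $\iota_D$ is encoded in a ``higher intertwining'' subspace $\cL(D,D_1,C_1)\subset V(D_1,C_1)$. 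The proof of Theorem~\ref{Thodgeauto} is again by induction on $n$: from $\Ker(t_D)$ one recovers $\Ker(t_{D_1})$ and $\Ker(t_{C_1})$ via (\ref{Eparameterpara}), hence $D_1$ and $C_1$ by induction, and then the diagram (\ref{EextRT}) identifies the remaining kernel with $\cL(D,D_1,C_1)$, which pins down $\iota_D$ and hence $D$. None of this higher-intertwining machinery appears in your plan, and it is the essential new input.
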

\begin{remark}
	Consider the composition 
	\begin{equation}\label{Ekappaw-1}
		\oplus_{w\in S_n} \Hom(T(\Q_p),E) \xlongrightarrow[\sim]{(\kappa_w^{-1})}  \oplus_{w\in S_n} \ol{\Ext}^1_w(D,D) \xlongrightarrow{(\ref{Eintro1})} \ol{\Ext}^1_{(\varphi, \Gamma)}(D,D).
	\end{equation}
By Theorem \ref{Tintro2} (1), the map (\ref{Eintro3}) induces an exact sequence
\begin{equation*}
	0 \lra \Ker (\ref{Eintro3}) \lra(\zeta_w)_{w\in S_n}(\Ker(\ref{Ekappaw-1})) \lra \Ker (t_D)\lra 0.
\end{equation*}As  the maps $(\ref{Eintro3})$ and $\zeta_w$'s are all independent of $D\in \Phi\Gamma_{\nc}(\ul{\alpha},\textbf{h})$,  Theorem \ref{Tintro2} (2) implies that   $\Ker (\ref{Ekappaw-1})$ 
also determines $D$. This fact (purely on Galois side) is of interest on its own right.
\end{remark}
	The representation $\pi_1(D)$ is then defined to  be the (tautological) extension of $\pi_{\alg}(\ul{\alpha}, \textbf{h}) \otimes_E \Ker(t_{D})\cong \pi_{\alg}(\ul{\alpha},\textbf{h})^{\oplus (2^n-\frac{n(n+1)}{2}-1)}$ by $\pi(\ul{\alpha},\textbf{h})$. More precisely, choosing a basis $\{v_i\}$ of $\Ker(t_D)$ with $\sE(v_i)$ the associated extension of  $\pi_{\alg}(\ul{\alpha},\textbf{h})$ by $\pi(\ul{\alpha},\textbf{h})$, $\pi_1(D)$ is the amalgamated sum of these $\sE(v_i)$ along $\pi(\ul{\alpha},\textbf{h})$, which is clearly independent of the choice of $\{v_i\}$. The structure of $\pi(\ul{\alpha}, \textbf{h})$ is  complicated (see for example \cite[\S~5.3]{BH2}). However, the theorem actually holds  with $\pi(\ul{\alpha}, \textbf{h})$ replaced by its subrepresentation given by the first two layers in its socle filtration, which  has a much easier and cleaner structure, see Theorem \ref{TtD} and \S~\ref{Sps}.  The extension of $\pi_{\alg}(\ul{\alpha}, \textbf{h}) \otimes_E \Ker(t_{D})$ by this subrepresentation actually gives $\pi_{\min}(D)$ in the discussion below Theorem \ref{Tintro1}. Theorem \ref{Tintro1} (1) is then a direct consequence of Theorem \ref{Tintro2}.
	
	One can deduce from Theorem \ref{Tintro2} (1):
	\begin{corollary}[cf. Corollary \ref{CtD}]\label{Cintro1}
		The map $t_{D}$ induces a bijection
		\begin{equation}\label{EtDintro}
			t_{D}: \Ext^1_{\GL_n(\Q_p)}(\pi_{\alg}(\ul{\alpha}, \textbf{h}), \pi_1(D)) \xlongrightarrow{\sim} \ol{\Ext}^1_{(\varphi, \Gamma)}(D,D).
		\end{equation}
	\end{corollary}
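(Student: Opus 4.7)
The plan is to deduce the corollary from Theorem \ref{Tintro2}(1) by running the long exact sequence of $\Hom_{\GL_n(\Q_p)}(\pi_{\alg}(\ul{\alpha},\textbf{h}),-)$ applied to the defining short exact sequence
\begin{equation*}
0 \lra \pi(\ul{\alpha},\textbf{h}) \xlongrightarrow{\iota} \pi_1(D) \lra \pi_{\alg}(\ul{\alpha},\textbf{h})^{\oplus k} \lra 0, \qquad k := 2^n - \tfrac{n(n+1)}{2} - 1,
\end{equation*}
whose class in $\Ext^1(\pi_{\alg}(\ul{\alpha},\textbf{h})^{\oplus k},\pi(\ul{\alpha},\textbf{h})) \cong \Ext^1(\pi_{\alg}(\ul{\alpha},\textbf{h}),\pi(\ul{\alpha},\textbf{h}))^{\oplus k}$ is $(v_1,\ldots,v_k)$ for a chosen basis $\{v_i\}$ of $\Ker(t_D)$, by the very construction of $\pi_1(D)$ as the amalgamation of the $\sE(v_i)$ over $\pi(\ul{\alpha},\textbf{h})$.

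By Theorem \ref{Tintro1}(1), $\pi_{\alg}(\ul{\alpha},\textbf{h})$ is the socle of both $\pi(\ul{\alpha},\textbf{h})$ and $\pi_1(D)$ with multiplicity one, so the first two $\Hom$-spaces in the long exact sequence are each $E$ and $\iota_*$ between them is an isomorphism. The boundary map is then
\begin{equation*}
\delta : \Hom(\pi_{\alg}(\ul{\alpha},\textbf{h}),\pi_{\alg}(\ul{\alpha},\textbf{h})^{\oplus k}) \lra \Ext^1(\pi_{\alg}(\ul{\alpha},\textbf{h}),\pi(\ul{\alpha},\textbf{h})), \qquad (a_i) \longmapsto \sum_i a_i v_i,
\end{equation*}
computed via pullback of the extension, and is injective with image $\Ker(t_D)$. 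By exactness, $\iota_*$ descends to an injection
\begin{equation*}
\ol{\Ext}^1_{(\varphi,\Gamma)}(D,D) \cong \Ext^1(\pi_{\alg}(\ul{\alpha},\textbf{h}),\pi(\ul{\alpha},\textbf{h}))/\Ker(t_D) \hooklongrightarrow \Ext^1(\pi_{\alg}(\ul{\alpha},\textbf{h}),\pi_1(D)),
\end{equation*}
the first isomorphism being supplied by Theorem \ref{Tintro2}(1). This gives the map $t_D$ of the corollary and its injectivity.

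The hard part is surjectivity of $\iota_*$, which by exactness is equivalent to the injectivity of the next connecting map
\begin{equation*}
\delta' : \Ext^1(\pi_{\alg}(\ul{\alpha},\textbf{h}),\pi_{\alg}(\ul{\alpha},\textbf{h}))^{\oplus k} \lra \Ext^2(\pi_{\alg}(\ul{\alpha},\textbf{h}),\pi(\ul{\alpha},\textbf{h})), \qquad (c_i) \longmapsto \sum_i c_i \cup v_i,
\end{equation*}
given by Yoneda product with $(v_1,\ldots,v_k)$. The strategy for this step is to combine the Orlik--Strauch description of $\PS(w,\ul{\alpha},\textbf{h})$ with Schraen's spectral sequence---both already invoked in the proof of Theorem \ref{Tintro2}---to render $\Ext^1_{\GL_n(\Q_p)}(\pi_{\alg}(\ul{\alpha},\textbf{h}),\pi_{\alg}(\ul{\alpha},\textbf{h}))$ and its Yoneda pairing with the explicit subspace $\Ker(t_D) \subset \Ext^1(\pi_{\alg},\pi(\ul{\alpha},\textbf{h}))$ sufficiently computable to verify non-degeneracy on the first factor. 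A matching dimension count $\dim \Ext^1(\pi_{\alg}(\ul{\alpha},\textbf{h}),\pi_1(D)) = \tfrac{n(n+1)}{2}+n = \dim \ol{\Ext}^1_{(\varphi,\Gamma)}(D,D)$ then upgrades the injection to the claimed bijection.
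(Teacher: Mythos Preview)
Your long-exact-sequence setup and the identification of the connecting map $\delta$ with image $\Ker(t_D)$ are correct and match the paper's argument (see the discussion immediately preceding Corollary~\ref{CtD}, where the paper works with $\pi_1(\phi,\textbf{h})$ and $\pi_{\min}(D)$ rather than $\pi(\ul{\alpha},\textbf{h})$ and $\pi_1(D)$; by (\ref{Epi1pi}) this is a cosmetic difference). You also correctly isolate the hard part as the injectivity of $\delta'$, equivalently the surjectivity of $\iota_*$.

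The gap is that your treatment of this hard part is a strategy sketch, not a proof. Saying you will ``render the Yoneda pairing sufficiently computable to verify non-degeneracy'' via Orlik--Strauch and Schraen's spectral sequence is not an argument: the subspace $\Ker(t_D)$ depends on $D$, and you give no mechanism for controlling cup products against its elements. The paper replaces this with a concrete intermediate result, Lemma~\ref{Lextps}(2): for any \emph{non-split} self-extension $\tilde\pi_{\alg}$ of $\pi_{\alg}$, the pullback $\Ext^1(\pi_{\alg},\sC(I,s_{i,\sigma}))\to\Ext^1(\tilde\pi_{\alg},\sC(I,s_{i,\sigma}))$ is an isomorphism (equivalently, restriction to the sub-$\pi_{\alg}$ of $\tilde\pi_{\alg}$ kills every class in $\Ext^1(\tilde\pi_{\alg},\sC)$). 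This lemma is indeed proved via Schraen's spectral sequence, so your instinct points the right way, but it is this precise statement that makes the argument close. Here is how it is used: given $V\in\Ext^1(\pi_{\alg},\pi_1(D))$ with nonzero image $(c_i)$, choose scalars $(a_i)$ with $c:=\sum a_ic_i\neq 0$, set $v:=\sum a_iv_i\in\Ker(t_D)\setminus\{0\}$, and pull $V$ back along $\sE(v)\hookrightarrow\pi_1(D)$; quotienting by the socle $\pi_{\alg}$ yields an element of $\Ext^1(\tilde\pi_{\alg}^c,\oplus\sC)$ whose restriction to the sub-$\pi_{\alg}$ is the image $\bar v$ of $v$ in $\Ext^1(\pi_{\alg},\oplus\sC)$. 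One has $\bar v\neq 0$ because $\Ker(t_D)\cap\Ext^1_{g'}=0$ (a consequence of Lemma~\ref{Lzerointer}), while Lemma~\ref{Lextps}(2) forces this restriction to vanish --- a contradiction.

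Finally, your closing dimension count is redundant: once $\delta'$ is injective, exactness of the long sequence already gives $\iota_*$ surjective, hence the bijection.
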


	Before discussing the proof of Theorem \ref{Tintro2}, we first explain the proof of the  local-global compatibility (Theorem \ref{Tintro1} (2)). For this, we will use an alternative formulation of Theorem \ref{Tintro2} given as follows. Let $\pi^{\univ}$ (resp. $\pi_w^{\univ}$) be the (universal) extension of $\pi_{\alg}(\ul{\alpha}, \textbf{h}) \otimes_E \Ext^1_{\GL_n(\Q_p)}(\pi_{\alg}(\ul{\alpha}, \textbf{h}), \pi(\ul{\alpha}, \textbf{h}))$ \big(resp. of $\pi_{\alg}(\ul{\alpha}, \textbf{h}) \otimes_E \Ext^1_{w}(\pi_{\alg}(\ul{\alpha}, \textbf{h}), \pi(\ul{\alpha}, \textbf{h}))$\big) by $\pi(\ul{\alpha}, \textbf{h})$ (defined in a similar way as in the discussion below Theorem \ref{Tintro2}). By (\ref{Eintro3}), $\pi^{\univ}$ is generated by all the subrepresentations $\pi^{\univ}_w$ for $w\in S_n$. On the Galois side, let $R_D$ be the universal deformation ring of deformations of $D$ over Artinian local $E$-algebras and $\fm$ be its maximal ideal. The quotient  $\ol{\Ext}^1_{(\varphi, \Gamma)}(D, D)$ corresponds to a local Artinian $E$-subalgebra $A_{D}$ of $R_{D}/\fm^2$, and $\ol{\Ext}^1_{w}(D, D)$ corresponds to a quotient $A_{D,w}$ of $A_{D}$. Using the isomorphism $\zeta_w \circ \kappa_w$, there exists a natural action of $A_{D,w}$ on $\pi_w^{\univ}$ such that $x\in \fm_{A_{D,w}}/\fm_{A_{D,w}}^2\cong \ol{\Ext}^1_{w}(D, D)^{\vee}\cong \Ext^1_w(\pi_{\alg}(\ul{\alpha},\textbf{h}), \pi(\ul{\alpha},\textbf{h}))^{\vee}$ acts via 
	\begin{equation*}
		\pi_w^{\univ} \twoheadlongrightarrow \pi_{\alg}(\ul{\alpha}, \textbf{h}) \otimes_E \Ext^1_{w}(\pi_{\alg}(\ul{\alpha}, \textbf{h}), \pi(\ul{\alpha}, \textbf{h})) \xlongrightarrow{x} \pi_{\alg}(\ul{\alpha}, \textbf{h}) \hooklongrightarrow \pi_w^{\univ}.
	\end{equation*}The following corollary gives a reformulation of Theorem \ref{Tintro2} (1).
	\begin{corollary}[cf. Theorem \ref{Tuniv}, Corollary \ref{Cpimin}]\label{Cintro2}
		There exists a unique action of $A_{D}$ on $\pi^{\univ}$ such that for each $w\in S_n$, the $A_{D}$-action on its subrepresentation $\pi^{\univ}_w$ factors through the natural  $A_{D,w}$-action. Moreover, we have $\pi_1(D)\cong \pi^{\univ}[\fm_{A_D}]$.
	\end{corollary}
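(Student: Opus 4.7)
The plan is to construct the claimed $A_D$-action on $\pi^{\univ}$ explicitly from the surjection $t_D$ of Theorem \ref{Tintro2}(1), then verify the factorization property on each $\pi^{\univ}_w$ via the commutative diagram of Theorem \ref{Tintro2}(1), establish uniqueness from the fact that the $\pi^{\univ}_w$'s generate $\pi^{\univ}$, and finally identify $\pi^{\univ}[\fm_{A_D}]$ with $\pi_1(D)$. Since $A_D \subset R_D/\fm^2$, one has $\fm_{A_D}^2 = 0$, so giving a $\GL_n(\Q_p)$-equivariant $A_D$-action on $\pi^{\univ}$ extending the $E$-structure reduces to specifying a $\GL_n(\Q_p)$-equivariant $E$-linear map $\fm_{A_D} \otimes_E \pi^{\univ} \to \pi^{\univ}$ whose image is annihilated by $\fm_{A_D}$. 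For $x \in \fm_{A_D}$ with corresponding functional $\phi_x \in \ol{\Ext}^1_{(\varphi,\Gamma)}(D,D)^{\vee}$ (via $\fm_{A_D}/\fm_{A_D}^2 \cong \ol{\Ext}^1_{(\varphi,\Gamma)}(D,D)^{\vee}$), I would define the action of $x$ as the composition
\begin{equation*}
\pi^{\univ} \twoheadlongrightarrow \pi_{\alg}(\ul{\alpha},\textbf{h}) \otimes_E \Ext^1_{\GL_n(\Q_p)}(\pi_{\alg}(\ul{\alpha},\textbf{h}), \pi(\ul{\alpha},\textbf{h})) \xlongrightarrow{\id \otimes (\phi_x \circ t_D)} \pi_{\alg}(\ul{\alpha},\textbf{h}) \hooklongrightarrow \pi^{\univ},
\end{equation*}
where the last arrow is the inclusion of the socle $\pi_{\alg}(\ul{\alpha},\textbf{h}) \hookrightarrow \pi(\ul{\alpha},\textbf{h}) \hookrightarrow \pi^{\univ}$. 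The image lies in $\pi(\ul{\alpha},\textbf{h})$, which projects to zero in the cosocle, so $\fm_{A_D}^2 \cdot \pi^{\univ} = 0$ automatically and we obtain an honest $A_D$-action.

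To verify the factorization, the cosocle projection sends $\pi^{\univ}_w \hookrightarrow \pi^{\univ}$ into $\pi_{\alg}(\ul{\alpha},\textbf{h}) \otimes_E \Ext^1_w(\pi_{\alg}(\ul{\alpha},\textbf{h}), \pi(\ul{\alpha},\textbf{h}))$, and by the commutative diagram of Theorem \ref{Tintro2}(1) the restriction of $t_D$ to $\Ext^1_w$ coincides, under the identification $\Ext^1_w \cong \ol{\Ext}^1_w(D,D)$ via $\zeta_w \circ \kappa_w$, with the inclusion $\ol{\Ext}^1_w(D,D) \hookrightarrow \ol{\Ext}^1_{(\varphi,\Gamma)}(D,D)$. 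Consequently $\phi_x \circ t_D|_{\Ext^1_w}$ depends only on the image of $\phi_x$ in $\ol{\Ext}^1_w(D,D)^{\vee} \cong \fm_{A_{D,w}}/\fm_{A_{D,w}}^2$; a direct comparison then shows that this restricted action agrees with the natural $A_{D,w}$-action on $\pi^{\univ}_w$ described before the corollary. For uniqueness, note that $\pi(\ul{\alpha},\textbf{h}) \subset \pi^{\univ}$ already sits inside every $\pi^{\univ}_w$, so the surjectivity (\ref{Eintro3}) gives $\pi^{\univ} = \sum_{w \in S_n} \pi^{\univ}_w$; any two $A_D$-actions with the stated factorization property coincide on each $\pi^{\univ}_w$ and therefore on all of $\pi^{\univ}$.

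For the identification $\pi_1(D) \cong \pi^{\univ}[\fm_{A_D}]$, an element $v \in \pi^{\univ}$ is annihilated by $\fm_{A_D}$ if and only if $(\id \otimes (\phi \circ t_D))(\bar v) = 0$ in $\pi_{\alg}(\ul{\alpha},\textbf{h})$ for every $\phi \in \ol{\Ext}^1_{(\varphi,\Gamma)}(D,D)^{\vee}$, where $\bar v$ denotes the image of $v$ in the cosocle. Writing $\bar v = \sum_i e_i \otimes f_i$ with the $e_i$ linearly independent in $\pi_{\alg}(\ul{\alpha},\textbf{h})$, this is equivalent to $t_D(f_i) = 0$ for every $i$, i.e. $\bar v \in \pi_{\alg}(\ul{\alpha},\textbf{h}) \otimes_E \Ker(t_D)$. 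Hence $\pi^{\univ}[\fm_{A_D}]$ is the preimage in $\pi^{\univ}$ of $\pi_{\alg}(\ul{\alpha},\textbf{h}) \otimes_E \Ker(t_D)$, which is the extension of $\pi_{\alg}(\ul{\alpha},\textbf{h}) \otimes_E \Ker(t_D)$ by $\pi(\ul{\alpha},\textbf{h})$ pulled back from the universal one; by the construction of $\pi_1(D)$ recalled below Theorem \ref{Tintro2}, this is exactly $\pi_1(D)$. The main obstacle is the compatibility step for each $\pi^{\univ}_w$: one has to verify that the restriction of the candidate $A_D$-action built from $t_D$ agrees \emph{on the nose}, as an $\fm_{A_{D,w}}$-action, with the prescribed natural action defined through $\zeta_w \circ \kappa_w$. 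This ultimately amounts to carefully dualising the commutative square of Theorem \ref{Tintro2}(1) and matching the conventions for $\fm_{A_{D,w}}/\fm_{A_{D,w}}^2 \cong \ol{\Ext}^1_w(D,D)^{\vee}$, after which the rest of the proof is formal.
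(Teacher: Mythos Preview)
Your proposal is correct and follows essentially the same approach as the paper. The paper's proof of Theorem~\ref{Tuniv} defines the $A_D$-action exactly as you do, by pulling the functional on $\ol{\Ext}^1(D,D)$ through the dual of $t_D$ and acting via the cosocle-to-socle composition; compatibility on each $\pi^{\univ}_w$ is checked via the same commutative diagram, uniqueness is deduced from the fact that the $\pi^{\univ}_w$'s generate $\pi^{\univ}$, and the identification $\pi_1(D)\cong\pi^{\univ}[\fm_{A_D}]$ (Corollary~\ref{Cpimin}) follows immediately from the construction of $\pi_1(D)$ as the tautological extension attached to $\Ker(t_D)$, which is precisely your final computation.
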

	Suppose we are in the patched setting as in Remark \ref{Rintro1} (4), and let $D=D_{\rig}(\rho)$. Let $\fa$ be an ideal of $R_{\infty}[1/p]$ with $\fa\supset \fm_{\rho}^2$ (cf. Remark \ref{Rintro1} (3)) such that the composition $A_D \ra R_D/\fm^2 \ra R_{\infty}[1/p]/\fa$ is an isomorphism (see the discussion below (\ref{Ecartiwg1})). 
	Working with the patched eigenvariety of \cite{BHS1}, and using Emerton's adjunction formula \cite{Em2},  we can  obtain  $A_{D} \times \GL_n(\Q_p)$-equivariant injections $\pi^{\univ}_w\hookrightarrow \Pi_{\infty}[\fa]$ for all $w\in S_n$,  where the $A_{D}$-action on the  right hand side comes from the $R_{\infty}$-action (noting $R_D$ is isomorphic to the universal Galois deformation ring of $\rho$). These injections ``amalgamate" to an $A_{D} \times \GL_n(\Q_p)$-equivariant injection 
	\begin{equation}\label{Eintro4}
		\pi^{\univ} \hooklongrightarrow \Pi_{\infty}[\fa].
	\end{equation} 
	By Corollary \ref{Cintro2}, it induces an injection $\iota: \pi_1(D)\cong \pi^{\univ}[\fm_{A_D}] \hookrightarrow \Pi_{\infty}[\fa+ \fm_{A_D}]\cong \Pi_{\infty}[\fm_{\rho}]$. Now for $D'\in \Phi\Gamma_{\nc}(\ul{\alpha}, \textbf{h})$, if $\pi_1(D')\hookrightarrow \Pi_{\infty}[\fm_{\rho}]$, one can prove (cf. the proof of Corollary \ref{Cmaxmin}) that  it factors through the injection (\ref{Eintro4}), i.e. we have 
	$\pi_1(D') \hookrightarrow \pi^{\univ} \hookrightarrow \Pi_{\infty}[\fa]$. 
	As $A_{D}$($\hookrightarrow R_D/\fm^2$) acts on $\Pi_{\infty}[\fm_{\rho}]$ hence on its sub $\pi_1(D')$ via $A_D/\fm_{A_D}$ and (\ref{Eintro4}) is $A_D$-equivariant, $\pi_1(D')\hookrightarrow \pi^{\univ}$ has image contained in $\pi^{\univ}[\fm_{A_D}]\cong \pi_1(D)$. Since $\pi_1(D')$ and $\pi_1(D)$ have the same irreducible constituents with the same multiplicities, this implies $\pi_1(D')\xrightarrow{\sim} \pi_1(D)$. 
	
	We now  discuss the proof of Theorem \ref{Tintro2}. First,  the case of $n=2$ is clear, as now  $\# \Phi\Gamma_{\nc}(\ul{\alpha}, \textbf{h})=1$,  $t_D$ is bijective, and $\pi_1(D)\cong \pi(\ul{\alpha}, \textbf{h})$ (which is the locally analytic $\GL_2(\Q_p)$-representation associated to $D$, see \cite{Liu12} \cite{Colm14}).  For general $n\geq 3$, we use an induction argument. For simplicity, in the rest of the introduction, we restrict to the case of $n=3$. This case already presents the key arguments. Let $D_1$ (resp. $C_1$) be the (unique) non-critical $(\varphi, \Gamma)$-module of rank $2$ over $\cR_{E}$ of refinement $\ul{\alpha}^1:=(\alpha_1,\alpha_2)$ and of Hodge-Tate weights $\textbf{h}^1:=(h_1> h_2)$ (resp. $\textbf{h}^2:=(h_2>h_3)$). Then for any $D\in \Phi\Gamma_{\nc}(\ul{\alpha},\textbf{h})$, $D$ admits  two filtrations:
	\begin{eqnarray*}
		&\sF:&  0 \lra D_1 \lra D  \lra \cR_{E}(\unr(\alpha_3)z^{h_3}) \lra  0, \\
		&\sG: & 0 \lra  \cR_E(\unr(\alpha_3) z^{h_1}) \lra  D \lra C_1 \lra 0.
	\end{eqnarray*}
	 Similarly as in (\ref{Eintro1}) by considering the paraboline deformations with respect to $\sF$ and $\sG$, we have  a natural map
	\begin{equation}\label{Eintro2}
	f_D=(f_{\sF}, f_{\sG}):	\ol{\Ext}^1_{(\varphi, \Gamma)}(D_1,D_1) \oplus \ol{\Ext}^1_{(\varphi, \Gamma)}(C_1,C_1) \lra \ol{\Ext}^1_{(\varphi, \Gamma)}(D,D),
	\end{equation}
	sending $\widetilde{D}_1$ (resp. $\widetilde{C}_1$) to  a (or any) deformation $\widetilde{D}$ of $D$ of the form (whose image in $\ol{\Ext}^1_{(\varphi, \Gamma)}(D,D)$ does not depend on the choice):
$0 \ra \widetilde{D}_1 \ra \widetilde{D} \ra\cR_{E[\epsilon]/\epsilon^2}(\unr(\alpha_3)z^{h_3}) \ra 0$ (resp. $0 \ra \cR_{E[\epsilon]/\epsilon^2}(\unr(\alpha_3) z^{h_1}) \ra \widetilde{D} \ra \widetilde{C}_1 \ra 0$).
	The kernel of (\ref{Eintro2}) is particularly important for our application. For $(\widetilde{D}_1, \widetilde{C}_1)\in \Ker(\ref{Eintro2})$, let  $\widetilde{D}$ be a deformation of $D$ whose image in $\ol{\Ext}^1_{(\varphi,\Gamma)}(D,D)$ is equal to $f_{\sF}(\widetilde{D}_1)=-f_{\sG}(\widetilde{C}_1)$. Then $\widetilde{D}$ admits two different parabolic filtrations (of saturated $(\varphi, \Gamma)$-submodules over $\cR_{K, E[\epsilon]/\epsilon^2}$). We refer to this as a \textit{higher intertwining} property (see \S~\ref{S24}). The following theorem is purely on Galois side, and follows from  an explicit description of $\Ker(\ref{Eintro2})$ together with a reinterpretation of the $p$-adic Hodge parameters of $D$ given in \S~\ref{S22}.
		\begin{theorem}[cf.  Corollary \ref{ChIw}] \label{Tintro3} For $D,D'\in \Phi\Gamma_{\nc}(\ul{\alpha},\textbf{h})$, $D\cong D'$ if and only if $\Ker(f_D)=\Ker(f_{D'})$.
		%
	\end{theorem}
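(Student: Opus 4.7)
The ``only if'' direction is immediate: the source of $f_D$ is $\ol{\Ext}^1_{(\varphi,\Gamma)}(D_1, D_1) \oplus \ol{\Ext}^1_{(\varphi,\Gamma)}(C_1, C_1)$, which depends only on $(\ul{\alpha}, \textbf{h})$ since the rank-$2$ non-critical crystalline $(\varphi,\Gamma)$-modules $D_1$ and $C_1$ are uniquely determined by their refinements and weights. Any isomorphism $D \cong D'$ then intertwines $f_D$ with $f_{D'}$ and yields $\Ker(f_D) = \Ker(f_{D'})$. For the converse, I would use that $\Phi\Gamma_{\nc}(\ul{\alpha},\textbf{h})$ is a Zariski open subset of the $3$-dimensional variety $T \backslash \GL_3/B$, so the content of the theorem is that the map sending $D$ to the subspace $\Ker(f_D)$ of the fixed source of $f_D$ is injective on this parameter space.

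The plan is to compute $\Ker(f_D)$ explicitly in coordinates derived from the Hodge parameter of $D$. By definition, a pair $(\widetilde{D}_1, \widetilde{C}_1)$ lies in $\Ker(f_D)$ precisely when (modulo $\Ext^1_0(D,D)$) there exists a common deformation $\widetilde{D}$ of $D$ over $\cR_{E[\epsilon]/\epsilon^2}$ fitting both into an $\sF$-paraboline extension with sub $\widetilde{D}_1$ and into a $\sG$-paraboline extension with quotient $\widetilde{C}_1$; this is the higher intertwining property of \S~\ref{S24}. I would combine this with the reinterpretation of the $p$-adic Hodge parameter in \S~\ref{S22}, which expresses $\Fil^\bullet_H(D)$ in terms of the $\varphi$-eigenbasis relative to the two $\varphi$-stable paraboline flags attached to $\sF$ and $\sG$, in order to write down an explicit linear system defining $\Ker(f_D)$ whose coefficients depend polynomially on the Hodge parameter. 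Concretely, for each candidate pair $(\widetilde{D}_1, \widetilde{C}_1)$ one obtains an obstruction class measuring the failure of the Hodge filtrations of the two paraboline sub/quotient deformations to match inside a common deformation of $D$, and this obstruction has an explicit matrix form involving the Hodge coordinates.

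The main obstacle is then verifying that the resulting family of linear subspaces separates points of $T \backslash \GL_3/B$. This is a non-degeneracy statement: one must check that each of the three Hodge parameters affects $\Ker(f_D)$ in a linearly independent way. I expect this to follow from an explicit coordinate computation on a standard chart of the double quotient, using crucially the non-criticality hypothesis, which guarantees that $\Fil^\bullet_H(D)$ sits in sufficiently general position with respect to the $\varphi$-stable flags associated to $\sF$ and $\sG$ and thereby rules out the degenerate configurations under which the parameter-to-kernel map could collapse. Once this injectivity is established, the ``if'' direction follows by comparing the Hodge parameters of $D$ and $D'$ recovered from the common kernel.
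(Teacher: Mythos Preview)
Your proposal has the right ingredients (higher intertwining from \S~\ref{S24}, the reinterpretation in \S~\ref{S22}) but remains a plan rather than a proof, and it contains a dimension error that obscures the actual structure. The open part of $T\backslash \GL_3/B$ is one-dimensional, not three-dimensional: the flag variety $\GL_3/B$ has dimension $3$, but the effective torus $T/Z$ acting is $2$-dimensional, leaving a \emph{single} Hodge parameter (this is the $a_{D}\in E\setminus\{0,1\}$ of Example~\ref{Egl3}). So there are not ``three Hodge parameters'' to separate, and the sentence ``I expect this to follow from an explicit coordinate computation'' is where the entire content of the theorem would have to live.

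More substantively, the paper avoids any coordinate computation. The reinterpretation in \S~\ref{S22} is not merely a choice of chart: it is the observation that the single invariant of $D$ (given $D_1,C_1,\phi_n$) is the line $E[\iota_D]\subset \Hom(D_1,C_1)$, where $\iota_D$ is the composite $D_1\hookrightarrow D\twoheadrightarrow C_1$; Proposition~\ref{Phodge} shows, via a cup-product orthogonality, that $E[\iota_D]$ determines $D$. The higher intertwining Theorem~\ref{ThIW1} then shows that membership of $(\widetilde{D}_1,\widetilde{C}_1)$ in $\Ker(f_D)$ is controlled (after a harmless twist by $\psi$) by whether $\widetilde{D}_1$ lies in the subspace $\Ext^1_{\iota_D}(D_1,D_1):=\iota_D^-\big(\Ext^1(C_1,D_1)\big)\subset \Ext^1(D_1,D_1)$. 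Hence $\Ker(f_D)$ determines $\Ext^1_{\iota_D}(D_1,D_1)$. The step you are missing is Proposition~\ref{Phint1}\,(3): the assignment $\iota\mapsto \Ext^1_{\iota}(D_1,D_1)$ is injective on $\bP\Hom(D_1,C_1)$. This is proved not by coordinates but by a dimension argument using the auxiliary maps $\alpha_i$ of (\ref{Ealphai}): if $\Ext^1_{\iota}(D_1,D_1)=\Ext^1_{\iota'}(D_1,D_1)$ for non-proportional $\iota,\iota'$, then every $\Ext^1_{\alpha_i}(D_1,D_1)$ would sit inside this common subspace, and Lemma~\ref{Ldalphai} shows that already $\Ext^1_{\alpha_i}+\Ext^1_{\alpha_j}$ for $i\neq j$ is too large for that. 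Chaining these three facts gives Corollary~\ref{ChIw}\,(1) and hence the theorem. Your coordinate approach could in principle replace Proposition~\ref{Phint1}\,(3) for $n=3$, but you have not carried it out, and the paper's conceptual route is both cleaner and what makes the induction to higher $n$ in Theorem~\ref{TtD} go through.
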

We move to the automorphic side.  Using parabolic inductions, one can show there is a natural map \begin{multline}\label{Ezeta00}
	\zeta: \Ext^1_{\GL_2}(\pi_{\alg}(\ul{\alpha}^1, \textbf{h}^1), \pi_1(D_1)) \oplus \Ext^1_{\GL_2}(\pi_{\alg}(\ul{\alpha}^1, \textbf{h}^2), \pi_1(C_1))\\  \xlongrightarrow{(\zeta_{\sF}, \zeta_{\sG})}\Ext^1_{\GL_3}(\pi_{\alg}(\ul{\alpha}, \textbf{h}), \pi(\ul{\alpha}, \textbf{h})).
\end{multline}
For example, $\zeta_{\sF}$ is constructed using $(\Ind_{P^-}^{\GL_3} - \boxtimes \unr(\alpha_3) \varepsilon^2)^{\an}$, for $P^-=\big(\begin{smallmatrix} \GL_2 & 0 \\ * & \GL_1\end{smallmatrix}\big)$, and $\zeta_{\sG}$ uses $\big(\begin{smallmatrix} \GL_1& 0 \\ * & \GL_2\end{smallmatrix}\big)$. Moreover,  (\ref{Ezeta00}) is surjective (roughly because $\pi(\ul{\alpha}, \textbf{h})$ can be ``amalgamated" from the two corresponding parabolic inductions). We refer to Proposition \ref{PFGfil} for details. 

Now a key fact is that for any $D\in \Phi\Gamma_{\nc}(\ul{\alpha},\textbf{h})$, $\Ker(\zeta)$ is sent to $\Ker f_D$ (cf. (\ref{Eintro2})) via the isomorphism for $n=2$ (cf. (\ref{EtDintro})):
\begin{multline*}
	t_{D_1, C_1}:\Ext^1_{\GL_2}(\pi_{\alg}(\ul{\alpha}^1, \textbf{h}^1), \pi_1(D_1)) \oplus \Ext^1_{\GL_2}(\pi_{\alg}(\ul{\alpha}^1, \textbf{h}^2), \pi_1(C_1))\\
	\xlongrightarrow[\sim]{(t_{D_1}, t_{C_1})} \ol{\Ext}^1_{(\varphi, \Gamma)}(D_1,D_1) \oplus \ol{\Ext}^1_{(\varphi, \Gamma)}(C_1,C_1).
\end{multline*}
The map $t_D$ in Theorem \ref{Tintro2} (1) can now be easily constructed: there is a unique map $t_D$ such that the following diagram commutes
\begin{equation*}
	\begin{tikzcd}
	&\substack{\Ext^1_{\GL_2}(\pi_{\alg}(\ul{\alpha}^1, \textbf{h}^1), \pi_1(D_1)) \\
		\oplus  \Ext^1_{\GL_2}(\pi_{\alg}(\ul{\alpha}^1, \textbf{h}^2), \pi_1(C_1))} \arrow[r, two heads, "\zeta"] \arrow[d, "t_{D_1,C_1}"', "\sim"] &  \Ext^1_{\GL_3}(\pi_{\alg}(\ul{\alpha}, \textbf{h}), \pi(\ul{\alpha}, \textbf{h})) \arrow[d, two heads, "t_D"] \\
	& \substack{\ol{\Ext}^1_{(\varphi, \Gamma)}(D_1,D_1) \\ \oplus \ol{\Ext}^1_{(\varphi, \Gamma)}(C_1,C_1)} \arrow[r, two heads, "f_D"] & \ol{\Ext}^1(D,D).
	\end{tikzcd}
\end{equation*}
It is not very difficult to check $t_D$ satisfies the properties in Theorem \ref{Tintro2} (1), and we refer to the proof of Theorem \ref{TtD} for details. Theorem \ref{Tintro2} (2) is then a consequence of  Theorem \ref{Tintro3}, as $\Ker(t_D)=\zeta(t_{D_1,C_1}^{-1} (\Ker(f_D)))$  (noting $t_{D_1,C_1}(\Ker(\zeta))\subset \Ker(f_{D'})$ for all $D'\in \Phi\Gamma_{\nc}(\ul{\alpha}, \textbf{h})$). Remark the existence of the extra one copy of $\pi_{\alg}(\ul{\alpha}, \textbf{h})$ in $\pi_1(D)$ (for $n=3$) then comes from the fact $\dim_E \Ker(f_D)=\dim_E \Ker(\ref{Ezeta00})+1$.

	We refer to the the context for  the more precise and detailed statements. One main difference from what's discussed in the introduction is that we mainly work with $\pi_{\min}(D)$ instead of $\pi_1(D)$ in the introduction, which has a cleaner structure but requires a bit more on Orlik-Strauch representations. 
	\subsection*{Acknowledgement}
	I thank Xiaozheng Han, Zicheng Qian, Zhixiang Wu for helpful discussions. I especially thank Christophe Breuil for the discussions and his interest, which  substantially accelerates the work, and for his comments on a preliminary version of the paper. I heartily thank the anonymous referee for the  careful reading, and valuable suggestions, which greatly improve the clarity of the paper. This work is supported by   the	 NSFC Grant No. 8200800065, No. 8200907289 and No. 8200908310, and partially supported by the New Cornerstone Science Foundation.
	\section{Hodge filtration and higher intertwining}
	
	\subsection{Notation and preliminaries}\label{S2.1}
	
	Let $K$ be a finite extension of $\Q_p$, $E$ be a finite extension of $\Q_p$ containing all the embeddings of $K$ in $\overline{\Q_p}$. Let $\Sigma_K:=\{\sigma: K \hookrightarrow E\}$, and $d_K:=[K:\Q_p]$.
	For $\textbf{k}=(k_{\sigma})_{\sigma\in \Sigma_K}\in \Z^{\Sigma_K}$, denote by $z^{\textbf{k}}:=\prod_{\sigma\in \Sigma_K} \sigma(z)^{k_{\sigma}}$ \ the \ ($\Q_p$-)algebraic \ character \ of \ $K^{\times}$ \ of \ weight \ $\textbf{k}$. Let $|\cdot|_K: K^{\times} \ra E^{\times}$ be the unramified character such that $|\varpi_K|_K=p^{-[K_0:\Q_p]}$ for  a uniformizer $\varpi_K$ of $K$, where $K_0$ is the maximal unramified subextension of $K$ over $\Q_p$. Let $\Gal_K$ be the absolute Galois group of $K$, and $\varepsilon: \Gal_K \ra \Z_p^{\times} \ra E^{\times}$ be the cyclotomic character. We normalize the local class field theory by sending a uniformizer to a (lift of the) geometric
	Frobenius. In this way, we view $\varepsilon$ as a character of $K^{\times}$, which is equal to $N_{K/\Q_p}(\cdot)|\cdot|_K$.
	
	For a locally $K$-analytic group $H$ (e.g. $H=K^{\times}$), set $\Hom(H,E)$ to be the $E$-vector space of locally $\Q_p$-analytic $E$-valued characters on $H$, $\Hom_{\sm}(H,E)$ the subspace of smooth (i.e. locally constant) $E$-valued characters on $H$. Let $\fh$ be the Lie algebra of $H$ (over $K$). For $\chi\in \Hom(H,E)$, by derivation, it induces a \textit{$\Q_p$-linear} map $\fh \ra E$ hence an $E$-linear map $d\chi: \fh \otimes_{\Q_p} E \ra E$. It is clear that $\chi\in \Hom_{\sm}(H,E)$ if and only if $d\chi=0$. For $\sigma\in \Sigma_K$, we call $\chi$ locally $\sigma$-analytic if $d\chi$ factors through $\fh \otimes_{K,\sigma} E \ra E$. Set $\Hom_{\sigma}(H,E)\subset \Hom(H,E)$ to be the subspace  of locally $\sigma$-analytic characters. Note we have $\dim_E \Hom_{\sm}(K^{\times},E)=1$, $\dim_E \Hom_{\sigma}(K^{\times},E)=2$ and $\dim_E \Hom(K^{\times},E)=1+d_K$.
		
	Let $\cR_{K,E}$ be the $E$-coefficient Robba ring for $K$. For a continuous character $\chi: K^{\times} \ra E^{\times}$, denote by $\cR_{K,E}(\chi)$ the associated rank one $(\varphi, \Gamma)$-module over $\cR_{K,E}$ (see for example \cite[\S~6.2]{KPX}). Note $\cR_{K,E}(\chi)$ is de Rham if and only if $\chi$ is locally algebraic. We write $\Ext^i$ (and $\Hom=\Ext^0$)  without ``$(\varphi, \Gamma)$" in the subscript for the $i$-th extension group of $(\varphi, \Gamma)$-modules (cf. \cite{Liu07}). For de Rham $(\varphi, \Gamma)$-modules $M$ and $N$, denote by $\Ext^1_g(M,N)\subset \Ext^1(M,N)$ the subspace of de Rham extensions.  For a $(\varphi, \Gamma)$-module $M$,  we identify \textit{elements} in $\Ext^1(M,M)$ with deformations of $M$ over $\cR_{K, E[\epsilon]/\epsilon^2}$. Indeed, the $E[\epsilon]/\epsilon^2$-structure on $\widetilde{M}\in \Ext^1(M,M)$ is given by letting $\epsilon$ act via
$\epsilon: \widetilde{M} \twoheadrightarrow M \xrightarrow{\id} M \hookrightarrow \widetilde{M}$.

	We denote by $W_{\dR}^+(M)$ the (semi-linear) $\Gal_K$-representation over $B_{\dR}^+ \otimes_{\Q_p} E$ associated to $M$ (cf. \cite[Prop.~2.2.6 (2)]{Ber08II}). There is a natural decomposition $W_{\dR}^+(M)\cong \oplus_{\sigma\in \Sigma_K} W_{\dR,\sigma}^+(M)$ with respect to $B_{\dR}^+ \otimes_{\Q_p} E\cong \oplus_{\sigma \in \Sigma_K} B_{\dR}^+ \otimes_{K, \sigma} E$. Denote by $D_{\dR}^+(M):=W_{\dR}^+(M)^{\Gal_K}\cong \oplus_{\sigma\in \Sigma_K} W_{\dR,\sigma}^+(M)^{\Gal_K}=:\oplus_{\sigma\in \Sigma_K} D_{\dR}^+(M)_{\sigma}$. We will frequently use the following lemma.
	\begin{lemma}\label{Lemtorsion}
		Let $M$ be a $(\varphi, \Gamma)$-module over $\cR_{K,E}$, $N$ be a $(\varphi, \Gamma)$-submodule of $M$ such that  $\rank_{\cR_{K,E}}(N)=\rank_{\cR_{K,E}}(M)$. Then there is a natural isomorphism of $E$-vector spaces:
		$H^0_{(\varphi, \Gamma)}(M/N)  \xrightarrow{\sim} H^0(\Gal_K, W_{\dR}^+(M)/W_{\dR}^+(N))$. Moreover, when $M$ is de Rham, this isomorphism identifies $H^0_{(\varphi, \Gamma)}(M/N) $ with  $D_{\dR}^+(M)/D_{\dR}^+(N)$.
	\end{lemma}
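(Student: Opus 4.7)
The key observation is that the rank equality $\rank_{\cR_{K,E}}(N) = \rank_{\cR_{K,E}}(M)$ implies $M/N$ is a torsion $(\varphi, \Gamma)$-module over $\cR_{K,E}$, and $W_{\dR}^+(N), W_{\dR}^+(M)$ are two $B_{\dR}^+ \otimes_{\Q_p} E$-lattices in the common $B_{\dR} \otimes_{\Q_p} E$-module $W_{\dR}(M) = W_{\dR}(N)$, so $W_{\dR}^+(M)/W_{\dR}^+(N)$ is a finite-length $B_{\dR}^+ \otimes_{\Q_p} E$-module carrying a $\Gal_K$-action. The plan is to construct a natural $\varphi$- and $\Gamma$-compatible morphism $M/N \to W_{\dR}^+(M)/W_{\dR}^+(N)$ via Berger's localization theory, and then to check that passing to invariants gives the claimed isomorphism.

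For the construction I would choose $n$ large enough that $M, N$ admit models $M^{[r_n,1)}, N^{[r_n,1)}$ over the annulus ring $\cR_{K,E}^{[r_n,1)}$, so that $W_{\dR}^+(-) = B_{\dR}^+ \otimes_{\iota_n, \cR_{K,E}^{[r_n,1)}} (-)^{[r_n,1)}$ via Berger's embedding $\iota_n$; base-changing $N^{[r_n,1)} \hookrightarrow M^{[r_n,1)}$ and passing to cokernels yields the map. To show this induces the desired isomorphism on invariants, I would proceed by analysing the structure of $M/N$: any torsion $(\varphi, \Gamma)$-module over $\cR_{K,E}$ is killed by some product $\prod t_n^{k_n}$ of the cyclotomic prime factors of $t$, and decomposes via CRT into $t_n$-primary components $T_n$. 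Since $\varphi(t_n) = t_{n+1}$ (up to units), $\varphi$ maps $T_n$ into $T_{n+1}$, so a $\varphi$-fixed element $m=(m_n)$ is determined by its $t_0$-component $m_0$ via $m_{k} = \varphi^k(m_0)$ (the sequence necessarily terminating because $T_k = 0$ for large $k$). Thus $(M/N)^{\varphi=1}$ is canonically identified with the appropriate piece of the $t_0$-localization---exactly the data recorded by $W_{\dR}^+(M)/W_{\dR}^+(N)$. Adding $\Gamma$-invariance on the Robba side then corresponds to $\Gal_K$-invariance on the $B_{\dR}^+$-side by the Fontaine/Berger equivalence between torsion $(\varphi, \Gamma)$-modules supported at the primary divisor and finite-length $B_{\dR}^+$-representations of $\Gal_K$.

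For the ``moreover'' assertion, since $W_{\dR}(N) = W_{\dR}(M)$, $N$ is automatically de Rham whenever $M$ is, with $D_{\dR}(N) = D_{\dR}(M)$. The long exact sequence of $\Gal_K$-cohomology attached to $0 \to W_{\dR}^+(N) \to W_{\dR}^+(M) \to W_{\dR}^+(M)/W_{\dR}^+(N) \to 0$ reduces the assertion to showing the connecting map $H^0(\Gal_K, W_{\dR}^+(M)/W_{\dR}^+(N)) \to H^1(\Gal_K, W_{\dR}^+(N))$ vanishes. Given an invariant class $\overline m$, lift it to $m \in W_{\dR}^+(M) \subset W_{\dR}(M) = W_{\dR}(N)$; the cocycle $g \mapsto (g-1)m$ takes values in $W_{\dR}^+(N)$ and is a coboundary in $W_{\dR}(N)$ witnessed by $m$. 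The obstruction class is therefore detected by the image of $\overline m$ in $(W_{\dR}(N)/W_{\dR}^+(N))^{\Gal_K}$; since $N$ is de Rham, this image lifts to some $d \in D_{\dR}(N) = D_{\dR}(M)$ with $m - d \in W_{\dR}^+(N) \subset W_{\dR}^+(M)$, forcing $d \in D_{\dR}(M) \cap W_{\dR}^+(M) = D_{\dR}^+(M)$, which is the required lift of $\overline m$.

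The main obstacle will be the precise invocation of Berger's dictionary at the primary divisor---specifically, making rigorous the step where $\Gamma$-invariance on the $t_0$-component of $M/N$ upgrades to full $\Gal_K$-invariance on the associated $B_{\dR}^+ \otimes_{\Q_p} E$-representation. This amounts to controlling the $\Gal_{K_\infty}$-action on $W_{\dR}^+(M)/W_{\dR}^+(N)$ and verifying the faithful exactness of the comparison functor on the subcategory of torsion $(\varphi, \Gamma)$-modules; once this is in place, the remainder of the argument is formal book-keeping with the CRT decomposition and the five/snake lemma.
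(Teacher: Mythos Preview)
Your proposal is essentially correct and follows the same route as the paper, which is quite brief: for the first part it cites \cite[Lem.~5.1]{BD2} (generalized to finite $K/\Q_p$), and for the second part it applies $(-)^{\Gal_K}$ to the short exact sequence $0 \to W_{\dR}^+(N) \to W_{\dR}^+(M) \to W_{\dR}^+(M)/W_{\dR}^+(N) \to 0$ and reduces to the injectivity of $H^1(\Gal_K, W_{\dR}^+(N)) \to H^1(\Gal_K, W_{\dR}^+(M))$, which it takes from \cite[Lem.~2.6]{Na}.

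Your sketch for the first part is precisely the argument behind \cite[Lem.~5.1]{BD2}: the CRT decomposition of the torsion quotient into $t_n$-primary pieces, the observation that $\varphi$ shifts these pieces so $\varphi$-invariants are determined at a single index, and the identification of that index with $W_{\dR}^+(M)/W_{\dR}^+(N)$ via Berger's localization $\iota_n$. The ``main obstacle'' you flag---upgrading $\Gamma$-invariance on the Robba side to $\Gal_K$-invariance on the $B_{\dR}^+$ side---is exactly the technical content of that lemma, so you have correctly located where the work lies rather than left a gap. For the second part, you and the paper make the same d\'evissage; the paper phrases the conclusion as injectivity on $H^1$ and outsources it to \cite{Na}, whereas you phrase it as vanishing of the connecting map and give a direct lifting argument using de Rhamness of $N$. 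Your argument is fine: the only step that needs care is the assertion that the invariant class of $m$ in $W_{\dR}(N)/W_{\dR}^+(N)$ lifts to $D_{\dR}(N)$, which amounts to the surjectivity of $D_{\dR}(N) \to (t^{-k}W_{\dR}^+(N)/W_{\dR}^+(N))^{\Gal_K}$ for the relevant $k$; this is again exactly \cite[Lem.~2.6]{Na}, so you have reproduced rather than bypassed that input.
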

	\begin{proof} The first part follows from a straightforward generalization of the proof of \cite[Lem.~5.1]{BD2} to finite extensions $K$ of $\Q_p$. For the second part, applying $(-)^{\Gal_K}$ to the exact sequence of $B_{\dR}^+$-representations $0 \ra W_{\dR}^+(N) \ra W_{\dR}^+(M) \ra W_{\dR}^+(M)/W_{\dR}^+(N) \ra 0$, it suffices to show the natural map $H^1(\Gal_K, W_{\dR}^+(N)) \ra H^1(\Gal_K, W_{\dR}^+(M))$ is injective. But this follows from \cite[Lem.~2.6]{Na}.
		\end{proof}
	Let $M$ be a crystabelline $(\varphi, \Gamma)$-module of rank $d$ over $\cR_{K,E}$. We can associate to $M$ a filtered Deligne-Fontaine module $(D_{\pst}(M), D_{\dR}(M))$ such that 
	\begin{itemize}\item $D_{\pst}(M)=(W_e(M) \otimes_{B_e} B_{\cris})^{\Gal_{K'}}$ which is free of rank $d$ over $K_0' \otimes_{\Q_p} E$ equipped with a commuting $K_0'$-semi-linear action of $\varphi$ and $\Gal(K'/K)$, $K'$ is an \textit{abelian} extension of $K$, and $K_0'$ is the maximal unramified extension of $K'$ (over $\Q_p$), and where $W_e(M)$ is the $B_e=B_{\cris}^{\varphi=1}$-representation associated to $M$ (\cite[Prop.~2.2.6 (1)]{Ber08II}),
		\item $D_{\dR}(M):=(W_{\dR}^+(M)[1/t])^{\Gal_K}\cong (D_{\pst}(M)\otimes_{K_0'} K')^{\Gal(K'/K)}$ is free of rank $d$ over $K\otimes_{\Q_p} E$, equipped with a Hodge filtration $\Fil_{H}$ of $K \otimes_{\Q_p} E$-submodules (not necessarily free).
	\end{itemize}	
	By \cite[Prop.~4.1]{BS07}, to $D_{\pst}(M)$, one can associate a Weil-Deligne representation $\mathtt{r}(M)$ over $E$. We call $M$ \textit{generic} if $\mathtt{r}(M)$  generic, which means $\mathtt{r}(M)$ is  semi-simple and isomorphic to $\oplus_{i=1}^d \phi_i$ with $\phi_i \phi_j^{-1} \neq 1, |\cdot|_K$ for $i\neq j$.
	In fact,  $M$ being generic crystabelline is equivalent to the existence of smooth characters $\phi_i$ for $i=1, \cdots, d$ such that $M[1/t]\cong \oplus_{i=1}^d \cR_{K,E}(\phi_i)[1/t]$, and $\phi_i\phi_j^{-1} \neq 1, |\cdot|_K$ for $i\neq j$. An ordering of $(\phi_1, \cdots, \phi_d)$ is refereed to as a \textit{refinement} of $M$. Indeed,  an ordering $w(\phi)=(\phi_{w^{-1} (1)}, \cdots, \phi_{w^{-1}(d)})$ for $w\in S_d$, corresponds uniquely to a filtration $\sT_w=\{\sT_w^i\}$, increasing with $i$, of saturated $(\varphi, \Gamma)$-submodules on $M$ such that $(\gr_{\sT_w}^i M)[1/t]\cong \cR_{K,E}(\phi_{w^{-1}(i)})[1/t]$. We frequently view $w(\phi)$ as a (smooth) character of $T(K)$ (the torus subgroup of $\GL_d(K)$) for any $w\in S_d$. We also call these characters of $T(K)$ refinements of $M$.
	
	Let $\textbf{h}:=(\textbf{h}_i)_{i=1,\cdots, d}=(\textbf{h}_{\sigma})_{\sigma\in \Sigma_K}=(h_{\sigma,1}\geq \cdots \geq h_{\sigma,d})_{\sigma \in \Sigma_K}$ be the Hodge-Tate-Sen weights of $M$ (normalized such that the weight of the cyclotomic character is $1$). Let $w\in S_d$, we call the refinement $w(\phi)$ (or $\sT_w$) \textit{non-critical} if the Hodge-Tate-Sen weights of $\gr_{\sT_w}^i M$ are exactly $\textbf{h}_i$ (which are hence decreasing with growth of $i$). We call $M$  non-critical, if all the  refinements  of $M$ are non-critical. We denote by $\Phi\Gamma_{\nc}(\phi, \textbf{h})$ the set of isomorphism classes of non-critical crystabelline $(\varphi, \Gamma)$-modules of refinement $\phi$ and of Hodge-Tate-Sen weights $\textbf{h}$. Finally, we say $M$ has \textit{regular} Hodge-Tate-Sen weights if $\textbf{h}$ is strictly dominant, i.e. $h_{i,\sigma}>h_{i+1,\sigma}$ for all $\sigma\in \Sigma_K$.

	Suppose $M$ is generic crystabelline with refinement $\phi$. For  a subset $\textbf{r}=\{r_1, \cdots, r_k\}\subset \{1, \cdots, d\}$, denote by $M_{\textbf{r}}$ (resp. $M^{\textbf{r}}$) the saturated $(\varphi,\Gamma)$-submodule of $M$ (resp. the quotient of $M$) which has a refinement given by $(\phi_{r_1}, \cdots, \phi_{r_k})$. So $M^{\textbf{r}}=M / M_{\textbf{r}^c}$ with $\textbf{r}^c=\{1,\cdots, d\} \setminus \textbf{r}$. While $M_{\textbf{r}}$ and $M^{\textbf{r}}$ depend on the chosen refinement, this will not cause ambiguity: in all instances where they appear, the context will specify the refinement in use. Assuming $M$ is non-critical, $M_{\textbf{r}}$ and $M^{\textbf{r}}$ are non-critical as well for any $\textbf{r}$ (noting any triangulation of $M_{\textbf{r}}$ or of $M^{\textbf{r}}$ extends to a triangulation of $M$). In this case,  the Hodge-Tate-Sen weights of $M_{\textbf{r}}$ (resp. $M^{\textbf{r}}$) are $(\textbf{h}_1, \cdots, \textbf{h}_k)$ (resp. $(\textbf{h}_{d-k+1}, \cdots, \textbf{h}_d)$). 
	
	Throughout the paper, we will use $\bullet \lin \bullet$ to denote an extension of two objects (such as $(\varphi, \Gamma)$-modules, or $\GL_n(K)$-representations etc.), where the left object is the sub and the right  the quotient.
	\subsection{A reinterpretation of Hodge parameters}\label{S22}
	In this section, we give a reinterpretation of (some) $p$-adic Hodge parameters of a generic non-critical crystabelline $(\varphi,\Gamma)$-module.
	
	Let $\phi=(\phi_i)_{i=1,\cdots, n}$ be generic, and $\textbf{h}=(\textbf{h}_{\sigma})_{\sigma\in \Sigma_K}=(\textbf{h}_i)_{i=1, \cdots, n}=(h_{\sigma,1}>h_{\sigma, 2}> \cdots > h_{\sigma,n})$. Let $D\in \Phi\Gamma_{\nc}(\phi, \textbf{h})$. Let $D_1:=D_{\{1, \cdots, n-1\}}$ and $C_1:=D^{\{1, \cdots, n-1\}}$, we have two exact sequences:
	\begin{eqnarray}
		&& 0 \lra D_1 \lra D \lra \cR_{K,E}(\phi_n z^{\textbf{h}_n}) \lra 0, \label{ED1}\\
		&&	0 \lra \cR_{K,E}(\phi_n z^{\textbf{h}_1}) \lra D \lra C_1 \lra 0.\label{EC1}
	\end{eqnarray}
	Let  $\iota_D$ be the composition $D_1 \hookrightarrow D \twoheadrightarrow C_1$. As $\Hom(D_1, \cR_{K,E}(\phi_n z^{\textbf{h}_1}))=0$, $\iota_D$ is  injective. 
\begin{proposition}\label{Pdim2}
	(1) We have $\dim_E \Hom(D_1,C_1)\leq 2$.
	
	(2) We have $\dim_E \Hom(D_1,C_1)=2$ if and only if $n\geq 3$, and for any $i\in \{1, \cdots, n-1\}$, $\textbf{r}:=\{1,\cdots, n-1\} \setminus \{i\}$, we have $(D_1)^{\textbf{r}}\cong (C_1)_{\textbf{r}}$ (for the refinement $(\phi_1, \cdots, \phi_{n-1})$). Moreover, if these hold,  for $i\in \{1,\cdots, n-1\}$, the composition
	\begin{equation}\label{Ealphai}
		\alpha_i: D_1 \twoheadlongrightarrow (D_1)^{\textbf{r}} \cong (C_1)_{\textbf{r}} \hooklongrightarrow C_1.
	\end{equation}
	are pair-wisely linearly independent as elements in $\Hom(D_1, C_1)$.
\end{proposition}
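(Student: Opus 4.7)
The plan is to analyze $\Hom(D_1, C_1)$ through the refinement filtrations and reduce to a linear algebra computation. The first step is to show that every $f \in \Hom(D_1, C_1)$ preserves the refinement filtrations $\sT^i_{D_1}, \sT^i_{C_1}$. This uses genericity of $\phi$: for $i \neq j$, morphisms between rank-one $(\varphi, \Gamma)$-modules with distinct smooth characters $\phi_i, \phi_j$ vanish after inverting $t$, so the $\phi_1$-isotypic sub $\sT^1_{D_1} \cong \cR_{K,E}(\phi_1 z^{\textbf{h}_1})$ must land inside the unique saturated rank-one $\phi_1$-sub of $C_1$, which is $\sT^1_{C_1}$. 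An induction on rank (descending through the quotients $D_1/\sT^1_{D_1} \to C_1/\sT^1_{C_1}$, which again have matching smooth refinements $(\phi_2, \ldots, \phi_{n-1})$) yields $f(\sT^i_{D_1}) \subseteq \sT^i_{C_1}$ for all $i$. Filtration preservation then induces graded maps $f_i: \cR_{K,E}(\phi_i z^{\textbf{h}_i}) \to \cR_{K,E}(\phi_i z^{\textbf{h}_{i+1}})$, each in a $1$-dimensional $E$-space (non-zero since the weights $\textbf{h}_{i+1} - \textbf{h}_i$ are strictly negative at every embedding), with scalars $\lambda_i(f) \in E$; a descending induction then shows $f \mapsto (\lambda_i(f))$ is injective.

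For the upper bound $\dim_E \Hom(D_1, C_1) \leq 2$, I would apply $\Hom(D_1, -)$ to the short exact sequence $0 \to \cR_{K,E}(\phi_n z^{\textbf{h}_1}) \to D \to C_1 \to 0$. By genericity, $\Hom(D_1, \cR_{K,E}(\phi_n z^{\textbf{h}_1})) = 0$ and $\Hom(D_1, D) = \End(D_1) = E$, so we obtain an injection $\Hom(D_1, C_1)/E\cdot\iota_D \hookrightarrow \Ext^1(D_1, \cR_{K,E}(\phi_n z^{\textbf{h}_1}))$. The image consists of extension classes arising as pullbacks of $D$ along some $f$, and the aim is to show this image is at most $1$-dimensional. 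Alternatively, reducing via Fontaine's equivalence to filtered $\varphi$-modules, the $\varphi$-equivariant maps $D_{\cris}(D_1) \to D_{\cris}(C_1)$ are diagonal in the common eigenbasis with entries $(c_i) \in E^{n-1}$; identifying $D_{\cris}(D_1)$ and $D_{\cris}(C_1)$ as the obvious sub and quotient of $D_{\cris}(D)$, the Hodge-filtration constraints translate into a linear system whose rank one expects to be $\geq n - 3$ by the generic position of the Hodge flag $V_\bullet$ of $D$ (guaranteed by non-criticality), forcing $\dim \leq 2$.

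For the characterization of equality in part (2): when the isomorphisms $(D_1)^{\textbf{r}} \cong (C_1)_{\textbf{r}}$ hold for every $i$ (with $\textbf{r} = \{1, \ldots, n-1\} \setminus \{i\}$), the compositions $\alpha_i: D_1 \twoheadrightarrow (D_1)^{\textbf{r}} \xrightarrow{\sim} (C_1)_{\textbf{r}} \hookrightarrow C_1$ give morphisms in $\Hom(D_1, C_1)$ with scalar pattern $\lambda_i(\alpha_i) = 0$ and $\lambda_j(\alpha_i) \neq 0$ for $j \neq i$. Since the $\alpha_i$'s have distinct zero positions, they are pairwise linearly independent; combined with the upper bound, this gives $\dim = 2$. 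For $n = 2$ the hypothesis $n \geq 3$ fails and $\dim = 1$. Conversely, the failure of any iso $(D_1)^{\textbf{r}} \cong (C_1)_{\textbf{r}}$ removes the corresponding $\alpha_i$ as an available morphism and imposes an additional constraint in the linear system, cutting the dimension to at most $1$. The main obstacle I anticipate is the upper bound step: controlling the image in $\Ext^1(D_1, \cR_{K,E}(\phi_n z^{\textbf{h}_1}))$ (or equivalently showing the Hodge constraint rank is $\geq n - 3$) requires a delicate structural use of non-criticality of the ambient $D$ and careful bookkeeping of which extensions of $D_1$ actually arise from $D$ via pullback.
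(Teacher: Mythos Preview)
Your filtration-preservation step and the injection $f \mapsto (\lambda_i(f))$ are correct, but they only yield $\dim_E \Hom(D_1,C_1) \leq n-1$, not $\leq 2$. The two routes you sketch to close this gap are both incomplete. In approach (a), the exact sequence gives $\Hom(D_1,C_1)/E\cdot\iota_D \hookrightarrow \Ext^1(D_1,\cR_{K,E}(\phi_n z^{\textbf{h}_1}))$, but the target has dimension $(n-1)d_K$, and you offer no mechanism to bound the image by $1$; identifying which extension classes arise as pullbacks of $D$ is no easier than the original problem. Approach (b) is only a heuristic (``one expects'' the rank to be $\geq n-3$), and for general $K$ the filtered-module picture is more delicate than you suggest. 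You yourself flag this as the main obstacle, and indeed it is a real gap.

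The paper bypasses all of this with a single d\'evissage that never leaves the category of $(\varphi,\Gamma)$-modules. Take $\textbf{r}=\{1,\ldots,n-3\}$ and look at the rank-$(n-3)$ saturated sub $(C_1)_{\textbf{r}}$ of $C_1$. By genericity any map $D_1\to (C_1)_{\textbf{r}}$ factors through the rank-$(n-3)$ quotient $(D_1)^{\textbf{r}}$, whose graded pieces are $\cR_{K,E}(\phi_j z^{\textbf{h}_{j+2}})$ while those of $(C_1)_{\textbf{r}}$ are $\cR_{K,E}(\phi_j z^{\textbf{h}_{j+1}})$; since $\textbf{h}_{j+2}<\textbf{h}_{j+1}$, each graded Hom vanishes and hence $\Hom(D_1,(C_1)_{\textbf{r}})=0$. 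Thus $\Hom(D_1,C_1)$ embeds into $\Hom(D_1,(C_1)^{\{n-2,n-1\}})$, and the target is bounded by $2$ simply because $(C_1)^{\{n-2,n-1\}}$ has rank $2$. No $\Ext^1$ computation, no Hodge linear algebra.

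Your converse in (2) is also not a proof: saying that failure of an isomorphism ``imposes an additional constraint'' does not establish that the constraint is independent of the ones already present. The paper argues directly: assume $\dim=2$ with basis $\iota_1,\iota_2$. For any $i$, the restrictions $\iota_1|_{(D_1)_{\{i\}}}$ and $\iota_2|_{(D_1)_{\{i\}}}$ lie in the $1$-dimensional space $\Hom(\cR_{K,E}(\phi_i z^{\textbf{h}_1}),C_1)$, so some nonzero combination $\iota$ kills $(D_1)_{\{i\}}$ and hence factors through $(D_1)^{\textbf{r}}\to C_1$. Comparing weights and using non-criticality forces this to land in $(C_1)_{\textbf{r}}$ as an isomorphism. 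Your ``if'' direction, on the other hand, matches the paper's.
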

\begin{proof}
(1) If $n=2$, it is clear that $\dim_E \Hom(D_1,C_1)=1$. Assume $n\geq 3$, and let $\textbf{r}=\{1,\cdots, n-3\},$ and consider $(C_1)_{\textbf{r}}$, which is the saturated submodule of $C_1$ of rank $n-3$ over $\cR_{K,E}$ with a refinement  $(\phi_1, \cdots, \phi_{n-3})$. As $C_1$ is non-critical of Hodge-Tate weights $(\textbf{h}_2, \cdots \textbf{h}_n)$, $(C_1)_{\textbf{r}}$ is non-critical of  Hodge-Tate weights $(\textbf{h}_2, \cdots, \textbf{h}_{n-2})$. Thus $(C_1)_{\textbf{r}}$ is isomorphic to a (non-split) successive extension of $\cR_{K,E}(\phi_i z^{\textbf{h}_{i+1}})$ for $i=1, \cdots, n-3$. Consider
	\begin{equation*}
		0 \lra \Hom(D_1, (C_1)_{\textbf{r}}) \lra \Hom(D_1, C_1) \lra \Hom(D_1, C_1/(C_1)_{\textbf{r}}).
	\end{equation*}
	Any map in  $\Hom(D_1, (C_1)_{\textbf{r}})$ clearly factors through $(D_1)^{\textbf{r}}$, the latter being isomorphic to a (non-split) successive extension of $\cR_{K,E}(\phi_i z^{\textbf{h}_{i+2}})$ for $i=1, \cdots, n-3$. By an easy d\'evissage, using $h_{\sigma, i+2}<h_{\sigma, i+1}$ and the fact 
	\begin{equation}\label{Ehomchara}\Hom(\cR_{K,E}(\phi_1' z^{k_1}), \cR_{K,E}(\phi'_2z^{k_2}))=0 \text{ if $\phi_1'\neq \phi_2'$  or $k_1<k_2$,}\end{equation}
	we deduce $\Hom((D_1)^{\textbf{r}}, (C_1)_{\textbf{r}})=0$ hence $\Hom(D_1, (C_1)_{\textbf{r}})=0$. Again by an easy d\'evissage, we have $\dim_E \Hom(D_1, C_1/(C_1)_{\textbf{r}})=\dim_E \Hom(D_1, (C_1)^{\{n-2,n-3\}})\leq 2$. Hence $\dim_E\Hom(D_1,C_1)\leq 2$.

	(2) We first prove ``if". As $(D_1)^{\textbf{r}}\cong (C_1)_{\textbf{r}}$, it is clear that $\alpha_i$  are well defined (as in  (\ref{Ealphai})) and pair-wisely linearly independent. Together with (1), we deduce $\dim_E \Hom(D_1, C_1)=2$. Conversely, assume $\Hom(D_1, C_1)=2$, and let $\iota_1, \iota_2$ be a basis of $\Hom(D_1, C_1)$. Let $i$, $\textbf{r}$ be as in (2). Consider the induced map $f_i: \cR_{K,E}(\phi_i z^{\textbf{h}_1}) \hookrightarrow D_1 \xrightarrow{\iota_i} C_1$. As $\dim_E \Hom(\cR_{K,E}(\phi_i z^{\textbf{h}_1}),C_1)=1$, there exists a  non-zero linear combination $\iota=a_1 \iota_1+a_2 \iota_2$ such that $a_1f_1+a_2f_2=0$. So (the non-zero) $\iota$ factors through a non-zero map $(D_1)^{\textbf{r}}\ra C_1$. As both $(D_1)^{\textbf{r}}$ and $C_1$ are non-critical, by comparing the weights and using (\ref{Ehomchara}), we deduce the map has  to factor through an isomorphism $(D_1)^{\textbf{r}} \xrightarrow{\sim} (C_1)_{\textbf{r}}$.  
\end{proof}
\begin{remark}
	By Proposition \ref{Pdim2} (2),  $ \Hom(D_1,C_1)$ is always two dimensional when $n=3$, or $n=4$ and $K=\Q_p$. In general, its dimension may be one or two  depending on the specific $D_1$ and $C_1$. 
\end{remark}
	Consider the cup-product
	\begin{equation}\label{Ecup1}
		\Ext^1(\cR_{K,E}(\phi_n z^{\textbf{h}_n}),D_1) \times \Hom(D_1, C_1) \lra \Ext^1(\cR_{K,E}(\phi_n z^{\textbf{h}_n}),C_1).
	\end{equation}
	\begin{proposition} \label{Phodge}
		Under the cup-product,   $E[D]\subset [\iota_D]^{\perp}$, and we have an equality if $K=\Q_p$. In particular, when $K=\Q_p$, $D$ is determined by $D_1$, $C_1$, $\phi_n$ and $\iota_D$.
	\end{proposition}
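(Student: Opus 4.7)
The plan is to prove the inclusion by an explicit splitting and then upgrade it to equality for $K=\Q_p$ via a dimension count, after identifying the cokernel of $\iota_D$ as an explicit torsion $(\varphi,\Gamma)$-module.

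For the inclusion $E[D] \subset [\iota_D]^{\perp}$, recall that the cup-product $[D] \cup \iota_D$ is the pushforward extension $\iota_{D,*}[D] \in \Ext^1(\cR_{K,E}(\phi_n z^{\textbf{h}_n}), C_1)$, represented by the pushout $X := D \oplus_{D_1} C_1$. The quotient map $\pi \colon D \twoheadrightarrow C_1$ from (\ref{EC1}) satisfies $\pi|_{D_1} = \iota_D$ by the very definition of $\iota_D$, so the assignment $D \to X$, $d \mapsto (d, -\pi(d))$, vanishes on $D_1$ and descends to a map $\cR_{K,E}(\phi_n z^{\textbf{h}_n}) = D/D_1 \to X$ that is visibly a section of $X \twoheadrightarrow \cR_{K,E}(\phi_n z^{\textbf{h}_n})$. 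Hence $\iota_{D,*}[D]=0$.

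For the equality when $K=\Q_p$, I would reduce to a dimension count via the torsion cokernel $Q := C_1/\iota_D(D_1)$. Using $\Hom(\cR_{K,E}(\phi_n z^{\textbf{h}_n}), C_1) = 0$ (by genericity: any nonzero map would necessarily factor through some graded piece $\cR_{K,E}(\phi_i z^{h_{i+1}})$, impossible since $\phi_n \neq \phi_i$ for $i < n$), the long exact $\Ext$-sequence of $0 \to D_1 \xrightarrow{\iota_D} C_1 \to Q \to 0$ gives $[\iota_D]^{\perp} = \ker(\iota_{D,*}) \cong \Hom(\cR_{K,E}(\phi_n z^{\textbf{h}_n}), Q)$. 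Next, I identify $Q$: since $D_1 \cap \cR_{K,E}(\phi_n z^{\textbf{h}_1}) = 0$ in $D$ by genericity, one has $Q = D/(D_1 + \cR_{K,E}(\phi_n z^{\textbf{h}_1}))$, and the induced map $\cR_{K,E}(\phi_n z^{\textbf{h}_1}) \hookrightarrow D \twoheadrightarrow D/D_1 = \cR_{K,E}(\phi_n z^{\textbf{h}_n})$ is non-zero (else $\cR_{K,E}(\phi_n z^{\textbf{h}_1})$ would lie inside $D_1$). Classifying morphisms between rank-one $(\varphi,\Gamma)$-modules of common smooth type forces this map to be multiplication by $c \cdot t^{\textbf{h}_1-\textbf{h}_n}$ for some $c \in E^{\times}$, yielding
\[
Q \cong \cR_{K,E}(\phi_n z^{\textbf{h}_n})/t^{\textbf{h}_1 - \textbf{h}_n}\,\cR_{K,E}(\phi_n z^{\textbf{h}_n}).
\]

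Twisting by $\cR_{K,E}(\phi_n^{-1} z^{-\textbf{h}_n})$ reduces the remaining computation to $\dim_E (\cR_{K,E}/t^{\textbf{h}_1-\textbf{h}_n})^{\varphi=1,\,\Gamma=1}$. For $K=\Q_p$, on the $E$-basis $1, t, \ldots, t^{h_1-h_n-1}$, $\varphi$ acts on $t^j$ by $p^j$ and $\gamma \in \Gamma$ acts by $\varepsilon(\gamma)^j$, so only the constant term survives both invariances and the dimension equals $1$. Since $[D] \neq 0$ (a split $D \cong D_1 \oplus \cR_{K,E}(\phi_n z^{\textbf{h}_n})$ would admit no saturated rank-one submodule of type $\phi_n z^{\textbf{h}_1}$, contradicting the non-criticality of the refinement beginning with $\phi_n$), the inclusion from the first part forces $E[D]=[\iota_D]^{\perp}$. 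The final assertion then follows: $\textbf{h}_n$ is recovered as the smallest Hodge--Tate weight of $C_1$, so the datum $(D_1, C_1, \phi_n, \iota_D)$ pins down the line $[\iota_D]^{\perp} = E[D]$, and therefore $D$ itself as an extension of $\cR_{K,E}(\phi_n z^{\textbf{h}_n})$ by $D_1$. The main technical obstacle is the combined identification of $Q$ together with the sharp computation of its $(\varphi,\Gamma)$-invariants; the $K=\Q_p$ hypothesis is essential, since over a nontrivial extension $(\cR_{K,E}/t^{\textbf{h}_1-\textbf{h}_n})^{\varphi=1,\,\Gamma=1}$ has dimension $[K:\Q_p]>1$, matching precisely the loss of Hodge information in the non-$\Q_p$ case noted in the introduction.
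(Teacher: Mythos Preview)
Your overall strategy matches the paper's: both reduce the question to computing $\Hom\bigl(\cR_{K,E}(\phi_n z^{\textbf{h}_n}),\,C_1/\iota_D(D_1)\bigr)$ and then perform a dimension count. Your argument for the inclusion $E[D]\subset[\iota_D]^\perp$ via an explicit section of the pushout is correct and equivalent to the paper's observation that the pairing factors through $\Ext^1(\cR(\phi_n z^{\textbf{h}_n}),D)$, where $[D]$ already dies. Your identification $Q\cong \cR_{K,E}(\phi_n z^{\textbf{h}_n})/t^{\textbf{h}_1-\textbf{h}_n}$ is a pleasant extra step the paper does not make explicit; it is correct.

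There is, however, a genuine gap in the final step. The quotient $\cR_{\Q_p,E}/t^N\cR_{\Q_p,E}$ is \emph{not} finite-dimensional over $E$, and $1,t,\ldots,t^{N-1}$ is certainly not an $E$-basis: the Robba ring consists of Laurent series in the variable $T$ (with $t=\log(1+T)$), so the quotient by $t^N$ is still infinite-dimensional. The elements $1,t,\ldots,t^{N-1}$ span only a finite-dimensional $(\varphi,\Gamma)$-stable subspace, and showing that the invariants on this subspace are one-dimensional does not rule out further invariants in the (infinite-dimensional) complement. Your eigenvalue argument therefore does not compute $(\cR_{\Q_p,E}/t^N)^{\varphi=1,\Gamma=1}$.

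The paper circumvents this by invoking Lemma~\ref{Lemtorsion} to pass to the de~Rham side: $H^0_{(\varphi,\Gamma)}\bigl((C_1/D_1)\otimes\cR(\phi_n^{-1}z^{-\textbf{h}_n})\bigr)\cong D_{\dR}^+(C_1')/D_{\dR}^+(D_1')$, where $C_1'=C_1\otimes\cR(\phi_n^{-1}z^{-\textbf{h}_n})$ and $D_1'=D_1\otimes\cR(\phi_n^{-1}z^{-\textbf{h}_n})$, and then reads off the dimension $d_K$ from the Hodge--Tate weights. In your language this amounts to $H^0_{(\varphi,\Gamma)}(\cR_{\Q_p,E}/t^N)\cong (B_{\dR}^+/t^N B_{\dR}^+)^{\Gal_{\Q_p}}$, which is one-dimensional by Tate's theorem on $\bC_p(j)^{\Gal_{\Q_p}}$. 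Either route repairs your argument, but as written the ``basis'' step is incorrect and must be replaced.
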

The last statement in Proposition \ref{Phodge} can be formulated precisely as follows: for a crystabelline $(\varphi, \Gamma)$-module $D'$ of rank $n$ over $\cR_{\Q_p,E}$, suppose $D_1$ is a saturated submodule of $D'$, $C_1$ is a quotient of $D'$ and $(\phi_1, \cdots, \phi_n)$ is a refinement of $D'$ (noting $(\phi_1, \cdots, \phi_{n-1})$ is already determined by $D_1$).  If the composition 
$\iota_{D'}: D_1 \hookrightarrow D' \twoheadrightarrow C_1$ is equal to $\iota_D$ up to a non-zero scalar, then $D'\cong D$.
	\begin{proof}
		As $\iota_D$ factors through $D$, the map induced by the pairing $\langle -, \iota_D\rangle$ (in (\ref{Ecup1})) is equal to the following  composition
		\begin{equation}\label{equpair}
			\Ext^1(\cR(\phi_n z^{\textbf{h}_n}), D_1) \lra 	\Ext^1(\cR(\phi_n z^{\textbf{h}_n}), D) \lra 	\Ext^1(\cR(\phi_n z^{\textbf{h}_n}), C_1).\end{equation}
		The first map sends  $[D]$ to zero, hence $\langle D, \iota_D\rangle=0$. In fact, by d\'evissage,  the kernel of the composition is isomorphic to $\Hom(\cR_{K,E}(\phi_n z^{\textbf{h}_n}), C_1/D_1)$, which, by Lemma \ref{Lemtorsion}, is furthermore isomorphic to $D_{\dR}^+(C_1')/D_{\dR}^+(D_1')$,
		where $C_1'=C_1 \otimes_{\cR_{K,E}} \cR_{K,E}(\phi_n^{-1} z^{-\textbf{h}_n})$ and $D_1'=D_1 \otimes_{\cR_{K,E}} \cR_{K,E}(\phi_n^{-1} z^{-\textbf{h}_n})$. As $C_1'$ (resp. $D_1'$) has Hodge-Tate-Sen weights $\{\textbf{h}_i-\textbf{h}_n\}_{i=2, \cdots, n}$ (resp. $\{\textbf{h}_i-\textbf{h}_n\}_{i=1,\cdots, n-1}$), we have $\dim_ED_{\dR}^+(C_1')/D_{\dR}^+(D_1')=d_K$. In particular, when $K=\Q_p$, the kernel of (\ref{equpair}) is exactly generated by $[D]$. This finishes the proof.
	\end{proof}
	
	In the rest of the section, we discuss what  information of  $D$ can be detected by $\iota_D$ for general $K$.  The reader who is mainly interested in the $\Q_p$-case can skip to the next section. Fix $\sigma\in \Sigma_K$, and define $\fT_{\sigma}(\textbf{h})$ to be the weight such that $\fT_{\sigma}(\textbf{h})_{\tau,i}=\begin{cases}
		h_{\tau,i} &\tau=\sigma \\
		h_{\tau,n} & \tau\neq \sigma
	\end{cases}$ which is in particular constant for $\tau\neq \sigma$.  The following proposition is a direct consequence of \cite[Thm.~A]{Ber08a}. We include a proof (of (1)) using similar arguments as in \cite[Lem.~2.1]{Ding15}.
	\begin{proposition} \label{Pcow}
	(1)	Let $D\in \Phi\Gamma_{\nc}(\phi, \textbf{h})$, and 	let $\sigma \in \Sigma_K$. There exists a unique $(\varphi, \Gamma)$-module (up to isomorphism) $D_{\sigma}$  over $\cR_{K,E}$ such that  $D_{\sigma}[1/t]\cong D[1/t]$,  $D\subset D_{\sigma}$,
		and  the Hodge-Tate weights of $D_{\sigma}$ are $\fT_{\sigma}(\textbf{h})$. 
		
	(2) Let $D$, $D_{\sigma}$ be as in (1).  The injection $D\hookrightarrow D_{\sigma}$ induces  a natural isomorphism of Deligne-Fontaine modules $D_{\pst}(D)\xrightarrow{\sim} D_{\pst}(D_{\sigma})$, such that the induced map $D_{\dR}(D) \ra D_{\dR}(D_{\sigma})$ is a morphism of filtered $K \otimes_{\Q_p} E$-modules,  satisfying $D_{\dR}(D)_{\sigma}\xrightarrow{\sim} D_{\dR}(D_{\sigma})$ (as filtered $E$-vector space). 
	\end{proposition}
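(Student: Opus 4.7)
The plan is to apply Berger's classification \cite[Thm.~A]{Ber08a}, which identifies $(\varphi,\Gamma)$-submodules $D'$ of $D[1/t]$ with full $\Gal_K$-stable $B_{\dR}^+$-sublattices $W_{\dR}^+(D') \subset W_{\dR}(D)$. Proving (1) then reduces to exhibiting a unique $\Gal_K$-stable lattice containing $W_{\dR}^+(D)$ whose induced Hodge-Tate weights are $\fT_\sigma(\textbf{h})$, following the strategy of \cite[Lem.~2.1]{Ding15}.

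First, I would exploit the decomposition $W_{\dR}^+(D) = \bigoplus_{\tau \in \Sigma_K} W_{\dR,\tau}^+(D)$ and work one embedding at a time. At $\tau = \sigma$, the constraint forces $W_{\dR,\sigma}^+(D_\sigma) := W_{\dR,\sigma}^+(D)$: any strict enlargement of a $B_{\dR}^+$-lattice strictly lowers some HT weight, whereas we insist that the weights at $\sigma$ remain equal to $\textbf{h}_\sigma$. For $\tau \neq \sigma$, I would specify $W_{\dR,\tau}^+(D_\sigma)$ as the $B_{\dR}^+$-lattice in $W_{\dR,\tau}(D)$ containing $W_{\dR,\tau}^+(D)$ whose HT weights all equal $h_{\tau,n}$. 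Its existence comes from picking a basis of $W_{\dR,\tau}(D)$ adapted to the $\tau$-Hodge filtration (available thanks to the strict regularity of $\textbf{h}$) and rescaling the basis vectors by appropriate powers of $t$ so as to collapse every filtration jump to $h_{\tau,n}$. Its uniqueness follows by noting that the sum of any two such lattices $L_1, L_2$ still satisfies both requirements, so a rank/weight comparison forces $L_1 = L_2$.

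The $\Gal_K$-stability of $W_{\dR,\tau}^+(D_\sigma)$ is then automatic from its uniqueness, since $\Gal_K$ preserves $W_{\dR,\tau}^+(D)$ and the HT weights. Assembling $W_{\dR}^+(D_\sigma) := \bigoplus_\tau W_{\dR,\tau}^+(D_\sigma)$ and invoking Berger's theorem yields a $(\varphi,\Gamma)$-submodule $D_\sigma \subset D[1/t]$ with $D \subset D_\sigma$, $D_\sigma[1/t] = D[1/t]$, and HT weights $\fT_\sigma(\textbf{h})$. Uniqueness of $D_\sigma$ in turn reduces to uniqueness of the lattice at each embedding.

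The main technical difficulty I anticipate lies in the construction and uniqueness of the ``weight-collapsed'' lattice at $\tau \neq \sigma$: formally a manipulation of $B_{\dR}^+$-modules, but requiring care with the Sen-theoretic (or Hodge-theoretic) decomposition of $W_{\dR,\tau}(D)$ to ensure both that the rescaling is intrinsically defined (independent of choices) and that $\Gal_K$ genuinely stabilises the enlarged lattice. This is precisely the step corresponding to \cite[Lem.~2.1]{Ding15} in the rank-two case, and for higher rank the non-critical hypothesis is used to guarantee that the Hodge filtration at $\tau$ is in sufficiently general position to permit the clean basis-adapted construction described above.
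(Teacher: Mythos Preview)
Your approach is essentially the same as the paper's: both use Berger's dictionary between $(\varphi,\Gamma)$-submodules of $D[1/t]$ and $\Gal_K$-stable $B_{\dR}^+$-lattices, decompose by embedding, keep the lattice unchanged at $\sigma$, and collapse the weights to $h_{\tau,n}$ at $\tau\neq\sigma$. The paper streamlines your ``weight-collapsed lattice'' step by citing Fontaine's classification of $B_{\dR}$-representations \cite[Thm.~3.19]{Fo04}: since $D$ is de Rham, each $W_{\dR,\tau}(D)$ is a trivial $B_{\dR}$-representation, so a lattice of the form $(t^{h_{\tau,n}} B_{\dR}^+\otimes_{K,\tau} E)^{\oplus n}$ containing $W_{\dR,\tau}^+(D)$ exists and is unique, with $\Gal_K$-stability built in. This replaces your hands-on basis manipulation and uniqueness-implies-stability argument with a single reference.

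One correction: you invoke the non-critical hypothesis to control the Hodge filtration at $\tau\neq\sigma$, but this is not needed here. Non-criticality concerns the relative position of the Hodge filtration against the $\varphi$-stable flags; the lattice construction only requires $D$ to be de Rham with the given weights, which already forces $W_{\dR,\tau}^+(D)\cong\bigoplus_i t^{h_{\tau,i}}B_{\dR}^+$ in some basis. You also do not address part (2); the paper dispatches it in one line by noting that $D\hookrightarrow D_\sigma$ with $D[1/t]\cong D_\sigma[1/t]$ immediately gives $D_{\pst}(D)\cong D_{\pst}(D_\sigma)$, and the filtration comparison at $\sigma$ follows because $W_{\dR,\sigma}^+$ is unchanged.
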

\begin{proof}
Let $(W_e(D), W_{\dR}^+(D))$ be the $B$-pair associated to $D$ (cf. \cite[Thm.~A]{Ber08II}). By   Fontaine's classification of $B_{\dR}$-representations \cite[Thm.~3.19]{Fo04}, there is a unique $B_{\dR}^+\otimes_{\Q_p} E$-representation $\Lambda \cong \oplus_{\tau\in \Sigma_K} \Lambda_{\tau}$ such that $W_{\dR}^+(D)\subset \Lambda\subset W_{\dR}^+(D)[\frac{1}{t}]$ and $\Lambda_{\tau}\cong\begin{cases}
		W_{\dR}^+(D)_{\tau} & \tau=\sigma \\
		(t^{h_{\tau,n}} B_{\dR}^+ \otimes_{K,\tau} E)^{\oplus n} & \tau\neq \sigma
	\end{cases}$. Let $D_{\sigma}$ be the $(\varphi, \Gamma)$-module associated to the $B$-pair $(W_e(D), \Lambda)$. This construction satisfies all the claimed properties in (1). (2) follows from (1) and \cite[Thm.~A]{Ber08a}.  
\end{proof}
	\begin{lemma}\label{Lcow}
		Let $D$, $D_{\sigma}$ be as in Proposition \ref{Pcow}. For each $w\in S_n$, $w(\phi)z^{\fT_{\sigma}(\textbf{h})}$ is a trianguline parameter of  $D_{\sigma}$.
	\end{lemma}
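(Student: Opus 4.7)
The plan is to transport the non-critical triangulation $\sT_w$ of $D$ to a triangulation of $D_\sigma$ by performing the same $B$-pair modification used in Proposition~\ref{Pcow}, step-by-step in the filtration. Let $0 = \sT_w^0 \subset \sT_w^1 \subset \cdots \subset \sT_w^n = D$ be the triangulation corresponding to $w(\phi)$, so $\gr_{\sT_w}^i D \cong \cR_{K,E}(\phi_{w^{-1}(i)} z^{\textbf{h}_i})$. Passing to $B$-pairs yields a filtration of $(W_e(D), W_\dR^+(D))$, sitting inside $(W_e(D), \Lambda) = (W_e(D_\sigma), W_\dR^+(D_\sigma))$ via the inclusion $W_\dR^+(D) \subset \Lambda$ from Proposition~\ref{Pcow}. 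I would then define a $B_\dR^+ \otimes_{\Q_p} E$-submodule of $W_e(\sT_w^i) \otimes_{B_e} B_\dR$ by
\[
\Lambda^i_\sigma := W_\dR^+(\sT_w^i)_\sigma, \qquad \Lambda^i_\tau := \Lambda_\tau \cap \bigl(W_e(\sT_w^i) \otimes_{B_e} B_\dR\bigr)_\tau \ \text{for } \tau \neq \sigma.
\]

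Next I would verify that each $\Lambda^i = \bigoplus_\tau \Lambda^i_\tau$ is a $B_\dR^+ \otimes_{\Q_p} E$-lattice of rank $i$ and is saturated in $\Lambda$. At $\sigma$ this is immediate from the saturation of $\sT_w^i$ in $D$. At $\tau \neq \sigma$, $\Lambda^i_\tau$ is a finitely generated $B_\dR^+$-submodule of the free module $\Lambda_\tau$, hence free of some rank by the DVR structure of $B_\dR^+ \otimes_{K,\tau} E$; its generic fibre recovers the $i$-dimensional $B_\dR$-subspace $(W_e(\sT_w^i) \otimes B_\dR)_\tau$, and the intersection description makes saturation automatic. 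Thus $(W_e(\sT_w^i), \Lambda^i)$ is a saturated sub-$B$-pair of $(W_e(D_\sigma), \Lambda)$, corresponding to a saturated rank-$i$ $(\varphi, \Gamma)$-submodule $D_\sigma^i \subset D_\sigma$ with $D_\sigma^0 = 0$ and $D_\sigma^n = D_\sigma$.

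Finally I would compute the graded pieces. The $B$-pair of $D_\sigma^i/D_\sigma^{i-1}$ is $(W_e(\gr_{\sT_w}^i D), \Lambda^i/\Lambda^{i-1})$, so $(D_\sigma^i/D_\sigma^{i-1})[1/t] \cong \cR_{K,E}(\phi_{w^{-1}(i)} z^{\textbf{h}_i})[1/t]$, forcing $D_\sigma^i/D_\sigma^{i-1} \cong \cR_{K,E}(\phi_{w^{-1}(i)} z^{\textbf{k}_i})$ for some $\textbf{k}_i \in \Z^{\Sigma_K}$. The weight $\textbf{k}_i$ is then determined embedding-by-embedding from the Hodge-Tate weight of $\Lambda^i_\tau/\Lambda^{i-1}_\tau$: at $\tau = \sigma$ the identity $\Lambda^i_\sigma/\Lambda^{i-1}_\sigma = W_\dR^+(\gr_{\sT_w}^i D)_\sigma$ gives $k_{i,\sigma} = h_{\sigma,i}$; at $\tau \neq \sigma$, the explicit form $\Lambda_\tau \cong (t^{h_{\tau,n}} B_\dR^+ \otimes_{K,\tau} E)^{\oplus n}$ of Proposition~\ref{Pcow} implies that every saturated $B_\dR^+$-sub-lattice of $\Lambda_\tau$ has all Hodge-Tate weights equal to $h_{\tau,n}$, so the graded piece $\Lambda^i_\tau/\Lambda^{i-1}_\tau$ has weight $h_{\tau,n}$ and $k_{i,\tau} = h_{\tau,n}$. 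Thus $\textbf{k}_i = \fT_\sigma(\textbf{h})_i$ and $\{D_\sigma^i\}$ realises $w(\phi) z^{\fT_\sigma(\textbf{h})}$ as a trianguline parameter of $D_\sigma$.

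The main obstacle I anticipate is the collapse of the $\tau$-weight to $h_{\tau,n}$ for $\tau \neq \sigma$: one must see that the saturation of $\Lambda^i_\tau$ inside the trivially filtered $\Lambda_\tau$, together with the rigidity of lattices over the DVR $B_\dR^+ \otimes_{K,\tau} E$, forces the graded piece to be $B_\dR^+$-isomorphic to a single copy of $t^{h_{\tau,n}} B_\dR^+ \otimes_{K,\tau} E$ regardless of the original weight $h_{\tau,i}$ carried by $\gr_{\sT_w}^i D$. The remainder of the argument is essentially bookkeeping forced by the $B$-pair construction of $D_\sigma$.
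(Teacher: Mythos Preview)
Your proposal is correct and is essentially the same argument as the paper's, just unwound at the $B$-pair level rather than phrased at the $(\varphi,\Gamma)$-module level. The paper argues inductively: it takes the injection $\cR_{K,E}(\phi_{w^{-1}(1)}z^{\textbf{h}_1})\hookrightarrow D\hookrightarrow D_\sigma$, observes its saturation in $D_\sigma$ is $\cR_{K,E}(\phi_{w^{-1}(1)}z^{\fT_\sigma(\textbf{h})_1})$, passes to the quotient $D/\cR_{K,E}(\phi_{w^{-1}(1)}z^{\textbf{h}_1})\hookrightarrow D_\sigma/\cR_{K,E}(\phi_{w^{-1}(1)}z^{\fT_\sigma(\textbf{h})_1})$, and iterates. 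Your construction of $\Lambda^i$ amounts to doing all the saturations simultaneously, and the ``obstacle'' you flag --- that at $\tau\neq\sigma$ every saturated sublattice of $\Lambda_\tau\cong(t^{h_{\tau,n}}B_\dR^+\otimes_{K,\tau}E)^{\oplus n}$ has all weights $h_{\tau,n}$ --- is exactly the content hidden in the paper's ``it is not difficult to see''.
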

	\begin{proof}
		Consider the composition $\cR_{K,E}(\phi_{w^{-1} (1)} z^{\textbf{h}_1}) \hookrightarrow D \hookrightarrow D_{\sigma}$. It is not difficult  to see the saturation of the image in $D_{\sigma}$ is just $\cR_{K,E}(\phi_{w^{-1} (1)}z^{\fT_{\sigma}(\textbf{h})_1})$,  
		and  we have $D/\cR_{K,E}(\phi_{w^{-1} (1)} z^{\textbf{h}_1}) \hookrightarrow D_{\sigma}/\cR_{K,E}(\phi_{w^{-1} (1)} z^{\fT_{\sigma}(\textbf{h})_1})$. Continuing with the argument, the lemma follows.
	\end{proof}
	We have hence a (surjective) map 
	\begin{equation}\label{Ecow}\fT_{\sigma}: \Phi\Gamma_{\nc}(\phi, \textbf{h}) \lra \Phi\Gamma_{\nc}(\phi, \fT_{\sigma}(\textbf{h})), \ D \mapsto D_{\sigma}.
	\end{equation}
	Let $D_{1,\sigma}:=(D_{\sigma})_{\{1,\cdots, n-1\}}$ and $C_{1,\sigma}:=(D_{\sigma})^{\{1,\cdots, n-1\}}$ (for the refinement $\phi$). By Lemma \ref{Lcow}, it is not difficult to see $D_{1,\sigma}$ (resp. $C_{1,\sigma}$) has Hodge-Tate-Sen weights $(\fT_{\sigma}(\textbf{h})_1, \cdots, \fT_{\sigma}(\textbf{h})_{n-1})$ \big(resp. $(\fT_{\sigma}(\textbf{h})_2, \cdots, \fT_{\sigma}(\textbf{h})_n)$\big). In fact, we have $D_{1,\sigma}=\fT_{\sigma}(D_1)$ and $C_{1,\sigma}=\fT_{\sigma}(C_1)$ (where $\fT_{\sigma}$ is defined in a similar way as (\ref{Ecow})). 
Consider $\Hom(D_{1,\sigma}, C_{1,\sigma})$. Note it is non-zero as it contains the composition $\iota_{D_{\sigma}}: D_{1,\sigma} \hookrightarrow D_{\sigma} \twoheadrightarrow C_{1,\sigma}$. By similar arguments as in Proposition \ref{Pdim2}, we have:
	\begin{proposition}
	(1) $\dim_E \Hom(D_{1,\sigma},C_{1,\sigma})\leq 2$.
	
	(2) We have $\dim_E \Hom(D_{1,\sigma},C_{1,\sigma})=2$ if and only if $n\geq 3$, and for any $i\in \{1,\cdots, n-1\}$, $\textbf{r}:=\{1,\cdots, n-1\} \setminus \{i\}$, we have $(D_{1,\sigma})^{\textbf{r}}\cong (C_{1,\sigma})_{\textbf{r}}$ (for the refinement $(\phi_1, \cdots, \phi_{n-1})$). Moreover, if these hold, for $i \in \{1,\cdots, n-1\}$, the  composition
	\begin{equation}\label{Ealphaisig}
		\alpha_{i,\sigma}: D_{1,\sigma} \twoheadlongrightarrow (D_{1,\sigma})^{\textbf{r}} \cong (C_{1,\sigma})_{\textbf{r}} \hooklongrightarrow C_{1,\sigma}.
	\end{equation}
	 are pair-wisely linearly independent as elements in $\Hom(D_{1,\sigma},C_{1,\sigma})$.
	\end{proposition}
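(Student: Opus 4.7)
The plan is to transcribe the proof of Proposition \ref{Pdim2} verbatim, with $(D_1, C_1, \textbf{h})$ replaced by $(D_{1,\sigma}, C_{1,\sigma}, \fT_\sigma(\textbf{h}))$, after verifying that the weight vanishing principle (\ref{Ehomchara}) admits a sufficient analogue in the vector-weight setting. The crucial input is: for rank-one $(\varphi,\Gamma)$-modules over $\cR_{K,E}$, one has $\Hom(\cR_{K,E}(\phi'_1 z^{\textbf{k}_1}), \cR_{K,E}(\phi'_2 z^{\textbf{k}_2}))=0$ whenever $\phi'_1\neq\phi'_2$, or whenever $k_{1,\tau}<k_{2,\tau}$ for some $\tau\in\Sigma_K$ (i.e.\ whenever $\textbf{k}_1\not\geq\textbf{k}_2$ componentwise). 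Since $\fT_\sigma(\textbf{h})_i$ varies with $i$ only at the embedding $\sigma$ and is strictly decreasing in $i$ there, all the strict inequalities invoked in the proof of Proposition \ref{Pdim2} (of the form $h_{\sigma,i+2}<h_{\sigma,i+1}$ etc.) persist at $\sigma$; at the other embeddings the corresponding weights merely coincide, but the above vanishing criterion is still enough to force the relevant $\Hom$-groups to vanish.

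For (1), I would assume $n\geq 3$ (the case $n=2$ being trivial) and set $\textbf{r}':=\{1,\dots,n-3\}$. From the short exact sequence $0\to (C_{1,\sigma})_{\textbf{r}'}\to C_{1,\sigma}\to (C_{1,\sigma})^{\{n-2,n-1\}}\to 0$ one extracts
\begin{equation*}
0\to \Hom(D_{1,\sigma},(C_{1,\sigma})_{\textbf{r}'})\to \Hom(D_{1,\sigma},C_{1,\sigma})\to \Hom(D_{1,\sigma},(C_{1,\sigma})^{\{n-2,n-1\}}).
\end{equation*}
Any map in the first term factors through $(D_{1,\sigma})^{\textbf{r}'}$, whose $i$-th refinement graded piece carries weight $\fT_\sigma(\textbf{h})_{i+2}$, while that of $(C_{1,\sigma})_{\textbf{r}'}$ carries weight $\fT_\sigma(\textbf{h})_{i+1}$; the strict inequality at $\sigma$ combined with the vanishing principle above yields $\Hom((D_{1,\sigma})^{\textbf{r}'},(C_{1,\sigma})_{\textbf{r}'})=0$. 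A two-step d\'evissage on $(C_{1,\sigma})^{\{n-2,n-1\}}$, using that each of its two rank-one graded pieces admits at most a one-dimensional space of maps from $D_{1,\sigma}$, bounds the third $\Hom$-space by $2$.

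For (2), the ``if'' direction is immediate: given the isomorphisms $(D_{1,\sigma})^{\textbf{r}}\cong (C_{1,\sigma})_{\textbf{r}}$ for all the relevant $\textbf{r}$, the compositions $\alpha_{i,\sigma}$ are non-zero elements of $\Hom(D_{1,\sigma},C_{1,\sigma})$, and any two of them are linearly independent because they are killed by distinct one-dimensional sub-refinements of $D_{1,\sigma}$; combined with (1) this forces $\dim=2$. For the ``only if'' direction, I would fix a basis $\iota_1,\iota_2$ and, for each $i$ with $\textbf{r}=\{1,\dots,n-1\}\setminus\{i\}$, consider the compositions $f_{i,j}\colon \cR_{K,E}(\phi_i z^{\fT_\sigma(\textbf{h})_1})\hookrightarrow D_{1,\sigma}\xrightarrow{\iota_j} C_{1,\sigma}$ for $j=1,2$. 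Since $\dim_E\Hom(\cR_{K,E}(\phi_i z^{\fT_\sigma(\textbf{h})_1}),C_{1,\sigma})\leq 1$ (a further application of weight comparison together with non-criticality and genericity of $\phi$), there is a non-trivial relation $a_1f_{i,1}+a_2f_{i,2}=0$; the non-zero combination $\iota:=a_1\iota_1+a_2\iota_2$ then factors through $(D_{1,\sigma})^{\textbf{r}}$, and a refinement-by-refinement weight comparison at $\sigma$ upgrades the resulting map to an isomorphism $(D_{1,\sigma})^{\textbf{r}}\xrightarrow{\sim}(C_{1,\sigma})_{\textbf{r}}$.

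The only step genuinely requiring care---and the expected main obstacle---is confirming that the coincidence of Hodge-Tate weights at embeddings $\tau\neq\sigma$ does not introduce spurious non-vanishing $\Hom$-groups. This is settled by the vector-weight version of (\ref{Ehomchara}) recorded in the first paragraph: strict inequality at a single embedding already suffices. Once this has been made explicit, every remaining step is a straightforward transcription of the corresponding step in the proof of Proposition \ref{Pdim2}.
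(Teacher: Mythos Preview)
Your proposal is correct and is exactly the approach the paper intends: the paper itself gives no separate proof for this proposition, writing only ``By similar arguments as in Proposition \ref{Pdim2}'', and you have correctly spelled out those arguments with the replacement $(D_1,C_1,\textbf{h})\rightsquigarrow(D_{1,\sigma},C_{1,\sigma},\fT_\sigma(\textbf{h}))$. Your identification of the one subtle point---that the vector-weight analogue of (\ref{Ehomchara}) only needs a strict inequality at a \emph{single} embedding to force vanishing, so the coincidence of weights at $\tau\neq\sigma$ causes no trouble---is precisely what makes the transcription go through.
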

	\begin{proposition}\label{Piotasigma0}
		For the cup-product $$\Ext^1(\cR_{K,E}(\phi_n z^{\textbf{h}_n}), D_{1,\sigma}) \times \Hom(D_{1,\sigma}, C_{1,\sigma}) \ra \Ext^1(\cR_{K,E}(\phi_n z^{\textbf{h}_n}), C_{1,\sigma}),$$ we have $[\iota_{D_{\sigma}}]^{\perp}=E[D_{\sigma}]$. In particular, $D_{\sigma}$ is determined by $D_{1,\sigma}$, $C_{1,\sigma}$, $\phi_n$ and $\iota_{D_{\sigma}}$ in a similar sense to that discussed following Proposition \ref{Phodge}.
	\end{proposition}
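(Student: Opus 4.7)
The plan is to follow the strategy of Proposition \ref{Phodge} essentially verbatim, with one genuinely new ingredient: a partial vanishing of the torsion cokernel $C_{1,\sigma}/\iota_{D_\sigma}(D_{1,\sigma})$ at embeddings $\tau \in \Sigma_K$ with $\tau \neq \sigma$. I would begin by factoring $\iota_{D_\sigma}$ as $D_{1,\sigma} \hookrightarrow D_\sigma \twoheadrightarrow C_{1,\sigma}$, so that the cup-product with $[\iota_{D_\sigma}]$ is realized as the composition
\begin{equation*}
\Ext^1(\cR_{K,E}(\phi_n z^{\fT_\sigma(\textbf{h})_n}), D_{1,\sigma}) \lra \Ext^1(\cR_{K,E}(\phi_n z^{\fT_\sigma(\textbf{h})_n}), D_\sigma) \lra \Ext^1(\cR_{K,E}(\phi_n z^{\fT_\sigma(\textbf{h})_n}), C_{1,\sigma}).
\end{equation*}
Since the pushout of $[D_\sigma]$ along $D_{1,\sigma} \hookrightarrow D_\sigma$ visibly splits, the inclusion $E[D_\sigma] \subset [\iota_{D_\sigma}]^\perp$ is immediate. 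For the reverse inclusion, I would use the long exact sequence attached to $0 \to D_{1,\sigma} \to C_{1,\sigma} \to C_{1,\sigma}/D_{1,\sigma} \to 0$, combined with $\Hom(\cR_{K,E}(\phi_n z^{\fT_\sigma(\textbf{h})_n}), C_{1,\sigma}) = 0$ (forced by genericity and a d\'evissage via (\ref{Ehomchara})), to identify the kernel of the cup-product map with $\Hom(\cR_{K,E}(\phi_n z^{\fT_\sigma(\textbf{h})_n}), C_{1,\sigma}/\iota_{D_\sigma}(D_{1,\sigma}))$, and then apply Lemma \ref{Lemtorsion} to rewrite this as $D_{\dR}^+(C_{1,\sigma}')/D_{\dR}^+(D_{1,\sigma}')$, where the primes denote twisting by $\cR_{K,E}(\phi_n^{-1} z^{-\fT_\sigma(\textbf{h})_n})$.

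The crux is then showing that this quotient has $E$-dimension exactly one, which I would do by decomposing along $\Sigma_K$. For the $\sigma$-component the twisted Hodge--Tate weights are $\{h_{\sigma,i}-h_{\sigma,n}\}_{i=1}^{n-1}$ and $\{h_{\sigma,i}-h_{\sigma,n}\}_{i=2}^{n}$, exactly the configuration of the $K=\Q_p$ argument in Proposition \ref{Phodge}, so this component contributes dimension one. For $\tau \neq \sigma$, the key new input is Proposition \ref{Pcow}: all $\tau$-HT weights of $D_\sigma$ equal $h_{\tau,n}$, so $D_{1,\sigma}$ and $C_{1,\sigma}$ have matching $\tau$-HT weight multisets $(h_{\tau,n}, \ldots, h_{\tau,n})$. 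Consequently $\det(D_{1,\sigma})$ and $\det(C_{1,\sigma})$ agree at $\tau$ (both of $\tau$-weight $(n-1)h_{\tau,n}$), so $\det(\iota_{D_\sigma})$ is an isomorphism at $\tau$, and a determinant--length computation then forces the $\tau$-component of the cokernel of $\iota_{D_\sigma}$ to vanish, killing the $\tau$-contribution to $D_{\dR}^+(C_{1,\sigma}')/D_{\dR}^+(D_{1,\sigma}')$.

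Combining these two inputs gives $\dim_E[\iota_{D_\sigma}]^\perp = 1$. It remains to verify $[D_\sigma]\neq 0$: a splitting $D_\sigma \cong D_{1,\sigma} \oplus \cR_{K,E}(\phi_n z^{\fT_\sigma(\textbf{h})_n})$ would prevent the refinement $(\phi_n, \phi_1, \dots, \phi_{n-1})$ of $D_\sigma$ from being non-critical (the unique rank-one submodule of the split sum with smooth part $\phi_n$ would have $\sigma$-HT weight $h_{\sigma,n}$, not $h_{\sigma,1}$), contradicting Lemma \ref{Lcow}. The uniqueness assertion then follows as after Proposition \ref{Phodge}: if $D_\sigma'$ shares the data $(D_{1,\sigma}, C_{1,\sigma}, \phi_n, \iota_{D_\sigma})$ up to scalar, then $[D_\sigma'] \in [\iota_{D_\sigma}]^\perp = E[D_\sigma]$, forcing $D_\sigma' \cong D_\sigma$. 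The hard step in this plan is the $\tau \neq \sigma$ vanishing, which amounts to a careful translation of the ``partial Hodge trivialization'' of $D_\sigma$ (from the explicit description of $W_{\dR,\tau}^+(D_\sigma)$ given in Proposition \ref{Pcow}) into the language of torsion $(\varphi,\Gamma)$-modules; the remaining ingredients are a straightforward $\sigma$-localization of the $K=\Q_p$ case of Proposition \ref{Phodge}.
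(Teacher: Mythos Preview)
Your proposal is correct and follows essentially the same route as the paper: factor $\iota_{D_\sigma}$ through $D_\sigma$, observe $[D_\sigma]$ lies in the kernel, identify the kernel with $D_{\dR}^+(C_{1,\sigma}')/D_{\dR}^+(D_{1,\sigma}')$ via Lemma \ref{Lemtorsion}, and show this is one-dimensional. The only noteworthy difference is in the $\tau\neq\sigma$ step. You propose a determinant/length argument to show the $\tau$-cokernel of $\iota_{D_\sigma}$ vanishes; the paper instead just observes that for $\tau\neq\sigma$ both $D_{1,\sigma}$ and $C_{1,\sigma}$ have the constant $\tau$-weight $h_{\tau,n}$ (by construction of $\fT_\sigma$), so after twisting by $\phi_n^{-1}z^{-\textbf{h}_n}$ both have all $\tau$-weights equal to $0$, hence $\dim_E D_{\dR}^+(C_{1,\sigma}')_\tau=\dim_E D_{\dR}^+(D_{1,\sigma}')_\tau=n-1$ and the $\tau$-contribution to the quotient is zero. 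This is more direct than your determinant route (which is valid but requires translating the equality of $\tau$-weights of the determinants into vanishing of the $\tau$-torsion of $C_{1,\sigma}/D_{1,\sigma}$), and it avoids the ``hard step'' you flag: once one remembers that $\dim_E D_{\dR}^+(M)_\tau$ is just the count of non-positive $\tau$-Hodge--Tate weights, the comparison is a one-line weight count. Your explicit check that $[D_\sigma]\neq 0$ is a nice addition that the paper leaves implicit.
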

	\begin{proof} Taking the cup-product with $\iota_{D_{\sigma}}$  is equal to the following composition
		\begin{equation}\label{equpair2}
			\Ext^1(\cR(\phi_n z^{\textbf{h}_n}), D_{1,\sigma}) \ra \Ext^1(\cR(\phi_n z^{\textbf{h}_n}), D_{\sigma})\ra \Ext^1(\cR(\phi_n z^{\textbf{h}_n}), C_{1,\sigma}),
		\end{equation}
		which is the push-forward map via $\iota_{D_{\sigma}}$. We see $\lan D_{\sigma}, \iota_{D_{\sigma}}\ran =0$. On the other hand, by d\'evissage and Lemma \ref{Lemtorsion}, $\Ker(\ref{equpair2})$ is isomorphic to \begin{equation}\label{EdimDr}D_{\dR}^+\big(C_{1,\sigma} \otimes_{\cR_{K,E}} \cR_{K,E}(\phi_n^{-1} z^{-\textbf{h}_n})\big)/D_{\dR}^+\big(D_{1,\sigma} \otimes_{\cR_{K,E}} \cR_{K,E}(\phi_n^{-1} z^{-\textbf{h}_n})\big).\end{equation}
		By comparing the Hodge-Tate-Sen weights (and noting the weights of $D_{1,\sigma}$ and $C_{1,\sigma}$ for embeddings different from $\sigma$ are the same), we easily see that (\ref{EdimDr}) is one dimensional. Hence $\Ker(\ref{equpair2})$ is generated by $[D_{\sigma}]$.
	\end{proof}
	\begin{example}\label{Egl3}
		We give an example to illustrate how $\iota_{D_{\sigma}}$ determines $D_{\sigma}$ (or equivalently the Hodge $\sigma$-filtration of $D$).  Suppose $n=3$, $K$ unramified and  $D$ is crystalline (generic non-critical) of regular Hodge-Tate-Sen  weights $\textbf{h}$. In this case we have $D_{\cris}(D)\cong D_{\dR}(D)\cong \oplus_{\tau\in \Sigma_K} D_{\cris}(D)_{\tau}$, where each $D_{\cris}(D)_{\tau}$ is a filtered $\varphi^{d_K}$-module. Fix $\sigma\in \Sigma_K$. Note that we have an isomorphism of filtered $\varphi^{d_K}$-module $D_{\cris}(D_{\sigma})_{\sigma}\cong D_{\cris}(D)_{\sigma}$. 
		
		Let $\alpha_1$, $\alpha_2$, $\alpha_3$ be the three distinct eigenvalues of $\varphi^{d_K}$ on $D_{\cris}(D_{\sigma})_{\tau}$ (for any $\tau$). Let $e_{i,\sigma}$ be an $\alpha_i$-eigenvector in $D_{\cris}(D_{\sigma})_{\sigma}$, hence $D_{\cris}(D_{\sigma})_{\sigma}\cong E e_{1,\sigma} \oplus E e_{2,\sigma} \oplus E e_{3,\sigma}$. For $j=0, \cdots, d_K-1$, we have $D_{\cris}(D_{\sigma})_{\sigma \circ \Frob^{-j}}\cong E \varphi^j(e_{1,\sigma}) \oplus E \varphi^j (e_{2,\sigma}) \oplus E \varphi^j(e_{3,\sigma})$ (where $\Frob$ denotes the absolute Frobenius), and $D_{\cris}(D_{1,\sigma})_{\sigma \circ \Frob^{-j}} \cong  E \varphi^j(e_{1,\sigma}) \oplus E \varphi^j (e_{2,\sigma})$ for $j=0, \cdots, d_K-1$, which is equipped with the induced Hodge filtration.  As $D_{1,\sigma}$ is non-critical, multiplying $e_{1,\sigma}$, $e_{2,\sigma}$ by non-zero scalars, we can and do assume $\Fil^{\max} D_{\cris}(D_{1,\sigma})_{\sigma}=\Fil^j D_{\cris}(D_{1,\sigma})_{\sigma}$, $-h_{1,\sigma}<j\leq -h_{2,\sigma}$, is generated by $e_{1,\sigma}+e_{2,\sigma}$. As $D_{\sigma}$ is non-critical for all the refinements, multiplying $e_{3,\sigma}$ by a non-zero scalar, we can and do assume $\Fil^{\max} D_{\cris}(D_{\sigma})_{\sigma}=\Fil^j D_{\cris}(D_{\sigma})_{\sigma}$, $-h_{2,\sigma} < j \leq -h_{3,\sigma}$, is generated by $e_1+a_{D_{\sigma}} e_2+e_3$. The filtered $\varphi^{d_K}$-module $D_{\cris}(D_{\sigma})_{\sigma}$ is in fact parametrized (and determined) by $a_{D_{\sigma}}\in E \setminus \{0,1\}$: we have 
		\begin{equation*}
			\Fil^j D_{\cris}(D_{\sigma})_{\sigma}=\begin{cases}
				D_{\cris}(D_{\sigma})_{\sigma} & j\leq -h_{1,\sigma} \\
				E(e_{1,\sigma}+e_{2,\sigma}) \oplus E(e_{1,\sigma}+a_{D_{\sigma}} e_{2,\sigma}+e_{3,\sigma}) & -h_{1,\sigma}<j \leq -h_{2,\sigma} \\
				E(e_{1,\sigma}+a_{D_{\sigma}} e_{2,\sigma}+e_{3,\sigma}) & -h_{2,\sigma}<j \leq -h_{3,\sigma} \\
				0 & j>-h_{3,\sigma}
			\end{cases}
		\end{equation*}
		For $\tau \neq \sigma$, we have $\Fil^j D_{\cris}(D_{\sigma})_{\tau}=\begin{cases} D_{\cris}(D_{\sigma})_{\tau} & j\leq -h_{n,\tau} \\ 0 & j> -h_{n,\tau}\end{cases}$. So $D_{\sigma}$ is indeed determined by the single parameter $a_{D_{\sigma}}$ (in contrast, $D$ itself has many more parameters, when $K\neq \Q_p$). Note that for $-h_{2,\sigma}<j \leq -h_{3,\sigma}$,
		$\Fil^{\max} D_{\cris}(C_{1,\sigma})_{\sigma}=\Fil^j D_{\cris}(C_{1,\sigma})_{\sigma}$		is generated by $e_{1,\sigma}+a_{D_{\sigma}}e_{2,\sigma}$ (as it is equipped with the quotient filtration). The map $\iota_{D_{\sigma}}$ uniquely corresponds to the morphism of filtered $\varphi^{d_K}$-modules  $\iota_{D_{\sigma}}: D_{\cris}(D_{1,\sigma})_{\sigma} \ra D_{\cris}(C_{1,\sigma})_{\sigma}$  sending $e_{i,\sigma}$ to $e_{i,\sigma}$ for $i=1,2$. We see $a_{D_{\sigma}}$ can be read out from the relative position of the two lines $\Fil^{\max} D_{\cris}(C_{1,\sigma})_{\sigma}$ and $\iota_{D_{\sigma}}\big(\Fil^{\max} D_{\cris}(D_{1,\sigma})_{\sigma}\big)$  in $D_{\cris}(C_{1,\sigma})_{\sigma}$. Thus $a_{D_{\sigma}}$ (hence $D_{\sigma}$) is determined by $\iota_{D_{\sigma}}$.
\end{example}

\subsection{Deformations of crystabelline $(\varphi, \Gamma)$-modules}
Let $D\in \Phi\Gamma_{\nc}(\phi, \textbf{h})$. In this section, we collect some facts on certain deformations of $D$. 

\subsubsection{Trianguline and paraboline deformations, I}

We first consider  trianguline deformations. For a character $\chi: K^{\times} \ra E^{\times}$, recall we have natural isomorphisms
\begin{equation}\label{Edefchara}
\Hom(K^{\times},E) \xlongrightarrow{\sim}	\Ext^1_{K^{\times}}(\chi, \chi) \xlongrightarrow{\sim} \Ext^1(\cR_{K,E}(\chi), \cR_{K,E}(\chi)),
\end{equation}
sending $\psi$ to $\chi(1+\psi\epsilon)$ then to $\cR_{K, E[\epsilon]/\epsilon^2}(\chi(1+\psi \epsilon))$.

For $w\in S_n$, denote by $\Ext^1_w(D,D)\subset \Ext^1(D,D)$ the subspace of trianguline deformations with respect to the refinement $w(\phi)$. More precisely, for $\widetilde{D}\in \Ext^1(D,D)$ (viewed as a $(\varphi, \Gamma)$-module over $\cR_{K, E[\epsilon]/\epsilon^2})$), $\widetilde{D}\in \Ext^1_w(D,D)$ if and only if $\widetilde{D}$ is isomorphic to a successive extension of $\cR_{K, E[\epsilon]/\epsilon^2}(\phi_{w^{-1}(i)}z^{\textbf{h}_i}(1+\psi_i\epsilon))$ for $\psi_i \in \Hom(K^{\times},E)$.   In this case, we call the character $w(\phi) z^{\textbf{h}} (1+\psi\epsilon)$ (with $\psi:=(\psi_1, \cdots, \psi_n)$) of $T(K)$ over $E[\epsilon]/\epsilon^2$ the \textit{trianguline parameter} of $\widetilde{D}$ with respect to $w(\phi)$.  Let $\kappa_w$ be the following composition:
\begin{equation}\label{Ekappaw}
	\kappa_w:	\Ext^1_w(D,D) \lra \Ext^1_{T(K)}(w(\phi) z^{\textbf{h}}, w(\phi) z^{\textbf{h}})\xlongrightarrow{\sim}	\Hom(T(K),E) ,
\end{equation}
where the first map sends $\widetilde{D}$ to its trianguline parameter with respect to $w(\phi)$, and the second map is induced by (\ref{Edefchara}).
We also denote $\Ext^1_w(D,D)$ by $\Ext^1_{w(\phi)}(D,D)$ or $\Ext^1_{\sT_w}(D,D)$ where $\sT_w$ is the filtration on $D$ associated to $w(\phi)$ whenever it is convenient for the context.  The following proposition is well-known (cf.  \cite[\S~2]{BCh} \cite[\S~2]{Na2}).
\begin{proposition}\label{PDef1}(1) $\dim_E \Ext^1(D,D)=1+n^2 d_K$, $\dim_E \Ext^1_g(D,D)=1+\frac{n(n-1)}{2}d_K$ and $\dim_E \Ext^1_w(D,D)=1+\frac{n(n+1)}{2} d_K$ for all $w\in S_n$.
	
	(2) For $w\in S_n$, $\kappa_w$ is surjective. 
	
	(3) For $w\in S_n$, $\Ext^1_g(D,D)\subset \Ext^1_w(D,D)$ and is equal to the preimage of the subspace $\Hom_{\sm}(T(K),E)$ via $\kappa_w$. 
\end{proposition}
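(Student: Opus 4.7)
The plan is to prove the three dimension formulas of Part (1) via the Euler--Poincar\'e characteristic and Tate local duality for $(\varphi,\Gamma)$-modules over $\cR_{K,E}$, and then extract Parts (2) and (3) from the structure of the argument. For the first formula, combine $\chi(D,D) = -n^2 d_K$ with two vanishing statements: $\End(D) = E$ (the graded pieces of any refinement are pairwise non-isomorphic by genericity plus non-criticality, ruling out non-scalar endomorphisms) and $\Ext^2(D,D) \cong \Hom(D, D \otimes \cR_{K,E}(\varepsilon))^\vee = 0$ (the twist by $\varepsilon$ combined with $\phi_i \phi_j^{-1} \neq |\cdot|_K$ forbids any such twisted homomorphism). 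For the de Rham dimension, I would apply the standard identity $\dim \Ext^1_g(D,D) = \dim \End(D) + \dim_E \bigl(D_{\dR}(\End D)/\Fil^0\bigr)$; the positive Hodge--Tate weights of $\End(D)$ are enumerated by pairs $(\sigma, i, j)$ with $i > j$, giving $\tfrac{n(n-1)}{2} d_K$ contributions by regularity of $\textbf{h}$.

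For the trianguline dimension, I would induct on $n$ using the exact sequence $0 \to D' \to D \to \cR_{K,E}(\phi_{w^{-1}(n)} z^{\textbf{h}_n}) \to 0$, where $D'$ is the rank-$(n{-}1)$ step of $\sT_w$, and record the key rank-one count $\dim_E \Ext^1\bigl(\cR_{K,E}(\phi_j z^{h_j}), \cR_{K,E}(\phi_i z^{h_i})\bigr) = d_K$ for $i < j$ (again $\Hom$ and $\Ext^2$ are killed by genericity and regularity). Feeding this into the long exact sequence and matching against the inductive hypothesis yields $1 + \tfrac{n(n+1)}{2} d_K$, once the single shared scalar-deformation direction is identified across the induction steps.

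Part (2) then follows by explicit construction: given $\psi = (\psi_i)_{i=1,\ldots,n} \in \Hom(T(K), E)$, I would build a trianguline deformation with parameter $w(\phi) z^{\textbf{h}}(1+\psi\epsilon)$ step by step, lifting at each stage and using the genericity-induced vanishing of $\Ext^2$ of successive rank-one pieces to see that the obstructions vanish; surjectivity is then confirmed by the dimension count in Part (1). For Part (3), the inclusion $\Ext^1_g \subset \Ext^1_w$ exploits non-criticality: the saturated rank-one submodules in $\sT_w$ lift uniquely to any de Rham deformation, so such a deformation inherits a triangulation lifting $\sT_w$. The identification $\Ext^1_g = \kappa_w^{-1}(\Hom_{\sm}(T(K), E))$ reduces to the rank-one fact that $\cR_{K,E[\epsilon]/\epsilon^2}(\chi(1+\psi\epsilon))$ with locally algebraic $\chi$ is de Rham iff $\psi$ is smooth (checked on $W_{\dR}^+$ via Sen theory), together with the matching $n d_K = n \bigl(\dim \Hom(K^{\times}, E) - \dim \Hom_{\sm}(K^{\times}, E)\bigr)$ from Part (1).

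The hardest step will be the inductive trianguline dimension count: one has to glue partial deformations along successive graded pieces, keep precise track of the single shared scalar direction (so as not to overcount it as $n$ separate directions), and verify the vanishing of the relevant $\Ext^2$ groups between rank-one pieces under the genericity and regularity hypotheses. Once this is in place, Parts (2) and (3) are comparatively formal, with the non-critical hypothesis in Part (3) doing the essential work of guaranteeing that the triangulation deforms uniquely.
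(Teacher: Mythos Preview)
Your proposal is correct and follows essentially the same route as the paper, which cites \cite[Thm.~1.2(1)]{Liu07} for the Euler characteristic, \cite[Cor.~2.53, Prop.~2.41, Lem.~2.56]{Na2} for the de Rham and trianguline dimensions, the surjectivity of $\kappa_w$, and the inclusion $\Ext^1_g(D,D)\subset \Ext^1_w(D,D)$, and then obtains (3) by exactly the dimension comparison you describe. Your outline is effectively an unpacking of the content of those references; the only cosmetic difference is that the paper invokes non-criticality alone for $\End(D)=E$ (non-split triangulation steps force the scalars on graded pieces to agree), whereas you phrase it as genericity plus non-criticality, which is of course also fine.
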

\begin{proof}
	The $K=\Q_p$-case is given in \cite[Prop.~2.3.10, Thm.~2.5.10]{BCh}. We sketch a proof for general $K$. As $D$ is non-critical, $\Hom(D,D)=E$. We also have $\Ext^2(D,D)=0$ since $D$ is generic. By \cite[Thm.~1.2(1)]{Liu07}, $\dim_E \Ext^1(D,D)=1+n^2 d_K$. By \cite[Cor.~2.53]{Na2} (noting any de Rham deformation of $D$ is automatically potentially crystalline), $\dim_E \Ext^1_g(D,D)=1+\frac{n(n-1)}{2}d_K$. By \cite[Prop.~2.41]{Na2} and the proof, $\dim_E \Ext^1_w(D,D)=1+\frac{n(n+1)}{2} d_K$ and $\kappa_w$ is surjective for all $w\in S_n$. Hence $\dim_E \Ker \kappa_w=\frac{n(n-1)}{2} d_K+1-n$. By \cite[Lem.~2.56]{Na2}, $\Ext^1_g(D,D)\subset \Ext^1_w(D,D)$ for all $w$. It is also clear $\kappa_w(\Ext^1_g(D,D))\subset \Hom_{\sm}(T(K),E)$. By comparing dimensions: $\dim_E \Ext^1_g(D,D)=\dim_E \Hom_{\sm}(T(K),E)+\dim_E \Ker \kappa_w,$
	 (3) follows. 
\end{proof}
Recall there is a right action of $S_n$ on $T(K)$: $w(a_1, \cdots, a_n)=(a_{w(1)}, \cdots, a_{w(n)})$ for $w\in S_n$. It induces a left action of $S_n$ on $\Hom(T(K),E)$: $(w\psi)(a_1,\cdots, a_n)=\psi(a_{w(1)}, \cdots, a_{w(n)})$. 
It is clear that $\Hom_{\sm}(T(K),E)$ is stabilized by the action. 
\begin{lemma}\label{Lint1}
	Let $w_1, w_2\in S_n$, the following diagram commutes
	\begin{equation}\label{Eint1}
		\begin{CD}
			\Ext^1_g(D,D) @> \kappa_{w_1} >>  \Hom_{\sm}(T(K),E) \\
			@| @V w_2w_1^{-1} V \sim V \\
			\Ext^1_g(D,D) @> \kappa_{w_2} >> \Hom_{\sm}(T(K),E).
	\end{CD}\end{equation}
\end{lemma}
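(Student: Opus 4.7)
The plan is to show that for any $\widetilde{D}\in \Ext^1_g(D,D)$ there exist canonical smooth characters $\widetilde{\psi}_1,\dots,\widetilde{\psi}_n\in \Hom_{\sm}(K^{\times},E)$, intrinsic to $\widetilde{D}$ and not depending on any chosen refinement, such that for every $w\in S_n$ the trianguline parameter with respect to $w(\phi)$ is given by
\begin{equation*}
\kappa_w(\widetilde{D})(a_1,\dots,a_n) \;=\; \sum_{i=1}^n \widetilde{\psi}_{w^{-1}(i)}(a_i).
\end{equation*}
Once this formula is in hand, the commutativity of (\ref{Eint1}) reduces to a purely combinatorial check: evaluating $(w_2w_1^{-1})\kappa_{w_1}(\widetilde{D})$ at $(a_1,\dots,a_n)$ via the $S_n$-action of the paper and reindexing with $k=w_1^{-1}(i)$ gives $\sum_k \widetilde{\psi}_k(a_{w_2(k)})$, and reindexing $\kappa_{w_2}(\widetilde{D})$ with $k=w_2^{-1}(i)$ produces the same expression.

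To produce the intrinsic characters $\widetilde{\psi}_i$, I would first invoke \cite[Cor.~2.53]{Na2} to conclude that $\widetilde{D}$ is potentially crystalline, in fact crystabelline over the same abelian extension $K'/K$ as $D$. Then $D_{\pst}(\widetilde{D})$ is a free $(K_0'\otimes_{\Q_p} E[\epsilon]/\epsilon^2)$-module with commuting $\varphi$- and $\Gal(K'/K)$-actions deforming those on $D_{\pst}(D)$. Genericity of $\phi$ means that the eigenvalues of the $E[\epsilon]/\epsilon^2$-linear operator $\varphi^{[K_0':\Q_p]}$ on each $\tau$-component of $D_{\pst}(\widetilde{D})$ are pairwise distinct modulo $\epsilon$, so by Hensel they lift uniquely; combined with the rigidity of the commuting $\Gal(K'/K)$-action this yields unique smooth characters $\widetilde{\phi}_i=\phi_i(1+\widetilde{\psi}_i\epsilon)$ and a canonical decomposition
\begin{equation*}
\widetilde{D}[1/t]\;\cong\;\bigoplus_{i=1}^n \cR_{K,E[\epsilon]/\epsilon^2}(\widetilde{\phi}_i)[1/t].
\end{equation*}

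For any $w\in S_n$, non-criticality of $D$ guarantees a triangulation $\sT_w$ of $\widetilde{D}$ lifting the triangulation of $D$ attached to $w(\phi)$, and the graded pieces of $\sT_w$ are necessarily compatible with the above eigendecomposition after inverting $t$, giving $\gr_{\sT_w}^i \widetilde{D}[1/t]\cong \cR(\widetilde{\phi}_{w^{-1}(i)})[1/t]$. Since Proposition \ref{PDef1}(3) forces $\kappa_w(\widetilde{D})$ to be smooth, no weight perturbation appears in the graded pieces, so $\gr_{\sT_w}^i\widetilde{D}\cong \cR_{K,E[\epsilon]/\epsilon^2}(\phi_{w^{-1}(i)}z^{\textbf{h}_i}(1+\widetilde{\psi}_{w^{-1}(i)}\epsilon))$, which is exactly the displayed formula for $\kappa_w(\widetilde{D})$. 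The only non-formal step is the intrinsic attachment of the tuple $(\widetilde{\psi}_i)$ to $\widetilde{D}$ via the eigendecomposition; I expect this to be the main (but mild) obstacle, while the resulting permutation identity is mere bookkeeping.
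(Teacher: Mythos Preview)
Your argument is correct and takes a genuinely different route from the paper. The paper proceeds by a direct d\'evissage: it reduces to the case $w_2w_1^{-1}=s_k$ and then compares the two triangulations of $\widetilde{D}$ step by step, using that $\Fil^j_{\sT_{w_1}}\widetilde{D}\cong \Fil^j_{\sT_{w_2}}\widetilde{D}$ for $j<k$ and $j>k+1$, together with explicit $\Hom$-computations between rank-one $(\varphi,\Gamma)$-modules to match the characters at positions $k$ and $k+1$. Your approach instead passes through $D_{\pst}(\widetilde{D})$ to extract the intrinsic tuple $(\widetilde{\psi}_i)$ in one stroke, after which the permutation identity becomes pure bookkeeping. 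This is more conceptual and avoids the induction on simple reflections; the paper's argument, on the other hand, stays entirely within the category of $(\varphi,\Gamma)$-modules and does not need to invoke the equivalence with filtered Deligne--Fontaine modules for the deformation.

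One small caveat on your phrasing: in the general crystabelline (non-crystalline) case the eigenvalues of $\varphi^{[K_0':\Q_p]}$ alone need not be pairwise distinct (two $\phi_i$ may agree on a uniformizer while differing on $\co_K^\times$), so the Hensel step as stated is not quite enough. What does work, and what you gesture at with ``rigidity of the commuting $\Gal(K'/K)$-action'', is that the rank-one $(\varphi,\Gal(K'/K))$-summands of $D_{\pst}(D)$ are pairwise non-isomorphic because the $\phi_i$ are distinct, and any free $(\varphi,\Gal(K'/K))$-stable $K_0'\otimes_{\Q_p}E[\epsilon]/\epsilon^2$-submodule of $D_{\pst}(\widetilde{D})$ is a direct sum of full isotypic pieces (the relevant idempotents lift since the pairwise differences of the isotypic data are invertible modulo $\epsilon$). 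With this adjustment your decomposition $\widetilde{D}[1/t]\cong\bigoplus_i\cR_{K,E[\epsilon]/\epsilon^2}(\widetilde{\phi}_i)[1/t]$ and the identification $\psi_{w,i}=\widetilde{\psi}_{w^{-1}(i)}$ go through exactly as you wrote.
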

\begin{proof}
	The lemma is well-known, but we include a proof for the convenience of the reader. It suffices to prove the statement for the case where  $w_2w_1^{-1}$ is a simple reflection, say, $s_k$. Let $\widetilde{D}\in \Ext^1_g(D,D)$ and suppose $\kappa_{w_i}(\widetilde{D})=(\psi_{i,1}, \cdots, \psi_{i,n})$. By definition, $\widetilde{D}$ admits  triangulations:
	\begin{equation*}
		\cR_{K, E[\epsilon]/\epsilon^2}(\phi_{w_i^{-1}(1)} z^{\textbf{h}_1} (1+\psi_{i,1}\epsilon)) \lin \cdots \lin 	\cR_{K, E[\epsilon]/\epsilon^2}(\phi_{w_i^{-1}(n)} z^{\textbf{h}_1} (1+\psi_{i,n}\epsilon)).
	\end{equation*}
	Note by assumption $w_1^{-1}(j)=w_2^{-1}(j)$ for $j\neq k, k+1$. Consequently,  for $j<k$ or $j>k+1$, we have  $\Fil^j_{\sT_{w_1}} \widetilde{D} \cong \Fil^j_{\sT_{w_2}} \widetilde{D}$, since $\Hom\big(\Fil^j_{\sT_{w_1}} \widetilde{D}, \widetilde{D}/\Fil^j_{\sT_{w_2}} \widetilde{D}\big)=0$.
	
	As $\Hom\Big(\cR_{K, E[\epsilon]/\epsilon^2}\big(\phi_{w_1^{-1}(1)} z^{\textbf{h}_1} (1+\psi_{1,1}\epsilon)\big), \widetilde{D}\Big)\cong E[\epsilon]/\epsilon^2$, 
	using d\'evissage for $\sT_{w_2}$, we easily deduce that if $k>1$, 
	\begin{equation*}
		\Hom\Big(\cR_{K, E[\epsilon]/\epsilon^2}\big(\phi_{w_1^{-1}(1)} z^{\textbf{h}_1} (1+\psi_{1,1}\epsilon)\big), \cR_{K, E[\epsilon]/\epsilon^2}\big(\phi_{w_2^{-1}(1)} z^{\textbf{h}_1} (1+\psi_{2,1}\epsilon)\big)\Big)\cong E[\epsilon]/\epsilon^2,
	\end{equation*} 
	hence  $H^0_{(\varphi, \Gamma)}\big(\cR_{K, E[\epsilon]/\epsilon^2}(1+(\psi_{1,1}-\psi_{2,1})\epsilon)\big)\cong E[\epsilon]/\epsilon^2$ (noting $w_1^{-1}(1)=w_2^{-1}(1)$). So $\psi_{1,1}=\psi_{2,1}$. We can then consider the $\cR_{K, E[\epsilon]/\epsilon^2}$-module $\widetilde{D}/\cR_{K, E[\epsilon]/\epsilon^2}(\phi_{w_1^{-1}(1)} z^{\textbf{h}_1} (1+\psi_{1,1}\epsilon))$ equipped with the filtrations induced by $\sT_{w_1}$ and $\sT_{w_2}$. Continuing with the above argument, we have $\psi_{1,j}=\psi_{2,j}$ for $j<k$. 
	
	For $j=k$, we have (noting $\Fil^{k-1}_{\sT_{w_1}} \widetilde{D}=\Fil^{k-1}_{\sT_{w_2}} \widetilde{D}$)
	\begin{equation*}
		\Hom\Big(\cR_{K, E[\epsilon]/\epsilon^2}\big(\phi_{w_1^{-1}(k)} z^{\textbf{h}_k}(1+\psi_{1,k\epsilon})\big), \widetilde{D}/\Fil^{k-1}_{\sT_{w_2}} \widetilde{D}\Big)\cong E[\epsilon]/\epsilon^2.
	\end{equation*}
	Using d\'evissage for $\sT_{w_2}$ (and the fact $w_2w_1^{-1}=s_k$), we get
	\begin{equation*}
		\Hom\big(\cR\big(\phi_{w_1^{-1}(k)} z^{\textbf{h}_k} (1+\psi_{1,k}\epsilon)\big), \cR\big(\phi_{w_2^{-1}(k+1)} z^{\textbf{h}_{k+1}} (1+\psi_{2,{k+1}}\epsilon)\big)\big)\cong E[\epsilon]/\epsilon^2,
	\end{equation*} 
	hence $\psi_{1,k}=\psi_{2,k+1}$. Exchanging $\sT_{w_1}$ and $\sT_{w_2}$, we get $\psi_{2,k}=\psi_{1,k+1}$.  
	
	For $j>k+1$, using the same argument as in the case of $j<k$ with $\widetilde{D}$ replaced by $\widetilde{D}/\Fil^{k+1}_{\sT_{w_1}} \widetilde{D}$, we see $\psi_{1,j}=\psi_{2,j}$. This concludes the proof. 
\end{proof}
Let $\Ext^1_0(D,D):=\Ker \kappa_w$ (for some $w\in S_n$ \textit{a priori}). By Proposition \ref{PDef1} (3), $\Ext^1_0(D,D) \subset \Ext^1_g(D,D)$. Using Lemma \ref{Lint1}, we see $\Ext^1_0(D,D)=\Ker \kappa_w$ for all $w\in S_n$. Moreover, by Proposition \ref{PDef1}  (1) (2), we have 
\begin{equation}\label{Ext0dim}
	\dim_E \Ext^1_0(D,D)=\frac{n(n-1)}{2}d_K+1-n.
\end{equation}
For $\Ext_*^1(D,D)\subset \Ext^1(D,D)$ (with $*=g, w, ...$), if $\Ext^1_*(D,D) \supset \Ext^1_0(D,D)$, we set $$\overline{\Ext}^1_*(D,D):=\Ext^1_*(D,D)/\Ext^1_0(D,D).$$ 
We have hence isomorphisms
\begin{equation}\label{Ekappaw000}
	\overline{\Ext}^1_w(D,D) \xlongrightarrow[\sim]{\kappa_w} \Hom(T(K),E), \ \ 	\overline{\Ext}^1_g(D,D) \xlongrightarrow[\sim]{\kappa_w} \Hom_{\sm}(T(K),E).
\end{equation}
Note also
\begin{equation}\label{bardim} \dim_E \ol{\Ext}^1(D,D)=\frac{n(n+1)}{2}d_K +n.
\end{equation}
Let $\Ext^1_{g'}(D,D)\subset \Ext^1(D,D)$ be the subspace of de Rham deformations up to twist by characters of $K^{\times}$ over $(E[\epsilon]/\epsilon^2)^{\times}$. Similarly, set
\begin{multline}\label{EHomg'T}
	\Hom_{g'}(T(K),E):=\{\psi\in \Hom(T(K),E)\ |\ \exists \psi_0: K^{\times} \ra E
\\ \text{ such that  } \psi-\psi_0 \circ \dett\in \Hom_{\sm}(T(K),E)\}.
	\end{multline}
One easily deduces from Proposition \ref{PDef1} (3) that for all $w\in S_n$, $\Ext^1_{g'}(D,D)\subset \Ext^1_w(D,D)$ and is equal to the preimage of $\Hom_{g'}(T(K),E)$ under $\kappa_w$. Thus
$\dim_E \Ext^1_{g'}(D,D)=1+(\frac{n(n-1)}{2}+1) d_K$.
Moreover, (\ref{Eint1}) holds with ``$g$" and ``$\sm$" replaced by ``$g'$". 
Using the fact that $D$ is non-critical, by \cite[Thm.~3.19]{Che11} (for $K=\Q_p$) and \cite[Thm.~2.62]{Na2} (for general $K$) (see also \cite{Kis10} for the ($n=2$)-case, noting the proposition also follows from Corollary \ref{Csur0} below and an easy induction argument), we have
\begin{proposition}\label{Pinffern}
	The natural map $\oplus_{w\in S_n} \Ext^1_w(D,D) \ra \Ext^1(D,D)$ is surjective and induces a surjective map
	$	\oplus_{w\in S_n} \ol{\Ext}^1_w(D,D) \twoheadrightarrow \ol{\Ext}^1(D,D)$.
\end{proposition}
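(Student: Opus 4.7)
The plan is to proceed by induction on $n$, combining Corollary~\ref{Csur0} (promised in the sequel to provide surjectivity from paraboline deformations onto the full deformation space modulo $\Ext^1_0$) with the inductive hypothesis applied to the rank-$(n-1)$ modules $D_1$ and $C_1$. The base case $n=1$ is trivial since $S_1$ is trivial and $\Ext^1_1(D,D) = \Ext^1(D,D)$.

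For the inductive step, fix the refinement $(\phi_1, \ldots, \phi_n)$ and the two filtrations $\sF$, $\sG$ of \S~\ref{S22}. By Corollary~\ref{Csur0}, $\Ext^1_{\sF}(D,D) + \Ext^1_{\sG}(D,D)$ surjects onto $\Ext^1(D,D)$ modulo $\Ext^1_0(D,D)$, so it suffices to show every $\sF$-paraboline class (and symmetrically every $\sG$-paraboline class) lies in the image of $\oplus_w \Ext^1_w(D,D)$. A lift $\widetilde{D} \in \Ext^1_{\sF}(D,D)$ fits into a short exact sequence $0 \to \widetilde{D_1} \to \widetilde{D} \to \widetilde{L} \to 0$ with $L := \cR_{K,E}(\phi_n z^{\textbf{h}_n})$. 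Applying the inductive hypothesis to the generic non-critical $D_1$ of rank $n-1$, one writes $\widetilde{D_1} \equiv \sum_{w_1 \in S_{n-1}} \widetilde{D}_{1,w_1} \pmod{\Ext^1_0(D_1,D_1)}$ with each summand trianguline of refinement $w_1 \cdot (\phi_1, \ldots, \phi_{n-1})$. Gluing each $\widetilde{D}_{1,w_1}$ with a suitable rank-one deformation of $L$ along a chosen lift of the connecting extension class produces a trianguline deformation of $D$ in $\Ext^1_w(D,D)$ for $w \in S_n$ extending $w_1$ and fixing $n$; the residual gluing ambiguity in $\Ext^1(L, D_1)$ and the trianguline character ambiguity at position $n$ are absorbed by the surjectivity of $\kappa_w$ (Proposition~\ref{PDef1}(2)).

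The main obstacle is the compatibility of the induction with the quotient by $\Ext^1_0$: one must verify that $\Ext^1_0(D_1,D_1)$ pushes forward into $\Ext^1_0(D,D)$ under the natural lifting map (so that the inductive decomposition of $\widetilde{D_1}$ descends to a decomposition of $\widetilde{D}$ modulo $\Ext^1_0(D,D)$), and that the adjustment of trianguline parameters at position $n$ can be carried out independently without disturbing the structure at positions $1, \ldots, n-1$. Both points reduce to the defining relation $\Ext^1_0(D,D) = \Ker \kappa_w$ together with the compatibility Lemma~\ref{Lint1} across refinements and the surjectivity in Proposition~\ref{PDef1}(2). As a sanity check, the dimension count $\dim_E \ol{\Ext}^1_w(D,D) = n(1+d_K)$ and $\dim_E \ol{\Ext}^1(D,D) = \frac{n(n+1)}{2}d_K + n$ shows that the sum on the left becomes strictly redundant for $n \geq 3$, consistent with the constraints imposed by Lemma~\ref{Lint1}.
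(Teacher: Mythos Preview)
Your approach is correct in outline and matches the alternative route the paper mentions parenthetically (``noting the proposition also follows from Corollary~\ref{Csur0} below and an easy induction argument''). The paper's own proof is simply a citation to \cite[Thm.~3.19]{Che11} (for $K=\Q_p$) and \cite[Thm.~2.62]{Na2} (for general $K$), so you are supplying the inductive argument the paper only gestures at.

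Two remarks on execution. First, you make the induction harder than necessary by invoking the \emph{barred} induction hypothesis $\widetilde{D_1} \equiv \sum_{w_1} \widetilde{D}_{1,w_1} \pmod{\Ext^1_0(D_1,D_1)}$, which then forces you to confront the $\Ext^1_0$-compatibility you flag as the main obstacle. If you induct on the \emph{unbarred} statement instead, you write $\widetilde{D_1} = \sum_{w_1} \widetilde{D}_{1,w_1}$ exactly; lifting each $\widetilde{D}_{1,w_1}$ to $\Ext^1_w(D,D)$ via Corollary~\ref{CkappaFp}(1) (which uses only Proposition~\ref{Ppara1}, not Proposition~\ref{Pinffern}, so there is no circularity) leaves a difference lying in $\kappa_{\sF}^{-1}(\{0\}\times \Hom(K^\times,E))$, and this preimage is already contained in $\Ext^1_w(D,D)$ for any $w$ with $w(n)=n$ by the same corollary. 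The barred statement then follows trivially since $\Ext^1_0(D,D)\subset \Ext^1_w(D,D)$ for every $w$.

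Second, your phrase ``$\Ext^1_0(D_1,D_1)$ pushes forward into $\Ext^1_0(D,D)$'' is directionally off: there is no natural map $\Ext^1(D_1,D_1)\to \Ext^1(D,D)$, only $\kappa_{\sF}$ going the other way. What you actually need (and what your citations to $\Ker\kappa_w$ and Lemma~\ref{Lint1} do justify) is that any $\sF$-deformation whose $\kappa_{\sF}$-image has first component in $\Ext^1_0(D_1,D_1)$ is already trianguline for some $w$ compatible with $\sF$; this is immediate from Corollary~\ref{CkappaFp}(1) since $\Ext^1_0(D_1,D_1)\subset \Ext^1_{w_1}(D_1,D_1)$.
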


Now we consider general paraboline deformations of $D$. Let $B$ the Borel subgroup of $\GL_n$ of upper triangular matrices, $P\supset B$ be a standard parabolic subgroup of $\GL_n$ with the standard Levi subgroup $L_P\supset T$ equal to $\diag(\GL_{n_1}, \cdots, \GL_{n_r})$. A filtration $$\sF_P: 0=\Fil^0_{\sF_P} D \subsetneq \Fil^1_{\sF_P} D \subsetneq \cdots \subsetneq \Fil^r_{\sF_P} D=D$$ of saturated $(\varphi, \Gamma)$-submodules of $D$ is called a \textit{$P$-filtration} if $M_i:=\rank \gr^i_{\sF_P} D=n_i$. A deformation $\widetilde{D}$ of $D$ over $E[\epsilon/\epsilon^2]$ is called an \textit{$\sF_P$-deformation}, if $\widetilde{D}$ admits a filtration $\Fil^i_{\sF_P} \widetilde{D}$ of saturated $(\varphi, \Gamma)$-submodules of $D$ over $\cR_{K, E[\epsilon]/\epsilon^2}$ (which means $\Fil^i_{\sF_P} \widetilde{D}$ is free over $\cR_{K, E[\epsilon]/\epsilon^2}$) such that $\gr^i_{\sF_P} \widetilde{D}$ is a deformation of $M_i$ over $\cR_{K,E[\epsilon]/\epsilon^2}$. Denote by $\Ext^1_{\sF_P}(D,D) \subset \Ext^1(D,D)$ the subspace of $\sF_P$-deformations. By \cite[Prop.~3.6, Prop.~3.7]{Che11} (which is for $K=\Q_p$, but all the arguments generalize directly to general $K$, see also the proof of Proposition \ref{Pparasigma} below), we have
\begin{proposition}\label{Ppara1}
	$\dim_E \Ext^1_{\sF_P}(D,D)=1+d_K \dim P=1+d_K \sum_{1\leq i \leq j \leq r} n_i n_j$. The natural map \begin{equation}\label{EkappaFp0}
		\kappa_{\sF_P}: \Ext^1_{\sF_P}(D,D) \lra \prod_{i=1}^r \Ext^1(M_i,M_i),
	\end{equation}sending $\widetilde{D}$ to $(\gr^i_{\sF_P} \widetilde{D})_{i=1,\cdots, r}$, is surjective. 
\end{proposition}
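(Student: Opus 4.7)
My approach is to identify $\Ext^1_{\sF_P}(D,D)$ with the image of the natural map $H^1_{(\varphi,\Gamma)}(\mathrm{ad}_P(D)) \to H^1_{(\varphi,\Gamma)}(\mathrm{ad}(D)) = \Ext^1(D,D)$, where $\mathrm{ad}(D) = D \otimes_{\cR_{K,E}} D^{\vee}$ and $\mathrm{ad}_P(D) \subset \mathrm{ad}(D)$ is the sub-$(\varphi,\Gamma)$-module of endomorphisms preserving $\sF_P$. This submodule carries a natural filtration $F^k \mathrm{ad}_P(D) := \{f : f(\Fil^i_{\sF_P} D) \subset \Fil^{i+k}_{\sF_P} D\}$ for $k \geq 0$, with graded pieces
$$\gr^k \mathrm{ad}_P(D) \cong \bigoplus_{j=1}^{r-k} M_{j+k} \otimes_{\cR_{K,E}} M_j^{\vee}.$$
Thus $\mathrm{ad}_P(D)$ is free of rank $\sum_{i \leq j} n_i n_j = \dim P$ over $\cR_{K,E}$, and Liu's Euler-characteristic formula gives $\chi(\mathrm{ad}_P(D)) = -d_K \dim P$. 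The injectivity of $H^1(\mathrm{ad}_P(D)) \hookrightarrow H^1(\mathrm{ad}(D))$ (so that $\Ext^1_{\sF_P}(D,D)$ really sits inside $\Ext^1(D,D)$) follows from $H^0(\mathrm{ad}(D)/\mathrm{ad}_P(D)) = 0$, a consequence of non-criticality forcing $\Hom(M_j, M_i) = 0$ whenever $i < j$ via (\ref{Ehomchara}) and devissage of the triangulations.

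The dimension formula then reduces to computing $H^0$ and $H^2$ of $\mathrm{ad}_P(D)$. For $H^0$: scalars are filtration-preserving and by non-criticality $\End(D) = E$, so $\dim H^0 = 1$. For $H^2$: I devissage along $F^{\bullet}\mathrm{ad}_P(D)$ and reduce to showing $\Ext^2(M_j, M_{j+k}) = 0$ for every $j$ and every $k \geq 0$. By Tate/Poincar\'e duality, this equals $\Hom(M_{j+k}, M_j \otimes \cR_{K,E}(\varepsilon))^{\vee}$; further devissage of the triangulations of both factors reduces the vanishing to
$$\Hom\bigl(\cR_{K,E}(\phi_a z^{\textbf{h}_a}),\, \cR_{K,E}(\phi_b \varepsilon z^{\textbf{h}_b})\bigr) = 0$$
for $a$ in the range of $M_{j+k}$ and $b$ in the range of $M_j$. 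Since the smooth part of $\varepsilon = N_{K/\Q_p}(\cdot)|\cdot|_K$ is $|\cdot|_K$, by (\ref{Ehomchara}) this Hom is non-zero only when $\phi_a = \phi_b |\cdot|_K$, which is excluded by the genericity hypothesis $\phi_a \phi_b^{-1} \neq |\cdot|_K$ for $a \neq b$, and is also excluded in the case $a = b$ because $|\cdot|_K \neq 1$. Combining, $\dim_E \Ext^1_{\sF_P}(D,D) = 1 + 0 - \chi(\mathrm{ad}_P(D)) = 1 + d_K \dim P$.

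For the surjectivity of $\kappa_{\sF_P}$, I unwind the identifications: the map $\kappa_{\sF_P}$ corresponds to $H^1(\mathrm{ad}_P(D)) \to H^1(\gr^0 \mathrm{ad}_P(D)) = \bigoplus_{i=1}^r \Ext^1(M_i, M_i)$ induced by the projection $\mathrm{ad}_P(D) \twoheadrightarrow \gr^0 \mathrm{ad}_P(D)$ with kernel $F^1 \mathrm{ad}_P(D)$. The long exact sequence reduces surjectivity to the vanishing $H^2(F^1 \mathrm{ad}_P(D)) = 0$, which in turn reduces along the remaining filtration to $H^2(\gr^k \mathrm{ad}_P(D)) = 0$ for $k \geq 1$ --- a special case of what was established above.

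The main obstacle is the $H^2$-vanishing, which is the single step where genericity (in particular the condition $\phi_a \phi_b^{-1} \neq |\cdot|_K$) and non-criticality enter together. Once this vanishing is in hand, both the dimension formula and the surjectivity follow, and the proof directly generalizes Chenevier's \cite[Prop.~3.6, Prop.~3.7]{Che11} --- the only $K$-dependence appearing through the factor $d_K$ in Liu's Euler-characteristic formula.
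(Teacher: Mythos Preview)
Your proof is correct and follows exactly the approach the paper cites (Chenevier \cite[Prop.~3.6, 3.7]{Che11}; the paper's $\Hom_{\sF_P}(D,D)$ in the proof of Proposition~\ref{Pparasigma} is your $\mathrm{ad}_P(D)$). One index slip worth noting: since $\Fil^i_{\sF_P}D$ is increasing, the decreasing filtration on $\mathrm{ad}_P(D)$ should be $F^k=\{f:f(\Fil^i)\subset\Fil^{i-k}\}$ with graded pieces $\bigoplus_j M_j\otimes M_{j+k}^{\vee}$, so the relevant vanishing is $\Ext^2(M_{j+k},M_j)=0$ rather than $\Ext^2(M_j,M_{j+k})=0$ --- but genericity kills both, so the argument is unaffected.
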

For  $w\in S_n$, we call the $B$-filtration $\mathscr{T}_w$ (associated to $w(\phi)$) compatible with $\sF_P$, if $\mathscr{T}_w$ induces a complete flag on $\Fil^i_{\sF_P} D$ for all $i$. In this case, we have $\Ext^1_{\sT_w}(D,D) \subset \Ext^1_{\sF_P}(D,D)$. For $i=1, \cdots, r$, we let $\mathscr{T}_{w,i}$ be the induced filtration on $M_i$ ($=\gr^i_{\sF_P} D$). 
\begin{corollary}\label{CkappaFp}
	Keep the above situation. 
	
	(1) $ \Ext^1_{w}(D,D) $ is the preimage of $ \prod_{i=1}^r \Ext^1_{\sT_{w,i}}(M_i,M_i)$ via $\kappa_{\sF_P}$. In particular, $\kappa_{\sF_P}$ induces a surjective map
	$	\kappa_{\sF_P}: \Ext^1_{w}(D,D) \twoheadrightarrow \prod_{i=1}^r \Ext^1_{\sT_{w,i}}(M_i,M_i)$.
	
	(2) The map $\kappa_{\sF_P}$ sends $\Ext^1_{0}(D,D)$ to $\prod_{i=1}^r \Ext^1_0(M_i,M_i)$ and induces  isomorphisms 
	\begin{equation}\label{EkappaFp}
		\kappa_{\sF_P}: \ol{\Ext}^1_{\sF_P}(D,D) \xlongrightarrow{\sim} \prod_{i=1}^r \ol{\Ext}^1(M_i,M_i),
	\end{equation}
and $\ol{\Ext}^1_{w}(D,D)\xrightarrow{\sim} \prod_{i=1}^r \ol{\Ext}^1_{\sT_{w,i}}(M_i,M_i)$.
\end{corollary}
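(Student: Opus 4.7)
The plan is to reduce (1) to a direct refinement construction at the level of deformations, and then to derive (2) by combining (1) with Proposition \ref{Ppara1} and a dimension count.

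For the easy inclusion $\Ext^1_{w}(D,D) \subset \kappa_{\sF_P}^{-1}\bigl(\prod_{i=1}^r \Ext^1_{\sT_{w,i}}(M_i,M_i)\bigr)$ in (1), I would take $\widetilde{D} \in \Ext^1_{w}(D,D)$ equipped with its triangulation $\sT_w$ of parameter $w(\phi) z^{\textbf{h}}(1+\psi\epsilon)$. Because $\sT_w$ refines $\sF_P$ by compatibility, setting $\Fil^i_{\sF_P}\widetilde{D} := \Fil^{n_1+\cdots+n_i}_{\sT_w}\widetilde{D}$ makes $\widetilde{D}$ an $\sF_P$-deformation whose $i$-th graded piece inherits the triangulation $\sT_{w,i}$ and therefore lies in $\Ext^1_{\sT_{w,i}}(M_i,M_i)$.

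For the reverse inclusion, which I expect to be the main technical step, I would start with an $\sF_P$-deformation $\widetilde{D}$ whose graded pieces $\widetilde{M}_i := \gr^i_{\sF_P}\widetilde{D}$ carry triangulations $\sT_{w,i}$, and build a triangulation of $\widetilde{D}$ by hand: for $k$ with $n_1+\cdots+n_{i-1} < k \leq n_1+\cdots+n_i$ and $j := k-(n_1+\cdots+n_{i-1})$, define $\Fil^k_{\sT_w}\widetilde{D}$ to be the preimage in $\Fil^i_{\sF_P}\widetilde{D}$ of $\Fil^j_{\sT_{w,i}}\widetilde{M}_i$ under the quotient map $\Fil^i_{\sF_P}\widetilde{D} \twoheadrightarrow \widetilde{M}_i$. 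A rank count gives the correct rank; saturation is inherited from that of $\Fil^j_{\sT_{w,i}}\widetilde{M}_i$ in $\widetilde{M}_i$ and of $\Fil^i_{\sF_P}\widetilde{D}$ in $\widetilde{D}$; and the successive quotients are canonically identified with the $\gr^j_{\sT_{w,i}}\widetilde{M}_i$, each free of rank one over $\cR_{K, E[\epsilon]/\epsilon^2}$ with the prescribed parameter. The delicate point is verifying that these graded pieces are genuinely free over $\cR_{K,E[\epsilon]/\epsilon^2}$ rather than merely of the right rank, which is handled by reducing modulo $\epsilon$ and invoking that the graded pieces of $\sT_{w,i}$ on $M_i$ are free of rank one over $\cR_{K,E}$ by non-criticality.

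For (2), I would first observe that if $\widetilde{D} \in \Ext^1_0(D,D) = \Ker \kappa_w$, then by (1) each $\gr^i_{\sF_P}\widetilde{D}$ has trivial trianguline parameter with respect to $\sT_{w,i}$, hence lies in $\Ext^1_0(M_i,M_i)$; so $\kappa_{\sF_P}$ sends $\Ext^1_0(D,D)$ into $\prod_i \Ext^1_0(M_i,M_i)$ and descends to a well-defined map $\ol{\Ext}^1_{\sF_P}(D,D) \to \prod_i \ol{\Ext}^1(M_i,M_i)$. This descended map is surjective by the surjectivity of $\kappa_{\sF_P}$ in Proposition \ref{Ppara1}. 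A dimension count using Proposition \ref{Ppara1}, Proposition \ref{PDef1} and (\ref{Ext0dim}) shows both sides have dimension $n + d_K \sum_{i=1}^r \frac{n_i(n_i+1)}{2}$, so the descended map is an isomorphism, yielding (\ref{EkappaFp}). The second isomorphism $\ol{\Ext}^1_{w}(D,D) \xrightarrow{\sim} \prod_{i=1}^r \ol{\Ext}^1_{\sT_{w,i}}(M_i,M_i)$ is then obtained by restricting the first to the subspaces identified in (1), using $\Ext^1_0(D,D) \subset \Ext^1_{w}(D,D)$ from Proposition \ref{PDef1}(3).
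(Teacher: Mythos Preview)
Your proof is correct and follows essentially the same outline as the paper's. Two differences are worth noting. First, for (1) the paper simply says the preimage statement holds ``by definition'' (which it does, once one unwinds what an $\sF_P$-deformation and a $\sT_w$-deformation are); your explicit construction of the refined filtration by preimages just makes this precise, and your freeness check is fine since an extension of free $\cR_{K,E[\epsilon]/\epsilon^2}$-modules splits. Second, for the surjectivity of (\ref{EkappaFp}) in (2), the paper invokes part (1) together with Proposition~\ref{Pinffern} applied to each $M_i$ (so that the images of the various $\ol{\Ext}^1_w$ already span the target), whereas you deduce it directly from the surjectivity of $\kappa_{\sF_P}$ before passing to quotients (Proposition~\ref{Ppara1}); your route is slightly more economical and avoids Proposition~\ref{Pinffern} altogether at this step. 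The dimension count and the final isomorphism are handled the same way in both.
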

\begin{proof}
	The first part of (1) is by definition, and the second part follows from Proposition \ref{Ppara1}. It is clear that the following diagram commutes
	\begin{equation}\label{Eparatri}
		\begin{tikzcd}
		&\Ext^1_w(D,D) \arrow[r, two heads] \arrow[d, "\kappa_w", two heads]& \prod_{i=1}^r \Ext^1_{\sT_{w,i}}(M_i,M_i)  \arrow[d,	"(\kappa_{\sT_{w,i}})", two heads] \\
			&\Hom(T(K),E) \arrow[r, "\sim"] &\prod_{i=1}^r \Hom(T_i(K),E)
		\end{tikzcd}
	\end{equation}
	where $T_i$ is the torus subgroup of $\GL_{n_i}$. The first part of (2) follows. By (1) and Proposition \ref{Pinffern}, (\ref{EkappaFp}) is surjective.  However, by Proposition \ref{Ppara1}, (\ref{Ext0dim}) and (\ref{bardim}) (applied to the $M_i$'s), we have  $\dim_E\overline{\Ext}^1_{\sF_P}(D,D)=d_K \dim (B \cap L_P)-n=\sum_{i=1}^r \dim_E \ol{\Ext}^1(M_i,M_i) $. Hence (\ref{EkappaFp}) is bijective. The final isomorphism follows by similar arguments.
\end{proof}
Let  $\Ext^1_{\sF_P,g'}(D,D)$ be the preimage of $\prod_{i=1}^r \Ext^1_{g'}(M_i,M_i)$ via (\ref{EkappaFp0}). Set 
\begin{multline}\label{EPg'T}\Hom_{P,g'}(T(K),E):=\{\psi \in \Hom(T(K),E)\ |\ \exists \psi_P: Z_{L_P}(K)\ra E \\
	\text{ such that }  \psi-\psi_P \circ \dett_{L_P} \in \Hom_{\sm}(T(K),E)\}.\end{multline}  It is straightforward to see $\dim_E \Hom_{P,g'}(T(K),E)=n+rd_K$. 
The following corollary  generalizes (\ref{Eint1}). 
\begin{corollary}\label{Cint}
	(1)	Let $w\in S_n$ such that $\sT_w$ is compatible with $\sF_P$, then $\Ext^1_{\sF_P,g'}(D,D)\subset \Ext^1_w(D,D)$.
	
	(2) Let $w_1,w_2\in S_n$ such that $\sT_{w_1}$, $\sT_{w_2}$  are compatible with $\sF_P$ (so $w_2w_1^{-1}$ lies in the Weyl group  $\sW_P$ of $L_P$), we have a commutative diagram
	\begin{equation*}
		\begin{CD}
			\ol{\Ext}^1_{\sF_P, g'}(D,D) @>  \kappa_{w_1} >  \sim >  \Hom_{P,g'}(T(K),E) \\
			@| @V w_2w_1^{-1} V \sim V \\ 
			\ol{\Ext}^1_{\sF_P,g'}(D,D) @> \kappa_{w_2}> \sim > \Hom_{P,g'}(T(K),E).
		\end{CD}
	\end{equation*}.
\end{corollary}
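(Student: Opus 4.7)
The plan is to reduce both parts to block-diagonal statements on the graded pieces $M_i = \gr^i_{\sF_P} D$ via the paraboline graded map $\kappa_{\sF_P}$ of Corollary \ref{CkappaFp}, and then invoke the already-established triangular results (Proposition \ref{PDef1}(3) and Lemma \ref{Lint1}) on each $M_i$.

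For (1), I would first observe that since $\sT_w$ is compatible with $\sF_P$, the induced filtration $\sT_{w,i}$ is a complete (Borel-type) flag on $M_i$. By the $g'$-analog of Proposition \ref{PDef1}(3) (noted in the paragraph containing \eqref{EHomg'T}), $\Ext^1_{g'}(M_i,M_i) \subseteq \Ext^1_{\sT_{w,i}}(M_i,M_i)$ for every $i$. Taking products and then the preimage under $\kappa_{\sF_P}$, Corollary \ref{CkappaFp}(1) yields at once $\Ext^1_{\sF_P,g'}(D,D) \subseteq \Ext^1_w(D,D)$.

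For (2), I would split the argument into two steps. \emph{Step A}: identify, under the block decomposition $T(K) = \prod_i T_i(K)$, the subspace $\Hom_{P,g'}(T(K),E)$ with $\prod_i \Hom_{g'}(T_i(K),E)$. This is because $Z_{L_P}(K) = \prod_i Z(\GL_{n_i})(K)$ and $\dett_{L_P}$ factors block-by-block through the individual determinants, so writing $\psi = \psi_P \circ \dett_{L_P} + \psi_{\sm}$ with $\psi_{\sm}$ smooth is exactly the same as giving, block by block, a decomposition $\psi_i = \psi_{P,i} \circ \dett_i + (\psi_{\sm})_i$. The dimension count $\sum_i(d_K + n_i) = rd_K + n$ matches, as a sanity check. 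Combining Corollary \ref{CkappaFp}(2) with the bijections $\ol{\Ext}^1_{g'}(M_i,M_i) \xrightarrow{\sim} \Hom_{g'}(T_i(K),E)$ coming from the $g'$-version of Proposition \ref{PDef1}(3) on each $M_i$, we obtain $\kappa_{w_i}\colon \ol{\Ext}^1_{\sF_P,g'}(D,D) \xrightarrow{\sim} \Hom_{P,g'}(T(K),E)$ for $i=1,2$.

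\emph{Step B}: verify commutativity. Because $w_2 w_1^{-1} \in \sW_P$, its action on $\Hom(T(K),E)$ preserves the block decomposition and acts on the $j$-th block as $w_{2,j} w_{1,j}^{-1}$, where $w_{i,j}$ is the component of $w_i$ in $\sW_{L_P}$ on the $j$-th block. Via the commutative square \eqref{Eparatri}, the claim on $\ol{\Ext}^1_{\sF_P,g'}(D,D)$ then reduces to the analogous claim for each $(M_j,\sT_{w_1,j},\sT_{w_2,j})$, which is precisely Lemma \ref{Lint1} in its $g'$-version (explicitly noted after \eqref{EHomg'T}) applied to $M_j$.

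The main obstacle, in practice, is the book-keeping in Step A: one has to make sure that the notion of ``de Rham up to a twist by a character of $K^{\times}$'' on $D$ (governed by a single $\psi_0\circ\dett$) is refined, in the paraboline setting, to one twist per block (governed by $\psi_P\circ\dett_{L_P}$), and that this refinement is exactly what \eqref{EPg'T} encodes. Everything else is a formal consequence of Corollary \ref{CkappaFp} together with Proposition \ref{PDef1}(3) and Lemma \ref{Lint1} applied blockwise.
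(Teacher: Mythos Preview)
Your proposal is correct and follows essentially the same approach as the paper: for (1) you use $\Ext^1_{g'}(M_i,M_i)\subset \Ext^1_{\sT_{w,i}}(M_i,M_i)$ together with Corollary~\ref{CkappaFp}(1), and for (2) you reduce via Corollary~\ref{CkappaFp}(2) and the square~\eqref{Eparatri} to the $g'$-version of Lemma~\ref{Lint1} applied blockwise to each $M_i$. Your Step~A is a bit more explicit than the paper about the identification $\Hom_{P,g'}(T(K),E)\cong\prod_i\Hom_{g'}(T_i(K),E)$, but the substance is the same.
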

\begin{proof}
	(1) follows from the fact $\Ext^1_{g'}(M_i,M_i)\subset \Ext^1_{\sT_{w,i}}(M_i,M_i)$ and Corollary \ref{CkappaFp} (1) . By Corollary \ref{CkappaFp} (2), we have $\ol{\Ext}^1_{\sF_P, g'}(D,D)\xrightarrow{\sim} \prod_{i=1}^r \ol{\Ext}^1_{g'}(M_i,M_i)$. (2) then follows from the commutative diagram (\ref{Eparatri}) and Lemma \ref{Lint1} (applied to each $M_i$, and with ``$g$", ``$\sm$" replaced by ``$g'$").
\end{proof}
\subsubsection{Trianguline and paraboline deformations, II}

Let $D\in \Phi\Gamma_{\nc}(\phi, \textbf{h})$.
We consider some partially de Rham deformations of $D$. The reader who is mainly interested in the $\Q_p$-case can skip this section. Recall for $J\subset \Sigma_K$, and a $(\varphi, \Gamma)$-module $M$  over $\cR_{K,E}$, $M$ is called \textit{$J$-de Rham}, if $\dim_E D_{\dR}(M)_{\tau}=\rank_{\cR_{K,E}} M$ for all $\tau\in J$, where $D_{\dR}(M)_{\tau}=H^0(\Gal_K, W_{\dR}^+(M)_{\tau}[1/t])$. Note the property is clearly inherited by taking subquotients. For a $(\varphi, \Gamma)$-module $M$ over $\cR_{K,E}$, denote by $W(M)=(W_e(M), W_{\dR}^+(M))$ its associated $B$-pair (\cite{Ber08II}). By \cite[Thm.~5.11]{Na2}, there are natural isomorphisms for $i=0, 1, 2$,
\begin{equation}
	\label{EcohophiGamma}	H^i_{(\varphi, \Gamma)}(M) \xlongrightarrow{\sim} H^i(\Gal_K, W(M))
\end{equation}
where $H^i(\Gal_K,W(M))$ denotes the $i$-th Galois cohomology of the $B$-pair $M$, see \cite[\S~2.1]{Na}.

 Throughout the section, we fix $\sigma\in \Sigma_K$. For an extension group $\Ext^1_?(D,D)$, we denote by $\Ext^1_{\sigma, ?}(D,D)\subset \Ext^1_?(D,D)$ the subspace consisting of  $\widetilde{D}$ that are $\Sigma_K \setminus \{\sigma\}$-de Rham.
If $\Ext^1_?(D,D)\supset \Ext^1_{0}(D,D)$, then it is clear that $\Ext^1_{\sigma,?}(D,D)\supset \Ext^1_{0}(D,D)$ and we set $$\ol{\Ext}^1_{\sigma,?}(D,D):=\Ext^1_{\sigma, ?}(D,D)/\Ext^1_0(D,D) \subset \ol{\Ext}^1_?(D,D). $$
\begin{lemma}\label{Lsigma1}
	We have $\dim_E \Ext^1_{\sigma}(D,D)=1+\frac{n(n-1)}{2} (d_K-1)+n^2$.
\end{lemma}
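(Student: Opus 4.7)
The plan is to express $\Ext^1_\sigma(D,D)$ as the intersection of partial de Rham conditions, one for each embedding $\tau \neq \sigma$, and to show each such condition cuts down by $\frac{n(n+1)}{2}$ independently. The target formula $1 + \frac{n(n-1)}{2}(d_K-1) + n^2$ can be rewritten as $1 + n^2 d_K - (d_K-1)\frac{n(n+1)}{2}$, i.e.\ codimension $(d_K-1)\frac{n(n+1)}{2}$ inside $\Ext^1(D,D)$, which has dimension $1 + n^2 d_K$ by Proposition~\ref{PDef1}(1). As a sanity check, the same codimension count applied with $J = \Sigma_K$ instead of $J = \Sigma_K \setminus \{\sigma\}$ recovers $\dim_E \Ext^1_g(D,D) = 1 + \frac{n(n-1)}{2} d_K$ from Proposition~\ref{PDef1}(1).

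First, I would set $V := D \otimes D^\vee$ and identify $\Ext^1(D,D) \cong H^1(\Gal_K, W(V))$ through \eqref{EcohophiGamma}. Using the decomposition $W_{\dR}^+(V) \cong \bigoplus_{\tau \in \Sigma_K} W_{\dR,\tau}^+(V)$ and the short exact sequence of $B$-pairs relating $W(V)$ to the quotient $W_{\dR}(V)/W_{\dR}^+(V)$, I would define, for each $\tau \in \Sigma_K$, a natural obstruction map
\begin{equation*}
\delta_\tau \colon \Ext^1(D,D) \longrightarrow H^1\bigl(\Gal_K,\, W_{\dR,\tau}(V)/W_{\dR,\tau}^+(V)\bigr)
\end{equation*}
whose kernel is the subspace $\Ext^1_{g,\tau}(D,D)$ of $\tau$-de Rham deformations, so that by definition $\Ext^1_\sigma(D,D) = \bigcap_{\tau \neq \sigma}\Ext^1_{g,\tau}(D,D) = \ker\bigl((\delta_\tau)_{\tau \neq \sigma}\bigr)$.

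Second, I would establish $\dim_E \operatorname{im}(\delta_\tau) = \frac{n(n+1)}{2}$ for every $\tau$, and independence of the $\delta_\tau$ for different $\tau$, i.e.\ that $(\delta_\tau)_{\tau \in J}\colon \Ext^1(D,D)\twoheadrightarrow \prod_{\tau\in J}\operatorname{im}(\delta_\tau)$ is surjective for any $J \subset \Sigma_K$. The dimension count per embedding mirrors the computation behind $\dim_E \Ext^1_g(D,D) = 1 + \frac{n(n-1)}{2} d_K$ in \cite[Cor.~2.53]{Na2}, localised to one embedding: the $\tau$-factor of $W_{\dR}^+(V)$ contributes the upper-triangular $\binom{n+1}{2}$ part, and the genericity of $D$ (which kills $H^2$ and keeps $H^0 = E$ by the usual $\phi_i\phi_j^{-1} \neq 1, |\cdot|_K$ hypothesis) makes the Euler-characteristic/Tate-duality book-keeping work out cleanly. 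Independence then follows from the orthogonality of the direct sum decomposition of $W_{\dR}^+(V)$ across $\Sigma_K$.

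The main obstacle is verifying both the precise image dimension of each $\delta_\tau$ and the independence step rigorously in the $B$-pair framework. If the direct $B$-pair computation proves awkward, an alternative route is to bootstrap from the trianguline picture of Proposition~\ref{PDef1}: for each $w \in S_n$, show that $\kappa_w$ restricts to a surjection
\begin{equation*}
\Ext^1_{\sigma,w}(D,D) \twoheadrightarrow \Hom_\sigma(T(K), E)
\end{equation*}
with kernel $\Ext^1_0(D,D)$, where $\Hom_\sigma(T(K),E)$ has dimension $2n$. This would give $\dim_E \Ext^1_{\sigma,w}(D,D) = \frac{n(n-1)}{2}d_K + 1 - n + 2n = 1 + n + \frac{n(n-1)}{2}d_K$. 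Then amalgamating over $w$ by a partially de Rham analogue of Proposition~\ref{Pinffern} (noting that the graded characters $\phi_i z^{\mathbf{h}_i}$ are already de Rham, so a trianguline deformation is $\tau$-de Rham iff each parameter $\psi_i$ is smooth at $\tau$, using non-criticality and $h_{i,\tau} > h_{i+1,\tau}$ to check the $H^1$-vanishing of the relevant rank-one quotients) yields the same dimension. The difficulty then migrates to checking that the amalgamation map surjects onto all of $\Ext^1_\sigma(D,D)$, but this follows the same template as the Bellaïche--Chenevier/Chenevier $\infty$-fern argument used in Proposition~\ref{Pinffern}.
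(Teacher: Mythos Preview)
Your first approach is exactly what the paper does, but with all the work outsourced: the paper simply identifies $\Ext^1_\sigma(D,D)$ with $H^1_{g,\Sigma_K\setminus\{\sigma\}}(\Gal_K, W(D\otimes D^\vee))$ via \eqref{EcohophiGamma} and then invokes \cite[Cor.~A.4]{Ding6}, which is precisely the dimension formula packaging your obstruction maps $\delta_\tau$, their image sizes, and their independence. Your description of the mechanism (per-embedding codimension $\frac{n(n+1)}{2}$, Hodge--Tate weights $h_{\tau,i}-h_{\tau,j}$, genericity to kill $H^2$) is correct; the paper just does not reprove it.

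Your alternative trianguline route, however, does not work as stated. First, amalgamation plus surjectivity of $\oplus_w \Ext^1_{\sigma,w}(D,D)\to \Ext^1_\sigma(D,D)$ gives only an \emph{upper} bound $\dim_E \Ext^1_\sigma(D,D)\le \sum_w \dim_E \Ext^1_{\sigma,w}(D,D)$; to extract the exact dimension you would also need to control the kernel (the mutual intersections of the $\Ext^1_{\sigma,w}$), which is not a free byproduct of the $\infty$-fern argument. Second, and more seriously, in this paper the partially de Rham analogue of Proposition~\ref{Pinffern} is Corollary~\ref{Cinf2}, whose proof passes through Proposition~\ref{Phodgecow} and Proposition~\ref{Pcow2}; the dimension comparison in Proposition~\ref{Pcow2}(1) for $*=\sigma$ explicitly uses Lemma~\ref{Lsigma1}. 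So invoking the amalgamation surjectivity to prove Lemma~\ref{Lsigma1} would be circular within the paper's logical structure. Stick with your first approach.
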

\begin{proof}
	Using the notation of \cite[\S~A]{Ding6}, the isomorphism (\ref{EcohophiGamma}) (for $i=1$, $M=D\otimes_{\cR_K,E} D^{\vee}$) induces an isomorphism $\Ext^1_{\sigma}(D,D)\cong H^1_{g,\Sigma_K\setminus \{\sigma\}}(\Gal_K, W(D \otimes_{\cR_{K,E}} D^{\vee}))$ where $D^{\vee}:=\Hom_{\cR_{K,E}}(D,\cR_{K,E})$. The lemma follows then from \cite[Cor.~A.4]{Ding6} (noting $D \otimes_{\cR_{K,E}} D^{\vee}$ has Hodge-Tate-Sen weights $\{h_{\tau,i}-h_{\tau,j}\}_{\substack{\tau\in \Sigma_K \\ i,j=1,\cdots, r}})$. The required assumption holds because $D$ is generic.	
\end{proof}
Let $P$ be a standard parabolic subgroup, and $\sF_P$ be a $P$-filtration on $D$ with $\gr^i_{\sF_P} D=:M_i$. The  surjection $\kappa_{\sF_P}$ (\ref{EkappaFp0}) induces a map
\begin{equation}\label{EkappaFp2}
\kappa_{\sF_P}: \Ext^1_{\sigma, \sF_P}(D,D) \lra \prod_{i=1}^r \Ext^1_{\sigma}(M_i,M_i).
\end{equation}
\begin{proposition}\label{Pparasigma}
(1) We have $\dim_E \Ext^1_{\sigma, \sF_P}(D,D)=1+(d_K-1)\frac{n(n-1)}{2}+\dim P$.

(2)	The map (\ref{EkappaFp2})  is surjective and induces an isomorphism
\begin{equation}\label{EkappaFP11}
	\ol{\Ext}_{\sigma, \sF_P}(D,D) \xlongrightarrow{\sim} \prod_{i=1}^r \ol{\Ext}^1_{\sigma}(M_i,M_i).
\end{equation}
\end{proposition}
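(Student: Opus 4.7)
The plan is to adapt the proof of Proposition \ref{Ppara1} (whose argument, as noted in the text, generalizes \cite[Prop.~3.6, Prop.~3.7]{Che11}) by imposing the partial de Rham constraint, using the $B$-pair cohomology machinery from Lemma \ref{Lsigma1} and \cite[Cor.~A.4]{Ding6}. I would argue by induction on $r$. The base case $r=1$ is Lemma \ref{Lsigma1} itself. For the inductive step, set $D' := \Fil^{r-1}_{\sF_P} D$ equipped with its induced $P'$-filtration $\sF_{P'}$ (where $P'$ has Levi $\GL_{n_1} \times \cdots \times \GL_{n_{r-1}}$) and $M_r := D/D'$. The short exact sequence $0 \to D' \to D \to M_r \to 0$ refines $\sF_P$ to a two-step filtration, and the map (\ref{EkappaFp2}) factors as the forgetful map $\Ext^1_{\sigma, \sF_P}(D,D) \to \Ext^1_{\sigma, \sF_{P'}}(D',D') \oplus \Ext^1_\sigma(M_r,M_r)$ composed with $\kappa_{\sF_{P'}} \oplus \id$.

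For the dimension claim in (1) and surjectivity in (2), I would exploit the long exact $\Ext$-sequence attached to $0 \to D' \to D \to M_r \to 0$. The key input is the vanishing of $\Hom(M_r, D')$ and $\Ext^2(M_r, D')$: both follow from the genericity of $\phi$, which precludes the relations $\phi_i = \phi_j$ and $\phi_i \phi_j^{-1} = |\cdot|_K$ needed to produce nontrivial maps or Tate-duality classes. With these vanishings in hand, the kernel of the forgetful map identifies with $\Ext^1_\sigma(M_r, D')$, whose dimension is computed via \cite[Cor.~A.4]{Ding6} applied to $M_r \otimes (D')^\vee$---non-criticality of $D$ separates the Hodge-Tate-Sen weights enough for the hypothesis of loc.~cit.\ to be satisfied (after a d\'evissage through the refinement of $D'$). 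Adding this kernel dimension to the inductive dimension of $\Ext^1_{\sigma, \sF_{P'}}(D',D')$ and $\dim_E \Ext^1_\sigma(M_r, M_r)$ from Lemma \ref{Lsigma1} yields $1 + (d_K - 1) \frac{n(n-1)}{2} + \dim P$. Surjectivity of (\ref{EkappaFp2}) follows by chasing the same long exact sequence, combining the inductive surjectivity for $D'$ with the absolute case for $M_r$.

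For the isomorphism (\ref{EkappaFP11}), one passes to the quotient by $\Ext^1_0(D,D)$: this subspace lies in $\Ext^1_g(D, D) \subset \Ext^1_{\sigma, \sF_P}(D,D)$ and maps into $\prod_i \Ext^1_0(M_i, M_i) \subset \prod_i \Ext^1_\sigma(M_i, M_i)$ compatibly, so the induced map on quotients is a surjection between spaces whose dimensions match by (\ref{Ext0dim}) applied to $D$ and to each $M_i$---hence an isomorphism. The main technical obstacle I anticipate is the dimension computation of $\Ext^1_\sigma(M_r, D')$ via \cite[Cor.~A.4]{Ding6}: the cited result is tailored to modules with well-separated Hodge-Tate-Sen weights and a compatible genericity condition, whereas $M_r \otimes (D')^\vee$ is typically reducible, so one must perform a d\'evissage along the refinement of $D'$ and verify at each stage that the relevant characters avoid the forbidden values---this reduces, ultimately, to the genericity and non-criticality of $D$. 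An alternative that avoids induction is to reformulate the whole problem in terms of the $B$-pair associated to $\End(D)$ together with its sub-$B$-pair corresponding to $\fp(D)$ (the filtration-preserving endomorphisms), and compute $H^1_{g, \Sigma_K \setminus \{\sigma\}}$ directly as in Lemma \ref{Lsigma1}; this requires more careful bookkeeping between the Levi and nilpotent parts of $\fp$ but gives the dimension and surjectivity in one shot.
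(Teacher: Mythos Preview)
Your inductive approach contains a genuine error in the kernel identification. The kernel of the forgetful map $\Ext^1_{\sigma,\sF_P}(D,D) \to \Ext^1_{\sigma,\sF_{P'}}(D',D') \oplus \Ext^1_\sigma(M_r,M_r)$ does \emph{not} identify with $\Ext^1_\sigma(M_r,D')$ (modulo the diagonal $E$); it equals the full $\Ker(\kappa_{\sF_P})$, which is $\Ext^1(M_r,D')/E$ of dimension $d_K n' n_r - 1$, not $n' n_r - 1$. The reason is that every element of $\Ker(\kappa_{\sF_P})$ is automatically \emph{de Rham} (this is the observation you use later for $\Ext^1_0$, and it is exactly how the paper argues: by Corollary~\ref{CkappaFp}~(1), (\ref{Eparatri}), and Proposition~\ref{PDef1}~(3), a deformation with trivial graded pieces has trianguline parameter in $\Hom_{\sm}(T(K),E)$, hence is de Rham). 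So non-$\sigma$-de Rham classes in $\Ext^1(M_r,D')$ still map into $\Ext^1_{\sigma,\sF_P}(D,D)$. Concretely, the naive long exact sequence in $H^*_{g,\Sigma_K\setminus\{\sigma\}}$ for $0 \to \Hom(M_r,D') \to \Hom_{\sF_Q}(D,D) \to \End(D') \oplus \End(M_r) \to 0$ fails exactness at the middle $H^1$ term when $d_K > 1$; the hypotheses of \cite[Prop.~A.5]{Ding6} are not met for this particular sequence. With the wrong kernel dimension, your inductive sum is off by $(d_K-1)n'n_r$. (If you instead use the correct kernel $\Ext^1(M_r,D')/E$, the dimension count does close up---but then surjectivity no longer follows from the long exact sequence and you are forced back to the direct route.)

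The ``alternative'' you sketch at the end---computing $H^1_{g,\Sigma_K\setminus\{\sigma\}}$ of $\Hom_{\sF_P}(D,D)$ in one shot---is precisely the paper's proof. The paper identifies $\Ext^1_{\sigma,\sF_P}(D,D) \cong H^1_{g,\Sigma_K\setminus\{\sigma\}}(\Gal_K, W(\Hom_{\sF_P}(D,D)))$ (as in \cite[Prop.~3.6~(ii)]{Che11}), reads the Hodge--Tate--Sen weights of $\Hom_{\sF_P}(D,D)$ from the entries of $\fp$ (non-criticality makes them $\{h_{\tau,i}-h_{\tau,j}\}$ over the $\fp$-positions), and applies \cite[Cor.~A.4]{Ding6} directly to get~(1). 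For surjectivity in~(2), the paper uses the key inclusion $\Ker(\kappa_{\sF_P}) \subset \Ext^1_{\sigma,\sF_P}(D,D)$ (elements of the kernel are de Rham) and then compares dimensions; your argument for the isomorphism (\ref{EkappaFP11}) via quotienting by $\Ext^1_0$ and matching dimensions is the same as the paper's.
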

\begin{proof}Let $\Hom_{\sF_P}(D,D)$ be the $(\varphi, \Gamma)$-submodule of $\Hom_{\cR_{K,E}}(D,D)\cong D\otimes_{\cR_{K,E}} D^{\vee}$ consisting of the maps $f$ such that $f(\Fil_{\sF_P}^i) \subset \Fil_{\sF_P}^i$ for $i=1, \cdots, r$. Similarly as in \cite[Prop.~3.6 (ii)]{Che11} and using the notation of \cite[\S~A]{Ding6}, we have $\Ext^1_{\sigma, \sF_P}(D,D)\cong H^1_{g,\Sigma_K\setminus \{\sigma\}}(\Gal_K, W(\Hom_{\sF_P}(D,D)))$. Since $D$ is non-critical, it is straightforward  to see $\Hom_{\sF_P}(D,D)$ has Hodge-Tate-Sen weights $\{h_{\tau,i}-h_{\tau,j}\}_{\tau \in \Sigma_K}$ where the indices $(i,j)$ correspond to entries of the matrix $\gl_n$ lying in $\fp$, the Lie algebra of $P$. 
	By \cite[Cor.~A.4]{Ding6} (noting that the $(\varphi, \Gamma)$-module $\Hom_{\sF_P}(D,D)$ satisfies the assumptions in \textit{loc. cit.} as $D$ is generic), we calculate
$\dim_E H^1_{g,\Sigma_K\setminus \{\sigma\}}(\Gal_K, W_{\dR}^+(\Hom_{\sF_P}(D,D)))
=1+d_K \dim P-\sum_{\tau\neq \sigma} \dim (B \cap L_P)=1+(d_K-1) \frac{n(n-1)}{2}+\dim P$.
(1) follows.
For any $\widetilde{D}\in \Ker(\ref{EkappaFp0})$, using Corollary \ref{CkappaFp} (1), (\ref{Eparatri}) and Proposition \ref{PDef1} (3), we see $\widetilde{D}$ is de Rham. Hence $\Ker(\ref{EkappaFp0})\subset \Ext^1_{\sigma, \sF_P}(D,D)$. Let $N$ be the unipotent radical of $B$. As
$\dim_E \Ext^1_{\sigma, \sF_P}(D,D)-\dim_E \Ker (\ref{EkappaFp0})=r+(d_K-1) \dim (N\cap L_P)+\dim L_P
	=\sum_{i=1}^r \dim_E \Ext^1_{\sigma}(M_i,M_i)$, 
(\ref{EkappaFp2}) hence (\ref{EkappaFP11}) are  surjective. Finally we have  equalities
$	\dim_E 	\ol{\Ext}_{\sigma, \sF_P}(D,D)=n+\dim (B \cap L_P)=\sum_{i=1}^r \dim \ol{\Ext}^1_{\sigma}(M_i,M_i)$
which complete the proof of (2).
\end{proof}
Combining Proposition \ref{Pparasigma} (2) with Corollary \ref{CkappaFp} (2), we get:
\begin{corollary}\label{CFpwsigma}
Let $\sT_w$ be a $B$-filtration compatible with $\sF_P$ (see Corollary \ref{CkappaFp}). The map $\kappa_{\sF_P}$ induces a bijection $\ol{\Ext}^1_{\sigma,w}(D,D) \xrightarrow{\sim} \prod_{i=1}^r \ol{\Ext}^1_{\sigma, \sT_{w,i}}(M_i,M_i)$.
\end{corollary}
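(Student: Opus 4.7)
The plan is to deduce this corollary by combining Corollary \ref{CkappaFp} (2), which handles the trianguline condition, with Proposition \ref{Pparasigma} (2), which handles the partial de Rhamness. Since $\sT_w$ is compatible with $\sF_P$, Corollary \ref{CkappaFp} (1) gives the inclusion $\Ext^1_w(D,D) \subset \Ext^1_{\sF_P}(D,D)$, hence
\[
\Ext^1_{\sigma,w}(D,D) = \Ext^1_w(D,D) \cap \Ext^1_\sigma(D,D) \subset \Ext^1_{\sigma, \sF_P}(D,D).
\]
Moreover the restriction of $\kappa_{\sF_P}$ to $\ol{\Ext}^1_{\sigma,w}(D,D)$ clearly lands inside $\prod_{i=1}^r \ol{\Ext}^1_{\sigma, \sT_{w,i}}(M_i,M_i)$, so the map in the statement is well-defined.

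For injectivity, I would simply note that $\kappa_{\sF_P}$ is already an isomorphism on the larger space $\ol{\Ext}^1_w(D,D)$ by Corollary \ref{CkappaFp} (2), so its restriction to the subspace $\ol{\Ext}^1_{\sigma,w}(D,D)$ remains injective. For surjectivity, I would exhibit a preimage of an arbitrary $(\widetilde{M}_i)\in \prod_{i=1}^r \ol{\Ext}^1_{\sigma, \sT_{w,i}}(M_i,M_i)$ in two different ways. Viewed as an element of $\prod_{i=1}^r \ol{\Ext}^1_{\sT_{w,i}}(M_i,M_i)$, Corollary \ref{CkappaFp} (2) supplies a unique lift $\widetilde{D}_1\in \ol{\Ext}^1_w(D,D)$. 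Viewed instead as an element of $\prod_{i=1}^r \ol{\Ext}^1_\sigma(M_i,M_i)$, Proposition \ref{Pparasigma} (2) supplies a unique lift $\widetilde{D}_2\in \ol{\Ext}^1_{\sigma, \sF_P}(D,D)$. Both $\widetilde{D}_1$ and $\widetilde{D}_2$ lie in $\ol{\Ext}^1_{\sF_P}(D,D)$ and have the same image under $\kappa_{\sF_P}$, so the bijectivity of $\kappa_{\sF_P}$ on $\ol{\Ext}^1_{\sF_P}(D,D)$ (again Corollary \ref{CkappaFp} (2)) forces $\widetilde{D}_1=\widetilde{D}_2$. This common element is then simultaneously $\sT_w$-trianguline and $(\Sigma_K\setminus\{\sigma\})$-de Rham, i.e.\ lies in $\ol{\Ext}^1_{\sigma,w}(D,D)$, as required.

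Since Corollary \ref{CkappaFp} (2) and Proposition \ref{Pparasigma} (2) are already established, there is no real obstacle; the proof is a clean intersection-of-subspaces argument carried out inside the ambient isomorphism on $\ol{\Ext}^1_{\sF_P}(D,D)$. The only point that deserves a quick sanity check is the compatibility of dimensions: one would observe, using Proposition \ref{Pparasigma} (1) applied to each $M_i$ together with $\dim_E\Ext^1_0(M_i,M_i)=\tfrac{n_i(n_i-1)}{2}d_K+1-n_i$, that the target has the expected dimension $\sum_i\big(\dim_E\ol{\Ext}^1_\sigma(M_i,M_i)\cap \ol{\Ext}^1_{\sT_{w,i}}(M_i,M_i)\big)$, which matches $\dim_E\ol{\Ext}^1_{\sigma,w}(D,D)$ on the source side and provides an independent check of bijectivity if desired.
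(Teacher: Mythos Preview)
Your proposal is correct and follows essentially the same approach as the paper. The paper gives no explicit proof beyond the sentence ``Combining Proposition \ref{Pparasigma} (2) with Corollary \ref{CkappaFp} (2), we get:'', and your argument is precisely the natural way to spell out this combination: inside the ambient isomorphism $\kappa_{\sF_P}$ on $\ol{\Ext}^1_{\sF_P}(D,D)$, the two subspaces $\ol{\Ext}^1_w(D,D)$ and $\ol{\Ext}^1_{\sigma,\sF_P}(D,D)$ correspond respectively to $\prod_i \ol{\Ext}^1_{\sT_{w,i}}(M_i,M_i)$ and $\prod_i \ol{\Ext}^1_{\sigma}(M_i,M_i)$, so their intersection corresponds to the intersection on the right.
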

For a rank one de Rham $(\varphi, \Gamma)$-module $\cR_{K,E}(\chi)$ (implying $\chi$ is locally algebraic), by \cite[Lem.~1.15]{Ding6}, (\ref{Edefchara}) induces  by restriction an isomorphism 
\begin{equation}\label{Echaracsigma}\Ext^1_{\sigma}(\cR_{K,E}(\chi), \cR_{K,E}(\chi))\cong \Hom_{\sigma}(K^{\times},E).\end{equation} By Proposition \ref{Pparasigma} (2) applied to $P=B$, we obtain:
\begin{corollary}\label{Cwtsigma}
For $w\in S_n$, $\kappa_w$ (\ref{Ekappaw}) induces an isomorphism
$	\ol{\Ext}_{\sigma,w}(D,D) \xrightarrow{\sim} \Hom_{\sigma}(T(K),E)$.
\end{corollary}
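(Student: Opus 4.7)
The plan is to deduce this corollary as a direct specialization of Proposition~\ref{Pparasigma}(2) (and Corollary~\ref{CFpwsigma}) to the Borel case. Take $P = B$ and let $\sF_B := \sT_w$ be the complete flag on $D$ arising from the refinement $w(\phi)$. By unwinding the definition of $\sF_P$-deformation, one has $\Ext^1_{\sigma, \sF_B}(D,D) = \Ext^1_{\sigma, w}(D,D)$, since a deformation $\widetilde{D}$ admits a filtration of saturated $(\varphi,\Gamma)$-submodules of $\cR_{K,E[\epsilon]/\epsilon^2}$-rank one (graded pieces of rank one) lifting $\sT_w$ precisely when it is trianguline with respect to $w(\phi)$. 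Here $r=n$ and by non-criticality the graded pieces are $M_i \cong \cR_{K,E}(\phi_{w^{-1}(i)} z^{\textbf{h}_i})$, each of rank one.

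Next I would identify the right-hand side of the isomorphism in Corollary~\ref{CFpwsigma}. Since the $M_i$ are rank one, the induced flag $\sT_{w,i}$ on $M_i$ is trivial, so $\Ext^1_{\sigma,\sT_{w,i}}(M_i,M_i) = \Ext^1_\sigma(M_i,M_i)$. Furthermore, for a rank one $(\varphi,\Gamma)$-module the formula (\ref{Ext0dim}) (with $n=1$) gives $\Ext^1_0(M_i,M_i) = 0$, so $\ol{\Ext}^1_\sigma(M_i,M_i) = \Ext^1_\sigma(M_i,M_i)$. Applying (\ref{Echaracsigma}) then yields a canonical isomorphism $\ol{\Ext}^1_\sigma(M_i,M_i) \xrightarrow{\sim} \Hom_\sigma(K^\times, E)$. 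Corollary~\ref{CFpwsigma} accordingly produces
\begin{equation*}
\ol{\Ext}^1_{\sigma,w}(D,D) \xlongrightarrow{\sim} \prod_{i=1}^n \ol{\Ext}^1_\sigma(M_i,M_i) \xlongrightarrow{\sim} \prod_{i=1}^n \Hom_\sigma(K^\times,E) = \Hom_\sigma(T(K),E),
\end{equation*}
where the last identification uses $T(K) = \prod_{i=1}^n K^\times$.

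Finally, I would verify that this composite isomorphism is indeed (the restriction of) $\kappa_w$. This is immediate from the commutative diagram (\ref{Eparatri}) specialized to $P=B$: the top horizontal map there is $\kappa_{\sF_B}$ (which is the first map above), and the right vertical map is the product of the rank-one $\kappa_{\sT_{w,i}}$, which under (\ref{Edefchara}) becomes the second isomorphism above; the left vertical map is $\kappa_w$. Hence the composition along the top and right agrees with $\kappa_w$ along the left and bottom, giving the claim. No serious obstacle arises: the entire proof is a matter of assembling Corollary~\ref{CFpwsigma}, (\ref{Echaracsigma}) and the commutativity of (\ref{Eparatri}); the only point requiring mild care is the verification that the Borel paraboline deformation space literally coincides with the trianguline deformation space (so that ``$\sF_B$'' and ``$\sT_w$'' match under the identifications used), but this is direct from the respective definitions.
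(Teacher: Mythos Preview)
Your proof is correct and takes essentially the same approach as the paper, which simply states that the corollary follows from Proposition~\ref{Pparasigma}(2) applied with $P=B$. You have spelled out in more detail the identifications (rank-one graded pieces, $\Ext^1_0(M_i,M_i)=0$, equation~(\ref{Echaracsigma}), and the use of (\ref{Eparatri}) to confirm the map is $\kappa_w$), but the underlying argument is identical.
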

We will show later (in Corollary \ref{Cinf2} below) the induced map 
\begin{equation}\label{Einf2}\oplus_{w\in S_n} \Ext^1_{\sigma, w}(D,D) \lra \Ext^1_{\sigma}(D,D)
\end{equation} is surjective (and the same holds with $\Ext^1$ replaced by $\ol{\Ext}^1$). Consider now certain extension groups of  $D_{\sigma}:=\fT_{\sigma}(D)$ (cf. (\ref{Ecow})). 
\begin{proposition}\label{Ppara2}
(1) We have $\dim_E\Ext^1(D_{\sigma},D_{\sigma})=1+n^2 d_K$.

(2) We have $\dim_E \Ext^1_g(D_{\sigma}, D_{\sigma})=1+\frac{n(n-1)}{2}$. 

(3)   Let $P$ be a standard parabolic subgroup of $\GL_n$, and $\sF_P$ be a $P$-filtration of $D_{\sigma}$ with $\gr_i \sF_P\cong M_{i,\sigma}$. We have $\dim_E \Ext^1_{\sF_P}(D_{\sigma}, D_{\sigma})=1+d_K \dim P$ and $\Ext^1_g(D_{\sigma}, D_{\sigma}) \subset \Ext^1_{\sF_P}(D_{\sigma},D_{\sigma})$. Moreover, the following natural map  (defined similarly as in (\ref{EkappaFp0})) is surjective
\begin{equation}\label{EkapFpsigma}
	\Ext^1_{\sF_P}(D_{\sigma}, D_{\sigma}) \twoheadlongrightarrow \prod_{i=1}^r \Ext^1(M_{i,\sigma}, M_{i,\sigma}).
\end{equation}
\end{proposition}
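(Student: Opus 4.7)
The plan is to treat $(1)$, $(2)$, $(3)$ as the analogues for $D_\sigma$ of Propositions \ref{PDef1} and \ref{Ppara1}, and to transfer their proofs by exploiting two structural inputs. First, $D_\sigma$ is non-critical in the paper's sense: by Lemma \ref{Lcow}, each refinement $w(\phi)z^{\fT_\sigma(\textbf{h})}$ of $D_\sigma$ has graded pieces with Hodge-Tate-Sen weights exactly $\fT_\sigma(\textbf{h})_i$. Second, $D_\sigma$ is generic: since $D_\sigma[1/t]\cong D[1/t]$, the attached Deligne-Fontaine module (and hence Weil-Deligne representation) of $D_\sigma$ coincides with that of $D$, which is generic. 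Together, these yield $\Hom(D_\sigma,D_\sigma)=E$ (non-criticality plus distinctness of graded pieces) and $\Ext^2(D_\sigma,D_\sigma)=0$ (Tate duality together with $\phi_i\phi_j^{-1}\neq|\cdot|_K$). Part $(1)$ then follows by applying Liu's Euler-Poincar\'e formula \cite[Thm.~1.2(1)]{Liu07}: $\dim_E\Ext^1(D_\sigma,D_\sigma)=\dim_E\Hom(D_\sigma,D_\sigma)+\dim_E\Ext^2(D_\sigma,D_\sigma)+n^2d_K=1+n^2d_K$.

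For $(2)$, I identify $\Ext^1_g(D_\sigma,D_\sigma)$ with $H^1_g$ of the $B$-pair attached to $M:=D_\sigma\otimes_{\cR_{K,E}}D_\sigma^\vee$ via (\ref{EcohophiGamma}). Since $D_\sigma$ is de Rham (Proposition \ref{Pcow}), so is $M$, and I compute its Hodge-Tate weights at each embedding. At $\sigma$ they are $\{h_{\sigma,i}-h_{\sigma,j}\}_{1\leq i,j\leq n}$, with $\frac{n(n+1)}{2}$ non-positive and $\frac{n(n-1)}{2}$ strictly positive ones (using regularity at $\sigma$), whence $\dim_E(D_{\dR}(M)_\sigma/\Fil^0 D_{\dR}(M)_\sigma)=\frac{n(n-1)}{2}$. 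At $\tau\neq\sigma$ all weights are $0$, so $D_{\dR}(M)_\tau=\Fil^0 D_{\dR}(M)_\tau$ and the contribution is $0$. The Bloch-Kato type formula $\dim_E H^1_g(M)=\dim_E H^0(M)+\sum_\tau\dim_E(D_{\dR}(M)_\tau/\Fil^0 D_{\dR}(M)_\tau)$, used in the form of \cite[Cor.~2.53]{Na2} (cf.~also the proof of Proposition \ref{PDef1}), then gives $\dim_E \Ext^1_g(D_\sigma,D_\sigma)=1+\frac{n(n-1)}{2}$.

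For $(3)$, I follow the pattern of \cite[Prop.~3.6, 3.7]{Che11}, introducing $\Hom_{\sF_P}(D_\sigma,D_\sigma)\subset \Hom_{\cR_{K,E}}(D_\sigma,D_\sigma)$, the $(\varphi,\Gamma)$-submodule of endomorphisms preserving $\sF_P$, of $\cR_{K,E}$-rank $\dim P$, and identify $\Ext^1_{\sF_P}(D_\sigma,D_\sigma)\cong H^1_{(\varphi,\Gamma)}(\Hom_{\sF_P}(D_\sigma,D_\sigma))$. Non-criticality gives $H^0=E$ and genericity (applied to the entries of $\fp$) gives $H^2=0$, so Liu's formula yields $\dim_E\Ext^1_{\sF_P}(D_\sigma,D_\sigma)=1+d_K\dim P$. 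The inclusion $\Ext^1_g(D_\sigma,D_\sigma)\subset\Ext^1_{\sF_P}(D_\sigma,D_\sigma)$ follows from \cite[Lem.~2.56]{Na2} (any de Rham deformation of a non-critical module respects every non-critical paraboline filtration). For the surjectivity of (\ref{EkapFpsigma}), I consider the short exact sequence
\begin{equation*}
0\to N\to\Hom_{\sF_P}(D_\sigma,D_\sigma)\to\prod_{i=1}^r\End(M_{i,\sigma})\to 0
\end{equation*}
where $N$ is the submodule of endomorphisms vanishing on all graded pieces; the vanishing of $H^2$ for $N$ and $\prod_i\End(M_{i,\sigma})$ (again by genericity) collapses the long exact sequence to give the required surjection onto $\prod_i\Ext^1(M_{i,\sigma},M_{i,\sigma})$.

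The main obstacle is $(2)$: justifying the Bloch-Kato-type identity $\dim_E H^1_g=\dim_E H^0+\sum_\tau\dim_E(D_{\dR,\tau}/\Fil^0_\tau)$ for a de Rham $(\varphi,\Gamma)$-module over $\cR_{K,E}$ whose Hodge filtration is non-trivial only at $\sigma$, as the usual regular-weight dimension count does not directly apply. Once the correct reference (such as \cite[Cor.~2.53]{Na2}) is invoked the bookkeeping is immediate, and $(3)$ reduces to the same Euler-Poincar\'e arguments already used in the proof of Proposition \ref{Ppara1}.
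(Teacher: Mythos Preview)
Your approach to (1), and to the dimension count and surjectivity in (3), matches the paper's exactly.

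For (2) and for the inclusion $\Ext^1_g(D_\sigma,D_\sigma)\subset\Ext^1_{\sF_P}(D_\sigma,D_\sigma)$ in (3), your argument diverges from the paper's, and the references you invoke are the weak points. You already flag the issue with \cite[Cor.~2.53]{Na2}: that result, as used in Proposition \ref{PDef1}, is stated in the regular-weight setting, and $D_\sigma$ has constant weights at every $\tau\neq\sigma$. The paper instead appeals to \cite[Cor.~A.4]{Ding6}, writing the answer as $(1+n^2d_K)-\sum_\tau\dim_E H^0(\Gal_K,W^+_{\dR}(D_\sigma\otimes D_\sigma^\vee)_\tau)$ and computing that the $\tau$-term equals $n^2$ for $\tau\neq\sigma$ and $\frac{n(n+1)}{2}$ for $\tau=\sigma$. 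Your Bloch--Kato bookkeeping is correct; it is only the justification that needs the more general reference.

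For the inclusion in (3), your appeal to \cite[Lem.~2.56]{Na2} is riskier than it looks: Remark \ref{RdeDs} shows that for $D_\sigma$ (unlike for $D$) the de Rham locus is \emph{strictly} smaller than $\kappa_w^{-1}(\Hom_{\sm}(T(K),E))$ when $K\neq\Q_p$, so the argument used for $D$ does not transfer verbatim. The paper avoids this entirely: using \cite[Cor.~A.4]{Ding6} again, it computes directly that $\Ext^1_g(\Fil^i_{\sF_P}D_\sigma,\,D_\sigma/\Fil^i_{\sF_P}D_\sigma)=0$ for each $i$ (all Hodge--Tate weights of the relevant $\Hom$-module are $\leq 0$ at every embedding, since they are strictly negative at $\sigma$ and zero elsewhere). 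Hence any $\widetilde{D}_\sigma\in\Ext^1_g(D_\sigma,D_\sigma)$ maps to zero in $\Ext^1(\Fil^1_{\sF_P}D_\sigma,\,D_\sigma/\Fil^1_{\sF_P}D_\sigma)$, so splits as $\widetilde{M}_1\,\rule[2.5pt]{10pt}{0.5pt}\,\widetilde{M}_2$; iterating gives the paraboline structure. This is an independent argument, not a citation, and it sidesteps any question of whether \cite{Na2} applies in the non-regular case.
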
 
\begin{proof}
(1) follows from \cite[Thm.~1.2 (1)]{Liu07} as $\Hom(D_{\sigma},D_{\sigma})\cong E$, $\Ext^2(D_{\sigma}, D_{\sigma})=0$. (2) follows from (1), \cite[Cor.~A.4]{Ding6} and 
$\dim H^0(\Gal_K, W_{\dR}^+(D_{\sigma} \otimes_{\cR_{K,E}} D_{\sigma}^{\vee})_{\tau})=\begin{cases} n^2 & \tau \neq \sigma \\ \frac{n(n+1)}{2} & \tau=\sigma.\end{cases}$. The statements in (3) except $\Ext^1_g(D_{\sigma}, D_{\sigma})\subset \Ext^1_{\sF_P}(D_{\sigma}, D_{\sigma})$ follow by the same argument as in the proof of \cite[Prop.~3.6, Prop.~3.7]{Che11}.  By  \cite[Cor.~A.4]{Ding6}, $\Ext^1_g\big(\Fil_{\sF_P}^i D_{\sigma}, D_{\sigma}/\Fil_{\sF_P}^i D_{\sigma}\big)=0$ for $i=1, \cdots, r-1$. Hence if  $\widetilde{D}_{\sigma} \in \Ext^1_g(D_{\sigma}, D_{\sigma})$, it must map to zero under the natural map 
\begin{equation*}
	\Ext^1(D_{\sigma}, D_{\sigma}) \lra \Ext^1(\Fil^1_{\sF_P} D_{\sigma}, D_{\sigma}/\Fil^1_{\sF_P} D_{\sigma}).
\end{equation*}
Thus $\widetilde{D}_{\sigma}$ has the form $[\widetilde{M}_1 \lin \widetilde{M}_2]$ where $\widetilde{M}_1$ (resp. $\widetilde{M}_2$) is a deformation of $\Fil^1_{\sF_P} D_{\sigma}$ (resp. of $D_{\sigma}/\Fil^1_{\sF_P} D_{\sigma}$). Iterating the argument for $\widetilde{M}_2$, we inductively deduce $\widetilde{D}_{\sigma}\in \Ext^1_{\sF_P}(D_{\sigma}, D_{\sigma})$.
\end{proof}
\begin{remark}\label{RdeDs}
Recall for each $w\in S_n$, $w(\phi)$ is also a refinement of $D_{\sigma}$ and we still use $\sT_w$ to denote the associated $B$-filtration on $D_{\sigma}$. Applying Proposition \ref{Ppara2} (3) for $P=B$ and $\sF_P=\sT_w$, we have $\dim_E \Ext^1_w(D_{\sigma}, D_{\sigma})=1+d_K \frac{n(n+1)}{2}$ and a natural surjection
\begin{equation}\label{Ekappaw2}
	\kappa_w: \Ext^1_w(D_{\sigma}, D_{\sigma}) \twoheadlongrightarrow \Hom(T(K),E).
\end{equation}
The preimage of $\Hom_{\sm}(T(K),E)$ hence has dimension equal to $(1+d_K \frac{n(n+1)}{2})-nd_K=1+d_K \frac{n(n-1)}{2}$. Together with Proposition \ref{Ppara2} (2), we see  when $K\neq \Q_p$, $\Ext^1_g(D_{\sigma}, D_{\sigma})$ is properly contained in the preimage of $\Hom_{\sm}(T(K),E)$.
\end{remark}
For $\Sigma_K \setminus\{\sigma\}$-de Rham deformations of $D_{\sigma}$, we have:
\begin{proposition}\label{Psigma3}(1) We have $\dim_E \Ext^1_{\sigma}(D_{\sigma}, D_{\sigma})=1+n^2$.

(2) Let $P$ be a standard parabolic subgroup of $\GL_n$, and $\sF_P$ be a $P$-filtration of $D_{\sigma}$ with $\gr^i_{\sF_P} D_{\sigma} \cong M_{i,\sigma}$. Then $\dim_E \Ext^1_{\sigma, \sF_P}(D_{\sigma}, D_{\sigma}) =1+\dim P$.
\end{proposition}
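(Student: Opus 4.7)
The plan is to parallel the cohomological computations of Lemma \ref{Lsigma1} and Proposition \ref{Pparasigma}, with $D$ replaced by $D_{\sigma}$. The key simplification is that, by Proposition \ref{Pcow} (1), at every $\tau \in \Sigma_K \setminus \{\sigma\}$ the Hodge-Tate-Sen weights of $D_{\sigma}$ are all equal to $h_{\tau, n}$; hence all Hodge-Tate weights of $D_{\sigma} \otimes_{\cR_{K,E}} D_{\sigma}^{\vee}$ (and of any $(\varphi, \Gamma)$-submodule thereof) vanish at such $\tau$. This is exactly what makes the dimensions drop from the regular case.

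For (1), use the isomorphism (\ref{EcohophiGamma}) to identify
\begin{equation*}
\Ext^1_{\sigma}(D_{\sigma}, D_{\sigma}) \;\cong\; H^1_{g, \Sigma_K \setminus \{\sigma\}}\bigl(\Gal_K, W(D_{\sigma} \otimes_{\cR_{K,E}} D_{\sigma}^{\vee})\bigr)
\end{equation*}
and apply \cite[Cor.~A.4]{Ding6}. Combining the total dimension $1 + n^2 d_K$ from Proposition \ref{Ppara2} (1) with the computation $\dim_E H^0(W_{\dR}^+(D_{\sigma} \otimes_{\cR_{K,E}} D_{\sigma}^{\vee})_{\tau}) = n^2$ for $\tau \neq \sigma$ established in the proof of Proposition \ref{Ppara2} (2), the formula yields $1 + n^2 d_K - (d_K - 1) n^2 = 1 + n^2$.

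For (2), let $\Hom_{\sF_P}(D_{\sigma}, D_{\sigma}) \subset D_{\sigma} \otimes_{\cR_{K,E}} D_{\sigma}^{\vee}$ be the $(\varphi,\Gamma)$-submodule of endomorphisms preserving $\sF_P$, of rank $\dim P$, exactly as in the proof of Proposition \ref{Pparasigma}. The same formalism gives
\begin{equation*}
\Ext^1_{\sigma, \sF_P}(D_{\sigma}, D_{\sigma}) \;\cong\; H^1_{g, \Sigma_K \setminus \{\sigma\}}\bigl(\Gal_K, W(\Hom_{\sF_P}(D_{\sigma}, D_{\sigma}))\bigr).
\end{equation*}
At every $\tau \neq \sigma$, all Hodge-Tate weights of $\Hom_{\sF_P}(D_{\sigma}, D_{\sigma})$ vanish, so $\dim H^0(W_{\dR}^+(\Hom_{\sF_P}(D_{\sigma}, D_{\sigma}))_{\tau}) = \dim P$. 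Together with the Tate-type identity $\dim H^1 = 1 + d_K \dim P$, applying \cite[Cor.~A.4]{Ding6} gives $1 + d_K \dim P - (d_K - 1) \dim P = 1 + \dim P$.

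The main technical point is verifying the hypotheses of \cite[Cor.~A.4]{Ding6}, namely $H^0 = E$ and $H^2 = 0$ for $\End(D_{\sigma})$ and $\Hom_{\sF_P}(D_{\sigma}, D_{\sigma})$. The vanishing of $H^0$ beyond scalars follows from the non-criticality of $D_{\sigma}$, which holds because its refinements $w(\phi)$ are inherited from those of $D$ via Lemma \ref{Lcow} and the degenerate weights at $\tau \neq \sigma$ pose no further constraint. The vanishing of $H^2$ follows from genericity: by Proposition \ref{Pcow} (2), $D_{\sigma}$ has the same Deligne-Fontaine module as $D$, so the generic condition $\phi_i \phi_j^{-1} \neq 1, |\cdot|_K$ transfers verbatim.
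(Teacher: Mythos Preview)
Your proposal is correct and follows essentially the same approach as the paper: apply \cite[Cor.~A.4]{Ding6} to $D_{\sigma}\otimes_{\cR_{K,E}} D_{\sigma}^{\vee}$ (resp.\ $\Hom_{\sF_P}(D_{\sigma},D_{\sigma})$), starting from the full dimensions $1+n^2 d_K$ (resp.\ $1+d_K\dim P$) from Proposition~\ref{Ppara2} (1) (resp.\ (3)) and subtracting $(d_K-1)n^2$ (resp.\ $(d_K-1)\dim P$) coming from the trivial Hodge--Tate weights at $\tau\neq\sigma$. The paper's proof is terser and omits your explicit check of the hypotheses of \cite[Cor.~A.4]{Ding6}, but the argument is the same.
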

\begin{proof}By \cite[Cor.~A.4]{Ding6}, (1) (resp. (2)) follows from Proposition \ref{Ppara2} (1) (resp. (3)) and the fact that 
for $\tau\neq \sigma$, $\dim_E H^0(\Gal_K, W_{\dR}^+(\Hom_{\cR_{K,E}}(D_{\sigma},D_{\sigma}))_{\tau})=n^2$ (resp. 
$\dim_E H^0(\Gal_K, W_{\dR}^+(\Hom_{\sF_P}(D_{\sigma}, D_{\sigma}))_{\tau})=\dim P$).
Here $\Hom_{\sF_P}(D_{\sigma}, D_{\sigma})$ is defined in a similar way as in the proof of Proposition \ref{Pparasigma}.
\end{proof}
Now we consider the relation between deformations of $D$ and those of $D_{\sigma}$. 	The following proposition follows from the same argument as in the proof of Proposition \ref{Pcow}, accounting for the $E[\epsilon]/\epsilon^2$-structure. We leave the details to the reader. 
\begin{proposition}For any $(\varphi, \Gamma)$-module $\widetilde{D}\in \Ext^1(D,D)$ over $\cR_{K, E[\epsilon]/\epsilon^2}$, there is a unique $(\varphi, \Gamma)$-module $\widetilde{D}_{\sigma}\in \Ext^1(D_{\sigma}, D_{\sigma})$ over $\cR_{K, E[\epsilon]/\epsilon^2}$ satisfying that $\widetilde{D}\subset \widetilde{D}_{\sigma}$, $\widetilde{D}[1/t]\cong \widetilde{D}_{\sigma}[1/t]$, and the Sen $\sigma$-weights of $\widetilde{D}_{\sigma}$ are equal to  those of $\widetilde{D}$, and the Sen $\tau$-weights (over $E$) of $\widetilde{D}_{\sigma}$ are constantly $h_{\tau,n}$ for $\tau \neq\sigma$.  
\end{proposition}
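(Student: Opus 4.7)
The plan is to mimic the proof of Proposition \ref{Pcow}, carrying the extra $E[\epsilon]/\epsilon^2$-structure through Fontaine's classification of $B_{\dR}^+$-representations. First I would associate to $\widetilde{D}$ its $B$-pair $\bigl(W_e(\widetilde{D}), W_{\dR}^+(\widetilde{D})\bigr)$, which now carries a commuting action of $E[\epsilon]/\epsilon^2$. Via the decomposition $B_{\dR}^+ \otimes_{\Q_p} E[\epsilon]/\epsilon^2 \cong \oplus_{\tau \in \Sigma_K} B_{\dR}^+ \otimes_{K,\tau} E[\epsilon]/\epsilon^2$, this gives a decomposition $W_{\dR}^+(\widetilde{D}) = \oplus_\tau W_{\dR,\tau}^+(\widetilde{D})$, with each summand a $\Gal_K$-stable $B_{\dR}^+ \otimes_{K,\tau} E[\epsilon]/\epsilon^2$-lattice in $W_{\dR,\tau}^+(\widetilde{D})[1/t]$.

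Next I would invoke a deformation-theoretic version of \cite[Thm.~3.19]{Fo04}: since $\widetilde{D}$ is free of rank $n$ over $\cR_{K,E[\epsilon]/\epsilon^2}$, each $W_{\dR,\tau}^+(\widetilde{D})$ is free of rank $n$ over $B_{\dR}^+\otimes_{K,\tau} E[\epsilon]/\epsilon^2$, and for each $\tau\neq\sigma$ there is a unique $\Gal_K$-stable $B_{\dR}^+\otimes_{K,\tau}E[\epsilon]/\epsilon^2$-lattice $\widetilde{\Lambda}_{\tau}\subset W_{\dR,\tau}^+(\widetilde{D})[1/t]$ containing $W_{\dR,\tau}^+(\widetilde{D})$ whose underlying $B_{\dR}^+\otimes_{K,\tau}E$-module has constant Sen $\tau$-weight $h_{\tau,n}$; explicitly $\widetilde{\Lambda}_\tau\cong (t^{h_{\tau,n}} B_{\dR}^+\otimes_{K,\tau}E[\epsilon]/\epsilon^2)^{\oplus n}$. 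Setting $\widetilde{\Lambda}_\sigma:=W_{\dR,\sigma}^+(\widetilde{D})$ and $\widetilde{\Lambda}:=\oplus_\tau\widetilde{\Lambda}_\tau$, define $\widetilde{D}_\sigma$ to be the $(\varphi,\Gamma)$-module associated to the $B$-pair $(W_e(\widetilde{D}),\widetilde{\Lambda})$.

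It remains to verify: (i) $\widetilde{D}\subset\widetilde{D}_\sigma$ with $\widetilde{D}[1/t]\cong\widetilde{D}_\sigma[1/t]$ and the desired Sen weights, which is immediate from $W_e(\widetilde{D})=W_e(\widetilde{D}_\sigma)$ and the explicit shape of $\widetilde{\Lambda}_\tau$; (ii) $\widetilde{D}_\sigma$ is free of rank $n$ over $\cR_{K,E[\epsilon]/\epsilon^2}$, so $\widetilde{D}_\sigma\in\Ext^1(D_\sigma,D_\sigma)$ — for this, reducing the $B$-pair $(W_e(\widetilde{D}),\widetilde{\Lambda})$ modulo $\epsilon$ recovers the $B$-pair of $D_\sigma$ by the uniqueness in Proposition \ref{Pcow}, so $\widetilde{D}_\sigma/\epsilon\widetilde{D}_\sigma\cong D_\sigma$, and combined with the $E[\epsilon]/\epsilon^2$-freeness of each $\widetilde{\Lambda}_\tau$ (see the main obstacle below), a Nakayama argument forces $\widetilde{D}_\sigma$ to be free over $\cR_{K,E[\epsilon]/\epsilon^2}$; (iii) uniqueness, which follows from the uniqueness of $\widetilde{\Lambda}_\tau$ together with the equivalence between $(\varphi,\Gamma)$-modules and $B$-pairs.

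The main obstacle I expect is precisely the $E[\epsilon]/\epsilon^2$-freeness in the classification of lattices: one must check that the unique $\Gal_K$-stable $B_{\dR}^+\otimes_{K,\tau}E$-lattice with prescribed constant Sen weight $h_{\tau,n}$ is automatically stable under the $\epsilon$-action and, being a deformation of the mod-$\epsilon$ lattice, is free of the right rank over $B_{\dR}^+\otimes_{K,\tau}E[\epsilon]/\epsilon^2$. Stability is forced by uniqueness (the $\epsilon$-action commutes with $\Gal_K$ and preserves the prescribed Sen weight, hence preserves the unique such lattice), and freeness follows by flatness of $\widetilde{D}$ together with the known freeness of the mod-$\epsilon$ reduction. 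Once this point is settled, the remainder of the proof is a transcription of the proof of Proposition \ref{Pcow}.
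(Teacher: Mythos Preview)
Your proposal is correct and is exactly the approach the paper indicates: the paper's proof simply says to repeat the argument of Proposition~\ref{Pcow} while keeping track of the $E[\epsilon]/\epsilon^2$-structure, and leaves the details to the reader. You have spelled out precisely those details, including the key point about $\epsilon$-stability and freeness of the modified $B_{\dR}^+$-lattice, so there is nothing to add.
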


We obtain hence a natural map\begin{equation}\label{Ecow2}
	\fT_{\sigma}: \Ext^1(D,D) \lra \Ext^1(D_{\sigma}, D_{\sigma}), \ \widetilde{D} \mapsto \widetilde{D}_{\sigma}.
\end{equation} 
It is clear that this operation preserves (partial) de Rhamness and filtrations of saturated submodules. In particular, $\fT_{\sigma}$ restricts to a map $\Ext^1_{\sigma}(D,D) \ra \Ext^1_{\sigma}(D_{\sigma}, D_{\sigma})$, and to a map 
$ \Ext^1_{\sF_P}(D,D) \ra \Ext^1_{\sF_P}(D_{\sigma}, D_{\sigma})$, where $\sF_P$ on $D_{\sigma}$ is defined by $\Fil^i_{\sF_P} D_{\sigma}=\fT_{\sigma}(\Fil^i_{\sF_P} D)$.
\begin{proposition}\label{Pcow2}(1) For $*\in \{g, \sigma,  \{\sigma, \sF_P\}\}$, the induced map $\fT_{\sigma}: \Ext^1_*(D,D) \ra \Ext^1_*(D_{\sigma},D_{\sigma})$ is surjective, and has the same kernel as (\ref{Ecow2}).
	
	(2) The following diagram commutes
\begin{equation}\label{Eparacow}
	\begin{tikzcd} 	&	\Ext^1_{\sigma,\sF_P}(D,D) \arrow[r, "(\ref{EkappaFP11})", two heads] \arrow[d,  "\fT_{\sigma}", two heads] &\prod_{i=1}^r \Ext^1_{\sigma}(M_i,M_i) \arrow[d, "\fT_{\sigma}", two heads] \\
	&	\Ext^1_{\sigma, \sF_P}(D_{\sigma}, D_{\sigma}) \arrow[r, "(\ref{EkapFpsigma})"] & \prod_{i=1}^r \Ext^1_{\sigma}(M_{i,\sigma}, M_{i,\sigma}).
	\end{tikzcd}
\end{equation}
Moreover, the map $\Ext^1_{\sigma, \sF_P}(D_{\sigma}, D_{\sigma}) \ra \prod_{i=1}^r \Ext^1_{\sigma}(M_{i,\sigma}, M_{i,\sigma})$ is surjective.
\end{proposition}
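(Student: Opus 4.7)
Commutativity of (\ref{Eparacow}) should follow from functoriality of $\fT_\sigma$ combined with the uniqueness statement in Proposition \ref{Pcow}: given $\widetilde{D}\in\Ext^1_{\sigma,\sF_P}(D,D)$ with filtration $\{\Fil^i_{\sF_P}\widetilde{D}\}$ and graded pieces $\widetilde{M}_i$, the images $\fT_\sigma(\Fil^i_{\sF_P}\widetilde{D})$ form a $P$-filtration on $\widetilde{D}_\sigma$ whose graded pieces are identified with $\fT_\sigma(\widetilde{M}_i)=\widetilde{M}_{i,\sigma}$ by the uniqueness applied to each piece. For surjectivity of the lower horizontal map, I will mimic the proof of Proposition \ref{Psigma3}(2): via (\ref{EcohophiGamma}), identify $\Ext^1_{\sigma,\sF_P}(D_\sigma,D_\sigma)$ with the partially de Rham $H^1_{g,\Sigma_K\setminus\{\sigma\}}$ of $\Hom_{\sF_P}(D_\sigma,D_\sigma)$, and perform a dévissage along the induced filtration on $\Hom_{\sF_P}(D_\sigma,D_\sigma)$, invoking \cite[Cor.~A.4]{Ding6} on each piece; the $\Ext^2$-vanishing needed on off-diagonal subquotients is supplied by the genericity of $D$ (and hence of $D_\sigma$).

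\textbf{Plan for (1).} Given $\widetilde{D}_\sigma\in\Ext^1_*(D_\sigma,D_\sigma)$ with $*\in\{g,\sigma,\{\sigma,\sF_P\}\}$, I will build a preimage $\widetilde{D}$ by an $E[\epsilon]/\epsilon^2$-analogue of the $B$-pair construction in the proof of Proposition \ref{Pcow}: set $W_e(\widetilde{D}):=W_e(\widetilde{D}_\sigma)$ and, at each $\tau\neq\sigma$, replace $W_{\dR,\tau}^+(\widetilde{D}_\sigma)$ by the unique $B_{\dR}^+\otimes_{K,\tau}E[\epsilon]/\epsilon^2$-sublattice of Sen profile $(h_{\tau,i})_i$ lifting $W_{\dR,\tau}^+(D)\subset W_{\dR,\tau}^+(D_\sigma)$; existence and uniqueness come from Fontaine's classification \cite[Thm.~3.19]{Fo04} applied over $E[\epsilon]/\epsilon^2$, available because the $\Sigma_K\setminus\{\sigma\}$-de Rham hypothesis trivializes $W_{\dR,\tau}^+(\widetilde{D}_\sigma)$ as $(t^{h_{\tau,n}}B_{\dR}^+\otimes_{K,\tau}E[\epsilon]/\epsilon^2)^n$. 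The construction preserves de Rham, $\sigma$-de Rham, and (applied piecewise to the filtration) $\sF_P$-structure. For the kernel assertion, I will show $\Ker(\fT_\sigma)\subset\Ext^1_g(D,D)$ by direct inspection: if $\fT_\sigma(\widetilde{D})$ is trivially extended, then $W_e(\widetilde{D})$ and $W_{\dR,\sigma}^+(\widetilde{D})$ are trivial, while at each $\tau\neq\sigma$ the lattice $W_{\dR,\tau}^+(\widetilde{D})$ sits inside the trivial (hence de Rham) $W_{\dR,\tau}^+(D_\sigma)\otimes_E E[\epsilon]/\epsilon^2$ and is therefore automatically de Rham. Combined with the inclusions $\Ext^1_g(D,D)\subset\Ext^1_{\sigma,\sF_P}(D,D)\subset\Ext^1_\sigma(D,D)$ (the first one via the analogue of Proposition \ref{Ppara2}(3) for $D$, proved by the same dévissage invoking \cite[Cor.~A.4]{Ding6}), this gives $\Ker(\fT_\sigma)\subset\Ext^1_*(D,D)$ for each $*$. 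As a consistency check, Lemma \ref{Lsigma1} and Propositions \ref{PDef1}(1), \ref{Pparasigma}(1), \ref{Ppara2}(1)(2), \ref{Psigma3} give $\dim\Ext^1_*(D,D)-\dim\Ext^1_*(D_\sigma,D_\sigma)=\tfrac{n(n-1)}{2}(d_K-1)$ uniformly in $*$, matching the required uniform kernel dimension.

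\textbf{Main obstacle.} The technical heart is the infinitesimal lattice construction underlying the surjectivity in (1): extending Fontaine's classification to the non-reduced coefficient ring $E[\epsilon]/\epsilon^2$, verifying that the chosen sublattice is $\Gal_K$-stable of the prescribed Sen profile, and showing that it respects an $\sF_P$-filtration when present. The latter reduces, by uniqueness, to applying the construction inductively to each sub-$(\varphi,\Gamma)$-module $\Fil^i_{\sF_P}\widetilde{D}_\sigma$, but requires careful propagation of the Hodge-Tate-Sen data across the filtration steps. Once the $B$-pair picture is correctly set up over $E[\epsilon]/\epsilon^2$, the kernel analysis and the verification of the preserved structures are comparatively routine.
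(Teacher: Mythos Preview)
Your proposal is correct but follows a different route from the paper, especially for the surjectivity in (1).

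For (1), the paper never constructs a preimage. Instead it computes $\dim_E\Ker(\fT_\sigma)$ by the following trick: the composition
\[
\Ext^1(D,D)\xrightarrow{\fT_\sigma}\Ext^1(D_\sigma,D_\sigma)\xrightarrow{\text{pull-back}}\Ext^1(D,D_\sigma)
\]
coincides with the push-forward along $D\hookrightarrow D_\sigma$, and the second map is injective (its kernel is $H^0_{(\varphi,\Gamma)}$ of a torsion module whose $D_{\dR}^+$-contribution vanishes by weight comparison, via Lemma~\ref{Lemtorsion}). Thus $\Ker(\fT_\sigma)$ equals the kernel of the push-forward, which Lemma~\ref{Lemtorsion} computes to have dimension $(d_K-1)\tfrac{n(n-1)}{2}$. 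Surjectivity then drops out from the dimension tables in Propositions~\ref{PDef1}, \ref{Pparasigma}, \ref{Ppara2}, \ref{Psigma3}. Your approach instead builds an explicit section via $B$-pairs; this is more hands-on and gives a concrete inverse, but requires care with Fontaine's classification over the non-reduced base $E[\epsilon]/\epsilon^2$, as you note. The kernel argument (any $\widetilde{D}$ with $\fT_\sigma(\widetilde{D})=0$ sits inside $D_\sigma\oplus D_\sigma$, hence is de Rham) is the same in both, modulo phrasing.

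For the surjectivity in (2), the paper takes the shortcut of applying (1) to each $M_i$ (with $*=\sigma$) and combining with the already-established surjectivity of the top row (Proposition~\ref{Pparasigma}(2)) and the commutativity of the square. Your direct d\'evissage on $\Hom_{\sF_P}(D_\sigma,D_\sigma)$ also works, but you should make explicit the point that for $\tau\neq\sigma$ all $\tau$-weights of the off-diagonal pieces $M_{i,\sigma}\otimes M_{j,\sigma}^\vee$ are zero, so these pieces are automatically $\Sigma_K\setminus\{\sigma\}$-de Rham and the partially-de-Rham long exact sequence of \cite[Prop.~A.5]{Ding6} behaves like the ordinary one there; this is what makes the d\'evissage go through, beyond mere $\Ext^2$-vanishing.
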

\begin{proof} First, any $\widetilde{D}\in \Ker(\ref{Ecow2})$ is de Rham, as it is contained in the de Rham $(\varphi, \Gamma)$-module $D_{\sigma} \oplus D_{\sigma}$. Hence $\Ker(\ref{Ecow2})$ coincides with the kernel of any maps in (1) (see also  Proposition \ref{Ppara2} (3)). Consider the composition  (where the second map is the natural pull-back)
\begin{equation}\label{Ecow3}
	\Ext^1(D,D) \xlongrightarrow{\fT_{\sigma}} \Ext^1(D_{\sigma}, D_{\sigma}) \lra \Ext^1(D, D_{\sigma}).
\end{equation}
As $\Hom(D,D)\cong \Hom(D,D_{\sigma}) \cong \Hom(D_{\sigma}, D_{\sigma}) \cong E$,  the kernel of the second map in (\ref{Ecow3}) is isomorphic to $H^0_{(\varphi,\Gamma)}\big((D_{\sigma}\otimes_{\cR_{K,E}} D^{\vee})/(D_{\sigma} \otimes_{\cR_{K,E}} D_{\sigma}^{\vee})\big)\xrightarrow[\sim]{\text{Lem. \ref{Lemtorsion}}} D_{\dR}^+(D_{\sigma}\otimes_{\cR_{K,E}} D^{\vee})/D_{\dR}^+(D_{\sigma} \otimes_{\cR_{K,E}} D_{\sigma}^{\vee})=0$ where the vanishing follows easily by comparing the weights. Thus $\Ker(\ref{Ecow2})\xrightarrow{\sim} \Ker(\ref{Ecow3})$. The composition (\ref{Ecow3}) coincides with  the natural push-forward map via $D\hookrightarrow D_{\sigma}$. We deduce by d\'evissage that $\Ker(\ref{Ecow3})$ is isomorphic to $H^0_{(\varphi, \Gamma)}\big((D_{\sigma}\otimes_{\cR_{K,E}}D^{\vee})/(D \otimes_{\cR_{K,E}} D^{\vee})\big)$. Using Lemma \ref{Lemtorsion} and the easy fact $\dim_E D_{\dR}^+(D_{\sigma} \otimes_{\cR_{K,E}} D^{\vee})=\frac{n(n+1)}{2}+(d_K-1) n^2$ and $\dim_E D_{\dR}^+(D \otimes_{\cR_{K,E}} D^{\vee})=\frac{n(n+1)}{2}d_K$, we deduce $\dim_E \Ker(\ref{Ecow2})=\dim_E \Ker(\ref{Ecow3})=(d_K-1)\frac{n(n-1)}{2}$. By the dimension results in Proposition \ref{PDef1} (1) (resp. Proposition \ref{Ppara1}, resp. Proposition \ref{Pparasigma} (1)) and Proposition \ref{Ppara2} (2) (resp. Proposition \ref{Psigma3} (1), resp. Proposition \ref{Psigma3} (2)), the difference in dimensions between the source and target spaces in (1)  is exactly $(d_K-1)\frac{n(n-1)}{2}$ for $*=g$ (resp. $*=\sigma$, resp. $*=\{\sigma, \sF_P\}$).  This proves (1). The commutativity of (\ref{Eparacow}) follows directly from the definition of  $\fT_{\sigma}$. The second part of (2)  is then  a consequence of (1) applied to each $M_i$ (with $*=\sigma$)  and of the surjectivity of (\ref{EkappaFp2}) (see the first part of Proposition \ref{Pparasigma} (2)). 
\end{proof}

\begin{corollary}\label{Ccowg}
Let $w_1, w_2\in S_n$, the following diagram commutes
\begin{equation*}
	\begin{CD}
		\Ext^1_g(D_{\sigma}, D_{\sigma}) @> \kappa_{w_1} >> \Hom_{\sm}(T(K),E) \\
		@| @V w_2 w_1^{-1} VV \\
		\Ext^1_g(D_{\sigma}, D_{\sigma}) @> \kappa_{w_2} >> \Hom_{\sm}(T(K),E),
	\end{CD}
\end{equation*}
and the horizontal maps are surjective.
\end{corollary}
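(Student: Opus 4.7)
The plan is to reduce the corollary to Lemma \ref{Lint1} (applied to $D$) via the surjection $\fT_{\sigma}: \Ext^1_g(D,D) \twoheadrightarrow \Ext^1_g(D_{\sigma},D_{\sigma})$ from Proposition \ref{Pcow2} (1). First, I would note that $\kappa_w$ is indeed well-defined on $\Ext^1_g(D_{\sigma},D_{\sigma})$ with image in $\Hom_{\sm}(T(K),E)$: by Remark \ref{RdeDs}, $\Ext^1_g(D_{\sigma},D_{\sigma})$ is contained in the preimage of $\Hom_{\sm}(T(K),E)$ under the surjection $\kappa_w: \Ext^1_w(D_{\sigma},D_{\sigma}) \twoheadrightarrow \Hom(T(K),E)$, so both horizontal arrows make sense.

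The crucial compatibility is the identity
\begin{equation*}
\kappa_w \circ \fT_{\sigma} = \kappa_w \quad \text{as maps} \quad \Ext^1_g(D,D) \lra \Hom_{\sm}(T(K),E).
\end{equation*}
To verify this, I would take $\widetilde{D} \in \Ext^1_g(D,D)$ with trianguline parameter $w(\phi) z^{\textbf{h}}(1+\psi\epsilon)$ (where $\psi \in \Hom_{\sm}(T(K),E)$ by Proposition \ref{PDef1} (3)). Unwinding the $B$-pair definition of $\fT_{\sigma}$ from Proposition \ref{Pcow} (which modifies $W_{\dR,\tau}^+$ only for $\tau \neq \sigma$), one sees this operation is functorial in saturated $(\varphi,\Gamma)$-submodules. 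Applying $\fT_{\sigma}$ to the $\sT_w$-filtration of $\widetilde{D}$ produces a $\sT_w$-filtration of $\fT_{\sigma}(\widetilde{D})$ whose rank-one graded pieces are $\cR_{K,E[\epsilon]/\epsilon^2}(\phi_{w^{-1}(i)} z^{\fT_{\sigma}(\textbf{h})_i}(1+\psi_i\epsilon))$, with the smoothness of $\psi_i$ ensuring that the weight change does not interfere with the infinitesimal deformation direction. Hence $\kappa_w(\fT_{\sigma}(\widetilde{D}))=\psi=\kappa_w(\widetilde{D})$.

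With this compatibility in hand, surjectivity of the bottom arrow is immediate from surjectivity of $\fT_{\sigma}$ (Proposition \ref{Pcow2} (1)) and surjectivity of $\kappa_w$ on $\Ext^1_g(D,D)$ (Lemma \ref{Lint1}). For the commutativity of the square: given $\widetilde{D}_{\sigma} \in \Ext^1_g(D_{\sigma},D_{\sigma})$, lift it to some $\widetilde{D} \in \Ext^1_g(D,D)$ via $\fT_{\sigma}$; then $\kappa_{w_i}(\widetilde{D}_{\sigma})=\kappa_{w_i}(\widetilde{D})$ for $i=1,2$, and the relation $\kappa_{w_2}(\widetilde{D}_{\sigma})=(w_2 w_1^{-1})\kappa_{w_1}(\widetilde{D}_{\sigma})$ follows from the commutativity already known for $D$ by Lemma \ref{Lint1}.

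The hard part will be the explicit verification that $\fT_{\sigma}$ sends the $\sT_w$-filtration of a deformation $\widetilde{D}$ to a $\sT_w$-filtration of $\fT_{\sigma}(\widetilde{D})$ with the expected rank-one graded pieces. This is essentially a bookkeeping exercise at the level of $B$-pairs over $E[\epsilon]/\epsilon^2$, but the passage from rank one (where it is Lemma \ref{Lcow}) to higher rank requires ensuring that the weight modification commutes with the formation of extensions, which is a delicate but purely formal step once the deformation-theoretic framework is set up carefully.
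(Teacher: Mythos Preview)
Your argument is correct, and your surjectivity proof coincides with the paper's: both pass through the compatibility $\kappa_w\circ\fT_{\sigma}=\kappa_w$ (which the paper records as the commutative diagram (\ref{Ecowg}), a specialization of (\ref{Eparacow}) to $P=B$) together with the surjectivity of $\fT_{\sigma}$ from Proposition~\ref{Pcow2}~(1) and of $\kappa_w$ on $\Ext^1_g(D,D)$ from Proposition~\ref{PDef1}~(3).

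The one genuine difference is in the commutativity. The paper does \emph{not} reduce to Lemma~\ref{Lint1} via lifting along $\fT_{\sigma}$; it simply observes that the d\'evissage argument proving Lemma~\ref{Lint1} goes through verbatim for $D_{\sigma}$ (the $\Hom$ computations used there only require the genericity of the refinement characters $\phi_i$ and the weak inequality of weights, both of which hold for $\fT_{\sigma}(\textbf{h})$). Your route is a clean alternative: once the compatibility $\kappa_w\circ\fT_{\sigma}=\kappa_w$ is in hand anyway for the surjectivity, lifting to $D$ lets you invoke Lemma~\ref{Lint1} as a black box rather than rerunning its proof. The trade-off is that you front-load the ``bookkeeping exercise'' of checking that $\fT_{\sigma}$ carries the $\sT_w$-filtration of $\widetilde{D}$ to that of $\fT_{\sigma}(\widetilde{D})$ with the expected rank-one graded pieces; but this is exactly what the paper has already recorded after (\ref{Ecow2}) and in Proposition~\ref{Pcow2}~(2), so you need not treat it as a hard part.
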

\begin{proof}
The commutativity follows by the same argument as in Lemma \ref{Lint1}. For $w\in S_n$, we have a commutative diagram (where the right square   corresponds to (\ref{Eparacow}) for $P=B$ and $\sF_P=\sT_w$)
\begin{equation}\label{Ecowg}
	\begin{tikzcd}&\Ext^1_g(D, D)\arrow[r, hook] \arrow[d, two heads, "\fT_{\sigma}"] &\Ext^1_{\sigma,w}(D,D) \arrow[r,two heads] \arrow[d,  two heads, "\fT_{\sigma}"]  &\Hom_{\sigma}(T(K),E) \arrow[d, equal] \\
		&\Ext^1_g(D_{\sigma}, D_{\sigma}) \arrow[r, hook] &\Ext^1_{\sigma,w}(D_{\sigma}, D_{\sigma}) \arrow[r] &\Hom_{\sigma}(T(K),E).
	\end{tikzcd}
\end{equation}
The surjectivity of $\kappa_{w_i}$ in the corollary follows from Proposition \ref{PDef1} (3). 
\end{proof}
Let $\Ext^1_0(D_{\sigma}, D_{\sigma})\subset \Ext^1_g(D_{\sigma}, D_{\sigma})$ be the kernel of $\kappa_w: \Ext^1_g(D_{\sigma}, D_{\sigma})\ra \Hom_{\sm}(T(K),E)$ (for one or equivalently any $w\in S_n$, by Corollary \ref{Ccowg}).  Note that unlike the case for $D$, this subspace is strictly contained in the kernel of (\ref{Ekappaw2}) when $K\neq \Q_p$ (see the last sentence in Remark \ref{RdeDs}).  
\begin{corollary}\label{Cext10}
We have $\Ext^1_0(D,D)=\fT_{\sigma}^{-1}(\Ext^1_0(D_{\sigma}, D_{\sigma}))$, and $\fT_{\sigma}$ restricts to a surjection $\Ext^1_0(D,D) \twoheadrightarrow \Ext^1_0(D_{\sigma}, D_{\sigma})$.
\end{corollary}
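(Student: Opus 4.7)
My plan is to deduce the corollary directly from the commutative diagram (\ref{Ecowg}) in the proof of Corollary \ref{Ccowg}, together with the surjectivity statement in Proposition \ref{Pcow2} (1) applied with $*=g$. The only piece of content beyond diagram chasing is the observation that de Rhamness descends from $\fT_{\sigma}(\widetilde{D})$ to $\widetilde{D}$, which is forced by the containment $\widetilde{D} \subset \fT_{\sigma}(\widetilde{D})$ together with $\widetilde{D}[1/t] \cong \fT_{\sigma}(\widetilde{D})[1/t]$ (so that the two modules share the same $D_{\dR}$).

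First I would establish the set-theoretic identity $\Ext^1_0(D,D) = \fT_{\sigma}^{-1}(\Ext^1_0(D_{\sigma},D_{\sigma}))$ by checking both inclusions. For ``$\subseteq$'', take $\widetilde{D} \in \Ext^1_0(D,D)$; by Proposition \ref{PDef1} (3) it lies in $\Ext^1_g(D,D)$, hence $\fT_{\sigma}(\widetilde{D}) \in \Ext^1_g(D_{\sigma},D_{\sigma})$ by Proposition \ref{Pcow2} (1), and the commutativity of the outer rectangle in (\ref{Ecowg}) gives $\kappa_w(\fT_{\sigma}(\widetilde{D})) = \kappa_w(\widetilde{D}) = 0$, whence $\fT_{\sigma}(\widetilde{D}) \in \Ext^1_0(D_{\sigma},D_{\sigma})$ by the very definition of the latter space. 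For ``$\supseteq$'', assume $\fT_{\sigma}(\widetilde{D}) \in \Ext^1_0(D_{\sigma},D_{\sigma}) \subset \Ext^1_g(D_{\sigma},D_{\sigma})$: the de Rhamness descent explained above shows $\widetilde{D} \in \Ext^1_g(D,D)$, hence by Proposition \ref{PDef1} (3) $\widetilde{D} \in \Ext^1_w(D,D)$ for any $w$. Reading off $\kappa_w$ from (\ref{Ecowg}) once more yields $\kappa_w(\widetilde{D}) = \kappa_w(\fT_{\sigma}(\widetilde{D})) = 0$, so $\widetilde{D} \in \Ker \kappa_w = \Ext^1_0(D,D)$.

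The surjectivity of $\fT_{\sigma} : \Ext^1_0(D,D) \twoheadrightarrow \Ext^1_0(D_{\sigma},D_{\sigma})$ is then immediate: given any $\widetilde{D}_{\sigma} \in \Ext^1_0(D_{\sigma},D_{\sigma})$, Proposition \ref{Pcow2} (1) with $*=g$ produces a preimage $\widetilde{D} \in \Ext^1_g(D,D)$ with $\fT_{\sigma}(\widetilde{D}) = \widetilde{D}_{\sigma}$, and by the identity just established this lift automatically lies in $\Ext^1_0(D,D)$. There is no genuine obstacle here; the one step that is not pure diagram chase is precisely the descent of de Rhamness, which is why the realization of $\fT_{\sigma}$ as a ``tautological enlargement'' preserving $\widetilde{D}[1/t]$ (via the $B$-pair construction used in Proposition \ref{Pcow}) is the essential input.
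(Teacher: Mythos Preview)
Your proof is correct and follows essentially the same route as the paper. The paper's argument is terser: it notes $\Ext^1_g(D,D)=\fT_{\sigma}^{-1}(\Ext^1_g(D_{\sigma},D_{\sigma}))$ from Proposition \ref{Pcow2} (1) and its proof (your ``de Rhamness descent'' is exactly what the paper extracts from that proof), and then concludes directly from the definition of $\Ext^1_0$ together with the diagram (\ref{Ecowg}); you have simply unpacked these steps into explicit inclusion checks and a lifting argument.
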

\begin{proof}
By Proposition \ref{Pcow2} (1) (and the proof), $\Ext^1_g(D,D)=\fT_{\sigma}^{-1} (\Ext^1_g(D_{\sigma}, D_{\sigma}))$. The corollary then follows from the definition of $\Ext^1_0$'s and (\ref{Ecowg}).
\end{proof}
For $\Ext^1_?(D_{\sigma}, D_{\sigma}) \supset \Ext^1_0(D_{\sigma}, D_{\sigma})$, set $\ol{\Ext}^1_?(D_{\sigma}, D_{\sigma}):=\frac{\Ext^1_?(D_{\sigma}, D_{\sigma})}{\Ext^1_0(D_{\sigma}, D_{\sigma})}$. By (the first statement of) Corollary \ref{Cext10} and Proposition \ref{Pcow2}, we easily deduce:
\begin{corollary}\label{Ccow2}For $*\in \{\sigma, g, \{\sF_P,\sigma\}\}$, the (surjective) map $\fT_{\sigma}: \Ext^1_{*}(D,D) \ra \Ext^1_{*}(D_{\sigma}, D_{\sigma})$ induces an isomorphism 
$\fT_{\sigma}:	\ol{\Ext}^1_{*}(D,D) \xrightarrow{\sim} \ol{\Ext}^1_{*}(D_{\sigma}, D_{\sigma})$.
Moreover, there is a natural  commutative diagram
\begin{equation*}
	\begin{CD} 		\ol{\Ext}^1_{\sigma,\sF_P}(D,D) @> (\ref{EkappaFP11}) > \sim > \prod_{i=1}^r \ol{\Ext}^1_{\sigma}(M_i,M_i) \\
		@V \fT_{\sigma} V \sim V @V \fT_{\sigma} V \sim V \\
		\ol{\Ext}^1_{\sigma, \sF_P}(D_{\sigma}, D_{\sigma}) @> \sim >> \prod_{i=1}^r \ol{\Ext}^1_{\sigma}(M_{i,\sigma}, M_{i,\sigma}).
	\end{CD}
\end{equation*}
\end{corollary}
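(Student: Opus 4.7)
The plan is to combine the kernel/surjectivity analysis already packaged in Corollary \ref{Cext10} and Proposition \ref{Pcow2} with a standard quotient argument, then deduce the commutative diagram by chasing three known isomorphisms.

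First, I would verify that the induced map on quotients is well-defined. For each $*\in \{\sigma, g, \{\sF_P,\sigma\}\}$, one has the chain $\Ext^1_0(D,D) \subset \Ext^1_g(D,D) \subset \Ext^1_*(D,D)$ (for $*=\{\sF_P,\sigma\}$ this uses the inclusion $\Ext^1_g(D_\sigma, D_\sigma) \subset \Ext^1_{\sF_P}(D_\sigma, D_\sigma)$ of Proposition \ref{Ppara2} (3), pulled back via $\fT_\sigma$ on $\Ext^1_g$; equivalently, the analogous fact on the $D$-side, which is immediate since $\Ext^1_g \subset \Ext^1_w$ for any $w$ compatible with $\sF_P$). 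Likewise $\Ext^1_0(D_\sigma,D_\sigma) \subset \Ext^1_*(D_\sigma,D_\sigma)$. Thus $\fT_\sigma$ descends to a map $\ol{\Ext}^1_*(D,D) \to \ol{\Ext}^1_*(D_\sigma,D_\sigma)$.

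Next I would show this descended map is an isomorphism. Surjectivity is immediate from Proposition \ref{Pcow2} (1). For injectivity, suppose $\widetilde{D}\in\Ext^1_*(D,D)$ has $\fT_\sigma(\widetilde{D})\in \Ext^1_0(D_\sigma,D_\sigma)$. By Corollary \ref{Cext10} (the equality $\Ext^1_0(D,D)=\fT_\sigma^{-1}(\Ext^1_0(D_\sigma,D_\sigma))$, taken in $\Ext^1(D,D)$), we get $\widetilde{D}\in\Ext^1_0(D,D)$, so $[\widetilde{D}]=0$ in $\ol{\Ext}^1_*(D,D)$. This gives the first assertion.

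For the commutative square, the top arrow is the isomorphism (\ref{EkappaFP11}) of Proposition \ref{Pparasigma} (2), the left vertical arrow is the isomorphism from the first part (applied with $*=\{\sF_P,\sigma\}$), and the right vertical arrow is the product over $i$ of the isomorphisms from the first part applied to each $M_i$ in place of $D$ (with $*=\sigma$). Commutativity on $\ol{\Ext}^1$ is inherited by passing to quotients from the commutative square (\ref{Eparacow}) of Proposition \ref{Pcow2} (2). The bottom horizontal arrow is defined as the composite which by the commutativity coincides with the reduction of (\ref{EkapFpsigma}) to $\ol{\Ext}^1$; it is then forced to be an isomorphism because the other three arrows in the square are.

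I do not expect a genuine obstacle here: the proof is essentially a diagram chase, and every nontrivial ingredient (surjectivity of $\fT_\sigma$, the equality $\Ext^1_0(D,D)=\fT_\sigma^{-1}(\Ext^1_0(D_\sigma,D_\sigma))$, commutativity at the $\Ext^1$ level) has already been established. The only small verification to keep track of is that the definition of $\Ext^1_0(D_\sigma,D_\sigma)$ is independent of $w$ (Corollary \ref{Ccowg}), so that the bottom-row map is intrinsically defined and matches the quotient of (\ref{EkapFpsigma}).
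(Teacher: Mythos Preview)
Your proposal is correct and follows exactly the approach the paper indicates: the paper's own proof is just the sentence ``By (the first statement of) Corollary~\ref{Cext10} and Proposition~\ref{Pcow2}, we easily deduce,'' and you have faithfully unpacked that sentence. The surjectivity comes from Proposition~\ref{Pcow2}~(1), the injectivity on quotients from the preimage description $\Ext^1_0(D,D)=\fT_\sigma^{-1}(\Ext^1_0(D_\sigma,D_\sigma))$ in Corollary~\ref{Cext10}, and the commutative square descends from~(\ref{Eparacow}); your treatment of the bottom arrow (defining it by the composite and identifying it with the reduction of~(\ref{EkapFpsigma})) is a clean way to finish.
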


\subsection{Hodge filtration and higher intertwining}\label{S24}
Let $D\in \Phi\Gamma_{\nc}(\phi, \textbf{h})$. The existence  of $S_n$-distinct trianguline filtrations of $D$ corresponds to  an intertwining phenomenon on the automorphic side. We adapt the term ``intertwining"  to describe the non-uniqueness of saturated $(\varphi, \Gamma)$-submodules in such modules.  Analogously,  \textit{higher intertwining} in this section refers to the non-uniqueness of filtrations of saturated  $(\varphi, \Gamma)$-submodules over $\cR_{K, E[\epsilon]/\epsilon^2}$ for a $(\varphi, \Gamma)$-module over $\cR_{K, E[\epsilon]/\epsilon^2}$. By Corollary  \ref{Cint} (2), higher intertwining relations exist for $\widetilde{D}\in \Ext^1_{\sF_P,g'}(D,D)$. In this section, we show a special class of paraboline deformations of $D$ admits higher intertwining (cf. Theorem \ref{ThIW1} below). Moreover, the Hodge parameter, reinterpreted as in \S~\ref{S22}, can be revealed in such higher intertwining relations. 

Let $D_1$, $C_1$ be as in \S~\ref{S22}.   Let $\sF$ be the filtration $D_1 \subset D$, and $\sG$ be the filtration $\cR_{K,E}(\phi_nz^{\textbf{h}_n}) \subset D$, which correspond to the exact sequences (\ref{ED1}) (\ref{EC1}) respectively. By Proposition \ref{Ppara1}, we have
$\dim_E\Ext^1_{\sF}(D, D)=\dim_E \Ext^1_{\sG}(D,D)=1+(n^2-n+1) d_K$. 
And there are natural surjections (identifying $\Ext^1_{K^{\times}}(\delta, \delta)$ with $\Hom(K^{\times},E)$):
\begin{eqnarray}\label{EkappaF}
\ \ \kappa_{\sF}&=&(\kappa_{\sF,1}, \kappa_{\sF,2}): \Ext^1_{\sF}(D,D) \twoheadlongrightarrow \Ext^1(D_1,D_1)\times \Hom(K^{\times},E), \\
\kappa_{\sG}&=&(\kappa_{\sG,1}, \kappa_{\sG,2}): \Ext^1_{\sG}(D,D) \twoheadlongrightarrow \Ext^1(C_1,C_1)\times \Hom(K^{\times},E). \nonumber
\end{eqnarray}
We introduce certain subspaces of $\Ext^1(D_1, D_1)$ and $\Ext^1(C_1,C_1)$. For $\iota\in \Hom(D_1,C_1)$. Consider the pull-back and push-forward maps:
\begin{equation}\label{Eiotapm}
\iota^-: \Ext^1(C_1, D_1) \lra \Ext^1(D_1, D_1), \ \iota^+: \Ext^1(C_1, D_1) \lra \Ext^1(C_1, C_1).
\end{equation}
Set
$\Ext^1_{\iota}(D_1,D_1):=\iota^-(\Ext^1(C_1, D_1))$, $\Ext^1_{\iota}(C_1, C_1):=\iota^+(\Ext^1(C_1,D_1))$.
\begin{lemma}\label{Ldalphai} Suppose $\dim_E \Hom(D_1, C_1)=2$, and for $i\in \{1,\cdots, n-1\}$, let $\alpha_i$ be as in (\ref{Ealphai}).  We have $\dim_E \Ext^1_{\alpha_i}(D_1, D_1) =(n-1)(n-2) d_K$. Moreover for $j\in \{1,\cdots, n-1\}$, $j\neq i$, 
\begin{equation*}
	\dim_E \big(\Ext^1_{\alpha_i}(D_1, D_1) \cap \Ext^1_{\alpha_j}(D_1,D_1)\big)=(n-1)(n-3) d_K+d_K-1. 
\end{equation*}
Consequently, $\dim_E \big(\Ext^1_{\alpha_i}(D_1,D_1)+\Ext^1_{\alpha_j}(D_1,D_1)\big)=1+n(n-2)d_K$.
Finally, the same statement holds with $D_1$ replaced by $C_1$.
\end{lemma}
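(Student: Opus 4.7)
The plan is to identify $\Ext^1_{\alpha_i}(D_1,D_1)$ with a kernel of pullback and then compute the intersection by reducing to a rank-$2$ saturated submodule. Set $N_i := (D_1)_{\{i\}} \cong \cR_{K,E}(\phi_i z^{\textbf{h}_1})$. First I would factor $\alpha_i$ as $D_1 \twoheadrightarrow (D_1)^{\textbf{r}} \xrightarrow{\sim} (C_1)_{\textbf{r}} \hookrightarrow C_1$ (with $\textbf{r} = \{1,\dots,n-1\}\setminus\{i\}$, using Proposition \ref{Pdim2}(2)), so that $\alpha_i^- = p^- \circ \iota_r^-$. Long exact sequences for the kernel $N_i$ of $p$ and the cokernel $\cR_{K,E}(\phi_i z^{\textbf{h}_n})$ of $\iota_r$, combined with the vanishing (by genericity and \eqref{Ehomchara}) of the relevant $\Ext^2$-groups, show that $\iota_r^-$ is surjective and $\Ima(p^-) = \Ker(f_i)$, where $f_i : \Ext^1(D_1,D_1) \to \Ext^1(N_i,D_1)$ is pullback along $N_i \hookrightarrow D_1$. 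Hence $\Ext^1_{\alpha_i}(D_1,D_1) = \Ker(f_i)$. A non-criticality argument together with \eqref{Ehomchara} gives $\Hom(N_i, D_1) = E$, so Liu's Euler characteristic formula yields $\dim_E \Ext^1(N_i, D_1) = 1 + (n-1) d_K$; the surjectivity of $f_i$ combined with Proposition \ref{PDef1}(1) then gives $\dim_E \Ext^1_{\alpha_i}(D_1, D_1) = (n-1)(n-2) d_K$.

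For the intersection, let $D_{ij} := (D_1)_{\{i,j\}}$ be the rank-$2$ saturated submodule containing both $N_i$ and $N_j$. Given $\widetilde{D}_1 \in \Ker(f_i) \cap \Ker(f_j)$, the two trivial lifts $\widetilde{N}_i, \widetilde{N}_j \subset \widetilde{D}_1$ exist, and their $\cR_{K,E[\epsilon]/\epsilon^2}$-span saturates to a rank-$2$ saturated submodule $\widetilde{D}_{ij} \subset \widetilde{D}_1$ reducing to $D_{ij}$; thus $\widetilde{D}_1 \in \Ext^1_{\sF_P}(D_1,D_1)$ for the $P$-filtration $\sF_P : D_{ij} \subset D_1$. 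The crucial point is that such a $\widetilde{D}_{ij}$ must lie in $\Ext^1_0(D_{ij}, D_{ij})$: writing $\widetilde{D}_{ij}/\widetilde{N}_i \cong \cR_{K,E[\epsilon]/\epsilon^2}(\phi_j z^{\textbf{h}_2}(1+\psi_2 \epsilon))$, the injection of $\widetilde{N}_j \cong \cR_{K,E[\epsilon]/\epsilon^2}(\phi_j z^{\textbf{h}_1})$ into this quotient realizes a rank-$1$ $\cR_{K,E[\epsilon]/\epsilon^2}$-submodule, which forces the character ratio to be of the form $z^{\textbf{m}}$ for some $\textbf{m} \in \Z^{\Sigma_K}$, and hence $\psi_2 = 0$. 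The converse inclusion is direct. Therefore $\Ker(f_i) \cap \Ker(f_j)$ is the preimage under the surjection $\kappa_{\sF_P}$ of Proposition \ref{Ppara1} of $\Ext^1_0(D_{ij}, D_{ij}) \times \Ext^1(D_1/D_{ij}, D_1/D_{ij})$. A dimension count using \eqref{Ext0dim} applied to $D_{ij}$ (which gives $d_K - 1$), Proposition \ref{Ppara1}, and Proposition \ref{PDef1}(1) then yields the asserted dimension $(n-1)(n-3) d_K + d_K - 1 = (n-2)^2 d_K - 1$, and the sum formula follows by inclusion-exclusion.

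For $C_1$ the statement follows by a dual argument: $\Ext^1_{\alpha_i}(C_1,C_1)$ equals the kernel of the pushforward $g_i : \Ext^1(C_1,C_1) \to \Ext^1(C_1, Q_r^i)$ along $C_1 \twoheadrightarrow Q_r^i := \cR_{K,E}(\phi_i z^{\textbf{h}_n})$, and the intersection is reduced via the paraboline filtration whose rank-$2$ non-critical quotient $C_1/(C_1)_{\{1,\dots,n-1\}\setminus\{i,j\}}$ plays the role of $D_{ij}$. The principal obstacle is the rank-$2$ assertion that the existence of trivial lifts of both refinement subs forces the whole deformation into $\Ext^1_0$; this rests on the classification of rank-$1$ $\cR_{K,E[\epsilon]/\epsilon^2}$-submodule inclusions sketched above.
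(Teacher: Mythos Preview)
Your identification of $\Ext^1_{\alpha_i}(D_1,D_1)$ with $\Ker(f_i)$ and the resulting dimension count is essentially the paper's argument. For the intersection, your route genuinely differs: the paper factors the pull-back to $\Ext^1(N_i\oplus N_j, D_1)$ through $\kappa_{i,j}\colon\Ext^1(D_1,D_1)\to\Ext^1((D_1)_{\{i,j\}}, D_1)$ and computes the two pieces $\dim\Ker(\kappa_{i,j})=(n-3)(n-1)d_K$ and $\dim\Ker(f_{i,j})=d_K-1$ directly via d\'evissage and Lemma~\ref{Lemtorsion}. Your paraboline description $\Ker(f_i)\cap\Ker(f_j)=\kappa_{\sF_P}^{-1}\big(\Ext^1_0(D_{ij},D_{ij})\times\Ext^1(D_1/D_{ij},D_1/D_{ij})\big)$ is more conceptual (it explains the $d_K-1$ as $\dim_E\Ext^1_0$ of a rank-$2$ module via \eqref{Ext0dim}) and the dimension count confirms both yield $(n-2)^2d_K-1$.

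There is, however, a genuine gap in your forward inclusion. You assert that $\widetilde{N}_i+\widetilde{N}_j$ ``saturates'' inside $\widetilde{D}_1$ to a rank-$2$ saturated $\cR_{K,E[\epsilon]/\epsilon^2}$-submodule, but over an Artinian base the existence of a saturation (a free submodule with free quotient containing a given submodule) is not automatic. What you actually need is $\widetilde{D}_1\in\Ext^1_{\sF_P}(D_1,D_1)$, i.e.\ that the image of $\widetilde{D}_1$ in $\Ext^1(D_{ij},D_1/D_{ij})$ vanishes. Since by hypothesis this image vanishes after pull-back along $N_i\oplus N_j\hookrightarrow D_{ij}$, it suffices to show that $\Ext^1(D_{ij},D_1/D_{ij})\to\Ext^1(N_i\oplus N_j,D_1/D_{ij})$ is injective, equivalently that $\Ext^1\big(D_{ij}/(N_i+N_j),\,D_1/D_{ij}\big)=0$. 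Writing the torsion quotient as $\cR_{K,E}(\phi_j z^{\textbf{h}_2})/\cR_{K,E}(\phi_j z^{\textbf{h}_1})$, d\'evissage and Lemma~\ref{Lemtorsion} reduce this to the equality $D_{\dR}^+\big((D_1/D_{ij})(\phi_j^{-1}z^{-\textbf{h}_1})\big)=D_{\dR}^+\big((D_1/D_{ij})(\phi_j^{-1}z^{-\textbf{h}_2})\big)$, which holds since both twists have all Hodge--Tate weights strictly negative. With this in hand (and the observation $\Hom(N_i,D_1/D_{ij})=0$ forcing $\widetilde{N}_i\subset\widetilde{D}_{ij}$), your $\psi_2=0$ step---which is indeed the mechanism of Lemma~\ref{Lint1}---completes the argument.
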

\begin{proof}
We only prove it for $D_1$, $C_1$ being similar. Fix the refinement $(\phi_1, \cdots, \phi_{n-1})$ of $D_1$ and $C_1$. Let $\textbf{r}:=\{1,\cdots, n-1\}\setminus \{i\}$. The map $\alpha_i^-$ factors through
$\Ext^1(C_1, D_1) \twoheadrightarrow \Ext^1((D_1)^{\textbf{r}}, D_1) \hookrightarrow \Ext^1(D_1, D_1)$
where the corresponding  surjectivity and injectivity follow easily by d\'evissage. So $\Ext^1_{\alpha_i}(D_1,D_1)$ is just the image of $\Ext^1((D_1)^{\textbf{r}}, D_1)$ in $\Ext^1(D_1, D_1)$,  and is the kernel of the natural pull-back map $\kappa_i: \Ext^1(D_1, D_1) \ra \Ext^1(\cR_{K,E}(\phi_i z^{\textbf{h}_1}), D_1)$. We directly calculate  $\dim_E \Ext^1((D_1)^{\textbf{r}}, D_1)=(n-1)(n-2) d_K$, and the first part follows.  For $i\neq j$, consider the following composition (of natural pull-back maps)
\begin{multline}\label{EDij}
	 \Ext^1(D_1, D_1) \xlongrightarrow{\kappa_{i,j}} \Ext^1((D_1)_{\{i,j\}}, D_1) \xlongrightarrow{f_{i,j}}
	 \big(\Ext^1(\cR_{K,E}(\phi_i z^{\textbf{h}_1}) \\ \oplus \cR_{K,E}(\phi_j,z^{\textbf{h}_1}),D_1) \big)
	 \cong \Ext^1(\cR_{K,E}(\phi_i z^{\textbf{h}_1}), D_1) \oplus \Ext^1(\cR_{K,E}(\phi_j z^{\textbf{h}_1}), D_1),
\end{multline}
whose kernel is clearly $\Ext^1_{\alpha_i}(D_1,D_1) \cap \Ext^1_{\alpha_j}(D_1,D_1)$.  By d\'evissage, $\kappa_{i,j}$ is surjective and $\Ker(\kappa_{i,j})\cong \Ext^1((D_1)^{i,j}, D_1)$, hence has dimension equal to $(n-3)(n-1) d_K$. 
Let $M_1:=D_1 \otimes_{\cR_{K,E}} (D_1)_{\{i,j\}}^{\vee}$ and $M_2:=D_1 \otimes_{\cR_{K,E}} (\cR_{K,E}(\phi_i^{-1} z^{-\textbf{h}_1}) \oplus \cR_{K,E}(\phi_j^{-1}z^{-\textbf{h}_1}))$. By d\'evissage, we have
\begin{equation*}
	0 \ra H^0_{(\varphi, \Gamma)}(M_1) \ra H^0_{(\varphi, \Gamma)}(M_2)
	\ra H^0_{(\varphi, \Gamma)}(M_2/M_1) \ra H^1_{(\varphi, \Gamma)}(M_1) \ra H^1_{(\varphi, \Gamma)}(M_2)
\end{equation*}
where the last map coincides with $f_{i,j}$ in (\ref{EDij}). We have $\dim_E H^0_{(\varphi, \Gamma)}(M_1)=1$, $\dim_E H^0_{(\varphi, \Gamma)}(M_2)=2$, and by Lemma \ref{Lemtorsion}, 
$\dim_EH^0_{(\varphi, \Gamma)}(M_2/M_1)=\dim_E D_{\dR}^+(M_2)- \dim_ED_{\dR}^+(M_1) =2(n-1)d_K-(n-1+n-2)d_K=d_K$.
So $\dim_E \Ext^1_{\alpha_i}(D_1,D_1) \cap \Ext^1_{\alpha_j}(D_1,D_1)=\dim_E \Ker(\kappa_{i,j})+\dim_E \Ker(f_{i,j})=(n-1)(n-3)d_K+d_K-1$. This proves the second part of the lemma.
\end{proof}
\begin{proposition}\label{Phint1}
Let $\iota\in \Hom(D_1, C_1)$ be an injection.

(1)  $\dim_E \Ext^1_{\iota}(D_1, D_1)=\dim_E \Ext^1_{\iota}(C_1,C_1)=1+(n-1)(n-2) d_K$.

(2) $\Ext_g^1(D_1, D_1) \subset \Ext^1_{\iota}(D_1,D_1)$ and $\Ext_g^1(C_1, C_1) \subset \Ext^1_{\iota}(C_1,C_1)$.

(3) For $\iota'\in \Hom(D_1,C_1)$, $\Ext^1_{\iota'}(D_1, D_1)=\Ext^1_{\iota}(D_1,D_1)$ if and only if $\Ext^1_{\iota'}(C_1, C_1)=\Ext^1_{\iota}(C_1,C_1)$ if and only if $\iota'=a \iota$ for some $a\in E^{\times}$.
\end{proposition}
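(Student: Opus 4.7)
The central tool is the short exact sequence of $(\varphi,\Gamma)$-modules
\[ 0 \lra D_1 \xrightarrow{\iota} C_1 \lra Q \lra 0, \]
where $Q=\Coker(\iota)$ is torsion (so $Q[1/t]=0$; concretely, $W_{\dR}^+(Q)_\sigma$ is a $B_{\dR}^+$-module of length $h_{\sigma,1}-h_{\sigma,n}$ in each embedding $\sigma$). Applying $\Hom(-,D_1)$ and $\Hom(C_1,-)$ produces long exact sequences of $\Ext$-groups whose $i=1$ images recover exactly $\Ext^1_\iota(D_1,D_1)$ and $\Ext^1_\iota(C_1,C_1)$.

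For (1), I would first observe $\Hom(C_1,D_1)=0$ (from $\Hom(D_1,D_1)\cong E$ together with non-splittability of $\iota$, which follows from genericity and non-criticality since $Q$ is torsion while $C_1$ is torsion-free) and $\Ext^2(C_1,D_1)=0$ (Tate-local duality plus genericity of $\phi$). The Euler-characteristic formula $\chi(C_1,D_1)=-d_K(n-1)^2$ then yields $\dim_E \Ext^1(C_1,D_1)=d_K(n-1)^2$. The torsion terms $\Ext^i(Q,D_1)$ are handled via Lemma \ref{Lemtorsion} (and its higher analog via B-pairs on the $B_{\dR}^+/t^N$-module $W_{\dR}^+(Q)$), together with the additivity $\chi(Q,D_1)=\chi(C_1,D_1)-\chi(D_1,D_1)=0$. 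Feeding these into the long exact sequence and applying rank-nullity to $\iota^-$ yields $\dim_E \Ext^1_\iota(D_1,D_1)=1+(n-1)(n-2)d_K$. The computation for $\Ext^1_\iota(C_1,C_1)$ via $\iota^+$ and $\Hom(C_1,-)$ is completely symmetric.

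For (2), since $\iota$ is a morphism of de Rham $(\varphi,\Gamma)$-modules, pull-back preserves de Rham-ness: $\iota^-$ sends $\Ext^1_g(C_1,D_1)$ into $\Ext^1_g(D_1,D_1)$. To establish the reverse inclusion $\Ext^1_g(D_1,D_1)\subset \Ima(\iota^-)$, I would construct, for any $\widetilde{D}_1\in \Ext^1_g(D_1,D_1)$, an explicit $X\in \Ext^1_g(C_1,D_1)$ with $\iota^- X=\widetilde{D}_1$ via B-pairs: set $W_e(X):=W_e(\widetilde{D}_1)$ (viewing its quotient as $W_e(C_1)$ through the $B_e$-isomorphism induced by $\iota$), and let $W_{\dR}^+(X)$ be the unique $\Gal_K$-stable $B_{\dR}^+$-lattice whose intersection with $W_{\dR}(D_1)$ equals $W_{\dR}^+(D_1)$ and whose image in $W_{\dR}(C_1)$ equals $W_{\dR}^+(C_1)$. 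The de Rham property of $\widetilde{D}_1$ supplies exactly the compatibility needed for this lattice to be well-defined and Galois-stable. The argument for $\Ext^1_g(C_1,C_1)\subset \Ext^1_\iota(C_1,C_1)$ via $\iota^+$ is parallel.

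For (3), the implication $\iota'=a\iota \Rightarrow \Ext^1_{\iota'}=\Ext^1_\iota$ is immediate, since $(a\iota)^-=a\cdot\iota^-$ has the same image (and likewise for $\iota^+$). Conversely, assume $\Ext^1_\iota(D_1,D_1)=\Ext^1_{\iota'}(D_1,D_1)$. If $\dim_E\Hom(D_1,C_1)=1$ the claim is vacuous, so assume $\dim_E\Hom(D_1,C_1)=2$; also $\iota'\neq 0$ by a dimension comparison. By Proposition \ref{Pdim2}(2) we have a basis $\alpha_i,\alpha_j$ of $\Hom(D_1,C_1)$; if $\iota\not\propto \iota'$ then $\{\iota,\iota'\}$ is also a basis, so every $\alpha_k$ ($k\in\{1,\dots,n-1\}$) satisfies $\alpha_k=\lambda_k\iota+\mu_k\iota'$, whence $\Ext^1_{\alpha_k}(D_1,D_1)=\Ima(\alpha_k^-)\subset \Ima(\iota^-)+\Ima(\iota'^-)=\Ext^1_\iota(D_1,D_1)$. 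Choosing two distinct $k,\ell$ and invoking Lemma \ref{Ldalphai}, $\Ext^1_\iota(D_1,D_1)$ would contain $\Ext^1_{\alpha_k}(D_1,D_1)+\Ext^1_{\alpha_\ell}(D_1,D_1)$ of dimension $1+n(n-2)d_K$, contradicting $\dim_E\Ext^1_\iota(D_1,D_1)=1+(n-1)(n-2)d_K$ (the gap $(n-2)d_K\geq 1$ for $n\geq 3$, which is the only range where $\dim_E\Hom(D_1,C_1)=2$ is possible). The argument for $C_1$ is identical, and the full chain of equivalences follows. The principal technical difficulty is in (1): handling the $\Ext^i$'s involving the torsion cokernel $Q$, which lies outside the category of torsion-free $(\varphi,\Gamma)$-modules over $\cR_{K,E}$ and requires the B-pair formalism.
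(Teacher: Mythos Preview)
Your overall strategy is sound and part (3) matches the paper's argument essentially verbatim. Parts (1) and (2), however, take a more laborious route than the paper.

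For (1), the paper avoids the torsion cokernel $Q$ entirely. Instead of applying $\Hom(-,D_1)$ to $0\to D_1\to C_1\to Q\to 0$, it tensors with $D_1$ the dual injection $C_1^\vee\hookrightarrow D_1^\vee$ and takes the long exact sequence in $(\varphi,\Gamma)$-cohomology of
\[
0 \lra D_1\otimes_{\cR_{K,E}} C_1^\vee \lra D_1\otimes_{\cR_{K,E}} D_1^\vee \lra (D_1\otimes D_1^\vee)/(D_1\otimes C_1^\vee) \lra 0.
\]
The map $\iota^-$ is then the connecting $H^1\to H^1$, and only the $H^0$ of the torsion quotient is needed; Lemma~\ref{Lemtorsion} computes it directly as $\dim_E D_{\dR}^+(D_1\otimes D_1^\vee)-\dim_E D_{\dR}^+(D_1\otimes C_1^\vee)=(n-1)d_K$. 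This sidesteps your ``principal technical difficulty'' of defining and computing $\Ext^i(Q,D_1)$ for torsion $Q$: no higher analog of Lemma~\ref{Lemtorsion} or B-pair machinery for torsion objects is required.

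For (2), the paper does not construct an explicit lift. It observes that any $M\in\Ker(\iota^-)$ contains $D_1\oplus D_1$ as a submodule of full rank, hence is de Rham; thus $\Ker(\iota^-)=\Ker(\iota_g^-)$ where $\iota_g^-$ is the restriction to $\Ext^1_g(C_1,D_1)$. Then a dimension count (using $\dim_E\Ext^1_g(C_1,D_1)=\frac{n(n-1)}{2}d_K$ from \cite[Cor.~A.4]{Ding6}, $\dim_E\Ext^1_g(D_1,D_1)=1+\frac{(n-1)(n-2)}{2}d_K$, and $\dim_E\Ker\iota^-=(n-1)d_K-1$ from part (1)) forces $\iota_g^-$ to be surjective. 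Your B-pair lattice construction is plausible, but the sentence ``the unique $\Gal_K$-stable $B_{\dR}^+$-lattice whose intersection with $W_{\dR}(D_1)$ equals $W_{\dR}^+(D_1)$ and whose image in $W_{\dR}(C_1)$ equals $W_{\dR}^+(C_1)$'' does not define a lattice without further input: the preimage of $W_{\dR}^+(C_1)$ under the quotient map $W_{\dR}(\widetilde{D}_1)\to W_{\dR}(D_1)$ meets the kernel in all of $W_{\dR}(D_1)$, not just $W_{\dR}^+(D_1)$. You would need to invoke a $\Gal_K$-equivariant splitting of the $B_{\dR}^+$-extension, which does follow from de Rham-ness but requires an extra argument. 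The paper's dimension count avoids this.
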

\begin{proof}
We only prove it for $D_1$ with $C_1$ being similar. 

(1) By d\'evissage, we have
\begin{multline*}
	0 \lra H^0_{(\varphi, \Gamma)}(D_1 \otimes_{\cR_{K,E}} D_1^{\vee}) \lra H^0_{(\varphi, \Gamma)}\big((D_1 \otimes_{\cR_{K,E}}  D_1^{\vee})/(D_1 \otimes_{\cR_{K,E}} C_1^{\vee})\big)\\ \lra H^1_{(\varphi, \Gamma)}(D_1 \otimes_{\cR_{K,E}} C_1^{\vee}) \lra H^1_{(\varphi, \Gamma)}(D_1 \otimes_{\cR_{K,E}} D_1^{\vee}),
\end{multline*}
where the last map can be identified with $\iota^-$. By Lemma \ref{Lemtorsion}, we have
\begin{multline*}
	\dim_E H^0_{(\varphi, \Gamma)}\big((D_1 \otimes_{\cR_{K,E}}  D_1^{\vee})/(D_1 \otimes_{\cR_{K,E}} C_1^{\vee})\big)=\dim_E D_{\dR}^+(D_1 \otimes_{\cR_{K,E}}  D_1^{\vee})\\ -\dim_E D_{\dR}^+(D_1 \otimes_{\cR_{K,E}} C_1^{\vee}) =\frac{n(n-1)}{2}d_K-\frac{(n-1)(n-2)}{2}d_K=(n-1)d_K.
\end{multline*}
Hence $\dim_E \Ima \iota^- =(n-1)^2 d_K-(n-1)d_K+1=1+(n-1)(n-2)d_K$. 

(2) The map $\iota^-$ clearly induces
$\iota_g^-:	\Ext^1_g(C_1, D_1) \ra \Ext^1_g(D_1,D_1)$.
For any $M\in \Ker(\iota^-)$, $D_1\oplus D_1 \subset M$ implies $M$ is de Rham. So 
$\Ker \iota^-\subset \Ext^1_g(C_1,D_1)$ and is equal to $\Ker \iota_g^-$. By \cite[Cor.~A.4]{Ding6} applied to the $B$-pair associated to $D_1 \otimes_{\cR_{K,E}} C_1^{\vee}$ (which satisfies the assumptions of \textit{loc. cit.} by the generic assumption on $D$), we have $\dim_E H^1_g(D_1 \otimes_{\cR_{K,E}} C_1^{\vee})=(n-1)^2 d_K-\frac{(n-1)(n-2)}{2}d_K=\frac{n(n-1)}{2} d_K$. Together with $\dim_E \Ext^1_g(D_1,D_1)=1+\frac{(n-1)(n-2)}{2} d_K$ (cf. Proposition \ref{PDef1} (1)) and (1), we see  $\iota_g^-$ is surjective.

(3) 
The case where $\dim_E \Hom(D_1, C_1)=1$ is trivial. Assume henceforth $\dim_E\Hom(D_1,C_1)=2$ (which implies $n\geq 3$ and Lemma \ref{Ldalphai} can apply).
Suppose $\iota'\notin E[\iota]$, then $\iota'$ and $\iota$ form a basis of $\Hom(D_1, C_1)$. If $\Ext^1_{\iota'}(D_1, D_1)=\Ext^1_{\iota}(D_1,D_1)$, we then easily deduce $\Ext^1_{\alpha_i}(D_1, D_1) \subset \Ext^1_{\iota}(D_1, D_1)$ for all $i=\{1,\cdots, n-1\}$. However, for $i\neq j$, by Lemma \ref{Ldalphai}, $\dim_E (\Ext^1_{\alpha_i}(D_1, D_1)+\Ext^1_{\alpha_j}(D_1, D_1))=1+n(n-2) d_K>\dim_E \Ext^1_{\iota}(D_1, D_1)$, a contradiction.
\end{proof}
Let $T_1$ be the torus subgroup of $\GL_{n-1}$, and $\phi^1:=\phi_1 \boxtimes \cdots \boxtimes \phi_{n-1}$. Let $\textbf{h}^1:=(\textbf{h}_1, \cdots, \textbf{h}_{n-1})$, and $\textbf{h}^2:=(\textbf{h}_2, \cdots, \textbf{h}_{n})$. For the refinement $\phi^1$ (of $D_1$ and $C_1$), we have maps 
\begin{equation*}
\Ext^1_g(D_1,D_1) \xlongrightarrow{\kappa_{\phi^1}}  \Hom_{\sm}(T_1(K),E), \ \Ext^1_g(C_1,C_1) \xlongrightarrow{\kappa_{\phi^1}}  \Hom_{\sm}(T_1(K),E).
\end{equation*}
\begin{lemma}
For $M\in \Ext^1_g(C_1, D_1)$, $\kappa_{\phi^1} \circ \iota_g^- (M)=\kappa_{\phi^1} \circ \iota_g^+(M)$, where $\iota_g^{\pm}$ is the restriction of $\iota^{\pm}$ to $\Ext^1_g(C_1,D_1)$ (see the proof of Proposition \ref{Phint1} (2)).
\end{lemma}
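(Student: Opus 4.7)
The plan is to compare $\iota_g^-(M)$ and $\iota_g^+(M)$ after inverting $t$, exploiting that $\iota$ becomes an isomorphism generically. Non-criticality of $D$ implies that $D[1/t] \cong \oplus_{i=1}^n \cR_{K,E}(\phi_i z^{\textbf{h}_i})[1/t]$ splits as a direct sum of rank-one pieces indexed by the refinement, and both $D_1[1/t]$ and $C_1[1/t]$ equal the summand indexed by $(\phi_1, \ldots, \phi_{n-1})$. Hence $\iota[1/t] : D_1[1/t] \xrightarrow{\sim} C_1[1/t]$, and consequently the natural morphisms $\iota^-(M) \to M$ and $M \to \iota^+(M)$ of $(\varphi, \Gamma)$-modules over $\cR_{K,E}$ both become isomorphisms after $[1/t]$.

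First I would verify directly from the constructions that the resulting identifications $\iota^-(M)[1/t] \cong M[1/t] \cong \iota^+(M)[1/t]$ are compatible with the $E[\epsilon]/\epsilon^2$-structures: in both cases the induced $\epsilon$-action on $M[1/t]$ is the composite $M[1/t] \twoheadrightarrow C_1[1/t] \xrightarrow{\iota[1/t]^{-1}} D_1[1/t] \hookrightarrow M[1/t]$. Hence $\iota^-(M)[1/t] \cong \iota^+(M)[1/t]$ as $(\varphi, \Gamma)$-modules over $\cR_{K, E[\epsilon]/\epsilon^2}[1/t]$. Next I would invoke the uniqueness of the trianguline lift, with refinement $\phi^1$, of this common $(\varphi, \Gamma)$-module after $[1/t]$ (the reduction modulo $\epsilon$ being the unique trianguline $D_1[1/t] = C_1[1/t]$ with that refinement, by non-criticality) to conclude that the trianguline filtrations of $\iota^\pm(M)$, after $[1/t]$, coincide as filtrations on $M[1/t]$. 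Comparing $i$-th graded pieces yields an isomorphism
\begin{equation*}
\cR_{K, E[\epsilon]/\epsilon^2}\bigl(\phi_i z^{\textbf{h}_i}(1+\psi_i \epsilon)\bigr)[1/t] \;\cong\; \cR_{K, E[\epsilon]/\epsilon^2}\bigl(\phi_i z^{\textbf{h}_{i+1}}(1+\psi'_i \epsilon)\bigr)[1/t],
\end{equation*}
where $\psi := \kappa_{\phi^1}\circ\iota_g^-(M)$ and $\psi' := \kappa_{\phi^1}\circ\iota_g^+(M)$. Since $\cR_{K,E}(\chi)[1/t]$ depends on $\chi$ only modulo algebraic characters $z^{\textbf{k}}$, we deduce $\psi_i - \psi'_i \in \Hom_{\alg}(K^\times, E)$. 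But de Rhamness of $M$ is preserved under pull-back and push-forward, so $\psi_i, \psi'_i$ are both smooth; the difference is thus simultaneously smooth and algebraic, hence zero, giving $\kappa_{\phi^1}\circ\iota_g^-(M) = \kappa_{\phi^1}\circ\iota_g^+(M)$.

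The main obstacle I anticipate is the bookkeeping: specifically, pinning down the uniqueness of the trianguline lift to $\cR_{K, E[\epsilon]/\epsilon^2}[1/t]$ and verifying the compatibility of the two $\epsilon$-actions in a rigorous way. A more conceptual alternative route, which I would also try, goes through Fontaine's $D_{\pst}$: because $D_{\pst}$ factors through the $B_e$-part and hence through the $[1/t]$-generic fiber, the isomorphism above induces an isomorphism $D_{\pst}(\iota^-(M)) \cong D_{\pst}(\iota^+(M))$ of $(\varphi, N, \Gal_K)$-modules over $E[\epsilon]/\epsilon^2$. Reading off the deformation of the Frobenius eigenvalues $\phi_i(\varpi_K)$ from these two $D_{\pst}$'s then yields directly $\psi_i = \psi'_i$, avoiding the need to manipulate the trianguline filtrations at $[1/t]$ level.
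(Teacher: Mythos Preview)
Your approach is correct but more circuitous than the paper's. The composite map $\iota_g^-(M)\to M\to\iota_g^+(M)$ that you invert $t$ on is precisely the map $\tilde\iota$ the paper uses; the difference is that the paper works with $\tilde\iota$ \emph{before} inverting $t$. Since $\iota$ is injective, the five-lemma shows $\tilde\iota$ is already an injection of $(\varphi,\Gamma)$-modules over $\cR_{K,E[\epsilon]/\epsilon^2}$, and it is visibly $E[\epsilon]/\epsilon^2$-linear from the pull-back/push-out description. Genericity then forces $\tilde\iota$ to respect the $\phi^1$-triangulations (this is the same ``uniqueness'' you invoke, but no passage to $[1/t]$ is needed), yielding injections $\cR_{K,E[\epsilon]/\epsilon^2}(\phi_i z^{\textbf{h}_i}(1+\psi_i\epsilon))\hookrightarrow\cR_{K,E[\epsilon]/\epsilon^2}(\phi_i z^{\textbf{h}_{i+1}}(1+\psi_i'\epsilon))$ on graded pieces. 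Such an injection of rank-one modules over $\cR_{K,E[\epsilon]/\epsilon^2}$ forces $\psi_i=\psi_i'$ outright: after twisting, one needs an element of $H^0_{(\varphi,\Gamma)}(\cR_{K,E[\epsilon]/\epsilon^2}(z^{\textbf{m}}(1+\psi\epsilon)))$ reducing to a unit mod $\epsilon$, and a short d\'evissage using Lemma~\ref{Lemtorsion} shows this requires $\psi=0$. So your steps 3--4 (``difference is algebraic'' followed by ``smooth $\cap$ algebraic $=0$'') are unnecessary; in fact your step~3 as stated is a bit loose, since the assertion that the kernel of $\Hom(K^\times,E)\cong\Ext^1(\cR,\cR)\to\Ext^1(\cR[1/t],\cR[1/t])$ equals the ``algebraic'' directions is never justified (and, by the same computation, that kernel is actually zero). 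Your alternate route through $D_{\pst}$ is clean and would work without this issue, since for de Rham deformations $\kappa_{\phi^1}$ reads off only Frobenius eigenvalue deformations, which are encoded in $D_{\pst}$; but the paper's argument is shorter still.
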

\begin{proof}
By definition, there is a natural injection $\tilde{\iota}: \iota_g^-(M) \hookrightarrow \iota_g^+(M)$ which sits in the following commutative diagram
\begin{equation*}
	\begin{CD}
		0 @>>> D_1 @>>> \iota_g^-(M) @>>> D_1 @>>> 0 \\
		@. @V \iota VV @V \tilde{\iota} VV @V \iota VV \\
		0 @>>> C_1 @>>> \iota_g^+(M) @>>> C_1 @>>> 0.
	\end{CD}
\end{equation*}
Moreover, $\tilde{\iota}$ is $\cR_{K,E[\epsilon]/\epsilon^2}$-linear if $\iota_g^-(M)$ and $\iota_g^+(M)$ are equipped with the natural $\cR_{K, E[\epsilon]/\epsilon^2}$-action. Suppose $$\kappa_{\phi^1}\circ \iota_g^-(M)=(\psi_1, \cdots, \psi_{n-1}),  \ \ \kappa_{\phi^1}\circ \iota_g^+(M)=(\psi_1', \cdots, \psi_{n-1}').$$ Then $\iota_g^-(M)$ (resp. $\iota_g^+(M)$) is isomorphic, as $(\varphi, \Gamma)$-module over $\cR_{K, E[\epsilon]/\epsilon^2}$, to a successive extension of $\cR_{K, E[\epsilon]/\epsilon^2}(\phi_i z^{\textbf{h}_i} (1+\psi_i \epsilon))$ (resp. $\cR_{K, E[\epsilon]/\epsilon^2}(\phi_i z^{\textbf{h}_{i+1}}(1+\psi'_i \epsilon))$) for $i=1,\cdots, n-1$. One sees inductively that  $\tilde{\iota}$ induces injections $\cR_{K, E[\epsilon]/\epsilon^2}(\phi_i z^{\textbf{h}_i} (1+\psi_i \epsilon)) \hookrightarrow \cR_{K, E[\epsilon]/\epsilon^2}(\phi_i z^{\textbf{h}_{i+1}} (1+\psi_i' \epsilon))$ of $(\varphi, \Gamma)$-modules over $\cR_{K, E[\epsilon]/\epsilon^2}$. Hence $\psi_i=\psi_i'$ for all $i$.
\end{proof}
We fix crystabelline $(\varphi, \Gamma)$-modules $D_1$ and $C_1$, where $D_1$  has Hodge-Tate-Sen weights $\textbf{h}^1$ and $C_1$ has weights $\textbf{h}^2$, and both have a generic refinement $\phi^1$. Denote by $\Phi\Gamma_{\nc}(D_1,C_1, \phi_n)\subset \Phi\Gamma_{\nc}(\phi, \textbf{h})$ the subset of isomorphism classes of $(\varphi,\Gamma)$-modules $D$ such that $\Hom(D_1,D)=\Hom(D,C_1)\cong E$.  Assume $\Phi\Gamma_{\nc}(D_1,C_1,\phi_n)$ is non-empty.
For an injection $\iota\in \Hom(D_1,C_1)$, we set $\sI_{\iota}$ to be the following set \begin{multline}\label{EsIiota}
\big\{(\widetilde{D}_1, \widetilde{C}_1)\in \Ext^1_{\iota}(D_1,D_1) \times \Ext^1_{\iota}(C_1,C_1)\ |\ \\ \exists M\in \Ext^1(C_1, D_1) \text{ s.t. } \iota^-(M)=\widetilde{D}_1, -\iota^+(M)=\widetilde{C}_1\big\}. 
\end{multline}
If $\iota=\iota_D$ for some $D\in \Phi\Gamma_{\nc}(D_1, C_1, \phi_n)$, we write $\sI_D:=\sI_{\iota_D}$. The following corollary is a direct consequence of Proposition \ref{Phint1} (3) and Proposition \ref{Phodge}. 
\begin{corollary}
We have $\sI_{\iota}=\sI_{\iota'}$ if and only if $\iota'=a \iota$ for some $a\in E^{\times}$. In particular, for $D$, $D'\in \Phi\Gamma_{\nc}(D_1, C_1, \phi_n)$ we have  $\sI_D=\sI_{D'}$ if and only if $\iota_D=a \iota_{D'}$ for $a\in E^{\times}$. When $K=\Q_p$, this is equivalent to $D\cong D'$.
\end{corollary}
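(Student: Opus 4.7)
The plan is to reduce the corollary to the two propositions flagged in the statement, namely Proposition \ref{Phint1} (3) and Proposition \ref{Phodge}. First I would dispose of the "if" direction of the first equivalence, which is formal: if $\iota' = a\iota$ for some $a \in E^{\times}$, then $(\iota')^{\pm} = a\,\iota^{\pm}$ as maps on $\Ext^1(C_1, D_1)$, so the images $\Ext^1_{\iota'}(D_1,D_1) = \Ext^1_{\iota}(D_1,D_1)$ and $\Ext^1_{\iota'}(C_1,C_1) = \Ext^1_{\iota}(C_1,C_1)$ agree. Moreover, given $(\widetilde{D}_1,\widetilde{C}_1) \in \sI_\iota$ witnessed by $M \in \Ext^1(C_1,D_1)$, the element $a^{-1}M$ witnesses $(\widetilde{D}_1,\widetilde{C}_1) \in \sI_{\iota'}$, so $\sI_\iota \subset \sI_{\iota'}$ and by symmetry these sets are equal.

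For the "only if" direction, the key observation is that the first projection $\pr_1 \colon \sI_\iota \to \Ext^1_\iota(D_1,D_1)$ is surjective: indeed, any $\widetilde{D}_1 \in \Ext^1_\iota(D_1,D_1)$ is by definition of the form $\iota^-(M)$ for some $M \in \Ext^1(C_1,D_1)$, and then $(\widetilde{D}_1, -\iota^+(M)) \in \sI_\iota$. (The analogous statement holds for $\pr_2$.) Hence the equality $\sI_\iota = \sI_{\iota'}$ implies $\Ext^1_\iota(D_1,D_1) = \Ext^1_{\iota'}(D_1,D_1)$, and Proposition \ref{Phint1} (3) then gives $\iota' = a\iota$ for some $a \in E^{\times}$. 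I expect no obstacle here; the content is entirely packed into Proposition \ref{Phint1} (3).

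The second statement is just the first applied to $\iota = \iota_D$ and $\iota' = \iota_{D'}$, noting both lie in $\Hom(D_1,C_1)$ by the definition of $\Phi\Gamma_{\nc}(D_1,C_1,\phi_n)$. Finally, when $K = \Q_p$, the equivalence between $\iota_D = a\iota_{D'}$ and $D \cong D'$ is exactly the uniqueness statement spelled out after Proposition \ref{Phodge}: the precise formulation there says that if $\iota_{D'}$ equals $\iota_D$ up to a non-zero scalar, then $D' \cong D$, and the converse is immediate since $\iota_D$ depends only on the isomorphism class of $D$ together with its specified sub $D_1$, quotient $C_1$, and refinement (changes of these data rescale $\iota_D$). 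The only mild point to be careful about is that the scalar in Proposition \ref{Phodge} lives in $E^{\times}$, which matches the scalar ambiguity of $\iota_D \in \Hom(D_1,C_1)$ coming from the choice of identifications of the sub and quotient.
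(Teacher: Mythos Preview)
Your proposal is correct and follows essentially the same approach as the paper, which simply states that the corollary is a direct consequence of Proposition \ref{Phint1} (3) and Proposition \ref{Phodge}. Your argument makes the deduction explicit: the surjectivity of $\pr_1:\sI_\iota\to \Ext^1_\iota(D_1,D_1)$ reduces the first equivalence to Proposition \ref{Phint1} (3), and the $K=\Q_p$ clause is exactly the content of Proposition \ref{Phodge}.
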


\begin{theorem}[Higher intertwining]\label{ThIW1}Let $D\in \Phi\Gamma_{\nc}(D_1, C_1, \phi_n)$ and $\widetilde{D}\in \Ext^1_{\sF}(D, D)$ with $\kappa_{\sF}(\widetilde{D})=(\widetilde{D}_1, \psi)$ (cf. (\ref{EkappaF})). The followings are equivalent:
\begin{enumerate}
	\item  $\widetilde{D} \in \Ext^1_{\sF}(D,D) \cap \Ext^1_{\sG}(D,D)$.
	\item  $\widetilde{D}_1 \otimes_{\cR_{K, E[\epsilon]/\epsilon^2}} \cR_{E[\epsilon]/\epsilon^2}(1-\psi \epsilon)\in \Ext^1_{\iota_D}(D_1,D_1)$.
\end{enumerate}		
Moreover, if the equivalent conditions hold, then $\kappa_{\sG,2}(\widetilde{D})=\psi$ and	 there exists $M\in \Ext^1(C_1,D_1)$ such that $\widetilde{D}_1=\iota_D^-(M) \otimes_{\cR_{K, E[\epsilon]/\epsilon^2}} \cR_{K, E[\epsilon]/\epsilon^2}(1+\psi \epsilon)$ and $\kappa_{\sG,1}(\widetilde{D})=\iota_D^+(M)\otimes_{\cR_{K, E[\epsilon]/\epsilon^2}}\cR_{K, E[\epsilon]/\epsilon^2}(1+\psi \epsilon)$.
\end{theorem}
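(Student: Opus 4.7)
The plan is to normalize the theorem to $\psi=0$ via a twist and then translate the equivalence into an assertion about $\Ext^1$-groups of $D_1$ and $C_1$ controlled by $\iota_D$. First, tensoring $\widetilde{D}$ with $\cR_{K,E[\epsilon]/\epsilon^2}(1-\psi\epsilon)$ preserves membership in both $\Ext^1_{\sF}(D,D)$ and $\Ext^1_{\sG}(D,D)$ (as it preserves saturated sub-$(\varphi,\Gamma)$-modules), converts the $\sF$-parameters $(\widetilde{D}_1,\psi)$ into $(\widetilde{D}_1\otimes \cR_{K,E[\epsilon]/\epsilon^2}(1-\psi\epsilon),0)$, and leaves condition (2) invariant since $(1+\psi\epsilon)(1-\psi\epsilon)=1$ in $E[\epsilon]/\epsilon^2$. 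Hence one may assume $\psi=0$, and the moreover statement $\kappa_{\sG,2}(\widetilde{D})=\psi$ becomes $\kappa_{\sG,2}(\widetilde{D})=0$, to be verified below.

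For (1)$\Rightarrow$(2) together with the ``moreover" part, form the composition $\widetilde{\iota}\colon \widetilde{D}_1\hookrightarrow \widetilde{D}\twoheadrightarrow \widetilde{C}_1$, a $\cR_{K,E[\epsilon]/\epsilon^2}$-linear map reducing to $\iota_D$ mod $\epsilon$, and automatically injective since $\iota_D$ is. Construct $M\in \Ext^1(C_1,D_1)$ from $\widetilde{D}$ by a careful amalgamation of $\widetilde{D}_1$ and the canonical $\cR_{K,E}$-splitting $N^{(0)}\cong \cR_{K,E}(\phi_n z^{\textbf{h}_1})$ of the trivially-deformed $\sG$-sub of $\widetilde{D}$, using the saturated-intersection vanishing $D_1\cap \cR_{K,E}(\phi_n z^{\textbf{h}_1})=0$ in $D$ (which follows from non-criticality/genericity) to ensure that $M$ is torsion-free of the correct rank $2(n-1)$ over $\cR_{K,E}$ and fits into an exact sequence $0\to D_1\to M\to C_1\to 0$ via the sub $\epsilon\widetilde{D}_1\cong D_1$ and a quotient obtained through $\widetilde{\iota}$. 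A direct comparison of $E[\epsilon]/\epsilon^2$-structures then yields both $\iota_D^-(M)=\widetilde{D}_1$ and $\iota_D^+(M)=\widetilde{C}_1$; insisting on the latter identification (rather than a twist thereof) is what forces $\kappa_{\sG,2}(\widetilde{D})=0=\psi$. For the converse (2)$\Rightarrow$(1), given $M\in \Ext^1(C_1,D_1)$ with $\iota_D^-(M)=\widetilde{D}_1$, the dual amalgamation/pushout $D\cup_{D_1}M$ along the common $D_1$ produces a deformation of $D$ carrying both the required $\sF$-sub $\widetilde{D}_1$ and a trivially-deformed $\sG$-sub $\cR_{K,E}(\phi_n z^{\textbf{h}_1})$; a dimension count combining Corollary~\ref{CkappaFp} with Proposition~\ref{Phint1}(1) then establishes $\Ker\kappa_{\sF}\subset \Ext^1_{\sG}(D,D)$, so that the existence of a $\sG$-filtration on $\widetilde{D}$ depends only on the $\sF$-parameters $(\widetilde{D}_1,\psi)$.

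The hard part is making the construction of $M$ (and its inverse in the dual direction) precise: one must carefully distinguish $\cR_{K,E}$- from $\cR_{K,E[\epsilon]/\epsilon^2}$-submodules of $\widetilde{D}$, pin down the canonical $\cR_{K,E}$-splittings of the trivially-deformed rank-one pieces, and verify that the resulting $M$ is torsion-free of rank $2(n-1)$ and realizes both pull-back and push-forward identifications simultaneously. Equally delicate is the direction (2)$\Rightarrow$(1): since $\widetilde{D}$ is not determined by $(\widetilde{D}_1,\psi)$ alone (the kernel of $\kappa_{\sF}$ has positive dimension $(n-1)d_K-1$ by Corollary~\ref{CkappaFp}), establishing the inclusion $\Ker\kappa_{\sF}\subset \Ext^1_{\sG}(D,D)$ will likely require a separate cohomological argument based on the paraboline-deformation framework of \S~2.3, combined with the explicit dimension formulas for $\Ext^1_{\iota_D}$ coming from Proposition~\ref{Phint1}.
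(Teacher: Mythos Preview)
Your reduction to $\psi=0$ by twisting is correct and matches the paper. After that, your route diverges substantially from the paper's, and there are two genuine gaps.

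\textbf{Gap in (1)$\Rightarrow$(2).} Your construction of $M$ via ``amalgamation of $\widetilde{D}_1$ and the canonical $\cR_{K,E}$-splitting $N^{(0)}$ of the trivially-deformed $\sG$-sub'' presupposes that the $\sG$-sub \emph{is} trivially deformed, i.e.\ that $\kappa_{\sG,2}(\widetilde{D})=0$. But this is precisely part of what must be proved. Saying that ``insisting on the identification forces $\kappa_{\sG,2}=0$'' is not an argument: without first knowing $\kappa_{\sG,2}=0$ there is no canonical splitting $N^{(0)}$ to amalgamate with, so the construction of $M$ is circular. Even setting this aside, the description of $M$ as an ``amalgamation'' is too vague to verify that it lands in $\Ext^1(C_1,D_1)$ with the required pull-back and push-forward identifications.

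\textbf{Gap in (2)$\Rightarrow$(1).} You correctly observe that the pushout $D\cup_{D_1}M$ produces only one particular $\sF$-deformation with the given parameters, so one still needs $\Ker\kappa_{\sF}\subset \Ext^1_{\sG}(D,D)$. Your proposed justification---a dimension count using Proposition~\ref{Phint1}(1)---does not work: the dimension of $\Ext^1_{\sF}(D,D)\cap\Ext^1_{\sG}(D,D)$ is only computed \emph{after} this theorem (Corollary~\ref{Csur0}), so invoking it here is circular. The inclusion $\Ker\kappa_{\sF}\subset \Ext^1_{\sG}(D,D)$ can be proved, but the route is through $\Ext^1_0(D,D)$: by Corollary~\ref{CkappaFp}(1) and the diagram~(\ref{Eparatri}) one has $\Ker\kappa_{\sF}\subset \Ext^1_0(D,D)\subset \Ext^1_g(D,D)$, and then Proposition~\ref{PDef1}(3) gives $\Ext^1_g(D,D)\subset\Ext^1_{\sG}(D,D)$. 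Proposition~\ref{Phint1}(1) plays no role here.

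\textbf{How the paper does it.} The paper avoids both difficulties by working entirely in $\Ext$-groups. It first characterises $\Ext^1_{\sG}(D,D)$ as the kernel of the composite $\Ext^1(D,D)\to\Ext^1(\cR_{K,E}(\phi_nz^{\textbf{h}_1}),D)\to\Ext^1(\cR_{K,E}(\phi_nz^{\textbf{h}_1}),C_1)$. Under $\psi=0$, any $\widetilde{D}\in\Ext^1_{\sF}(D,D)$ lifts uniquely to a class $M_1\in\Ext^1(D,D_1)$. On this subspace the composite above equals $\Ext^1(D,D_1)\to\Ext^1(\cR_{K,E}(\phi_nz^{\textbf{h}_1}),D_1)\xrightarrow{\iota_D}\Ext^1(\cR_{K,E}(\phi_nz^{\textbf{h}_1}),C_1)$, and a weight computation via Lemma~\ref{Lemtorsion} shows this second map $\iota_D$ is \emph{injective}. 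Hence condition~(1) is equivalent to $M_1$ lying in the kernel of the first map, which by d\'evissage is exactly $\Ext^1(C_1,D_1)\hookrightarrow\Ext^1(D,D_1)$. The desired $M$ is then simply $M_1$ viewed in $\Ext^1(C_1,D_1)$, and $\widetilde{D}_1=\iota_D^-(M)$ follows because the push-forward $\Ext^1(C_1,D_1)\hookrightarrow\Ext^1(D,D_1)\to\Ext^1(D_1,D_1)$ is $\iota_D^-$. This argument handles both implications at once and makes no module-level construction; the ``moreover'' statements then drop out from the explicit description of $\widetilde{D}$ as a push-forward of $M_1$.
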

\begin{proof}
Twisting $\widetilde{D}$ by $1-\psi \epsilon$, we can and do assume $\kappa_{\sF,2}(\widetilde{D})=0$. By definition, $\widetilde{D}\in \Ext^1_{\sG}(D,D)$  if and only if it lies in the kernel of the composition
\begin{equation}\label{0}
	\Ext^1(D,D) \lra \Ext^1(\cR_{K,E}( \phi_n z^{\textbf{h}_1}), D) \lra \Ext^1(\cR_{K,E}(\phi_n z^{\textbf{h}_1}), C_1).
\end{equation}
Similarly, $\Ext^1_{\sF}(D,D)$ is equal to the kernel of the composition
\begin{equation*}
	\Ext^1(D,D) \lra \Ext^1(D, \cR_{K,E}(\phi_n z^{\textbf{h}_n})) \lra \Ext^1(D_1, \cR_{K,E}(\phi_n z^{\textbf{h}_n})). 
\end{equation*}
By d\'evissage, one can deduce an exact sequence
\begin{equation*}
	0 \ra 	\Ext^1(D,D_1) \lra \Ext^1_{\sF}(D,D) \lra \Ext^1(\cR_{K,E}(\phi_n z^{\textbf{h}_n}), \cR_{K,E}(\phi_n z^{\textbf{h}_n}))\lra 0.
\end{equation*}
As $\kappa_{\sF,2}(\widetilde{D})=0$, $\widetilde{D}$ lies in the image of $\Ext^1(D,D_1) \ra \Ext^1_{\sF}(D,D)$. Let $M_1\in \Ext^1(D,D_1) $ be the  preimage of $\widetilde{D}$. Consider the composition 
\begin{equation*}
	\Ext^1(D,D_1) \hookrightarrow	\Ext^1(D,D) \ra \Ext^1(\cR_{K,E}(\phi_n z^{\textbf{h}_1}), D) \ra \Ext^1(\cR_{K,E}( \phi_n z^{\textbf{h}_1}), C_1).
\end{equation*}
It is straightforward to see it is equal to the composition
\begin{equation}\label{EDtoM}
	\Ext^1(D,D_1) \lra \Ext^1(\cR_{K,E}(\phi_n z^{\textbf{h}_1}), D_1) \xlongrightarrow{\iota_D} \Ext^1(\cR_{K,E}(\phi_n z^{\textbf{h}_1}), C_1).
\end{equation}
So $\widetilde{D}$ lies in the kernel of (\ref{0}) if and only if $M_1$ is sent to zero via (\ref{EDtoM}). However, using d\'evissage, we see the kernel of $\iota_D$ in (\ref{EDtoM}) is isomorphic to $H^0_{(\varphi, \Gamma)}(\cR_{K,E}(\phi_n^{-1} z^{-\textbf{h}_1}) \otimes_{\cR_{K,E}}(C_1/D_1))$, which, by Lemma \ref{Lemtorsion}, has dimension $\dim_E D_{\dR}^+(\cR_{K,E}(\phi_n^{-1} z^{-\textbf{h}_1}) \otimes_{\cR_{K,E}} C_1)-\dim_E D_{\dR}^+(\cR_{K,E}(\phi_n^{-1} z^{-\textbf{h}_1}) \otimes_{\cR_{K,E}} D_1)=0$. So  $\iota_D$ in (\ref{EDtoM}) is injective. We see (under the assumption $\psi=0$) that (1) is equivalent to  that $M_1$ lies in the kernel of the first map of (\ref{EDtoM}), which is equal to $\Ext^1(C_1, D_1)$ by d\'evissage. This is furthermore equivalent to that  $\widetilde{D}_1$ lies in the image of the composition 
$	\Ext^1(C_1,D_1) \hookrightarrow \Ext^1(D,D_1) \ra \Ext^1(D_1,D_1)$,
which is  no other than $\iota_D^-$.  The other parts are straightforward. 
\end{proof}
\begin{corollary}\label{Csur0}
We have $\dim_E (\Ext^1_{\sF}(D,D) \cap \Ext^1_{\sG}(D,D))=1+(n^2-2n+2)d_K$. Consequently, the following natural map is surjective: 
\begin{equation}\label{Epinf}
	\Ext^1_{\sF}(D,D) \oplus \Ext^1_{\sG}(D,D) \twoheadlongrightarrow \Ext^1(D,D).
\end{equation}
\end{corollary}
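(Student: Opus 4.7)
The plan is to compute the dimension of $V := \Ext^1_{\sF}(D,D) \cap \Ext^1_{\sG}(D,D)$ using Theorem \ref{ThIW1}, and then deduce the surjectivity of (\ref{Epinf}) by comparing with $\dim_E \Ext^1(D,D) = 1 + n^2 d_K$ from Proposition \ref{PDef1}(1). Let $\alpha: \Hom(K^\times,E) \to \Ext^1(D_1,D_1)$ be the map sending $\psi$ to the class of $D_1 \otimes_{\cR_{K,E}} \cR_{K,E[\epsilon]/\epsilon^2}(1-\psi\epsilon)$; it is $E$-linear since $(1-\psi_1\epsilon)(1-\psi_2\epsilon) \equiv 1-(\psi_1+\psi_2)\epsilon \pmod{\epsilon^2}$. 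Define
\[
\Psi: \Ext^1(D_1,D_1) \oplus \Hom(K^\times,E) \lra \Ext^1(D_1,D_1), \quad (\widetilde{D}_1,\psi) \longmapsto \widetilde{D}_1 + \alpha(\psi).
\]
A direct cocycle computation identifies $\widetilde{D}_1 \otimes_{\cR_{K,E[\epsilon]/\epsilon^2}} \cR_{K,E[\epsilon]/\epsilon^2}(1-\psi\epsilon)$ with $\Psi(\widetilde{D}_1,\psi)$ as elements of $\Ext^1(D_1,D_1)$, so by Theorem \ref{ThIW1} we have $\kappa_{\sF}(V) = \Psi^{-1}(\Ext^1_{\iota_D}(D_1,D_1))$.

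The map $\Psi$ is manifestly $E$-linear and surjective (take $\psi=0$), with kernel $\{(-\alpha(\psi),\psi) : \psi \in \Hom(K^\times,E)\}$. The injectivity of $\alpha$ follows from a trianguline-parameter argument: for any refinement $w$ of $D_1$, $\kappa_w$ sends the class of $D_1 \otimes \cR_{K,E[\epsilon]/\epsilon^2}(1-\psi\epsilon)$ to the diagonal character $(-\psi,\dots,-\psi) \in \Hom(T_1(K),E)$, which vanishes iff $\psi=0$. Therefore $\dim_E \Ker\Psi = 1 + d_K$. Combined with Proposition \ref{Phint1}(1), giving $\dim_E \Ext^1_{\iota_D}(D_1,D_1) = 1+(n-1)(n-2)d_K$, we obtain $\dim_E \Psi^{-1}(\Ext^1_{\iota_D}(D_1,D_1)) = 2+(n^2-3n+3)d_K$.

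By Proposition \ref{Ppara1}, $\kappa_{\sF}$ is surjective and a dimension count yields $\dim_E \Ker\kappa_{\sF} = (1+(n^2-n+1)d_K) - (2+(n^2-2n+2)d_K) = (n-1)d_K - 1$, so
\[
\dim_E V = \big((n-1)d_K-1\big) + \big(2+(n^2-3n+3)d_K\big) = 1+(n^2-2n+2)d_K,
\]
proving the first claim. For the surjectivity of (\ref{Epinf}), Proposition \ref{Ppara1} gives $\dim_E \Ext^1_{\sF}(D,D) = \dim_E \Ext^1_{\sG}(D,D) = 1+(n^2-n+1)d_K$, hence the inclusion-exclusion identity produces $\dim_E(\Ext^1_{\sF}(D,D) + \Ext^1_{\sG}(D,D)) = 2(1+(n^2-n+1)d_K) - (1+(n^2-2n+2)d_K) = 1+n^2d_K = \dim_E\Ext^1(D,D)$, forcing surjectivity. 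The delicate technical step is the identification of the twist operation with the $E$-linear map $\Psi$ at the level of extension classes; once that is in hand, everything reduces to the dimension bookkeeping assembled above.
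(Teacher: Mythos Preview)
Your proof is correct and follows essentially the same approach as the paper: both use Theorem~\ref{ThIW1} to identify $\kappa_{\sF}(V)$, add $\dim_E\Ker\kappa_{\sF}$, and finish with inclusion--exclusion. The only difference is that you make the linear map $\Psi$ explicit (the paper simply writes the sum $\dim_E\Hom(K^\times,E)+\dim_E\Ext^1_{\iota_D}(D_1,D_1)+\dim_E\Ker\kappa_{\sF}$ directly); note that your argument for injectivity of $\alpha$ is superfluous, since $\Ker\Psi=\{(-\alpha(\psi),\psi)\}$ is in bijection with $\Hom(K^\times,E)$ via projection to the second factor regardless.
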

\begin{proof}
By  Theorem \ref{ThIW1},  $\dim_E (\Ext^1_{\sF}(D,D) \cap \Ext^1_{\sG}(D,D))=\dim_E\Hom(K^{\times},E) +\dim_E \Ext^1_{\iota_D}(D_1,D_1)+\dim_E \Ker(\kappa_{\sF})$ (cf. (\ref{EkappaF})). By Proposition \ref{Phint1} (1) and Proposition \ref{Ppara1}, it is equal to $(1+d_K)+(1+(n-1)(n-2)d_K)+(-1+(n-1)d_K)=1+(n^2-2n+2)d_K$. Together with  Proposition \ref{Ppara1}, we see $\dim_E (\Ext^1_{\sF}(D,D) +\Ext^1_{\sG}(D,D))=2(1+(n(n-1)+1)d_K-1+(n^2-2n+2)d_K=1+n^2 d_K{\buildrel {\text{Prop}.~\ref{PDef1} (1)} \over =} \dim_E \Ext^1(D,D)$. The second part follows.
\end{proof}
\!\!\!\!\!\! \!\!\!Let 
$
V(D_{1}, C_{1})\!\!:= \big(\ol{\Ext}^1(D_1, D_1) \times\Hom(K^{\times},E)\big) \oplus 	\big(\ol{\Ext}^1(C_1, C_1) \times \Hom(K^{\times},E)\big)$
$\big(	\xleftarrow[\sim]{(\kappa_{\sF}, \kappa_{\sG})} \ol{\Ext}^1_{\sF}(D,D) \oplus \ol{\Ext}^1_{\sG}(D,D)\big),
$
and $\cL(D,D_1, C_1)$ be the subspace consisting of those $\big(\big(\overline{\widetilde{D}_1}, \psi\big), \big(\overline{\widetilde{C}_1}, -\psi\big)\big)\in V(D_1,C_1)$ such that  $\big(\widetilde{D}_1 \otimes_{\cR_{K, E[\epsilon]/\epsilon^2}} \cR_{K, E[\epsilon]/\epsilon^2}(1-\psi \epsilon), \widetilde{C}_1\otimes_{\cR_{K, E[\epsilon]/\epsilon^2}} \cR_{K, E[\epsilon]/\epsilon^2}(1+\psi \epsilon)\big)\in \sI_D$ (cf. (\ref{EsIiota})).
\begin{corollary}\label{ChIw}
(1) Let $D, D'\in \Phi\Gamma_{\nc}(D_1, C_1, \phi_n)$, $\cL(D',D_1,C_1)=\cL(D,D_1,C_1)$ if and only if $\iota_{D'}=a\iota_D$ for some $a\in E^{\times}$. When $K=\Q_p$, this is equivalent to $D\cong D'$.

(2) For $D \in \Phi\Gamma_{\nc}(D_1, C_1, \phi_n)$, there is a natural exact sequence
\begin{equation}\label{Eexa1}
	0 \lra \cL(D,D_1,C_1) \lra V(D_1, C_1) \lra \ol{\Ext}^1(D,D) \lra 0.
\end{equation}
\end{corollary}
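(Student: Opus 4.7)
My plan is to derive (2) directly from Corollary \ref{Csur0} and Theorem \ref{ThIW1}, and then to deduce (1) from (2) via the corollary that immediately precedes Corollary \ref{ChIw}. For (2), Corollary \ref{Csur0} supplies the surjection
\[
\Ext^1_{\sF}(D,D)\oplus \Ext^1_{\sG}(D,D) \twoheadlongrightarrow \Ext^1(D,D), \quad (\widetilde{D}^{\sF}, \widetilde{D}^{\sG}) \longmapsto \widetilde{D}^{\sF}-\widetilde{D}^{\sG}.
\]
Since $\Ext^1_0(D,D)\subset \Ext^1_{\sF}(D,D)\cap \Ext^1_{\sG}(D,D)$ by Corollary \ref{Cint}(1), this descends to a surjection on the $\ol{\Ext}^1$-quotients; transporting through the isomorphism $(\kappa_{\sF},\kappa_{\sG})$ of Corollary \ref{CkappaFp}(2) yields the desired surjection $V(D_1,C_1)\twoheadlongrightarrow \ol{\Ext}^1(D,D)$. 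Its kernel consists of the pairs $(\kappa_{\sF}(\widetilde{D}),-\kappa_{\sG}(\widetilde{D}))$ for $\widetilde{D}\in \Ext^1_{\sF}(D,D)\cap \Ext^1_{\sG}(D,D)$.

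The second step is to identify this kernel with $\cL(D,D_1,C_1)$. For $\widetilde{D}$ in the intersection, write $\kappa_{\sF}(\widetilde{D})=(\widetilde{D}_1,\psi)$; Theorem \ref{ThIW1} produces $M\in \Ext^1(C_1,D_1)$ with $\widetilde{D}_1=\iota_D^-(M)\otimes \cR(1+\psi\epsilon)$ and $\kappa_{\sG}(\widetilde{D})=(\iota_D^+(M)\otimes \cR(1+\psi\epsilon),\psi)$. The Baer inverse on $\ol{\Ext}^1_{\sG}(D,D)$ is realized by $\epsilon\mapsto -\epsilon$ and sends this to $(-\iota_D^+(M)\otimes \cR(1-\psi\epsilon),-\psi)$; using $(1\pm\psi\epsilon)(1\mp\psi\epsilon)\equiv 1\pmod{\epsilon^2}$, one finds that the kernel element, written in the form $((\overline{\widetilde{D}_1},\psi),(\overline{\widetilde{C}_1'},-\psi))$, satisfies $(\widetilde{D}_1\otimes \cR(1-\psi\epsilon),\widetilde{C}_1'\otimes \cR(1+\psi\epsilon))=(\iota_D^-(M),-\iota_D^+(M))\in \sI_D$. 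Hence the kernel lies in $\cL(D,D_1,C_1)$. The reverse inclusion follows by running Theorem \ref{ThIW1} backwards: starting from an $\sI_D$-datum $(\iota_D^-(M),-\iota_D^+(M))$ and a character $\psi$, twisting by $(1+\psi\epsilon)$ and appealing to the ``moreover'' part of Theorem \ref{ThIW1} produces a deformation $\widetilde{D}$ lying in $\Ext^1_{\sF}(D,D)\cap \Ext^1_{\sG}(D,D)$ whose associated kernel element is the prescribed one. This proves (2).

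For (1), the subspace $\cL(D,D_1,C_1)$ depends on $D$ only through $\sI_D$. The implication ``$\iota_{D'}=a\iota_D \Rightarrow \cL(D',D_1,C_1)=\cL(D,D_1,C_1)$'' is immediate, since rescaling $\iota$ rescales $\sI_\iota$ (cf.\ Proposition \ref{Phint1}(3)). For the converse, I will specialize to $\psi=0$ in the definition of $\cL(D,D_1,C_1)$: the twisting characters become trivial, so the $\psi=0$ slice recovers exactly $\sI_D\subset \ol{\Ext}^1(D_1,D_1)\oplus \ol{\Ext}^1(C_1,C_1)$. Hence $\cL(D,D_1,C_1)=\cL(D',D_1,C_1)$ forces $\sI_D=\sI_{D'}$, and the corollary immediately preceding Corollary \ref{ChIw} then yields $\iota_{D'}=a\iota_D$ for some $a\in E^{\times}$. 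The final assertion for $K=\Q_p$ follows from Proposition \ref{Phodge}.

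The main obstacle lies in the sign and twist bookkeeping of the second step: the definition of $\cL$ was arranged precisely so that Baer negation on the $\sG$-component (acting as $\epsilon\mapsto -\epsilon$) absorbs the $\cR(1+\psi\epsilon)$-twist appearing in $\kappa_{\sG,1}$, but checking this cleanly requires unwinding the tensor product conventions for rank-one $(\varphi,\Gamma)$-modules over $\cR_{K,E[\epsilon]/\epsilon^2}$. No input beyond Theorem \ref{ThIW1} and the preceding corollary is required.
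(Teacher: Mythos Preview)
Your argument for (2) is correct and matches the paper's proof, which simply quotients the surjection of Corollary~\ref{Csur0} by $\Ext^1_0(D,D)$ and identifies the kernel via Theorem~\ref{ThIW1}; you have merely spelled out the sign and twist bookkeeping that the paper leaves implicit.

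For (1), your approach is the same in spirit as the paper's, but there is a gap. You assert that the $\psi=0$ slice of $\cL(D,D_1,C_1)$ ``recovers exactly $\sI_D\subset \ol{\Ext}^1(D_1,D_1)\oplus\ol{\Ext}^1(C_1,C_1)$'' and then invoke the preceding corollary. But $\cL(D,D_1,C_1)$ lives in $V(D_1,C_1)$, which is built from the quotients $\ol{\Ext}^1$, whereas $\sI_D$ is defined inside $\Ext^1(D_1,D_1)\times\Ext^1(C_1,C_1)$ (see (\ref{EsIiota})). The $\psi=0$ slice therefore only yields the \emph{image} $\overline{\sI_D}$ of $\sI_D$ modulo $\Ext^1_0(D_1,D_1)\times\Ext^1_0(C_1,C_1)$, and the preceding corollary requires $\sI_D=\sI_{D'}$, not merely $\overline{\sI_D}=\overline{\sI_{D'}}$. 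The missing ingredient is Proposition~\ref{Phint1}(2): since $\Ext^1_0(D_1,D_1)\subset\Ext^1_g(D_1,D_1)\subset\Ext^1_{\iota}(D_1,D_1)$ for any injective $\iota$, the subspace $\Ext^1_{\iota_D}(D_1,D_1)$ is the full preimage of its image in $\ol{\Ext}^1(D_1,D_1)$; hence equal images force $\Ext^1_{\iota_D}(D_1,D_1)=\Ext^1_{\iota_{D'}}(D_1,D_1)$, and then Proposition~\ref{Phint1}(3) concludes. The paper's proof makes exactly this step explicit: starting from $\widetilde{D}_1\in\Ext^1_{\iota_D}(D_1,D_1)$, membership of $((\overline{\widetilde{D}_1},0),(\overline{\widetilde{C}_1},0))$ in $\cL(D',D_1,C_1)$ produces a lift $\widetilde{D}_1'\in\Ext^1_{\iota_{D'}}(D_1,D_1)$ with $[\widetilde{D}_1']-[\widetilde{D}_1]\in\Ext^1_0(D_1,D_1)$, and Proposition~\ref{Phint1}(2) then gives $\widetilde{D}_1\in\Ext^1_{\iota_{D'}}(D_1,D_1)$.
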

\begin{proof}
(1): The ``if" part is trivial. Suppose $\cL(D',D_1,C_1)=\cL(D,D_1,C_1)$. Let $\widetilde{D}_1\in \Ext^1_{\iota_D}(D_1,D_1)$ $M\in \Ext^1(C_1,D_1)$ be a preimage of $\widetilde{D}_1$ (via $\iota_D^-$) and $\widetilde{C}_1:=-\iota_D^+(M)\in \Ext^1_{\iota_D}(C_1,C_1)$. We have by definition and assumption $$\big((\overline{\widetilde{D}_1}, 0), (\overline{\widetilde{C}_1},0)\big)\in \cL(D,D_1,C_1)=\cL(D',D_1,C_1).$$ There exists hence $\widetilde{D}_1'\in \Ext^1_{\iota_{D'}}(D_1,D_1)$ such that $[\widetilde{D}_1']-[\widetilde{D}_1]\in \Ext^1_0(D_1,D_1)$. As $\Ext^1_0(D_1,D_1)\subset \Ext^1_{\iota_{D'}}(D_1,D_1)$ (by Proposition \ref{Phint1} (2)), this implies $\widetilde{D}_1\in \Ext^1_{\iota_D'}(D_1,D_1)$. So $\Ext^1_{\iota_D}(D_1,D_1)\subset \Ext^1_{\iota_{D'}}(D_1,D_1)$ hence $\Ext^1_{\iota_{D'}}(D_1,D_1)=\Ext^1_{\iota_D}(D_1,D_1)$ by symmetry and $\iota_{D'}\in E^{\times} \iota_D$ by Proposition \ref{Phint1} (3). 

(2) Quotienting (\ref{Epinf}) by $\Ext^1_0(D,D)$ yields a surjection $V(D_1,C_1)\twoheadrightarrow \ol{\Ext}^1(D,D)$. By Theorem \ref{ThIW1}, the kernel is exactly $\cL(D,D_1,C_1)$.
\end{proof}

Now we consider $\Sigma_K \setminus \{\sigma\}$-de Rham deformations for general $K$.  Let $D_{1,\sigma}=\fT_{\sigma}(D_1)$ and  $C_{1,\sigma}=\fT_{\sigma}(C_1)$ (cf. (\ref{Ecow})). Let $\iota_{\sigma}\in \Hom(D_{1,\sigma}, C_{1,\sigma})$. We have similar maps as in (\ref{Eiotapm}), which induce, by restricting to $\Sigma_K\setminus \{\sigma\}$-de Rham extension groups, 
\begin{equation*}
\iota_{\sigma}^-: \Ext^1_{\sigma}(C_{1,\sigma}, D_{1,\sigma}) \ra \Ext^1_{\sigma}(D_{1,\sigma}, D_{1,\sigma}), \ \iota_{\sigma}^+: \Ext^1_{\sigma}(C_{1,\sigma}, D_{1,\sigma}) \ra \Ext^1_{\sigma}(C_{1,\sigma}, C_{1,\sigma}).
\end{equation*}
Let $\Ext^1_{\iota_{\sigma}}(D_{1,\sigma},D_{1,\sigma}):=\Ima(\iota_{\sigma}^-)$, $\Ext^1_{\iota_{\sigma}}(C_{1,\sigma}, C_{1,\sigma}):=\Ima(\iota_{\sigma}^+)$. Denote by 
\begin{multline}\label{EsIiotasigma}
\sI_{\iota_{\sigma}}:=\{(\widetilde{D}_{1,\sigma}, \widetilde{C}_{1,\sigma})\in \Ext^1_{\iota_{\sigma}}(D_{1,\sigma},D_{1,\sigma}) \times \Ext^1_{\iota_{\sigma}}(C_{1,\sigma},C_{1,\sigma})\ |\ \\
\exists M\in \Ext^1_{\sigma}(C_{1,\sigma}, D_{1,\sigma}) \text{ with } \iota_{\sigma}^-(M)=\widetilde{D}_{1,\sigma}, -\iota^+_{\sigma}(M)=\widetilde{C}_{1,\sigma}\}. 
\end{multline}
Similarly as in Proposition \ref{Phint1}, we have:
\begin{proposition}\label{Piotasigma}
Let $\iota_{\sigma}\in \Hom(D_{1,\sigma}, C_{1,\sigma})$ be an injection.

(1)  $\dim_E \Ext^1_{\iota_{\sigma}}(D_{1,\sigma}, D_{1,\sigma})=\dim_E \Ext^1_{\iota_{\sigma}}(C_{1,\sigma},C_{1,\sigma})=1+(n-1)(n-2) $.

(2) $\Ext_g^1(D_{1,\sigma}, D_{1,\sigma}) \subset \Ext^1_{\iota_{\sigma}}(D_{1,\sigma},D_{1,\sigma})$ and $\Ext_g^1(C_{1,\sigma}, C_{1,\sigma}) \subset \Ext^1_{\iota_{\sigma}}(C_{1,\sigma},C_{1,\sigma})$.

(3) For $\iota_{\sigma}'\in \Hom(D_{1,\sigma},C_{1,\sigma})$, $\Ext^1_{\iota_{\sigma}'}(D_{1,\sigma}, D_{1,\sigma})=\Ext^1_{\iota_{\sigma}}(D_{1,\sigma},D_{1,\sigma})$ if and only if  $\Ext^1_{\iota_{\sigma}'}(C_{1,\sigma}, C_{1,\sigma})=\Ext^1_{\iota_{\sigma}}(C_{1,\sigma},C_{1,\sigma})$ if and only if $\iota_{\sigma}'=a \iota_{\sigma}$ for some $a\in E^{\times}$. 
\end{proposition}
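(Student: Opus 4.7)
The plan is to mimic the proof of Proposition~\ref{Phint1} verbatim, substituting the $\Sigma_K\setminus\{\sigma\}$-partially de Rham extension groups $\Ext^1_{\sigma}$ for the full extension groups $\Ext^1$ throughout, and invoking \cite[Cor.~A.4]{Ding6} together with Proposition~\ref{Psigma3} (applied to $(\varphi,\Gamma)$-modules of rank $n-1$) in place of Proposition~\ref{PDef1} whenever a dimension count is needed.

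For part (1), I would run the same d\'evissage on the short exact sequence
\begin{equation*}
0 \lra D_{1,\sigma}\otimes_{\cR_{K,E}} C_{1,\sigma}^{\vee} \lra D_{1,\sigma}\otimes_{\cR_{K,E}} D_{1,\sigma}^{\vee} \lra Q \lra 0
\end{equation*}
and take the associated long exact sequence of $\Sigma_K\setminus\{\sigma\}$-partially de Rham cohomology of the associated $B$-pairs. The final connecting map coincides with $\iota_{\sigma}^{-}$, and the three input quantities are: $\dim_E H^0(Q)$, computed via Lemma~\ref{Lemtorsion} by tabulating the Hodge--Tate--Sen weights of $D_{1,\sigma}$ and $C_{1,\sigma}$ (constant at every $\tau\neq\sigma$ by construction); $\dim_E \Ext^1_{\sigma}(C_{1,\sigma},D_{1,\sigma})$, computed directly from \cite[Cor.~A.4]{Ding6} applied to $W(D_{1,\sigma}\otimes C_{1,\sigma}^{\vee})$; and the $\Hom$-groups, which are $E$ and $0$ respectively by genericity and non-criticality. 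The identical computation with the roles of $D_{1,\sigma}$ and $C_{1,\sigma}$ exchanged handles $\Ext^1_{\iota_{\sigma}}(C_{1,\sigma},C_{1,\sigma})$.

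For part (2), first note that $D_{1,\sigma}$ and $C_{1,\sigma}$ are themselves de Rham: by the construction in Proposition~\ref{Pcow} their $B_{\dR}^{+}$-lattices are free of full rank at $\tau\neq\sigma$, and at $\sigma$ they coincide with the de Rham lattices of $D_1$ (resp.\ $C_1$). The argument of Proposition~\ref{Phint1}(2) then transports: any $M\in\Ker(\iota_{\sigma}^{-})$ contains $D_{1,\sigma}\oplus D_{1,\sigma}$ with torsion cokernel, forcing $M$ to be de Rham, so $\Ker(\iota_{\sigma}^{-})=\Ker(\iota_{\sigma,g}^{-})$ with $\iota_{\sigma,g}^{-}:\Ext^1_g(C_{1,\sigma},D_{1,\sigma})\to \Ext^1_g(D_{1,\sigma},D_{1,\sigma})$. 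A dimension count using \cite[Cor.~A.4]{Ding6} together with Proposition~\ref{Ppara2}(2) (adapted to rank $n-1$) and part~(1) then shows $\iota_{\sigma,g}^{-}$ is surjective, yielding the desired containment. Part (3) then proceeds exactly as in Proposition~\ref{Phint1}(3): the dimension one case is trivial, and in the dimension two case I would establish a partially de Rham analog of Lemma~\ref{Ldalphai} (replacing $d_K$ by $1$ throughout) to obtain the strict inequality $\dim_E(\Ext^1_{\alpha_{i,\sigma}}(D_{1,\sigma},D_{1,\sigma})+\Ext^1_{\alpha_{j,\sigma}}(D_{1,\sigma},D_{1,\sigma}))>1+(n-1)(n-2)$, which contradicts any hypothetical equality $\Ext^1_{\iota_{\sigma}'}=\Ext^1_{\iota_{\sigma}}$ with $\iota_{\sigma}'\notin E[\iota_{\sigma}]$.

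The main obstacle will be the bookkeeping of Sen weights at the embeddings $\tau\neq\sigma$ when invoking \cite[Cor.~A.4]{Ding6} on the various tensor products; in particular, the partially de Rham analog of Lemma~\ref{Ldalphai} requires a careful d\'evissage identifying the intersection of $\Ext^1_{\sigma,\alpha_{i,\sigma}}(D_{1,\sigma},D_{1,\sigma})$ and $\Ext^1_{\sigma,\alpha_{j,\sigma}}(D_{1,\sigma},D_{1,\sigma})$ via pullback to the two rank-one constituents indexed by $\{i,j\}$, and each piece must be re-evaluated via \cite[Cor.~A.4]{Ding6}. Once these partially de Rham dimension computations are in place, everything else is a mechanical translation of the proof of Proposition~\ref{Phint1}.
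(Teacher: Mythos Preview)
Your proposal is correct and follows essentially the same route as the paper's proof: all three parts are transported from Proposition~\ref{Phint1} by replacing $\Ext^1$ with $\Ext^1_{\sigma}$, with the dimension inputs supplied by \cite[Cor.~A.4]{Ding6} and Proposition~\ref{Ppara2}(2), and part~(3) handled via a $\sigma$-analogue of Lemma~\ref{Ldalphai}. The only organizational difference is in part~(1): where you propose running the d\'evissage directly in partially de Rham cohomology, the paper instead computes the kernel of the \emph{full} pull-back $\Ext^1(C_{1,\sigma},D_{1,\sigma})\to\Ext^1(D_{1,\sigma},D_{1,\sigma})$ via the argument of Proposition~\ref{Phint1}(1), then observes that any element of this kernel contains $D_{1,\sigma}\oplus D_{1,\sigma}$ and is therefore de Rham, so the full kernel coincides with $\Ker\iota_{\sigma}^{-}$ (and with $\Ker\iota_{\sigma,g}^{-}$). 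This sidesteps the need to justify a long exact sequence in partially de Rham cohomology at that step; your direct approach is also fine but relies on \cite[Prop.~A.5]{Ding6}, which the paper only invokes explicitly for part~(3).
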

\begin{proof}We \ still \ only \ prove \ the \ statements \ for \ $D_{1,\sigma}$. \ 
By \ \cite[Cor.~A.4]{Ding6}, \ $\dim_E \Ext^1_{\sigma}(C_{1,\sigma}, D_{1,\sigma})=(n-1)^2 d_K-\sum_{\tau\in \Sigma_K \setminus \{\sigma\}} \dim_E D_{\dR}^+(D_{1,\sigma}\otimes_{\cR_{K,E}} C_{1,\sigma}^{\vee})_{\tau}=(n-1)^2.$ By similar arguments as in the proof of Proposition \ref{Phint1} (1), the kernel of $ \Ext^1(C_{1,\sigma}, D_{1,\sigma}) \ra \Ext^1(D_{1,\sigma}, D_{1,\sigma})$ has dimension $(n-1)-1$. But any element in this kernel contains $D_{1,\sigma} \oplus D_{1,\sigma}$ hence is de Rham. We see it is the same as $\Ker \iota_{\sigma}^-$ and $\Ker \big(\iota_{\sigma}^-|_{\Ext^1_g(C_{1,\sigma}, D_{1,\sigma})}\big)$. (1) follows. Using \cite[Cor.~A.4]{Ding6}, $\dim_E\Ext^1_g(C_{1,\sigma}, D_{1,\sigma})=\frac{n(n-1)}{2}$. Together with Proposition \ref{Ppara2} (2) and comparing dimensions, the induced map $\Ext^1_g(C_{1,\sigma}, D_{1,\sigma}) \ra \Ext^1_g(D_{1,\sigma},D_{1,\sigma})$ is surjective. (2) follows. (3) follows from  similar arguments as in the proof of Proposition \ref{Phint1} (3) using an analogue of Lemma \ref{Ldalphai} for $\Ext^1_{\alpha_{i,\sigma}}$ with $\alpha_{i,\sigma}$ given as in (\ref{Ealphaisig}) (when $\dim_E \Hom(D_{1,\sigma}, C_{1,\sigma})=2$). Note the d\'evissage arguments in the proof of Lemma \ref{Ldalphai} work when  $\Ext^1$'s are all replaced by $\Ext^1_{\sigma}$'s, by \cite[Prop.~A.5]{Ding6}. We leave the details to the reader.
\end{proof}
For $D_{\sigma}\in \Phi\Gamma_{\nc}(D_{1,\sigma}, C_{1,\sigma}, \phi_n)$ (which is the subset of $\Phi\Gamma_{\nc}(\phi, \sT_{\sigma}(\textbf{h}))$ defined similarly as $\Phi\Gamma_{\nc}(D_1,C_1, \phi_n)$), set $\sI_{D_{\sigma}}:=\sI_{\iota_{D_{\sigma}}}$ (cf. (\ref{EsIiotasigma})) where $\iota_{D_{\sigma}}$ is the composition $D_{1,\sigma}\hookrightarrow D_{\sigma} \twoheadrightarrow C_{1,\sigma}$.
We have by Proposition \ref{Piotasigma} (3) and Proposition \ref{Piotasigma0}:
\begin{corollary}
For $D_{\sigma}$, $D_{\sigma}'\in \Phi\Gamma_{\nc}(D_{1,\sigma},C_{1,\sigma}, \phi_n)$, we have $\sI_{D_{\sigma}}=\sI_{D_{\sigma}'}$ if and only if $D_{\sigma}\cong D_{\sigma}'$.
\end{corollary}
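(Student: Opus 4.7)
The statement is parallel to Corollary~\ref{ChIw}~(1), but now formulated for the quotient $D_\sigma=\fT_\sigma(D)$ and with $\Sigma_K\setminus\{\sigma\}$-de Rham deformations throughout. My plan is to deduce it by directly chaining two already-proven results: Proposition~\ref{Piotasigma}~(3), which recovers $\iota_\sigma$ up to $E^\times$ from the subspace $\Ext^1_{\iota_\sigma}(D_{1,\sigma},D_{1,\sigma})$; and Proposition~\ref{Piotasigma0}, which recovers $D_\sigma$ from $(D_{1,\sigma},C_{1,\sigma},\phi_n,\iota_{D_\sigma})$ up to scalar. The key intermediate step is thus to identify $\Ext^1_{\iota_{D_\sigma}}(D_{1,\sigma},D_{1,\sigma})$ purely in terms of $\sI_{D_\sigma}$.

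For the ``only if'' direction, I proceed in four short steps. First, unwinding the definition~(\ref{EsIiotasigma}), I observe that the projection $\sI_{D_\sigma}\to\Ext^1_\sigma(D_{1,\sigma},D_{1,\sigma})$ onto the first factor has image exactly $\Ext^1_{\iota_{D_\sigma}}(D_{1,\sigma},D_{1,\sigma})$, since every element $\widetilde{D}_{1,\sigma}=\iota_{D_\sigma}^-(M)$ gives the pair $(\widetilde{D}_{1,\sigma},-\iota_{D_\sigma}^+(M))\in\sI_{D_\sigma}$. Second, the hypothesis $\sI_{D_\sigma}=\sI_{D_\sigma'}$ therefore yields $\Ext^1_{\iota_{D_\sigma}}(D_{1,\sigma},D_{1,\sigma})=\Ext^1_{\iota_{D_\sigma'}}(D_{1,\sigma},D_{1,\sigma})$. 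Third, Proposition~\ref{Piotasigma}~(3) upgrades this to $\iota_{D_\sigma'}=c\,\iota_{D_\sigma}$ for some $c\in E^\times$. Fourth, since $[\iota_{D_\sigma}]^\perp=[c\,\iota_{D_\sigma}]^\perp$ under the cup-product in Proposition~\ref{Piotasigma0}, applying that proposition to both $D_\sigma$ and $D_\sigma'$ forces $E[D_\sigma]=E[D_\sigma']$ inside $\Ext^1(\cR_{K,E}(\phi_n z^{\textbf{h}_n}),D_{1,\sigma})$, hence $D_\sigma\cong D_\sigma'$.

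The ``if'' direction is more routine but demands careful scalar bookkeeping. Given an isomorphism $\phi\colon D_\sigma\xrightarrow{\sim} D_\sigma'$, the one-dimensionality of $\Hom(D_{1,\sigma},D_\sigma)$ and $\Hom(D_\sigma,C_{1,\sigma})$ (and the analogous spaces for $D_\sigma'$), which is built into the definition of $\Phi\Gamma_{\nc}(D_{1,\sigma},C_{1,\sigma},\phi_n)$, forces $\phi$ to act by scalars $a,b\in E^\times$ on $D_{1,\sigma}$ and $C_{1,\sigma}$ respectively, whence $\iota_{D_\sigma'}=(b/a)\iota_{D_\sigma}$. The $E$-linearity of the maps $\iota^\pm$ in the variable $\iota$ then permits me to replace each witness $M$ of a pair in $\sI_{D_\sigma}$ by $(a/b)M$ to land in $\sI_{D_\sigma'}$, and symmetry gives the reverse inclusion. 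The main obstacle I anticipate is precisely this scalar compatibility between rescaling $\iota$ and rescaling $M$; once tracked, no further subtlety arises, and the corollary is simply the assembly of the two referenced propositions.
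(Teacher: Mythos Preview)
Your proof is correct and follows precisely the route the paper indicates: it is a direct consequence of Proposition~\ref{Piotasigma}~(3) and Proposition~\ref{Piotasigma0}, exactly as in the analogous corollary for $D$ (proved from Proposition~\ref{Phint1}~(3) and Proposition~\ref{Phodge}). Your filling-in of the details---recovering $\Ext^1_{\iota_{D_\sigma}}(D_{1,\sigma},D_{1,\sigma})$ as the first projection of $\sI_{D_\sigma}$, and the scalar bookkeeping $\sI_{c\iota}=\sI_\iota$ via rescaling the witness $M$---is accurate and matches what the paper leaves implicit.
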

Consider $\kappa_{\sF}: \Ext^1_{\sigma, \sF_P}(D_{\sigma}, D_{\sigma}) \ra \Ext^1_{\sigma}(D_{1,\sigma}, D_{1,\sigma}) \times \Hom_{\sigma}(K^{\times},E)$, and $\kappa_{\sG}: \Ext^1_{\sigma, \sF_P}(D_{\sigma}, D_{\sigma}) \ra \Ext^1_{\sigma}(C_{1,\sigma}, C_{1,\sigma}) \times \Hom_{\sigma}(K^{\times},E)$ (cf.  (\ref{EkapFpsigma}) and (\ref{Echaracsigma})). 
The following theorem follows by the same argument as in the proof of Theorem \ref{ThIW1} (note that all the d\'evissage arguments used in \textit{loc. cit.} work if $\Ext^1$'s are all  replaced by $\Ext^1_{\sigma}$'s by \cite[Prop.~A.5]{Ding6}).
\begin{theorem}\label{ThIW2}
Let $\widetilde{D}_{\sigma}\in \Ext^1_{\sigma, \sF}(D_{\sigma}, D_{\sigma})$ with $\kappa_{\sF}(\widetilde{D}_{\sigma})=(\widetilde{D}_{\sigma,1}, \psi)$. The followings are equivalent:
\begin{enumerate}
	\item  $\widetilde{D}_{\sigma} \in \Ext^1_{\sigma, \sF}(D_{\sigma},D_{\sigma}) \cap \Ext^1_{\sigma,\sG}(D_{\sigma},D_{\sigma})$,
	\item  $\widetilde{D}_{1,\sigma}\otimes_{\cR_{K, E[\epsilon]/\epsilon^2}} \cR_{E[\epsilon]/\epsilon^2}(1-\psi \epsilon)\in \Ext^1_{\iota_{D_{\sigma}}}(D_{1,\sigma},D_{1,\sigma})$.
\end{enumerate}		
Moreover, if the equivalent conditions hold, then $\kappa_{\sG,2}(\widetilde{D}_{\sigma})=\psi$ and	 there exists $M\in \Ext^1_{\sigma}(C_{1,\sigma},D_{1,\sigma})$ such that $\widetilde{D}_{1,\sigma} \cong \iota_{D_{\sigma}}^-(M) \otimes_{\cR_{K, E[\epsilon]/\epsilon^2}} \cR_{K, E[\epsilon]/\epsilon^2}(1+\psi \epsilon)$ and $\kappa_{\sG,1}(\widetilde{D}_{\sigma})=\iota_{D_{\sigma}}^+(M)\otimes_{\cR_{K, E[\epsilon]/\epsilon^2}}\cR_{K, E[\epsilon]/\epsilon^2}(1+\psi \epsilon)$.
\end{theorem}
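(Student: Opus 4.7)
The plan is to mimic the proof of Theorem \ref{ThIW1} verbatim, replacing every occurrence of $\Ext^1$ by $\Ext^1_\sigma$, and justifying each d\'evissage step by invoking \cite[Prop.~A.5]{Ding6} (which ensures that the relevant long exact sequences of partially de Rham cohomologies behave like the full ones under the genericity and regularity assumptions on $D_\sigma$). First, by twisting $\widetilde{D}_\sigma$ by the character $1-\psi\epsilon$ (which preserves $\Sigma_K\setminus\{\sigma\}$-de Rhamness and preserves both $\sF$- and $\sG$-filtrations), one reduces to the case $\psi=0$; so it suffices to show that, under $\kappa_{\sF,2}(\widetilde{D}_\sigma)=0$, condition (1) is equivalent to $\widetilde{D}_{1,\sigma}\in\Ext^1_{\iota_{D_\sigma}}(D_{1,\sigma},D_{1,\sigma})$.

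Next, exactly as in Theorem \ref{ThIW1}, write $\Ext^1_{\sigma,\sG}(D_\sigma,D_\sigma)$ as the kernel of the composition
\begin{equation*}
\Ext^1_\sigma(D_\sigma,D_\sigma)\lra \Ext^1_\sigma(\cR_{K,E}(\phi_nz^{\fT_\sigma(\textbf{h})_1}),D_\sigma)\lra \Ext^1_\sigma(\cR_{K,E}(\phi_nz^{\fT_\sigma(\textbf{h})_1}),C_{1,\sigma}),
\end{equation*}
and analogously characterise $\Ext^1_{\sigma,\sF}(D_\sigma,D_\sigma)$. By \cite[Prop.~A.5]{Ding6} together with Proposition \ref{Pparasigma}, one obtains the partially de Rham analogue of the short exact sequence
\begin{equation*}
0\lra \Ext^1_\sigma(D_\sigma,D_{1,\sigma})\lra \Ext^1_{\sigma,\sF}(D_\sigma,D_\sigma)\lra \Ext^1_\sigma(\cR_{K,E}(\phi_nz^{\fT_\sigma(\textbf{h})_n}),\cR_{K,E}(\phi_nz^{\fT_\sigma(\textbf{h})_n}))\lra 0,
\end{equation*}
so that (under $\psi=0$) the element $\widetilde{D}_\sigma$ is the image of some $M_1\in\Ext^1_\sigma(D_\sigma,D_{1,\sigma})$. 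One then identifies the composition $\Ext^1_\sigma(D_\sigma,D_{1,\sigma})\to\Ext^1_\sigma(\cR_{K,E}(\phi_nz^{\fT_\sigma(\textbf{h})_1}),C_{1,\sigma})$ with the two-step composition factoring through $\Ext^1_\sigma(\cR_{K,E}(\phi_nz^{\fT_\sigma(\textbf{h})_1}),D_{1,\sigma})\xrightarrow{\iota_{D_\sigma}}\Ext^1_\sigma(\cR_{K,E}(\phi_nz^{\fT_\sigma(\textbf{h})_1}),C_{1,\sigma})$.

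For the injectivity of the push-forward by $\iota_{D_\sigma}$, one uses Lemma \ref{Lemtorsion} to identify its kernel with $D_{\dR}^+\big(\cR_{K,E}(\phi_n^{-1}z^{-\fT_\sigma(\textbf{h})_1})\otimes(C_{1,\sigma}/D_{1,\sigma})\big)$, whose vanishing is immediate by comparing Hodge--Tate--Sen weights: both $C_{1,\sigma}$ and $D_{1,\sigma}$ have Hodge--Tate--Sen $\tau$-weight $h_{\tau,n}$ for $\tau\neq\sigma$, and at $\sigma$ the weights of $C_{1,\sigma}/D_{1,\sigma}$ are bounded above by $h_{\sigma,2}<h_{\sigma,1}$. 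Thus $\widetilde{D}_\sigma\in\Ext^1_{\sigma,\sG}(D_\sigma,D_\sigma)$ if and only if $M_1$ lies in the kernel of the first arrow, which by d\'evissage (again \cite[Prop.~A.5]{Ding6}) equals $\Ext^1_\sigma(C_{1,\sigma},D_{1,\sigma})$. The image of such an $M_1$ in $\Ext^1_\sigma(D_{1,\sigma},D_{1,\sigma})$ is exactly $\iota_{D_\sigma}^-(M)$ for some $M\in\Ext^1_\sigma(C_{1,\sigma},D_{1,\sigma})$, which is the content of (2). The final ``moreover'' assertion on $\kappa_{\sG,1}$ and $\kappa_{\sG,2}$ is then read off from the same diagram, by tracking $M$ through the $\sG$-filtration.

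The only real obstacle is checking that the d\'evissage steps in the proof of Theorem \ref{ThIW1}, which use freely the long exact sequence of $H^i_{(\varphi,\Gamma)}$ for short exact sequences of $(\varphi,\Gamma)$-modules, continue to hold after restricting to $\Sigma_K\setminus\{\sigma\}$-de Rham classes. This is exactly what \cite[Prop.~A.5]{Ding6} provides, once one verifies that the auxiliary $(\varphi,\Gamma)$-modules appearing (namely $D_\sigma\otimes D_\sigma^\vee$, $C_{1,\sigma}\otimes D_{1,\sigma}^\vee$, and their subquotients) satisfy the genericity hypothesis of \emph{loc.\ cit.}, which follows from the genericity of $\phi$ and the regularity of $\textbf{h}$. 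The rest of the argument is a routine translation of the proof of Theorem \ref{ThIW1}.
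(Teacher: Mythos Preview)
Your proposal is correct and follows exactly the approach the paper takes: the paper's own proof of Theorem \ref{ThIW2} is a one-line statement that the argument of Theorem \ref{ThIW1} goes through verbatim once all $\Ext^1$'s are replaced by $\Ext^1_\sigma$'s, with the d\'evissage steps justified by \cite[Prop.~A.5]{Ding6}. Your write-up simply unpacks this in detail, including the twist reduction to $\psi=0$, the identification of $\Ext^1_{\sigma,\sG}$ as a kernel, the injectivity of the push-forward by $\iota_{D_\sigma}$ via Lemma \ref{Lemtorsion}, and the tracking of $M$ through the $\sG$-filtration for the ``moreover'' clause.
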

Set  $V(D_{1,\sigma}, C_{1,\sigma})_{\sigma}:=  \big(\ol{\Ext}^1_{\sigma}(D_{1,\sigma}, D_{1,\sigma}) \times \Hom_{\sigma}(K^{\times},E)\big) \oplus 	\big(\ol{\Ext}^1_{\sigma}(C_{1,\sigma}, C_{1,\sigma}) \times \Hom_{\sigma}(K^{\times},E)\big)$ and $\cL(D_{\sigma},D_{1,\sigma},D_{2,\sigma})_{\sigma}$ to be the  the subspace consisting of those $\big((\overline{\widetilde{D}_{1,\sigma}}, \psi), (\overline{\widetilde{C}_{1,\sigma}}, -\psi)\big)\in V(D_{1,\sigma}, C_{1,\sigma})_{\sigma}$ such that  (cf. (\ref{EsIiotasigma}))
$$\big(\widetilde{D}_{1,\sigma}\otimes_{\cR_{K, E[\epsilon]/\epsilon^2}} \cR_{K, E[\epsilon]/\epsilon^2}(1-\psi \epsilon), \widetilde{C}_{1,\sigma}\otimes_{\cR_{K, E[\epsilon]/\epsilon^2}} \cR_{K, E[\epsilon]/\epsilon^2}(1+\psi \epsilon)\big)\in \sI_{D_{\sigma}}.$$
By Proposition \ref{Piotasigma0} and  the same arguments as in Corollary \ref{ChIw}, we have:
\begin{corollary}\label{Chodge1}
(1) Let $D_{\sigma}$, $D'_{\sigma}\in \Phi\Gamma_{\nc}(D_{1,\sigma},C_{1,\sigma}, \phi_n)$, then $\cL(D'_{\sigma},D_{1,\sigma},C_{1,\sigma})=\cL(D_{\sigma},D_{1,\sigma},C_{1,\sigma})$ if and only if $D_{\sigma}\cong D'_{\sigma}$.

(2) There is a natural exact sequence
\begin{equation}\label{Eparainf0}
	0 \lra \cL(D_{\sigma},D_{1,\sigma},C_{1,\sigma}) \lra V(D_{1,\sigma}, C_{1,\sigma})_{\sigma} \lra \ol{\Ext}^1_{\sigma}(D_{\sigma},D_{\sigma}) \lra 0.
\end{equation}
\end{corollary}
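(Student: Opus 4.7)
The plan is to transpose each step of the proof of Corollary \ref{ChIw} to the partially de Rham setting, replacing ordinary extension groups by their $\Sigma_K \setminus \{\sigma\}$-de Rham variants and invoking the $\sigma$-analogues of the key propositions (Proposition \ref{Piotasigma}, Proposition \ref{Piotasigma0}, and Theorem \ref{ThIW2}).

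For part (1), the ``only if'' direction runs exactly as in Corollary \ref{ChIw}(1). Assuming $\cL(D_\sigma, D_{1,\sigma}, C_{1,\sigma})_\sigma = \cL(D'_\sigma, D_{1,\sigma}, C_{1,\sigma})_\sigma$, I would pick any $\widetilde{D}_{1,\sigma} \in \Ext^1_{\iota_{D_\sigma}}(D_{1,\sigma}, D_{1,\sigma})$ together with a preimage $M \in \Ext^1_\sigma(C_{1,\sigma}, D_{1,\sigma})$ under $\iota_{D_\sigma}^-$, and set $\widetilde{C}_{1,\sigma} := -\iota_{D_\sigma}^+(M)$. The pair $\bigl((\ol{\widetilde{D}_{1,\sigma}}, 0), (\ol{\widetilde{C}_{1,\sigma}}, 0)\bigr)$ lies in $\cL(D_\sigma, D_{1,\sigma}, C_{1,\sigma})_\sigma$, hence also in $\cL(D'_\sigma, D_{1,\sigma}, C_{1,\sigma})_\sigma$, so there is $\widetilde{D}'_{1,\sigma} \in \Ext^1_{\iota_{D'_\sigma}}(D_{1,\sigma}, D_{1,\sigma})$ with $[\widetilde{D}'_{1,\sigma}] - [\widetilde{D}_{1,\sigma}] \in \Ext^1_0(D_{1,\sigma}, D_{1,\sigma})$. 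By Proposition \ref{Piotasigma}(2) the subspace $\Ext^1_0$ is contained in $\Ext^1_{\iota_{D'_\sigma}}$, so $\widetilde{D}_{1,\sigma} \in \Ext^1_{\iota_{D'_\sigma}}(D_{1,\sigma}, D_{1,\sigma})$. Swapping $D_\sigma$ and $D'_\sigma$ gives the reverse inclusion, and Proposition \ref{Piotasigma}(3) then forces $\iota_{D'_\sigma} = a\, \iota_{D_\sigma}$ for some $a \in E^\times$. Finally, Proposition \ref{Piotasigma0} upgrades this scalar equality to $D'_\sigma \cong D_\sigma$.

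For part (2), I would first establish the $\sigma$-analogue of Corollary \ref{Csur0}, namely the surjectivity of
\begin{equation*}
\Ext^1_{\sigma, \sF}(D_\sigma, D_\sigma) \oplus \Ext^1_{\sigma, \sG}(D_\sigma, D_\sigma) \twoheadlongrightarrow \Ext^1_\sigma(D_\sigma, D_\sigma).
\end{equation*}
Theorem \ref{ThIW2} parameterizes the intersection $\Ext^1_{\sigma, \sF}(D_\sigma, D_\sigma) \cap \Ext^1_{\sigma, \sG}(D_\sigma, D_\sigma)$ by triples $(\psi, \widetilde{D}_{1,\sigma}, \xi)$ with $\psi \in \Hom_\sigma(K^\times, E)$, $\widetilde{D}_{1,\sigma}$ lying (after the explicit twist) in $\Ext^1_{\iota_{D_\sigma}}(D_{1,\sigma}, D_{1,\sigma})$, and $\xi$ in the kernel of $\kappa_{\sF}$ on $\Ext^1_{\sigma, \sF}(D_\sigma, D_\sigma)$. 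Combining Proposition \ref{Piotasigma}(1), Proposition \ref{Psigma3}(2), and Proposition \ref{Pcow2}(2) to count each factor, and comparing with $\dim_E \Ext^1_\sigma(D_\sigma, D_\sigma) = 1 + n^2$ from Proposition \ref{Psigma3}(1), an inclusion-exclusion check yields the surjectivity. Quotienting by $\Ext^1_0(D_\sigma, D_\sigma)$ then produces a surjection $V(D_{1,\sigma}, C_{1,\sigma})_\sigma \twoheadrightarrow \ol{\Ext}^1_\sigma(D_\sigma, D_\sigma)$, and a second application of Theorem \ref{ThIW2} identifies its kernel with $\cL(D_\sigma, D_{1,\sigma}, C_{1,\sigma})_\sigma$, giving the exact sequence (\ref{Eparainf0}).

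The main technical obstacle is the dimension arithmetic needed for the $\sigma$-analogue of Corollary \ref{Csur0}; however, every relevant extension-group dimension has been pinned down in the preceding propositions, so the verification reduces to a mechanical check that closely mirrors the proof of Corollary \ref{Csur0}.
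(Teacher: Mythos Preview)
Your proposal is correct and follows essentially the same approach as the paper, which simply says ``by Proposition \ref{Piotasigma0} and the same arguments as in Corollary \ref{ChIw}.'' You have carefully spelled out what those arguments become in the $\sigma$-setting, including the dimension count for the $\sigma$-analogue of Corollary \ref{Csur0} (which indeed works out: $2(n^2-n+2)-(n^2-2n+3)=1+n^2$), and correctly invoked Proposition \ref{Piotasigma0} to upgrade $\iota_{D'_\sigma}\in E^\times\iota_{D_\sigma}$ to $D'_\sigma\cong D_\sigma$.
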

Set $V(D_1, C_1)_{\sigma}$ to be 
\begin{equation*}
	\big(\ol{\Ext}^1_{\sigma}(D_1, D_1) \times\Hom_{\sigma}(K^{\times},E)\big) \oplus 	\big(\ol{\Ext}^1_{\sigma}(C_1, C_1) \times \Hom_{\sigma}(K^{\times},E)\big) \subset V(D_1,C_1),
\end{equation*}
and $\cL(D,D_1,C_1)_{\sigma}:=\cL(D,D_1,C_1) \cap V(D_1, C_1)_{\sigma}\subset V(D_1,C_1)$. Note $V(D_1,C_1)_{\sigma} \cong \ol{\Ext}^1_{\sigma, \sF}(D,D) \oplus \ol{\Ext}^1_{\sigma,\sG}(D,D)$ by Proposition \ref{Pparasigma} (2).
\begin{proposition}\label{Phodgecow}
The functor $\fT_\sigma$ induces a commutative diagram of short exact sequences
\begin{equation*}
	\begin{CD}			0 @>>> \cL(D,D_1, C_1)_{\sigma}  @>>> V(D_{1}, C_{1})_{\sigma} @>>> \ol{\Ext}^1_{\sigma}(D, D) @>>> 0	 \\
		@. @V \fT_{\sigma} V \sim V @V \fT_{\sigma} V\sim V @V \fT_{\sigma} V \sim V \\
		0 @>>> \cL(D_{\sigma},D_{1,\sigma}, C_{1,\sigma})_{\sigma}  @>>> V(D_{1,\sigma}, C_{1,\sigma})_{\sigma} @>>> \ol{\Ext}^1_{\sigma}(D_{\sigma}, D_{\sigma}) @>>> 0
	\end{CD}
\end{equation*}
where the top sequence is induced by (\ref{Eexa1}). 
\end{proposition}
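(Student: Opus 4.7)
My plan is to reduce the claim to previously established facts in three moves: first show the two right vertical arrows are isomorphisms, next verify commutativity from the functoriality of $\fT_\sigma$, and finally deduce exactness of the top row together with the left vertical iso by a short five-lemma chase using the bottom row (Corollary \ref{Chodge1} (2)).

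For the right verticals, the map $\fT_\sigma\colon\ol{\Ext}^1_\sigma(D,D)\longrightarrow\ol{\Ext}^1_\sigma(D_\sigma,D_\sigma)$ is precisely the case $\ast=\sigma$ of Corollary \ref{Ccow2}. For the middle vertical, I would invoke Proposition \ref{Pparasigma} (2) applied to the two $P$-filtrations $\sF$ and $\sG$ (with Levi subgroups $\GL_{n-1}\times\GL_1$ and $\GL_1\times\GL_{n-1}$ respectively), combined with (\ref{Echaracsigma}) for the rank one character factors, to obtain
\[
V(D_1,C_1)_\sigma\ \cong\ \ol{\Ext}^1_{\sigma,\sF}(D,D)\oplus\ol{\Ext}^1_{\sigma,\sG}(D,D)
\]
via $(\kappa_\sF,\kappa_\sG)$, together with the analogous identification for $D_\sigma$. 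The case $\ast=\{\sF_P,\sigma\}$ of Corollary \ref{Ccow2}, applied separately to $\sF_P=\sF$ and $\sF_P=\sG$, then supplies an isomorphism on each summand.

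Commutativity of the right square follows because, under the identifications above, the top (respectively bottom) horizontal is just the natural sum map $\ol{\Ext}^1_{\sigma,\sF}\oplus\ol{\Ext}^1_{\sigma,\sG}\to\ol{\Ext}^1_\sigma$, and $\fT_\sigma$ is functorial with respect to the push-forward/pull-back constructions entering $\kappa_\sF$ and $\kappa_\sG$: the maps $D_{1,\sigma}\hookrightarrow D_\sigma\twoheadrightarrow C_{1,\sigma}$ are precisely the images of $D_1\hookrightarrow D\twoheadrightarrow C_1$ under $\fT_\sigma$. Turning to the top row, its horizontal map is the restriction of the surjection (\ref{Eexa1}) to $V(D_1,C_1)_\sigma$ (its image lands in $\ol{\Ext}^1_\sigma(D,D)$ since a sum of $\Sigma_K\setminus\{\sigma\}$-de Rham extensions stays such), so its kernel equals $\cL(D,D_1,C_1)\cap V(D_1,C_1)_\sigma=\cL(D,D_1,C_1)_\sigma$ by the very definition of $\cL(D,D_1,C_1)_\sigma$; surjectivity is then automatic from the iso middle/right verticals plus the bottom row surjectivity. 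With both rows short exact and the two rightmost verticals isomorphisms, the short five-lemma forces the induced map $\cL(D,D_1,C_1)_\sigma\to\cL(D_\sigma,D_{1,\sigma},C_{1,\sigma})_\sigma$ (well-defined by commutativity) to also be an isomorphism.

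The main obstacle is verifying that $\fT_\sigma$ is genuinely compatible with the $\sG$-structure even though the rank one submodule $\cR_{K,E}(\phi_n z^{\textbf{h}_1})$ of $D$ is not saturated inside $D_\sigma$: its saturation becomes $\cR_{K,E}(\phi_n z^{\fT_\sigma(\textbf{h})_1})$ by Lemma \ref{Lcow}. One must check that this change of saturation does not disturb the character deformation parameter $\psi\in\Hom_\sigma(K^\times,E)$ attached by $\kappa_{\sG,2}$, which reduces to the fact that $\psi$ is purely $\sigma$-analytic while $\fT_\sigma$ only alters the algebraic part of the Sen weights at embeddings $\tau\neq\sigma$.
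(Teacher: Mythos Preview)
Your proposal is correct and follows essentially the same route as the paper: invoke Corollary \ref{Ccow2} for the two right vertical isomorphisms, use Corollary \ref{Chodge1}~(2) for exactness of the bottom row, identify the top kernel with $\cL(D,D_1,C_1)_\sigma$ by definition, and conclude via a diagram chase. Your ``main obstacle'' about saturation under $\sG$ is a legitimate point to flag, but it is already absorbed by the paper's framework: the discussion immediately preceding Proposition~\ref{Pcow2} states that $\fT_\sigma$ preserves filtrations of saturated submodules (so the $\sG$-filtration on $D_\sigma$ is by definition the one with saturated rank-one sub $\cR_{K,E}(\phi_n z^{\fT_\sigma(\textbf{h})_1})$), and the commutative diagram in Corollary~\ref{Ccow2} then gives the required compatibility directly without needing to track the Sen-weight adjustment by hand.
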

\begin{proof}
All the maps are clear, and we have seen in the above corollary that the bottom sequence is exact.   The left exactness of the top sequence is clear. It is also exact in the middle because of the definition of $\cL(D,D_1,C_1)_{\sigma}$. By Corollary \ref{Ccow2}, the two right vertical maps are both isomorphisms. The proposition follows.
\end{proof}
\begin{corollary}\label{Cinf2}
The map  (\ref{Einf2}) is surjective. And the same holds with $D$ replaced by $D_{\sigma}$.
\end{corollary}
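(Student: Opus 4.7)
The plan is to argue by induction on $n$, the rank of $D$. The base case $n=1$ is trivial: $S_1=\{\id\}$ and the map in question is the identity.

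For the inductive step, I would first exploit the top row of Proposition~\ref{Phodgecow}: combined with the identification $V(D_1,C_1)_\sigma\cong \ol{\Ext}^1_{\sigma,\sF}(D,D)\oplus\ol{\Ext}^1_{\sigma,\sG}(D,D)$ furnished by Proposition~\ref{Pparasigma}~(2), it already yields a surjection
\[
\ol{\Ext}^1_{\sigma,\sF}(D,D)\oplus \ol{\Ext}^1_{\sigma,\sG}(D,D)\twoheadlongrightarrow \ol{\Ext}^1_\sigma(D,D),
\]
where $\sF$ and $\sG$ are the two tautological paraboline filtrations $D_1\subset D$ and $\cR_{K,E}(\phi_n z^{\textbf{h}_1})\subset D$. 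It therefore suffices to produce, for each of the filtrations $\sF$ and $\sG$, a surjection $\oplus_w \ol{\Ext}^1_{\sigma,w}(D,D)\twoheadrightarrow \ol{\Ext}^1_{\sigma,\sF}(D,D)$ where $w$ ranges over those elements of $S_n$ with $\sT_w$ compatible with $\sF$, and analogously for $\sG$.

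For such $w$, Corollary~\ref{CFpwsigma} gives $\ol{\Ext}^1_{\sigma,w}(D,D)\cong \ol{\Ext}^1_{\sigma,\sT_{w,1}}(D_1,D_1)\times \ol{\Ext}^1_\sigma(\cR_{K,E}(\phi_n z^{\textbf{h}_n}),\cR_{K,E}(\phi_n z^{\textbf{h}_n}))$, while Proposition~\ref{Pparasigma}~(2) identifies $\ol{\Ext}^1_{\sigma,\sF}(D,D)$ with $\ol{\Ext}^1_\sigma(D_1,D_1)\times \ol{\Ext}^1_\sigma(\cR_{K,E}(\phi_n z^{\textbf{h}_n}),\cR_{K,E}(\phi_n z^{\textbf{h}_n}))$. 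The $w$'s compatible with $\sF$ correspond bijectively to $w'\in S_{n-1}$ (extend by placing $\phi_n$ at the top), so factor by factor the surjection we need reduces to the same statement for the rank-$(n-1)$ non-critical generic module $D_1$, i.e.\ to the induction hypothesis (the rank-one factor is trivially surjective). The symmetric argument with $\sG$ and $C_1$ handles the second side. Since $\Ext^1_0\subset \Ext^1_g(D,D)\subset \Ext^1_{\sigma,w}(D,D)$ for every $w$ (by Proposition~\ref{PDef1}~(3), together with the fact that de Rham implies $\Sigma_K\setminus\{\sigma\}$-de Rham), surjectivity at the $\ol{\Ext}^1_\sigma$ level lifts to the $\Ext^1_\sigma$ level, closing the induction for $D$.

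The variant for $D_\sigma$ is then immediate from the isomorphisms $\fT_\sigma\colon \ol{\Ext}^1_{\sigma,w}(D,D)\xrightarrow{\sim}\ol{\Ext}^1_{\sigma,w}(D_\sigma,D_\sigma)$ and $\fT_\sigma\colon \ol{\Ext}^1_\sigma(D,D)\xrightarrow{\sim}\ol{\Ext}^1_\sigma(D_\sigma,D_\sigma)$ of Corollary~\ref{Ccow2} (applied with $\sF_P=\sT_w$), combined with Corollary~\ref{Cext10} to descend from $\ol{\Ext}^1$ back to $\Ext^1_\sigma$. The hard part of the argument is not any new analytic input but rather the careful bookkeeping of parabolic and trianguline compatibilities: one has to ensure that the refinements of $D_1$ and $C_1$ appearing in the induction match up with the refinements of $D$ obtained by inserting $\phi_n$ at the top resp.\ the bottom, so that the two parabolic reductions and the inductive hypotheses glue cleanly through Corollary~\ref{CFpwsigma} and Proposition~\ref{Pparasigma}~(2).
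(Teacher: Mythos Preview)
Your proposal is correct and follows essentially the same approach as the paper: surjectivity of $V(D_1,C_1)_\sigma\to\ol{\Ext}^1_\sigma(D,D)$ via Proposition~\ref{Phodgecow}, reduction to $D_1$ and $C_1$ through Proposition~\ref{Pparasigma}~(2) and Corollary~\ref{CFpwsigma}, induction on the rank, and passage to $D_\sigma$ via Corollary~\ref{Ccow2}. Your write-up simply spells out in more detail what the paper's terse proof sketches.
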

\begin{proof}
By the above proposition,  $\ol{\Ext}^1_{\sigma, \sF}(D,D) \oplus \ol{\Ext}^1_{\sigma,\sG}(D,D) \cong V(D_1,C_1)_{\sigma}\ra \ol{\Ext}^1_{\sigma}(D,D)$ is surjective. Using Proposition \ref{Pparasigma} (2), Corollary \ref{CFpwsigma} and induction on the rank $n$, one deduces $\oplus_{w\in S_n} \ol{\Ext}^1_{\sigma, w}(D,D) \ra \ol{\Ext}^1_{\sigma}(D,D)$ is surjective. As $\Ext^1_0(D,D)\subset \Ext^1_{\sigma, w}(D,D)$ for any $w\in S_n$, we see (\ref{Einf2}) is also surjective. The statement for $D_{\sigma}$ follows by similar arguments or using Corollary \ref{Ccow2}.
\end{proof}

\section{Locally analytic crystabelline representations of $\GL_n(K)$}

\subsection{Locally analytic representations of $\GL_n(K)$ and extensions}
\label{S3.1}
\subsubsection{Notation and preliminaries}\label{S311}
We introduce some (more) notation on the $\GL_n$-side.
Recall  $T$ is the torus subgroup of $\GL_n$, and $B\supset T$ is the Borel subgroup of upper triangular matrices. For a standard parabolic subgroup $P\supset B$ of  $\GL_n$, let $L_P\supset T$ be its standard Levi subgroup and $P^-$ its opposite parabolic subgroup.  Denote by $\ft\subset \ub \subset \fp \subset \gl_n$ the corresponding Lie algebras over $K$. 
Let $\theta:=(0, \cdots,1-i, \cdots,  1-n)$. For a parabolic subgroup $P$, let $n_i\in \Z_{\geq 1}$ such that  the simple roots of $L_P$ are given by $\{1,\cdots, n-1\}\setminus \{n_1, n_2+n_1, \cdots, n_1+\cdots +n_{r-1}\}$ (so $L_P\cong \GL_{n_1} \times \GL_{n_2} \times \cdots \GL_{n_r}$).  Let $\theta^P:=(\underbrace{0,\cdots, 0}_{n_1}, \underbrace{-n_1, \cdots, -n_1}_{n_2}, \cdots, \underbrace{-(n_1+\cdots+n_{r-1}), \cdots, -(n_1+\cdots+n_{r-1})}_{n_r})$ (so $\theta=\theta^B$), that we view as an algebraic character of $L_P$.  For simplicity,  for $i\in \{1,\cdots, n-1\}$, we denote by $P_i$ the associated maximal parabolic subgroup such that  its standard Levi subgroup $L_i\supset T$ has simple roots $\{1,\cdots, n-1\}\setminus \{i\}$. 

For a Lie algebra $\ug$ over $K$, denote by $\ug_{\Sigma_K}:=\ug \otimes_{\Q_p} E \cong \prod_{\sigma\in \Sigma_K} \ug \otimes_{K,\sigma} E=:\prod_{\sigma\in \Sigma_K} \ug_{\sigma}$. For a weight $\mu$ of $\ft_{\Sigma_K}$, denote by $M^-(\mu):=\text{U}(\gl_{n,\Sigma_K})  \otimes_{\text{U}(\ub^-_{\Sigma_K})} \mu$, and let $L^-(\mu)$ be its unique simple quotient. If $\mu$ is anti-dominant (i.e. $\mu_{\sigma,1}<\mu_{\sigma,2}<\cdots<\mu_{\sigma,n}$ for all $\sigma\in \Sigma_K$, where $\mu=(\mu_{\sigma,i})_{\substack{\sigma\in \Sigma_K \\ i=1, \cdots, n}}$), then $L^-(\mu)$ is finite dimensional and isomorphic to the dual $L(-\mu)^{\vee}$, where $L(-\mu)$ is the algebraic representation of $\Res^K_{\Q_p} \GL_n$ of highest weight $-\mu$ with respect to $\Res^K_{\Q_p} B$.

For  an admissible locally $\Q_p$-analytic representation $V$ of $\GL_n(K)$, by \cite{ST03}, its continuous dual $V^{\vee}$ is naturally a module over the ($\Q_p$-analytic) distribution algebra $\cD(\GL_n(K),E)$, which, equipped with the strong topology, is a coadmissible module over $\cD(H,E)$ for a(ny) compact open subgroup $H$ of $\GL_n(K)$. For admissible locally $\Q_p$-analytic representations $V_1$, $V_2$ of $\GL_n(K)$, set $\Ext^i_{\GL_n(K)}(V_1,V_2):=\Ext^i_{\cD(\GL_n(K),E)}(V_2^{\vee},V_1^{\vee})$, where the latter is defined in the abelian category of abstract $\cD(\GL_n(K),E)$-modules. By \cite[Lem.~2.1.1]{Br16}, $\Ext^1_{\GL_n(K)}(V_1,V_2)$ is equal to the extension group of admissible locally $\Q_p$-analytic representations of $V_1$ by $V_2$. If $V_1$, $V_2$ are locally algebraic, set $\Ext^1_{\lalg}(V_1,V_2)$ to be the subgroup of locally algebraic extensions. Any representation  $\widetilde{V}$ in  $\Ext^1_{\GL_n(K)}(V,V)$  is equipped with a natural $E[\epsilon]/\epsilon^2$ structure where $\epsilon$ acts via $\widetilde{V} \twoheadrightarrow V \xrightarrow{\id} V \hookrightarrow \widetilde{V}$. 

Suppose $\Ext^1_{\GL_n(K)}(V_1,V_2)$ is finite dimensional over $E$. For a subspace $U\subset \Ext^1_{\GL_n(K)}(V_1,V_2)$, we can associate a tautological extension of $V_1 \otimes_E U$ by $V_2$ (for example see the discussion below Theorem \ref{Tintro2}). When $U= \Ext^1_{\GL_n(K)}(V_1,V_2)$, we call the corresponding extension the \textit{universal} extension of $V_1$ (or $V_1 \otimes_E \Ext^1_{\GL_n(K)}(V_1,V_2)$) by $V_2$.

Let $\phi=\phi_1 \boxtimes \cdots \boxtimes \phi_n: T(K)\ra E^{\times}$ be a smooth character. We call $\phi$ generic if $\phi_i \phi_j^{-1}\neq 1, |\cdot |_K$ for $i \neq j$. For $w\in S_n$, let $w(\phi):=\phi_{w^{-1}(1)}  \boxtimes \cdots \boxtimes \phi_{w^{-1}(n)}$. Let $\delta_B=|\cdot|_K^{n-1} \boxtimes \cdots \boxtimes |\cdot|_K^{n+1-2i} \boxtimes \cdots \boxtimes |\cdot|_K^{1-n}$ be the modulus character of $B(K)$ and $\eta:=1  \boxtimes |\cdot|_K \boxtimes \cdots \boxtimes |\cdot|_K^{n-1}=|\cdot|_K^{-1} \circ \theta$.  Let $I_{\sm}(\phi):=(\Ind_{B^-(K)}^{\GL_n(K)} \phi \eta)^{\infty}$, which is an absolutely irreducible smooth admissible representation of $\GL_n(K)$ when  $\phi$ is generic. Moreover, when $\phi$ is generic,  $I_{\sm}(\phi)\cong I_{\sm}(w(\phi))=:\pi_{\sm}(\phi)$ for all $w\in S_n$, which is  in fact the smooth representation of $\GL_n(K)$ corresponding to the Weil-Deligne representation $\oplus_{i=1}^n \phi_i$ in the classical local Langlands correspondence.

\subsubsection{Principal series}\label{Sps}We collect some facts on the locally $\Q_p$-analytic principal series of $\GL_n(K)$.

Let $\textbf{h}$  be a strictly dominant weight  of $\ft_{\Sigma_K}$, put $\lambda:=\textbf{h}-\theta^{[K:\Q_p]}=(\lambda_{i,\sigma}=h_{i,\sigma}+i-1)_{\substack{\sigma\in \Sigma_K\\ i=1,\cdots, n}}$, which is a dominant weight of $\ft$. Let $\phi$ be a generic smooth character of $T(K)$. Put $\pi_{\alg}(\phi, \textbf{h}):=\pi_{\sm}(\phi) \otimes_E L(\lambda)$ ($\cong I_{\sm}(w(\phi)) \otimes_E L(\lambda)$ for all $w\in S_n$), which is an irreducible  locally algebraic representation of $\GL_n(K)$. 
For $w\in S_n$, put $\PS(w(\phi), \textbf{h}):=(\Ind_{B^-(K)}^{\GL_n(K)} w(\phi)\eta z^{\lambda})^{\Q_p-\an}=\big(\Ind_{B^-(K)}^{\GL_n(K)}w(\phi) z^{\textbf{h}} (\varepsilon^{-1}\circ \theta)\big)^{\Q_p-\an}.$ We have (where $\cF_{B^-}^{\GL_n}(-,-)$ denotes Orlik-Strauch functor \cite{OS}):
\begin{proposition}\label{LGLn1}
Let  $w\in S_n$. 

(1) The irreducible constituents of $\PS(w(\phi), \textbf{h})$ are given by $\big\{\sC(w,u):=\cF_{B^-}^{\GL_n}(L^-(-u \cdot \lambda), w(\phi) \eta)\big\}_{u=(u_{\sigma})\in S_n^{|\Sigma_K|}},$ which are pairwisely distinct. Moreover, if $\lg(u)=1$, then $\sC(w,u)$ has multiplicity one.

(2) $\soc_{\GL_n(K)} \PS(w(\phi), \textbf{h})\cong I_{\sm}(w(\phi)) \otimes_E L(\lambda)\cong \pi_{\alg}(\phi, \textbf{h})$.

(3) $\soc_{\GL_n(K)}(\PS(w(\phi), \textbf{h})/\pi_{\alg}(\phi, \textbf{h}))\cong \oplus_{\substack{u\in S_n^{|\Sigma_K|}, \\ \lg(u)=1}} \sC(w,u)$.

(4) For $w'\in S_n$, and $u, u'\in S_n^{|\Sigma_K|}$ with $\lg(u)=\lg(u')=1$, $\sC(w,u)\cong \sC(w',u')$ if and only if $u=u'=s_{i,\sigma}$ for some $i \in \{1, \cdots, n-1\}$ and $\sigma\in \Sigma_K$,  and  $w(w')^{-1}$ lies in the Weyl group of $L_{P_i}$.
\end{proposition}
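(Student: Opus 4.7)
The plan is to deduce everything from Orlik--Strauch's theory \cite{OS} applied to the identification
$$\PS(w(\phi), \textbf{h}) \cong \cF_{B^-}^{\GL_n}\big(M^-(-\lambda), w(\phi)\eta\big),$$
combined with the BGG-style analysis of the Verma module $M^-(-\lambda)$ in the category $\cO^{\ub^-}$ associated to $\gl_{n,\Sigma_K}$. Since $\gl_{n,\Sigma_K}\cong \prod_{\sigma\in \Sigma_K}\gl_{n,\sigma}$, we have a factorization $M^-(-\lambda)\cong \otimes_{\sigma}M^-_{\sigma}(-\lambda_{\sigma})$, whose simple subquotients are $L^-(-u\cdot\lambda)=\otimes_{\sigma}L^-_{\sigma}(-u_{\sigma}\cdot\lambda_{\sigma})$ for $u=(u_{\sigma})\in S_n^{|\Sigma_K|}$. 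Each such simple appears in $M^-(-\lambda)$ (BGG theorem for type $A$), and for $\lg(u)=1$ its multiplicity in $M^-(-\lambda)$ is one (an elementary case of the Kazhdan--Lusztig formula, or direct from the BGG resolution of $L^-(-\lambda)$).

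For (1), the exact contravariant functor $\cF_{B^-}^{\GL_n}(-,w(\phi)\eta)$ transports the composition series of $M^-(-\lambda)$ to that of $\PS(w(\phi),\textbf{h})$. Genericity of $\phi$ ensures that the smooth parabolic induction $I_{\sm}(w(\phi))$ is irreducible, so by the Orlik--Strauch irreducibility criterion each $\sC(w,u)$ is irreducible; the injectivity of $\cF_{B^-}^{\GL_n}$ on pairs of distinct simple objects (with inputs the distinct $L^-(-u\cdot\lambda)$'s) yields pairwise non-isomorphism as $u$ varies, and the above multiplicity statement gives the length-one multiplicity-one claim. For (2), among the $L^-(-u\cdot\lambda)$ only $L^-(-\lambda)$ (the case $u=1$) is the unique simple quotient of $M^-(-\lambda)$; applying the contravariant functor this becomes the unique irreducible subobject, namely $\cF_{B^-}^{\GL_n}(L^-(-\lambda),w(\phi)\eta)\cong I_{\sm}(w(\phi))\otimes_E L(\lambda)\cong \pi_{\alg}(\phi,\textbf{h})$.

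For (3), writing $N:=\Ker\big(M^-(-\lambda)\twoheadrightarrow L^-(-\lambda)\big)$, we have $\PS(w(\phi),\textbf{h})/\pi_{\alg}(\phi,\textbf{h})\cong \cF_{B^-}^{\GL_n}(N,w(\phi)\eta)$. The cosocle of $N$ in the BGG category is $\oplus_{\lg(u)=1}L^-(-u\cdot\lambda)$ (each $L^-(-s_{i,\sigma}\cdot\lambda)$ is the kernel of the natural surjection $M^-_{\sigma}(-\lambda_{\sigma})\twoheadrightarrow L^-_{\sigma}(-\lambda_{\sigma})$ after tensoring with the trivial simples at other embeddings), and contravariant exactness of $\cF_{B^-}^{\GL_n}$ turns this into the claimed socle. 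For (4), for $u=s_{i,\sigma}$ the module $L^-(-s_{i,\sigma}\cdot\lambda)$ lies in the subcategory $\cO^{\fp_i^-}$, so by the parabolic reduction formula
$$\sC(w,s_{i,\sigma})\cong \cF_{P_i^-}^{\GL_n}\big(L^-(-s_{i,\sigma}\cdot\lambda),\pi_w\big),$$
where $\pi_w$ denotes the smooth principal series of $L_{P_i}(K)$ built from $w(\phi)\eta$ via $(B\cap L_{P_i})^-$. The Orlik--Strauch functor is injective on distinct simple inputs on both slots, so isomorphism of the $\sC(w,s_{i,\sigma})$'s reduces to isomorphism of the smooth $L_{P_i}(K)$-representations $\pi_w$ and $\pi_{w'}$, which, by genericity of $\phi$ and the classical theory of smooth principal series of $\GL_m$, holds if and only if $w(w')^{-1}\in W_{L_{P_i}}$; the case of distinct $u,u'$ is ruled out by the distinctness of the $L^-(-u\cdot\lambda)$'s. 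The main technical obstacle will be step (3) (identifying the cosocle of $N$ without invoking the full Jantzen/BGG machinery) and the careful bookkeeping with the embedding-indexed product structure on $\gl_{n,\Sigma_K}$ throughout.
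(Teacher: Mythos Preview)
Your approach via Orlik--Strauch theory and the structure of $M^-(-\lambda)$ is essentially the paper's approach; the paper simply cites \cite[Thm.]{OS} (with \cite[Chap.~6]{Hum08}) for (1) and (4), and \cite[Thm.~1]{Or18} for (2) and (3), without spelling out the details you give.

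The one genuine gap is in your argument for (3). Your parenthetical justification ``each $L^-(-s_{i,\sigma}\cdot\lambda)$ is the kernel of the natural surjection $M^-_{\sigma}(-\lambda_{\sigma})\twoheadrightarrow L^-_{\sigma}(-\lambda_{\sigma})$ after tensoring with the trivial simples at other embeddings'' is false for $n\geq 3$: the kernel of $M^-_\sigma(-\lambda_\sigma)\twoheadrightarrow L^-_\sigma(-\lambda_\sigma)$ is not simple, it has all the $L^-_\sigma(-u_\sigma\cdot\lambda_\sigma)$ with $u_\sigma\neq 1$ as constituents. What you actually need is that the \emph{cosocle} of this kernel is $\oplus_i L^-_\sigma(-s_i\cdot\lambda_\sigma)$ (equivalently, that the second radical layer of the Verma module contains no length-one constituents), together with compatibility of radical filtrations under the tensor product over $\Sigma_K$. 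This holds---it is the statement that the Loewy and length filtrations on regular integral Verma modules agree---but it is not as elementary as your parenthetical suggests, and you correctly flag it as the main obstacle. The paper bypasses this by invoking Orlik's computation \cite[Thm.~1]{Or18} of the socle filtration of $\cF^{\GL_n}_{B^-}$-representations directly, which packages exactly this input. If you want a self-contained argument, appeal to the Loewy structure of Verma modules (e.g.\ via the BGG resolution or Jantzen filtration in type $A$) rather than the incorrect parenthetical.
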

\begin{proof}
(1) and (4) follow from \cite[Thm.]{OS} (together with some standard facts on the constituents of the Verma module, see for example \cite[Chap.~6]{Hum08}). (2) (3) follow from \cite[Thm.~1]{Or18}. 
\end{proof}
For $i\in \{1,\cdots, n-1\}$, let $I \subset \{1,\cdots, n\}$ be a subset of cardinality $i$. By Proposition \ref{LGLn1} (4), all the representations $\sC(w,s_{i,\sigma})$ with $w(\{1,\cdots, i\})=I$ are isomorphic, which we denote by $\sC(I,s_{i,\sigma})$. Moreover, $\sC(I,s_{i,\sigma})$ are pairwisely distinct for different $s_{i,\sigma}$ or $I$. For $w\in S_n$ with $w(\{1,\cdots, i\})=I$,  we have (by \cite[Thm.~1]{Or18})
\begin{equation}\label{ECIsigma}
\sC(I,s_{i,\sigma})\cong \soc_{\GL_n(K)} (\Ind_{B^-(K)}^{\GL_n(K)} z^{-s_{i,\sigma} \cdot \lambda} w(\phi) \eta)^{\Q_p-\an}.
\end{equation}
\begin{lemma}\label{Ljacsi} Let $w\in S_n$ such that $w(\{1,\cdots, i\})=I$.

(1)We have  $\Hom_{T(\Q_p)}(z^{-s_{i,\sigma} \cdot \lambda} w(\phi) \eta\delta_B, J_B(\sC(I,s_{i,\sigma})))\cong E$, where $J_B(-)$ denotes the Jacquet-Emerton functor for $B$ (cf. \cite{Em11}).

(2) We have $I_{B^-(K)}^{\GL_n(K)} (z^{-s_{i,\sigma} \cdot \lambda} w(\phi) \eta)\cong \sC(I,s_{i,\sigma})$, where  $I_{B^-(K)}^{\GL_n(K)}(-)$ is  Emerton's induction functor \cite[\S~(2.8)]{Em2}. 
\end{lemma}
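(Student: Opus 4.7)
The plan is to deduce both parts from Emerton's Jacquet-Ext adjunction \cite{Em11} combined with the Orlik-Strauch description of $\sC(I,s_{i,\sigma})$ as the socle of a locally analytic principal series.

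Set $\chi:=z^{-s_{i,\sigma}\cdot\lambda}w(\phi)\eta$ and $\Pi:=(\Ind_{B^-(K)}^{\GL_n(K)}\chi)^{\Q_p-\an}$. By (\ref{ECIsigma}) and Proposition \ref{LGLn1}, $\sC(I,s_{i,\sigma})$ is irreducible, appears with multiplicity one in $\Pi$, and is its socle; in particular $\dim_E\Hom_{\GL_n(K)}(\sC(I,s_{i,\sigma}),\Pi)=1$.

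For (1), I would apply Emerton's adjunction (cf. \cite[Thm.~4.3.5]{Em11}) in the form
$$\Hom_{\GL_n(K)}\big(V,(\Ind_{B^-(K)}^{\GL_n(K)}\chi)^{\Q_p-\an}\big)\xrightarrow{\sim}\Hom_{T(K)}(\chi\delta_B,J_B(V))$$
for admissible locally analytic $V$, taking $V=\sC(I,s_{i,\sigma})$. No finite-slope condition is needed on the right-hand side, since the source $\chi\delta_B$ is a single continuous character and is therefore automatically of finite slope. Combined with the one-dimensionality of the left-hand side from the socle computation, this yields (1).

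For (2), I would use the characterization of $I_{B^-(K)}^{\GL_n(K)}$ via the adjunction \cite[\S~2.8]{Em2}
$$\Hom_{\GL_n(K)}\big(I_{B^-(K)}^{\GL_n(K)}(\chi),V\big)\cong\Hom_{T(K),\bal}(\chi,J_B(V)).$$
The nonzero element in (1), after accounting for the $\delta_B$-twist and checking balancedness, corresponds to a nonzero $\GL_n(K)$-map $I_{B^-(K)}^{\GL_n(K)}(\chi)\to\sC(I,s_{i,\sigma})$, which is surjective by the irreducibility of $\sC(I,s_{i,\sigma})$. To obtain injectivity, I would consider the canonical $\GL_n(K)$-equivariant map $I_{B^-(K)}^{\GL_n(K)}(\chi)\to\Pi$ adjoint to the tautological element of $\Hom_{T(K),\bal}(\chi,J_B(\Pi))$: by (1) its image is forced to lie in $\sC(I,s_{i,\sigma})\subset\Pi$, and combining this with multiplicity one shows the surjection above is an isomorphism.

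The principal obstacle is the bookkeeping of the $\delta_B$-twists and of the balanced (resp. finite-slope) condition in Emerton's adjunctions; one must verify that, for the specific character $\chi$ at hand, the relevant Hom spaces are unaffected by passing to the balanced subspace, so that the socle computation from Proposition \ref{LGLn1} transfers cleanly across the adjunction. Once this is in place, both parts reduce to the multiplicity-one statement for $\sC(I,s_{i,\sigma})$ inside $\Pi$.
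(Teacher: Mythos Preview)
Your proof of (1) rests on an adjunction that does not exist in the form you state. Emerton's adjunction \cite[\S~(2.8)]{Em2} reads
\[
\Hom_{\GL_n(K)}\big(I_{B^-(K)}^{\GL_n(K)}(\chi),\,V\big)\;\cong\;\Hom_{T(K)}^{\bal}\big(\chi\delta_B,\,J_B(V)\big),
\]
with Emerton's induction $I_{B^-(K)}^{\GL_n(K)}(\chi)$ in the \emph{source}, not with the full locally analytic induction $\Pi=(\Ind_{B^-}^{\GL_n}\chi)^{\Q_p-\an}$ in the \emph{target}. There is no identification of $\Hom_{\GL_n(K)}(V,\Pi)$ with $\Hom_{T(K)}(\chi\delta_B,J_B(V))$; the Frobenius reciprocity that does hold for maps into $\Pi$ (as in Schraen's spectral sequence \cite[(4.37)]{Sch11}) involves $H_0(N^-(K),V)$, not $J_B(V)$. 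With the correct adjunction, your computation for (1) becomes $\Hom_{T(K)}(\chi\delta_B,J_B(\sC))\cong\Hom_{\GL_n(K)}(I_{B^-}^{\GL_n}(\chi),\sC)$, which you cannot evaluate without already knowing (2); the argument is circular.

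The paper avoids this by appealing to Breuil's explicit description \cite[Thm.~4.3, Rem.~4.4(i)]{Br13II} of $J_B$ applied to Orlik--Strauch constituents, which yields (1) directly and simultaneously gives $\Hom_{T(K)}(\chi\delta_B,J_B(\sC'))=0$ for every other irreducible constituent $\sC'$ of $\Pi$. This vanishing is exactly what forces the canonical map $\chi\delta_B\hookrightarrow J_B(\Pi)$ to land in $J_B(\sC(I,s_{i,\sigma}))$, whence $I_{B^-}^{\GL_n}(\chi)\subset\sC(I,s_{i,\sigma})$ by definition and (2) follows by irreducibility. Your sketch for (2) asserts this containment ``by (1)'', but (1) alone says nothing about the other constituents; the missing input is precisely the vanishing statement, and for that you need an independent computation of $J_B(\sC')$ such as \cite{Br13II}.
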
 
\begin{proof}By \cite[Thm.]{OS}, it is easy to see any irreducible constituent of $\PS(w(\phi),\textbf{h})$ is a subrepresentation of a certain locally $\Q_p$-analytic principal series, hence is very strongly admissible by \cite[Prop.~2.1.2]{Em2}. 
(1) then follows by \cite[Thm.~4.3, Rem.~4.4 (i)]{Br13II}. By \textit{loc. cit.} and \cite[Thm.~1]{Or18}, $\Hom_{T(\Q_p)}(z^{-s_{i,\sigma} \cdot \lambda} w(\phi) \eta \delta_B, J_B(\sC))=0$ for any irreducible constituent $\sC$ of $(\Ind_{B^-(K)}^{\GL_n(K)} z^{-s_{i,\sigma} \cdot \lambda} w(\phi) \eta)^{\Q_p-\an}$ with $\sC \neq \sC(I,s_{i, \sigma})$. The natural map
$z^{-s_{i,\sigma}\cdot \lambda} w(\phi) \eta \delta_B \hookrightarrow J_B\big((\Ind_{B^-(K)}^{\GL_n(K)} z^{-s_{i,\sigma} \cdot \lambda} w(\phi) \eta)^{\Q_p-\an}\big)$
hence has image contained in $J_B(\sC(I,s_{i,\sigma}))$. By definition of $I_{B^-(K)}^{\GL_n(K)}(-)$ (cf. \cite[\S~(2.8)]{Em11}), (2) follows. 
\end{proof}
Let $\PS_1(w(\phi), \textbf{h})$ be the unique subrepresentation of $\PS(w(\phi), \textbf{h})$ of socle $I_{\sm}(w(\phi)) \otimes_E L(\lambda)$ and cosocle $\oplus_{\substack{i=1,\cdots, n-1\\ \sigma\in \Sigma_K}} \sC(w, s_{i,\sigma})$ (with the tautological injection $\PS_1(w(\phi), \textbf{h})\hookrightarrow \PS(w(\phi), \textbf{h})$). Throughout the section, we fix isomorphisms\begin{equation} \label{Eintisom}\pi_{\alg}(\phi, \textbf{h})\cong  L(\lambda)\otimes_E I_{\sm}(w(\phi)) \big(\hooklongrightarrow \PS_1(w(\phi), \textbf{h})\big).\end{equation} for all $w\in S_n$.
The amalgamated sum $\oplus_{\pi_{\alg}(\phi ,\lambda)}^{w\in S_n} \PS_1(w(\phi), \textbf{h})$ admits a unique quotient, denoted by $\pi_1(\phi, \textbf{h})$ of socle $\pi_{\alg}(\phi, \textbf{h})$. By Lemma \ref{LGLn1} (3) (4), $\pi_1(\phi, \textbf{h})$ is given by an extension of $\oplus_{\substack{i=1,\cdots, n-1, \sigma\in \Sigma_K\\ I \subset \{1,\cdots, n\}, \# I =i}} \sC(I,s_{i,\sigma})$ ($(2^n-2)d_K$ constituents in total) by $\pi_{\alg}(\phi, \textbf{h})$. Note we have a tautological injection \begin{equation}\label{Etautoinj}\pi_{\alg}(\phi, \textbf{h}) \hooklongrightarrow \pi_1(\phi, \textbf{h}).\end{equation}  We study the extension group of 
$\pi_{\alg}(\phi, \textbf{h})$ by $\pi_1(\phi, \textbf{h})$. 
\begin{proposition}\label{Pextalg}
(1)  For $w\in S_n$ and  $\psi  \in \Hom_{g'}(T(K),E)$ (cf. (\ref{EHomg'T})), we have $ I_{B^-(K)}^{\GL_n(K)} (w(\phi) \eta z^{\lambda}(1+\psi \epsilon))\in  \Ext^1_{\GL_n(K)}(\pi_{\alg}(\phi, \textbf{h}), \pi_{\alg}(\phi, \textbf{h})\big)$ (using (\ref{Eintisom})). Moreover, the following map is a bijection:
\begin{eqnarray}\label{Ezetaw0}
	\zeta_{w}:	\Hom_{g'}(T(K),E) &\xlongrightarrow{\sim} & \Ext^1_{\GL_n(K)}(\pi_{\alg}(\phi, \textbf{h}), \pi_{\alg}(\phi, \textbf{h})\big), \\ \psi &\mapsto& I_{B^-(K)}^{\GL_n(K)} (w(\phi) \eta z^{\lambda}(1+\psi \epsilon)), \nonumber
\end{eqnarray}
and induces $\Hom_{\sm}(T(K),E) \xrightarrow{\sim}  \Ext^1_{\lalg}\big(\pi_{\alg}(\phi, \textbf{h}), \pi_{\alg}(\phi, \textbf{h})\big)$.
In particular, $\dim_E \Ext^1_{\lalg}\big(\pi_{\alg}(\phi, \textbf{h}), \pi_{\alg}(\phi, \textbf{h})\big)=n$, $\dim_E  \Ext^1_{\GL_n(K)}(\pi_{\alg}(\phi, \textbf{h}), \pi_{\alg}(\phi, \textbf{h})\big)=n+d_K.$

(2) For $w_1, w_2\in S_n$, the following diagram commutes:
\begin{equation}\label{Ecomg'int}
	\begin{CD}\Hom_{g'}(T(K),E) @> \zeta_{w_1} > \sim> \Ext^1_{\GL_n(K)}(\pi_{\alg}(\phi, \textbf{h}), \pi_{\alg}(\phi, \textbf{h})\big) \\
		@V w_2w_1^{-1} V \sim V @| \\
		\Hom_{g'}(T(K),E) @> \zeta_{w_2} > \sim> \Ext^1_{\GL_n(K)}(\pi_{\alg}(\phi, \textbf{h}), \pi_{\alg}(\phi, \textbf{h})\big).
	\end{CD}
\end{equation}
\end{proposition}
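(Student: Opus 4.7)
The plan is to prove part (1) in three steps—well-definedness, injectivity, and a dimension calculation forcing bijectivity—and then deduce part (2) by a Jacquet-module argument parallel to Lemma \ref{Lint1}.

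For \emph{well-definedness of $\zeta_w$}, I would start from Lemma \ref{Ljacsi} (2), which gives $I_{B^-(K)}^{\GL_n(K)}(w(\phi)\eta z^{\lambda}) \cong \pi_{\alg}(\phi,\textbf{h})$. Applied to the character $w(\phi)\eta z^{\lambda}(1+\psi\epsilon)$ over $E[\epsilon]/\epsilon^2$, Emerton's functor produces an admissible representation over $E[\epsilon]/\epsilon^2$ whose underlying $E$-structure is an extension of $\pi_{\alg}$ by a subquotient of $\pi_{\alg}$. To check the subquotient is the full $\pi_{\alg}$ (and not a proper piece involving other constituents of $\PS(w(\phi),\textbf{h})$), the key input is the decomposition $\psi = \psi_0 \circ \dett + \psi_{\sm}$ afforded by $\psi \in \Hom_{g'}(T(K), E)$: the derivative $d\psi$ is then central (a multiple of the trace in each $\sigma$-component), so the infinitesimal character shift is scalar and the deformation stays in the same block; twisting by the $\GL_n(K)$-character $1 + \psi_0 \circ \dett \cdot \epsilon$ further reduces to the smooth case, handled by standard smooth principal-series theory after tensoring with $L(\lambda)$.

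For \emph{injectivity}, suppose $\zeta_w(\psi) = 0$, so $\widetilde{\pi} := \zeta_w(\psi) \cong \pi_{\alg}^{\oplus 2}$. The universal property of $I_{B^-(K)}^{\GL_n(K)}$ (the adjunction-type relation with $J_B$, via Lemma \ref{Ljacsi} (1)) yields a canonical injection $w(\phi)\eta z^{\lambda}(1+\psi\epsilon)\delta_B \hookrightarrow J_B(\widetilde{\pi}) \cong J_B(\pi_{\alg})^{\oplus 2}$. Since $\dim_E \Hom_{T(K)}(w(\phi)\eta z^{\lambda}\delta_B, J_B(\pi_{\alg})) = 1$, both projections land in the same line, forcing $\psi = 0$. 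For \emph{surjectivity/dimension}, I would invoke Schraen's spectral sequence \cite[(4.37)]{Sch11}, which expresses $\Ext^{\bullet}_{\GL_n(K)}(\pi_{\alg}, \pi_{\alg})$ in terms of Lie-algebra cohomology and smooth extensions; the $E_2$-terms contributing to degree $1$ combine the smooth principal-series Ext $\Hom_{\sm}(T(K), E)$ (dimension $n$) with $H^1$ of a Lie-algebra block contributing infinitesimal deformations of the weight $\lambda$ in each embedding direction (dimension $d_K$), giving the bound $\dim_E \Ext^1_{\GL_n(K)}(\pi_{\alg}, \pi_{\alg}) \leq n + d_K = \dim_E \Hom_{g'}(T(K), E)$. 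Combined with injectivity this forces bijectivity, and the locally algebraic identification is immediate from the construction: when $\psi_{\sm}$ is smooth, $w(\phi)\eta z^{\lambda}(1+\psi_{\sm}\epsilon) = z^{\lambda} \cdot w(\phi)\eta(1+\psi_{\sm}\epsilon)$ has its algebraic part undeformed.

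Part (2) follows as in Lemma \ref{Lint1}. For $\widetilde{\pi} \in \Ext^1_{\GL_n(K)}(\pi_{\alg}, \pi_{\alg})$, set $\psi_w := \zeta_w^{-1}(\widetilde{\pi})$. The universal property provides canonical injections $w(\phi)\eta z^{\lambda}(1+\psi_w\epsilon)\delta_B \hookrightarrow J_B(\widetilde{\pi})$ for each $w \in S_n$; combined with $J_B(\pi_{\alg}) \cong \oplus_{w' \in S_n} w'(\phi)\eta z^{\lambda}\delta_B$ (whose summands are permuted by the right $S_n$-action), a d\'evissage of $J_B(\widetilde{\pi})$ tracking how each $\psi_{w_i}$-twisted line is located in the filtration by $\epsilon$-powers forces the relation $\psi_{w_2} = (w_2 w_1^{-1})\psi_{w_1}$, exactly as in the Galois-side argument. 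The \emph{main obstacle} is the dimension bound for $\Ext^1_{\GL_n(K)}(\pi_{\alg}, \pi_{\alg})$—the only step that is not formal; it requires careful application of Schraen's spectral sequence or an equivalent distribution-algebra computation, with attention to which Lie-algebra cohomology pieces actually contribute.
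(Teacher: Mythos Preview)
Your overall structure is sound and the well-definedness and injectivity steps are essentially correct (though the reference to Lemma~\ref{Ljacsi}~(2) is misplaced---that lemma concerns $\sC(I,s_{i,\sigma})$, not $\pi_{\alg}$; what you actually want is the direct computation $I_{B^-(K)}^{\GL_n(K)}(w(\phi)\eta z^{\lambda}(1+\psi\epsilon))\cong (\Ind_{B^-}^{\GL_n} w(\phi)\eta(1+\psi_{\sm}\epsilon))^{\sm}\otimes_E L(\lambda)\otimes_{E[\epsilon]/\epsilon^2}(1+\psi_0\circ\dett\cdot\epsilon)$, which the paper spells out). The substantive divergence is in the dimension bound. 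Schraen's spectral sequence \cite[(4.37)]{Sch11} (equivalently \cite[Cor.~4.9]{Sch11}) computes $\Ext^{\bullet}_{\GL_n(K)}(\pi_{\alg},(\Ind_{B^-}^{\GL_n}\delta)^{\Q_p-\an})$, i.e.\ extensions into a full principal series, not into $\pi_{\alg}$; extracting $\dim\Ext^1_{\GL_n(K)}(\pi_{\alg},\pi_{\alg})\le n+d_K$ from it would require a d\'evissage along the socle filtration of $\PS(w(\phi),\textbf{h})$ and control of $\Hom(\pi_{\alg},\sC(w,u))$ for many $u$, which is not what you describe. The paper instead invokes a different result of Schraen, \cite[Prop.~4.7]{Sch11}, giving $\Ext^1_{\lalg,Z}(\pi_{\alg},\pi_{\alg})\xrightarrow{\sim}\Ext^1_{\GL_n(K),Z}(\pi_{\alg},\pi_{\alg})$, then uses classical smooth theory to identify the left side with $\Hom_{\sm}(T(K)/Z(K),E)$ (dimension $n-1$), and finally removes the central-character constraint via the short exact sequence $0\to\Ext^1_Z\to\Ext^1\to\Hom(Z(K),E)\to 0$ and a five-lemma argument against the analogous sequence for $\Hom_{g'}(T(K),E)$.

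For part~(2), your Jacquet-module d\'evissage ``parallel to Lemma~\ref{Lint1}'' does not transfer as stated. In Lemma~\ref{Lint1} the two triangulations of $\widetilde{D}$ share most of their filtration and one compares graded pieces step by step inside a single filtered object; here the generalized $w(\phi)\eta z^{\lambda}\delta_B$-eigenspaces of $J_B(\widetilde{\pi})$ for distinct $w$ are \emph{a priori} independent $T(K)$-submodules (the characters being pairwise distinct), with no intrinsic filtration linking them---so nothing in the $T(K)$-module structure of $J_B(\widetilde{\pi})$ alone forces $\psi_{w_2}=(w_2w_1^{-1})\psi_{w_1}$. One needs an input encoding the $\GL_n(K)$-structure. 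The paper first reduces from $g'$ to $\sm$ (the $\psi_0\circ\dett$ part is visibly $S_n$-invariant), then uses the Bernstein centre: the maps $\cJ_w:\prod_i\Spec\cH_{w^{-1}(i)}\to\Spec\cH$ of \cite[Lem.~3.22]{CEGGPS1} satisfy $\cJ_{w_2}=(w_2w_1^{-1})\circ\cJ_{w_1}$ by the classical intertwining isomorphisms $I_{\sm}(w_1(\phi))\cong I_{\sm}(w_2(\phi))$, and the commutativity of \eqref{Ecomg'int} follows by taking tangent maps at $\phi$. Your approach could be salvaged by computing $J_B(\widetilde{\pi})$ directly from the explicit description of $\widetilde{\pi}$ obtained in the well-definedness step, but that is essentially the Bernstein-centre argument in different clothing, not a filtration d\'evissage.
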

\begin{proof}For $\psi=\psi_1+\psi_0 \circ \dett$ with $\psi_1\in \Hom_{\sm}(T(K),E)$ and $\psi_0\in \Hom(K^{\times}, E)$, it is easy to see the natural map
\begin{multline*}
	w(\phi) \eta z^{\lambda}(1+\psi \epsilon) \delta_B 
	 \hooklongrightarrow J_B\big((\Ind_{B^-(K)}^{\GL_n(K)} w(\phi) \eta z^{\lambda} (1+\psi \epsilon))^{\Q_p-\an}\big) \\ \hooklongrightarrow (\Ind_{B^-(K)}^{\GL_n(K)} w(\phi) z^{\lambda}\eta (1+\psi \epsilon))^{\Q_p-\an}
\end{multline*}
factors through the subrepresentation $(\Ind_{B^-(K)}^{\GL_n(K)} w(\phi) \eta (1+\psi_1 \epsilon))^{\sm} \otimes_E L(\lambda) \otimes_{E[\epsilon]/\epsilon^2} (1+\epsilon \psi_0 \circ \dett)$. By definition (\cite[\S~(2.8)]{Em2}), we see
\begin{equation*}
	I_{B^-(K)}^{\GL_n(K)} (w(\phi) \eta z^{\lambda}(1+\psi \epsilon))\cong (\Ind_{B^-(K)}^{\GL_n(K)} w(\phi) \eta (1+\psi_1 \epsilon))^{\sm} \otimes_E L(\lambda) \otimes_{E[\epsilon]/\epsilon^2} (1+\psi_0 \circ \dett).
\end{equation*}
Together with the isomorphism $I_{\sm}(w(\phi)) \otimes_E L(\lambda)\cong \pi_{\alg}(\phi, \textbf{h})$ (\ref{Eintisom}), it gives a well-defined element in $\Ext^1_{\GL_n(K)}(\pi_{\alg}(\phi, \textbf{h}), \pi_{\alg}(\phi, \textbf{h}))$.
By \cite[Prop.~4.7]{Sch11}, we have (where $Z\subset \GL_n$ denotes the centre, and the subscript ``$Z$" stands for fixing central character)
\begin{equation}\label{Eautolalg}\Ext^1_{\lalg, Z}(\pi_{\alg}(\phi, \textbf{h}), \pi_{\alg}(\phi, \textbf{h})) \xlongrightarrow{\sim} \Ext^1_{\GL_n(K),Z}(\pi_{\alg}(\phi, \textbf{h}), \pi_{\alg}(\phi, \textbf{h})).\end{equation} 
By classical smooth representation theory, the restriction of $\zeta_w$  induces an isomorphism $\Hom_{\sm}(T(K)/Z(K),E) \xrightarrow{\sim} \Ext^1_{\lalg, Z}(\pi_{\alg}(\phi, \textbf{h}), \pi_{\alg}(\phi, \textbf{h}))$ (so the latter has dimension $n-1$).
Using similar arguments as in \cite[Lem.~3.16]{BD1} (and the aforementioned discussion), we obtain a commutative diagram of short exact sequences (we omit $\GL_n(K)$, $(\phi, \textbf{h})$)
\begin{equation*}
	\begin{tikzcd}
		\Hom_{\mathrm{sm}}(T(K)/Z(K),E) \arrow[r, hook] \arrow[d, "\sim"'] 
		& \Hom_{g'}(T(K),E) \arrow[r, two heads] \arrow[d, "\zeta_w"] 
		& \Hom(Z(K),E) \arrow[d, equal] \\
		\Ext^1_Z(\pi_{\mathrm{alg}}, \pi_{\mathrm{alg}}) \arrow[r, hook] 
		& \Ext^1(\pi_{\mathrm{alg}}, \pi_{\mathrm{alg}}) \arrow[r, two heads] 
		& \Hom(Z(K),E)
	\end{tikzcd}
\end{equation*}
So $\zeta_w$ is a bijection and $\dim_E \Ext^1_{\GL_n(K)}(\pi_{\alg}(\phi, \textbf{h}), \pi_{\alg}(\phi, \textbf{h}))=n+d_K.$ For (2), it suffices to prove the statement for $g'$ replaced by ``$\sm$". This is a classical fact. Indeed, let $\cH$ (resp. $\cH_i\cong \bG_m$) be the Bernstein centre over $E$ associated to the smooth representation $\pi_{\sm}(\phi)$ of $\GL_{n}(K)$ (resp. $\phi_i$ of $K^{\times}$) (cf. \cite[\S~3.13]{CEGGPS1}). By \cite[Lem.~3.22]{CEGGPS1}, for each $w\in S_n$, there is a natural map $\cJ_w: \prod_{i=1}^n \Spec \cH_{w^{-1}(i)} \ra \Spec \cH$ sending a point $(\phi_i')\in \prod_{i=1}^n \Spec \cH_{w^{-1}(i)}$ to the point (associated to) $(\Ind_{B^-(K)}^{\GL_n(K)} (\boxtimes_{i=1}^r \phi_i')\eta)^{\sm}$ of $\cH$.  Moreover, the tangent map of $\cJ_w$ at $(\phi_{w^{-1}(i)})$ coincides with $\zeta_w$. The intertwining property implies that for $w_1, w_2\in S_n$, $\cJ_{w_2}=(w_2w_1^{-1})\circ \cJ_{w_1}$ where $w_2w_1^{-1}$ here denotes the morphism $\prod_{i=1}^n \Spec \cH_{w_1^{-1}(i)}\ra \prod_{i=1}^n \Spec \cH_{w_2^{-1}(i)}$, $(\phi_i')\mapsto (\phi_{(w_2w_1^{-1})^{-1}(i)}')$. By considering the corresponding tangent maps of $\cJ_{w_1}$, $\cJ_{w_2}$, we deduce the commutativity of (\ref{Ecomg'int}) (with $g'$ replaced by ``$\sm$"). This concludes the proof.
\end{proof}
\begin{remark}
	Note that $\zeta_w$ is in fact  independent of the choice of (\ref{Eintisom}).
\end{remark}

\begin{lemma}\label{Lextps}
For any $\sC(I,s_{i,\sigma})$, we have: 

\noindent (1) $\dim_E \Ext^1_{\GL_n}(\sC(I,s_{i,\sigma}), \pi_{\alg}(\phi, \textbf{h}))=\dim_E \Ext^1_{\GL_n}(\pi_{\alg}(\phi, \textbf{h}), \sC(I,s_{i,\sigma}))=1.$

\noindent (2) Let $\tilde{\pi}_{\alg}(\phi, \textbf{h})\in \Ext^1_{\GL_n(K)}(\pi_{\alg}(\phi, \textbf{h}), \pi_{\alg}(\phi, \textbf{h}))$ be non-split, then the following pull-back map (via $\tilde{\pi}_{\alg}(\phi, \textbf{h}) \twoheadrightarrow \pi_{\alg}(\phi, \textbf{h})$) is a bijection:
\begin{equation}\label{Etildealg}
	\Ext^1_{\GL_n(K)}(\pi_{\alg}(\phi, \textbf{h}), \sC(I,s_{i,\sigma}))  \xlongrightarrow{\sim}	\Ext^1_{\GL_n(K)}(\tilde{\pi}_{\alg}(\phi, \textbf{h}), \sC(I,s_{i,\sigma})).
\end{equation}
\end{lemma}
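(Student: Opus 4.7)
For part (1), I would first establish non-vanishing of both Ext groups by exhibiting explicit uniserial extensions inside locally analytic principal series. Taking $w \in S_n$ with $w(\{1,\ldots,i\}) = I$, Proposition \ref{LGLn1}(2),(3) shows that $\PS_1(w(\phi), \textbf{h})$ has socle $\pi_{\alg}(\phi, \textbf{h})$ and contains $\sC(w, s_{i,\sigma}) \cong \sC(I, s_{i,\sigma})$ in its second socle layer with multiplicity one; extracting the uniserial subquotient $[\pi_{\alg}(\phi, \textbf{h}) \lin \sC(I, s_{i,\sigma})]$ gives a non-split element of $\Ext^1_{\GL_n(K)}(\sC(I, s_{i,\sigma}), \pi_{\alg}(\phi, \textbf{h}))$. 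A parallel construction using locally analytic induction from the opposite Borel $B$ (with socle and cosocle interchanged relative to $B^-$ by Orlik-Strauch duality) provides an extension $[\sC(I, s_{i,\sigma}) \lin \pi_{\alg}(\phi, \textbf{h})]$, giving $\Ext^1_{\GL_n(K)}(\pi_{\alg}(\phi, \textbf{h}), \sC(I, s_{i,\sigma})) \neq 0$.

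For the upper bound of dimension $\leq 1$, I would use Lemma \ref{Ljacsi}(2) to realize $\sC(I, s_{i,\sigma}) \cong I_{B^-}^{\GL_n(K)}(z^{-s_{i,\sigma} \cdot \lambda} w(\phi) \eta)$, then apply Emerton's adjunction for $\Hom$ together with the Schraen-type spectral sequence of \cite[(4.37)]{Sch11} to reduce the Ext computation to one over $T(K)$ involving the Emerton-Jacquet module $J_B(\pi_{\alg}(\phi, \textbf{h}))$. The key input is that the character $z^{-s_{i,\sigma} \cdot \lambda} w(\phi) \eta \delta_B$ appears in $J_B(\pi_{\alg})$ with multiplicity exactly one (by classical Casselman theory on the smooth part, combined with genericity of $\phi$); the resulting $\Ext^1_{T(K)}$ between two fixed characters of $T(K)$ is then one-dimensional. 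A symmetric argument (or duality) handles $\Ext^1_{\GL_n(K)}(\sC(I, s_{i,\sigma}), \pi_{\alg}(\phi, \textbf{h}))$.

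For part (2), apply $\Hom_{\GL_n(K)}(-, \sC(I, s_{i,\sigma}))$ to $0 \to \pi_{\alg}(\phi, \textbf{h}) \to \tilde{\pi}_{\alg}(\phi, \textbf{h}) \to \pi_{\alg}(\phi, \textbf{h}) \to 0$. Since $\Hom(\pi_{\alg}(\phi, \textbf{h}), \sC(I, s_{i,\sigma})) = 0$ for distinct irreducibles, the long exact sequence begins
\[ 0 \to \Ext^1(\pi_{\alg}, \sC(I, s_{i,\sigma})) \xrightarrow{\pi^*} \Ext^1(\tilde{\pi}_{\alg}, \sC(I, s_{i,\sigma})) \xrightarrow{\iota^*} \Ext^1(\pi_{\alg}, \sC(I, s_{i,\sigma})), \]
so $\pi^*$ is automatically injective; bijectivity reduces to the dimension estimate $\dim_E \Ext^1_{\GL_n(K)}(\tilde{\pi}_{\alg}(\phi, \textbf{h}), \sC(I, s_{i,\sigma})) \leq 1$. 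I would obtain this by rerunning the spectral-sequence argument of part (1) with the Orlik-Strauch realization $\tilde{\pi}_{\alg}(\phi, \textbf{h}) \cong I_{B^-}^{\GL_n(K)}(w(\phi) \eta z^{\lambda}(1+\psi\epsilon))$ from Proposition \ref{Pextalg}(1) replacing $\pi_{\alg}$: functoriality of $J_B$ under infinitesimal deformation ensures that the relevant character-component of $J_B(\tilde{\pi}_{\alg})$ stays free of rank one over $E[\epsilon]/\epsilon^2$, preserving the bound. The main technical obstacle, shared by both parts, is justifying the Schraen-type spectral sequence rigorously in this admissible-locally-analytic setting, verifying sharpness of the upper bound (i.e.\ that higher differentials contribute nothing), and, for part (2), carefully tracking the $E[\epsilon]/\epsilon^2$-module structure so that the infinitesimal deformation does not spuriously enlarge the Ext.
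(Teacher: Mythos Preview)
Your overall strategy (Schraen's spectral sequence \cite[Cor.~4.9]{Sch11}) matches the paper's, but the key step in part (1) is misidentified. The character $z^{-s_{i,\sigma}\cdot\lambda}w(\phi)\eta$ does \emph{not} appear in $J_B(\pi_{\alg}(\phi,\textbf{h}))=H_0(N^-(K),\pi_{\alg}(\phi,\textbf{h}))$: the weights occurring there are all congruent to $\lambda$ (coming from $L(\lambda)$ and the smooth Jacquet module $J_{N^-}(\pi_{\sm}(\phi))\cong\oplus_{w'}w'(\phi)\eta$), whereas your character has weight $s_{i,\sigma}\cdot\lambda\neq\lambda$. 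Consequently the $(p,q)=(1,0)$ term $\Ext^1_{T(K)}(H_0,\delta)$ in the spectral sequence vanishes, and in any case $\Ext^1_{T(K)}(\chi,\chi)$ has dimension $n(1+d_K)$, not $1$. The actual contribution comes from the $(p,q)=(0,1)$ term: by the Kostant-type formulas \cite[(4.40)--(4.42)]{Sch11}, the character $\delta=z^{-s_{i,\sigma}\cdot\lambda}w(\phi)\eta$ appears in $H_1(N^-(K),\pi_{\alg}(\phi,\textbf{h}))$ with multiplicity one, and the relevant term is $\Hom_{T(K)}(H_1,\delta)\cong\Hom_{T(K)}(\delta,\delta)\cong E$. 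One then passes from $(\Ind_{B^-}^{\GL_n}\delta)^{\Q_p-\an}$ back to its socle $\sC(I,s_{i,\sigma})$ via \cite[Lem.~2.26]{Ding7}.

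For part (2), the same correction applies: running the spectral sequence for $\tilde{\pi}_{\alg}$, the contributing term is $\Hom_{T(K)}\big(H_1(N^-,\tilde{\pi}_{\alg}),\delta\big)$, and the relevant summand of $H_1$ is the $T(K)$-representation $\delta(1+\psi\epsilon)$ over $E[\epsilon]/\epsilon^2$ with $\psi=\zeta_w^{-1}(\tilde{\pi}_{\alg})$. Your phrase ``stays free of rank one over $E[\epsilon]/\epsilon^2$'' is not quite the point: that module is $2$-dimensional over $E$, but $\Hom_{T(K)}(\delta(1+\psi\epsilon),\delta)$ has dimension $1$ precisely because $\psi\neq0$ (the non-splitness of $\tilde{\pi}_{\alg}$ is used exactly here). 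So the injectivity-plus-dimension argument you outline works, but the dimension count hinges on this $\Hom$ dropping from $2$ to $1$, not on any rank-preservation. For the first equality $\dim\Ext^1(\sC(I,s_{i,\sigma}),\pi_{\alg})=1$, the paper invokes \cite[Prop.~5.1.14, Lem.~3.2.4(ii)]{BQ}; your ``symmetric argument or duality'' would need to be made precise, since the Orlik--Strauch representations are not literally self-dual in a way that swaps the two $\Ext$ groups.
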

\begin{proof}(1) follows from \cite[Prop.~5.1.14]{BQ} together with \cite[Lem.~3.2.4 (ii)]{BQ} (when $K=\Q_p$, the part on  $\Ext^1_{\GL_n}(\sC(I,s_{i,\sigma}), \pi_{\alg}(\phi, \textbf{h}))$ was proved in \cite[Cor.~5.9]{BD2}).  We give a proof of (2) and an alternative proof of the second equality in (1) using Schraen's spectral sequence \cite[Cor.~4.9]{Sch11} (for $G=\Res^K_{\Q_p} \GL_n$). First, note by the same argument below \cite[Cor.~4.9]{Sch11}, the separatedness assumption in \cite[Cor.~4.9]{Sch11} is satisfied for either $\pi_{\alg}(\phi, \textbf{h})$  or any $\widetilde{\pi}_{\alg}(\phi, \textbf{h})$ in (2) (noting by Proposition \ref{Pextalg} (1) and the proof, $\widetilde{\pi}_{\alg}(\phi, \textbf{h})|_{\SL_n(K)}$ is locally algebraic). Let $\delta:= z^{- s_{i,\sigma} \cdot \lambda} w(\phi) \eta$. By 
	\cite[Cor.~4.9]{Sch11}, we have a spectral sequence \begin{equation}\label{Espsq1}\Ext^p_{T(K)}(H_q(N^-(K),\pi_{\alg}(\phi, \textbf{h})), \delta)\Rightarrow \Ext^{p+q}_{\GL_n(K)}\big(\pi_{\alg}(\phi, \textbf{h}), (\Ind_{B^-(K)}^{\GL_n(K)}\delta)^{\Q_p-\an}\big)\end{equation} where $N^-$ is the unipotent radical of $B^-$. Recall for characters $\chi$, $\chi'$ of $T(K)$ over $E$, we have $\Ext^i_{T(K)}(\chi,\chi')=0$ for all $i$ if $\chi\neq \chi'$. This, together with \cite[(4.40), (4.41), (4.42)]{Sch11} and the classical fact $J_{N^-}(I_{\sm}(\phi))\cong \oplus_{w'\in S_n} w'(\phi) \eta$ (where $J_{N^-}(-)$ denotes the classical Jacquet module for $N^-$), imply that for $p+q=1$, the only non-zero term on the left hand side of (\ref{Espsq1}) is $\Hom_{T(K)}(H_1(N^-(K), \pi_{\alg}(\phi, \textbf{h})), \delta) \cong \Hom_{T(K)}(\delta, \delta)\cong E$. So $ \Ext^1_{\GL_n(K)}\big(\pi_{\alg}(\phi, \textbf{h}), (\Ind_{B^-(K)}^{\GL_n(K)}\delta)^{\Q_p-\an}\big)\cong E$. By   (\ref{ECIsigma}) and \ \cite[Lem.~2.26]{Ding7}, \ the \ natural \ push-forward \ map \ is \ an \ isomorphism: \ $\Ext^1_{\GL_n(K)}(\pi_{\alg}(\phi, \textbf{h}), \sC(I,s_{i,\sigma})) \xrightarrow{\sim}\Ext^1_{\GL_n(K)}\big(\pi_{\alg}(\phi, \textbf{h}), (\Ind_{B^-(K)}^{\GL_n(K)}\delta)^{\Q_p-\an}\big)$. This proves the second equality in (1).
	Similarly with $\pi_{\alg}(\phi, \textbf{h})$ replaced by $\tilde{\pi}_{\alg}(\phi, \textbf{h})$, we get
	\begin{multline}\label{EisomExtGT}
		\Ext^1_{\GL_n(K)}(\tilde{\pi}_{\alg}(\phi, \textbf{h}), \sC(I,s_{i,\sigma}))\\
		\cong \Ext^1_{\GL_n(K)}\big(\tilde{\pi}_{\alg}(\phi, \textbf{h}), (\Ind_{B^-(K)}^{\GL_n(K)}z^{- s_{i,\sigma} \cdot \lambda} w(\phi) \eta)^{\Q_p-\an}\big)\\
		\cong \Hom_{T(K)}\big(w(\phi) \eta z^{-s_{i,\sigma} \cdot \lambda}(1+\psi \epsilon), w(\phi) \eta z^{-s_{i,\sigma} \cdot \lambda} \big)
	\end{multline}
	where $\psi=\zeta_w^{-1}\big(\tilde{\pi}_{\alg}(\phi, \textbf{h})\big)\in \Hom_{g'}(T(K),E)$. As $\widetilde{\pi}_{\alg}(\phi, \textbf{h})$ is non-split, $\psi\neq 0$ hence the right hand side of (\ref{EisomExtGT}) is one dimensional over $E$. However, by an easy d\'evissage, (\ref{Etildealg}) is injective hence has to be bijective.
\end{proof}
For $w\in S_n$, consider the  natural map
\begin{equation*}
\Hom(T(K),E) \lra\Ext^1_{\GL_n(K)}\big(\PS(w(\phi), \textbf{h}), \PS(w(\phi), \textbf{h})\big)
\end{equation*}
sending $\psi $ to $(\Ind_{B^-(K)}^{\GL_n(K)} w(\phi) \eta z^{\lambda} (1+\psi \epsilon))^{\Q_p-\an}$.
Composed with the pull-back map for (\ref{Eintisom}) and using \cite[Lem.~2.26]{Ding7},  it induces:
\begin{equation}\label{Eind1}
\Hom(T(K),E) \lra \Ext^1_{\GL_n(K)}(\pi_{\alg}(\phi, \textbf{h}), \PS_1(w(\phi), \textbf{h})).
\end{equation}
Composed furthermore with the push-forward map via the injection $\PS_1(w(\phi), \textbf{h})\hookrightarrow \pi_1(\phi, \textbf{h})$ (associated to (\ref{Eintisom}), see also (\ref{Etautoinj})), we finally obtain a map
\begin{equation}\label{Ezetaw}
\zeta_w: \Hom(T(K),E) \lra \Ext^1_{\GL_n(K)}(\pi_{\alg}(\phi, \textbf{h}), \pi_1(\phi, \textbf{h})).
\end{equation}
Note that the map $\zeta_w$ does not depend on the choice of (\ref{Eintisom}).
\begin{proposition}\label{Pextps}

\noindent (1)  For $w\in S_n$, the map (\ref{Eind1}) is bijective.  In particular, we have $\dim_E \Ext^1_{\GL_n(K)}(\pi_{\alg}(\phi, \textbf{h}), \PS_1(w(\phi), \textbf{h}))=n+nd_K$.

\noindent (2) For $w\in S_n$, $\zeta_w|_{\Hom_{g'}(T(K),E)}$ is equal to the composition of (\ref{Ezetaw0}) with the  push-forward  map $\Ext^1_{\GL_n(K)}(\pi_{\alg}(\phi, \textbf{h}), \pi_{\alg}(\phi, \textbf{h})) \hookrightarrow \Ext^1_{\GL_n(K)}(\pi_{\alg}(\phi, \textbf{h}), \pi_1(\phi, \textbf{h})).$

\end{proposition}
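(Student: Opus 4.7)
The plan is to compute $\dim_E \Ext^1_{\GL_n(K)}(\pi_{\alg}(\phi, \textbf{h}), \PS_1(w(\phi), \textbf{h}))$ by two different means — an upper bound from a long exact sequence, and a lower bound from injectivity of (\ref{Eind1}) established via Schraen's spectral sequence — and then match dimensions. For the upper bound, I will apply $\Ext^*_{\GL_n(K)}(\pi_{\alg}(\phi, \textbf{h}), -)$ to the short exact sequence
\begin{equation*}
0 \lra \pi_{\alg}(\phi, \textbf{h}) \lra \PS_1(w(\phi), \textbf{h}) \lra \bigoplus_{\substack{i=1,\dots,n-1 \\ \sigma\in\Sigma_K}} \sC(w, s_{i,\sigma}) \lra 0,
\end{equation*}
and read off from the long exact sequence, using Proposition \ref{Pextalg}(1) ($\dim n + d_K$) and Lemma \ref{Lextps}(1) (each $\sC(w,s_{i,\sigma})$ contributes $1$, for $(n-1)d_K$ terms in total), the bound $\dim_E \Ext^1_{\GL_n(K)}(\pi_{\alg}(\phi, \textbf{h}), \PS_1(w(\phi), \textbf{h})) \leq (n+d_K) + (n-1)d_K = n+nd_K$, which matches $\dim_E \Hom(T(K),E)$.

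For the matching lower bound, the task reduces to proving injectivity of (\ref{Eind1}). I will compose it with the pull-back $\Ext^1_{\GL_n(K)}(\pi_{\alg}(\phi, \textbf{h}), \PS_1(w(\phi), \textbf{h})) \to \Ext^1_{\GL_n(K)}(\pi_{\alg}(\phi, \textbf{h}), \PS(w(\phi), \textbf{h}))$; by construction this composition sends $\psi$ to the pull-back of $(\Ind_{B^-(K)}^{\GL_n(K)} \delta(1+\psi\epsilon))^{\Q_p-\an}$ along (\ref{Eintisom}), where $\delta := w(\phi)\eta z^\lambda$. Exactly as in the proof of Lemma \ref{Lextps}(2), I will invoke Schraen's spectral sequence
\begin{equation*}
E_2^{p,q} = \Ext^p_{T(K)}(H_q(N^-(K), \pi_{\alg}(\phi, \textbf{h})), \delta) \Longrightarrow \Ext^{p+q}_{\GL_n(K)}(\pi_{\alg}(\phi, \textbf{h}), \PS(w(\phi), \textbf{h}))
\end{equation*}
from \cite[Cor.~4.9]{Sch11} (with separatedness checked as in \emph{loc. cit.}). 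Using \cite[(4.40), (4.41), (4.42)]{Sch11} together with $\pi_{\alg}(\phi, \textbf{h}) \cong I_{\sm}(\phi) \otimes_E L(\lambda)$, each $H_q(N^-(K), \pi_{\alg}(\phi, \textbf{h}))$ is a sum of characters $w'(\phi)\eta z^{u\cdot\lambda}$ with $w' \in S_n$ and $u\in S_n^{|\Sigma_K|}$ of total length $q$. Since $\phi$ is generic and $\lambda$ is strictly dominant, no $u\cdot\lambda$ with $\ell(u)=1$ equals $\lambda$, so $E_2^{0,1}=0$, while $E_2^{1,0} \cong \Ext^1_{T(K)}(\delta,\delta) \cong \Hom(T(K),E)$ has dimension $n+nd_K$. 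The adjacent terms $E_2^{-1,2}$ and $E_2^{3,-1}$ vanish trivially, so no non-zero differential touches $E_2^{1,0}$, and the edge map identifies $\Hom(T(K),E)$ with $\Ext^1_{\GL_n(K)}(\pi_{\alg}(\phi, \textbf{h}), \PS(w(\phi), \textbf{h}))$ via $\psi \mapsto (\Ind_{B^-(K)}^{\GL_n(K)}\delta(1+\psi\epsilon))^{\Q_p-\an}$. This yields injectivity of (\ref{Eind1}), and hence bijectivity by the dimension count above.

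For (2), I will exploit the explicit description from the proof of Proposition \ref{Pextalg}(1): for $\psi = \psi_1 + \psi_0 \circ \dett \in \Hom_{g'}(T(K), E)$, the Emerton-induced piece $I_{B^-(K)}^{\GL_n(K)}(\delta(1+\psi\epsilon)) \cong (\Ind_{B^-(K)}^{\GL_n(K)} w(\phi)\eta(1+\psi_1\epsilon))^{\infty} \otimes_E L(\lambda) \otimes_{E[\epsilon]/\epsilon^2}(1+\psi_0 \circ \dett \cdot \epsilon)$ is a locally algebraic-up-to-twist subrepresentation of $(\Ind_{B^-(K)}^{\GL_n(K)}\delta(1+\psi\epsilon))^{\Q_p-\an}$ whose pull-back along (\ref{Eintisom}) is precisely the class $\zeta_w(\psi) \in \Ext^1_{\GL_n(K)}(\pi_{\alg}(\phi, \textbf{h}), \pi_{\alg}(\phi, \textbf{h}))$ of Proposition \ref{Pextalg}(1). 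Since the inclusion $\pi_{\alg}(\phi, \textbf{h}) \hookrightarrow \PS_1(w(\phi), \textbf{h}) \hookrightarrow \pi_1(\phi, \textbf{h})$ agrees with the tautological (\ref{Etautoinj}) by the construction of $\pi_1(\phi, \textbf{h})$ as an amalgamated sum, functoriality of the push-forward gives the stated identification of $\zeta_w|_{\Hom_{g'}(T(K),E)}$. The main obstacle will be the spectral-sequence bookkeeping, in particular checking that the edge map at $E_2^{1,0}$ really is the one induced by parabolic induction of character deformations; once this is in place, the remaining steps are straightforward consequences of the dimension count and of Proposition \ref{Pextalg}(1).
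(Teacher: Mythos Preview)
Your proposal is correct and follows essentially the same approach as the paper, which also invokes Schraen's spectral sequence \cite[Cor.~4.9]{Sch11} (as in the proof of Lemma \ref{Lextps}) to compute $\Ext^1_{\GL_n(K)}(\pi_{\alg}(\phi,\textbf{h}),\PS(w(\phi),\textbf{h}))$. The only minor variation is that the paper appeals to \cite[Lem.~2.26]{Ding7} to identify $\Ext^1(\pi_{\alg},\PS_1)$ with $\Ext^1(\pi_{\alg},\PS)$ directly, whereas you obtain the matching upper bound for $\dim_E\Ext^1(\pi_{\alg},\PS_1)$ via the d\'evissage sequence (which is exactly (\ref{Edivips}) in the paper); both routes are equivalent here.
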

\begin{proof}(1) follows from similar arguments as  in the proof of Lemma \ref{Lextps}, using Schraen's spectral sequence \cite[Cor.~4.9]{Sch11} and \cite[Lem.~2.26]{Ding6}. We leave the details to the reader. (2) is clear (see also Remark \ref{Rind1} below). 
\end{proof}
\begin{remark}\label{Rind1}
The map $\zeta_w$ can also be obtained by using Emerton's functor $I_{B^-(K)}^{\GL_n(K)} (-)$. In fact, by definition (cf. \cite[\S~(2.8)]{Em2}) and using \cite[Lem.~2.26]{Ding7}, it is straightforward  to see for $\psi \in \Hom(T(K),E)$, $I_{B^-(K)}^{\GL_n(K)} (w(\phi) \eta z^{\lambda} (1+\psi \epsilon))\subset (\Ind_{B^-(K)}^{\GL_n(K)} w(\phi) \eta z^{\lambda} (1+\psi \epsilon))^{\Q_p-\an}$ is an extension of $\pi_{\alg}(\phi, \textbf{h})\cong I_{\sm}(w(\phi))\otimes_E L(\lambda)$ by a certain subrepresentation $V$ of $\PS_1(w(\phi), \textbf{h})$. Then $\zeta_w(\psi)$ is just its image of  the push-forward map via $V\hookrightarrow \PS_1(w(\phi),\textbf{h}) \hookrightarrow \pi_1(\phi, \textbf{h})$.
\end{remark}
\begin{proposition}\label{Pextpi1}(1) We have an exact sequence
\begin{multline}\label{Edivi}
	0 \lra \Ext^1_{\GL_n(K)}(\pi_{\alg}(\phi, \textbf{h}), \pi_{\alg}(\phi, \textbf{h})) \lra \Ext^1_{\GL_n(K)}(\pi_{\alg}(\phi, \textbf{h}), \pi_1(\phi, \textbf{h})) \\ \lra \oplus_{\substack{i=1,\cdots, n-1, \sigma\in \Sigma_K\\ I \subset \{1,\cdots, n-1\}, \# I =i}} \Ext^1_{\GL_n(K)}(\pi_{\alg}(\phi, \textbf{h}), \sC(I,s_{i,\sigma})) \lra 0.
\end{multline}
In particular, $\dim_E \Ext^1_{\GL_n(K)} (\pi_{\alg}(\phi, \textbf{h}), \pi_1(\phi, \textbf{h}))=n+(2^n-1)d_K$.

(2) The following map is surjective:
\begin{equation}\label{Etphih}
	t_{\phi, \textbf{h}}:	\oplus_{w\in S_n} 
	\Hom(T(K),E) \xlongrightarrow{(\zeta_w)} \Ext^1_{\GL_n(K)}(\pi_{\alg}(\phi, \textbf{h}), \pi_1(\phi, \textbf{h})).
\end{equation}
\end{proposition}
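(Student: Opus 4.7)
The plan is to apply $\Hom_{\GL_n(K)}(\pi_{\alg}(\phi,\textbf{h}),-)$ to the short exact sequence
\[
0 \longrightarrow \pi_{\alg}(\phi,\textbf{h}) \longrightarrow \pi_1(\phi,\textbf{h}) \longrightarrow Q \longrightarrow 0,
\]
where $Q:=\pi_1(\phi,\textbf{h})/\pi_{\alg}(\phi,\textbf{h})\cong \oplus_{(I,i,\sigma)}\sC(I,s_{i,\sigma})$ is the semisimple cosocle described in the paragraph preceding the statement. Since each $\sC(I,s_{i,\sigma})$ is irreducible and non-isomorphic to $\pi_{\alg}(\phi,\textbf{h})$, we have $\Hom(\pi_{\alg},Q)=0$, so the resulting long exact sequence immediately gives the left-exactness of (\ref{Edivi}), and (1) reduces to the surjectivity of $\Ext^1(\pi_{\alg},\pi_1)\to\Ext^1(\pi_{\alg},Q)$. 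I will in fact establish this surjectivity together with (2), by exploiting the subrepresentations $\PS_1(w(\phi),\textbf{h})\hookrightarrow\pi_1(\phi,\textbf{h})$ from the amalgamation construction.

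Fix $w\in S_n$ and apply $\Hom(\pi_{\alg},-)$ to $0\to\pi_{\alg}\to\PS_1(w(\phi),\textbf{h})\to\oplus_{i,\sigma}\sC(w,s_{i,\sigma})\to 0$. The dimension formulas $\dim_E\Ext^1(\pi_{\alg},\pi_{\alg})=n+d_K$ (Proposition \ref{Pextalg} (1)), $\dim_E\Ext^1(\pi_{\alg},\PS_1(w(\phi),\textbf{h}))=n+nd_K$ (Proposition \ref{Pextps} (1)), and $\dim_E\Ext^1(\pi_{\alg},\sC(w,s_{i,\sigma}))=1$ (Lemma \ref{Lextps} (1)) fit together with $\Hom(\pi_{\alg},\oplus\sC(w,s_{i,\sigma}))=0$ so that the long exact sequence forces
\[
\Ext^1_{\GL_n(K)}(\pi_{\alg},\PS_1(w(\phi),\textbf{h}))\twoheadrightarrow \oplus_{i,\sigma}\Ext^1_{\GL_n(K)}(\pi_{\alg},\sC(w,s_{i,\sigma}))
\]
to be surjective. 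Moreover, $\pi_1(\phi,\textbf{h})/\PS_1(w(\phi),\textbf{h})$ has as constituents precisely the $\sC(I,s_{i,\sigma})$ with $I\neq w(\{1,\ldots,i\})$, none isomorphic to $\pi_{\alg}$, so the pushforward $\Ext^1(\pi_{\alg},\PS_1(w(\phi),\textbf{h}))\hookrightarrow\Ext^1(\pi_{\alg},\pi_1)$ is injective. By Proposition \ref{LGLn1} (4), as $w$ varies over $S_n$ the pairs $(w(\{1,\ldots,i\}),s_{i,\sigma})$ exhaust all $(I,s_{i,\sigma})$, and therefore the composition $\oplus_{w}\Ext^1(\pi_{\alg},\PS_1(w(\phi),\textbf{h}))\to\Ext^1(\pi_{\alg},\pi_1)\to\Ext^1(\pi_{\alg},Q)$ is surjective. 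This proves (1), with dimension count $(n+d_K)+(2^n-2)d_K=n+(2^n-1)d_K$ being immediate.

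For (2), the bijection (\ref{Eind1}) identifies $\Hom(T(K),E)\xrightarrow{\sim}\Ext^1(\pi_{\alg},\PS_1(w(\phi),\textbf{h}))$, and by construction $\zeta_w$ is this bijection followed by the injective pushforward into $\Ext^1(\pi_{\alg},\pi_1)$. Hence the preceding paragraph shows that the image of $t_{\phi,\textbf{h}}$ maps surjectively onto $\Ext^1(\pi_{\alg},Q)$. Given $x\in\Ext^1(\pi_{\alg},\pi_1)$, I pick $y\in\oplus_{w}\Hom(T(K),E)$ with $t_{\phi,\textbf{h}}(y)$ having the same image as $x$ in $\Ext^1(\pi_{\alg},Q)$; then $x-t_{\phi,\textbf{h}}(y)$ lies in $\Ext^1(\pi_{\alg},\pi_{\alg})$, which by Proposition \ref{Pextps} (2) combined with the isomorphism (\ref{Ezetaw0}) of Proposition \ref{Pextalg} (1) equals the image of $\zeta_{w_0}|_{\Hom_{g'}(T(K),E)}$ for any chosen $w_0\in S_n$. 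This completes the proof. The main obstacle is the dimension-count step that forces surjectivity of $\Ext^1(\pi_{\alg},\PS_1(w(\phi)))\twoheadrightarrow \oplus_{i,\sigma}\Ext^1(\pi_{\alg},\sC(w,s_{i,\sigma}))$: it hinges on the three previously established dimension formulas agreeing exactly and on the vanishing of $\Hom(\pi_{\alg},\oplus\sC(w,s_{i,\sigma}))$ to kill the only potentially obstructing connecting map.
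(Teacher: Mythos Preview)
Your proof is correct and follows essentially the same route as the paper: both use d\'evissage on $0\to\pi_{\alg}\to\pi_1\to Q\to 0$, reduce the surjectivity question to the $\PS_1(w(\phi),\textbf{h})$ level via the dimension count $(n+d_K)+(n-1)d_K=n+nd_K$, and then cover all $(I,s_{i,\sigma})$ by varying $w$. Your treatment of (2) is somewhat more explicit than the paper's terse ``follows from Proposition~\ref{Pextps}~(1) and (\ref{Edivi})'', but the underlying argument is the same.
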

\begin{proof}We omit the subscript ``$\GL_n(K)$" in the proof. The sequence follows by d\'evissage,  and it suffices to prove the second last map in (\ref{Edivi}) is surjective. For $w\in S_n$, using d\'evissage, we have an exact sequence
\begin{multline}\label{Edivips}
	0 \ra \Ext^1(\pi_{\alg}(\phi, \textbf{h}), \pi_{\alg}(\phi, \textbf{h})) \ra \Ext^1(\pi_{\alg}(\phi, \textbf{h}), \PS_1(w(\phi), \textbf{h})) \\ \ra \oplus_{\substack{i=1,\cdots, n-1\\ \sigma\in \Sigma_K}} \Ext^1(\pi_{\alg}(\phi, \textbf{h}), \sC(w,s_{i,\sigma})).
\end{multline}
By comparing dimensions (using Proposition \ref{Pextalg} (1), Proposition \ref{Pextps} (1) and Lemma \ref{Lextps} (1)), the last map in (\ref{Edivips}) is surjective. The following diagram clearly commutes
\begin{equation}\label{Epspi1}\small
	\begin{tikzcd}
	&	\Ext^1(\pi_{\alg}(\phi, \textbf{h}), \PS_1(w(\phi), \textbf{h})) \arrow[r, two heads] \arrow[d, hook] &\oplus_{\substack{i=1,\cdots, n-1\\ \sigma\in \Sigma_K}} \Ext^1(\pi_{\alg}(\phi, \textbf{h}), \sC(w,s_{i,\sigma})) \arrow[d, hook] \\
	&	\Ext^1(\pi_{\alg}(\phi, \textbf{h}), \pi_1(\phi, \textbf{h})) \arrow[r] & \oplus_{\substack{i=1,\cdots, n-1, \sigma\in \Sigma_K\\ I \subset \{1,\cdots, n-1\}, \# I =i}} \Ext^1(\pi_{\alg}(\phi, \textbf{h}), \sC(I,s_{i,\sigma})).
	\end{tikzcd}
\end{equation}Varying $w$, the image of the right vertical map  can ``cover" the target.
Together with  the surjectivity of the top map, we see the bottom map is also  surjective. (2) follows by the first statement in Proposition \ref{Pextps} (1) and (\ref{Edivi}).
And the dimension part in (1) follows then from Lemma \ref{Lextps} (1) and Proposition \ref{Pextps} (1).  
\end{proof}
\begin{remark}
By Proposition \ref{Pextps} (1) and \cite[Lem.~2.26]{Ding7} (and using (\ref{Eintisom})), for $w\in S_n$, we have 
$	\zeta_w: \Hom(T(K),E) \xrightarrow{\sim} \Ext^1_{\GL_n(K)}(\pi_{\alg}(\phi, \textbf{h}), \PS(w(\phi), \textbf{h}))$.
Denote by $\pi(\phi, \textbf{h})$ the unique quotient of $\oplus_{\pi_{\alg}(\phi, \textbf{h})}^{w\in S_n} \PS(w(\phi), \textbf{h})$ of socle $\pi_{\alg}(\phi, \textbf{h})$ (cf. \cite[Def.~5.7]{BH2}, which is the representation $\pi(\rho)^{\fss}$ of \textit{loc. cit.}). The representation $\pi_1(\phi, \textbf{h})$ is in fact the first two layers in the socle filtration of $\pi(\phi, \textbf{h})$. Moreover, using again \cite[Lem.~2.26]{Ding7}, we have 
\begin{equation}\label{Epi1pi}
	\Ext^1_{\GL_n(K)}(\pi_{\alg}(\phi, \textbf{h}), \pi_1(\phi, \textbf{h})) \xlongrightarrow{\sim} \Ext^1_{\GL_n(K)}(\pi_{\alg}(\phi, \textbf{h}), \pi(\phi, \textbf{h})).
\end{equation} 
Proposition \ref{Pextpi1} (2) hence holds with $\pi_1(\phi, \textbf{h})$ replaced by $\pi(\phi, \textbf{h})$.
\end{remark}
Denote by 
\begin{equation*}
\Ext^1_{g}(\pi_{\alg}(\phi, \textbf{h}), \pi_1(\phi, \textbf{h})) \subset \Ext^1_{g'}(\pi_{\alg}(\phi, \textbf{h}), \pi_1(\phi,\textbf{h})) \subset \Ext^1_w(\pi_{\alg}(\phi, \textbf{h}), \pi_1(\phi, \textbf{h}))
\end{equation*}
the respective image of  $\Ext^1_{\lalg}(\pi_{\alg}(\phi, \textbf{h}), \pi_{\alg}(\phi, \textbf{h}))$, $\Ext^1_{\GL_n(K)}(\pi_{\alg}(\phi, \textbf{h}), \pi_{\alg}(\phi, \textbf{h}))$, and $\Ima(\zeta_w)$ for $w\in S_n$. We also use the notation $\Ext^1_{\sT_w}$ for $\Ext^1_{w}$ whenever it is convenient for the context where $\sT_w$ is the $B$-filtration of $\oplus_{i=1}^n \phi_i$ associated to $w$. So $\zeta_w$ (\ref{Ezetaw}) induces an isomorphism
\begin{equation}\label{Eisomzetaw}
	\zeta_w: \Hom(T(K),E) \xlongrightarrow{\sim} \Ext^1_{w}(\pi_{\alg}(\phi, \textbf{h}), \pi_1(\phi, \textbf{h})).
\end{equation}
By Proposition \ref{Pextalg} (2),  for $w_1, w_2\in S_n$, the following diagram commutes
\begin{equation}\label{Eintg'}
\begin{CD}
	\Hom_{g'}(T(K),E)@> \zeta_{w_1}> \sim > \Ext^1_{g'}(\pi_{\alg}(\phi, \textbf{h}), \pi_1(\phi, \textbf{h})) \\
	@V w_2w_1^{-1} V \sim V @| \\
	\Hom_{g'}(T(K),E)@> \zeta_{w_2} > \sim > \Ext^1_{g'}(\pi_{\alg}(\phi, \textbf{h}), \pi_1(\phi, \textbf{h})).
\end{CD}
\end{equation}
\subsubsection{Parabolic inductions}\label{S313}
Let $P\supset B$ be a standard parabolic subgroup of $\GL_n$ with $L_P=\diag(\GL_{n_1}, \cdots, \GL_{n_r})$. Let $\sW_P$ be the Weyl group of $L_P$. Let $\sF_P$ be a $P$-filtration of $\oplus_{i=1}^n \phi_i$ and $\phi_{\sF_P,i}:=\otimes \phi_j$ for $\phi_j\in \gr_i \sF_P$ (where the order of these $\phi_j$ does not matter here). For $i=1, \cdots, r$, let $\textbf{h}^i:=(\textbf{h}_{n_1+\cdots n_{i-1}+1}, \cdots, \textbf{h}_{n_1+\cdots+n_i})$, $\lambda^i=(h_{n_1+\cdots + n_{i-1}+1,\sigma}, \cdots, h_{n_1+\cdots+n_i,\sigma}+n_{i}-1)_{\sigma\in \Sigma_K}$. \ Applying \ the \ constructions \ in \ \S~\ref{Sps} \ to \ $(\phi_{\sF_{P,i}}, \textbf{h}^i)$, \ we \ obtain \ $\GL_{n_i}(K)$-representations $\pi_{\alg}(\phi_{\sF_P,i}, \textbf{h}^i)$, $\pi_1(\phi_{\sF_P,i}, \textbf{h}^i)$ etc. Note when $n_i=1$, we have $\pi_{\alg}(\phi_{\sF_P,i}, \textbf{h}^i)=\pi_1(\phi_{\sF_P,i}, \textbf{h}^i)=\phi_{n_1+\cdots+n_{i-1}}z^{\textbf{h}_{n_1+\cdots+n_{i-1}}}$. We fix an isomorphism $\big(\Ind_{P^-(K)}^{\GL_n(K)} (\boxtimes_{i=1}^r \pi_{\alg}(\phi_{\sF_P,i}, \textbf{h}^i))\varepsilon^{-1} \circ \theta^P\big)^{\lalg}\cong \pi_{\alg}(\phi, \textbf{h})$ (where the supscript ``$\lalg$" means locally algebraic induction) and  (then) fix isomorphisms $\pi_{\alg}(\phi_{\sF_P,i},\textbf{h}^i)\cong I_{\sm}(w_i(\phi_{\sF_p,i}))\otimes_E L(\lambda^i)$ for all $i$ and $w_i\in S_{n_i}$  such that the composition (noting the first isomorphism is obtained by the transitivity of parabolic induction) $I_{\sm}(w(\phi))\otimes_E L(\lambda) \cong \big(\Ind_{P^-(K)}^{\GL_n(K)} (\boxtimes_{i=1}^r  I_{\sm}(w_i(\phi_{\sF_p,i}))\otimes_E L(\lambda^i)) \varepsilon^{-1} \circ \theta^P\big)^{\lalg}\cong  \pi_{\alg}(\phi, \textbf{h})$ coincides the fixed isomorphism (\ref{Eintisom}) for all $w\in \sW_P$. Consider the parabolic induction 
\begin{multline}\label{Eparaind}
(\Ind_{P^-(K)}^{\GL_n(K)} (\widehat{\boxtimes}_{i=1}^r \pi_1 (\phi_{\sF_P,i}, \textbf{h}^i)) \varepsilon^{-1}\circ \theta^P)^{\Q_p-\an}\\
\hooklongleftarrow (\Ind_{P^-(K)}^{\GL_n(K)} (\boxtimes_{i=1}^r \pi_{\alg}(\phi_{\sF_P,i}, \textbf{h}^i))\varepsilon^{-1} \circ \theta^P)^{\lalg} \cong \pi_{\alg}(\phi, \textbf{h}),
\end{multline}
where $\widehat{\boxtimes}$ denotes the completed (injective or equivalently projective) tensor product over $E$ (cf. \cite[Prop.~1.1.31]{Em04}).
\begin{lemma}\label{Lparaind}
For $i=1,\cdots, n-1$, $\sigma\in \Sigma_K$ and $I\subset \{1,\cdots, n\}$, $\# I=i$, $\sC(I,s_{i,\sigma})$ appears as an irreducible constituent of $\big(\Ind_{P^-(K)}^{\GL_n(K)} (\widehat{\boxtimes}_{i=1}^r \pi_1 (\phi_{\sF_P,i}, \textbf{h}^i)) \varepsilon^{-1}\circ \theta^P\big)^{\Q_p-\an}$ if and only if one of the following conditions holds:

(1) there exists $k\in \{1,\cdots, r\}$ such that $(n_1+\cdots+n_{k-1})+1\leq i\leq (n_1+\cdots+n_k)-1$ and $
\{j\ |\ \phi_j\in  \Fil_{\sF_P,k-1}\} \subset I\subset \{j\ | \ \phi_j \in \Fil_{\sF_P, k}\}$,

(2) $i=n_1+\cdots+n_k$ for some $k=1, \cdots, r-1$, and $I=\{j \ |\ \phi_j \in \Fil_k \sF_P\}$. 

\noindent Moreover, each of such constituents has multiplicity one, and lies in the socle of 
\begin{equation}\label{Eparaind3}\big(\Ind_{P^-(K)}^{\GL_n(K)} (\widehat{\boxtimes}_{i=1}^r \pi_1 (\phi_{\sF_P,i}, \textbf{h}^i)) \varepsilon^{-1}\circ \theta^P\big)^{\Q_p-\an}/\pi_{\alg}(\phi, \textbf{h}).\end{equation}
\end{lemma}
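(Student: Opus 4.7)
The plan is to filter the inducing representation via the socle filtration of each $\pi_1(\phi_{\sF_P,i}, \textbf{h}^i)$ and then identify the irreducible constituents in the parabolic induction of each graded piece using Orlik-Strauch theory. By the $\GL_{n_i}$-analogue of the construction in Section~\ref{Sps}, each $\pi_1(\phi_{\sF_P,i}, \textbf{h}^i)$ fits in a short exact sequence
\begin{equation*}
0 \lra \pi_{\alg}(\phi_{\sF_P,i}, \textbf{h}^i) \lra \pi_1(\phi_{\sF_P,i}, \textbf{h}^i) \lra \bigoplus_{\substack{1 \leq j \leq n_i-1,\, \sigma \in \Sigma_K \\ J \subset \{1,\ldots,n_i\},\, |J|=j}} \sC_i(J, s_{j,\sigma}) \lra 0.
\end{equation*}
Taking the completed tensor product and applying the exact functor $(\Ind_{P^-(K)}^{\GL_n(K)} (-)\, \varepsilon^{-1} \circ \theta^P)^{\Q_p-\an}$ yields a two-step filtration on the target whose bottom piece is $(\Ind_{P^-}^{\GL_n} (\boxtimes_i \pi_{\alg}(\phi_{\sF_P,i}, \textbf{h}^i))\, \varepsilon^{-1} \circ \theta^P)^{\Q_p-\an}$ and whose next graded piece decomposes as a direct sum, indexed by $(k,J,j,\sigma)$, of parabolic inductions where the $k$-th tensor factor is replaced by $\sC_k(J, s_{j,\sigma})$. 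It then suffices to locate the $\sC(I, s_{i,\sigma})$-constituents in each layer.

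For the bottom piece, Orlik-Strauch realizes it as $\cF_{P^-}^{\GL_n}(M_P^-(-\lambda), \boxtimes_i I_{\sm}(w_i(\phi_{\sF_P,i})))$ for the parabolic Verma module $M_P^-(-\lambda) := U(\gl_{n,\Sigma_K}) \otimes_{U(\fp^-_{\Sigma_K})} L^-_{L_P}(-\lambda)$, and its constituents are of the form $\cF_{B^-}^{\GL_n}(L^-(-u \cdot \lambda), w(\phi)\eta)$ where $L^-(-u \cdot \lambda)$ runs through Jordan-H\"older factors of $M_P^-(-\lambda)$. For $u = s_{i,\sigma}$, this occurs exactly when $s_i \notin \sW_P$, i.e.\ when $i = n_1 + \cdots + n_k$ for some $k \in \{1,\ldots,r-1\}$; the resulting constituent then coincides, via Proposition~\ref{LGLn1}(4), with $\sC(I, s_{i,\sigma})$ for $I = \{m : \phi_m \in \Fil_{\sF_P,k}\}$, matching condition~(2) with multiplicity one. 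For the second layer, the realization (\ref{ECIsigma}) of $\sC_k(J, s_{j,\sigma})$ as a submodule of a $\GL_{n_k}$-principal series together with transitivity of parabolic induction embeds the $(k,J,j,\sigma)$-summand into $(\Ind_{B^-}^{\GL_n} z^{-s_{j',\sigma} \cdot \lambda}\, w(\phi)\eta)^{\Q_p-\an}$ with $j' := (n_1+\cdots+n_{k-1})+j$ and $w \in S_n$ chosen so that $w(\{1,\ldots,j'\}) = I$, where $I$ is the union of $\{m : \phi_m \in \Fil_{\sF_P,k-1}\}$ and the lift of $J$ to the $k$-th block. By (\ref{ECIsigma}) and Lemma~\ref{Ljacsi} this induced representation has socle $\sC(I, s_{j',\sigma})$, matching condition~(1), again with multiplicity one.

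Conditions (1) and (2) are mutually exclusive since they require $i$ to be strictly inside or on the boundary of a Levi block, respectively, so distinct layers contribute distinct constituents; hence each $\sC(I, s_{i,\sigma})$ arises with total multiplicity one, and no other such constituents appear. The socle placement in (\ref{Eparaind3}) follows because in both cases $\sC(I, s_{i,\sigma})$ sits at the bottom of its graded piece, and these graded pieces lie immediately above $\pi_{\alg}(\phi, \textbf{h})$ in the filtration on the parabolic induction. The main technical difficulty will be executing the Orlik-Strauch transitivity in the second layer carefully enough to pin down the Weyl group element $w \in S_n$ identifying the abstract induced socle with the concrete $\sC(I, s_{j',\sigma})$, which requires tracking the weight and coset bookkeeping across $\GL_{n_k} \subset L_P \subset \GL_n$.
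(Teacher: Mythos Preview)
Your overall strategy---filtering $\widehat{\boxtimes}_{i=1}^r \pi_1(\phi_{\sF_P,i},\textbf{h}^i)$ by the socle filtrations of the factors and analyzing each graded piece via Orlik--Strauch and transitivity---is the same as the paper's. However, there are two genuine gaps.

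First, the filtration has more than two steps when $r\geq 2$: for each $m\geq 2$ there is a layer where $m$ of the tensor factors are cosocle constituents $\sC_{k_1}(\cdots)\boxtimes\cdots\boxtimes\sC_{k_m}(\cdots)$, and you do not explain why these higher layers contribute no $\sC(I,s_{i,\sigma})$. This does follow from Orlik--Strauch (any constituent of such a layer has Lie-algebra part $L^-(-u\cdot\lambda)$ with $u$ dominating a product of $m\geq 2$ commuting simple reflections, hence $\ell(u)\geq 2$), and the paper closes this with a final blanket appeal to \cite[Thm.]{OS}; but your sketch omits it.

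Second, and more substantively, your socle argument for case~(1) does not go through as written. You assert that the graded piece $V_I:=\big(\Ind_{P^-}^{\GL_n}(\boxtimes_{i\neq k}\pi_{\alg}(\phi_{\sF_P,i},\textbf{h}^i)\,\widehat{\boxtimes}\,\sC_k(J,s_{j,\sigma}))\varepsilon^{-1}\circ\theta^P\big)^{\Q_p-\an}$ ``lies immediately above $\pi_{\alg}(\phi,\textbf{h})$''; but in fact $V_I$ sits above all of $V_0=\big(\Ind_{P^-}^{\GL_n}(\boxtimes_i\pi_{\alg}(\phi_{\sF_P,i},\textbf{h}^i))\varepsilon^{-1}\circ\theta^P\big)^{\Q_p-\an}$, and $V_0$ has many constituents beyond $\pi_{\alg}(\phi,\textbf{h})$. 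Knowing that $\sC(I,s_{i,\sigma})$ is the socle of $V_I$ only places it in the socle of the full induction modulo $V_0$, not modulo $\pi_{\alg}(\phi,\textbf{h})$. The paper's repair is to work instead with the subrepresentation
\[
W:=\big(\Ind_{P^-}^{\GL_n}\big(\boxtimes_{i\neq k}\pi_{\alg}(\phi_{\sF_P,i},\textbf{h}^i)\,\widehat{\boxtimes}\,U\big)\varepsilon^{-1}\circ\theta^P\big)^{\Q_p-\an},
\]
where $U=[\pi_{\alg}(\phi_{\sF_P,k},\textbf{h}^k)\lin\sC_k(J,s_{j,\sigma})]\subset\pi_1(\phi_{\sF_P,k},\textbf{h}^k)$, and to embed $W$ (via transitivity) into the \emph{dominant-weight} principal series $\PS(w(\phi),\textbf{h})$. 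Proposition~\ref{LGLn1}(3) then gives $\sC(I,s_{i,\sigma})\subset\soc\big(\PS(w(\phi),\textbf{h})/\pi_{\alg}(\phi,\textbf{h})\big)$, hence the same for $W/\pi_{\alg}(\phi,\textbf{h})$, which is what is needed. The ``technical difficulty'' you flag at the end (bookkeeping the Weyl element for the embedding of $V_I$ into a non-dominant principal series) is comparatively routine; the missing piece is this second embedding into $\PS(w(\phi),\textbf{h})$.
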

\begin{proof}
Let $V_0:=(\Ind_{P^-(K)}^{\GL_n(K)} (\boxtimes_{i=1}^r \pi_{\alg}(\phi_{\sF_P,i}, \textbf{h}^i)) \varepsilon^{-1}\circ \theta^P)^{\Q_p-\an} $. For all $i$ as in (2), $L^-(-s_{i,\sigma}\cdot \lambda)$ has multiplicity one in the parabolic Verma module $\text{U}(\gl_{n,\Sigma_K}) \otimes_{\text{U}(\fp_{\Sigma_K})}  (-\lambda)$ and  lies in the cosocle of $\Ker[\text{U}(\gl_{n,\Sigma_K}) \otimes_{\text{U}(\fp_{\Sigma_K})}  (-\lambda)\ra L^-(-\lambda)]$. Using \cite[Thm.]{OS}, we deduce the constituents for $i$ as in (2) appear with multiplicity one in  $V_0$, and all  lie  in  the socle  of (\ref{Eparaind3}). 

For  $i$  as  in (1), let $V_I:=\big(\Ind_{P^-(K)}^{\GL_n(K)} ((\boxtimes_{\substack{i=1, \cdots, r\\  i \neq k}} \pi_{\alg}(\phi_{\sF_P,i}, \textbf{h}^i) \widehat{\boxtimes} \sC(I, s_{i,\sigma})_k)\varepsilon^{-1}\circ \theta^P\big)^{\Q_p-\an}$
where \ $\sC(I,s_{i,\sigma})_k$ \ denotes \ the \ corresponding \ representation \ in \ the \ cosocle \ of \ $\pi_1(\phi_{\sF_P,i}, \textbf{h}^i)$. 
By  (\ref{ECIsigma}) for $\sC(I,s_{i,\sigma})_k$ and the transitivity of parabolic inductions, 
$V_I$ injects into $\big(\Ind_{B^-(K)}^{\GL_n(K)} z^{-s_{i,\sigma}\cdot \lambda} w(\phi) \eta\big)^{\Q_p-\an}$ for any $w\in S_n$ satisfying $w(\{1,\cdots, i\})=I$. Since the latter representation has socle $\sC(I,s_{i,\sigma})$ with multiplicity one (cf. (\ref{ECIsigma})), so does its subrepresentation  $V_I$. It is not difficult to see these give all the $\sC(I,s_{i,\sigma})$ appearing in (1), and  they all have multiplicity one.  Let $U$ be the closed subrepresentation of $\pi_1(\phi_{\sF_P,k},\textbf{h}^k)$ of the form $[\pi_{\alg}(\phi_{\sF_P,k},\textbf{h}^k) \lin \sC(I, s_{i,\sigma})_k]$, which is clearly a closed subrepresentation of a certain principal series of $\GL_{n_k}(K)$. Using  the transitivity of parabolic inductions, one sees 
$W:=\big(\Ind_{P^-(K)}^{\GL_n(K)} \big((\boxtimes_{\substack{i=1, \cdots, r\\  i \neq k}} \pi_{\alg}(\phi_{\sF_P,i}, \textbf{h}^i) \widehat{\boxtimes} U\big)\varepsilon^{-1}\circ \theta^P\big)^{\Q_p-\an}$ is a closed subrepresentation of $(\Ind_{B^-(K)}^{\GL_n(K)} z^{\lambda} w(\phi) \eta)^{\Q_p-\an}$ with $w\in S_n$ satisfying $w(\{1,\cdots, i\})=I$. For the latter representation, $\sC(I,s_{i,\sigma})$ has multiplicity one and lies in the socle of its quotient by $\pi_{\alg}(\phi,\textbf{h})$. We then deduce $\sC(I,s_{i,\sigma})$ lies in the socle of $W/\pi_{\alg}(\phi, \textbf{h})$ hence in the socle of (\ref{Eparaind3}).

 Finally, \ by \ \cite[Thm.]{OS}, \ one \ sees \ every \ $\sC(I,s_{i,\sigma})$ \ in \ the \ representation \ $\big(\Ind_{P^-(K)}^{\GL_n(K)} (\widehat{\boxtimes}_{i=1}^r \pi_1 (\phi_{\sF_P,i}, \textbf{h}^i)) \varepsilon^{-1}\circ \theta^P\big)^{\Q_p-\an}$ must come from either $V_0$ or $V_I$ with $I$ as in (1), and has multiplicity one. This completes the proof.
\end{proof}
Denote by $S_{\sF_P}$ the subset of the constituents $\sC(I,s_{i,\sigma})$, which satisfy one of the conditions in Lemma \ref{Lparaind}. We have 
\begin{equation}\label{ESFP}
	\# S_{\sF_P}=\big(\sum_{i=1}^r (2^{n_i}-2)+(r-1)\big)d_K.
\end{equation}The representation $\big(\Ind_{P^-(K)}^{\GL_n(K)} (\widehat{\boxtimes}_{i=1}^r \pi_1 (\phi_{\sF_P,i}, \textbf{h}^i)) \varepsilon^{-1}\circ \theta^P\big)^{\Q_p-\an}$ contains a unique \ subrepresentation \ $\pi_{\sF_P}(\phi, \textbf{h})$ \ such \ that \ $\soc_{\GL_n(K)} \pi_{\sF_P}(\phi, \textbf{h}) \cong \pi_{\alg}(\phi, \textbf{h})$ and $\pi_{\sF_P}(\phi, \textbf{h})/\pi_{\alg}(\phi, \textbf{h})\cong \oplus_{\sC \in S_{\sF_P}} \sC$. Note when $P=B$, $\sF_P=\sT_w$, then $\pi_{\sF_P}(\phi, \textbf{h})\cong \PS_1(w(\phi), \textbf{h})$. It is easy to see the  injection $\pi_{\alg}(\phi, \textbf{h})\hookrightarrow \pi_{\sF_P}(\phi, \textbf{h})$ (cf. (\ref{Eparaind})) uniquely extends to $\pi_{\sF_P}(\phi, \textbf{h})\hookrightarrow \pi_1(\phi, \textbf{h})$. 
\begin{proposition}\label{PextpiFP1}
We have 
$\dim_E \Ext^1_{\GL_n(K)}(\pi_{\alg}(\phi, \textbf{h}), \pi_{\sF_P}(\phi, \textbf{h}))=n+d_K r+d_K\sum_{i=1}^r (2^{n_i}-2)$.
And the following push-forward map is injective
\begin{equation}\label{EpiPpi1}
	\Ext^1_{\GL_n(K)}(\pi_{\alg}(\phi, \textbf{h}), \pi_{\sF_P}(\phi, \textbf{h})) \hooklongrightarrow \Ext^1_{\GL_n(K)}(\pi_{\alg}(\phi, \textbf{h}), \pi_1(\phi, \textbf{h})).
\end{equation} 
\end{proposition}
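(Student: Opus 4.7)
The plan is to deduce both assertions simultaneously from the short exact sequence
\begin{equation*}
0 \lra \pi_{\sF_P}(\phi, \textbf{h}) \lra \pi_1(\phi, \textbf{h}) \lra \pi_1(\phi, \textbf{h})/\pi_{\sF_P}(\phi, \textbf{h}) \lra 0,
\end{equation*}
combined with the already established structure of $\Ext^1_{\GL_n(K)}(\pi_{\alg}(\phi, \textbf{h}), \pi_1(\phi, \textbf{h}))$ from Proposition \ref{Pextpi1}(1). The first task is to identify the quotient: since the cosocle $\pi_1(\phi, \textbf{h})/\pi_{\alg}(\phi, \textbf{h}) \cong \bigoplus_{\sC} \sC$ is semisimple and multiplicity-free (the constituents $\sC(I, s_{i, \sigma})$ being pairwise distinct by Proposition \ref{LGLn1}(4)), and since $\pi_{\sF_P}(\phi, \textbf{h})/\pi_{\alg}(\phi, \textbf{h}) \cong \bigoplus_{\sC \in S_{\sF_P}} \sC$ embeds into this sum via the canonical injection $\pi_{\sF_P}(\phi, \textbf{h}) \hookrightarrow \pi_1(\phi, \textbf{h})$, one obtains the clean identification
\begin{equation*}
\pi_1(\phi, \textbf{h})/\pi_{\sF_P}(\phi, \textbf{h}) \cong \bigoplus_{\sC \notin S_{\sF_P}} \sC.
\end{equation*}
This quotient is semisimple with irreducible constituents none of which is isomorphic to the (locally algebraic, irreducible) representation $\pi_{\alg}(\phi, \textbf{h})$.

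Applying $\Hom_{\GL_n(K)}(\pi_{\alg}(\phi, \textbf{h}), -)$ and using $\Hom_{\GL_n(K)}(\pi_{\alg}(\phi, \textbf{h}), \pi_1(\phi, \textbf{h})/\pi_{\sF_P}(\phi, \textbf{h})) = 0$, the long exact sequence begins
\begin{equation*}
0 \lra \Ext^1_{\GL_n(K)}(\pi_{\alg}, \pi_{\sF_P}) \lra \Ext^1_{\GL_n(K)}(\pi_{\alg}, \pi_1) \xlongrightarrow{q} \Ext^1_{\GL_n(K)}(\pi_{\alg}, \pi_1/\pi_{\sF_P}),
\end{equation*}
which at once yields the injectivity of (\ref{EpiPpi1}). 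For the dimension computation, the plan is to show that $q$ is surjective, identifying $\Ext^1(\pi_{\alg}, \pi_{\sF_P})$ with $\ker q$. Semisimplicity of $\pi_1/\pi_{\sF_P}$ gives $\Ext^1(\pi_{\alg}, \pi_1/\pi_{\sF_P}) \cong \bigoplus_{\sC \notin S_{\sF_P}} \Ext^1(\pi_{\alg}, \sC)$, and $q$ factors as the composition
\begin{equation*}
\Ext^1(\pi_{\alg}, \pi_1) \twoheadlongrightarrow \bigoplus_{\text{all } \sC} \Ext^1(\pi_{\alg}, \sC) \twoheadlongrightarrow \bigoplus_{\sC \notin S_{\sF_P}} \Ext^1(\pi_{\alg}, \sC),
\end{equation*}
the first arrow being surjective by Proposition \ref{Pextpi1}(1) and the second the canonical projection onto a direct summand; hence $q$ is surjective.

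Combining this with Lemma \ref{Lextps}(1), Proposition \ref{Pextpi1}(1), and the cardinality count (\ref{ESFP}), a short arithmetic then yields
\begin{equation*}
\dim_E \Ext^1(\pi_{\alg}, \pi_{\sF_P}) = \dim_E \Ext^1(\pi_{\alg}, \pi_1) - \dim_E \Ext^1(\pi_{\alg}, \pi_1/\pi_{\sF_P}) = n + d_K + \# S_{\sF_P},
\end{equation*}
which equals $n + r d_K + d_K \sum_{i=1}^r (2^{n_i} - 2)$, exactly as claimed. No serious obstacle is anticipated; the only mildly technical point, namely that the embedding $\pi_{\sF_P}/\pi_{\alg} \hookrightarrow \pi_1/\pi_{\alg}$ identifies the former with the sum of the $\sC \in S_{\sF_P}$, is transparent since both sides are multiplicity-free semisimples with pairwise distinct irreducible constituents.
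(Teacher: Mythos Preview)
Your proof is correct and is essentially identical to the paper's own argument: both apply the long exact sequence (d\'evissage) to $0 \to \pi_{\sF_P}(\phi,\textbf{h}) \to \pi_1(\phi,\textbf{h}) \to \bigoplus_{\sC\notin S_{\sF_P}}\sC \to 0$, use the vanishing of $\Hom(\pi_{\alg},\bigoplus_{\sC\notin S_{\sF_P}}\sC)$ for injectivity, invoke Proposition~\ref{Pextpi1}(1) for surjectivity of the last map, and finish with the dimension count via Lemma~\ref{Lextps}(1) and (\ref{ESFP}). Your write-up simply spells out a few points (the identification of the quotient, the factorization of $q$) that the paper leaves implicit.
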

\begin{proof}
We have an exact sequence by d\'evissage
\begin{multline}\label{EpiPpi12}
	0 \lra 	\Ext^1_{\GL_n(K)}(\pi_{\alg}(\phi, \textbf{h}), \pi_{\sF_P}(\phi, \textbf{h})) \lra \Ext^1_{\GL_n(K)}(\pi_{\alg}(\phi, \textbf{h}), \pi_1(\phi, \textbf{h})) \\ \lra \Ext^1_{\GL_n(K)}(\pi_{\alg}(\phi, \textbf{h}), \oplus_{\sC\notin S_{\sF_P}} \sC).
\end{multline} 
The injectivity of (\ref{EpiPpi1}) follows. By Proposition \ref{Pextpi1} (1), the last map in (\ref{EpiPpi12}) is surjective. The first part follows then by a direct calculation using Proposition \ref{Pextpi1} (1), Lemma \ref{Lextps} (1) and (\ref{ESFP}).
\end{proof}
Set $\Ext^1_{\sF_P}(\pi_{\alg}(\phi, \textbf{h}), \pi_1(\phi, \textbf{h}))$ to be the image of (\ref{EpiPpi1}). The injection $\pi_{\alg}(\phi, \textbf{h})\hookrightarrow \pi_{\sF_P}(\phi, \textbf{h})$ induces a natural injection
\begin{equation}\label{Einjg'Fp}
	\Ext^1_{g'}(\pi_{\alg}(\phi, \textbf{h}), \pi_1(\phi, \textbf{h})) \hooklongrightarrow\Ext^1_{\sF_P}(\pi_{\alg}(\phi, \textbf{h}), \pi_1(\phi, \textbf{h})).
\end{equation}
We have natural maps
\begin{multline}\label{Eparaind000}
	\Ext^1_{L_P(K)}\big(\boxtimes_{i=1}^r  \pi_{\alg}(\phi_{\sF_P,i}, \textbf{h}^i), \widehat{\boxtimes}_{i=1}^r \pi_1(\phi_{\sF_P,i}, \textbf{h}^i)\big) \\
	\lra
	\Ext^1_{\GL_n(K)}\Big(\pi_{\alg}(\phi, \textbf{h}), \big(\Ind_{P^-(K)}^{\GL_n(K)} (\widehat{\boxtimes}_{i=1}^r \pi_1 (\phi_{\sF_P,i}, \textbf{h}^i)) \varepsilon^{-1}\circ \theta^P\big)^{\Q_p-\an}\Big)\\
	\xlongleftarrow{\sim} \Ext^1_{\GL_n(K)}(\pi_{\alg}(\phi,\textbf{h}), \pi_{\sF_P}(\phi, \textbf{h})),
\end{multline}
where the first map is obtained by taking $(\Ind_{P^-(K)}^{\GL_n(K)} - \otimes_E \varepsilon^{-1} \circ \theta^P)^{\Q_p-\an}$ and using the pull-back via  $\pi_{\alg}(\phi, \textbf{h})\hookrightarrow(\Ind_{P^-(K)}^{\GL_n(K)} (\boxtimes_{i=1}^r  \pi_{\alg}(\phi_{\sF_P,i}, \textbf{h}^i))\varepsilon^{-1} \circ \theta^P)^{\Q_p-\an}$ (cf. (\ref{Eparaind})),  and where the second map is the natural push-forward map, which is bijective by \cite[Lem.~2.26]{Ding7} (and Lemma \ref{Lparaind}). 

For $(\widetilde{\pi}_i)\in \prod_{i=1}^r \Ext^1_{\GL_{n_i}}\big(\pi_{\alg}(\phi_{\sF_P,i}, \textbf{h}^i), \pi_1(\phi_{\sF_P,i}, \textbf{h}^i)\big)$, consider the $L_P(K)$-representation $\widehat{\boxtimes}_{i=1}^r \widetilde{\pi}_i$ (where the completed tensor product is taken over $E$). It is clear that $\widehat{\boxtimes}_{i=1}^r \widetilde{\pi}_i$ admits a quotient $V$ given by an extension of $\boxtimes_{i=1}^r \pi_{\alg}(\phi_{\sF_P,i},\textbf{h}^i)$ by $W:=\oplus_{i=1}^r \big(\widehat{\boxtimes}_{\substack{j=1,\cdots, r\\ j\neq i}} \pi_1(\phi_{\sF_P,j},\textbf{h}^j) \boxtimes_E \pi_{\alg}(\phi_{\sF_P,i}\textbf{h}^i)\big)$. The push-forward of $V$ via the natural map $W\ra  \widehat{\boxtimes}_{i=1}^r \pi_1(\phi_{\sF_P,i}, \textbf{h}^i)$ (induced by $\pi_{\alg}(\phi_{\sF_P,i},\textbf{h}^i)\hookrightarrow \pi_1(\phi_{\sF_P,i},\textbf{h}^i)$) gives an element in $\Ext^1_{L_P(K)}\big(\boxtimes_{i=1}^r  \pi_{\alg}(\phi_{\sF_P,i}, \textbf{h}^i), \widehat{\boxtimes}_{i=1}^r \pi_1(\phi_{\sF_P,i}, \textbf{h}^i)\big)$. In this way, we obtain a map
\begin{multline}\label{EextLevi}
	\prod_{i=1}^r \Ext^1_{\GL_{n_i}(K)}\big(\pi_{\alg}(\phi_{\sF_P,i}, \textbf{h}^i), \pi_1(\phi_{\sF_P,i}, \textbf{h}^i)\big) \\
	\lra 	\Ext^1_{L_P(K)}\big(\boxtimes_{i=1}^r  \pi_{\alg}(\phi_{\sF_P,i}, \textbf{h}^i), \widehat{\boxtimes}_{i=1}^r \pi_1(\phi_{\sF_P,i}, \textbf{h}^i)\big).
\end{multline}
Together with (\ref{Eparaind000}), we finally get a map:
\begin{equation}\label{EzetaFp}
	\zeta_{\sF_P}: 	\prod_{i=1}^r \Ext^1_{\GL_{n_i}(K)}\big(\pi_{\alg}(\phi_{\sF_P,i}, \textbf{h}^i), \pi_1(\phi_{\sF_P,i}, \textbf{h}^i)\big) 
	\lra  \Ext^1_{\sF_P}\big(\pi_{\alg}(\phi, \textbf{h}),  \pi_{1}(\phi, \textbf{h})\big).
\end{equation}
For $w\in S_n$, let $\sT_w$ be the $B$-filtration of $\oplus_{i=1}^n\phi_i$ associated to $w$. Suppose $\sT_w$ is compatible with $\sF_P$. It is clear that $\PS_1(w(\phi),\textbf{h})$ is a subrepresentation of  $\pi_{\sF_P}(\phi, \textbf{h})$  (e.g. by comparing constituents and using Lemma \ref{Lextps} (1)), hence (by d\'evissage) $\Ext^1_w(\pi_{\alg}(\phi, \textbf{h}), \pi_1(\phi, \textbf{h})) \hookrightarrow \Ext^1_{\sF_P}(\pi_{\alg}(\phi, \textbf{h}), \pi_1(\phi, \textbf{h}))$.
\begin{proposition}\label{PextpiFP}
(1) The map $\zeta_{\sF_P}$ is bijective. 

(2) For any $w$ such that the associated $B$-filtration $\sT_w$ is compatible with $\sF_P$, the following diagram commutes
\begin{equation}\label{Eparaps}
	\begin{CD}
		\prod_{i=1}^r \Hom(T(K)\cap L_{P,i}(K),E) @> \sim >> \Hom(T(K),E) \\
		@V \sim V (\ref{Eisomzetaw}) V @V \sim V (\ref{Eisomzetaw}) V \\
		\prod_{i=1}^r \Ext^1_{\sT_{w,i}}\big(\pi_{\alg}(\phi_{\sF_P,i}, \textbf{h}^i), \pi_1(\phi_{\sF_P,i}, \textbf{h}^i)\big) 
		@>\sim >>  \Ext^1_{w}\big(\pi_{\alg}(\phi, \textbf{h}),  \pi_{1}(\phi, \textbf{h})\big)
	\end{CD}
\end{equation}
where $\sT_{w,i}$ is the induced $B\cap L_P$-filtration on $\gr_i \sF_P$, and the bottom map is induced by $\zeta_{\sF_P}$.
\end{proposition}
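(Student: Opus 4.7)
My plan is to establish part (2) first from the transitivity of parabolic inductions, and then deduce part (1) via dimension counting combined with the surjectivity statements from Proposition \ref{Pextpi1} (2) applied on each Levi block.

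For (2), the commutativity of (\ref{Eparaps}) expresses a transitivity property of Emerton's induction functor. Given $\psi=(\psi_i)\in\prod_i\Hom(T(K)\cap L_{P,i}(K),E)\cong\Hom(T(K),E)$, the counterclockwise route through each $\zeta_{\sT_{w,i}}$ produces $\widehat{\boxtimes}_{i=1}^r I_{(B\cap L_{P,i})^-(K)}^{\GL_{n_i}(K)}(w_i(\phi_{\sF_P,i})\eta_i z^{\lambda^i}(1+\psi_i\epsilon))$ by Remark \ref{Rind1} applied on each $\GL_{n_i}(K)$, to which $\zeta_{\sF_P}$ then applies $(\Ind_{P^-(K)}^{\GL_n(K)}(-)\otimes_E \varepsilon^{-1}\circ\theta^P)^{\Q_p-\an}$ followed by pull-back along $\pi_{\alg}(\phi,\textbf{h})$. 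The clockwise route $\zeta_w$ directly yields $I_{B^-(K)}^{\GL_n(K)}(w(\phi)\eta z^\lambda(1+\psi\epsilon))$. By the transitivity of parabolic induction from $B^-$ through $P^-$ to $\GL_n$, carried out over $E[\epsilon]/\epsilon^2$, these coincide as subrepresentations of the common ambient $(\Ind_{B^-(K)}^{\GL_n(K)} w(\phi)\eta z^\lambda(1+\psi\epsilon))^{\Q_p-\an}$, and the compatibility of the chosen trivializations (\ref{Eintisom}) across $w\in\sW_P$ (fixed at the beginning of \S\ref{S313}) upgrades this to an equality of extensions rather than a mere isomorphism class.

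For (1), source and target of $\zeta_{\sF_P}$ have equal dimension. Indeed, Proposition \ref{Pextpi1} (1) applied on each $\GL_{n_i}(K)$ (the formula $n_i+(2^{n_i}-1)d_K$ specializing to $1+d_K=\dim\Hom(K^\times,E)$ when $n_i=1$) gives source dimension $\sum_i(n_i+(2^{n_i}-1)d_K)=n+d_K r+d_K\sum_i(2^{n_i}-2)$, which matches the target dimension of Proposition \ref{PextpiFP1}. It therefore suffices to prove surjectivity. Part (2) combined with Proposition \ref{Pextpi1} (2) on each factor shows that $\Ima(\zeta_{\sF_P})$ contains $\sum_{w\in\prod_i S_{n_i}}\Ext^1_w(\pi_{\alg}(\phi,\textbf{h}),\pi_1(\phi,\textbf{h}))$ (the sum ranging over $w$ compatible with $\sF_P$). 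Using the d\'evissage
\begin{equation*}
0\lra\Ext^1_{\GL_n(K)}(\pi_{\alg},\pi_{\alg})\lra\Ext^1_{\sF_P}(\pi_{\alg},\pi_1)\lra \oplus_{\sC\in S_{\sF_P}}\Ext^1_{\GL_n(K)}(\pi_{\alg},\sC)\lra 0,
\end{equation*}
whose right exactness follows from comparing the dimensions of Proposition \ref{PextpiFP1}, Proposition \ref{Pextalg} (1), and Lemma \ref{Lextps} (1), one reduces to checking that the above sum surjects onto both extremities. The left end $\Ext^1(\pi_{\alg},\pi_{\alg})\cong\Ext^1_{g'}(\pi_{\alg},\pi_1)$ already sits inside every $\Ext^1_w(\pi_{\alg},\pi_1)$ via (\ref{Einjg'Fp}) and (\ref{Eintg'}). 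For the right end, each $\sC(I,s_{i,\sigma})\in S_{\sF_P}$ admits, by the very characterization in Lemma \ref{Lparaind}, some $w\in\prod_i S_{n_i}$ with $w(\{1,\ldots,i\})=I$; for such $w$, $\Ext^1_w(\pi_{\alg},\pi_1)=\Ima(\zeta_w)$ surjects onto $\Ext^1(\pi_{\alg},\sC(I,s_{i,\sigma}))=\Ext^1(\pi_{\alg},\sC(w,s_{i,\sigma}))$ via the last map of (\ref{Edivips}) (using Proposition \ref{Pextps} (1) and Lemma \ref{Lextps} (1)). Surjectivity of $\zeta_{\sF_P}$ follows, and bijectivity is then forced by the matching dimension count.

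The main subtlety I anticipate lies in (2): making sure that the chosen trivializations (\ref{Eintisom}) and their fixed compatibilities across $w\in\sW_P$ turn the transitivity of parabolic induction into an actual equality of the two extensions (not merely an isomorphism), so that the map $\zeta_{\sF_P}$ is canonically well defined and the diagram genuinely commutes on the nose; this was already the purpose of the normalization conventions laid down at the start of \S\ref{S313}.
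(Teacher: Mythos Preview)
Your proposal is correct and follows essentially the same approach as the paper's proof: part (2) via transitivity of parabolic induction (the paper phrases this as $(1+\psi\epsilon)\cong\boxtimes_{E[\epsilon]/\epsilon^2}^{i}(1+\psi_i\epsilon)$ and then invokes transitivity), and part (1) by showing that $\sum_{\sT_w\subset\sF_P}\Ext^1_w$ surjects onto $\Ext^1_{\sF_P}$ (your d\'evissage argument is exactly what the paper means by ``similar arguments as in the proof of Proposition \ref{Pextpi1}'') combined with the matching dimension count from Proposition \ref{PextpiFP1}. The reference to Proposition \ref{Pextpi1} (2) ``on each factor'' in your surjectivity argument is slightly superfluous---part (2) alone already gives $\Ext^1_w\subset\Ima(\zeta_{\sF_P})$ for each compatible $w$---but this does not affect correctness.
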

\begin{proof}Given $\psi=(\psi_i)\in 	\prod_{i=1}^r \Hom(T(K)\cap L_{P,i}(K),E)\cong \Hom(T(K),E)$, we have $(1+\psi\epsilon)\cong \boxtimes^{i=1,\cdots r}_{E[\epsilon]/\epsilon^2} (1+\psi_i \epsilon)$ as character of $T(K)$ over $E[\epsilon]/\epsilon^2$ hence as element in $\Ext^1_{T(K)}(1,1)$. Note $\boxtimes^{i=1,\cdots r}_{E[\epsilon]/\epsilon^2} (1+\psi_i \epsilon)$ admits an extension construction in a similar way as given above (\ref{EextLevi}). 
The commutativity of (\ref{Eparaps}) then follows by definition and the transitivity of parabolic inductions (see also the discussion above (\ref{Eparaind})). In particular, we deduce  the bottom map of (\ref{Eparaps}) is bijective. Note that any $\sC(I,s_{i,\sigma})\in S_{\sF_P}$ is a constituent of some $\PS_1(w(\phi), \textbf{h})$ with $\sT_w$ compatible with $\sF_P$. By similar arguments  as in the proof of Proposition \ref{Pextpi1}, one sees the natural map (``$\subset$" means compatible)
\begin{equation} \label{EinffernFp}\bigoplus_{\text{$\sT_w\subset\sF_P$}} \Ext^1_w(\pi_{\alg}(\phi, \textbf{h}), \pi_1(\phi, \textbf{h}))\lra \Ext^1_{\sF_P}(\pi_{\alg}(\phi, \textbf{h}), \pi_1(\phi, \textbf{h}))
\end{equation} is surjective, hence so is $\zeta_{\sF_P}$. By Proposition \ref{Pextpi1} (1) and Proposition \ref{PextpiFP1},  both sides of (\ref{EzetaFp}) have the same dimension over $E$ hence $\zeta_{\sF_P}$ is bijective. 
\end{proof}
\noindent Let $\Ext^1_{\sF_P,g'}(\pi_{\alg}(\phi, \textbf{h}), \pi_1(\phi, \textbf{h})):=\zeta_{\sF_P}\big(\prod_{i=1}^r \Ext^1_{g'}(\pi_{\alg}(\phi_{\sF_P,i}, \textbf{h}^i), \pi_1(\phi_{\sF_p, i}, \textbf{h}^i))\big)$. By Proposition \ref{PextpiFP} (2), and (\ref{Eintg'}), assuming $\sT_{w_i}$ compatible with $\sF_P$, the following diagram commutes (cf. (\ref{EPg'T})):
\begin{equation}\label{EintP2}
\begin{CD}
	\Hom_{P,g'}(T(K),E) @> \zeta_{w_1} > \sim > \Ext^1_{\sF_P,g'}(\pi_{\alg}(\phi, \textbf{h}), \pi_1(\phi, \textbf{h})) \\
	@V w_2w_1^{-1} V \sim V @| \\
	\Hom_{P,g'}(T(K),E) @> \zeta_{w_2} > \sim > \Ext^1_{\sF_P,g'}(\pi_{\alg}(\phi, \textbf{h}), \pi_1(\phi, \textbf{h})).
\end{CD}
\end{equation}

We finally discuss some intertwining properties related to \S~\ref{S24}.  Let $\phi^1:=\phi_1 \boxtimes \cdots \boxtimes \phi_{n-1}: T_{n-1}(K) \ra E^{\times}$, $\textbf{h}^1:=(\textbf{h}_1, \cdots, \textbf{h}_{n-1})$ and $\textbf{h}^2:=(\textbf{h}_2, \cdots, \textbf{h}_n)$ which are dominant weights of $\ft_{n-1,\Sigma_K}$.  We have locally $\Q_p$-analytic $\GL_{n-1}(K)$-representations 
$\pi_{\alg}(\phi^1, \textbf{h}^i)\subset \pi_1(\phi^1, \textbf{h}^i)$ for $i=1,2$, 
and  parabolic inductions
$(\Ind_{P_1^-(K)}^{\GL_n(K)} (\pi_1(\phi^1, \textbf{h}^1) \otimes \varepsilon)\boxtimes \phi_n z^{\textbf{h}_n})^{\Q_p-\an}$ and $(\Ind_{P_2^-(K)}^{\GL_n(K)} \phi_n z^{\textbf{h}_1}\varepsilon^{n-1} \boxtimes \pi_1(\phi^1, \textbf{h}^2))^{\Q_p-\an}$.
Let $\sF$ be the filtration $\oplus_{i=1}^{n-1} \phi_i \subset \oplus_{i=1}^n \phi_i$ and $\sG$ be the filtration $\phi_n \subset \oplus_{i=1}^n \phi_i$. By Lemma \ref{Lparaind}, $C(I, s_{i,\sigma})$ appears in $\pi_{\sF}(\phi, \textbf{h})$ (resp. in $\pi_{\sG}(\phi, \textbf{h})$) if and only if $i=1,\cdots, n-1$, $\sigma\in \Sigma_K$ and $I\subset \{1,\cdots, n-1\}$, $\#I=i$ (resp. $I=I_1 \cup \{n\}$ with $I_1\subset \{1,\cdots, n-1\}$ and $\#I_1=i-1$). In particular, $\pi_1(\phi, \textbf{h})/\pi_{\alg}(\phi, \textbf{h})\cong \big(\pi_{\sF}(\phi, \textbf{h})/\pi_{\alg}(\phi, \textbf{h})\big)\oplus  \big(\pi_{\sG}(\phi, \textbf{h})/\pi_{\alg}(\phi, \textbf{h})\big)$.
The following proposition is straightforward (where the right exactness of the last sequence follows by comparing dimensions, using Proposition \ref{Pextalg} (1), Proposition \ref{Pextpi1} (1) and Proposition \ref{PextpiFP1}): 
\begin{proposition}\label{PFGfil}
There is a natural exact sequence of $\GL_n(K)$-representations
\begin{equation*}
	0 \lra \pi_{\alg}(\phi, \textbf{h}) \lra \pi_{\sF}(\phi, \textbf{h}) \oplus \pi_{\sG}(\phi, \textbf{h}) \lra \pi_1(\phi, \textbf{h}) \lra 0.
\end{equation*}
Consequently, we have a natural exact sequence
\begin{multline*}
	0 \lra \Ext^1_{g'}(\pi_{\alg}(\phi, \textbf{h}), \pi_1(\phi,\textbf{h})) \lra \\
	\Ext^1_{\sF}(\pi_{\alg}(\phi, \textbf{h}), \pi_1(\phi,\textbf{h}))  \oplus \Ext^1_{\sG}(\pi_{\alg}(\phi, \textbf{h}), \pi_1(\phi,\textbf{h})) \\ \lra \Ext^1_{\GL_n(K)}(\pi_{\alg}(\phi, \textbf{h}), \pi_1(\phi,\textbf{h}))  \lra 0.
\end{multline*}
\end{proposition}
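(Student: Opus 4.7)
The first exact sequence amounts to the assertion that $\pi_1(\phi, \textbf{h})$ is the amalgamated sum of $\pi_{\sF}(\phi, \textbf{h})$ and $\pi_{\sG}(\phi, \textbf{h})$ along $\pi_{\alg}(\phi, \textbf{h})$. Since $\pi_{\sF}(\phi, \textbf{h}), \pi_{\sG}(\phi, \textbf{h})$ are recorded to be subrepresentations of $\pi_1(\phi, \textbf{h})$ (cf.\ the paragraph above Proposition \ref{PextpiFP1}), my plan is to verify that $\pi_{\sF} + \pi_{\sG} = \pi_1$ and $\pi_{\sF} \cap \pi_{\sG} = \pi_{\alg}$ inside $\pi_1$. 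By the splitting $\pi_1/\pi_{\alg} \cong (\pi_{\sF}/\pi_{\alg}) \oplus (\pi_{\sG}/\pi_{\alg})$ recalled just above the proposition (a direct consequence of Lemma \ref{Lparaind} applied to $P_{n-1}$ and $P_1$, which shows the sets of cosocle constituents $\sC(I, s_{i,\sigma})$ of $\pi_{\sF}/\pi_{\alg}$ and $\pi_{\sG}/\pi_{\alg}$ are disjoint and together exhaust those of $\pi_1/\pi_{\alg}$), the images of $\pi_{\sF}$ and $\pi_{\sG}$ in $\pi_1/\pi_{\alg}$ are complementary summands. This immediately yields $\pi_{\sF} + \pi_{\sG} = \pi_1$ and, combined with $\pi_{\alg} \subset \pi_{\sF} \cap \pi_{\sG}$, that $\pi_{\sF} \cap \pi_{\sG} = \pi_{\alg}$. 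The standard sum-intersection short exact sequence then produces the first claim.

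For the second exact sequence, I would apply $\Hom_{\GL_n(K)}(\pi_{\alg}(\phi, \textbf{h}), -)$ to the first one. Since $\pi_{\alg}(\phi, \textbf{h})$ is the socle of each of $\pi_{\alg}, \pi_{\sF}, \pi_{\sG}, \pi_1$ with multiplicity one, the $\Hom$-segment reads $0 \to E \to E \oplus E \to E \to 0$, which is exact, so the connecting map $\Hom(\pi_{\alg}, \pi_1) \to \Ext^1_{\GL_n(K)}(\pi_{\alg}, \pi_{\alg})$ vanishes, and the resulting fragment
$$0 \to \Ext^1_{\GL_n(K)}(\pi_{\alg}, \pi_{\alg}) \to \Ext^1_{\GL_n(K)}(\pi_{\alg}, \pi_{\sF}) \oplus \Ext^1_{\GL_n(K)}(\pi_{\alg}, \pi_{\sG}) \to \Ext^1_{\GL_n(K)}(\pi_{\alg}, \pi_1)$$
is exact at the left and middle. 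I then translate via the identifications coming from Lemma \ref{Lextps}(2) and Proposition \ref{PextpiFP1}: the push-forward $\Ext^1(\pi_{\alg}, \pi_{\alg}) \hookrightarrow \Ext^1(\pi_{\alg}, \pi_1)$ is injective with image $\Ext^1_{g'}$ by definition, and the push-forward $\Ext^1(\pi_{\alg}, \pi_{\sF}) \hookrightarrow \Ext^1(\pi_{\alg}, \pi_1)$ is injective with image $\Ext^1_{\sF}$ (and similarly for $\sG$). Under these identifications, the fragment becomes precisely the diagonal embedding of $\Ext^1_{g'}$ into $\Ext^1_{\sF} \oplus \Ext^1_{\sG}$ followed by the difference map into $\Ext^1(\pi_{\alg}, \pi_1)$, giving left and middle exactness of the target sequence.

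For the right exactness, as the parenthetical indicates, the plan is a direct dimension count. Proposition \ref{Pextalg}(1) gives $\dim_E \Ext^1_{g'}(\pi_{\alg}, \pi_1) = n + d_K$; Proposition \ref{PextpiFP1} applied to $P_{n-1}$ (resp.\ $P_1$) with $(n_1,n_2) = (n-1,1)$ (resp.\ $(1, n-1)$) yields $\dim_E \Ext^1_{\sF}(\pi_{\alg}, \pi_1) = \dim_E \Ext^1_{\sG}(\pi_{\alg}, \pi_1) = n + d_K 2^{n-1}$; and Proposition \ref{Pextpi1}(1) gives $\dim_E \Ext^1(\pi_{\alg}, \pi_1) = n + (2^n - 1) d_K$. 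One checks the Euler characteristic identity $(n+d_K) + (n + (2^n-1)d_K) = 2(n + d_K 2^{n-1})$, so right exactness follows automatically from the left and middle exactness. The only mildly subtle point is the cosocle disjointness argument in the first paragraph (which is immediate from Lemma \ref{Lparaind}); the bookkeeping of the long exact sequence and the dimension count are routine, and I do not expect any serious obstacle.
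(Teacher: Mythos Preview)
Your proof is correct and follows essentially the same approach as the paper, which simply declares the statement ``straightforward'' and points to the same three results (Proposition~\ref{Pextalg}(1), Proposition~\ref{Pextpi1}(1), Proposition~\ref{PextpiFP1}) for the dimension count establishing right exactness. One minor remark: the reference to Lemma~\ref{Lextps}(2) is not needed here; the identifications you use come directly from Proposition~\ref{PextpiFP1} (injectivity of the push-forward $\Ext^1(\pi_{\alg},\pi_{\sF_P})\hookrightarrow\Ext^1(\pi_{\alg},\pi_1)$) and the definition of $\Ext^1_{g'}$.
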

\begin{remark}
By Proposition \ref{PextpiFP} (1), we have a bijection
\begin{equation*}
	\zeta_{\sF}:	\Ext^1_{\GL_{n-1}}(\pi_{\alg}(\phi^1,\textbf{h}^1), \pi_1(\phi^1, \textbf{h}^1)) \times \Hom(K^{\times},E) \xlongrightarrow{\sim} 	\Ext^1_{\sF}(\pi_{\alg}(\phi, \textbf{h}), \pi_1(\phi,\textbf{h})),
\end{equation*}
and a similar bijection $\zeta_{\sG}$. 
\end{remark}
\subsubsection{Locally $\sigma$-analytic parabolic inductions}
Let $\sigma\in \Sigma_K$. Recall a locally $\Q_p$-analytic representation $V$ of $\GL_n(K)$ over $E$ is called locally \textit{$\sigma$-analytic} if the $\gl_n(K) \otimes_{\Q_p} E$-action (obtained by derivation) on $V$ factors through $\gl_n(K) \otimes_{K, \sigma} E$ (cf. \cite[\S~2]{Sch10}). And $V$ is called \textit{$\ug_{\Sigma_K\setminus \{\sigma\}}$-algebraic} if $\text{U}(\ug_{\Sigma_K\setminus \{\sigma\}}) v$ is a finite dimensional algebraic representation of $\ug_{\Sigma_K\setminus \{\sigma\}}$ over $E$ for all $v\in V$. Let $\lambda_{\sigma}$ be the $\sigma$-component of $\lambda$, and $\lambda^{\sigma}:=(\lambda_{\tau})_{\tau\neq \sigma}$. We also view them as weights of $\ft_{\Sigma_K}$ in the obvious way. 
For $i=1, \cdots, n-1$, $I\subset\{1,\cdots, n\}$, $\#I=i$, let $w\in S_n$ such that $w(\{1,\cdots, i\})=I$. We have 
\begin{equation*}
\sC(I,s_{i,\sigma})\cong \cF_{B^-}^{\GL_n}(L^-(-s_{i,\sigma} \cdot \lambda), w(\phi) \eta) \cong \cF_{B^-}^{\GL_n}(L^-(-s_{i,\sigma} \cdot \lambda_{\sigma}), w(\phi) \eta) \otimes_E L(\lambda^{\sigma}).
\end{equation*}
Note we have  $\cF_{B^-}^{\GL_n}(L^-_{\sigma}(-s_{i,\sigma} \cdot \lambda_{\sigma}), w(\phi) \eta)  \hookrightarrow (\Ind_{B^-(K)}^{\GL_n(K)}w(\phi) \eta z^{s_{i,\sigma}\cdot \lambda_{\sigma}})^{\sigma-\an}$, where the sup-script ``$\sigma-\an$" means the locally $\sigma$-analytic induction. So the both are  locally $\sigma$-analytic. Let  $\pi_{1,\sigma}(\phi, \textbf{h})$ be the subrepresentation of $\pi_1(\phi, \textbf{h})$ given by the extension of $\oplus_{\substack{i=1,\cdots, n-1 \\ I \subset \{1,\cdots, n\}, \#I=i}} \sC(I,s_{i,\sigma})$ by $\pi_{\alg}(\phi, \lambda)$.  Similarly, for $w\in S_n$, let  $\PS_{1,\sigma}(w(\phi), \textbf{h})\subset \PS_1(w(\phi), \textbf{h})$ be the subrepresentation with irreducible constituents $\pi_{\alg}(\phi, \textbf{h})$ and $\sC(w,s_{i,\sigma})$ for $i=1, \cdots, n-1$. It is easy to see 
\begin{equation*}
\PS_{1,\sigma}(w(\phi), \textbf{h})=\PS_1(w(\phi),\textbf{h}) \cap \big((\Ind_{B^-(K)}^{\GL_n(K)} w(\phi) \eta z^{\lambda_{\sigma}})^{\sigma-\an} \otimes_E L(\lambda^{\sigma}) \big)
\hookrightarrow \PS(w(\phi), \textbf{h}).
\end{equation*}
Moreover, $\pi_{1,\sigma}(\phi, \textbf{h})$ is the unique quotient of $\oplus_{\pi_{\alg}(\phi, \lambda)}^{w\in S_n} \PS_{1,\sigma}(w(\phi), \textbf{h})$ of socle $\pi_{\alg}(\phi, \textbf{h})$. In particular, $\pi_{1,\sigma}(\phi, \textbf{h})$ is $\ug_{\Sigma_K\setminus \{\sigma\}}$-algebraic. In fact, $\pi_{1,\sigma}(\phi, \textbf{h})$  is  the maximal  $\ug_{\Sigma_K\setminus \{\sigma\}}$-algebraic subrepresentation of $\pi_1(\phi, \textbf{h})$. 

For  $\ug_{\Sigma_K\setminus \{\sigma\}}$-algebraic representations $V$, $W$, we denote by $\Ext^1_{\sigma}(V,W) \subset \Ext^1_{\GL_n(K)}(V,W)$ the subspace of extensions, which are  $\ug_{\Sigma_K\setminus \{\sigma\}}$-algebraic. Let $\Hom_{\sigma, g'}(T(K),E):= \Hom_{g'}(T(K),E)\cap \Hom_{\sigma}(T(K),E)$ (recalling $\Hom_{\sigma}(T(K),E)$ is just the subspace of locally $\sigma$-analytic characters).
\begin{lemma}\label{Lsigma00}
We have $\dim_E \Ext^1_{\sigma}(\pi_{\alg}(\phi, \textbf{h}), \pi_{\alg}(\phi, \textbf{h}))=n+1$, and (\ref{Ezetaw0}) induces an isomorphism
$\Hom_{\sigma, g'}(T(K),E) \xrightarrow{\sim} \Ext^1_{\sigma}(\pi_{\alg}(\phi, \textbf{h}), \pi_{\alg}(\phi, \textbf{h}))$.
\end{lemma}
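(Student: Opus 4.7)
The plan is to derive both statements from Proposition \ref{Pextalg} by restricting the explicit formula for $\zeta_w$ recorded in its proof. Recall that for $\psi \in \Hom_{g'}(T(K),E)$, we may write $\psi = \psi_1 + \psi_0 \circ \dett$ with $\psi_1 \in \Hom_{\sm}(T(K),E)$ and $\psi_0 \in \Hom(K^\times,E)$, and this decomposition is canonical modulo $\Hom_{\sm}(K^\times,E)$ (absorbed into $\psi_1$). From the proof of Proposition \ref{Pextalg} we have
\[
\zeta_w(\psi) \;\cong\; (\Ind_{B^-(K)}^{\GL_n(K)} w(\phi)\,\eta\,(1+\psi_1 \epsilon))^{\sm} \otimes_E L(\lambda) \otimes_{E[\epsilon]/\epsilon^2} (1+\psi_0 \circ \dett \cdot \epsilon).
\]

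First I would check the inclusion $\zeta_w(\Hom_{\sigma,g'}(T(K),E)) \subset \Ext^1_{\sigma}(\pi_{\alg}(\phi,\textbf{h}), \pi_{\alg}(\phi,\textbf{h}))$. Indeed, the smooth induction factor is trivially $\ug_{\Sigma_K\setminus\{\sigma\}}$-algebraic, and so is the algebraic representation $L(\lambda)$. The only non-algebraic contribution to the $\ug$-action on $\zeta_w(\psi)$ comes from the twist by $1+\psi_0 \circ \dett\cdot\epsilon$, whose derivative at the identity factors through $d(\psi_0 \circ \dett)$; this vanishes on $\ug_{\Sigma_K\setminus\{\sigma\}}$ precisely when $\psi_0$ is locally $\sigma$-analytic, which, combined with $\psi_1$ being smooth, is equivalent to $\psi \in \Hom_{\sigma,g'}(T(K),E)$.

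Next I would show the reverse inclusion. If $\widetilde{V} \in \Ext^1_\sigma(\pi_{\alg}(\phi,\textbf{h}),\pi_{\alg}(\phi,\textbf{h}))$ then by Proposition \ref{Pextalg} (1), $\widetilde{V}\cong \zeta_w(\psi)$ for a unique $\psi \in \Hom_{g'}(T(K),E)$. The preceding explicit description shows that $\ug_{\Sigma_K \setminus \{\sigma\}}$-algebraicity of $\widetilde{V}$ forces the derivation $d(\psi_0 \circ \dett)$ to vanish on $\ug_{\Sigma_K\setminus\{\sigma\}}$. Restricting to the centre $K^\times \hookrightarrow T(K)$ via $a\mapsto (a,\dots,a)$ gives $(\psi_0 \circ \dett)|_{Z(K)} = n\psi_0$, so $\psi_0$ itself is locally $\sigma$-analytic; consequently $\psi = \psi_1 + \psi_0 \circ \dett \in \Hom_{\sigma}(T(K),E)\cap\Hom_{g'}(T(K),E) = \Hom_{\sigma,g'}(T(K),E)$, as required.

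Finally, the dimension count is straightforward: $\Hom_{g'}(T(K),E)/\Hom_{\sm}(T(K),E) \cong \Hom(K^\times,E)/\Hom_{\sm}(K^\times,E)$ is $d_K$-dimensional, and intersecting with $\Hom_\sigma$ cuts this down to $\Hom_\sigma(K^\times,E)/\Hom_{\sm}(K^\times,E)$, which is one-dimensional; together with the $n$-dimensional smooth part we obtain $\dim_E \Hom_{\sigma,g'}(T(K),E) = n+1$, hence $\dim_E \Ext^1_\sigma(\pi_{\alg}(\phi,\textbf{h}),\pi_{\alg}(\phi,\textbf{h})) = n+1$. The only subtle point in the argument is verifying that the $\ug_{\Sigma_K\setminus\{\sigma\}}$-algebraicity of the whole extension $\widetilde{V}$ really does detect locally $\sigma$-analyticity of the twisting character $\psi_0\circ\dett$; this is handled cleanly by testing on the centre, which acts through a single (non-trivially twisted) character.
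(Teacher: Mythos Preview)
Your proof is correct and arrives at the same conclusion, but the route differs from the paper's. The paper first establishes the dimension of $\Ext^1_\sigma(\pi_{\alg}(\phi,\textbf{h}),\pi_{\alg}(\phi,\textbf{h}))$ independently, via an exact sequence
\[
0 \lra \Ext^1_{\GL_n(K),Z}(\pi_{\alg},\pi_{\alg}) \lra \Ext^1_\sigma(\pi_{\alg},\pi_{\alg}) \lra \Hom_\sigma(Z(K),E) \lra 0
\]
(as in \cite[Lem.~3.16]{BD1}, using that extensions with fixed central character are locally algebraic by (\ref{Eautolalg})), giving $(n-1)+2=n+1$; it then observes that $\zeta_w$ restricted to $\Hom_{\sigma,g'}(T(K),E)$ lands in $\Ext^1_\sigma$ and concludes bijectivity by matching dimensions. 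You instead prove both inclusions directly from the explicit formula $\zeta_w(\psi)\cong(\Ind\,w(\phi)\eta(1+\psi_1\epsilon))^{\sm}\otimes L(\lambda)\otimes_{E[\epsilon]/\epsilon^2}(1+\psi_0\circ\dett\cdot\epsilon)$, characterising exactly when this is $\ug_{\Sigma_K\setminus\{\sigma\}}$-algebraic, and read off the dimension afterwards. Your approach is more hands-on and avoids invoking the central-character exact sequence, at the cost of having to justify carefully that a non-zero $(d\psi_0)_\tau$ for $\tau\neq\sigma$ genuinely obstructs $\ug_\tau$-algebraicity (the centre of $\ug_\tau$ then acts non-semisimply on the $\text{U}(\ug_\tau)$-orbit, so the orbit cannot integrate to an algebraic $\GL_n$-representation); you handle this by testing on the centre, which is legitimate. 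The paper's approach has the side benefit of recording the exact sequence, which is structurally informative even if not reused.
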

\begin{proof}
As $\Ext^1_{\GL_n(K),Z}(\pi_{\alg}(\phi, \textbf{h}), \pi_{\alg}(\phi, \textbf{h}))\subset \Ext^1_{\lalg}(\pi_{\alg}(\phi, \textbf{h}), \pi_{\alg}(\phi, \textbf{h}))$ (by (\ref{Eautolalg})) hence is contained in $ \Ext^1_{\sigma}(\pi_{\alg}(\phi, \textbf{h}), \pi_{\alg}(\phi, \textbf{h}))$, we have an exact sequence (similarly as in \cite[Lem.~3.16]{BD1}) 
\begin{multline*}
	0 \lra \Ext^1_{\GL_n(K),Z}(\pi_{\alg}(\phi, \textbf{h}), \pi_{\alg}(\phi, \textbf{h})) \lra \Ext^1_{\sigma}(\pi_{\alg}(\phi, \textbf{h}), \pi_{\alg}(\phi, \textbf{h}))\\ \lra \Hom_{\sigma}(Z(K),E) \lra 0.
\end{multline*}
The first part follows. It is clear that (\ref{Ezetaw0}) induces the map in the lemma by restriction, which is hence injective. However, both the source and target spaces have the same dimension $n+1$, so the map  is bijective. 
\end{proof}

\begin{proposition}\label{Pextpssigma}
Let $w\in S_n$, the map (\ref{Eind1}) induces an isomorphism
\begin{equation}\label{Eindsigma}
	\Hom_{\sigma}(T(K),E)\xlongrightarrow{\sim} \Ext^1_{\sigma}(\pi_{\alg}(\phi, \textbf{h}), \PS_{1,\sigma}(w(\phi), \textbf{h})).
\end{equation}
\end{proposition}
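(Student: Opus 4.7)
The plan is to mimic the proof of Proposition \ref{Pextps} (1), adapted to the locally $\sigma$-analytic setting, and to control the image through a dimension count. The map (\ref{Eind1}) sends $\psi$ to the pullback along $\pi_{\alg}(\phi,\textbf{h}) \hookrightarrow \PS_1(w(\phi),\textbf{h})$ of the natural deformation $(\Ind_{B^-(K)}^{\GL_n(K)} w(\phi)\eta z^{\lambda}(1+\psi\epsilon))^{\Q_p-\an}$ of $\PS(w(\phi), \textbf{h})$. When $\psi \in \Hom_{\sigma}(T(K),E)$, the derivation $d\psi$ factors through $\ft_{\sigma}\otimes_{K,\sigma} E$, so for every $\tau \neq \sigma$ the $\ut_\tau$-action on the deformed character coincides with its algebraic value $d(w(\phi)\eta z^{\lambda})_\tau$. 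Consequently the deformed induction is $\ug_{\Sigma_K\setminus\{\sigma\}}$-algebraic and coincides with $(\Ind_{B^-(K)}^{\GL_n(K)} w(\phi)\eta z^{\lambda_\sigma}(1+\psi\epsilon))^{\sigma-\an} \otimes_E L(\lambda^\sigma)$. Since $\PS_{1,\sigma}(w(\phi),\textbf{h})$ is the maximal $\ug_{\Sigma_K\setminus\{\sigma\}}$-algebraic subrepresentation of $\PS_1(w(\phi),\textbf{h})$, the $\PS_1$-valued extension produced by (\ref{Eind1}) actually factors through $\PS_{1,\sigma}$. This yields a well-defined map as in (\ref{Eindsigma}).

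Injectivity is then inherited from the bijectivity of (\ref{Eind1}), since (\ref{Eindsigma}) is just its restriction composed with the injection $\Ext^1_{\sigma}(\pi_{\alg}(\phi,\textbf{h}), \PS_{1,\sigma}(w(\phi),\textbf{h})) \hookrightarrow \Ext^1_{\GL_n(K)}(\pi_{\alg}(\phi,\textbf{h}), \PS_1(w(\phi),\textbf{h}))$ (the latter being injective because $\pi_{\alg}(\phi,\textbf{h})$ does not appear as a constituent of the quotient $\PS_1/\PS_{1,\sigma}$). For surjectivity I would do a dimension count. Note that $\dim_E \Hom_{\sigma}(T(K),E) = 2n$, using $\dim_E \Hom_{\sigma}(K^{\times},E) = 2$. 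On the other hand, applying $\Ext^\bullet_{\sigma}(\pi_{\alg}(\phi,\textbf{h}),-)$ to the short exact sequence
\begin{equation*}
0 \lra \pi_{\alg}(\phi,\textbf{h}) \lra \PS_{1,\sigma}(w(\phi),\textbf{h}) \lra \bigoplus_{i=1}^{n-1} \sC(w,s_{i,\sigma}) \lra 0
\end{equation*}
yields an exact sequence whose middle term is bounded by $\dim_E \Ext^1_{\sigma}(\pi_{\alg},\pi_{\alg}) + (n-1) \max_i \dim_E \Ext^1_{\sigma}(\pi_{\alg},\sC(w,s_{i,\sigma}))$. By Lemma \ref{Lsigma00}, the first term equals $n+1$; since $\Ext^1_{\sigma} \subset \Ext^1_{\GL_n(K)}$, each term of the sum is $\leq \dim_E \Ext^1_{\GL_n(K)}(\pi_{\alg},\sC(w,s_{i,\sigma})) = 1$ by Lemma \ref{Lextps} (1). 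Hence $\dim_E \Ext^1_{\sigma}(\pi_{\alg}(\phi,\textbf{h}), \PS_{1,\sigma}(w(\phi),\textbf{h})) \leq 2n$, and combined with the injection of $2n$-dimensional source from Step 2, the map (\ref{Eindsigma}) must be an isomorphism.

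The main obstacle I anticipate is a clean justification of Step 1, specifically verifying that the deformation constructed from $\psi \in \Hom_\sigma(T(K),E)$ genuinely lands in the $\ug_{\Sigma_K\setminus\{\sigma\}}$-algebraic subrepresentation $\PS_{1,\sigma}$ (as opposed to merely living inside $\PS_1$). The subtle point is the compatibility of the $\sigma$-analytic induction with the identification \cite[Lem.~2.26]{Ding7} used to pull back the deformation of $\PS(w(\phi),\textbf{h})$ to $\PS_1(w(\phi),\textbf{h})$. The cleanest way around this is to run the analogue of \cite[Lem.~2.26]{Ding7} for the $\sigma$-analytic induction $(\Ind_{B^-(K)}^{\GL_n(K)} w(\phi)\eta z^{\lambda_\sigma})^{\sigma-\an}\otimes_E L(\lambda^\sigma)$ and $\PS_{1,\sigma}(w(\phi),\textbf{h})$, whose constituents above $\pi_{\alg}(\phi,\textbf{h})$ are exactly $\{\sC(w,s_{i,\sigma})\}_{i=1,\dots,n-1}$; the required vanishing $\Ext^1_{\sigma}(\pi_{\alg}(\phi,\textbf{h}),\sC)=0$ for any irreducible constituent $\sC$ of the quotient (which are exactly $\cF_{B^-}^{\GL_n}(L^-(-u\cdot\lambda_\sigma),w(\phi)\eta)\otimes L(\lambda^\sigma)$ with $\lg(u)\geq 2$) follows from Orlik--Strauch and a standard Jacquet--Emerton/Schraen spectral sequence argument as in the proof of Lemma \ref{Lextps}. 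Once this is in place, the preceding dimension count closes the proof.
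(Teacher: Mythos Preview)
Your proposal is correct and follows essentially the same approach as the paper: restrict (\ref{Eind1}) to $\Hom_\sigma(T(K),E)$, check the resulting extension is $\ug_{\Sigma_K\setminus\{\sigma\}}$-algebraic hence lands in $\PS_{1,\sigma}$, inherit injectivity, and conclude by the dimension bound $(n+1)+(n-1)=2n$ via Lemma \ref{Lsigma00} and Lemma \ref{Lextps} (1). The only difference is in how you handle the obstacle you flag: rather than running a $\sigma$-analytic analogue of \cite[Lem.~2.26]{Ding7}, the paper uses the description of (\ref{Eind1}) via Emerton's functor $I_{B^-(K)}^{\GL_n(K)}$ (Remark \ref{Rind1}), observing that for $\psi\in\Hom_\sigma(T(K),E)$ the representation $I_{B^-(K)}^{\GL_n(K)}(w(\phi)\eta z^\lambda(1+\psi\epsilon))$ sits inside $(\Ind_{B^-(K)}^{\GL_n(K)} w(\phi)\eta z^{\lambda_\sigma}(1+\psi\epsilon))^{\sigma-\an}\otimes_E L(\lambda^\sigma)$ and is therefore $\ug_{\Sigma_K\setminus\{\sigma\}}$-algebraic, which immediately forces the extension to factor through $\PS_{1,\sigma}$.
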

\begin{proof}
For $\psi\in \Hom_{\sigma}(T(K),E)$, by similar arguments as in the proof of Proposition \ref{Pextalg},  $I_{B^-(K)}^{\GL_n(K)}\big(w(\phi) \eta z^{\lambda}(1+\psi \epsilon)\big)$ is a subrepresentation of $(\Ind_{B^-(K)}^{\GL_n(K)} w(\phi) \eta z^{\lambda_{\sigma}} (1+\psi \epsilon))^{\sigma-\an} \otimes_E L(\lambda^{\sigma})$, 
hence is $\ug_{\Sigma_K\setminus \{\sigma\}}$-algebraic. Together with the description of (\ref{Eind1})  in  Remark \ref{Rind1}, we deduce (\ref{Eind1}) induces the injective map in (\ref{Eindsigma}) by restriction. We have an exact sequence by d\'evissage \begin{multline}\label{Edivisigma000}
	0 \lra \Ext^1_{\sigma}(\pi_{\alg}(\phi, \textbf{h}), \pi_{\alg}(\phi, \textbf{h})) \lra \Ext^1_{\sigma}(\pi_{\alg}(\phi, \textbf{h}), \PS_{1,\sigma}(w(\phi), \textbf{h})) \\ \lra \oplus_{i=1}^{n-1} \Ext^1_{\sigma}(\pi_{\alg}(\phi, \textbf{h}), \sC(w(\phi), s_{i,\sigma})).
\end{multline}
By Lemma \ref{Lsigma00} and Lemma \ref{Lextps} (1),  
$\dim_E \Ext^1_{\sigma}(\pi_{\alg}(\phi,\textbf{h}), \PS_{1,\sigma}(w(\phi), \textbf{h}))\leq (n+1)+(n-1)=2n$. However, the source of (the injective) (\ref{Eindsigma}) has dimension $2n$, so (\ref{Eindsigma}) must be bijective.  
\end{proof}
\begin{remark}\label{Rautosigma}
By the above proof, we see the last map in (\ref{Edivisigma000}) is surjective and  $\Ext^1_{\sigma}(\pi_{\alg}(\phi, \textbf{h}), \sC(w(\phi), s_{i,\sigma}))\xrightarrow{\sim} \Ext^1_{\GL_n(K)}(\pi_{\alg}(\phi, \textbf{h}), \sC(w(\phi), s_{i,\sigma})).$
\end{remark}
Denote \ by \ $\Ext^1_{\sigma}(\pi_{\alg}(\phi, \textbf{h}), \pi_1(\phi, \textbf{h}))$ \ (resp. \ $\Ext^1_{\sigma, g'}(\pi_{\alg}(\phi, \textbf{h}), \pi_1(\phi, \textbf{h}))$) \ the \ image \ of $\Ext^1_{\sigma}(\pi_{\alg}(\phi,\textbf{h}), \pi_{1,\sigma}(\phi, \textbf{h}))$ (resp.  $\Ext^1_{\sigma}(\pi_{\alg}(\phi, \textbf{h}), \pi_{\alg}(\phi, \textbf{h}))$ ) via the (injective) push-forward map. It is easy to see 
\begin{equation*}
\Ext^1_{\sigma, g'}(\pi_{\alg}(\phi, \textbf{h}), \pi_1(\phi, \textbf{h}))=\Ext^1_{g'}(\pi_{\alg}(\phi, \textbf{h}), \pi_1(\phi, \textbf{h})) \cap \Ext^1_{\sigma}(\pi_{\alg}(\phi, \textbf{h}), \pi_1(\phi, \textbf{h})).
\end{equation*}
\begin{proposition}\label{Pextpi1sigma}
(1) We have an exact sequence 
\begin{multline}\label{Edivsigma}
	0 \lra \Ext^1_{\sigma}(\pi_{\alg}(\phi, \textbf{h}), \pi_{\alg}(\phi, \textbf{h})) \lra \Ext^1_{\sigma}(\pi_{\alg}(\phi, \textbf{h}), \pi_1(\phi, \textbf{h})) \\ \lra \oplus_{\substack{i=1,\cdots, n-1\\ I \subset \{1,\cdots, n-1\}, \# I =i}} \Ext^1_{\GL_n(K)}(\pi_{\alg}(\phi, \textbf{h}), \sC(I,s_{i,\sigma})) \lra 0.
\end{multline}
And $\dim_E \Ext^1_{\sigma}(\pi_{\alg}(\phi, \textbf{h}), \pi_{1}(\phi, \textbf{h}))=n+2^n-1$.

(2) The map (\ref{Etphih}) induces a surjection
	$t_{\phi, \textbf{h}}:	\oplus_{w\in S_n} \Hom_{\sigma}(T(K),E)\twoheadrightarrow \Ext^1_{\sigma}(\pi_{\alg}(\phi, \textbf{h}), \pi_{1}(\phi, \textbf{h}))$.

(3) The following map is surjective
\begin{equation}\label{Evarytau}
	\oplus_{\tau\in \Sigma_K} \Ext^1_{\tau}(\pi_{\alg}(\phi, \textbf{h}), \pi_1(\phi, \textbf{h})) \lra \Ext^1_{\GL_n(K)}(\pi_{\alg}(\phi, \textbf{h}), \pi_1(\phi, \textbf{h})),
\end{equation}
and induces an isomorphism
\begin{multline}\label{Evarytau2}
	\oplus_{\tau\in \Sigma_K} \big(\Ext^1_{\tau}(\pi_{\alg}(\phi, \textbf{h}), \pi_1(\phi, \textbf{h}))/\Ext^1_g(\pi_{\alg}(\phi, \textbf{h}), \pi_1(\phi, \textbf{h}))\big) \\
	\xlongrightarrow{\sim} \Ext^1_{\GL_n(K)}(\pi_{\alg}(\phi, \textbf{h}), \pi_1(\phi, \textbf{h}))/\Ext^1_g(\pi_{\alg}(\phi, \textbf{h}), \pi_1(\phi, \textbf{h})).
\end{multline}
\end{proposition}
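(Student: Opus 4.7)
The plan is to parallel the proofs of Proposition~\ref{Pextpi1} and Proposition~\ref{Pextpssigma} in the $\ug_{\Sigma_K\setminus \{\sigma\}}$-algebraic setting, and then derive (3) by a dimension comparison. For (1), I would run the d\'evissage on $\pi_{\alg}(\phi, \textbf{h}) \hookrightarrow \pi_1(\phi, \textbf{h})$ restricted to $\ug_{\Sigma_K\setminus \{\sigma\}}$-algebraic extensions. The key observation is that any subrepresentation of a $\ug_{\Sigma_K\setminus\{\sigma\}}$-algebraic representation inherits the same property; applied to an extension of $\pi_{\alg}(\phi, \textbf{h})$ by $\sC(I, s_{i,\sigma'})$ with $\sigma' \neq \sigma$, this forces $\sC(I, s_{i,\sigma'})$ to be $\ug_{\Sigma_K\setminus\{\sigma\}}$-algebraic, which it is not. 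Hence $\Ext^1_{\sigma}(\pi_{\alg}(\phi,\textbf{h}), \sC(I,s_{i,\sigma'})) = 0$ for $\sigma' \neq \sigma$, and the long exact sequence collapses to (\ref{Edivsigma}). Surjectivity of its last arrow comes from the commutative square analogous to (\ref{Epspi1}): for each $w$ with $w(\{1,\dots,i\}) = I$, Proposition~\ref{Pextpssigma} together with the sequence (\ref{Edivisigma000}) gives a surjection $\Ext^1_{\sigma}(\pi_{\alg}(\phi,\textbf{h}), \PS_{1,\sigma}(w(\phi),\textbf{h})) \twoheadrightarrow \Ext^1_{\sigma}(\pi_{\alg}(\phi,\textbf{h}), \sC(w, s_{i,\sigma}))$, and $\sC(w, s_{i,\sigma}) \cong \sC(I, s_{i,\sigma})$ by Proposition~\ref{LGLn1}(4). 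The dimension formula $(n+1) + (2^n-2) = n + 2^n - 1$ follows using Lemma~\ref{Lsigma00}, Lemma~\ref{Lextps}(1) and Remark~\ref{Rautosigma}.

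For (2), the containment $\zeta_w(\Hom_{\sigma}(T(K),E)) \subset \Ext^1_{\sigma}(\pi_{\alg}(\phi, \textbf{h}), \pi_1(\phi, \textbf{h}))$ is immediate from the argument in the proof of Proposition~\ref{Pextpssigma}, since the corresponding induced representation lies in a locally $\sigma$-analytic induction tensored with the complementary algebraic weight. For surjectivity, I would use the exact sequence from (1): the restriction of $t_{\phi,\textbf{h}}$ to $\oplus_w \Hom_{\sigma}(T(K),E)$ hits $\Ext^1_{\sigma}(\pi_{\alg}(\phi, \textbf{h}), \pi_{\alg}(\phi, \textbf{h}))$ through $\Hom_{\sigma,g'}(T(K),E) \subset \Hom_{\sigma}(T(K),E)$ via Lemma~\ref{Lsigma00}, and varying $w$ covers all of $\oplus_{i,I} \Ext^1_{\GL_n(K)}(\pi_{\alg}(\phi,\textbf{h}), \sC(I, s_{i,\sigma}))$, since each $\sC(I, s_{i,\sigma})$ is a cosocle constituent of some $\PS_{1,\sigma}(w(\phi), \textbf{h})$.

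For (3), the surjectivity of (\ref{Evarytau}) follows from Proposition~\ref{Pextpi1}(2) together with the decomposition $\Hom(T(K),E) = \sum_{\tau \in \Sigma_K} \Hom_{\tau}(T(K),E)$: any $\xi = \sum_w \zeta_w(\psi_w)$ decomposes as $\xi = \sum_{w,\tau} \zeta_w(\psi_{w,\tau})$, and each $\zeta_w(\psi_{w,\tau})$ lies in $\Ext^1_{\tau}(\pi_{\alg}(\phi,\textbf{h}), \pi_1(\phi,\textbf{h}))$ by the same argument recalled in (2). For (\ref{Evarytau2}) I argue by dimensions: since no constituent of $\pi_1(\phi, \textbf{h})/\pi_{\alg}(\phi,\textbf{h})$ is isomorphic to $\pi_{\alg}(\phi, \textbf{h})$, the push-forward $\Ext^1_{\lalg}(\pi_{\alg}(\phi,\textbf{h}), \pi_{\alg}(\phi,\textbf{h})) \hookrightarrow \Ext^1_{\GL_n(K)}(\pi_{\alg}(\phi,\textbf{h}), \pi_1(\phi,\textbf{h}))$ is injective, so $\dim_E \Ext^1_g(\pi_{\alg}(\phi,\textbf{h}), \pi_1(\phi,\textbf{h})) = n$ by Proposition~\ref{Pextalg}(1). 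Combined with (1), this gives $\dim_E \Ext^1_{\tau}/\Ext^1_g = 2^n - 1$ for each $\tau$, so the LHS of (\ref{Evarytau2}) has dimension $(2^n-1)d_K$, matching the RHS dimension $n + (2^n-1)d_K - n$ from Proposition~\ref{Pextpi1}(1); hence the surjection is an isomorphism.

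The main obstacle I anticipate is the vanishing $\Ext^1_{\sigma}(\pi_{\alg}(\phi,\textbf{h}), \sC(I, s_{i,\sigma'})) = 0$ for $\sigma' \neq \sigma$, which rests on the inheritance of $\ug_J$-algebraicity by subrepresentations; once this is established, the rest is a careful transcription of the $\Q_p$-analytic arguments into the $\sigma$-analytic setting, together with the final dimension count.
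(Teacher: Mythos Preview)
Your proposal is correct and follows essentially the same line as the paper, with two minor remarks. First, in (1) your phrasing ``run the d\'evissage on $\pi_{\alg}(\phi,\textbf{h})\hookrightarrow\pi_1(\phi,\textbf{h})$ restricted to $\ug_{\Sigma_K\setminus\{\sigma\}}$-algebraic extensions'' is imprecise: since $\pi_1(\phi,\textbf{h})$ is not itself $\ug_{\Sigma_K\setminus\{\sigma\}}$-algebraic, there are no such extensions, and by definition $\Ext^1_{\sigma}(\pi_{\alg}(\phi,\textbf{h}),\pi_1(\phi,\textbf{h}))$ is the push-forward image of $\Ext^1_{\sigma}(\pi_{\alg}(\phi,\textbf{h}),\pi_{1,\sigma}(\phi,\textbf{h}))$. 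The d\'evissage should therefore be run on $\pi_{1,\sigma}(\phi,\textbf{h})$ directly (this is what ``the same argument as in the proof of Proposition~\ref{Pextpi1}~(1)'' means in the paper), after which your vanishing claim for $\sigma'\neq\sigma$ becomes unnecessary. Second, for the surjectivity in (3) the paper simply compares the two exact sequences (\ref{Edivi}) and (\ref{Edivsigma}) summed over $\tau$, whereas you route through $t_{\phi,\textbf{h}}$ and the decomposition $\Hom(T(K),E)=\sum_{\tau}\Hom_{\tau}(T(K),E)$; both arguments are equally short and yield the same conclusion, and the final dimension count for (\ref{Evarytau2}) is identical.
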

\begin{proof}
(1) follows by the same argument as in the proof of Proposition \ref{Pextpi1} (1). 
(2) follows from Proposition \ref{Pextpssigma} and Remark \ref{Rautosigma} by the same argument as in the proof of Proposition \ref{Pextpi1} (2).
The first part of (3) follows easily by comparing  the exact sequences (\ref{Edivi}) and (\ref{Edivsigma}). It is clear that (\ref{Evarytau}) induces (\ref{Evarytau2}), which is hence surjective. However, both the sources and target spaces have the same dimension $(2^n-1)d_K$ by (1) and Proposition \ref{Pextalg} (1),  Proposition \ref{Pextpi1} (1). So (\ref{Evarytau2}) is bijective. 
\end{proof}

Now let $P$ be a standard parabolic subgroup of $\GL_n$, and $\sF_P$ be a $P$-filtration on  $\phi$. We use the notation in \S~\ref{S313}. Let $\pi_{\sF_P,\sigma}(\phi, \textbf{h}):=\pi_{\sF_P}(\phi,\textbf{h})\cap \pi_{1,\sigma}(\phi, \textbf{h})$, which is the maximal $\ug_{\Sigma_K\setminus \{\sigma\}}$-algebraic subrepresentation of $\pi_{\sF_P}(\phi, \textbf{h})$. Then $\pi_{\sF_P,\sigma}(\phi, \textbf{h})$ is an extension of the direct sum of $\sC(I,s_{i,\sigma})\in S_{\sF_P}$ (for the fixed $\sigma$) by $\pi_{\alg}(\phi, \textbf{h})$. We denote by $\Ext^1_{\sigma,\sF_P}(\pi_{\alg}(\phi, \textbf{h}), \pi_1(\phi, \textbf{h}))$ the image of $\Ext^1_{\sigma}(\pi_{\alg}(\phi, \textbf{h}), \pi_{\sF_P, \sigma}(\phi, \textbf{h}))$ via the (injective) push-forward map. As previously, we also write $\Ext^1_{\sigma, w}$ for $\Ext^1_{\sigma, \sT_w}$. One easily sees 
$\Ext^1_{\sigma, \sF_P}(\pi_{\alg}(\phi, \textbf{h}), \pi_1(\phi, \textbf{h}))=\Ext^1_{\sigma}(\pi_{\alg}(\phi, \textbf{h}), \pi_1(\phi,\textbf{h})) \cap \Ext^1_{\sF_P}(\pi_{\alg}(\phi, \textbf{h}), \pi_1(\phi, \textbf{h}))$.
\begin{proposition}\label{PdimsigmaPara}
(1) We have $\dim_E \Ext^1_{\sigma,\sF_P}(\pi_{\alg}(\phi, \textbf{h}), \pi_1(\phi, \textbf{h}))=n+r+\sum_{i=1}^r (2^{n_i}-2)$.

(2) The isomorphism (\ref{EzetaFp}) induces an isomorphism 
\begin{equation}\label{EzetaFpsigma}
	\prod_{i=1}^r \Ext^1_{\sigma}(\pi_{\alg}(\phi_{\sF_P,i}, \textbf{h}^i), \pi_{1}(\phi_{\sF_P,i}, \textbf{h}^i)) \xlongrightarrow{\sim} \Ext^1_{\sigma,\sF_P}(\pi_{\alg}(\phi, \textbf{h}), \pi_1(\phi, \lambda)).
\end{equation}
Moreover, for any $w$ such that the associated $B$-filtration $\sT_w$ is compatible with $\sF_P$, the following diagram commutes
\begin{equation*}
	\begin{CD}
		\prod_{i=1}^r \Hom_{\sigma}(T(K)\cap L_{P,i}(K),E) @> \sim >> \Hom_{\sigma}(T(K),E) \\
		@V \sim V (\ref{Eindsigma})V @V \sim V (\ref{Eindsigma}) V \\
		\prod_{i=1}^r \Ext^1_{\sigma, \sT_{w,i}}\big(\pi_{\alg}(\phi_{\sF_P,i}, \textbf{h}^i), \pi_{1,\sigma}(\phi_{\sF_P,i}, \textbf{h}^i)\big) 
		@> \sim >>  \Ext^1_{\sigma,w}\big(\pi_{\alg}(\phi, \textbf{h}),  \pi_{1,\sigma}(\phi, \textbf{h})\big)
	\end{CD}
\end{equation*}
where $\sT_{w,i}$ is the induced $B\cap L_P$-filtration on $\gr_i\sF_P$.
\end{proposition}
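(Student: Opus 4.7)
The plan is to establish (1) by a d\'evissage and dimension count analogous to that of Proposition \ref{Pextpi1} (1), and then to deduce (2) by restricting the already bijective map $\zeta_{\sF_P}$ of Proposition \ref{PextpiFP} (1) to the $\sigma$-analytic subspaces and matching dimensions.

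For (1), I would let $S_{\sF_P,\sigma}\subset S_{\sF_P}$ denote the subset of constituents of the form $\sC(I,s_{i,\sigma})$ for the fixed $\sigma$. Since the description in Lemma \ref{Lparaind} treats the embeddings in $\Sigma_K$ symmetrically, formula (\ref{ESFP}) gives $\#S_{\sF_P,\sigma}=\sum_{i=1}^r(2^{n_i}-2)+(r-1)$. Next I would consider the $\sigma$-analytic version of the exact sequence (\ref{Edivi}),
\[
0 \to \Ext^1_\sigma(\pi_{\alg}(\phi,\textbf{h}),\pi_{\alg}(\phi,\textbf{h})) \to \Ext^1_{\sigma,\sF_P}(\pi_{\alg}(\phi,\textbf{h}),\pi_1(\phi,\textbf{h})) \to \bigoplus_{\sC\in S_{\sF_P,\sigma}}\Ext^1_\sigma(\pi_{\alg}(\phi,\textbf{h}),\sC),
\]
and establish surjectivity of the last arrow by varying $w\in S_n$ with $\sT_w$ compatible with $\sF_P$ and invoking the $\sigma$-analytic analog of diagram (\ref{Epspi1}) whose top row is surjective by Proposition \ref{Pextpssigma}. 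Combining this with Lemma \ref{Lsigma00} (giving $\dim_E\Ext^1_\sigma(\pi_{\alg},\pi_{\alg})=n+1$) and Remark \ref{Rautosigma} (identifying each $\Ext^1_\sigma(\pi_{\alg},\sC(I,s_{i,\sigma}))\cong E$ via Lemma \ref{Lextps} (1)) yields the claimed dimension $n+r+\sum_{i=1}^r(2^{n_i}-2)$.

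For (2), the main point is that $\zeta_{\sF_P}$ restricts to the stated map. Given $(\tilde{\pi}_i)\in\prod_i\Ext^1_\sigma(\pi_{\alg}(\phi_{\sF_P,i},\textbf{h}^i),\pi_1(\phi_{\sF_P,i},\textbf{h}^i))$, each $\tilde{\pi}_i$ factors through $\pi_{1,\sigma}(\phi_{\sF_P,i},\textbf{h}^i)$, which is $\ug_{\Sigma_K\setminus\{\sigma\}}$-algebraic as a $\GL_{n_i}(K)$-representation. Carrying out the parabolic-induction construction (\ref{EextLevi})--(\ref{Eparaind000}) via the locally $\sigma$-analytic induction on the $\sigma$-component tensored with the algebraic factor $L(\lambda^\sigma)$ on the remaining embeddings, exactly as in the proof of Proposition \ref{Pextpssigma}, produces an extension which is $\ug_{\Sigma_K\setminus\{\sigma\}}$-algebraic, hence lies in $\Ext^1_{\sigma,\sF_P}$. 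Injectivity of the restriction is inherited from the bijectivity of $\zeta_{\sF_P}$. Proposition \ref{Pextpi1sigma} (1) gives the source dimension $\sum_{i=1}^r(n_i+2^{n_i}-1)=n+r+\sum_i(2^{n_i}-2)$, matching the target dimension supplied by (1); hence the restriction is bijective.

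The commutative square in the second half of (2) is the restriction of (\ref{Eparaps}) (proved in Proposition \ref{PextpiFP} (2)) to $\Hom_\sigma(T(K),E)\subset\Hom(T(K),E)$, using that $\zeta_w$ in (\ref{Eisomzetaw}) sends $\Hom_\sigma(T(K),E)$ into $\Ext^1_{\sigma,w}$ by Proposition \ref{Pextpssigma}. The main obstacle throughout is the careful tracking of $\ug_{\Sigma_K\setminus\{\sigma\}}$-algebraicity through parabolic induction, but this reduces, via the transitivity of parabolic induction, to the rank-one computation already carried out in Proposition \ref{Pextpssigma}.
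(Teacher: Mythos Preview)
Your proposal is correct and uses the same ingredients as the paper (d\'evissage against the constituents in $S_{\sF_P,\sigma}$, the count $\#S_{\sF_P,\sigma}=\sum_i(2^{n_i}-2)+(r-1)$, Lemma~\ref{Lsigma00}, Remark~\ref{Rautosigma}, and the restriction of $\zeta_{\sF_P}$ from Proposition~\ref{PextpiFP}). The only difference is organizational: you first prove (1) with equality by establishing surjectivity of the last d\'evissage arrow directly (varying $w$ compatible with $\sF_P$), and then deduce (2) by a dimension match; the paper instead extracts only the \emph{upper bound} $\dim\le n+r+\sum_i(2^{n_i}-2)$ from the d\'evissage, checks that $\zeta_{\sF_P}$ restricts to an injection into $\Ext^1_{\sigma,\sF_P}$, computes the source dimension via Proposition~\ref{Pextpi1sigma} (1), and then concludes (1) and the bijectivity in (2) simultaneously. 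Your route costs the extra (valid) surjectivity step; the paper's route is slightly more economical but otherwise identical in content.
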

\begin{proof}
By d\'evissage, Lemma \ref{Lparaind} and a similar argument as in the proof of Proposition \ref{Pextpssigma} (noting for a fixed $\sigma$, $\# \{\sC(I,s_{i,\sigma})\in S_{\sF_P}\}=\sum_{i=1}^r (2^{n_i}-2)+(r-1)$), we see $\dim_E \Ext^1_{\sigma,\sF_P}(\pi_{\alg}(\phi, \textbf{h}), \pi_1(\phi, \textbf{h}))\leq n+r+\sum_{i=1}^r (2^{n_i}-2)$ and that (\ref{EzetaFp}) restricts to an injective map as in (\ref{EzetaFpsigma}). Its source space has dimension $n+r+\sum_{i=1}^r (2^{n_i}-2)$ by Proposition \ref{Pextpi1sigma} (1). We deduce  (\ref{EzetaFpsigma}) is bijective and (1) follows. The second part of (2) follows  from (\ref{Eparaps}).
\end{proof}
Finally, we  have similarly as in Proposition \ref{PFGfil}:
\begin{proposition}
Let $\sF$ and $\sG$ be as in Proposition \ref{PFGfil}, there is a natural exact sequence
$0 \ra \pi_{\alg}(\phi, \textbf{h}) \ra \pi_{\sF,\sigma}(\phi, \textbf{h}) \oplus \pi_{\sG,\sigma}(\phi, \textbf{h}) \ra \pi_{1,\sigma}(\phi, \textbf{h}) \ra 0$. 
Consequently, we have a natural exact sequence
\begin{multline}\label{EFGsigma}
	0 \lra \Ext^1_{\sigma, g'}(\pi_{\alg}(\phi, \textbf{h}), \pi_1(\phi,\textbf{h})) \lra \\
	\Ext^1_{\sigma, \sF}(\pi_{\alg}(\phi, \textbf{h}), \pi_1(\phi,\textbf{h}))  \oplus \Ext^1_{\sigma,\sG}(\pi_{\alg}(\phi, \textbf{h}), \pi_1(\phi,\textbf{h})) \\ \lra \Ext^1_{\sigma}(\pi_{\alg}(\phi, \textbf{h}), \pi_1(\phi,\textbf{h}))  \lra 0.
\end{multline}
\end{proposition}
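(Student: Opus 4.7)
The plan is to deduce both exact sequences from their non-$\sigma$ analogues (Proposition \ref{PFGfil}) by intersecting with the maximal $\ug_{\Sigma_K\setminus\{\sigma\}}$-algebraic subrepresentations, and then confirm right exactness by a dimension count.

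First I would establish the short exact sequence of representations. Since $\pi_{\sF,\sigma}(\phi,\textbf{h})=\pi_{\sF}(\phi,\textbf{h})\cap\pi_{1,\sigma}(\phi,\textbf{h})$ and similarly for $\sG$, the left exactness and middle exactness of
\[
0\lra \pi_{\alg}(\phi,\textbf{h})\lra \pi_{\sF,\sigma}(\phi,\textbf{h})\oplus\pi_{\sG,\sigma}(\phi,\textbf{h})\lra \pi_{1,\sigma}(\phi,\textbf{h})
\]
are inherited from Proposition \ref{PFGfil}. For surjectivity of the last map, I would compare constituents: by Lemma \ref{Lparaind} applied to $\sF$ (resp.\ $\sG$), the constituents $\sC(I,s_{i,\sigma})$ of $\pi_{\sF,\sigma}(\phi,\textbf{h})/\pi_{\alg}(\phi,\textbf{h})$ are exactly those with $I\subset\{1,\cdots,n-1\}$ (resp.\ $I\ni n$). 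Together these exhaust the constituents of $\pi_{1,\sigma}(\phi,\textbf{h})/\pi_{\alg}(\phi,\textbf{h})$. Since the composition $\pi_{\sF,\sigma}\oplus\pi_{\sG,\sigma}\ra\pi_{1,\sigma}\twoheadrightarrow\pi_{1,\sigma}/\pi_{\alg}$ induces the identification
$\pi_{1,\sigma}(\phi,\textbf{h})/\pi_{\alg}(\phi,\textbf{h})\cong (\pi_{\sF,\sigma}/\pi_{\alg})\oplus (\pi_{\sG,\sigma}/\pi_{\alg})$
(reading from Proposition \ref{PFGfil}), surjectivity follows.

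Next I would deduce the Ext sequence \eqref{EFGsigma} by applying $\Ext^1_{\sigma}(\pi_{\alg}(\phi,\textbf{h}),-)$ to the short exact sequence just established, then pushing forward into $\Ext^1_{\sigma}(\pi_{\alg}(\phi,\textbf{h}),\pi_1(\phi,\textbf{h}))$. By definition of $\Ext^1_{\sigma,\sF}$ and $\Ext^1_{\sigma,\sG}$, the resulting terms match those in \eqref{EFGsigma}; moreover $\Ext^1_{\sigma,g'}(\pi_{\alg},\pi_1)$ is identified with $\Ext^1_{\sigma}(\pi_{\alg},\pi_{\alg})$ via Lemma \ref{Lsigma00}. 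Left exactness at the first two terms is automatic from $\Hom$-$\Ext^1$ long exact sequences, using that $\Hom_{\GL_n(K)}(\pi_{\alg},\pi_{\alg})\cong E$ (which makes the connecting map land correctly) and that $\pi_{\sF,\sigma}\cap\pi_{\sG,\sigma}=\pi_{\alg}$.

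For right exactness, I would simply check that the dimensions add up. By Proposition \ref{PdimsigmaPara} (1) applied to the Levis $\GL_{n-1}\times\GL_1$ and $\GL_1\times\GL_{n-1}$, one gets $\dim_E\Ext^1_{\sigma,\sF}=\dim_E\Ext^1_{\sigma,\sG}=n+2^{n-1}$. By Lemma \ref{Lsigma00}, $\dim_E\Ext^1_{\sigma,g'}(\pi_{\alg},\pi_1)=n+1$. By Proposition \ref{Pextpi1sigma} (1), $\dim_E\Ext^1_{\sigma}(\pi_{\alg},\pi_1)=n+2^n-1$. The identity $(n+1)+(n+2^n-1)=2(n+2^{n-1})$ forces the alternating sum of dimensions to vanish, and combined with the exactness already established this yields surjectivity on the right. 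The only step requiring genuine thought is the constituent bookkeeping for the short exact sequence, which I expect to be the mildest obstacle; everything else is formal.
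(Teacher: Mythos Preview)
Your proposal is correct and follows essentially the same approach the paper intends: the paper states this proposition with the remark ``similarly as in Proposition \ref{PFGfil}'' and gives no further proof, and your argument---deducing the first sequence from its non-$\sigma$ analogue by passing to maximal $\ug_{\Sigma_K\setminus\{\sigma\}}$-algebraic subrepresentations, then establishing right exactness of the $\Ext$-sequence by the dimension count via Proposition \ref{PdimsigmaPara} (1), Lemma \ref{Lsigma00}, and Proposition \ref{Pextpi1sigma} (1)---is exactly the intended unwinding. One minor remark: your phrase ``applying $\Ext^1_\sigma(\pi_{\alg},-)$'' and appealing to ``$\Hom$-$\Ext^1$ long exact sequences'' is slightly loose, since $\Ext^1_\sigma$ is not a derived functor; but the substance is fine because the $\Ext^1_{\sigma,?}(\pi_{\alg},\pi_1)$ are defined as images of injective push-forwards and hence are simply intersections with $\Ext^1_\sigma$ inside $\Ext^1_{\GL_n(K)}(\pi_{\alg},\pi_1)$, so left and middle exactness are inherited directly from Proposition \ref{PFGfil}.
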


\subsection{Hodge parameters in $\GL_n(K)$-representations}\label{S32}
\subsubsection{Construction and properties}\label{S321}
In this section, we associate to $D\in \Phi\Gamma_{\nc}(\phi, \textbf{h})$ a locally $\Q_p$-analytic representation $\pi_{\min}(D)$ of $\GL_n(K)$ over $E$, which determines those Hodge parameters of $D$ reinterpreted in \S~\ref{S22} (hence determines $D$ when $K=\Q_p$).

Consider the following composition (see (\ref{Ekappaw000}) and (\ref{Etphih}) for the maps)
\begin{equation}\label{ERT}
\oplus_{w\in S_n} \ol{\Ext}^1_w(D,D) \xlongrightarrow[\sim]{(\kappa_w)} \oplus_{w\in S_n}  \Hom(T(K),E)  {\buildrel {t_{\phi,\textbf{h}}} \over \twoheadlongrightarrow}  \Ext^1_{\GL_n(K)}(\pi_{\alg}(\phi, \textbf{h}), \pi_1(\phi, \textbf{h})).
\end{equation}
The following theorem is crucial for the paper. 
\begin{theorem}\label{TtD}
The natural surjection $\oplus_{w\in S_n} \ol{\Ext}^1_w(D,D) \twoheadrightarrow \ol{\Ext}^1(D,D)$ (cf. Proposition \ref{Pinffern}) factors through (\ref{ERT}), i.e. there exists a unique map 
\begin{equation}\label{EtD000}t_D: \Ext^1_{\GL_n(K)}\big(\pi_{\alg}(\phi, \textbf{h}), \pi_1(\phi, \textbf{h})\big)
\twoheadlongrightarrow \ol{\Ext}^1(D,D)
\end{equation} such that $\oplus_{w\in S_n} \ol{\Ext}^1_w(D,D) \twoheadrightarrow \ol{\Ext}^1(D,D)$ is equal to $t_D$ composed with (\ref{ERT}).
\end{theorem}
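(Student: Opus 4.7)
The plan is induction on $n$. Uniqueness of $t_D$ is automatic, since the composition (\ref{ERT}) is surjective by Proposition \ref{Pextpi1}(2) combined with the bijectivity of each $\kappa_w$ in (\ref{Ekappaw000}); so existence amounts to the kernel inclusion
\begin{equation*}
\Ker(t_{\phi,\textbf{h}}) \subseteq \Ker\bigl(\oplus_{w\in S_n} \ol{\Ext}^1_w(D,D) \twoheadrightarrow \ol{\Ext}^1(D,D)\bigr)
\end{equation*}
under the identification by $(\kappa_w)$, where the latter surjection is Proposition \ref{Pinffern}. For $n=2$, $\Phi\Gamma_{\nc}(\phi,\textbf{h})$ is a singleton and both $\ol{\Ext}^1(D,D)$ and $\Ext^1_{\GL_2(K)}(\pi_{\alg}(\phi,\textbf{h}),\pi_1(\phi,\textbf{h}))$ have dimension $3d_K+2$ (by (\ref{bardim}) and Proposition \ref{Pextpi1}(1)); the inclusion is then a matter of matching the intertwining compatibilities (\ref{Eintg'}) on the automorphic side with Lemma \ref{Lint1} on the Galois side, so $t_D$ exists and is bijective.

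For the inductive step $n \geq 3$, I would exploit the two paraboline filtrations $\sF$ (with saturated sub $D_1$ of rank $n-1$) and $\sG$ (with quotient $C_1$ of rank $n-1$). Corollary \ref{Csur0} yields the Galois surjection $\ol{\Ext}^1_\sF(D,D) \oplus \ol{\Ext}^1_\sG(D,D) \twoheadrightarrow \ol{\Ext}^1(D,D)$ whose kernel is the higher intertwining locus $\cL(D,D_1,C_1)$ of Corollary \ref{ChIw} (governed by Theorem \ref{ThIW1}). Proposition \ref{PFGfil} yields the parallel automorphic surjection $\Ext^1_\sF(\pi_{\alg},\pi_1) \oplus \Ext^1_\sG(\pi_{\alg},\pi_1) \twoheadrightarrow \Ext^1_{\GL_n(K)}(\pi_{\alg},\pi_1)$ whose kernel is $\Ext^1_{g'}(\pi_{\alg},\pi_1)$ embedded anti-diagonally. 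Corollary \ref{CkappaFp}(2) and Proposition \ref{PextpiFP}(1) further identify each summand with an $\Ext^1$-group for $D_1$ or $C_1$ crossed with $\Hom(K^\times,E)$. Applying the induction hypothesis supplies $t_{D_1}$ and $t_{C_1}$, and these assemble (with the identity on the $\Hom(K^\times,E)$-factors) into a map $\tilde t$ of direct sums, which I then descend through the two vertical surjections to obtain the candidate $t_D$. Compatibility of $t_D$ with each $\kappa_w$ reduces to the induction hypothesis via the paraboline intertwining diagram (\ref{Eparaps}) for any $w \in S_n$ whose $\sT_w$ is compatible with $\sF$ or $\sG$, and for the remaining $w$ follows by combining this with the Weyl-group intertwining relations (\ref{Eintg'}) and Lemma \ref{Lint1}.

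The main obstacle is well-definedness of $t_D$: the image of the amalgamating kernel $\Ext^1_{g'}(\pi_{\alg},\pi_1)$ under $\tilde t$ must land in $\cL(D,D_1,C_1)$. Explicitly, for $v \in \Ext^1_{g'}(\pi_{\alg},\pi_1)$ corresponding under $\zeta_\sF^{-1}$ to $(v_\sF,\psi_0)$ and under $\zeta_\sG^{-1}$ to $(v_\sG,-\psi_0)$ with $v_\sF$, $v_\sG$ in the respective $g'$-subspaces, the induction hypothesis yields $t_{D_1}(v_\sF) \in \ol{\Ext}^1_{g'}(D_1,D_1)$ and $t_{C_1}(v_\sG) \in \ol{\Ext}^1_{g'}(C_1,C_1)$; one must then verify the condition of Theorem \ref{ThIW1}, namely that after twisting by $(1-\psi_0\epsilon)$ the resulting deformation of $D_1$ lies in $\Ext^1_{\iota_D}(D_1,D_1)$ (and dually for $C_1$). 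I expect this to follow from a $g'$-variant of Proposition \ref{Phint1}(2), whose proof is essentially identical once one notes that $\iota_D^\pm$ is compatible with central character twists, yielding $\Ext^1_{g'}(D_1,D_1) \subseteq \Ext^1_{\iota_D}(D_1,D_1)$ and analogously for $C_1$, together with careful tracking of the parameter $\psi_0$ on the two sides as enforced by the sign convention embedding $\Ext^1_{g'}(\pi_{\alg},\pi_1)$ into $\Ext^1_\sF \oplus \Ext^1_\sG$.
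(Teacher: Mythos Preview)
Your overall architecture—induction on $n$, using the two filtrations $\sF$ and $\sG$, applying the induction hypothesis to $D_1$ and $C_1$, and then gluing via Proposition~\ref{PFGfil}—is exactly the paper's approach. The difference lies in how the gluing step (well-definedness of $t_D$) is verified.

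You attempt to check that $\tilde t$ carries the anti-diagonally embedded $\Ext^1_{g'}$ into $\cL(D,D_1,C_1)$ by invoking a $g'$-variant of Proposition~\ref{Phint1}(2). This is a detour and, as written, has a genuine gap: the inclusion $\Ext^1_{g'}(D_1,D_1)\subset \Ext^1_{\iota_D}(D_1,D_1)$ (and its $C_1$-analogue) only tells you the individual deformations lie in the correct subspaces. Membership in $\cL(D,D_1,C_1)$ requires the \emph{paired} condition of $\sI_D$ in (\ref{EsIiota}): the twisted $D_1$- and $C_1$-deformations must arise as $\iota_D^-(M)$ and $-\iota_D^+(M)$ for the \emph{same} $M\in\Ext^1(C_1,D_1)$. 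Your ``careful tracking of the parameter $\psi_0$'' acknowledges but does not discharge this; in particular you have not established that the $\Hom(K^\times,E)$-components under $\zeta_\sF^{-1}$ and $\zeta_\sG^{-1}$ match, nor that the residual $(\varphi,\Gamma)$-module pairing holds.

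The paper avoids this entirely: it shows directly that $t_{\sF,D}$ and $t_{\sG,D}$ agree on $\Ext^1_{g'}$ as maps into $\ol{\Ext}^1(D,D)$, by combining the automorphic intertwining (\ref{Eintg'}) with the $g'$-version of Lemma~\ref{Lint1} applied to $D$ (see the remark following (\ref{EHomg'T})). For $v=\zeta_w(\psi)$ with $w\in S_{n-1}$ and $\psi\in\Hom_{g'}(T(K),E)$, one has $t_{\sF,D}(v)=\kappa_w^{-1}(\psi)$ by construction; for $w'\in S_{n-1}'$, (\ref{Eintg'}) gives $v=\zeta_{w'}((w'w^{-1})\psi)$, so $t_{\sG,D}(v)=\kappa_{w'}^{-1}((w'w^{-1})\psi)$; and the $g'$-intertwining on the Galois side forces these to coincide in $\ol{\Ext}^1_{g'}(D,D)$. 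No reference to $\cL$ or Theorem~\ref{ThIW1} is needed.

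For the compatibility with all $\kappa_w$, your sketch (``Weyl-group intertwining relations (\ref{Eintg'}) and Lemma~\ref{Lint1}'') is too coarse: the relations (\ref{Eintg'}) and Lemma~\ref{Lint1} only control $\Hom_{g'}(T(K),E)$, which is a proper subspace of $\Hom(T(K),E)$. The paper handles $w$ with $1<w(n)<n$ by decomposing $\Hom(T(K),E)\cong\bigoplus_j\Hom(Z_j(K),E)\oplus\Hom(Z(K),E)$ as in (\ref{Evschara}), and on each $\Hom(Z_j(K),E)$ invoking the \emph{parabolic} intertwining of Corollary~\ref{Cint}(2) and (\ref{EintP2}) for the maximal parabolic $P_j$ to conjugate $w$ back into $S_{n-1}\cup S_{n-1}'$. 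This extra ingredient is what makes the reduction work.
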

\begin{proof}
We prove the theorem by induction on $n$. It is trivial for $n=1$. Suppose it holds for $n-1$. As in \S~\ref{S24}, let $D_1\in \Phi\Gamma_{\nc}(\phi^1, \textbf{h}^1)$ (resp. $C_1\in \Phi\Gamma_{\nc}(\phi^1, \textbf{h}^2)$) be the corresponding saturate $(\varphi, \Gamma)$-submodule (resp. quotient) of $D$ (where $\phi^1:=\phi_1 \boxtimes \cdots \boxtimes \phi_{n-1}$, $\textbf{h}^1:=(\textbf{h}_1, \cdots, \textbf{h}_{n-1})$, and $\textbf{h}^2:=(\textbf{h}_2, \cdots, \textbf{h}_{n})$), and $\sF$, $\sG$ be the associated filtrations on $D$.  For $w\in S_{n-1}$, the  following diagram commutes (cf. (\ref{Eparatri})):
\begin{equation*}
	\begin{tikzcd}
	&	\Hom(T(K),E) \arrow[r, "\sim"]  &	\Hom(T_1(K),E) \times \Hom(K^{\times},E) \\
		& \overline{\Ext}^1_w(D,D) \arrow[u, "\kappa_w", "\sim"'] \arrow[r, "\sim"] \arrow[d, hook]   & \overline{\Ext}^1_w(D_1,D_1) \times \Hom(K^{\times}, E) \arrow[u, "\kappa_{w}", "\sim"']  \arrow[d, hook] \\
		&	\overline{\Ext}^1_{\sF}(D,D)  \arrow[r, "\sim", "\kappa_{\sF}"']  &  \overline{\Ext}^1(D_1,D_1) \times \Hom(K^{\times},E).
	\end{tikzcd}
\end{equation*}
By induction hypothesis, the map $\oplus_{w\in S_{n-1}}\ol{\Ext}^1_w(D_1,D_1) \twoheadrightarrow \ol{\Ext}^1(D_1,D_1)$ factors through the following map (defined similarly as in (\ref{ERT}))
\begin{equation*}
	t_{\phi^1,\textbf{h}^1}:  \oplus_{w\in S_{n-1}}\ol{\Ext}^1_w(D_1,D_1)
	\twoheadlongrightarrow \Ext^1_{\GL_{n-1}(K)}(\pi_{\alg}(\phi^1,\textbf{h}^1), \pi_1(\phi^1,\textbf{h}^1)).
\end{equation*}
This together with  Proposition \ref{PextpiFP} and (\ref{EkappaFp}) imply   $\oplus_{w\in S_{n-1}}\overline{\Ext}^1_w(D,D) \twoheadrightarrow \overline{\Ext}^1_{\sF}(D,D) $ ($ \hookrightarrow \overline{\Ext}^1(D,D)$) factors through 
\begin{equation}\label{EtFPD}t_{\sF,D}: \Ext^1_{\sF}(\pi_{\alg}(\phi, \textbf{h}), \pi_1(\phi, \textbf{h}))\twoheadlongrightarrow \ol{\Ext}^1_{\sF}(D,D) \hooklongrightarrow \ol{\Ext}^1(D,D).
	\end{equation} Let $S_{n-1}':=\{w\in S_n\ |\ w(n)=1\}$, that is a subset of $S_n$ of cardinality $(n-1)!$. By a similar discussion with $D_1$ replaced by $C_1$,  the map $\oplus_{w\in S_{n-1}'}\overline{\Ext}^1_w(D,D) \twoheadrightarrow \overline{\Ext}^1_{\sG}(D,D) \hookrightarrow \overline{\Ext}^1(D,D)$ factors through $t_{\sG,D}: \Ext^1_{\sG}(\pi_{\alg}(\phi, \textbf{h}), \pi_1(\phi, \textbf{h}))\twoheadrightarrow \ol{\Ext}^1_{\sG}(D,D)\hookrightarrow \ol{\Ext}^1(D,D)$. By (\ref{Eintg'}) and (\ref{Eint1}) (with $g$ replaced by $g'$), the following diagram commutes (see (\ref{Einjg'Fp}) for the injections from $\Ext^1_{g'}$)
\begin{equation*}
	\begin{tikzcd}
		\Ext^1_{g'}(\pi_{\mathrm{alg}}(\phi, \textbf{h}), \pi_1(\phi, \textbf{h})) 
		\arrow[r, hook]  
		\arrow[d, hook]  
		& \Ext^1_{\sF}(\pi_{\mathrm{alg}}(\phi, \textbf{h}), \pi_1(\phi, \textbf{h})) 
		\arrow[d, "t_{\sF,D}"] 
		\\
		\Ext^1_{\sG}(\pi_{\mathrm{alg}}(\phi, \textbf{h}), \pi_1(\phi, \textbf{h})) 
		\arrow[r, "t_{\sG,D}"] 
		& \overline{\Ext}^1(D,D).
	\end{tikzcd}
\end{equation*}
Hence by the second exact sequence in Proposition \ref{PFGfil}, the composition 
\begin{multline*}
	\Ext^1_{\sF}(\pi_{\alg}(\phi, \textbf{h}), \pi_1(\phi, \textbf{h})) \oplus \Ext^1_{\sG}(\pi_{\alg}(\phi, \textbf{h}), \pi_1(\phi, \textbf{h})) \\
\twoheadlongrightarrow\overline{\Ext}^1_{\sF}(D,D) \oplus \overline{\Ext}^1_{\sG}(D,D) \lra \overline{\Ext}^1(D,D)
\end{multline*}
factors though a map 
\begin{equation} \label{Etdfirst}t_D: \Ext^1_{\GL_n(K)}(\pi_{\alg}(\phi, \textbf{h}), \pi_1(\phi, \textbf{h})) \twoheadlongrightarrow \overline{\Ext}^1(D,D).\end{equation}
Next, we show $t_D$ satisfies the property in the theorem. By construction, the map
	$\oplus_{w\in S_{n-1} \cup S_{n-1}'} \overline{\Ext}^1_w(D,D) \ra \overline{\Ext}^1(D,D)$
factors through $t_D$. It suffices to show for the other $w\in S_n$,  $\overline{\Ext}^1_w(D,D)\ra \overline{\Ext}^1(D,D)$ also factors as 
\begin{equation}\label{Efactor1}
	\overline{\Ext}^1_w(D,D) \xrightarrow[\sim]{\kappa_w} \Hom(T(K),E)\xrightarrow{\zeta_w}  \Ext^1_{\GL_n}(\pi_{\alg}(\phi, \textbf{h}), \pi_1(\phi, \textbf{h}))  \xrightarrow{(\ref{Etdfirst})}\overline{\Ext}^1(D,D).
\end{equation} Suppose hence  $w(n)=i$ with $1<i<n$. We have 
\begin{equation}\label{Evschara}\Hom(T(K),E)\cong \oplus_{j=1}^{n-1} \Hom(Z_{j}(K),E) \oplus \Hom(Z(K),E),\end{equation} where $Z_{j}\subset T$ is the  centre of the Levi subgroup $L_j$ (containing $T$) of the  maximal parabolic subgroup $P_j$ (with $j\notin \sW_{P_j}$).  For any $j=1, \cdots, n-1$, $\kappa_w^{-1}(\Hom(Z_j(K),E))\subset \ol{\Ext}^1_{\sF_{P_j},g'}(D,D)$ (cf. Corollary \ref{Cint}), where $\sF_{P_j}$ is the $P_j$-filtration associated to the $B$-filtration $\sT_w$ (such that $\sT_w$ is compatible with $\sF_{P_j}$). 
Let $w_j$ be an element in the Weyl group of $L_{j}$ such that $w_j(i)=1$ or $w_j(i)=n$ (whose existence is clear).  By Corollary \ref{Cint} (2) and (\ref{EintP2}), we have a commutative diagram
\begin{equation*}
	\begin{tikzcd}
	&	\ol{\Ext}^1_{\sF_{P_j},g'}(D,D) \arrow[r, "\kappa_w", "\sim"'] 
	\arrow[d, equal] & \Hom_{P_j,g'}(T(K),E) \arrow[r, hook, "\zeta_w"] \arrow[d, "\sim", "w_j"']  &\Ext^1_{\GL_n}(\pi_{\alg}(\phi, \textbf{h}), \pi_1(\phi, \textbf{h}))  \arrow[d, equal]\\
	&	\ol{\Ext}^1_{\sF_{P_j},g'}(D,D) \arrow[r, "\kappa_{w_jw}", "\sim"'] &\Hom_{P_j,g'}(T(K),E) \arrow[r, hook, "\zeta_{w_jw}"] & \Ext^1_{\GL_n}(\pi_{\alg}(\phi, \textbf{h}), \pi_1(\phi, \textbf{h})).
	\end{tikzcd}
\end{equation*}
It is clear that $w_jw\in S_{n-1} \cup S_{n-1}'$, hence the map $\overline{\Ext}^1_{w_jw}(D,D) \ra \overline{\Ext}^1(D,D)$ is equal to $t_D \circ (\zeta_{w_jw} \circ \kappa_{w_jw})$.  In particular, its restriction to $	\ol{\Ext}^1_{\sF_{P_j},g'}(D,D)$ is equal to $t_D\circ (\zeta_{w_jw} \circ \kappa_{w_jw})=t_D \circ (\zeta_w \circ \kappa_w)$ by the above commutative diagram. As $\ol{\Ext}^1_w(D,D)$ is spanned by $\ol{\Ext}^1_{\sF_{P_j,g'}}(D,D)$ and $\Hom(Z(K),E)$ (e.g. using (\ref{Evschara})), we obtain the factorisation as in (\ref{Efactor1}). This concludes the proof.
\end{proof}
\begin{remark}\label{RtD}
(1) By  comparing dimensions (using Proposition \ref{PDef1}, Proposition \ref{Pextpi1} (1)), we have 
$\dim_E \Ker(t_D)=(2^n-\frac{n(n+1)}{2}-1)d_K$.

(2) The same argument holds with $\pi_1(\phi, \textbf{h})$ replaced by $\pi(\phi, \textbf{h})$ (with the same $t_D$ under the isomorphism (\ref{Epi1pi})).
\end{remark}
The following lemma is clear.
\begin{lemma}\label{Lzerointer}
For any $w\in S_n$, $\Ker(t_D) \cap \Ext^1_w(\pi_{\alg}(\phi, \textbf{h}), \pi_1(\phi, \textbf{h}))=0$.
\end{lemma}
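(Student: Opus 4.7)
The plan is to use the defining commutative diagram of $t_D$ from Theorem \ref{TtD} to identify the restriction $t_D|_{\Ext^1_w(\pi_{\alg}(\phi,\textbf{h}),\pi_1(\phi,\textbf{h}))}$ explicitly, and then observe that this restriction is an isomorphism onto $\ol{\Ext}^1_w(D,D) \subset \ol{\Ext}^1(D,D)$, so in particular injective.

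First I would recall that $\zeta_w \circ \kappa_w : \ol{\Ext}^1_w(D,D) \xrightarrow{\sim} \Ext^1_w(\pi_{\alg}(\phi,\textbf{h}),\pi_1(\phi,\textbf{h}))$ is an isomorphism (combining the bijection $\kappa_w$ from \eqref{Ekappaw000} with the bijection $\zeta_w$ from \eqref{Eisomzetaw}). By the defining property of $t_D$ in Theorem \ref{TtD}, the composition
\[
\ol{\Ext}^1_w(D,D) \xlongrightarrow[\sim]{\zeta_w \circ \kappa_w} \Ext^1_w(\pi_{\alg}(\phi,\textbf{h}),\pi_1(\phi,\textbf{h})) \xlongrightarrow{t_D} \ol{\Ext}^1(D,D)
\]
coincides with the natural map $\ol{\Ext}^1_w(D,D) \to \ol{\Ext}^1(D,D)$ induced by the inclusion $\Ext^1_w(D,D) \hookrightarrow \Ext^1(D,D)$.

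The next step is to observe that this natural map is injective. Indeed, $\Ext^1_0(D,D) \subset \Ext^1_w(D,D) \subset \Ext^1(D,D)$ by Proposition \ref{PDef1} (3) and the definition of $\Ext^1_0(D,D)$, so both $\ol{\Ext}^1_w(D,D)$ and $\ol{\Ext}^1(D,D)$ are quotients of $\Ext^1_w(D,D)$, $\Ext^1(D,D)$ by the same subspace; the induced map on quotients is thus injective.

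Combining these two steps, the restriction $t_D|_{\Ext^1_w(\pi_{\alg}(\phi,\textbf{h}),\pi_1(\phi,\textbf{h}))}$ is equal to the composition of the isomorphism $(\zeta_w\circ\kappa_w)^{-1}$ with the injection $\ol{\Ext}^1_w(D,D) \hookrightarrow \ol{\Ext}^1(D,D)$, and is therefore injective. This gives $\Ker(t_D) \cap \Ext^1_w(\pi_{\alg}(\phi,\textbf{h}),\pi_1(\phi,\textbf{h})) = 0$, as required. There is no serious obstacle here: the lemma is essentially a formal consequence of Theorem \ref{TtD} once one unwinds the definitions.
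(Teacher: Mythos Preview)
Your proof is correct and is precisely the natural unwinding of Theorem \ref{TtD} that the paper has in mind; the paper in fact gives no proof at all, simply declaring the lemma ``clear'', and your argument makes explicit exactly why.
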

Let $\pi_{\min}(D)$ (resp. $\pi_{\fss}(D)$) be the extension of $\Ker(t_D) \otimes_E\pi_{\alg}(\phi, \textbf{h})$ ($\cong \pi_{\alg}(\phi, \textbf{h})^{ \oplus (2^n-\frac{n(n+1)}{2}-1)d_K}$) by  $\pi_1(\phi, \textbf{h})$ (resp. $\pi(\phi, \textbf{h})$) associated to $\Ker(t_D)$ (cf. \S~\ref{S311}, \ see \ also \ Remark \ \ref{RtD} \ (2)). \ Note \ that \ as 
\ $\End_{\GL_n(K)}(\pi(\phi, \textbf{h}))\xrightarrow{\sim} \End_{\GL_n(K)}(\pi_1(\phi, \textbf{h}))\xrightarrow{\sim}\End_{\GL_n(K)}(\pi_{\alg}(\phi, \textbf{h})\cong E,$ either $\pi_{\min}(D)$ or $\pi_{\fss}(D)$ determines $\Ker(t_D)$. We have 
\begin{equation}\label{EpifsD}
\pi_{\fss}(D)\cong \pi_{\min}(D) \oplus_{\pi_1(\phi, \textbf{h})} \pi(\phi, \textbf{h}).
\end{equation}
In the sequel, we will mainly work with $\pi_{\min}(D)$, noting that most of the  statements generalize to $\pi_{\fss}(D)$ without effort. We have an exact sequence
\begin{multline}
0 \lra \Hom_{\GL_n(K)}(\pi_{\alg}(\phi, \textbf{h}), \Ker(t_D) \otimes_E\pi_{\alg}(\phi, \textbf{h}))\\  \lra \Ext^1_{\GL_n(K)}(\pi_{\alg}(\phi, \textbf{h}), \pi_1(\phi, \textbf{h})) \xlongrightarrow{f_D} \Ext^1_{\GL_n(K)}(\pi_{\alg}(\phi, \textbf{h}), \pi_{\min}(D)).
\end{multline}
By Lemma \ref{Lextps} (2), one sees the last map $f_D$ is surjective. For a $P$-filtration $\sF_P$ on $D$, we denote by $\Ext^1_{\sF_P}(\pi_{\alg}(\phi, \textbf{h}), \pi_{\min}(D))$ the image of $\Ext^1_{\sF_P}(\pi_{\alg}(\phi, \textbf{h}), \pi_1(\phi, \textbf{h}))$ under $f_D$, and write $\Ext^1_w$ for $\Ext^1_{\sT_w}$. Denote by $\Ext^1_g(\pi_{\alg}(\phi, \textbf{h}), \pi_{\min}(D))$ the image of $\Ext^1_g(\pi_{\alg}(\phi, \textbf{h}), \pi_1(\phi, \textbf{h}))$ under $f_D$. 
\begin{corollary}
	The map $t_D$ induces 
	$\Ext^1_g(\pi_{\alg}(\phi, \textbf{h}), \pi_{\min}(D))\xlongrightarrow{\sim}   \ol{\Ext}^1_g(D,D) $
	and $\Ext^1_w(\pi_{\alg}(\phi,\textbf{h}),\pi_{\min}(D))\xrightarrow{\sim} \ol{\Ext}^1_w(D,D)$ for all $w\in S_n$.
\end{corollary}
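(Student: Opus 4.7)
My plan is to first identify $\Ker(f_D)$ with $\Ker(t_D)$ inside $\Ext^1_{\GL_n(K)}(\pi_{\alg}(\phi,\textbf{h}),\pi_1(\phi,\textbf{h}))$, which will let $t_D$ descend to a bijection $\bar t_D\colon \Ext^1_{\GL_n(K)}(\pi_{\alg}(\phi,\textbf{h}),\pi_{\min}(D))\xrightarrow{\sim}\ol{\Ext}^1(D,D)$, and then verify that this bijection identifies the prescribed subspaces on either side. For the first step, applying $\Hom_{\GL_n(K)}(\pi_{\alg}(\phi,\textbf{h}),-)$ to
\begin{equation*}
0\longrightarrow \pi_1(\phi,\textbf{h})\longrightarrow \pi_{\min}(D)\longrightarrow \Ker(t_D)\otimes_E\pi_{\alg}(\phi,\textbf{h})\longrightarrow 0
\end{equation*}
gives a connecting map $\Ker(t_D)\cong \Hom_{\GL_n(K)}(\pi_{\alg}(\phi,\textbf{h}),\Ker(t_D)\otimes_E\pi_{\alg}(\phi,\textbf{h}))\to \Ext^1_{\GL_n(K)}(\pi_{\alg}(\phi,\textbf{h}),\pi_1(\phi,\textbf{h}))$ whose image is exactly $\Ker(f_D)$. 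By the tautological construction of $\pi_{\min}(D)$ from $\Ker(t_D)$, this connecting map is the inclusion, so $\Ker(f_D)=\Ker(t_D)$ and $\bar t_D$ is well-defined and bijective.

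For the $w$-part, the key input is that the triangle
\begin{equation*}
\Hom(T(K),E)\xlongrightarrow[\sim]{\zeta_w}\Ext^1_w(\pi_{\alg}(\phi,\textbf{h}),\pi_1(\phi,\textbf{h}))\xlongrightarrow{t_D}\ol{\Ext}^1_w(D,D)
\end{equation*}
equals $\kappa_w^{-1}$ composed with the inclusion $\ol{\Ext}^1_w(D,D)\hookrightarrow \ol{\Ext}^1(D,D)$ (this is exactly what Theorem \ref{TtD} asserts when restricted to the $w$-summand). Hence $t_D$ restricts to a \emph{surjection} $\Ext^1_w(\pi_{\alg}(\phi,\textbf{h}),\pi_1(\phi,\textbf{h}))\twoheadrightarrow \ol{\Ext}^1_w(D,D)$, and Lemma \ref{Lzerointer} guarantees injectivity. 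Passing this bijection through $f_D$ (which is injective on $\Ext^1_w$ again by Lemma \ref{Lzerointer}, since $\Ker(f_D)=\Ker(t_D)$) yields the desired isomorphism $\Ext^1_w(\pi_{\alg}(\phi,\textbf{h}),\pi_{\min}(D))\xrightarrow{\sim}\ol{\Ext}^1_w(D,D)$.

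For the $g$-part, recall that $\Ext^1_g(\pi_{\alg}(\phi,\textbf{h}),\pi_1(\phi,\textbf{h}))=\zeta_w(\Hom_{\sm}(T(K),E))\subset \Ext^1_w(\pi_{\alg}(\phi,\textbf{h}),\pi_1(\phi,\textbf{h}))$, and that on the Galois side Proposition~\ref{PDef1}(3) together with (\ref{Ekappaw000}) identifies $\ol{\Ext}^1_g(D,D)$ with $\Hom_{\sm}(T(K),E)\subset \Hom(T(K),E)\cong \ol{\Ext}^1_w(D,D)$. The identification $t_D\circ\zeta_w=\kappa_w^{-1}$ established in the previous paragraph therefore restricts to a bijection on the $\Hom_{\sm}$ subspaces, and combining with $\Ker(f_D)\cap\Ext^1_g=0$ (which follows from Lemma \ref{Lzerointer} and the containment $\Ext^1_g\subset \Ext^1_w$) finishes the $g$-case. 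The only subtlety, and the step I would check most carefully, is the identification $\Ker(f_D)=\Ker(t_D)$: one must confirm that the connecting map is literally the identity on $\Ker(t_D)$ under the chosen isomorphism, which follows from the fact that $\pi_{\min}(D)$ is constructed as the tautological pushout along $\Ker(t_D)\hookrightarrow \Ext^1_{\GL_n(K)}(\pi_{\alg}(\phi,\textbf{h}),\pi_1(\phi,\textbf{h}))$.
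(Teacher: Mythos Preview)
Your proposal is correct and follows essentially the same approach as the paper: the paper's proof also rests on Lemma~\ref{Lzerointer} (giving $\Ker(t_D)\cap\Ext^1_*(\pi_{\alg}(\phi,\textbf{h}),\pi_1(\phi,\textbf{h}))=0$ for $*\in\{w,g\}$, hence $f_D$ is injective on these subspaces) together with the identifications (\ref{Ekappaw000}), (\ref{Eisomzetaw}) and Proposition~\ref{Pextalg}(1). Your first paragraph, identifying $\Ker(f_D)$ with $\Ker(t_D)$ via the tautological construction, simply makes explicit what the paper records in the exact sequence displayed immediately before the corollary.
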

\begin{proof}
	By Lemma \ref{Lzerointer}, $\Ext^1_*(\pi_{\alg}(\phi, \textbf{h}), \pi_1(\phi, \textbf{h}))\xrightarrow{\sim}\Ext^1_*(\pi_{\alg}(\phi,\textbf{h}),\pi_{\min}(D))$ for $*\in \{w,g\}$. The corollary then follows from the definition of $t_D$,  (\ref{Ekappaw000}), (\ref{Eisomzetaw}) and Proposition \ref{Pextalg} (1).
\end{proof}The following corollary is a direct consequence of Theorem \ref{TtD}.
\begin{corollary}\label{CtD} Let \ $D\in \Phi\Gamma_{\nc}(\phi, \textbf{h})$. \ The \ representation \ $\pi_{\min}(D)$ \ is \ the \ unique \ extension \ of $\pi_{\alg}(\phi,\textbf{h})^{\oplus (2^n-\frac{n(n+1)}{2}-1)d_K}$ by $\pi_1(\phi, \textbf{h})$ satisfying the following properties:

(1)  $\soc_{\GL_n(K)}\pi_{\min}(D)\cong \pi_{\alg}(\phi, \textbf{h}),$ and
	$\soc_{\GL_n(K)}\big(\pi_{\min}(D)/ \pi_{\alg}(\phi, \textbf{h})\big)\cong  \soc_{\GL_n(K)}\big(\pi_1(\phi, \textbf{h})/ \pi_{\alg}(\phi, \textbf{h})\big).$

(2) There is a bijection
	\begin{equation*}
		t_D:\Ext^1_{\GL_{n}(K)}(\pi_{\alg}(\phi, \textbf{h}), \pi_{\min}(D))	\xlongrightarrow{\sim} 	\ol{\Ext}^1(D,D) 	
	\end{equation*}
	which is compatible with trianguline deformations, i.e. for $w\in S_n$, the composition $	\Hom(T(K),E) \xrightarrow[\sim]{\zeta_w} \Ext^1_w (\pi_{\alg}(\phi, \textbf{h}), \pi_{\min}(D)){\buildrel t_D \over \hookrightarrow} 	\ol{\Ext}^1(D,D) $ coincides with  $\Hom(T(K),E) \xrightarrow[\sim]{\kappa_w^{-1}} \ol{\Ext}^1_w(D,D)\hookrightarrow \ol{\Ext}^1(D,D)$. 

\end{corollary}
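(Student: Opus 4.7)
The plan is to deduce each piece of the corollary directly from Theorem \ref{TtD}, the construction of $\pi_{\min}(D)$ as the tautological extension along $V:=\Ker(t_D)\subset \Ext^1_{\GL_n(K)}(\pi_{\alg}(\phi,\textbf{h}),\pi_1(\phi,\textbf{h}))$, and a long exact sequence calculation.

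First I would analyze the pushforward $f_D\colon \Ext^1_{\GL_n(K)}(\pi_{\alg}(\phi,\textbf{h}),\pi_1(\phi,\textbf{h}))\to \Ext^1_{\GL_n(K)}(\pi_{\alg}(\phi,\textbf{h}),\pi_{\min}(D))$ obtained from the defining exact sequence $0\to \pi_1(\phi,\textbf{h})\to \pi_{\min}(D) \to \pi_{\alg}(\phi,\textbf{h})\otimes_E V \to 0$. Applying $\Hom_{\GL_n(K)}(\pi_{\alg}(\phi,\textbf{h}),-)$ yields a connecting map $\partial\colon \Hom(\pi_{\alg}(\phi,\textbf{h}),\pi_{\alg}(\phi,\textbf{h})\otimes_E V)\cong V \to \Ext^1_{\GL_n(K)}(\pi_{\alg}(\phi,\textbf{h}),\pi_1(\phi,\textbf{h}))$ which, by the very definition of a tautological extension, is the given inclusion $V\hookrightarrow \Ext^1_{\GL_n(K)}(\pi_{\alg}(\phi,\textbf{h}),\pi_1(\phi,\textbf{h}))$. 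Combined with the surjectivity of $f_D$ (as noted in the excerpt, which follows from Lemma \ref{Lextps} (2) via a standard d\'evissage), this gives an isomorphism
\begin{equation*}
\Ext^1_{\GL_n(K)}(\pi_{\alg}(\phi,\textbf{h}),\pi_1(\phi,\textbf{h}))/V \xlongrightarrow{\sim} \Ext^1_{\GL_n(K)}(\pi_{\alg}(\phi,\textbf{h}),\pi_{\min}(D)).
\end{equation*}
Since by construction $V=\Ker(t_D)$ as a subspace of the source, the surjection $t_D$ of Theorem \ref{TtD} descends to the required bijection $t_D\colon \Ext^1_{\GL_n(K)}(\pi_{\alg}(\phi,\textbf{h}),\pi_{\min}(D)) \xrightarrow{\sim} \ol{\Ext}^1(D,D)$. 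The compatibility with trianguline deformations on $\Ext^1_w$ is inherited directly from the defining commutative diagram in Theorem \ref{TtD}, combined with the injection $\Ext^1_w(\pi_{\alg}(\phi,\textbf{h}),\pi_1(\phi,\textbf{h}))\hookrightarrow \Ext^1(\pi_{\alg}(\phi,\textbf{h}),\pi_1(\phi,\textbf{h}))$ (which in fact remains injective after passing to the quotient by $V$, by Lemma \ref{Lzerointer}).

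Next I would establish the socle assertions in (1). For $\soc_{\GL_n(K)}\pi_{\min}(D)$, any simple subrepresentation $S$ either lands in $\pi_1(\phi,\textbf{h})$, forcing $S\cong \pi_{\alg}(\phi,\textbf{h})$, or has non-zero image in $\pi_{\alg}(\phi,\textbf{h})\otimes_E V$; in the latter case $S\cong \pi_{\alg}(\phi,\textbf{h})$ and the lift produces a non-zero $v\in V$ for which the pulled-back extension splits, forcing $v=0$ in $\Ext^1(\pi_{\alg}(\phi,\textbf{h}),\pi_1(\phi,\textbf{h}))$---a contradiction. For $\soc_{\GL_n(K)}(\pi_{\min}(D)/\pi_{\alg}(\phi,\textbf{h}))$, the same argument applies after replacing $\pi_1(\phi,\textbf{h})$ by $\pi_1(\phi,\textbf{h})/\pi_{\alg}(\phi,\textbf{h})$, provided that $V$ remains injective in $\Ext^1(\pi_{\alg}(\phi,\textbf{h}),\pi_1(\phi,\textbf{h})/\pi_{\alg}(\phi,\textbf{h}))$. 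The kernel of $\Ext^1(\pi_{\alg}(\phi,\textbf{h}),\pi_1(\phi,\textbf{h}))\to \Ext^1(\pi_{\alg}(\phi,\textbf{h}),\pi_1(\phi,\textbf{h})/\pi_{\alg}(\phi,\textbf{h}))$ is $\Ext^1_{\GL_n(K)}(\pi_{\alg}(\phi,\textbf{h}),\pi_{\alg}(\phi,\textbf{h}))$, which $t_D$ sends injectively into $\ol{\Ext}^1_{g'}(D,D)$ via $\zeta_w^{-1}$ and $\kappa_w^{-1}$ (bijectively, by Proposition \ref{Pextalg} (1) combined with (\ref{Ekappaw000})), so $V\cap \Ext^1(\pi_{\alg}(\phi,\textbf{h}),\pi_{\alg}(\phi,\textbf{h}))=0$, as desired.

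Finally, for uniqueness, given any extension $E$ of $\pi_{\alg}(\phi,\textbf{h})^{\oplus(2^n-\frac{n(n+1)}{2}-1)d_K}$ by $\pi_1(\phi,\textbf{h})$ satisfying (1) and (2), one writes $E$ as the tautological extension associated to some subspace $V'\subset \Ext^1_{\GL_n(K)}(\pi_{\alg}(\phi,\textbf{h}),\pi_1(\phi,\textbf{h}))$ (its injectivity follows from condition (1) by the same socle argument). The analogue of the first step identifies $\Ker(f_{D,E})=V'$, so that $\Ext^1_{\GL_n(K)}(\pi_{\alg}(\phi,\textbf{h}),E)\cong \Ext^1_{\GL_n(K)}(\pi_{\alg}(\phi,\textbf{h}),\pi_1(\phi,\textbf{h}))/V'$. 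The bijection in (2), being compatible with trianguline deformations, factors the surjection of Theorem \ref{TtD}, forcing $V'=\Ker(t_D)=V$ and hence $E\cong \pi_{\min}(D)$. The most delicate point is the verification that $f_D$ is surjective (hidden in the remark right after the definition of $\pi_{\min}(D)$), which requires Lemma \ref{Lextps} (2) together with a careful d\'evissage along the layers of $\pi_1(\phi,\textbf{h})$; everything else is essentially bookkeeping against Theorem \ref{TtD}.
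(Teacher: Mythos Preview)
Your proposal is correct and follows the same route the paper takes: the paper states this corollary as a direct consequence of Theorem \ref{TtD}, having already established the short exact sequence with connecting map identified as the inclusion of $\Ker(t_D)$ and the surjectivity of $f_D$ (via Lemma \ref{Lextps} (2)) just before the statement. You simply spell out the remaining details---the socle assertions (where your key observation $V\cap\Ext^1_{\GL_n(K)}(\pi_{\alg},\pi_{\alg})=0$ is in fact an immediate consequence of Lemma \ref{Lzerointer}, since $\Ext^1_{g'}\subset\Ext^1_w$) and the uniqueness argument (where the compatibility forces the composite $s\circ f_{D,E}$ to agree with the original $t_D$ on each $\Ext^1_w$, hence everywhere, giving $V'\subset V$ and then $V'=V$ by dimension).
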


Let $\chi_D$ be the character $z^{|\lambda|}|\cdot|_K^{\frac{n(n-1)}{2}} \prod_{i=1}^n \phi_i$ of $K^{\times}$ with $|\lambda|=\sum_{\substack{i=1,\cdots, n\\ \sigma\in \Sigma_K}} \lambda_{i,\sigma}$. We have $\wedge^n D\cong \cR_{K,E}(\chi_D \varepsilon^{-\frac{n(n-1)}{2}})$. For an integral weight $\mu$ of $\ft_{\Sigma_K}$, let $\xi_{\mu}$ be the central character of $\text{U}(\gl_{n,\Sigma_K})$ acting on  $L(\mu)$.
\begin{proposition}
The representation $\pi_{\min}(D)$ has central character $\chi_D$ and infinitesimal character $\xi_{\lambda}$. 
\end{proposition}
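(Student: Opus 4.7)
The plan is to establish that $Z(K) = K^{\times}$ acts on all of $\pi_{\min}(D)$ by $\chi_D$ and that the centre $Z(\text{U}(\gl_{n, \Sigma_K}))$ acts by $\xi_{\lambda}$. I will first verify that every Jordan--Hölder constituent of $\pi_{\min}(D)$ carries these characters, and then show that the extension class defining $\pi_{\min}(D)$ on top of $\pi_1(\phi, \textbf{h})$ lies in the subspaces of extensions preserving these characters, by matching the obstruction maps on the automorphic side with the determinant and Hodge--Tate--Sen weight obstruction maps on the Galois side through $t_D$.

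For the constituents, a direct calculation gives the characters of $\pi_{\alg}(\phi, \textbf{h}) = \pi_{\sm}(\phi) \otimes_E L(\lambda)$: the centre $Z(K)$ acts on $\pi_{\sm}(\phi)$ by $(\phi \eta)|_{Z(K)} = (\prod_i \phi_i) \cdot |\cdot|_K^{n(n-1)/2}$ (using $\eta|_{Z(K)} = |\cdot|_K^{n(n-1)/2}$) and on $L(\lambda)$ by $z^{|\lambda|}$, whose product is $\chi_D$; the infinitesimal character comes from $L(\lambda)$ since $\pi_{\sm}(\phi)$ is annihilated by $\gl_{n, \Sigma_K}$. Every other constituent $\sC(I, s_{i,\sigma})$ is realised as a subquotient of some locally analytic principal series $\PS(w(\phi), \textbf{h}) = (\Ind_{B^-(K)}^{\GL_n(K)} w(\phi) z^{\textbf{h}} (\varepsilon^{-1} \circ \theta))^{\Q_p-\an}$, which is induced from a locally analytic character of $T(K)$ whose restriction to $Z(K)$ equals $\chi_D$ and whose derivative at $\ft_{\Sigma_K}$ equals $\lambda$; hence all of $\PS(w(\phi), \textbf{h})$ has central character $\chi_D$ and infinitesimal character $\xi_{\lambda}$, and these descend to any subquotient. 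In particular the quotient $\pi_1(\phi, \textbf{h})$ of $\oplus^{\pi_{\alg}}_w \PS_1(w(\phi), \textbf{h}) \hookrightarrow \oplus^{\pi_{\alg}}_w \PS(w(\phi), \textbf{h})$ carries $\chi_D$ and $\xi_{\lambda}$.

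It remains to show that the class of $\pi_{\min}(D)$, namely the inclusion $\Ker(t_D) \hookrightarrow \Ext^1_{\GL_n(K)}(\pi_{\alg}(\phi, \textbf{h}), \pi_1(\phi, \textbf{h}))$, lies in the subspace $\Ext^1_Z$ of extensions with fixed central character $\chi_D$ and in the subspace with fixed infinitesimal character $\xi_{\lambda}$. The central character obstruction $c \colon \Ext^1_{\GL_n(K)}(\pi_{\alg}, \pi_1) \to \Hom(K^{\times}, E)$ evaluated on $\zeta_w(\psi) \in \Ext^1_w(\pi_{\alg}, \pi_1)$ gives $\psi|_{Z(K)} = \sum_i \psi_i$, while on the Galois side the determinant obstruction $\ol{\Ext}^1(D, D) \to \Hom(K^{\times}, E)$ evaluated on $\kappa_w^{-1}(\psi)$ gives the same value $\sum_i \psi_i$, since the trianguline deformation has $\det(\widetilde{D}) = \det(D) \cdot (1 + (\sum_i \psi_i)\epsilon)$. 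Hence $c = \det \circ t_D$ on every $\Ext^1_w$, and since these generate $\Ext^1_{\GL_n(K)}(\pi_{\alg}, \pi_1)$ by Proposition~\ref{Pextpi1}~(2), the identity holds globally; thus $\Ker(t_D) \subset \Ker(c) = \Ext^1_Z$. The argument for the infinitesimal character is formally identical: the obstruction on $\zeta_w(\psi)$ is the derivative $d\psi \in \ft_{\Sigma_K}^*$ (the inducing character becomes $w(\phi) z^{\textbf{h}} (1 + \psi \epsilon)$ with Lie derivative $\lambda + d\psi \cdot \epsilon$), and on the Galois side the Hodge--Tate--Sen weight variation of $\kappa_w^{-1}(\psi)$ is also $d\psi$; these obstructions match under $t_D$, and $\Ker(t_D)$ has vanishing image.

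The main obstacle is verifying that these two obstruction maps descend compatibly from the surjection $\oplus_w \Hom(T(K), E) \twoheadrightarrow \Ext^1_{\GL_n(K)}(\pi_{\alg}, \pi_1)$, i.e., that the relations among the classes $\zeta_w(\psi)$ are respected by both $\psi \mapsto \psi|_{Z(K)}$ and $\psi \mapsto d\psi$. This compatibility follows from the Weyl invariance of the restriction of characters of $T(K)$ to $Z(K)$ and of the (unordered) Hodge--Tate--Sen weights on the Galois side, made explicit by diagrams such as (\ref{Ecomg'int}) together with its analogue for the derivative.
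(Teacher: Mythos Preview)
Your proof is correct and follows essentially the same approach as the paper: both define the central/infinitesimal character obstruction on $\Ext^1_{\GL_n(K)}(\pi_{\alg},\pi_1)$ and a matching Sen-weight/determinant obstruction on $\ol{\Ext}^1(D,D)$, verify they agree on each $\Ext^1_w$ (the paper does this for the infinitesimal character via a Jacquet-module computation showing $\cZ_K$ acts on $\widetilde{\pi}$ by $d\widetilde{\delta}\circ\HC$), and conclude that $\Ker(t_D)$ lies in the kernel. Your final paragraph is unnecessary: the obstruction maps $c$ and $d_{\inf}$ are intrinsically defined on $\Ext^1_{\GL_n(K)}(\pi_{\alg},\pi_1)$ (since $\Hom(\pi_{\alg},\pi_1)\cong E$), so once you know they agree with $\det\circ t_D$ and $d_{\Sen}\circ t_D$ on each spanning subspace $\Ext^1_w$, there is nothing further to check about descent.
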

\begin{proof}
We only prove the statement for the infinitesimal character,  the central character being similar. Let $\cZ_K$ be the centre of $\text{U}(\gl_{n,\Sigma_K})$. Recall we have the Harish-Chandra isomorphism $\HC: \cZ_K \xrightarrow{\sim} \text{U}(\ft_{\Sigma_K})^{\sW_{n,K}}$, where $\sW_{n,K}$ is the Weyl group of $\Res^K_{\Q_p} \GL_n$, isomorphic to $S_n^{d_K}$, and where we normalize the map such that a weight $\mu$ of $\ft_{\Sigma_K}$, seen as a character of $\text{U}(\ft_{\Sigma_K})^{\sW_{n,K}}$, corresponds to $\xi_{\mu-\theta^{[K:\Q_p]}}$ of $\cZ_K$ (recalling $\theta^{[K:\Q_p]}=(0,\cdots, 1-n)_{\sigma\in \Sigma_K}$). 
In particular, the weight $\textbf{h}$ corresponds to $\xi_{\lambda}$. Let $X_{\xi_{\lambda}}$ (resp. $X_{\textbf{h}}$) be the tangent space  of $\cZ_K$ (resp. $\text{U}(\ft_{\Sigma_K})$) at $\xi_{\lambda}$ (resp. at $\textbf{h}$), i.e. $X_{\xi_{\lambda}}=\{f:\cZ_K \twoheadrightarrow E[\epsilon]/\epsilon^2\ |\ f\equiv \xi_{\lambda} \pmod{\epsilon}\}$ and similarly for $X_{\textbf{h}}$. The map $\HC$ induces a bijection  $\HC: X_{\textbf{h}} \xrightarrow{\sim} X_{\xi_{\lambda}}$ (noting the injection $\text{U}(\ft_{\Sigma_K})^{\sW_{n,K}}\hookrightarrow \text{U}(\ft_{\Sigma_K})$ induces bijections on tangent spaces, e.g. by the explicit description of the invariants $\text{U}(\ft_{\Sigma_K})^{\sW_{n,K}}$ as a polynomial algebra).

For $\widetilde{D}\in \Ext^1(D,D)$, the Sen weights of $\widetilde{D}$ (over $E[\epsilon]/\epsilon^2$) have the form $(h_{i,\sigma}+a_{i,\sigma} \epsilon)_{\substack{\sigma\in \Sigma_K\\ i=1,\cdots, n}}$. We obtain hence an $E$-linear map $\Ext^1(D,D) \ra X_{\textbf{h}}$, $\widetilde{D} \mapsto (a_{i,\sigma})$. The map sends $\Ext^1_g(D,D)$ hence $\Ext^1_0(D,D)$ to zero, thus induces an $E$-linear map $d_{\Sen}:	\ol{\Ext}^1(D,D) \ra X_{\textbf{h}}$. For $\widetilde{\pi}\in  \Ext^1_{\GL_n(K)}(\pi_{\alg}(\phi, \textbf{h}), \pi_1(\phi, \textbf{h}))$ equipped with the natural $E[\epsilon]/\epsilon^2$-action, and $a\in \cZ_K$, the operator $(a-\xi_{\lambda}(a))$ (on $\widetilde{\pi}$) annihilates $\pi_1(\phi,\textbf{h})$ hence induces a $\GL_n(K)$-equivariant map $\widetilde{\pi}\twoheadrightarrow \pi_{\alg}(\phi, \textbf{h}) \ra \widetilde{\pi}$. As $\Hom(\pi_{\alg}(\phi, \textbf{h}), \pi_1(\phi, \textbf{h}))\cong E$,  this map is equal to $\alpha \epsilon$ for some $\alpha\in E$ (depending on $a$). We deduce $\cZ_K$ acts on $\widetilde{\pi}$ via a character over $E[\epsilon]/\epsilon^2$, which corresponds to an element in $X_{\xi_{\lambda}}$. We obtain hence  an $E$-linear map:
$	d_{\inf}: \Ext^1_{\GL_n(K)}(\pi_{\alg}(\phi, \textbf{h}), \pi_1(\phi, \textbf{h})) \ra X_{\xi_{\lambda}}$.
The proposition (for the infinitesimal character) will be a direct consequence of the commutativity of the diagram:
\begin{equation}\label{EHC}
	\begin{CD}
		\Ext^1_{\GL_n(K)}(\pi_{\alg}(\phi, \textbf{h}), \pi_1(\phi, \textbf{h})) @> d_{\inf}   >> 	X_{\xi_{\lambda}}	\\
		@V t_DV V @V  \HC^{-1} VV \\
		\ol{\Ext}^1(D,D) @> d_{\Sen}>>	X_{\textbf{h}}.
	\end{CD}
\end{equation}
By the construction of $t_D$, it suffices to show for all $w\in S_n$, the following diagram commutes
\begin{equation}\label{Einfcomm00}
	\begin{CD}
		\ol{\Ext}^1_w(D,D)@> d_{\Sen}>>	X_{\textbf{h}}\\
		@V \sim V V @V  \HC VV \\
		\Ext^1_{w}(\pi_{\alg}(\phi, \textbf{h}), \pi_1(\phi, \textbf{h})) @> d_{\inf}   >> 	X_{\xi_{\lambda}}.
	\end{CD}
\end{equation}
Let $\psi\in \Hom(T(K),E)$, and $\widetilde{D}\in \Ext^1_w(D,D)$ be a $(\varphi, \Gamma)$-module over $\cR_{K, E[\epsilon]/\epsilon^2}$ of trianguline parameter $\widetilde{\delta}:=w(\phi) z^{\textbf{h}}(1+\psi \epsilon)$. Let  $d\widetilde{\delta}: \text{U}(\ft_{\Sigma_K})\ra E[\epsilon]/\epsilon^2$ be the morphism induced by $\widetilde{\delta}$ by derivation. Then  $d_{\Sen}(\widetilde{D})=d \widetilde{\delta}$. By (\ref{Ekappaw}) (\ref{Ezetaw}) and Remark \ref{Rind1}, the image $\widetilde{\pi}$ of $\widetilde{D}$ under the left vertical map in (\ref{Einfcomm00}) satisfies $I_{B^-(K)}^{\GL_n(K)} (\widetilde{\delta} \eta z^{-\theta^{[K:\Q_p]}}) \hookrightarrow \widetilde{\pi}$. By \cite[(0.4)]{Em2}, we have $\widetilde{\delta}\eta \delta_Bz^{-\theta^{[K:\Q_p]}}\hookrightarrow J_B(\widetilde{\pi})\hookrightarrow  \widetilde{\pi}^{\fn}$, where $\fn$ is the Lie algebra of $N$. We see $\cZ_K$ acts on the image of the injection via $d\widetilde{\delta}\circ \HC$. Moreover, the composition  $\widetilde{\delta}\eta \delta_Bz^{-\theta^{[K:\Q_p]}}\hookrightarrow J_B(\widetilde{\pi}) \hookrightarrow  \widetilde{\pi} \twoheadrightarrow \pi_{\alg}(\phi, \textbf{h})$  is non-zero, since $\dim_E \Hom_{T(K)}\big(w(\phi)z^{\textbf{h}} \eta z^{-\theta^{[K:\Q_p]}}\delta_B,J_B(\pi_1(\phi, \textbf{h}))\big)\cong E$. We deduce the subrepresentation $\widetilde{\pi}[\cZ_K=d\widetilde{\delta}\circ \HC]$ strictly contains $\pi_1(\phi, \textbf{h})$ hence is equal to $\widetilde{\pi}$ itself. So $d_{\inf}(\widetilde{\pi})=d\widetilde{\delta}\circ \HC$ and (\ref{Einfcomm00}) commutes. This concludes the proof. 
\end{proof}
We next discuss the compatibility of $t_D$ (and $\pi_{\min}(D)$) with parabolic inductions. Let $P\supset B$ be a standard parabolic subgroup of $\GL_n$ with $L_P$ equal to $\diag(\GL_{n_1}, \cdots, \GL_{n_r})$. 
Let $\sF_P$ be a $P$-filtration of $D$, $M_i:=\gr_i \sF_P$, which is a $(\varphi, \Gamma)$-module of rank $n_i$, for $i=1, \cdots, r$.  Recall we have defined $\Ext^1_{\sF_P}$ for both $(\varphi, \Gamma)$-modules (cf. the discussion above Proposition \ref{Ppara1}), and $\GL_n(K)$-representations (cf. \S~\ref{S313}).
\begin{proposition}\label{PtDFp}
The map  $t_D$ restricts to  a surjection
\begin{equation*}
	t_{D,\sF_P}: 	\Ext^1_{\sF_P}(\pi_{\alg}(\phi, \textbf{h}), \pi_1(\phi, \textbf{h}))  \twoheadlongrightarrow \ol{\Ext}^1_{\sF_P}(D,D).
\end{equation*}Moreover, the following diagram commutes
\begin{equation}\label{EtDFp}
	\begin{CD}
		\prod_{i=1}^r \Ext^1_{\GL_{n_i}(K)}(\pi_{\alg}(\phi_{\sF_P,i}, \textbf{h}^i), \pi_1(\phi_{\sF_P,i},\textbf{h}^i)) @> (t_{M_i}) >> \prod_{i=1}^r \ol{\Ext}^1_{(\varphi, \Gamma)}(M_i,M_i) \\
		@V \sim V (\ref{EzetaFp}) V @V \sim V (\ref{EkappaFp}) V\\
		\Ext^1_{\sF_P}(\pi_{\alg}(\phi, \textbf{h}), \pi_1(\phi, \textbf{h})) @> t_{D,\sF_P} >> \ol{\Ext}^1_{\sF_P}(D,D).
	\end{CD}
\end{equation}
In particular, the parabolic induction (\ref{EzetaFp}) induces a natural isomorphism
\begin{equation}\label{Eparameterpara}
	\oplus_{i=1}^r \Ker(t_{M_i}) \xlongrightarrow{\sim} \Ker(t_{D,\sF_P})=\Ker(t_D)\cap 	\Ext^1_{\sF_P}(\pi_{\alg}(\phi, \textbf{h}), \pi_1(\phi, \textbf{h})).
\end{equation}
\end{proposition}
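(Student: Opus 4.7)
The plan is to reduce everything to the compatibility with trianguline deformations already encoded in Theorem \ref{TtD} and Corollary \ref{CtD}, then bootstrap through the parabolic surjections on both sides.

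First I would show that $t_D$ sends $\Ext^1_{\sF_P}(\pi_{\alg}(\phi, \textbf{h}), \pi_1(\phi, \textbf{h}))$ into $\ol{\Ext}^1_{\sF_P}(D,D)$, which is enough to define $t_{D, \sF_P}$ as the restriction. By the surjectivity (\ref{EinffernFp}) of
$\bigoplus_{\sT_w \subset \sF_P} \Ext^1_w(\pi_{\alg}(\phi, \textbf{h}), \pi_1(\phi, \textbf{h})) \twoheadrightarrow \Ext^1_{\sF_P}(\pi_{\alg}(\phi, \textbf{h}), \pi_1(\phi, \textbf{h}))$,
it suffices to check this on each summand indexed by a $w \in S_n$ with $\sT_w$ compatible with $\sF_P$. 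For such a $w$, Corollary \ref{CtD} (2) identifies $t_D \circ \zeta_w$ with $\kappa_w^{-1}$, and by Corollary \ref{CkappaFp} (1), $\kappa_w^{-1}$ sends $\Hom(T(K), E)$ (viewed via $\kappa_w$ as the image of $\Ext^1_w$) into $\ol{\Ext}^1_w(D,D) \subset \ol{\Ext}^1_{\sF_P}(D,D)$. This yields the restriction and simultaneously identifies $t_{D, \sF_P}$ on each such $\Ext^1_w(\pi_{\alg}(\phi, \textbf{h}), \pi_1(\phi, \textbf{h}))$ with $\kappa_w^{-1}$.

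Next I would verify commutativity of (\ref{EtDFp}). The idea is to test it after precomposing with the surjections from trianguline parameters on both sides. Concretely, picking $w \in S_n$ with $\sT_w$ compatible with $\sF_P$ and writing $\sT_{w,i}$ for the induced $B$-filtration on $M_i$, I would use Proposition \ref{PextpiFP} (2) to identify $\zeta_{\sF_P}$ (restricted to $\prod_i \Ext^1_{\sT_{w,i}}(\pi_{\alg}(\phi_{\sF_P,i}, \textbf{h}^i), \pi_1(\phi_{\sF_P,i}, \textbf{h}^i))$) with $\zeta_w$ via the natural product decomposition of $\Hom(T(K), E)$; and use Corollary \ref{CkappaFp} (2) to identify $\kappa_{\sF_P}$ on $\ol{\Ext}^1_w(D,D)$ with the product of the $\kappa_{\sT_{w,i}}$. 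Both $t_{D,\sF_P}$ and $\prod_i t_{M_i}$ are then identified, on the image of $\Ext^1_w$ (resp. $\prod_i \Ext^1_{\sT_{w,i}}$), with the inverses of $\kappa_w$ (resp. $\prod_i \kappa_{\sT_{w,i}}$), by Corollary \ref{CtD} (2) applied to $D$ and each $M_i$. As $w$ ranges over elements compatible with $\sF_P$, these factors cover the source via the surjection (\ref{EinffernFp}) (and the analogous surjection on the Galois side from Proposition \ref{Pinffern} applied to each $M_i$), so commutativity of (\ref{EtDFp}) follows.

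The surjectivity of $t_{D, \sF_P}$ is then immediate from (\ref{EtDFp}): the top horizontal is surjective by Theorem \ref{TtD} applied to each $M_i$ (i.e.\ each $t_{M_i}$ is surjective), and the vertical maps are isomorphisms by Corollary \ref{CkappaFp} (2) and Proposition \ref{PextpiFP} (1). Finally, for (\ref{Eparameterpara}), the right vertical isomorphism in (\ref{EtDFp}) gives $\bigoplus_i \Ker(t_{M_i}) \xrightarrow{\sim} \Ker(t_{D, \sF_P})$ via $\zeta_{\sF_P}$, and the equality $\Ker(t_{D, \sF_P}) = \Ker(t_D) \cap \Ext^1_{\sF_P}(\pi_{\alg}(\phi, \textbf{h}), \pi_1(\phi, \textbf{h}))$ holds by definition once the restriction is well-defined.

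I expect the main technical obstacle to be the compatibility step in the second paragraph: ensuring that the identification of $\zeta_{\sF_P}$ with the various $\zeta_w$ (for $\sT_w$ compatible with $\sF_P$) is genuinely compatible with the choices of isomorphisms (\ref{Eintisom}) and the product decomposition $\Hom(T(K), E) \cong \prod_i \Hom(T_i(K) \cap L_{P,i}(K), E)$. This is exactly the content of Proposition \ref{PextpiFP} (2), but verifying that it aligns with Corollary \ref{CkappaFp} (2) on the Galois side requires tracking the trianguline parameters carefully through the parabolic inductions; once this bookkeeping is done, the rest of the argument is a diagram chase.
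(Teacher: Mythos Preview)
Your proposal is correct and follows essentially the same route as the paper: both reduce to the trianguline case via the surjection (\ref{EinffernFp}) on the automorphic side and the spanning of $\ol{\Ext}^1_{\sF_P}(D,D)$ by the $\ol{\Ext}^1_w(D,D)$ with $\sT_w \subset \sF_P$ on the Galois side, then deduce commutativity from the compatibility diagrams (\ref{Eparatri}) and (\ref{Eparaps}) (which are exactly your Corollary \ref{CkappaFp} (2) and Proposition \ref{PextpiFP} (2)). The only organizational difference is that the paper bundles the restriction and surjectivity together in one step, whereas you establish the restriction first and then read surjectivity off the commutative diagram; both are fine.
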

\begin{proof}By Corollary \ref{CkappaFp} (2), Proposition \ref{Pinffern},  $\ol{\Ext}^1_{\sF_P}(D,D)$ can be spanned by $\ol{\Ext}^1_w(D,D)$  for $\sT_w$ compatible with $\sF_P$.  Together with (\ref{EinffernFp}), the first part follows. The commutativity of the diagram follows from (\ref{Eparatri}) and (\ref{Eparaps}).
\end{proof}
\begin{remark}\label{RparatD}
Let $\pi_{\min,\sF_P}(D)\subset \pi_{\min}(D)$ be the extension of $\Ker(t_{D,\sF_P}) \otimes_E\pi_{\alg}(\phi, \textbf{h})\cong  \pi_{\alg}(\phi, \textbf{h})^{\oplus \sum_{i=1}^r(d_K(2^{n_i}-\frac{n_i(n_i+1)}{2}-1))}$ by $\pi_1(\phi, \textbf{h})$ associated to $\Ker(t_{D,\sF_P})$. By Proposition \ref{PtDFp}, $\pi_{\min, \sF_P}(D)$ is the maximal subrepresentation of $\pi_{\min}(D)$ which comes from the push-forward of extensions of $\pi_{\alg}(\phi, \textbf{h})$ by $\pi_{\sF_P}(\phi, \textbf{h})$ via $\pi_{\sF_P}(\phi ,\textbf{h})\hookrightarrow \pi_1(\phi, \textbf{h})$. We have   $I_{P^-(K)}^{\GL_n(K)}((\widehat{\boxtimes}_{i=1}^r \pi_{\min}(M_i)) \otimes_E \varepsilon^{-1} \circ \theta^P)\hookrightarrow \pi_{\min, \sF_P}(D)$.  Moreover, (\ref{EtDFp}) induces a commutative diagram
\begin{equation*}
	\begin{CD}
		\prod_{i=1}^r \Ext^1_{\GL_{n_i}(K)}(\pi_{\alg}(\phi_{\sF_P,i}, \textbf{h}^i), \pi_{\min}(M_i)) @> (t_{M_i}) > \sim > \prod_{i=1}^r \ol{\Ext}^1_{(\varphi, \Gamma)}(M_i,M_i) \\
		@V \sim VV @V \sim VV\\
		\Ext^1_{\sF_P}(\pi_{\alg}(\phi, \textbf{h}), \pi_{\min, \sF_P}(D)) @> t_{D,\sF_P} > \sim > \ol{\Ext}^1_{\sF_P}(D,D),
	\end{CD}
\end{equation*}
where $\Ext^1_{\sF_P}(\pi_{\alg}(\phi, \textbf{h}), \pi_{\min, \sF_P}(D))$ is the image of $\Ext^1_{\sF_P}(\pi_{\alg}(\phi, \textbf{h}), \pi_1(\phi, \textbf{h}))$ via the push-forward map, and where the left vertical map is obtained in a similar way as (\ref{EzetaFp}).

\end{remark}

Let $\sigma\in \Sigma_K$. By Proposition \ref{Pextpi1sigma} and  Corollary \ref{Cwtsigma}, (\ref{ERT})  restricts to a surjection
\begin{equation}\label{EDsigma000}\oplus_{w\in S_n}\ol{\Ext}^1_{\sigma, w}(D,D) \twoheadlongrightarrow \Ext^1_{\sigma}(\pi_{\alg}(\phi, \textbf{h}), \pi_1(\phi, \textbf{h})).
	\end{equation}
\begin{corollary} \label{CtDsigma} Let $D\in \Phi\Gamma_{\nc}(\phi, \textbf{h})$.

(1) The map (\ref{Einf2}), quotienting by $\Ext^1_0(D,D)$, factors through (\ref{EDsigma000}) and the restriction of $t_D$:
\begin{equation*}
	t_{D,\sigma}:	\Ext^1_{\sigma}(\pi_{\alg}(\phi, \textbf{h}), \pi_1(\phi, \textbf{h})) \twoheadlongrightarrow \ol{\Ext}^1_{\sigma}(D,D).
\end{equation*}

(2) Let $P$ be a standard parabolic subgroup and $\sF_P$ be a $P$-filtration on $D$. Let $t_{D,\sF_P, \sigma}$ be the restriction of  $t_{D, \sigma}$ to $\Ext^1_{\sigma, \sF_P}(\pi_{\alg}(\phi, \textbf{h}), \pi_1(\phi, \textbf{h}))$, we have a commutative diagram 
\begin{equation*}
	\begin{CD}
		\prod_{i=1}^r \Ext^1_{\sigma}(\pi_{\alg}(\phi_{\sF_P,i}, \textbf{h}^i), \pi_1(\phi_{\sF_P,i},\textbf{h}^i)) @> (t_{M_{i},\sigma}) >> \prod_{i=1}^r \ol{\Ext}^1_{\sigma}(M_i,M_i) \\
		@V \sim V (\ref{EzetaFpsigma}) V  @V \sim V (\ref{EkappaFP11})V\\
		\Ext^1_{\sigma,\sF_P}(\pi_{\alg}(\phi, \textbf{h}), \pi_1(\phi, \textbf{h})) @> t_{D,\sF_P,\sigma}>> \ol{\Ext}^1_{\sigma, \sF_P}(D,D).
	\end{CD}
\end{equation*}
In particular, (\ref{EzetaFpsigma}) induces $\oplus_{i=1}^r \Ker t_{M_{i,\sigma}} \xrightarrow{\sim} \Ker t_{D, \sF_P,\sigma} $.
\end{corollary}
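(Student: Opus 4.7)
The plan is to deduce both statements by restricting Theorem~\ref{TtD} and Proposition~\ref{PtDFp} to the $\sigma$-analytic parts, using the fact that all the relevant isomorphisms on the Galois and automorphic sides admit natural $\sigma$-restrictions that match up.

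For part (1), first I would verify that $t_D$ sends $\Ext^1_{\sigma}(\pi_{\alg}(\phi,\textbf{h}),\pi_1(\phi,\textbf{h}))$ into $\ol{\Ext}^1_\sigma(D,D)$. By Proposition~\ref{Pextpi1sigma}(2), every class in $\Ext^1_\sigma(\pi_{\alg},\pi_1)$ is a sum of elements of the form $\zeta_w(\psi_w)$ with $\psi_w\in\Hom_\sigma(T(K),E)$ and $w\in S_n$. Theorem~\ref{TtD} says $t_D(\zeta_w(\psi_w))$ is the class of $\kappa_w^{-1}(\psi_w)$ in $\ol{\Ext}^1(D,D)$, and Corollary~\ref{Cwtsigma} identifies $\kappa_w^{-1}(\Hom_\sigma(T(K),E))$ precisely with $\ol{\Ext}^1_{\sigma,w}(D,D)\subset\ol{\Ext}^1_\sigma(D,D)$. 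This gives the factorization $t_{D,\sigma}$. For surjectivity, I would chain together Corollary~\ref{Cinf2} (which yields that $\oplus_w \ol{\Ext}^1_{\sigma,w}(D,D)\twoheadrightarrow\ol{\Ext}^1_\sigma(D,D)$) with the lift $\kappa_w^{-1}$ from Corollary~\ref{Cwtsigma} and with $\zeta_w$: any class in $\ol{\Ext}^1_\sigma(D,D)$ is a sum of trianguline pieces, each pulled back through $\Hom_\sigma(T(K),E)$ to an element of $\Ext^1_{\sigma,w}(\pi_{\alg},\pi_1)\subset\Ext^1_\sigma(\pi_{\alg},\pi_1)$ whose image under $t_{D,\sigma}$ recovers the class by Theorem~\ref{TtD}.

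For part (2), I would then restrict the commutative diagram (\ref{EtDFp}) of Proposition~\ref{PtDFp} to $\sigma$-analytic deformations and check compatibility on each arrow. The left vertical isomorphism reduces, via Proposition~\ref{PdimsigmaPara}(2), to (\ref{EzetaFpsigma}), and the right vertical isomorphism restricts, via Proposition~\ref{Pparasigma}(2) (or equivalently Corollary~\ref{CFpwsigma}), to (\ref{EkappaFP11}). The top arrow $(t_{M_i,\sigma})$ is the product of the $\sigma$-restrictions built in part (1) applied to each factor $M_i$, and the bottom arrow $t_{D,\sF_P,\sigma}$ is the restriction of $t_{D,\sF_P}$ to $\sigma$-analytic classes. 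Commutativity is inherited from (\ref{EtDFp}) once one observes that all restricted arrows factor compatibly, which reduces to the commutativity of the analogous diagram in Proposition~\ref{PdimsigmaPara}(2). Finally, since (\ref{EzetaFpsigma}) and (\ref{EkappaFP11}) are both bijective, the vertical isomorphisms identify $\Ker t_{D,\sF_P,\sigma}$ with $\oplus_{i=1}^r \Ker t_{M_i,\sigma}$, giving the last claim.

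The main obstacle I anticipate is bookkeeping-level rather than conceptual: confirming that the various ``$\sigma$" decorations on both sides, introduced through different avenues (partial de Rhamness on the Galois side via $W_{\dR,\tau}^+$, $\ug_{\Sigma_K\setminus\{\sigma\}}$-algebraicity on the automorphic side), are compatible through \emph{every} map in the diagram. In particular, one needs that the parabolic-induction construction of $\zeta_{\sF_P}$ sends $\Hom_\sigma$-characters to $\ug_{\Sigma_K\setminus\{\sigma\}}$-algebraic extensions, and that $\kappa_{\sF_P}$ respects partial de Rhamness after quotienting by $\Ext^1_0$; both are essentially already recorded in Proposition~\ref{PdimsigmaPara}(2) and Corollary~\ref{CFpwsigma}, so the proof amounts to assembling these pieces.
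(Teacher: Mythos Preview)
Your proposal is correct and follows essentially the same approach as the paper's own proof, which simply cites Theorem~\ref{TtD} together with Corollary~\ref{Cinf2} for part~(1), and the diagram~(\ref{EtDFp}) for part~(2). Your write-up is a faithful and detailed unpacking of exactly these citations, invoking the supporting results (Corollary~\ref{Cwtsigma}, Proposition~\ref{Pextpi1sigma}(2), Proposition~\ref{PdimsigmaPara}(2), Proposition~\ref{Pparasigma}(2)) that make the $\sigma$-restriction precise.
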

\begin{proof}
(1) follows by Theorem \ref{TtD} and Corollary \ref{Cinf2}. (2) follows from   (\ref{EtDFp}).
\end{proof}
\begin{remark}\label{Rdimsigma}
(1)	 It is clear that  $\Ker(t_{D,\sigma})=\Ker(t_D) \cap 	\Ext^1_{\sigma}(\pi_{\alg}(\phi, \textbf{h}), \pi_1(\phi, \textbf{h}))$. By Lemma \ref{Lsigma1} and (\ref{Ext0dim}), $\dim_E\ol{\Ext}^1_{\sigma}(D,D)=n+\frac{n(n+1)}{2}$. Together with  Proposition \ref{Pextpi1sigma} (1), we then deduce $\dim_E \Ker(t_{D,\sigma})=2^n-\frac{n(n+1)}{2}-1$.

(2) Recall we have $\Ext^1_{\sigma}(\pi_{\alg}(\phi, \textbf{h}), \pi_{1,\sigma}(\phi, \textbf{h}))\xrightarrow{\sim} \Ext^1_{\sigma}(\pi_{\alg}(\phi, \textbf{h}), \pi_1(\phi, \textbf{h}))$ (see the discussion below Remark \ref{Rautosigma}). We  view hence $\Ker(t_{D,\sigma})$ as subspace of $\Ext^1_{\sigma}(\pi_{\alg}(\phi, \textbf{h}), \pi_{1,\sigma}(\phi, \textbf{h}))$. Set $\pi_{\min}(D)_{\sigma}$ to be the extension of $\Ker(t_{D,\sigma}) \otimes_E \pi_{\alg}(\phi, \textbf{h})$ by $\pi_{1,\sigma}(\phi, \textbf{h})$. Then $\pi_{\min}(D)_{\sigma}$ is just  the maximal $\ug_{\Sigma_K\setminus \{\sigma\}}$-algebraic subrepresentation of $\pi_{\min}(D)$. 

\end{remark}

\begin{corollary}\label{Cparadecom}
We have $\oplus_{\sigma\in \Sigma_K} \Ker(t_{D,\sigma})\xrightarrow{\sim} \Ker(t_D)$. Consequently, $\pi_{\min}(D) \cong \bigoplus_{\pi_{\alg}(\phi, \textbf{h})}^{\sigma\in \Sigma_K}\pi_{\min}(D)_{\sigma}$.
\end{corollary}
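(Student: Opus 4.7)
The statement has two parts, and the second follows formally from the first. I will focus on the isomorphism $\bigoplus_{\sigma\in\Sigma_K} \Ker(t_{D,\sigma}) \xrightarrow{\sim} \Ker(t_D)$.

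First, by Remark \ref{Rdimsigma}(1) and Remark \ref{RtD}(1) both sides have the same $E$-dimension, namely $(2^n - \tfrac{n(n+1)}{2} - 1)d_K$, so it suffices to prove that the natural summation map is injective. Two ingredients go into this. On the one hand, the ``$\Ext^1_g$'' part is harmless: the corollary following Theorem \ref{TtD} shows that $t_D$ restricts to an isomorphism $\Ext^1_g(\pi_{\alg}(\phi,\textbf{h}),\pi_1(\phi,\textbf{h})) \xrightarrow{\sim} \ol{\Ext}^1_g(D,D)$, so $\Ker(t_D) \cap \Ext^1_g = 0$, and a fortiori $\Ker(t_{D,\sigma}) \cap \Ext^1_g = 0$ for every $\sigma$. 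On the other hand, the key global tool is the isomorphism (\ref{Evarytau2}) from Proposition \ref{Pextpi1sigma}(3), which asserts that, modulo $\Ext^1_g$, the subspaces $\Ext^1_\tau(\pi_{\alg},\pi_1)$ sum to $\Ext^1_{\GL_n(K)}(\pi_{\alg},\pi_1)$ as a direct sum. Thus if $\sum_\sigma v_\sigma = 0$ in $\Ext^1_{\GL_n(K)}(\pi_{\alg},\pi_1)$ with $v_\sigma \in \Ker(t_{D,\sigma})$, then reducing modulo $\Ext^1_g$ and invoking (\ref{Evarytau2}) forces each image $\overline{v_\sigma}$ to vanish, i.e.\ $v_\sigma \in \Ext^1_g$, whence $v_\sigma = 0$.

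For the representation-theoretic consequence, I would first record that the analogous amalgamation already holds at the level of $\pi_1$: namely $\pi_1(\phi,\textbf{h}) \cong \bigoplus_{\pi_{\alg}(\phi,\textbf{h})}^{\sigma\in\Sigma_K} \pi_{1,\sigma}(\phi,\textbf{h})$. Indeed, since by Proposition \ref{LGLn1}(4) the constituents $\sC(I,s_{i,\sigma})$ appearing in $\pi_{1,\sigma}$ are pairwise distinct for distinct $\sigma$, one has $\pi_{1,\sigma_1}\cap \pi_{1,\sigma_2} = \pi_{\alg}(\phi,\textbf{h})$ inside $\pi_1(\phi,\textbf{h})$ for $\sigma_1\neq \sigma_2$, and the $\pi_{1,\sigma}$'s jointly exhaust the cosocle of $\pi_1(\phi,\textbf{h})$. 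Using the inclusions $\pi_{\min}(D)_\sigma \hookrightarrow \pi_{\min}(D)$ from Remark \ref{Rdimsigma}(2), I get a natural map $\bigoplus_{\pi_{\alg}(\phi,\textbf{h})}^{\sigma} \pi_{\min}(D)_\sigma \to \pi_{\min}(D)$. It fits in a map of short exact sequences whose left-hand vertical is the isomorphism $\bigoplus_{\pi_{\alg}}^{\sigma} \pi_{1,\sigma} \xrightarrow{\sim} \pi_1$ just established, and whose right-hand vertical is $\bigoplus_\sigma \Ker(t_{D,\sigma})\otimes_E \pi_{\alg}(\phi,\textbf{h}) \xrightarrow{\sim} \Ker(t_D)\otimes_E \pi_{\alg}(\phi,\textbf{h})$, an isomorphism by the first part; the five lemma concludes.

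The main obstacle is the injectivity in the first part, which rests essentially on combining the decomposition (\ref{Evarytau2}) with the injectivity of $t_D$ on $\Ext^1_g$. Once these two inputs are in hand, the dimension count and the five-lemma argument are routine.
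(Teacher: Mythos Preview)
Your proof is correct and follows essentially the same approach as the paper's: both reduce to injectivity via the dimension count, then use $\Ker(t_D)\cap\Ext^1_g=0$ together with the direct-sum decomposition (\ref{Evarytau2}) modulo $\Ext^1_g$ to force each $v_\sigma$ to vanish. The paper phrases this by passing to the quotient maps $\bar{t}_D$, $\bar{t}_{D,\sigma}$ and citing Lemma~\ref{Lzerointer} (rather than the corollary you reference) for $\Ker(t_D)\cap\Ext^1_g=0$, while you work directly with elements; your five-lemma argument for the second part is a correct expansion of what the paper leaves implicit.
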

\begin{proof}
The second part follows from the first one (see also Remark \ref{Rdimsigma} (2)). Consider the induced maps
\begin{equation*}
\Ext^1_{\GL_n}(\pi_{\alg}(\phi, \textbf{h}), \pi_1(\phi, \textbf{h}))/\Ext^1_{g}(\pi_{\alg}(\phi, \textbf{h}), \pi_1(\phi, \textbf{h})) 
	{\buildrel \bar{t}_D \over \twoheadrightarrow} \ol{\Ext}^1(D,D)/\ol{\Ext}^1_g(D,D),
\end{equation*} 
\begin{equation*}
	\Ext^1_{\sigma}(\pi_{\alg}(\phi, \textbf{h}), \pi_1(\phi, \textbf{h}))/\Ext^1_{g}(\pi_{\alg}(\phi, \textbf{h}), \pi_1(\phi, \textbf{h})) 
	{\buildrel 	\bar{t}_{D,\sigma} 	\over \twoheadrightarrow} \ol{\Ext}_{\sigma}^1(D,D)/\ol{\Ext}^1_g(D,D).
\end{equation*} As $\Ext^1_g(\pi_{\alg}(\phi, \textbf{h}),\pi_1(\phi, \textbf{h}))\cap \Ker(t_D)=0$ (cf. Lemma \ref{Lzerointer}), we have isomorphisms $\Ker(t_D)\xrightarrow{\sim} \Ker \bar{t}_D$ and $\Ker(t_{D,\sigma}) \xrightarrow{\sim} \Ker \bar{t}_{D,\sigma}$. Using Proposition \ref{Pextpi1sigma} (3),  $\oplus_{\sigma\in \Sigma_K} \Ker \bar{t}_{D,\sigma}\ra \Ker \bar{t}_D$ is injective. We deduce the natural map 
$	\oplus_{\sigma\in \Sigma_K} \Ker(t_{D,\sigma})\ra  \Ker(t_D)$
is injective. As the both sides have  dimension $(2^n-\frac{n(n+1)}{2}-1)d_K$ by Remark \ref{RtD} (1) and Remark \ref{Rdimsigma} (1),  the map is actually bijective. 
\end{proof}

Let $D_{\sigma}=\fT_{\sigma}(D)$ (cf. (\ref{Ecow})),  
and consider (see Corollary \ref{Ccow2} (1) for the last isomorphism) $$t_{D_{\sigma}}:=\fT_{\sigma} \circ t_{D,\sigma}: \Ext^1_{\sigma}(\pi_{\alg}(\phi, \textbf{h}), \pi_1(\phi, \textbf{h})) \twoheadrightarrow \ol{\Ext}^1_{\sigma}(D,D) \xrightarrow[\sim]{\fT_{\sigma}} \ol{\Ext}^1_{\sigma}(D_{\sigma}, D_{\sigma}).$$
For a $P$-filtration $\sF_P$ on $D_{\sigma}$ (which corresponds to a $P$-filtration on $D$, still denoted by $\sF_P$), let $t_{D_{\sigma}, \sF_P}:=\fT_{\sigma} \circ t_{D, \sF_P, \sigma}$, which is equal to the restriction of $t_{D_{\sigma}}$ to $	\Ext^1_{\sigma,\sF_P}(\pi_{\alg}(\phi, \textbf{h}), \pi_{1,\sigma}(\phi, \textbf{h})) $. It is clear $\Ker t_{D_{\sigma}}=\Ker t_{D,\sigma}$, and $\Ker t_{D_{\sigma}, \sF_P}=\Ker t_{D,\sF_P, \sigma}$. The following corollary is clear. 
\begin{corollary}\label{CtDFpsigma}
(1) The surjective map $\oplus_w \ol{\Ext}^1_{\sigma, w}(D_{\sigma}, D_{\sigma}) \twoheadrightarrow \ol{\Ext}^1_{\sigma}(D_{\sigma}, D_{\sigma})$ (cf. Corollary \ref{Cinf2}) factors through $t_{D_{\sigma}}$ composed with the following composition (which is compatible with (\ref{EDsigma000}), by (\ref{Ecowg}))
\begin{multline*}
\oplus_{w\in S_n} \ol{\Ext}^1_{\sigma,w}(D_{\sigma}, D_{\sigma}) \xlongrightarrow[\sim]{(\kappa_w)} \oplus_{w\in S_n} \Hom_{\sigma}(T(K),E) \\ \xlongrightarrow{(\zeta_w)} \Ext^1_{\sigma}(\pi_{\alg}(\phi, \textbf{h}), \pi_{1,\sigma}(\phi, \textbf{h}))\twoheadlongrightarrow \Ext^1_{\sigma}(\pi_{\alg}(\phi, \textbf{h}), \pi_1(\phi, \textbf{h})).
\end{multline*}
Consequently, $\Ker t_{D_{\sigma}}$ depends only on $D_{\sigma}$.
	
(2) The statements in Corollary \ref{CtDsigma} (2) hold with $D$, $M_i$ replace by $D_{\sigma}$, $M_{i,\sigma}=\fT_{\sigma}(M_i)$. In particular, we have 
	\begin{equation}\label{Eparadecom}
		\oplus_{i=1}^r \Ker t_{M_{i,\sigma}} \xlongrightarrow{\sim} \Ker t_{D_{\sigma}, \sF_P}=\Ker t_{D_{\sigma}} \cap 	\Ext^1_{\sigma,\sF_P}(\pi_{\alg}(\phi, \textbf{h}), \pi_{1,\sigma}(\phi, \textbf{h})).
	\end{equation}
\end{corollary}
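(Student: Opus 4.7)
The plan is to deduce Corollary \ref{CtDFpsigma} by transporting the results already established for $D$ over to $D_\sigma$ via the functor $\fT_\sigma$ of (\ref{Ecow}), using the fact that $t_{D_\sigma}$ was \emph{defined} as $\fT_\sigma\circ t_{D,\sigma}$. The two key pieces of input are (i) the various $\fT_\sigma$-isomorphisms on partially de Rham extension groups (Corollary \ref{Ccow2}), and (ii) the commutative diagram (\ref{Ecowg}) which ensures that $\fT_\sigma$ intertwines the trianguline parameters $\kappa_w$ for $D$ and for $D_\sigma$.

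For (1), I would first note that by Corollary \ref{CtDsigma} (1), the map $\oplus_w\ol{\Ext}^1_{\sigma,w}(D,D)\twoheadrightarrow\ol{\Ext}^1_\sigma(D,D)$ factors as $t_{D,\sigma}\circ(\zeta_w)\circ(\kappa_w)$ through $\Ext^1_\sigma(\pi_{\alg}(\phi,\textbf{h}),\pi_1(\phi,\textbf{h}))$. Postcomposing with the isomorphism $\fT_\sigma:\ol{\Ext}^1_\sigma(D,D)\xrightarrow{\sim}\ol{\Ext}^1_\sigma(D_\sigma,D_\sigma)$ of Corollary \ref{Ccow2}, and replacing the source by $\oplus_w\ol{\Ext}^1_{\sigma,w}(D_\sigma,D_\sigma)$ via the right-most vertical isomorphism in (\ref{Ecowg}) (which matches $\kappa_w$ on the two sides), yields precisely the claimed factorization through $t_{D_\sigma}$. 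Surjectivity of the whole composition onto $\ol{\Ext}^1_\sigma(D_\sigma,D_\sigma)$ is automatic since both $\fT_\sigma\circ t_{D,\sigma}$ and $\oplus_w\ol{\Ext}^1_{\sigma,w}(D_\sigma,D_\sigma)\to\ol{\Ext}^1_\sigma(D_\sigma,D_\sigma)$ are surjective, the latter by Corollary \ref{Cinf2} applied to $D_\sigma$. The ``consequently'' clause then follows at once: the displayed composition is built only from objects intrinsically attached to $D_\sigma$, namely its trianguline refinements $w(\phi)z^{\fT_\sigma(\textbf{h})}$ (Lemma \ref{Lcow}), the maps $\kappa_w$ on trianguline deformations of $D_\sigma$, the $\GL_n$-side maps $\zeta_w$ (which do not depend on $D$), and the inclusion $\Ext^1_\sigma(\pi_\alg,\pi_{1,\sigma})\hookrightarrow\Ext^1_\sigma(\pi_\alg,\pi_1)$. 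Hence the kernel can be read off from $D_\sigma$ alone.

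For (2), I would deduce the assertion formally by stacking the commutative diagram of Corollary \ref{CtDsigma} (2) with the $\fT_\sigma$-isomorphisms on each side. On the Galois side, the second commutative square of Corollary \ref{Ccow2} gives
\[
\ol{\Ext}^1_{\sigma,\sF_P}(D,D)\xrightarrow[\sim]{\fT_\sigma}\ol{\Ext}^1_{\sigma,\sF_P}(D_\sigma,D_\sigma),\qquad
\prod_i\ol{\Ext}^1_\sigma(M_i,M_i)\xrightarrow[\sim]{\fT_\sigma}\prod_i\ol{\Ext}^1_\sigma(M_{i,\sigma},M_{i,\sigma}),
\]
and these isomorphisms are compatible with the paraboline splitting (\ref{EkappaFP11}). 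On the automorphic side, the spaces $\Ext^1_{\sigma,\sF_P}(\pi_{\alg}(\phi,\textbf{h}),\pi_1(\phi,\textbf{h}))$ and the parabolic induction isomorphism (\ref{EzetaFpsigma}) are the same whether we work with $D$ or $D_\sigma$ (they only remember the Levi data $(\phi_{\sF_P,i},\textbf{h}^i)$). Combining these, the commutative square of Corollary \ref{CtDsigma} (2) for $D$ transports, under $\fT_\sigma$ on the Galois factors, to the analogous square for $D_\sigma$; uniqueness of the map $t_{D_\sigma,\sF_P}=\fT_\sigma\circ t_{D,\sF_P,\sigma}$ makes this consistent. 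The kernel identification (\ref{Eparadecom}) then drops out by chasing kernels in the square, exactly as in the proof of Corollary \ref{CtDsigma} (2).

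The mathematical content is essentially nil beyond what is already in Corollaries \ref{CtDsigma} and \ref{Ccow2}; the only obstacle, and the reason this needs a proof rather than being truly immediate, is the bookkeeping required to verify that the three layers of $\fT_\sigma$-isomorphisms (trianguline via (\ref{Ecowg}), paraboline via Corollary \ref{Ccow2}, full $\Sigma_K\setminus\{\sigma\}$-de Rham) are simultaneously compatible with the one-sided automorphic data (which is unchanged under $\fT_\sigma$). Once that compatibility is spelled out, (1) and (2) both follow by diagram chase.
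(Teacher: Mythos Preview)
Your proposal is correct and follows precisely the approach the paper has in mind: the paper declares this corollary ``clear'' after defining $t_{D_\sigma}:=\fT_\sigma\circ t_{D,\sigma}$ and $t_{D_\sigma,\sF_P}:=\fT_\sigma\circ t_{D,\sF_P,\sigma}$, and your write-up simply unpacks that clearness by invoking Corollary~\ref{CtDsigma}, the $\fT_\sigma$-isomorphisms of Corollary~\ref{Ccow2}, and the compatibility (\ref{Ecowg}) of $\kappa_w$ under $\fT_\sigma$. Your observation that the displayed composition in (1) is surjective onto $\Ext^1_\sigma(\pi_{\alg},\pi_1)$ (so that $t_{D_\sigma}$ is uniquely determined by the factorization and hence intrinsic to $D_\sigma$) is exactly the point behind the ``Consequently'' clause.
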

Similarly as in Corollary \ref{CtD} and  Remark \ref{RparatD}, we have 
\begin{corollary}
	We have natural isomorphisms (cf. Remark \ref{Rdimsigma} (2))
	\begin{equation*}
		t_{D_{\sigma}}: \Ext^1_{\sigma}(\pi_{\alg}(\phi, \textbf{h}), \pi_{\min}(D)_{\sigma}) \xlongrightarrow[\sim]{t_{D,\sigma}}\ol{\Ext}^1_{\sigma}(D,D)\xlongrightarrow[\sim]{\fT_{\sigma}}\ol{\Ext}^1_{\sigma}(D_{\sigma}, D_{\sigma}).
	\end{equation*}
Moreover, for a $P$-filtration $\sF_P$ on $D$ as in Corollary \ref{CtDsigma} (2), we have a commutative diagram (see Corollary \ref{Ccow2} for the right square):
\begin{equation}\label{EtDFPsigma2}\small
	\begin{tikzcd}
	&\prod_{i=1}^r \Ext^1_{\sigma}(\pi_{\alg}(\phi_{\sF_P,i}, \textbf{h}^i), \pi_{\min}(M_i)_{\sigma}) \arrow[r, "(t_{M_i,\sigma})", "\sim"']  \arrow[d, "\sim"'] & \prod_{i=1}^r \ol{\Ext}^1_{\sigma}(M_{i},M_{i})  \arrow[r, "\fT_{\sigma}", "\sim"'] \arrow[d, "\sim"']& \prod_{i=1}^r \ol{\Ext}^1_{\sigma}(M_{i,\sigma},M_{i,\sigma})  \arrow[d, "\sim"] \\
&\Ext^1_{\sigma,\sF_P}(\pi_{\alg}(\phi, \textbf{h}), \pi_{\min, \sF_P}(D)_{\sigma}) \arrow[r, "t_{D, \sF_P, \sigma}", "\sim"']  & \ol{\Ext}^1_{\sigma, \sF_P}(D,D) \arrow[r, "\fT_{\sigma}", "\sim"']  &  \ol{\Ext}^1_{\sigma, \sF_P}(D_{\sigma},D_{\sigma}) 
	\end{tikzcd}
\end{equation}
where $M_{i,\sigma}=\fT_{\sigma}(M_i)$, $\pi_{\min,\sF_P}(D)_{\sigma}\supset \pi_{1,\sigma}(\phi, \textbf{h})$ is the maximal $\ug_{\Sigma_K\setminus \{\sigma\}}$-algebraic subrepresentation of $\pi_{\min,\sF_P}(D)$ (Rk. \ref{RparatD}), and $\Ext^1_{\sigma, \sF_P}(\pi_{\alg}(\phi, \textbf{h}), \pi_{\min, \sF_P}(D)_{\sigma})$ is the image of $\Ext^1_{\sigma, \sF_P}(\pi_{\alg}(\phi, \textbf{h}), \pi_{1,\sigma}(\phi, \textbf{h}))$ via the natural push-forward map.
\end{corollary}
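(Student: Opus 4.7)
The plan is to first establish the isomorphism for $t_{D_\sigma}$, then derive the commutative diagram from the $\pi_1$-version already proved in Corollary \ref{CtDsigma} (2) together with the compatibility with $\fT_\sigma$ given by Corollary \ref{Ccow2}.

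First I would show the bijection $t_{D,\sigma}: \Ext^1_{\sigma}(\pi_{\alg}(\phi,\textbf{h}), \pi_{\min}(D)_{\sigma}) \xrightarrow{\sim} \ol{\Ext}^1_{\sigma}(D,D)$. By Corollary \ref{CtDsigma} (1), the map $t_{D,\sigma}: \Ext^1_{\sigma}(\pi_{\alg}(\phi,\textbf{h}), \pi_1(\phi,\textbf{h})) \twoheadrightarrow \ol{\Ext}^1_{\sigma}(D,D)$ is surjective with kernel $\Ker(t_{D,\sigma})$, which by Remark \ref{Rdimsigma} (2) is viewed inside $\Ext^1_{\sigma}(\pi_{\alg}(\phi,\textbf{h}), \pi_{1,\sigma}(\phi,\textbf{h}))$. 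Since $\pi_{\min}(D)_{\sigma}$ is by construction the extension of $\Ker(t_{D,\sigma})\otimes_E\pi_{\alg}(\phi,\textbf{h})$ by $\pi_{1,\sigma}(\phi,\textbf{h})$ tautologically associated to the subspace $\Ker(t_{D,\sigma})$, the natural push-forward map $\Ext^1_\sigma(\pi_\alg(\phi,\textbf{h}), \pi_{1,\sigma}(\phi,\textbf{h})) \to \Ext^1_\sigma(\pi_\alg(\phi,\textbf{h}), \pi_{\min}(D)_\sigma)$ is surjective with kernel exactly $\Ker(t_{D,\sigma})$. This last claim is proved by an argument parallel to Lemma \ref{Lextps} (2): an element $v \in \Ker(t_{D,\sigma})$ pushed forward to $\pi_{\min}(D)_{\sigma}$ corresponds, via the tautological construction, to a diagonal embedding that splits, while conversely any $\tilde{\pi}\in \Ext^1_{\sigma}(\pi_{\alg}(\phi,\textbf{h}), \pi_{1,\sigma}(\phi,\textbf{h}))$ becoming split in $\Ext^1_{\sigma}(\pi_{\alg}(\phi,\textbf{h}), \pi_{\min}(D)_{\sigma})$ must lie in $\Ker(t_{D,\sigma})$ by the universal property of the tautological extension. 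Composing with the isomorphism $\fT_{\sigma}: \ol{\Ext}^1_{\sigma}(D,D) \xrightarrow{\sim}\ol{\Ext}^1_{\sigma}(D_{\sigma}, D_{\sigma})$ from Corollary \ref{Ccow2} yields $t_{D_\sigma}$.

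Next I would establish the commutative diagram. The rightmost vertical arrows and the right square are precisely Corollary \ref{Ccow2} (applied to each $M_i$ and to $D$ with the $P$-filtration $\sF_P$). The middle square, relating $(t_{M_i,\sigma})$ on the $\pi_1$-level to the parabolic $\ol{\Ext}^1_{\sigma, \sF_P}$ version, is exactly Corollary \ref{CtDsigma} (2). For the left square, I would use Remark \ref{RparatD} and its natural $\sigma$-analogue: by construction, $\pi_{\min,\sF_P}(D)_\sigma$ is the maximal $\ug_{\Sigma_K\setminus\{\sigma\}}$-algebraic subrepresentation of $\pi_{\min,\sF_P}(D)$, and its restricted $t_{D,\sF_P,\sigma}$ becomes a bijection onto $\ol{\Ext}^1_{\sigma,\sF_P}(D,D)$ by the same pushforward-kernel argument as before, using the isomorphism (\ref{Eparadecom}) $\oplus_i \Ker(t_{M_{i,\sigma}}) \xrightarrow{\sim} \Ker(t_{D_\sigma, \sF_P})$. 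The compatibility of the parabolic-induction isomorphism (\ref{EzetaFpsigma}) with the construction of $\pi_{\min}(M_i)_\sigma$ on each factor, together with the commutativity of the analogous diagram on the $\pi_1$-level (Corollary \ref{CtDsigma} (2)), then gives commutativity of the left square.

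The main obstacle is the left square: one must verify that the parabolic-induction map $\zeta_{\sF_P,\sigma}$ (\ref{EzetaFpsigma}) respects the passage from $\pi_1$ to $\pi_{\min}$ compatibly on each Levi factor, i.e.\ that pushing forward along $\prod_i \pi_{1,\sigma}(\phi_{\sF_P,i},\textbf{h}^i) \hookrightarrow \prod_i \pi_{\min}(M_i)_{\sigma}$ and then parabolically inducing is the same as first parabolically inducing and then pushing forward along $\pi_{1,\sigma}(\phi,\textbf{h}) \hookrightarrow \pi_{\min,\sF_P}(D)_\sigma$. This will follow from the explicit description of $\pi_{\min,\sF_P}(D)_\sigma$ via (\ref{Eparadecom}) together with the fact that the parabolic induction $I_{P^-(K)}^{\GL_n(K)}(\widehat{\boxtimes}_i \pi_{\min}(M_i)_\sigma \otimes_E \varepsilon^{-1}\circ\theta^P)$ injects into $\pi_{\min,\sF_P}(D)_\sigma$ (the $\sigma$-analogue of the injection in Remark \ref{RparatD}), realized concretely by tracking tautological extensions through the identifications $\zeta_w$. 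Once this compatibility is in place, the full diagram assembles automatically from the three squares.
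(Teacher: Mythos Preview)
Your proposal is correct and follows the same route the paper takes: the paper states this corollary as following ``similarly as in Corollary \ref{CtD} and Remark \ref{RparatD}'', and you have spelled out exactly that $\sigma$-analogue, using the tautological-extension/pushforward argument (via Lemma \ref{Lextps} (2)) for the first isomorphism and combining Corollary \ref{CtDsigma} (2) with Corollary \ref{Ccow2} for the diagram. Your ``main obstacle'' concerning the left square is not a genuine difficulty---the compatibility of $\zeta_{\sF_P,\sigma}$ with the passage to $\pi_{\min}$ is immediate from (\ref{Eparadecom}) and the tautological nature of the constructions, exactly as in Remark \ref{RparatD}.
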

\begin{theorem} \label{Thodgeauto}Let $D$, $D' \in \Phi\Gamma_{\nc}(\phi, \textbf{h})$, and $\sigma \in \Sigma_K$.  Then $\pi_{\min}(D)_{\sigma}\cong\pi_{\min}(D')_{\sigma}$ if and only if $D_{\sigma}\cong D'_{\sigma}$. Consequently, $\pi_{\min}(D)\cong\pi_{\min}(D')$ if and only if $D_{\sigma} \cong D_{\sigma}'$ for all $\sigma\in \Sigma_K$. 
\end{theorem}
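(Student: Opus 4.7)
The plan is to prove the first equivalence by induction on $n$ and then deduce the ``consequently'' statement from Corollary~\ref{Cparadecom}. By construction $\pi_{\min}(D)_\sigma$ is the tautological extension of $\Ker(t_{D,\sigma})\otimes_E\pi_{\alg}(\phi,\textbf{h})$ by $\pi_{1,\sigma}(\phi,\textbf{h})$, and since $\End_{\GL_n(K)}(\pi_{1,\sigma}(\phi,\textbf{h}))\cong E$, the representation $\pi_{\min}(D)_\sigma$ determines (and is determined by) the subspace $\Ker(t_{D,\sigma})\subset \Ext^1_\sigma(\pi_{\alg}(\phi,\textbf{h}),\pi_{1,\sigma}(\phi,\textbf{h}))$. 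Since by Corollary~\ref{CtDFpsigma}(1) this subspace depends only on $D_\sigma$, the ``if'' direction is immediate, and we are reduced to showing: if $\Ker(t_{D_\sigma})=\Ker(t_{D'_\sigma})$, then $D_\sigma\cong D'_\sigma$. The base case $n=2$ is trivial because $\Phi\Gamma_{\nc}(\phi,\textbf{h})$ is then a singleton.

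For the inductive step ($n\geq 3$), let $D_1,C_1$ and $\sF,\sG$ be as in \S~\ref{S24}. Intersecting both subspaces with the (ambient, $D$-independent) $\Ext^1_{\sigma,\sF}(\pi_{\alg},\pi_{1,\sigma})$ gives $\Ker(t_{D_\sigma,\sF})=\Ker(t_{D'_\sigma,\sF})$, which via (\ref{Eparadecom}) for $\sF$ (noting the rank-one graded piece contributes a zero kernel) transports to $\Ker(t_{D_{1,\sigma}})=\Ker(t_{D'_{1,\sigma}})$. The inductive hypothesis then yields $D_{1,\sigma}\cong D'_{1,\sigma}$, and the same argument with $\sG$ gives $C_{1,\sigma}\cong C'_{1,\sigma}$; fix such isomorphisms. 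To recover the Hodge parameter, combine (\ref{EFGsigma}), Proposition~\ref{Phodgecow} and the diagram (\ref{EtDFPsigma2}) applied to $\sF$ and $\sG$ into the commutative square
\begin{equation*}
\begin{CD}
\Ext^1_{\sigma,\sF}(\pi_{\alg},\pi_{1,\sigma})\oplus\Ext^1_{\sigma,\sG}(\pi_{\alg},\pi_{1,\sigma}) @>>> \Ext^1_\sigma(\pi_{\alg},\pi_{1,\sigma}) \\
@V (t_{D,\sigma,\sF},\, t_{D,\sigma,\sG}) VV @V t_{D_\sigma} VV \\
V(D_{1,\sigma},C_{1,\sigma})_\sigma @>>> \ol{\Ext}^1_\sigma(D_\sigma,D_\sigma),
\end{CD}
\end{equation*}
whose rows are surjective with kernels $\Ext^1_{\sigma,g'}(\pi_{\alg},\pi_{1,\sigma})$ and $\cL(D_\sigma,D_{1,\sigma},C_{1,\sigma})_\sigma$ respectively, and whose left vertical is surjective with kernel $\Ker(t_{D_{1,\sigma}})\oplus \Ker(t_{C_{1,\sigma}})$. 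By a diagram chase, the preimage in the top-left of $\Ker(t_{D_\sigma})$ (via the top map) coincides with the preimage of $\cL(D_\sigma,D_{1,\sigma},C_{1,\sigma})_\sigma$ via the left vertical. Since the top map and (after fixing the identifications of sub and quotient) the left vertical are $D$-independent, and since we have $\Ker(t_{D_\sigma})=\Ker(t_{D'_\sigma})$, we obtain $\cL(D_\sigma,D_{1,\sigma},C_{1,\sigma})_\sigma=\cL(D'_\sigma,D_{1,\sigma},C_{1,\sigma})_\sigma$; Corollary~\ref{Chodge1}(1) then gives $D_\sigma\cong D'_\sigma$, closing the induction.

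The ``consequently'' statement follows from Corollary~\ref{Cparadecom}: $\pi_{\min}(D)$ is the amalgamated sum of the maximal $\ug_{\Sigma_K\setminus\{\sigma\}}$-algebraic subrepresentations $\pi_{\min}(D)_\sigma$ along $\pi_{\alg}(\phi,\textbf{h})$, so $\pi_{\min}(D)\cong \pi_{\min}(D')$ iff $\pi_{\min}(D)_\sigma\cong \pi_{\min}(D')_\sigma$ for every $\sigma$, which by the first part holds iff $D_\sigma\cong D'_\sigma$ for every $\sigma$. The main technical hurdle is the commutative diagram in the inductive step: one must verify commutativity by naturality of $\kappa_w$, $\zeta_w$ and $t$, and, more subtly, must ensure that the chosen isomorphisms $D_{1,\sigma}\cong D'_{1,\sigma}$ and $C_{1,\sigma}\cong C'_{1,\sigma}$ (canonical only up to scalar) induce a single identification $V(D_{1,\sigma},C_{1,\sigma})_\sigma\cong V(D'_{1,\sigma},C'_{1,\sigma})_\sigma$ compatible with both the left vertical map and the subspaces $\cL$, so that their equality is meaningful.
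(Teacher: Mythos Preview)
Your proof is correct and follows essentially the same approach as the paper's. Both reduce to showing $\Ker(t_{D_\sigma})=\Ker(t_{D'_\sigma})\Rightarrow D_\sigma\cong D'_\sigma$ by induction, recover $D_{1,\sigma}\cong D'_{1,\sigma}$ and $C_{1,\sigma}\cong C'_{1,\sigma}$ from the parabolic kernels via (\ref{Eparadecom}), and then use a commutative diagram relating the automorphic side to $V(D_{1,\sigma},C_{1,\sigma})_\sigma$ to transport the equality of kernels to $\cL(D_\sigma,\ldots)_\sigma=\cL(D'_\sigma,\ldots)_\sigma$, concluding by Corollary~\ref{Chodge1}(1). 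The only organizational difference is that the paper first forms the quotient representations $\pi^{\pm}$ (so the middle vertical in its diagram (\ref{EextRT}) is a bijection onto $V(D_{1,\sigma},C_{1,\sigma})_\sigma$), whereas you work directly with $\Ext^1_{\sigma,\sF}\oplus\Ext^1_{\sigma,\sG}$ and a surjective left vertical with kernel $\Ker(t_{D_{1,\sigma}})\oplus\Ker(t_{C_{1,\sigma}})$; your diagram chase is the evident unwinding of the paper's. Your closing caveat about the scalar ambiguity in the isomorphisms $D_{1,\sigma}\cong D'_{1,\sigma}$, $C_{1,\sigma}\cong C'_{1,\sigma}$ is harmless: rescaling these isomorphisms acts trivially on $\ol{\Ext}^1_\sigma(D_{1,\sigma},D_{1,\sigma})$ and $\ol{\Ext}^1_\sigma(C_{1,\sigma},C_{1,\sigma})$, hence on $V(D_{1,\sigma},C_{1,\sigma})_\sigma$ and on the $\cL$-subspaces, so the identification is canonical.
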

\begin{proof}The second part follows from the first part by Corollary \ref{Cparadecom} and Remark \ref{Rdimsigma} (2).  As $\End_{\GL_n(K)}(\pi_1(\phi, \textbf{h}))\xrightarrow{\sim} \End_{\GL_n(K)}(\pi_{\alg}(\phi, \textbf{h}))\cong E$, $\pi_{\min}(D)_{\sigma}\cong \pi_{\min}(D')_{\sigma}$ if and only if $\Ker(t_{D_{\sigma}})=\Ker(t_{D'_{\sigma}})$.  The ``only if" in the first part follows by Corollary \ref{CtDFpsigma} (1). 
We prove ``if" in the first statement by induction on $n$. The case where $n\leq 2$ is trivial. Indeed, in this case, $\pi_{\min}(D)_{\sigma}$ are all isomorphic, and $D_{\sigma}$ are all isomorphic as well, for $D\in \Phi\Gamma_{\nc}(\phi, \textbf{h})$. 
Suppose it holds for $n-1$.
Let $D_1$ (resp. $D_1'$) be the saturated $(\varphi, \Gamma)$-submodule of $D$ (resp. of $D'$) of rank $n-1$, and $C_1$ (resp. $C_1'$) be the quotient of $D$ (resp. of $D'$), both with the refinement $\phi^1=(\phi_1, \cdots, \phi_{n-1})$. Let $\sF$ (resp. $\sF'$, resp. $\sG$, resp. $\sG'$) be the filtration $D_1\subset D$ (resp. $D_1'\subset D'$, resp. $\cR_{K,E}(\phi_n z^{\textbf{h}_n})\subset C_1$, resp. $\cR_{K,E}(\phi_n z^{\textbf{h}_n})\subset D'$). As $\Ker(t_{D_{\sigma}})=\Ker(t_{D'_{\sigma}})$,  we have $\Ker(t_{D_{\sigma}, \sF})=\Ker(t_{D'_{\sigma}, \sF})$ and $\Ker(t_{D_{\sigma,\sG}})=\Ker(t_{D'_{\sigma,\sG}})$ by Corollary \ref{CtDFpsigma} (2). By  (\ref{Eparadecom}), we have $\Ker(t_{D_{1,\sigma}})=\Ker(t_{D_{1,\sigma}'})$ and $\Ker(t_{C_{1,\sigma}})=\Ker(t_{C_{1,\sigma}'})$, hence $D_{1,\sigma}\cong D_{1,\sigma}'$ and $C_{1,\sigma}\cong C_{1,\sigma}'$ by induction hypothesis.

Let $\pi:=\pi_{\min}(D)_{\sigma}\cong \pi_{\min}(D')_{\sigma}$. Let $\pi^-$ (resp. $\pi^+$) be the extension of $\Ker(t_{D_{\sigma},\sF})\otimes_E \pi_{\alg}(\phi, \textbf{h})$ (resp. $\Ker(t_{D_{\sigma},\sG})\otimes_E \pi_{\alg}(\phi, \textbf{h})$) by $\pi_{1,\sigma}(\phi, \textbf{h})$ (which stays unchanged if $D_{\sigma}$ is replaced by $D'_{\sigma}$). So $\pi^{\pm} \hookrightarrow \pi$. 
Let $\cL$ be the kernel of the following  natural (push-forward) map
\begin{equation}\label{Eparainf1}
	\ol{\Ext}^1_{\sigma}(\pi_{\alg}(\phi, \textbf{h}), \pi^-) \oplus \ol{\Ext}^1_{\sigma}(\pi_{\alg}(\phi, \textbf{h}), \pi^+) \twoheadlongrightarrow \ol{\Ext}^1_{\sigma}(\pi_{\alg}(\phi, \textbf{h}), \pi).
\end{equation}
We have a commutative diagram of exact sequences (see (\ref{Eparainf0}) for the bottom one)
\begin{equation}\label{EextRT}\small
	\begin{tikzcd}
&\cL \arrow[r, hook] \arrow[d] &\ol{\Ext}^1_{\sigma}(\pi_{\alg}(\phi, \textbf{h}), \pi^-) \oplus \ol{\Ext}^1_{\sigma}(\pi_{\alg}(\phi, \textbf{h}), \pi^+) \arrow[r, two heads] \arrow[d,  "\sim"] &\ol{\Ext}^1_{\sigma}(\pi_{\alg}(\phi, \textbf{h}), \pi) \arrow[d, " t_{D_{\sigma}}"', "\sim"]\\
&\cL(D_{\sigma}, D_{1,\sigma}, C_{1,\sigma}) \arrow[r, hook] &V(D_{1,\sigma}, C_{1,\sigma})_{\sigma} \arrow[r, two heads] &\ol{\Ext}^1_{\sigma}(D_{\sigma},D_{\sigma}) 
	\end{tikzcd}
\end{equation}
where the middle (bijective) map is induced by (\ref{EtDFPsigma2}). We then deduce $\cL \xrightarrow{\sim} \cL(D_{\sigma}, D_{1,\sigma}, C_{1,\sigma})$. Similarly, replacing $D_{\sigma}$ by $D'_{\sigma}$, we obtain $\cL\xrightarrow{\sim}  \cL(D_{\sigma}, D_{1,\sigma}, C_{1,\sigma})$. Note the middle map in (\ref{EextRT}) does not change when $D_{\sigma}$ is replaced by $D_{\sigma}'$ by the discussion in the first paragraph. Hence $ \cL(D_{\sigma}, D_{1,\sigma}, C_{1,\sigma})\cong  \cL(D_{\sigma}', D_{1,\sigma}, C_{1,\sigma})$ as subspace of $ V(D_{1,\sigma}, C_{1,\sigma})_{\sigma}$. But this implies $D_{\sigma}\cong D_{\sigma}'$ by Corollary \ref{Chodge1} (1). 
\end{proof}

\subsubsection{Universal extensions}\label{S322}
We give a reformulation of Theorem \ref{TtD} using  deformation rings of $(\varphi, \Gamma)$-modules, which will be useful in our proof of the local-global compatibility.

Let $D\in \Phi\Gamma_{\nc}(\phi, \textbf{h})$. Note  $\End_{(\varphi, \Gamma)}(D)\cong E$. Let $R_D$ be the universal deformation ring of deformations of $D$ over local Artinian $E$-algebras.  Let $R_{D,w}$ be the universal deformation ring of $\sT_w$-deformations of $D$ (i.e. the trianguline deformations of $D$ with respect to the refinement $w(\phi)$), and $R_{D,g}$ be the universal deformation ring of de Rham deformations or equivalently crystabelline deformations. All of these rings are formally smooth complete local Noetherian $E$-algebras (using the fact $\phi$ is generic). See \cite[\S~2.3.5, \S~2.5.3]{BCh} \cite[\S~2]{Na2} for detailed discussions. For a continuous character $\delta$ of $T(K)$, denote by $R_{\delta}$ the universal deformation ring of deformations of $\delta$ over local Artinian $E$-algebras, which is also formally smooth complete local Noetherian. If $\delta$ is locally algebraic, denote by $R_{\delta,g}$ the universal deformation ring of locally algebraic deformations of $\delta$. For a complete local Noetherian $E$-algebra $R$, we use $\fm_R$ to denote  its maximal ideal and  we will use $\fm$ for simplicity when it does not cause confusion. 

We have natural surjections $R_D \twoheadrightarrow R_{D,w} \twoheadrightarrow R_{D,g}$,  $R_{\delta} \twoheadrightarrow R_{\delta,g}$.
Moreover, we have natural isomorphisms of $E$-vector spaces for the tangent spaces $(\fm_{R_{D,*}}/\fm_{R_{D,*}}^2)^\vee\cong \Ext^1_{*}(D,D)$ for $*\in \{\emptyset, g, w\}$, $(\fm_{R_{\delta}}/\fm_{R_{\delta}}^2)^{\vee}\cong \Hom(T(K),E)$ and $(\fm_{R_{\delta,g}}/\fm_{R_{\delta,g}}^2)^{\vee}\cong \Hom_{\sm}(T(K),E)$. For $w\in S_n$, by Proposition \ref{PDef1} (2)(3), the map (\ref{Ekappaw}) $\kappa_w$ induces 
a commutative Cartesian diagram (of local Artinian $E$-algebras):
\begin{equation}\label{Ecartwg}
\begin{tikzcd}
&	R_{w(\phi)z^{\textbf{h}}}/\fm^2 \arrow[r, two heads] \arrow[d, hook] &R_{w(\phi)z^{\textbf{h}},g}/\fm^2 \arrow[d, hook] \\ 
&	R_{D,w}/\fm^2 \arrow[r, two heads]  &R_{D,g}/\fm^2.
\end{tikzcd}
\end{equation}
As in the proof of Proposition \ref{Pextalg} (2), let $\cH$ (resp. $\cH_i\cong \bG_m$) be the Bernstein centre over $E$ associated to the smooth representation $\pi_{\sm}(\phi)$ of $\GL_{n}(K)$ (resp. $\phi_i$ of $K^{\times}$) (cf. \cite[\S~3.13]{CEGGPS1}). For $w\in S_n$, there is a natural morphism $\cJ_w: \prod_{i=1}^n \Spec \cH_{w^{-1}(i)} \ra \Spec \cH$ (see the proof of  Proposition \ref{Pextalg} (2)), which, by \cite[Lem.~3.22]{CEGGPS1}, induces an isomorphism between completions at closed points.    The completion of $\prod_{i=1}^n \Spec \cH_{w^{-1}(i)}$ at the point $w(\phi)$ is naturally isomorphic to $R_{w(\phi), g}$.  Let $\widehat{\cH}_{\phi}$ be the completion of $\cH$ at $\pi_{\sm}(\phi)$. So $\cJ_w$ induces
$\cJ_w: \widehat{\cH}_{\phi}\xrightarrow{\sim} R_{w(\phi),g} \xrightarrow{\sim} R_{w(\phi)z^{\textbf{h}},g}$
where the second map is given by twisting $z^{-\textbf{h}}$. By Lemma \ref{Lint1} and Proposition \ref{Pextalg} (2), the composition
\begin{equation*}
A_0:=\widehat{\cH}_{\phi}/\fm^2 \xlongrightarrow[\sim]{\cJ_w} R_{w(\phi)z^{\textbf{h}},g}/\fm^2 \hooklongrightarrow R_{D,g}/\fm^2
\end{equation*}
is independent of the choice of $w$. We let $A_D:=	R_D/\fm^2 \times_{R_{D,g}/\fm^2} A_0$ and $A_{D,w}:=R_{D,w} /\fm^2 \times_{R_{D,g}/\fm_2} A_0$ ($\cong R_{w(\phi)z^{\textbf{h}}}/\fm^2$ by (\ref{Ecartwg})).  
The tangent space of $A_D$ (resp. $A_{D,w}$) is  naturally isomorphic to $\ol{\Ext}^1(D,D)$ (resp. $\ol{\Ext}^1_w(D,D)\cong  \Hom(T(K),E)$). We let $\cI_w$ be the kernel of $A_D \twoheadrightarrow A_{D,w}$. By Proposition \ref{Pinffern}, the natural morphism $A_D \ra \prod_{w} A_{D,w}$ is injective.

Let \ $\pi_1(\phi, \textbf{h})^{\univ}$ \ (resp. \ $\pi_1(\phi, \textbf{h})_w^{\univ}$) \ be \ the \ tautological \ extension \ of \ $\Ext^1_{\GL_n(K)}(\pi_{\alg}(\phi, \textbf{h}), \pi_1(\phi, \textbf{h})) \otimes_E\pi_{\alg}(\phi, \textbf{h})$ \big(resp. of $\Ext^1_{w}(\pi_{\alg}(\phi, \textbf{h}), \pi_1(\phi, \textbf{h})) \otimes_E\pi_{\alg}(\phi, \textbf{h})$\big) by $\pi_1(\phi, \textbf{h})$ (cf. \S~\ref{S311}). 
For $w\in S_n$, denote by $\delta_w:=w(\phi)z^{\textbf{h}} \varepsilon^{-1} \circ \theta$, and $\widetilde{\delta}_w^{\univ}$ the tautological extension of $\Ext^1_{T(K)}(\delta_w, \delta_w)\otimes_E \delta_w$ ($\cong \Hom(T(K),E) \otimes_E \delta_w$) by $\delta_w$. 
\begin{lemma}\label{Lunivw}
The induced representation $I_{B^-(K)}^{\GL_n(K)} \widetilde{\delta}_w^{\univ}$ is the universal extension of $\pi_{\alg}(\phi, \textbf{h})\otimes_E \Ext^1_{\GL_n(K)}(\pi_{\alg}(\phi, \textbf{h}), \PS_1(w(\phi), \textbf{h}))$ by $\PS_1(w(\phi), \textbf{h})$.
\end{lemma}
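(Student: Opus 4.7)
The plan is to identify $I_{B^-(K)}^{\GL_n(K)}\widetilde{\delta}_w^{\univ}$ with the universal extension by matching classifying classes. Set $V := \Ext^1_{T(K)}(\delta_w,\delta_w) \cong \Hom(T(K),E)$ and $W := \Ext^1_{\GL_n(K)}(\pi_{\alg}(\phi,\textbf{h}), \PS_1(w(\phi),\textbf{h}))$. By Proposition~\ref{Pextps}(1), the natural map $\zeta_w: V \xrightarrow{\sim} W$ (sending $\psi$ to the class in $W$ obtained from $I_{B^-(K)}^{\GL_n(K)}(\delta_w(1+\psi\epsilon))$ via the push-forward along its canonical embedding into $\PS_1(w(\phi),\textbf{h})$) is an isomorphism. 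The strategy is then: (1) show that $I_{B^-(K)}^{\GL_n(K)}\widetilde{\delta}_w^{\univ}$ is an extension of $\pi_{\alg}(\phi,\textbf{h}) \otimes_E V$ by $\PS_1(w(\phi),\textbf{h})$, and (2) verify that its classifying map $V \to W$ coincides with $\zeta_w$; universality will then conclude.

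For (1), I would first unpack Emerton's closed induction applied to $\widetilde{\delta}_w^{\univ}$ viewed as an $A := R_{\delta_w}/\fm^2$-valued locally analytic character of $T(K)$: by \cite[\S(2.8)]{Em2}, $I_{B^-(K)}^{\GL_n(K)}\widetilde{\delta}_w^{\univ}$ is the subrepresentation of $(\Ind_{B^-(K)}^{\GL_n(K)}\widetilde{\delta}_w^{\univ})^{\Q_p-\an}$ corresponding under the $J_B$-adjunction to the canonical inclusion $\widetilde{\delta}_w^{\univ}\delta_B \hookrightarrow J_B\bigl((\Ind_{B^-(K)}^{\GL_n(K)}\widetilde{\delta}_w^{\univ})^{\Q_p-\an}\bigr)$, compatibly with the $A$-action. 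Applying $I_{B^-(K)}^{\GL_n(K)}(-)$ to the tautological short exact sequence $0 \to \delta_w \to \widetilde{\delta}_w^{\univ} \to \delta_w \otimes_E V \to 0$ of $T(K)$-representations and using left exactness of this functor (inherited from the left exactness of locally analytic induction and of $J_B$), together with $I_{B^-(K)}^{\GL_n(K)}\delta_w = \pi_{\alg}(\phi,\textbf{h})$ (the socle of $\PS(w(\phi),\textbf{h})$ by Proposition~\ref{LGLn1}(2)), I obtain a canonical embedding $\pi_{\alg}(\phi,\textbf{h}) \hookrightarrow I_{B^-(K)}^{\GL_n(K)}\widetilde{\delta}_w^{\univ}$. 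I would then enlarge this to an embedding of $\PS_1(w(\phi),\textbf{h})$: by Lemma~\ref{Ljacsi} and \eqref{ECIsigma}, each constituent $\sC(w,s_{i,\sigma})$ of $\PS_1(w(\phi),\textbf{h})/\pi_{\alg}(\phi,\textbf{h})$ arises inside the big induction from a specific character $z^{-s_{i,\sigma}\cdot\lambda}w(\phi)\eta\delta_B$ appearing in $J_B$; the non-trivial extension data carried by $\widetilde{\delta}_w^{\univ}$ lifts these characters to $A$-module extensions sitting in $J_B$ of the big induction, whose preimages in the closed induction recover the $\sC(w,s_{i,\sigma})$. A final constituent count using \cite[Thm.]{OS} would identify the cokernel with $\pi_{\alg}(\phi,\textbf{h})\otimes_E V$.

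For (2), for each $\psi \in V$ the surjection $\widetilde{\delta}_w^{\univ} \twoheadrightarrow \delta_w(1+\psi\epsilon)$ (pull-back along $A \to E[\epsilon]/\epsilon^2$ determined by $\psi$) induces by functoriality a morphism
\begin{equation*}
I_{B^-(K)}^{\GL_n(K)}\widetilde{\delta}_w^{\univ} \lra I_{B^-(K)}^{\GL_n(K)}\bigl(\delta_w(1+\psi\epsilon)\bigr),
\end{equation*}
whose target has push-out to $\PS_1(w(\phi),\textbf{h})$ equal to $\zeta_w(\psi) \in W$ by Remark~\ref{Rind1}. This identifies the classifying map $V \to W$ of the extension from (1) with $\zeta_w$, which is a bijection by Proposition~\ref{Pextps}(1); hence $I_{B^-(K)}^{\GL_n(K)}\widetilde{\delta}_w^{\univ}$ is the universal extension, as desired.

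The main obstacle will be step (1), specifically the enlargement from the canonical embedding of $\pi_{\alg}(\phi,\textbf{h})$ (coming from bare left-exactness) to the embedding of the full $\PS_1(w(\phi),\textbf{h})$ inside $I_{B^-(K)}^{\GL_n(K)}\widetilde{\delta}_w^{\univ}$. This requires carefully controlling how the non-split extension data packaged in $\widetilde{\delta}_w^{\univ}$ propagates through the closed induction to produce the $\sC(w,s_{i,\sigma})$-constituents, amounting to a precise understanding of the $A$-module structure on the $J_B$-subspace generated by $\widetilde{\delta}_w^{\univ}\delta_B$ inside $J_B$ of the big induction.
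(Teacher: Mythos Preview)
Your overall strategy is sound and your step (2) matches the paper's argument for universality. The difference lies entirely in step (1), and precisely at the point you flagged as the ``main obstacle.''

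The paper does not attempt the direct enlargement from $\pi_{\alg}(\phi,\textbf{h})$ to $\PS_1(w(\phi),\textbf{h})$ via constituent-tracking in $J_B$. Instead it argues indirectly and much more cheaply. By Remark~\ref{Rind1} applied to each $\psi$, one already knows that $I_{B^-(K)}^{\GL_n(K)}\widetilde{\delta}_w^{\univ}$ is an extension of $\pi_{\alg}(\phi,\textbf{h})^{\oplus n(d_K+1)}$ by \emph{some} subrepresentation $V'\subset \PS_1(w(\phi),\textbf{h})$; the point is only to show $V'=\PS_1(w(\phi),\textbf{h})$. Now by construction every class in the image of \eqref{Eind1} arises by push-forward along $V'\hookrightarrow \PS_1(w(\phi),\textbf{h})$ from an extension of $\pi_{\alg}(\phi,\textbf{h})$ by $V'$. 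Since \eqref{Eind1} is bijective (Proposition~\ref{Pextps}(1)) and the last map in the d\'evissage sequence \eqref{Edivips} is surjective, the image of \eqref{Eind1} hits every $\Ext^1_{\GL_n(K)}(\pi_{\alg}(\phi,\textbf{h}),\sC(w,s_{i,\sigma}))$; this forces each $\sC(w,s_{i,\sigma})$ to already appear in $V'$, hence $V'=\PS_1(w(\phi),\textbf{h})$. Universality then follows exactly as in your step (2).

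In short: the paper turns your obstacle into a one-line dimension/surjectivity argument using \eqref{Edivips}, rather than analysing the $A$-module structure on $J_B$ directly. Your route would work, but it is the harder of the two; the paper's trick of deducing $V'=\PS_1$ from the surjectivity onto $\oplus_{i,\sigma}\Ext^1(\pi_{\alg},\sC(w,s_{i,\sigma}))$ is the efficient move here.
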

\begin{proof}
By Remark \ref{Rind1}, $I_{B^-(K)}^{\GL_n(K)} \widetilde{\delta}_w^{\univ}$ is an extension of $\pi_{\alg}(\phi, \textbf{h})^{\oplus (n(d_K+1))}$ by a certain subrepresentation $V$ of $\PS_1(w(\phi), \textbf{h})$. However, again by  Remark \ref{Rind1}, as $\widetilde{\delta}_w^{\univ}$ is universal, any extension in the image of (\ref{Eind1}) comes from an extension of $\pi_{\alg}(\phi, \textbf{h})$ by $V$ by push-forward via $V\hookrightarrow \PS_1(w(\phi), \textbf{h})$. As (\ref{Eind1}) is bijective (by Proposition \ref{Pextps} (1)), using the surjectivity of the last map in (\ref{Edivips}), we see $V$ has to be the entire $\PS_1(w(\phi), \textbf{h})$. 
Using again Proposition \ref{Pextps} (1), we see  $I_{B^-(K)}^{\GL_n(K)} \widetilde{\delta}_w^{\univ}$ is in fact the universal extension. 
\end{proof}
We have hence an isomorphism of $\GL_n(K)$-representations
\begin{equation}\label{Edeltauniv}
I_{B^-(K)}^{\GL_n(K)} \widetilde{\delta}_w^{\univ} \oplus_{\PS_1(w(\phi), \textbf{h})} \pi_1(\phi, \textbf{h})\xlongrightarrow{\sim } \pi_1(\phi, \textbf{h})_w^{\univ}.
\end{equation}
There is a natural action of $A_{D,w}\cong R_{w(\phi) z^{\textbf{h}}}/\fm^2$ on $\widetilde{\delta}_w^{\univ}$ where an element $x\in \fm_{R_{w(\phi) z^{\textbf{h}}}}/\fm_{R_{w(\phi) z^{\textbf{h}}}}^2\cong\Hom(T(K),E)^{\vee}$ acts via $x: \widetilde{\delta}_w^{\univ} \twoheadrightarrow\Hom(T(K),E)\otimes_E \delta_w \xrightarrow{x} \delta_w \hookrightarrow \widetilde{\delta}_w^{\univ}$.
Hence $I_{B^-(K)}^{\GL_n(K)} \widetilde{\delta}_w^{\univ} $ is equipped with an induced $R_{w(\phi)z^{\textbf{h}}}/\fm^2$-action. Similarly $\pi_1(\phi,\textbf{h})_w^{\univ}$ is equipped with an action of $A_{D,w}$ given by 
\begin{equation}\label{EactAw}
x:\pi_1(\phi, \textbf{h})_w^{\univ} \twoheadrightarrow \Ext^1_w(\pi_{\alg}(\phi,\textbf{h}), \pi_1(\phi, \textbf{h}))\otimes_E \pi_{\alg}(\phi, \textbf{h}) \xrightarrow{x} \pi_{\alg}(\phi, \textbf{h}) \hookrightarrow \pi_1(\phi, \textbf{h})_w^{\univ},
\end{equation}
for $x\in \fm_{A_{D,w}}/\fm_{A_{D,w}}^2\cong \Hom(T(K),E)^{\vee} \xrightarrow[\sim]{\zeta_w}\Ext^1_w(\pi_{\alg}(\phi,\textbf{h}), \pi_1(\phi, \textbf{h}))^{\vee}$. The injection $I_{B^-(K)}^{\GL_n(K)}\widetilde{\delta}_w^{\univ}\hookrightarrow \pi_1(\phi,\textbf{h})^{\univ}_w$ (induced by (\ref{Edeltauniv})) is $A_{D,w}$-equivariant.  

For a commutative $E$-algebra $A$ acting on an $E$-vector space $V$, and an ideal $I$ of $A$, we denote by $V[I]$ the subspace of $V$ annihilated by elements in $I$.
The following theorem is a reformulation of Theorem \ref{TtD}. 

\begin{theorem} \label{Tuniv}There is a unique $A_D$-action on $\pi_1(\phi, \textbf{h})^{\univ}$ such that for all $w\in S_n$, we have an $A_{D,w}\times \GL_n(K)$-equivariant injection
$\pi_1(\phi, \textbf{h})_w^{\univ} \hookrightarrow		\pi_1(\phi, \textbf{h})^{\univ}[\cI_w]$.
\end{theorem}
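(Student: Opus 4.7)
My plan is to construct the $A_D$-action directly from the map $t_D$ of Theorem \ref{TtD}, then verify the compatibility with each $\pi_1(\phi,\textbf{h})_w^{\univ}$, and finally conclude uniqueness from the surjectivity statement in Proposition \ref{Pextpi1}~(2). Since $\fm_{A_D}^2 = 0$, giving an $A_D$-action on an $E$-vector space $V$ amounts to giving an $E$-linear map $\fm_{A_D} \otimes_E V \to V$. Under the identification $\fm_{A_D} \cong \ol{\Ext}^1(D,D)^{\vee}$, the surjection $t_D$ dualizes to an injection $\fm_{A_D} \hookrightarrow \Ext^1_{\GL_n(K)}(\pi_{\alg}(\phi, \textbf{h}), \pi_1(\phi, \textbf{h}))^{\vee}$. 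For $y \in \fm_{A_D}$, I would define its action on $\pi_1(\phi, \textbf{h})^{\univ}$ by the composition
\begin{equation*}
\pi_1(\phi, \textbf{h})^{\univ} \twoheadlongrightarrow \pi_{\alg}(\phi, \textbf{h}) \otimes_E \Ext^1_{\GL_n(K)}(\pi_{\alg}(\phi,\textbf{h}), \pi_1(\phi,\textbf{h})) \xlongrightarrow{\id \otimes (y \circ t_D)} \pi_{\alg}(\phi, \textbf{h}) \hooklongrightarrow \pi_1(\phi, \textbf{h})^{\univ},
\end{equation*}
in exact analogy with (\ref{EactAw}). The image of $\fm_{A_D}$ lands in $\pi_{\alg}(\phi, \textbf{h}) \subset \pi_1(\phi, \textbf{h})$, which is killed by the first surjection, so $\fm_{A_D}^2$ automatically acts by zero and we get a genuine $A_D$-module structure.

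Next, I would verify the injection $\pi_1(\phi, \textbf{h})_w^{\univ} \hookrightarrow \pi_1(\phi, \textbf{h})^{\univ}[\cI_w]$. The tautological map $\pi_1(\phi, \textbf{h})_w^{\univ} \hookrightarrow \pi_1(\phi, \textbf{h})^{\univ}$ is the pull-back of the universal extension along $\Ext^1_w(\pi_{\alg}(\phi, \textbf{h}), \pi_1(\phi, \textbf{h})) \hookrightarrow \Ext^1_{\GL_n(K)}(\pi_{\alg}(\phi, \textbf{h}), \pi_1(\phi, \textbf{h}))$. For $y \in \cI_w$, dually $y$ vanishes on $\ol{\Ext}^1_w(D,D) \subset \ol{\Ext}^1(D,D)$; by the commutative diagram in Theorem \ref{TtD}, $t_D$ restricted to $\Ext^1_w(\pi_{\alg}(\phi, \textbf{h}), \pi_1(\phi, \textbf{h}))$ is precisely the isomorphism $\ol{\Ext}^1_w(D,D) \xleftarrow[\sim]{\kappa_w \circ \zeta_w^{-1}} \Ext^1_w(\pi_{\alg}(\phi, \textbf{h}), \pi_1(\phi, \textbf{h}))$, so $y \circ t_D$ vanishes on $\Ext^1_w(\pi_{\alg}(\phi, \textbf{h}), \pi_1(\phi, \textbf{h}))$, and the composition above kills $\pi_1(\phi, \textbf{h})_w^{\univ}$. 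The same compatibility shows that for any lift $\tilde{x} \in \fm_{A_D}$ of $x \in \fm_{A_{D,w}}$, the functional $\tilde{x} \circ t_D|_{\Ext^1_w(\pi_{\alg}(\phi, \textbf{h}), \pi_1(\phi, \textbf{h}))}$ corresponds under $\zeta_w \circ \kappa_w$ to $x$ itself. Hence the restriction of the $A_D$-action to $\pi_1(\phi, \textbf{h})_w^{\univ}$ factors through $A_{D,w}$ and coincides with the natural action described in (\ref{EactAw}), making the injection $A_{D,w} \times \GL_n(K)$-equivariant.

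For uniqueness, I would use Proposition \ref{Pextpi1}~(2), which implies $\sum_{w \in S_n} \Ext^1_w(\pi_{\alg}(\phi, \textbf{h}), \pi_1(\phi, \textbf{h})) = \Ext^1_{\GL_n(K)}(\pi_{\alg}(\phi, \textbf{h}), \pi_1(\phi, \textbf{h}))$. Translated through the pull-back interpretation, this means the subrepresentations $\pi_1(\phi, \textbf{h})_w^{\univ}$ span $\pi_1(\phi, \textbf{h})^{\univ}$ as an $E$-linear subspace (their cokernels together have trivial intersection). Any $A_D$-action satisfying the stated property is forced on each $\pi_1(\phi, \textbf{h})_w^{\univ}$ to be the $A_{D,w}$-action of (\ref{EactAw}), hence is uniquely determined on the whole of $\pi_1(\phi, \textbf{h})^{\univ}$. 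The main real content is not a hard step in this argument but rather the careful tracking of three compatible identifications $\kappa_w$, $\zeta_w$, $t_D$ together with the dualities switching between tangent spaces and linear functionals on $\Ext^1$-groups; this bookkeeping is essentially dictated by the commutative square of Theorem \ref{TtD}, so once that theorem is in hand the proof is a direct formal verification.
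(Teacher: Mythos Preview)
Your proof is correct and follows essentially the same approach as the paper: define the $A_D$-action via the dual of $t_D$ acting through the displayed composition, check compatibility with (\ref{EactAw}) using that $t_D\circ(\zeta_w\circ\kappa_w)$ is the inclusion $\ol{\Ext}^1_w(D,D)\hookrightarrow\ol{\Ext}^1(D,D)$, and deduce uniqueness from the fact that the $\pi_1(\phi,\textbf{h})_w^{\univ}$ generate $\pi_1(\phi,\textbf{h})^{\univ}$ (which is Proposition~\ref{Pextpi1}~(2), exactly as you cite). One small slip: where you write $\kappa_w\circ\zeta_w^{-1}$ you mean $\kappa_w^{-1}\circ\zeta_w^{-1}$, since $\kappa_w$ goes from $\ol{\Ext}^1_w(D,D)$ to $\Hom(T(K),E)$; the surrounding reasoning is unaffected.
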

\begin{proof}
By Theorem \ref{TtD}, we define an $A_D$-action by letting $x\in \fm_{A_D}/\fm_{A_D}^2\cong \ol{\Ext}^1(D,D)^{\vee}\hookrightarrow \Ext^1_{\GL_n(K)}(\pi_{\alg}(\phi, \textbf{h}), \pi_1(\phi, \textbf{h}))^{\vee}$ act via
\begin{equation}\label{EAdaction}
	x: \pi_1(\phi, \textbf{h})^{\univ} \twoheadrightarrow \Ext^1(\pi_{\alg}(\phi,\textbf{h}), \pi_1(\phi, \textbf{h}))\otimes_E \pi_{\alg}(\phi, \textbf{h}) \xrightarrow{x} \pi_{\alg}(\phi, \textbf{h}) \hookrightarrow \pi_1(\phi, \textbf{h})_w^{\univ}.
\end{equation}
By definition, the restriction of $t_D$ to $\Ext^1_w(\pi_{\alg}(\phi, \textbf{h}), \pi_1(\phi, \textbf{h}))$ coincides with the composition $\Ext^1_w(\pi_{\alg}(\phi, \textbf{h}), \pi_1(\phi, \textbf{h})) \xrightarrow[\sim]{\zeta_w^{-1}}\Hom(T(K),E) \xrightarrow[\sim]{\kappa_w^{-1}} \ol{\Ext}^1_w(D,D)$. 
We deduce the action in (\ref{EAdaction}) is compatible with (\ref{EactAw}) when $x\in A_{D,w}$ hence satisfies the properties in the theorem. The uniqueness follows from the fact $\pi_1(\phi, \textbf{h})^{\univ}$ is generated by $\pi_1(\phi, \textbf{h})_w^{\univ}$ for $w\in S_n$.
\end{proof}
By the construction of $\pi_{\min}(D)$, we have: 
\begin{corollary}\label{Cpimin}
We have $\pi_{\min}(D)\cong \pi_1(\phi, \textbf{h})^{\univ}[\fm_{A_D}]$.
\end{corollary}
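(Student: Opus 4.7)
The plan is to compute $\pi_1(\phi,\textbf{h})^{\univ}[\fm_{A_D}]$ directly from the explicit description of the $A_D$-action constructed in the proof of Theorem \ref{Tuniv}, and then identify the resulting subrepresentation with $\pi_{\min}(D)$ via the definition of $\pi_{\min}(D)$ as the push-out determined by $\Ker(t_D)\subset \Ext^1_{\GL_n(K)}(\pi_{\alg}(\phi,\textbf{h}), \pi_1(\phi,\textbf{h}))$.

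First, I would unwind the action formula (\ref{EAdaction}). An element $x\in \fm_{A_D}/\fm_{A_D}^2\cong \ol{\Ext}^1(D,D)^\vee$ acts on $\pi_1(\phi,\textbf{h})^{\univ}$ by projecting to the cosocle component $\Ext^1_{\GL_n(K)}(\pi_{\alg}(\phi,\textbf{h}), \pi_1(\phi,\textbf{h})) \otimes_E \pi_{\alg}(\phi,\textbf{h})$, pairing with $t_D^\vee(x)$ in the first factor, and re-embedding $\pi_{\alg}(\phi,\textbf{h})$ into $\pi_1(\phi,\textbf{h})\subset \pi_1(\phi,\textbf{h})^{\univ}$. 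Since $\fm_{A_D}^2$ acts trivially (as $A_D$ is the quotient $A_D/\fm_{A_D}^2$ of its own maximal ideal squared, i.e.\ $\fm_{A_D}^2=0$ in $A_D$), an element $v\in \pi_1(\phi,\textbf{h})^{\univ}$ lies in $\pi_1(\phi,\textbf{h})^{\univ}[\fm_{A_D}]$ if and only if $x$ annihilates $v$ for every $x\in \fm_{A_D}/\fm_{A_D}^2$.

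Next I would separate into two cases. If $v\in \pi_1(\phi,\textbf{h})$, i.e.\ $v$ projects to $0$ in the cosocle, then $v$ is trivially killed; so $\pi_1(\phi,\textbf{h})\subset \pi_1(\phi,\textbf{h})^{\univ}[\fm_{A_D}]$. Otherwise, write the image of $v$ in the cosocle as $\sum_i \widetilde{e}_i\otimes w_i$ with $\widetilde{e}_i\in \Ext^1_{\GL_n(K)}(\pi_{\alg}(\phi,\textbf{h}), \pi_1(\phi,\textbf{h}))$ and $w_i\in\pi_{\alg}(\phi,\textbf{h})$ linearly independent. The action of $x$ sends $v$ to $\sum_i \langle t_D^\vee(x),\widetilde{e}_i\rangle\, w_i$ inside $\pi_{\alg}(\phi,\textbf{h})\hookrightarrow \pi_1(\phi,\textbf{h})^{\univ}$. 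Independence of the $w_i$ forces $\langle t_D^\vee(x),\widetilde{e}_i\rangle=0$ for all $i$ and all $x\in \ol{\Ext}^1(D,D)^\vee$, which is equivalent to $\widetilde{e}_i\in \Ker(t_D)$ for all $i$. Thus $\pi_1(\phi,\textbf{h})^{\univ}[\fm_{A_D}]$ is exactly the preimage in $\pi_1(\phi,\textbf{h})^{\univ}$ of $\Ker(t_D)\otimes_E \pi_{\alg}(\phi,\textbf{h})$.

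Finally I would match this with $\pi_{\min}(D)$. By construction (discussion below Theorem \ref{TtD}), $\pi_{\min}(D)$ is precisely the tautological extension of $\Ker(t_D)\otimes_E\pi_{\alg}(\phi,\textbf{h})$ by $\pi_1(\phi,\textbf{h})$, which is the sub-extension of the universal extension $\pi_1(\phi,\textbf{h})^{\univ}$ obtained by restricting to the subspace $\Ker(t_D)$ of the cosocle indexing set. This sub-extension coincides with the preimage identified in the previous step, giving the desired isomorphism. I do not anticipate a serious obstacle here: the only thing to be careful about is that $t_D$ is exactly the pairing via which $A_D$ has been defined to act (which is, by design, the content of Theorem \ref{Tuniv}), so the computation is essentially a diagram chase once the action on cosocle is written down; the main conceptual step is the identification of the dual pairing $\fm_{A_D}/\fm_{A_D}^2\cong \ol{\Ext}^1(D,D)^\vee\hookrightarrow \Ext^1_{\GL_n(K)}(\pi_{\alg}(\phi,\textbf{h}), \pi_1(\phi,\textbf{h}))^\vee$ with $t_D^\vee$, which was established in the proof of Theorem \ref{Tuniv}.
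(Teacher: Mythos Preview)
Your proposal is correct and takes essentially the same approach as the paper. The paper merely records the corollary with the remark ``by the construction of $\pi_{\min}(D)$'', leaving the verification implicit; your argument spells out precisely this verification by unwinding the action formula (\ref{EAdaction}) and identifying the $\fm_{A_D}$-annihilated elements as the preimage of $\Ker(t_D)\otimes_E\pi_{\alg}(\phi,\textbf{h})$, which is $\pi_{\min}(D)$ by definition.
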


\section{Local-global compatibility}
\subsection{The patched setting}\label{S41}
Let $M_{\infty}$ be the patched module of \cite[\S~2]{CEGGPS1}. Then 
$\Pi_{\infty}:=\Hom_{\co_E}^{\cont}(M_{\infty}, E)$, equipped with the usual maximum norm,  is a unitary Banach representation of $\GL_n(K)$ (where $K$ is the field $F$ of \textit{loc. cit.}), which is equipped with an action of the patched Galois deformation ring $R_{\infty}\cong R_{\overline{\rho}}^{\square} \widehat{\otimes}_{\co_E} R_{\infty}^{\wp}$ (where $\wp$ is ``$\widetilde{\fp}$" and $\overline{\rho}$ is the local Galois representation $\overline{r}$ of \textit{loc. cit.}). We refer to \cite[\S~2.8]{CEGGPS1} for details. Let $\widehat{T}$ be the rigid space over $E$ parametrizing continuous characters of $T(K)$. Let
\begin{equation}\label{Eeigen}
\cE \hooklongrightarrow (\Spf R_{\overline{\rho}}^{\square})^{\rig} \times \widehat{T} \times (\Spf R_{\infty}^{\wp})^{\rig}
\end{equation}be the associated patched eigenvariety (see \cite[\S~4.1.2]{Ding7}, that is an easy variation of the patched eigenvariety introduced in \cite{BHS1}), $\cM$ be the natural coherent sheaf on $\cE$ such that there is a $T(K)\times R_{\infty}$-equivariant isomorphism 
$\Gamma(\cE, \cM)^{\vee} \cong J_B(\Pi_{\infty}^{R_{\infty}-\an})$ (see \cite[\S~3.1]{BHS1} for ``$R_{\infty}-\an$"). 
Recall a point $x=(\rho_{x,\wp}, \delta_x, \fm_x^{\wp})\in (\Spf R_{\overline{\rho}}^{\square})^{\rig} \times \widehat{T} \times (\Spf R_{\infty}^{\wp})^{\rig}$ lies in $\cE$ if and only if 
$\Hom_{T(K)}\big(\delta_x, J_B(\Pi_{\infty}^{R_{\infty}-\an})[\fm_x]\big)\neq 0$
where $\fm_x=(\rho_{x,\wp}, \fm_x^{\wp})$ is the associated maximal ideal of $R_{\infty}[1/p]$. 

Let  $X_{\tri}^{\square}(\overline{\rho})\hookrightarrow (\Spf R_{\overline{\rho}}^{\square})^{\rig} \times \widehat{T}$ be the trianguline variety \cite[\S~2.2]{BHS1}, and $\iota_p$ be the twisting map (see \S~\ref{S311} for $\delta_B$ and $\theta$)
\begin{equation*}
\iota_p: (\Spf R_{\overline{\rho}}^{\square})^{\rig} \times \widehat{T} \xlongrightarrow{\sim} (\Spf R_{\overline{\rho}}^{\square})^{\rig} \times \widehat{T}, \ (\rho_p, \chi ) \mapsto (\rho_p, \chi\delta_B (\varepsilon^{-1} \circ \theta)).
\end{equation*}Recall (\ref{Eeigen}) factors through an embedding (cf. \cite[Thm.~1.1]{BHS1}). 
\begin{equation}\label{Eeigvar1}
\cE \hooklongrightarrow \iota_p(X_{\tri}^{\square}(\overline{\rho})) \times (\Spf R_{\infty}^{\wp})^{\rig}, 
\end{equation} 
which identifies $\cE$ with a union of irreducible components of the latter.  Recall $\dim X_{\tri}^{\square}(\overline{\rho})=n^2+d_K\frac{n(n+1)}{2}$ (cf. \cite[Thm.~2.6]{BHS1}, where  ``$n^2$" comes from the framing).

Let $\rho: \Gal_{K}\ra \GL_n(E)$ be a continuous representation such that $\rho$ admits a $\Gal_K$-invariant $\co_E$-lattice whose  modulo $p$ reduction is equal to $\overline{\rho}$ and that $D:=D_{\rig}(\rho)\in \Phi\Gamma_{\nc}(\phi, \textbf{h})$ ($\phi$ generic, $\textbf{h}$ strictly dominant given as in \S~\ref{S22}). 
Let $\fm_{\rho}\subset R_{\overline{\rho}}^{\square}[1/p]$ be the maximal ideal associated to $\rho$.
\begin{lemma}\label{Lnocompa}
	For a continuous character $\delta: T(K) \ra E^{\times}$, $(\rho, \delta)\in X_{\tri}^{\square}(\overline{\rho})$ if and only if $\delta=w(\phi)z^{\textbf{h}}$ for some $w\in S_n$.
\end{lemma}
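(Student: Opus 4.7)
The plan is to prove both directions separately, with the ``if'' direction being essentially a recollection of the construction of $X_{\tri}^{\square}(\overline{\rho})$ applied to the given $\rho$, and the ``only if'' direction requiring more care via the global triangulation theorem.

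For the ``if'' direction, fix $w\in S_n$ and consider $\delta_w:=w(\phi)z^{\textbf{h}}$. By the non-criticality assumption on $D=D_{\rig}(\rho)$, the refinement $w(\phi)$ of $D$ gives a strict triangulation $\sT_w$ of $D$ whose $i$-th graded piece is $\cR_{K,E}(\phi_{w^{-1}(i)}z^{\textbf{h}_i})$. Hence, by the very definition of $X_{\tri}^{\square}(\overline{\rho})$ (via its open dense ``regular'' locus $U_{\tri}^{\square}(\overline{\rho})$ in \cite[Def.~2.4, \S~2.2]{BHS1}), it suffices to check that $\delta_w$ is $\overline{\rho}$-regular: this amounts to the ratios $(\delta_w)_i(\delta_w)_j^{-1}=\phi_{w^{-1}(i)}\phi_{w^{-1}(j)}^{-1}z^{\textbf{h}_i-\textbf{h}_j}$ never equaling $z^{\textbf{k}}$ with $\textbf{k}\in \Z_{\geq 0}^{\Sigma_K}$ nor $|\cdot|_K z^{-\textbf{k}}$ with $\textbf{k}\in \Z_{\geq 0}^{\Sigma_K}$ for $i<j$. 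This follows directly from the genericity of $\phi$ (so $\phi_i\phi_j^{-1}\neq 1, |\cdot|_K$) together with $\textbf{h}$ being strictly dominant. Thus $(\rho,\delta_w)\in U_{\tri}^{\square}(\overline{\rho})\subset X_{\tri}^{\square}(\overline{\rho})$.

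For the ``only if'' direction, suppose $(\rho,\delta)\in X_{\tri}^{\square}(\overline{\rho})$. Since $X_{\tri}^{\square}(\overline{\rho})$ is, by definition, the Zariski closure of $U_{\tri}^{\square}(\overline{\rho})$ in $(\Spf R_{\overline{\rho}}^{\square})^{\rig}\times \widehat{T}$, we can choose a smooth curve (or one-dimensional germ) inside $X_{\tri}^{\square}(\overline{\rho})$ passing through $(\rho,\delta)$ whose generic point lies in $U_{\tri}^{\square}(\overline{\rho})$. Applying the global triangulation theorem \cite[Cor.~6.3.10]{KPX} to the family of $(\varphi,\Gamma)$-modules parametrized by this curve, the generic triangulation specializes (up to possibly saturating) to a triangulation of $D=D_{\rig}(\rho)$ with graded pieces $\cR_{K,E}((\delta)_i)$ for $i=1,\ldots,n$ in the appropriate order. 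Thus $\delta$ is a trianguline parameter of $D$ itself. Since $D$ is generic crystabelline and non-critical, the classification of its trianguline parameters is governed by its refinements: any triangulation has graded pieces obtained from some ordering $w(\phi)$ of the refinements, and by the assumption that all refinements are non-critical, the Hodge-Tate-Sen weights of the graded pieces must be exactly $(\textbf{h}_1,\ldots,\textbf{h}_n)$ in decreasing order. This forces $\delta=w(\phi)z^{\textbf{h}}$ for some $w\in S_n$.

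The main obstacle is making the last step of the ``only if'' direction precise: one must rule out the possibility that $\delta$ differs from $w(\phi)z^{\textbf{h}}$ by an algebraic twist coming from the closure operation (i.e., a non-saturated triangulation of the limit). The genericity of $\phi$ (which prevents accidental coincidences among the $\phi_i\phi_j^{-1}$) together with the strict dominance of $\textbf{h}$ (which makes the weights of the graded pieces determined by the ordering) provides the necessary rigidity: any triangulation of $D$ extending to a triangulation of a generic deformation in $X_{\tri}^{\square}(\overline{\rho})$ near $\rho$ is automatically saturated and hence has parameter exactly $w(\phi)z^{\textbf{h}}$. Alternatively, one can argue via tangent spaces: by Proposition \ref{PDef1}~(1), $\dim_E \Ext^1_w(D,D)=1+\frac{n(n+1)}{2}d_K$, which, together with the framing, matches the equidimension $n^2+d_K\frac{n(n+1)}{2}$ of $X_{\tri}^{\square}(\overline{\rho})$, so $X_{\tri}^{\square}(\overline{\rho})$ is smooth at each $(\rho,\delta_w)$, yielding $n!$ distinct smooth branches above $\rho$ that exhaust the fiber by a dimension/counting argument.
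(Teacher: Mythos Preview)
Your ``if'' direction is correct and matches the paper's: both simply observe that $(\rho, w(\phi)z^{\textbf{h}})$ lies in the regular open locus $U_{\tri}^{\square}(\overline{\rho})$ by genericity of $\phi$ and strict dominance of $\textbf{h}$.

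Your ``only if'' direction has a genuine gap. You correctly identify the obstacle---that global triangulation only tells you the specialized filtration has graded pieces $\cR_{K,E}(\delta_i)$ \emph{after inverting $t$}, so that $\delta$ agrees with an actual trianguline parameter $w(\phi)z^{\textbf{h}}$ only up to algebraic twists---but you do not actually close it. Concretely, what one obtains from \cite{KPX} (together with continuity of Sen weights on the family) is that $\delta = w(\phi) z^{\textbf{h}'}$ where, for each $\sigma$, $(\textbf{h}'_{\sigma,1},\ldots,\textbf{h}'_{\sigma,n})$ is a permutation of $(\textbf{h}_{\sigma,1},\ldots,\textbf{h}_{\sigma,n})$. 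Ruling out the case $\textbf{h}' \neq \textbf{h}$ is precisely the statement that $D$, being non-critical, admits no companion points of non-dominant weight on the trianguline variety. This is a substantial theorem (proved via local models of the trianguline variety), and it is exactly what the paper invokes by citing \cite[Thm.~4.2.3]{BHS3} and \cite[Cor.~6.4.12]{BD3}. Your sentence ``genericity of $\phi$ together with strict dominance of $\textbf{h}$ provides the necessary rigidity'' does not supply this argument; the rigidity you describe only shows that any \emph{saturated} triangulation of $D$ has parameter $w(\phi)z^{\textbf{h}}$, not that every point of $X_{\tri}^{\square}(\overline{\rho})$ above $\rho$ arises from a saturated triangulation.

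Your alternative dimension/counting argument also does not work: smoothness of $X_{\tri}^{\square}(\overline{\rho})$ at the $n!$ points $(\rho,\delta_w)$ does not imply these exhaust the fiber over $\rho$, since the map $X_{\tri}^{\square}(\overline{\rho}) \to (\Spf R_{\overline{\rho}}^{\square})^{\rig}$ is not finite and nothing prevents the fiber from containing further (possibly singular) points. You need the companion-points input to conclude.
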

\begin{proof}
	  The ``if" part follows the very construction (cf. \cite[\S~2.2]{BHS1}). Indeed all these points lie in the space $U_{\tri}^{\square}(\overline{\rho})^{\mathrm{reg}}$ of \textit{loc. cit.} The ``only if" part follows from the fact that $D$ is non-critical hence does not admit companion points of non-dominant weight (e.g. see \cite[Thm.~4.2.3]{BHS3}, \cite[Cor.~6.4.12]{BD3}).
\end{proof} 
Suppose there exists a maximal ideal $\fm^{\wp}$ of $R_{\infty}^{\wp}[1/p]$ such that $\Pi_{\infty}[\fm_x]^{\lalg}\neq 0$ for $\fm_x=(\fm_{\rho}, \fm^{\wp})$, the corresponding maximal ideal of $R_{\infty}[1/p]$. By \cite[Thm.~4.35]{CEGGPS1}, we have $\Pi_{\infty}[\fm_x]^{\lalg} \cong \pi_{\alg}(\phi, \textbf{h})$. By taking Jacquet-Emerton modules, this implies $x_w:=(x_{w,\wp}, \fm^{\wp})=(\rho, \delta_{w}\delta_B=w(\phi)z^{\textbf{h}}\delta_B( \varepsilon^{-1} \circ \theta ), \fm^{\wp})\in \cE$ for all $w\in S_n$. By Lemma \ref{Lnocompa}, these give all the points on $\cE$ associated to $\fm_x$. By \cite[Thm.~2.6 (iii)]{BHS1}, 
$X_{\tri}^{\square}(\overline{\rho})$ is smooth at the points $\iota_p^{-1}(x_{w,\wp})$ and (\ref{Eeigvar1}) is a local isomorphism at $x_w$.  As $(\Spf R_{\infty}^{\wp})^{\rig}$ is also smooth at $\fm^{\wp}$ (e.g. see the proof of \cite[Cor.~4.4]{Ding6}), $\cE$ is smooth at all $x_{w}$.  By  \cite[Lem.~3.8]{BHS2} and \cite[Thm. 4.35]{CEGGPS1},  we see  $\cM$ is locally free of rank one at all $x_w$.  

Let $R_D^{\square}:=R_D \otimes_{R_{\rho}} R_{\rho}^{\square}\cong R_{\rho}^{\square}$  (where $R_{\rho}^{\square}$ is the framed universal deformation ring of $\rho$ of deformations over local Artinian $E$-algebras). Let $R_{D,w}^{\square}:=R_D^{\square} \otimes_{R_D} R_{D,w}$ for $w\in S_n$ and $R_{D,g}^{\square}:=R_D^{\square} \otimes_{R_D} R_{D,g}$. We have commutative Cartesian diagrams (see \S~\ref{S322}, in particular the discussion below (\ref{Ecartwg}) for $A_D$, $A_{D,w}$, $A_0$, where $\fm$ denotes the corresponding maximal ideals):
\begin{equation}\label{Ecartiwg1}
	\begin{tikzcd}
		&A_D \arrow[r, hook]  \arrow[d, two heads]  &R_{D}/\fm^2 \arrow[r, hook] \arrow[d, two heads] &R_{D}^{\square}/\fm^2 \arrow[d, two heads] \\
			&A_{D,w} \arrow[r, hook]  \arrow[d, two heads]  &R_{D,w}/\fm^2 \arrow[r, hook] \arrow[d, two heads] &R_{D,w}^{\square}/\fm^2 \arrow[d, two heads]  \\
				&A_0 \arrow[r, hook] &R_{D,g}/\fm^2 \arrow[r, hook]  &R_{D,g}^{\square}/\fm^2.
	\end{tikzcd}
\end{equation}
Let $\fa\supset \fm_{R_{D}^{\square}}^2$ be an ideal of $R_D^{\square}$ such that $\fa/\fm_{R_{D}^{\square}}^2 \oplus \fm_{A_D}/\fm_{A_D}^2 \xrightarrow{\sim} \fm_{R_{D}^{\square}}/\fm_{R_{D}^{\square}}^2$ (noting $ \fm_{A_D}/\fm_{A_D}^2\hookrightarrow \fm_{R_{D}^{\square}}/\fm_{R_{D}^{\square}}^2$). The composition $A_D \hookrightarrow R_{D}^{\square}/\fm_{R_{D}^{\square}}^2 \twoheadrightarrow R_D^{\square}/\fa$ is hence an isomorphism. We use $\fa$ to denote its image in $R_{D,w}^{\square}$ and $R_{D,g}^{\square}$. By (\ref{Ecartiwg1}), $\fa/\fm_{R_{D,w}^{\square}}^2 \oplus \fm_{A_{D,w}}/\fm_{A_{D,w}}^2 \xrightarrow{\sim} \fm_{R_{D,w}^{\square}}/\fm_{R_{D,w}^{\square}}^2$ and  $\fa/\fm_{R_{D,g}^{\square}}^2 \oplus \fm_{A_{0}}/\fm_{A_{0}}^2 \xrightarrow{\sim} \fm_{R_{D,g}^{\square}}/\fm_{R_{D,g}^{\square}}^2$.  Moreover, the compositions $A_{D,w} \hookrightarrow R_{D,w}^{\square}/\fm^2 \twoheadrightarrow R_{D,w}^{\square}/\fa$ for $w\in S_n$, and $A_0 \hookrightarrow R_{D,g}^{\square}/\fm^2 \twoheadrightarrow R_{D,g}^{\square}/\fa$ are all isomorphisms.

Recall the completion of $R_{\overline{\rho}}^{\square}[1/p]$ at $\rho$ is naturally isomorphic to $R_{\rho}^{\square}\cong R_{D}^{\square}$ (cf. \cite{Kis09}). Let $\fa\subset R_{\overline{\rho}}^{\square}[1/p]$ denote the preimage of $\fa\subset R_{D}^{\square}$. Let $\fa_x:=(\fa, \fm^{\wp})\subset R_{\infty}[1/p]$. So $\Pi_{\infty}^{R_{\infty}-\an}[\fa_x]$ is equipped with a natural $R_{D}^{\square}/\fa\cong A_{D}$-action.
\begin{lemma}\label{Lfax}
	(1) We have $\Pi_{\infty}^{R_{\infty}-\an}[\fm_x]\xrightarrow{\sim}\Pi_{\infty}^{R_{\infty}-\an}[\fa_x][\fm_{A_D}]$.
	
\noindent	(2) $\Hom_{\GL_n}(\pi_{\alg}(\phi, \textbf{h}), \Pi_{\infty}^{R_{\infty}-\an}[\fa_x])\cong \Hom_{\GL_n}(\pi_{\alg}(\phi, \textbf{h}), \Pi_{\infty}^{R_{\infty}-\an}[\fm_x])\cong E$.
\end{lemma}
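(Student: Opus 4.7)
The plan is to treat (1) as an essentially algebraic observation and to reduce (2) to a tangent-space computation on the patched eigenvariety. First I would establish (1): since $\fa\supset\fm_{R_D^\square}^2$ and $A_D\cong R_D^\square/\fa$ is local with residue field $E$, one has $\fm_{R_D^\square}/\fa\cong\fm_{A_D}$; base changing to $R_\infty[1/p]$ and killing $\fm^\wp$ gives $R_\infty[1/p]/\fa_x\cong A_D$ and $\fm_x/\fa_x\cong\fm_{A_D}$. Part (1) then follows from the tautology $V[\fm_x]=(V[\fa_x])[\fm_x/\fa_x]$ applied to $V=\Pi_\infty^{R_\infty-\an}$.

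For the second isomorphism in (2), I would note that any $\GL_n(K)$-equivariant map from $\pi_{\alg}(\phi,\textbf{h})$ into $\Pi_\infty[\fm_x]$ must factor through $\Pi_\infty[\fm_x]^{\lalg}\cong\pi_{\alg}(\phi,\textbf{h})$ (by the standing hypothesis and \cite[Thm.~4.35]{CEGGPS1}), and then irreducibility of $\pi_{\alg}(\phi,\textbf{h})$ gives $E$. The inclusion $\Pi_\infty[\fm_x]\hookrightarrow\Pi_\infty[\fa_x]$ induces an injection of $\Hom$-spaces, so for the first isomorphism it is enough to show $\dim_E\Hom_{\GL_n(K)}(\pi_{\alg}(\phi,\textbf{h}),\Pi_\infty^{R_\infty-\an}[\fa_x])\leq 1$ (the passage to the $R_\infty$-analytic subspace is automatic here, since $R_\infty$ acts on $\Pi_\infty[\fa_x]$ through the Artinian $E$-algebra $A_D$). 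Fixing any $w\in S_n$ and invoking $I_{B^-(K)}^{\GL_n(K)}(w(\phi)z^\lambda\eta)\cong\pi_{\alg}(\phi,\textbf{h})$ (cf.\ the proof of Proposition \ref{Pextalg}), Emerton's adjunction converts this to $\Hom_{T(K)}\big(\chi_w,J_B(\Pi_\infty^{R_\infty-\an})[\fa_x]\big)$, where $\chi_w:=w(\phi)z^\lambda\eta\delta_B$.

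The plan is then to compute this $T(K)$-$\Hom$-space via the patched eigenvariety. Dually it corresponds to the fiber at $x_w$ of the coherent sheaf $\cM/(\fa_x,\fm_{\chi_w})\cM$; by Lemma \ref{Lnocompa} and the fact that $\fa_x$ pins the $R_\infty$-projection to the Artinian scheme $\Spec A_D\times\{\fm^\wp\}$, the point $x_w$ is the only contributing point. By the discussion preceding Lemma \ref{Lfax}, $\cM$ is locally free of rank one at $x_w$, and since $\fa_x+\fm_{\chi_w}\supset\fm_{x_w}^2$ the quotient $\co_{\cE,x_w}/(\fa_x,\fm_{\chi_w})$ is Artinian. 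The hard part will be the tangent-space computation showing that its maximal ideal vanishes: using the local isomorphism of $\cE$ with $\iota_p(X_{\tri}^\square(\overline\rho))\times(\Spf R_\infty^\wp)^{\rig}$ at $x_w$ coming from (\ref{Eeigvar1}) and the identification of $T_{\iota_p^{-1}(x_{w,\wp})}X_{\tri}^\square$ with pairs $(\widetilde D,\kappa_w(\widetilde D))$ for $\widetilde D\in\Ext^1_w(D,D)$ (modulo framings), the ideal $\fa_x$ kills framings, the $R_\infty^\wp$-directions and, via $\fm_{A_D}/\fm_{A_D}^2\cong\ol{\Ext}^1(D,D)^\vee$, the image of $\Ext^1_0(D,D)$ in $\Ext^1_w(D,D)$, while $\fm_{\chi_w}$ forces $\kappa_w(\widetilde D)=0$. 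Since $\Ker\kappa_w=\Ext^1_0(D,D)$, the surviving tangent space vanishes, giving $\co_{\cE,x_w}/(\fa_x,\fm_{\chi_w})\cong E$ as required.
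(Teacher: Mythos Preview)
Your argument for (1) is exactly the paper's: it is the one-line observation that $\fa_x+\fm_{A_D}=\fm_x$.

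For (2) your route is genuinely different from the paper's. The paper stays on the locally algebraic side: it uses \cite[Lem.~4.17]{CEGGPS1} to see that the $R_D^{\square}$-action on $\Pi_{\infty}^{R_{\infty}-\an}[\fa_x]^{\lalg}$ factors through $R_{D,g}^{\square}/\fa\cong A_0$, then rewrites $\Hom_{\GL_n(K)}(\pi_{\alg}(\phi,\textbf{h}),-)$ as $M[\fm_{A_0}]$ for the Bernstein--centre module $M=\Hom_{\GL_n(K)}\big((\cind_{\GL_n(\co_K)}^{\GL_n(K)}\sigma_{\sm})\otimes_E L(\lambda),\,-\,\big)$, and finally invokes the Hecke/Galois compatibility \cite[Thm.~4.19]{CEGGPS1} to identify the $\cH$-action on $M$ with the $A_0\cong R_{D,g}^{\square}/\fa$-action; this collapses $M[\fm_{A_0}]$ to a Hom into $\Pi_\infty^{R_\infty-\an}[\fm_x]^{\lalg}\cong\pi_{\alg}(\phi,\textbf{h})$. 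Your approach instead passes through Emerton's adjunction to the Jacquet side and makes an eigenvariety computation: Lemma~\ref{Lnocompa} isolates $x_w$, the local freeness of $\cM$ identifies the relevant piece with $\widehat{\co}_{\cE,x_w}/(\fa_x,\fm_{\chi_w})$, and the identifications $R_{D,w}^{\square}/\fa\cong A_{D,w}\cong R_{w(\phi)z^{\textbf{h}}}/\fm^2$ (already recorded around (\ref{Ecartiwg1}) and (\ref{Et(K)act})) make the quotient by $\fm_{\chi_w}$ equal to $E$. Both arguments are correct; yours avoids the Bernstein--centre input \cite[Thm.~4.19]{CEGGPS1} at the cost of the eigenvariety machinery (local freeness of $\cM$, the local model $R_{D,w}^{\square}$), while the paper's argument is more ``smooth-representation-theoretic''. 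Two small points of care in your write-up: first, Emerton's adjunction only gives an \emph{injection} $\Hom_{\GL_n(K)}(I_{B^-}^{\GL_n}(\chi),V)\hookrightarrow\Hom_{T(K)}(\chi\delta_B,J_B(V))$ onto the balanced maps, but that inequality is all you need for the upper bound; second, your description ``$\fa_x$ kills framings and the image of $\Ext^1_0(D,D)$'' is heuristic (since $\fa$ is only a \emph{chosen} complement), but the precise statement you actually use---namely $R_{D,w}^{\square}/\fa\cong A_{D,w}$ and $A_{D,w}/\fm_{\chi_w}\cong E$---is exactly what the paper establishes.
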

\begin{proof}By definition, $\fa+\fm_{A_D}=\fm_{R_D^{\square}}$ hence $\fa_x+\fm_{A_D}=\fm_x$. (1) follows. By \cite[Lem.~4.17]{CEGGPS1}, the $R_D^{\square}$-action on $\Pi_{\infty}^{R_{\infty}-\an}[\fa_x]^{\lalg}$ factors through $R_{D,g}^{\square}/\fa$.  Let $\sigma_{\sm}$ be a smooth irreducible representation of $\GL_n(\co_K)$ over $E$ associated to the Bernstein component of $\pi_{\sm}(\phi)$. We have  $\cH\cong\End_{\GL_n(K)}(\cind_{\GL_n(\co_K)}^{\GL_n(K)}\sigma_{\sm})$ (recalling $\cH$ is the Bernstein centre associated to $\pi_{\sm}(\phi)$). Consider 
	\begin{multline*}
		M:=\Hom_{\GL_n(K)}\big((\cind_{\GL_n(\co_K)}^{\GL_n(K)} \sigma_{\sm}) \otimes_E L(\lambda), \Pi_{\infty}^{R_{\infty}-\an}[\fa_x]^{\lalg}\big)\\
		\cong \Hom_{\GL_n(K)}\big(\cind_{\GL_n(\co_K)}^{\GL_n(K)} \sigma_{\sm}, (\Pi_{\infty}^{R_{\infty}-\an}[\fa_x]^{\lalg}\otimes_E L(\lambda)^{\vee})_{\sm}\big)
	\end{multline*}
	which is naturally an $\cH$-module (with $\cH$ acting on $\cind_{\GL_n(\co_K)}^{\GL_n(K)} \sigma_{\sm}$). Hence $\Hom_{\GL_n(K)}(\pi_{\alg}(\phi, \textbf{h}), \Pi_{\infty}^{R_{\infty}-\an}[\fa_x]^{\lalg})\cong M[\fm_{A_0}]$. By \cite[Thm.~4.19]{CEGGPS1}, this $\cH$-action on $M$ coincides with the one induced by $\cH \ra \widehat{\cH}_{\phi} \twoheadrightarrow A_0 \xrightarrow{\sim} R_{D,g}^{\square}/\fa$ acting on $\Pi_{\infty}^{R_{\infty}-\an}[\fa_x]^{\lalg}$ . This implies 
$M[\fm_{A_0}]\cong \Hom_{\GL_n(K)}\big((\cind_{\GL_n(\co_K)}^{\GL_n(K)} \sigma_{\sm}) \otimes_E L(\lambda), \Pi_{\infty}^{R_{\infty}-\an}[\fa_x]^{\lalg}[\fm_{A_{0}}]\big)$.
	However, as ideals of $R_{D,g}^{\square}/\fm^2$, we have $\fm_{A_0}+\fa=\fm$ hence $ \Pi_{\infty}^{R_{\infty}-\an}[\fa_x]^{\lalg}[\fm_{A_0}]\cong \Pi_{\infty}^{R_{\infty}-\an}[\fm_x]^{\lalg}\cong \pi_{\alg}(\phi, \textbf{h})$ (by \cite[Thm.~4.3.5]{CEGGPS1}). So $M[\fm_{A_0}]\cong E$, proving (2).
\end{proof}
For $w\in S_n$, let $U=U_{\wp} \times U^{\wp}\subset \iota_p(X_{\tri}^{\square}(\overline{\rho}) )\times (\Spf R_{\infty}^{\wp})^{\rig}$ be a smooth affinoid neighbourhood of $x_w$ such that $x_{w'}\notin U$ for $w'\neq w$.   Let $\fm_{x_{w,\wp}}$ be the maximal ideal of $\co(U_{\wp})$ at $x_{w,\wp}$. As $x_{w,\wp}$ is the only point in $U_{\wp}$ associated to $\rho$ (by assumption and Lemma \ref{Lnocompa}), $\fm_{x_{w,\wp}}\subset \co(U_{\wp})$ is the closed ideal generated by $\fm_{\rho}\subset R_{\overline{\rho}}^{\square}[1/p]$. Let $\fa\supset \fm_{x_{w,\wp}}^2$ be the closed ideal generated by $\fa \subset R_{\overline{\rho}}^{\square}[1/p]$. Consider $\cM_{\tilde{x}_w}:=\cM(U)/(\fa+\fm^{\wp})$. By definition, we have  a $T(K) \times R_{\infty}$-equivariant map 
\begin{equation}\label{Etangent1}
	\cM_{\tilde{x}_w}^{\vee} \hooklongrightarrow J_B(\Pi_{\infty}^{R_{\infty}-\an})[\fa_x]\cong J_B(\Pi_{\infty}^{R_{\infty}-\an}[\fa_x]).
\end{equation}
 Recall as $D$ is non-critical, the completion of $X_{\tri}^{\square}(\overline{\rho})$ at $\iota_p^{-1}(x_{w,\wp})$ is naturally isomorphic to $R_{D,w}^{\square}$. As $\cM$ is locally free of rank one at $x_w$, we see $\cM_{\tilde{x}_w}\cong R_{D,w}^{\square}/\fa$. In particular, $\dim_E \cM_{\tilde{x}_w}=	1+(n+nd_K)$. The $T(K)$-action on $\cM_{\tilde{x}_w}$ is encoded in the $A_{D,w}$-action. Indeed, $T(K)$ acts on the $A_{D,w}$-module $\cM_{\tilde{x}_w}$ via the composition:
  \begin{equation}\label{Et(K)act}T(K) \lra R_{\delta_w \delta_B}/\fm^2 \xlongrightarrow{\sim}R_{w(\phi)z^{\textbf{h}}}/\fm^2 \cong A_{D,w}
 \end{equation}
where the first map is induced by the universal deformation, and the second is induced by twisting $\delta_B (\varepsilon^{-1} \circ \theta)$ (which corresponds to the twist in $\iota_p$).  
We equip $A_{D,w}$ with the $T(K)$-action as in (\ref{Et(K)act}). Then the $T(K)$-representation $A_{D,w}^{\vee}$ is just isomorphic to the universal extension $\widetilde{\delta}_w^{\univ} \delta_B$ (see the discussion below (\ref{Edeltauniv})). In summary, we have  a $T(K) \times A_{D,w}$-equivariant isomorphism $\cM_{\tilde{x}_w}^{\vee}\cong \widetilde{\delta}_w^{\univ} \delta_B$.


\begin{lemma}\label{Lbala}
The map (\ref{Etangent1}) is balanced in the sense of \cite[Def.~0.8]{Em2}, hence (by \cite[Thm.~0.13]{Em2}) induces a $\GL_n(K) \times R_{\infty}$-equivariant injection 
$$\iota_w: 	I_{B^-(K)}^{\GL_n(K)}\widetilde{\delta}_w^{\univ}\hooklongrightarrow \Pi_{\infty}^{R_{\infty}-\an}[\fa_x],$$ where the $R_{\infty}$-action on $I_{B^-(K)}^{\GL_n(K)}\widetilde{\delta}_w^{\univ}=I_{B^-(K)}^{\GL_n(K)}((\widetilde{\delta}_w^{\univ}\delta_B) \delta_B^{-1})$ is induced by  $R_{\infty} \ra (R_{D,w}^{\square}/\fa) \otimes_E (R_{\infty}^{\wp}[1/p]/\fm^{\wp}) \xrightarrow{\sim} A_{D,w}$ acting on $\widetilde{\delta}_w^{\univ}\delta_B$.
\end{lemma}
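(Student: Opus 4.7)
The plan is to verify the balanced condition of \cite[Def.~0.8]{Em2} for the map (\ref{Etangent1}), then apply Emerton's adjunction \cite[Thm.~0.13]{Em2} to obtain $\iota_w$, and finally check injectivity using an induction on the length of the nilpotent thickening $\widetilde{\delta}_w^{\univ}$.

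First, I would recall that $\cM_{\tilde x_w}^{\vee}\cong \widetilde\delta_w^{\univ}\delta_B$ as $T(K)\times A_{D,w}$-representations, and that this is a successive extension of copies of the single character $\delta_w\delta_B= w(\phi)z^{\textbf h}\delta_B(\varepsilon^{-1}\circ\theta)$. Since the point $x_w\in \cE$ is a non-critical classical point (this is the content of our assumption $D\in \Phi\Gamma_{\nc}(\phi,\textbf h)$ together with the classicality provided by $\Pi_{\infty}[\fm_x]^{\lalg}\neq 0$), the balanced condition for the underlying semisimple character $\delta_w\delta_B$ follows by the standard argument: strict dominance of $\textbf h$ together with the positivity of $\delta_B$ and the genericity of $\phi$ guarantees that $\delta_w\delta_B$ lies in the "non-critical" locus where $J_B$ and Emerton's closed induction $I_{B^-(K)}^{\GL_n(K)}$ form an adjoint pair (cf.\ \cite[Thm.~4.3]{Br13II} and \cite[Prop.~3.7.4, Thm.~4.4.6]{BHS1}).

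Next I would promote the balanced condition from the semisimple character to the nilpotent thickening $\widetilde\delta_w^{\univ}\delta_B$. The key point is that $A_{D,w}$ acts on $\cM_{\tilde x_w}^{\vee}$ through an action compatible with the $R_{\infty}$-action on $\Pi_{\infty}^{R_{\infty}-\an}[\fa_x]$ (via the identification $A_{D,w}\xrightarrow{\sim} R_{D,w}^{\square}/\fa$ and $R_\infty\twoheadrightarrow R_{D,w}^{\square}/\fa\otimes R_{\infty}^{\wp}[1/p]/\fm^{\wp}$ given by the patched eigenvariety construction). Since both the semisimple quotient and the $A_{D,w}$-equivariant subobjects of $\cM_{\tilde x_w}^{\vee}$ factor through the same balanced character $\delta_w\delta_B$, the balanced property propagates along the filtration by $\fm_{A_{D,w}}$-powers. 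Applying \cite[Thm.~0.13]{Em2} then yields the $\GL_n(K)\times R_{\infty}$-equivariant map $\iota_w\colon I_{B^-(K)}^{\GL_n(K)}\widetilde\delta_w^{\univ}\to \Pi_{\infty}^{R_{\infty}-\an}[\fa_x]$.

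Finally, for injectivity, I would argue by induction on $\dim_E A_{D,w}$. By Lemma \ref{Lunivw} (and (\ref{Edeltauniv})), the representation $I_{B^-(K)}^{\GL_n(K)}\widetilde\delta_w^{\univ}$ is a successive extension of copies of $\pi_{\alg}(\phi,\textbf h)=I_{B^-(K)}^{\GL_n(K)}\delta_w$. At the bottom step, $\iota_w|_{\pi_{\alg}(\phi,\textbf h)}$ is non-zero because its image has non-zero Jacquet module (it recovers the generating semisimple character of $\cM_{\tilde x_w}^{\vee}$ in $J_B(\Pi_{\infty}^{R_{\infty}-\an}[\fa_x])$), and by Lemma \ref{Lfax}(2) the space $\Hom_{\GL_n(K)}(\pi_{\alg}(\phi,\textbf h),\Pi_{\infty}^{R_{\infty}-\an}[\fa_x])$ is one-dimensional, so the map on the socle is an injection. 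At each subsequent stage, the kernel of $\iota_w$ would be an $A_{D,w}\times\GL_n(K)$-subrepresentation with trivial image in the Jacquet module, contradicting the injectivity of (\ref{Etangent1}); the $A_{D,w}$-equivariance ensures the filtrations match up so that $J_B$ detects the kernel faithfully.

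The main obstacle is the second step: promoting the balanced property from the semisimple character to the thickening. This requires matching Emerton's distribution-algebra-theoretic formulation of "balanced" with the $R_{\infty}$-equivariant structure coming from the patched eigenvariety, and is the point where the compatibility $T(K)\to A_{D,w}$ of (\ref{Et(K)act}) with the $R_{\infty}$-action on $\cM_{\tilde x_w}^{\vee}$ is essential.
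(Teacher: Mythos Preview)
Your overall strategy (verify the balanced condition for the single character $\delta_w\delta_B$, promote it to the nilpotent thickening $\widetilde\delta_w^{\univ}\delta_B$, then deduce injectivity) is exactly the shape of the argument the paper has in mind: the paper simply cites \cite[Lem.~4.11]{Ding7} together with Lemma~\ref{Lnocompa}. So the approach is not genuinely different. However, there are two concrete problems with your write-up.

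First, your injectivity step rests on a false structural claim. You assert that, by Lemma~\ref{Lunivw}, $I_{B^-(K)}^{\GL_n(K)}\widetilde\delta_w^{\univ}$ is a successive extension of copies of $\pi_{\alg}(\phi,\textbf h)=I_{B^-(K)}^{\GL_n(K)}\delta_w$. This is not what Lemma~\ref{Lunivw} says: that lemma identifies $I_{B^-(K)}^{\GL_n(K)}\widetilde\delta_w^{\univ}$ with the universal extension of $\pi_{\alg}(\phi,\textbf h)^{\oplus n(d_K+1)}$ by $\PS_1(w(\phi),\textbf h)$, and $\PS_1(w(\phi),\textbf h)$ contains the non-locally-algebraic constituents $\sC(w,s_{i,\sigma})$. (Equivalently, $I_{B^-(K)}^{\GL_n(K)}$ is not exact, so the $\fm_{A_{D,w}}$-filtration on $\widetilde\delta_w^{\univ}$ does not yield graded pieces $\pi_{\alg}(\phi,\textbf h)$ after induction; cf.\ Remark~\ref{Rind1}.) The fix is actually simpler than your induction: $\soc_{\GL_n(K)}I_{B^-(K)}^{\GL_n(K)}\widetilde\delta_w^{\univ}\cong\pi_{\alg}(\phi,\textbf h)$ is irreducible (by Lemma~\ref{Lunivw} and Proposition~\ref{LGLn1} (2)), so any non-zero $\GL_n(K)$-map out of it is automatically injective, and non-vanishing follows from the Jacquet-module argument you already gave.

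Second, the promotion of balanced-ness to the thickening is the real content of the lemma, and your justification (``the balanced property propagates along the filtration by $\fm_{A_{D,w}}$-powers'') is too vague. The balanced condition of \cite[Def.~0.8]{Em2} is a condition on the map (\ref{Etangent1}), not merely on the underlying characters, and it does not formally propagate along extensions. What makes it work here is precisely Lemma~\ref{Lnocompa}: the absence of companion points guarantees that no other $T(K)$-characters besides $\delta_w\delta_B$ occur in $J_B(\Pi_\infty^{R_\infty-\an}[\fa_x])$ near $x_w$, which is the input needed to verify Emerton's criterion for the full thickened character. This is exactly the role of Lemma~\ref{Lnocompa} in the paper's one-line proof; you should make this dependence explicit rather than appealing to a formal filtration argument.
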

\begin{proof}
The lemma follows by the same argument as in \cite[Lem.~4.11]{Ding7}, using Lemma \ref{Lnocompa}.
\end{proof}

\begin{lemma}\label{Clg1}
We have $(\Ima \iota_w)[\fm_x]\cong \PS_1(w(\phi), \textbf{h})$. 
\end{lemma}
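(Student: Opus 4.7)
The plan is to reduce the computation of $(\Ima\iota_w)[\fm_x]$ to an intrinsic calculation on the representation $I_{B^-(K)}^{\GL_n(K)}\widetilde{\delta}_w^{\univ}$ itself, and then to read off the answer from the universal extension structure given in Lemma \ref{Lunivw}. Since $\iota_w$ is $R_\infty$-equivariant and injective, we have $(\Ima\iota_w)[\fm_x]\cong (I_{B^-(K)}^{\GL_n(K)}\widetilde{\delta}_w^{\univ})[\fm_x]$; and by the description of the $R_\infty$-action in Lemma \ref{Lbala}, this $R_\infty$-action factors through the composition $R_\infty\to (R_{D,w}^{\square}/\fa)\otimes_E(R_\infty^{\wp}[1/p]/\fm^{\wp})\xrightarrow{\sim} A_{D,w}$. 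Since $\fa\subset \fm_{R_{D,w}^\square}$ and $A_{D,w}\xrightarrow{\sim} R_{D,w}^\square/\fa$ from the discussion around (\ref{Ecartiwg1}), the image of $\fm_x$ in $A_{D,w}$ is exactly $\fm_{A_{D,w}}$. So it suffices to compute $(I_{B^-(K)}^{\GL_n(K)}\widetilde{\delta}_w^{\univ})[\fm_{A_{D,w}}]$.

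Recall from the discussion above (\ref{Edeltauniv}) and (\ref{EactAw}) that, under the identification $\fm_{A_{D,w}}/\fm_{A_{D,w}}^2\cong \Hom(T(K),E)^\vee$ (and using that $\fm_{A_{D,w}}^2=0$), a class $y\in\fm_{A_{D,w}}$ acts on $\widetilde{\delta}_w^{\univ}$ by the composition
\[
y\colon \widetilde{\delta}_w^{\univ}\twoheadrightarrow \Hom(T(K),E)\otimes_E\delta_w\xrightarrow{y}\delta_w\hookrightarrow\widetilde{\delta}_w^{\univ}.
\]
Applying Emerton's functor $I_{B^-(K)}^{\GL_n(K)}(-)$ (which is $E$-linear and left exact) and recalling $I_{B^-(K)}^{\GL_n(K)}\delta_w\cong \pi_{\alg}(\phi,\textbf{h})$ (cf.\ the proof of Proposition \ref{Pextalg} (1)), the $y$-action on $I_{B^-(K)}^{\GL_n(K)}\widetilde{\delta}_w^{\univ}$ becomes
\[
y\colon I_{B^-(K)}^{\GL_n(K)}\widetilde{\delta}_w^{\univ}\xrightarrow{\pi}\Hom(T(K),E)\otimes_E\pi_{\alg}(\phi,\textbf{h})\xrightarrow{y}\pi_{\alg}(\phi,\textbf{h})\hookrightarrow I_{B^-(K)}^{\GL_n(K)}\widetilde{\delta}_w^{\univ},
\]
where $\pi$ is obtained by applying $I_{B^-(K)}^{\GL_n(K)}(-)$ to the quotient $\widetilde{\delta}_w^{\univ}\twoheadrightarrow\Hom(T(K),E)\otimes\delta_w$. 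Since $\fm_{A_{D,w}}$ is spanned by its image in $\fm_{A_{D,w)}}/\fm_{A_{D,w}}^2\cong\Hom(T(K),E)^\vee$, an element $v$ of $I_{B^-(K)}^{\GL_n(K)}\widetilde{\delta}_w^{\univ}$ is annihilated by $\fm_{A_{D,w}}$ if and only if $\pi(v)=0$.

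The main point is then to identify $\Ker(\pi)$ with $\PS_1(w(\phi),\textbf{h})$. By construction $\pi$ factors through the cosocle quotient $I_{B^-(K)}^{\GL_n(K)}\widetilde{\delta}_w^{\univ}\twoheadrightarrow \pi_{\alg}(\phi,\textbf{h})\otimes_E\Ext^1_{\GL_n(K)}(\pi_{\alg}(\phi,\textbf{h}),\PS_1(w(\phi),\textbf{h}))$ provided by Lemma \ref{Lunivw}, and the induced map between two direct sums of copies of $\pi_{\alg}(\phi,\textbf{h})$ is precisely the one coming (tensored with $\pi_{\alg}(\phi,\textbf{h})$) from the natural pairing $\Ext^1_{\GL_n(K)}(\pi_{\alg}(\phi,\textbf{h}),\PS_1(w(\phi),\textbf{h}))\xrightarrow{\zeta_w^{-1}}\Hom(T(K),E)$ (cf.\ Remark \ref{Rind1}); this is because both maps compute the parameter $\psi\in\Hom(T(K),E)$ of a first-order deformation $I_{B^-(K)}^{\GL_n(K)}(w(\phi)\eta z^{\lambda}(1+\psi\epsilon))$. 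By Proposition \ref{Pextps} (1), $\zeta_w$ is a bijection, hence the induced map is an isomorphism of the two cosocle quotients. Consequently $\pi$ is surjective and $\Ker(\pi)$ equals the socle-layer sub $\PS_1(w(\phi),\textbf{h})$ from Lemma \ref{Lunivw}, which proves the claim.

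The main obstacle is the identification of the induced map on the cosocle with $\zeta_w^{-1}$; once this is granted, the dimension argument and Proposition \ref{Pextps} (1) finish the calculation. Some care is also needed to verify that the chain $R_\infty\to A_{D,w}$ sends $\fm_x$ surjectively onto $\fm_{A_{D,w}}$, but this is essentially formal from the isomorphism $A_{D,w}\cong R_{D,w}^{\square}/\fa$.
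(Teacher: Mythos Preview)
Your argument is correct and takes a genuinely different, more elementary route than the paper. The paper's proof first shows $\PS_1(w(\phi),\textbf{h})\hookrightarrow(\Ima\iota_w)[\fm_x]$ using Lemma~\ref{Lfax} and a vanishing of $\Hom$'s, and then establishes the reverse inclusion by a \emph{global} argument: it proves that the generalized $T(K)$-eigenspace $J_B(\Pi_{\infty}^{R_{\infty}-\an}[\fm_x])\{T(K)=\delta_w\delta_B\}$ is one-dimensional, using the local freeness of $\cM$ at $x_w$ together with the global triangulation theory (so that a tangent vector at $x_{w,\wp}$ with trivial Galois part must also have trivial weight part), and concludes by contradiction. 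Your approach bypasses all of this: since $\iota_w$ is injective and $R_\infty$-equivariant, you reduce to the purely local computation $(I_{B^-(K)}^{\GL_n(K)}\widetilde{\delta}_w^{\univ})[\fm_{A_{D,w}}]$, and then read off the answer from the universal extension structure of Lemma~\ref{Lunivw}. What you gain is a shorter proof that uses no eigenvariety input beyond what is already packaged into Lemma~\ref{Lbala}; what the paper's proof buys is the independently useful semi-simplicity statement (\ref{Esemi11}).

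One small caution: your parenthetical claim that $I_{B^-(K)}^{\GL_n(K)}$ is ``left exact'' is not something established in the paper or in Emerton's work in general. You only need the much weaker fact that $I_{B^-(K)}^{\GL_n(K)}(\delta_w)\to I_{B^-(K)}^{\GL_n(K)}(\widetilde{\delta}_w^{\univ})$ is injective, which follows because both sit inside the corresponding full locally analytic inductions and the latter functor is exact. Alternatively, you can sidestep the whole identification of $\pi$ with $\zeta_w^{-1}$ by directly using the $A_{D,w}$-equivariance of the inclusion $I_{B^-(K)}^{\GL_n(K)}\widetilde{\delta}_w^{\univ}\hookrightarrow\pi_1(\phi,\textbf{h})_w^{\univ}$ stated just after (\ref{EactAw}): from (\ref{EactAw}) one has $\pi_1(\phi,\textbf{h})_w^{\univ}[\fm_{A_{D,w}}]=\pi_1(\phi,\textbf{h})$, and intersecting with $I_{B^-(K)}^{\GL_n(K)}\widetilde{\delta}_w^{\univ}$ via (\ref{Edeltauniv}) gives exactly $\PS_1(w(\phi),\textbf{h})$.
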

\begin{proof}By Lemma \ref{Lunivw}, we have 
	\begin{equation}\label{EinjPS1}\PS_1(w(\phi), \textbf{h})\hooklongrightarrow 	I_{B^-(K)}^{\GL_n(K)}\widetilde{\delta}_w^{\univ}{\buildrel {\iota_w} \over \hooklongrightarrow} \Pi_{\infty}^{R_{\infty}-\an}[\fa_x].
	\end{equation} By Lemma \ref{Lfax}, it sends $\pi_{\alg}(\phi, \textbf{h})$ to $\Pi_{\infty}^{R_{\infty}-\an}[\fm_x]$. By Lemma \ref{Lnocompa} and Lemma \ref{Ljacsi} (1),  $\Hom_{\GL_n(K)}(\PS_1(w(\phi), \textbf{h}), \Pi_{\infty}^{R_{\infty}-\an}[\fm_x])=0$. For any $\alpha\in \fm_x$,  the induced ($\GL_n(K)$-equivariant) map $\Pi_{\infty}^{R_{\infty}-\an}[\fa_x]\xrightarrow{\alpha} \Pi_{\infty}^{R_{\infty}-\an}[\fa_x]$ composed with (\ref{EinjPS1}) factors through $\PS_1(w(\phi), \textbf{h}) \ra \Pi_{\infty}^{R_{\infty}-\an}[\fm_x]$ (noting $\fa_x \supset \fm_x^2$), hence has to be zero. So (\ref{EinjPS1}) has image in $\Pi_{\infty}^{R_{\infty}-\an}[\fm_x]$. 

By \cite[Lem.~4.16]{BHS2} (which directly generalizes to the crystabelline case), we have (where $\{-\}$ denotes the generalized eigenspace):
\begin{equation}\label{Esemi11}J_B(\Pi_{\infty}^{R_{\infty}-\an}[\fm_x])[T(K)=\delta_w \delta_B]\xlongrightarrow{\sim} J_B(\Pi_{\infty}^{R_{\infty}-\an}[\fm_x])\{T(K)=\delta_w \delta_B\}\end{equation}
which is hence one dimensional. Indeed, consider the tangent map of $U_{\wp}\ra (\Spf R_{\overline{\rho}}^{\square})^{\rig} \times \widehat{T}$ at the point $x_{w,\wp}$:
\begin{equation*}
	(\kappa_1, \kappa_2): T_{x_{w,\wp}} U_{\wp} \lra T_{\rho}(\Spf R_{\overline{\rho}}^{\square})^{\rig} \times T_{\delta_w\delta_B} \widehat{T}\lra \Ext^1(D,D) \times \Hom(T(K),E)
\end{equation*}(where $T_x X$ denotes the tangent space of a rigid analytic space $X$ at a closed point $x$).  For $v\in T_{x_{w,\wp}} U_{\wp} $, let $\widetilde{D}_v$ be the deformation of $D$ over $\cR_{K, E[\epsilon]/\epsilon^2}$ associated to $\kappa_1(v)$. As $D$ is generic non-critical, the global triangulation theory  (e.g. using \cite[Thm.~5.3]{Bergd17} and an induction argument) implies $\widetilde{D}_v$ is trianguline of parameter $\delta_w (1+\kappa_2(v) \epsilon)$. In particular, if $\kappa_1(v)=0$, then $\kappa_2(v)$ must be zero as well. As $\cM$ is locally free at $x_w$, we deduce $J_B(\Pi_{\infty}^{R_{\infty}-\an}[\fm_x])[\fm_{\delta_w\delta_B}]\xrightarrow{\sim} J_B(\Pi_{\infty}^{R_{\infty}-\an}[\fm_x])[\fm_{\delta_w\delta_B}^2]$, where $\fm_{\delta_w\delta_B}$ denotes the maximal ideal of $\co(\widehat{T})$ at the point $\delta_w\delta_B$. But this implies $J_B(\Pi_{\infty}^{R_{\infty}-\an}[\fm_x])[\fm_{\delta_w\delta_B}]\xrightarrow{\sim} J_B(\Pi_{\infty}^{R_{\infty}-\an}[\fm_x])[\fm_{\delta_w\delta_B}^k]$ for all $k\geq 1$. (\ref{Esemi11}) follows.

Now if the injection $\PS_1(w(\phi),\textbf{h})\hookrightarrow (\Ima \iota_w)[\fm_x]$ is not surjective, by Proposition \ref{Pextps} (1) and Remark \ref{Rind1}, there exists $\psi\in \Hom(T(K),E)$ such that $I_{B^-(K)}^{\GL_n(K)}(w(\phi) \eta z^{\lambda}(1+\psi \epsilon))\hookrightarrow(\Ima \iota_w)[\fm_x]$. Hence $$\delta_w \delta_B(1+\psi \epsilon)\hooklongrightarrow J_B(\Pi_{\infty}^{R_{\infty}-\an}[\fm_x])\{T(K)=\delta_w \delta_B\},$$ a contradiction. The lemma follows.
\end{proof}
Let $\widetilde{\pi}$ be the closed subrepresentation of $\Pi_{\infty}^{R_{\infty}-\an}[\fa_x]$ generated by $\Ima \iota_w$ for all $w\in S_n$. It is clear that $\widetilde{\pi}$ is stabilized by $R_{\infty}$. In particular, $\widetilde{\pi}$ has an induced $A_D$-action via $A_D \xrightarrow{\sim} R_{D}^{\square}/\fa\xrightarrow{\sim} R_{\infty}[1/p]/\fa_x$.
\begin{theorem}\label{Tlg}
We have a $\GL_n(K) \times A_D$-equivariant  isomorphism  $\pi_1(\phi, \textbf{h})^{\univ} \cong \widetilde{\pi}$ (see Theorem \ref{Tuniv} for the $A_D$-action on $\pi_1(\phi, \textbf{h})^{\univ}$). 
\end{theorem}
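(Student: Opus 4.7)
The strategy is to realise $\widetilde{\pi}$ as the image of $\pi_1(\phi,\textbf{h})^{\univ}$ under a canonical $\GL_n(K)\times A_D$-equivariant map built out of the injections $\iota_w$ supplied by Lemma~\ref{Lbala}, and then show this map is an isomorphism.

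First, I would glue the various $\Ima(\iota_w)$ along their common socle to produce a single copy of $\pi_1(\phi,\textbf{h})$ inside $\widetilde{\pi}$. By Lemma~\ref{Clg1}, $(\Ima\iota_w)[\fm_x]\cong \PS_1(w(\phi),\textbf{h})$, while Lemma~\ref{Lfax}(2) ensures that $\Hom_{\GL_n(K)}(\pi_{\alg}(\phi,\textbf{h}),\Pi_{\infty}^{R_{\infty}-\an}[\fa_x])\cong E$, so that the sockets of the different $\PS_1(w(\phi),\textbf{h})$ coincide inside $\widetilde{\pi}[\fm_x]$. Recalling that $\pi_1(\phi,\textbf{h})$ is, by construction, the unique quotient of $\oplus_{\pi_{\alg}(\phi,\textbf{h})}^{w\in S_n}\PS_1(w(\phi),\textbf{h})$ of socle $\pi_{\alg}(\phi,\textbf{h})$, this yields a natural injection $j:\pi_1(\phi,\textbf{h})\hookrightarrow \widetilde{\pi}[\fm_x]$.

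Next, via the isomorphism (\ref{Edeltauniv}) I would amalgamate each $\iota_w$ with $j$ to obtain, for every $w\in S_n$, a $\GL_n(K)\times A_{D,w}$-equivariant map
$\tilde{\iota}_w:\pi_1(\phi,\textbf{h})^{\univ}_w\cong I^{\GL_n(K)}_{B^-(K)}\widetilde{\delta}^{\univ}_w\oplus_{\PS_1(w(\phi),\textbf{h})}\pi_1(\phi,\textbf{h})\lra \widetilde{\pi}$,
where the target carries the $A_{D,w}$-action inherited from the map $R_\infty\twoheadrightarrow R_\infty[1/p]/\fa_x\cong A_D\twoheadrightarrow A_{D,w}$. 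Injectivity of $\tilde{\iota}_w$ can be checked on irreducible constituents: on $\PS_1(w(\phi),\textbf{h})$ it is forced by Lemma~\ref{Clg1}, on the subrepresentation $\pi_1(\phi,\textbf{h})$ it is $j$, and on the top layer the injectivity follows from $\iota_w$ being an injection together with the equality $(\Ima\iota_w)[\fm_x]\cong\PS_1(w(\phi),\textbf{h})$.

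The maps $\tilde{\iota}_w$ assemble into a map
\[
\Theta:\oplus_{\pi_1(\phi,\textbf{h})}^{w\in S_n}\pi_1(\phi,\textbf{h})^{\univ}_w\lra \widetilde{\pi},
\]
which is surjective by the very definition of $\widetilde{\pi}$. Now I would invoke the uniqueness clause of Theorem~\ref{Tuniv}: since each $\tilde{\iota}_w$ is $A_{D,w}$-equivariant and the $A_D$-action on $\widetilde{\pi}$ (coming from $R_\infty$) restricts on $\Ima(\tilde{\iota}_w)$ to an action factoring through $A_D\twoheadrightarrow A_{D,w}$, the $A_D$-action on $\widetilde{\pi}$ is the unique one characterized in Theorem~\ref{Tuniv}. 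Consequently $\Theta$ factors through the tautological quotient $\oplus_{\pi_1(\phi,\textbf{h})}^{w}\pi_1(\phi,\textbf{h})^{\univ}_w\twoheadrightarrow \pi_1(\phi,\textbf{h})^{\univ}$, yielding a $\GL_n(K)\times A_D$-equivariant surjection $\overline{\Theta}:\pi_1(\phi,\textbf{h})^{\univ}\twoheadrightarrow \widetilde{\pi}$.

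Finally, I would prove $\overline{\Theta}$ is injective. Using $\End_{\GL_n(K)}(\pi_1(\phi,\textbf{h}))\cong E$ and Lemma~\ref{Lfax}(2), the socle of $\Ker(\overline{\Theta})$ would have to be contained in the upper layer $\pi_{\alg}(\phi,\textbf{h})\otimes_E \Ext^1_{\GL_n(K)}(\pi_{\alg}(\phi,\textbf{h}),\pi_1(\phi,\textbf{h}))$ of $\pi_1(\phi,\textbf{h})^{\univ}$. Restricting $\overline{\Theta}$ to each $\pi_1(\phi,\textbf{h})^{\univ}_w$ recovers the injection $\tilde{\iota}_w$, hence $\Ker(\overline{\Theta})$ maps to zero in $\Ext^1_{\GL_n(K)}(\pi_{\alg}(\phi,\textbf{h}),\pi_1(\phi,\textbf{h}))/\!\sum_{w}\Ext^1_w(\pi_{\alg}(\phi,\textbf{h}),\pi_1(\phi,\textbf{h}))$; but Proposition~\ref{Pextpi1}(2) says the latter quotient vanishes, so $\Ker(\overline{\Theta})=0$. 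The main technical obstacle is the verification that the natural $A_D$-action induced from $R_\infty$ on $\widetilde{\pi}$ really restricts to the prescribed $A_{D,w}$-action on each $\Ima(\tilde{\iota}_w)$; this hinges on the identification of the $T(K)$-action on $\cM_{\tilde{x}_w}^{\vee}$ with the $A_{D,w}$-action through (\ref{Et(K)act}), coupled with the compatibility of the Cartesian square (\ref{Ecartiwg1}) with the map $R_\infty\to R_\infty/\fa_x\cong A_D$.
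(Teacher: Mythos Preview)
Your argument has a genuine gap in steps 4 and 5, and both stem from the same missing ingredient: you have not established enough control over the internal structure of $\widetilde{\pi}$.

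In step 4 you invoke the uniqueness clause of Theorem~\ref{Tuniv} to conclude that $\Theta$ descends to $\pi_1(\phi,\textbf{h})^{\univ}$. But that theorem characterizes an $A_D$-action \emph{on} $\pi_1(\phi,\textbf{h})^{\univ}$; it does not say anything about $\widetilde{\pi}$, nor does it provide a criterion for when a map out of the amalgamated sum $\oplus_{\pi_1(\phi,\textbf{h})}^{w}\pi_1(\phi,\textbf{h})^{\univ}_w$ factors through $\pi_1(\phi,\textbf{h})^{\univ}$. Concretely, for $\Theta$ to descend you must show that $\Theta$ kills the subspace $\Ker\!\big(\oplus_w\Ext^1_w\to\Ext^1\big)\otimes_E\pi_{\alg}(\phi,\textbf{h})$ sitting in the top layer, and this requires knowing that the extension class of $\widetilde{\pi}$ over $\pi_1(\phi,\textbf{h})$ injects into $\Ext^1_{\GL_n(K)}(\pi_{\alg}(\phi,\textbf{h}),\pi_1(\phi,\textbf{h}))$ --- equivalently, that $\soc_{\GL_n(K)}\widetilde{\pi}\cong\pi_{\alg}(\phi,\textbf{h})$ with multiplicity one and that each $\sC(I,s_{i,\sigma})$ occurs with multiplicity one in $\widetilde{\pi}$. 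You have not proved either.

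Step 5 is likewise incomplete: from $\Ker(\overline{\Theta})\cap\big(\Ext^1_w\otimes_E\pi_{\alg}(\phi,\textbf{h})\big)=0$ for every $w$ you cannot conclude $\Ker(\overline{\Theta})=0$. A subspace of $\Ext^1$ can meet each $\Ext^1_w$ trivially while being nonzero; your appeal to the vanishing of $\Ext^1/\sum_w\Ext^1_w$ is vacuous (it is the zero space) and carries no information about $\Ker(\overline{\Theta})$.

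The paper avoids both problems by reversing the order of the argument. It first proves $\widetilde{\pi}\cong\pi_1(\phi,\textbf{h})^{\univ}$ purely as a $\GL_n(K)$-representation, via a direct structural analysis: Lemma~\ref{Lnocompa} together with Lemma~\ref{Ljacsi}(1) shows no $\sC(I,s_{i,\sigma})$ lies in the socle of $\Pi_\infty^{R_\infty-\an}[\fa_x]$, Lemma~\ref{Lfax}(2) gives $\widetilde{\pi}^{\lalg}\cong\pi_{\alg}(\phi,\textbf{h})$, hence $\soc_{\GL_n(K)}\widetilde{\pi}\cong\pi_{\alg}(\phi,\textbf{h})$; then Lemma~\ref{Lextps}(1) forces each $\sC(I,s_{i,\sigma})$ to have multiplicity one, and the presence of all the $I_{B^-(K)}^{\GL_n(K)}\widetilde{\delta}_w^{\univ}$ pins $\widetilde{\pi}$ down as the full universal extension. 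Only after this isomorphism is in hand does the paper transport the $R_\infty$-induced $A_D$-action across it and invoke the uniqueness in Theorem~\ref{Tuniv} to match it with the action defined there. Your approach needs exactly these structural facts to make steps 4 and 5 go through; once you add them, your construction of $\overline{\Theta}$ becomes essentially the paper's argument read backwards.
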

\begin{proof}
We first show $\widetilde{\pi} \cong \pi_1(\phi, \textbf{h})^{\univ}$ as $\GL_n(K)$-representation. It is clear that the irreducible constituents of $\widetilde{\pi}$ are given by $\pi_{\alg}(\phi, \textbf{h})$, and all $\sC(I,s_{i,\sigma})$ (with multiplicities no less than one, cf. (\ref{ECIsigma})). By Lemma \ref{Lnocompa} and Lemma \ref{Ljacsi} (1), $\Hom_{\GL_n(K)}(\sC(I,s_{i,\sigma}), \Pi_{\infty}^{R_{\infty}-\an}[\fa_x])=0$. Together with the fact $\widetilde{\pi}^{\lalg}\cong \pi_{\alg}(\phi, \textbf{h})$ by Lemma \ref{Lfax} (2), we see $\soc_{\GL_n(K)} \widetilde{\pi}\cong  \pi_{\alg}(\phi, \textbf{h})$.  It is also clear from the definition that all $\sC(I,s_{i,\sigma})$ lie in the socle of $\widetilde{\pi}/ \pi_{\alg}(\phi, \textbf{h})$, hence all have multiplicity one by Lemma \ref{Lextps} (1). These together with $I_{B^-(K)}^{\GL_n(K)} \widetilde{\delta}_w^{\univ} \subset \widetilde{\pi}$ for all $w$, imply $\widetilde{\pi}\cong  \pi_1(\phi, \textbf{h})^{\univ}$.

We equip $ \pi_1(\phi, \textbf{h})^{\univ}$ with an $A_D$-action induced by the  $A_D$-action on $\widetilde{\pi}$ (induced from $R_{\infty}$). By Lemma \ref{Lbala}, the composition $I_{B^-(K)}^{\GL_n(K)}\widetilde{\delta}_w^{\univ} \hookrightarrow \widetilde{\pi}\cong  \pi_1(\phi, \textbf{h})^{\univ}$ is then $A_D$-equivariant hence factors through an $A_{D,w}$-equivariant injection $I_{B^-(K)}^{\GL_n(K)} \widetilde{\delta}_w^{\univ} \hookrightarrow  \pi_1(\phi, \textbf{h})^{\univ}[\cI_w]$ (recalling $\cI_w$ is the kernel of $A_D \twoheadrightarrow A_{D,w}$). 
By Theorem \ref{Tuniv}, we see this (global) $A_D$-action coincides with the one given there. This concludes the proof.
\end{proof}
Together with Corollary \ref{Cpimin}, Lemma \ref{Lfax} (1), we get 
\begin{corollary}
	We have  a $\GL_n(K)$-equivariant injection 
	\begin{equation}
		\label{EinjDcru} \pi_{\min}(D) \cong\pi_1(\phi, \textbf{h})^{\univ}[\fm_{A_D}]\cong \widetilde{\pi}[\fm_{A_D}] \hookrightarrow  \Pi_{\infty}^{R_{\infty}-\an}[\fa_x][\fm_D] \cong \Pi_{\infty}^{R_{\infty}-\an}[\fm_x].
	\end{equation}
\end{corollary}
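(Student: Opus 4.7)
The plan is to simply assemble the pieces already established immediately before the statement. First, I would invoke Corollary \ref{Cpimin} to rewrite $\pi_{\min}(D)$ as $\pi_1(\phi, \textbf{h})^{\univ}[\fm_{A_D}]$, giving the first isomorphism. Next, Theorem \ref{Tlg} provides a $\GL_n(K) \times A_D$-equivariant isomorphism $\pi_1(\phi, \textbf{h})^{\univ} \cong \widetilde{\pi}$; passing to $\fm_{A_D}$-invariants (which is functorial in $A_D$-equivariant maps) yields the second isomorphism $\pi_1(\phi, \textbf{h})^{\univ}[\fm_{A_D}] \cong \widetilde{\pi}[\fm_{A_D}]$.

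For the injection, note that by construction $\widetilde{\pi}$ is a closed $\GL_n(K)$-subrepresentation of $\Pi_\infty^{R_\infty-\an}[\fa_x]$, and the $A_D$-action on $\widetilde{\pi}$ is induced from the $R_\infty$-action on the target via $A_D \xrightarrow{\sim} R_D^\square/\fa \xrightarrow{\sim} R_\infty[1/p]/\fa_x$. Consequently, passing to $\fm_{A_D}$-invariants gives a $\GL_n(K)$-equivariant injection
\[
\widetilde{\pi}[\fm_{A_D}] \hooklongrightarrow \Pi_\infty^{R_\infty-\an}[\fa_x][\fm_{A_D}].
\]
Finally, Lemma \ref{Lfax}(1) (whose proof uses $\fa_x + \fm_{A_D} = \fm_x$) identifies the target with $\Pi_\infty^{R_\infty-\an}[\fm_x]$, giving the last isomorphism in the displayed chain.

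Since every step is a direct application of a previously proved statement, there is no genuine obstacle; the only point requiring a moment of care is ensuring compatibility of the $A_D$-actions at each step, namely that the $A_D$-action arising intrinsically on $\pi_1(\phi,\textbf{h})^{\univ}$ in Theorem \ref{Tuniv} matches the one induced by $R_\infty$ through the embedding $\widetilde{\pi} \hookrightarrow \Pi_\infty^{R_\infty-\an}[\fa_x]$. But precisely this compatibility was established in the last paragraph of the proof of Theorem \ref{Tlg}, so there is nothing further to verify.
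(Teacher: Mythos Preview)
Your proposal is correct and follows essentially the same approach as the paper: the paper's own proof is just the sentence ``Together with Corollary \ref{Cpimin}, Lemma \ref{Lfax} (1), we get'' preceding the corollary, i.e.\ it assembles Theorem \ref{Tlg}, Corollary \ref{Cpimin}, and Lemma \ref{Lfax}(1) exactly as you do. Your extra remark on the compatibility of the $A_D$-actions is precisely what is verified in the proof of Theorem \ref{Tlg}, so nothing is missing.
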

\begin{remark}\label{Rpifs2}
 By \cite[Thm.~5.12]{BH2}, $\pi_{\alg}(\phi, \textbf{h}) \hookrightarrow \Pi_{\infty}^{R_{\infty}-\an}[\fm_x]$ uniquely extends to $\pi(\phi, \textbf{h})\hookrightarrow \Pi_{\infty}^{R_{\infty}-\an}[\fm_x]$. Using (\ref{EpifsD}), we deduce from (\ref{EinjDcru}) an injection
$\pi_{\fss}(D) \hookrightarrow\Pi_{\infty}^{R_{\infty}-\an}[\fm_x]$.
Remark that $\pi_{\fss}(D)$ should still be far from the entire $\Pi_{\infty}^{R_{\infty}-\an}[\fm_x]$. 
\end{remark}

\begin{corollary}\label{Cmaxmin}The representation $\pi_{\min}(D)$ is the maximal subrepresentation of $\Pi_{\infty}^{R_{\infty}-\an}[\fm_x]$ given by extensions of $\pi_{\alg}(\phi, \textbf{h})$ by $\pi_1(\phi, \textbf{h})$. 
\end{corollary}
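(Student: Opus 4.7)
The plan is to show that any subrepresentation $V \subset \Pi_\infty^{R_\infty-\an}[\fm_x]$ which is an extension of copies of $\pi_{\alg}(\phi, \textbf{h})$ by $\pi_1(\phi, \textbf{h})$ must satisfy $V \subset \pi_{\min}(D)$, given that the inclusion $\pi_{\min}(D) \hookrightarrow \Pi_\infty^{R_\infty-\an}[\fm_x]$ is already provided by (\ref{EinjDcru}). I would first locate $V$ inside the universal extension $\widetilde{\pi}$ (which sits in $\Pi_\infty^{R_\infty-\an}[\fa_x]$ via Theorem \ref{Tlg}), and then cut down by the $\fm_{A_D}$-action to land in $\pi_{\min}(D)$.

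For Step 1, I claim $V \subset \widetilde{\pi}$ inside $\Pi_\infty^{R_\infty-\an}[\fa_x]$. Since $\pi_{\alg}(\phi, \textbf{h})$ has multiplicity one as a subrepresentation of $\Pi_\infty^{R_\infty-\an}[\fm_x]$ by Lemma \ref{Lfax}(2), the socles of $V$ and of $\widetilde{\pi}$ coincide. Next, one verifies that the subrepresentation $\pi_1(\phi, \textbf{h}) \subset V$ equals the canonical $\pi_1(\phi, \textbf{h}) \subset \widetilde{\pi}$: since no $\sC(I,s_{i,\sigma})$ is a subrepresentation of $\Pi_\infty^{R_\infty-\an}[\fm_x]$ (Lemma \ref{Ljacsi}(1), Lemma \ref{Lnocompa}), and since the companion-point analysis of Lemma \ref{Clg1} forces each $\sC(I,s_{i,\sigma})$ to be realized uniquely through the images of the $\iota_w$'s, any embedding of $\pi_1(\phi, \textbf{h})$ starting from the unique $\pi_{\alg}(\phi, \textbf{h})$-sub factors through $\widetilde{\pi}$. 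With this shared $\pi_1(\phi, \textbf{h})$, the sum $V + \widetilde{\pi}$ is again an extension of copies of $\pi_{\alg}(\phi, \textbf{h})$ by $\pi_1(\phi, \textbf{h})$, classified by the sum of the subspaces of $\Ext^1_{\GL_n(K)}(\pi_{\alg}(\phi, \textbf{h}), \pi_1(\phi, \textbf{h}))$ attached to $V$ and to $\widetilde{\pi}$. Since $\widetilde{\pi} \cong \pi_1(\phi, \textbf{h})^{\univ}$ is the universal extension (Theorem \ref{Tlg}), its classifying subspace is already the full $\Ext^1$, so $V + \widetilde{\pi}$ and $\widetilde{\pi}$ have the same length, forcing $V + \widetilde{\pi} = \widetilde{\pi}$ and hence $V \subset \widetilde{\pi}$.

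For Step 2, I invoke the $\GL_n(K) \times A_D$-equivariant isomorphism $\widetilde{\pi} \cong \pi_1(\phi, \textbf{h})^{\univ}$ of Theorem \ref{Tlg}, where $A_D$ acts through $R_\infty \to R_\infty[1/p]/\fa_x \xrightarrow{\sim} A_D$. Because $V \subset \Pi_\infty^{R_\infty-\an}[\fm_x]$ and $\fm_x = \fa_x + \fm_{A_D}$, the ideal $\fm_{A_D}$ annihilates $V$, so $V \subset \widetilde{\pi}[\fm_{A_D}]$, which equals $\pi_{\min}(D)$ by Corollary \ref{Cpimin}.

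The hard part is the rigidity analysis in Step 1: a priori $\pi_1(\phi, \textbf{h})$ could embed into $\Pi_\infty^{R_\infty-\an}[\fm_x]$ in several ways starting from the unique $\pi_{\alg}(\phi, \textbf{h})$-sub. Ruling this out rests on controlling $\Hom_{\GL_n(K)}(\sC(I,s_{i,\sigma}), \Pi_\infty^{R_\infty-\an}[\fm_x]/W)$ for the relevant subrepresentations $W$, via Emerton's adjunction and the absence of companion points at the reflected weights (Lemma \ref{Lnocompa}); this is where the geometry of the patched eigenvariety at the companion points $x_w$ enters decisively.
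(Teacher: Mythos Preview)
Your overall strategy matches the paper's: first show $V\subset\widetilde{\pi}$ inside $\Pi_\infty^{R_\infty-\an}[\fa_x]$, then pass to the $\fm_{A_D}$-kernel. Step 2 is exactly the paper's argument. The point that $V$ and $\widetilde{\pi}$ share the same copy of $\pi_1(\phi,\textbf{h})$ is correct and follows already from $\Hom_{\GL_n(K)}(\pi_{\alg}(\phi,\textbf{h}),\Pi_\infty^{R_\infty-\an}[\fa_x])\cong E$ together with $\Hom_{\GL_n(K)}(\sC(I,s_{i,\sigma}),\Pi_\infty^{R_\infty-\an}[\fa_x])=0$; the appeal to Lemma~\ref{Clg1} is unnecessary.

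There is, however, a genuine gap in the final inference of Step 1. You assert that $V+\widetilde{\pi}$ is ``classified by the sum of the subspaces'' and hence has the same length as $\widetilde{\pi}$. This is what you are trying to prove. If $V\not\subset\widetilde{\pi}$, then $V\cap\widetilde{\pi}=\pi_1(\phi,\textbf{h})$ and $V+\widetilde{\pi}\cong V\oplus_{\pi_1(\phi,\textbf{h})}\widetilde{\pi}$ is an extension of $\pi_{\alg}(\phi,\textbf{h})^{\oplus(N+1)}$ by $\pi_1(\phi,\textbf{h})$ (with $N=\dim_E\Ext^1_{\GL_n(K)}(\pi_{\alg}(\phi,\textbf{h}),\pi_1(\phi,\textbf{h}))$), not of $\pi_{\alg}(\phi,\textbf{h})^{\oplus N}$. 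It is the tautological extension attached to a subspace of $\Ext^1$ only if its socle is the single $\pi_{\alg}(\phi,\textbf{h})$, and that is precisely the point at issue.

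The paper closes this gap by contradiction: since $\widetilde{\pi}$ is the universal extension, the class of $V$ is already realized inside $\widetilde{\pi}$, so if $V\not\subset\widetilde{\pi}$ the amalgamated sum $V\oplus_{\pi_1(\phi,\textbf{h})}\widetilde{\pi}$ acquires a second copy of $\pi_{\alg}(\phi,\textbf{h})$ in its socle, giving $\dim_E\Hom_{\GL_n(K)}(\pi_{\alg}(\phi,\textbf{h}),\Pi_\infty^{R_\infty-\an}[\fa_x])\geq 2$ and contradicting Lemma~\ref{Lfax}(2). Equivalently, you could salvage your length argument by first observing that $\soc_{\GL_n(K)}(V+\widetilde{\pi})\hookrightarrow\soc_{\GL_n(K)}\Pi_\infty^{R_\infty-\an}[\fa_x]$ forces the socle to be a single $\pi_{\alg}(\phi,\textbf{h})$; but this second use of Lemma~\ref{Lfax}(2) is the essential input you omitted.
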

\begin{proof}
Let $V$ be an extension of $\pi_{\alg}(\phi, \textbf{h})$ by $\pi_1(\phi, \textbf{h})$ contained in $\Pi_{\infty}^{R_{\infty}-\an}[\fm_x]\subset  \Pi_{\infty}^{R_{\infty}-\an}[\fa_x]$. If $V$ is not a subrepresentation of $\widetilde{\pi}$, then $V\oplus_{\pi_1(\phi, \textbf{h})} \widetilde{\pi}\hookrightarrow \Pi_{\infty}^{R_{\infty}-\an}[\fa_x]$. As $\widetilde{\pi}$ is isomorphic to the universal extension of $\pi_1(\phi, \textbf{h})$ by $\pi_{\alg}(\phi, \textbf{h})$, this implies $\dim_E \Hom_{\GL_n(K)}(\pi_{\alg}(\phi, \textbf{h}), \Pi_{\infty}^{R_{\infty}-\an}[\fa_x])\geq 2$ contradicting Lemma \ref{Lfax} (2). So $V\subset \widetilde{\pi} \cap \Pi_{\infty}^{R_{\infty}-\an}[\fm_x]= \widetilde{\pi}[\fm_x]=\widetilde{\pi}[\fm_{A_D}]\cong \pi_{\min}(D)$.
\end{proof}
By  \cite[Thm.~5.3.3]{BHS3} (for the crystalline case) and \cite[Thm.~1.3]{BD3} (for the crystabelline non-crystalline case), the information that $D$ is non-critical can be determined by $\Pi_{\infty}^{R_{\infty}-\an}[\fm_x]$. By Corollary \ref{Cmaxmin}, Theorem \ref{Thodgeauto}, we then obtain
\begin{corollary}Keep the situation, then  $\Pi_{\infty}^{R_{\infty}-\an}[\fm_x]$ determines $\{D_{\sigma}\}_{\sigma\in \Sigma_K}$ for $D=D_{\rig}(\rho)$. In particular, when $K=\Q_p$, it determines $\rho$. 
\end{corollary}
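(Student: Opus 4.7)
The plan is to reduce the statement to Corollary \ref{Cmaxmin} and Theorem \ref{Thodgeauto} by showing that all ingredients in the characterization of $\pi_{\min}(D)$ can be read off from $\Pi_{\infty}^{R_{\infty}-\an}[\fm_x]$. First, I would recover the pair $(\phi,\textbf{h})$ intrinsically from $\Pi_{\infty}^{R_{\infty}-\an}[\fm_x]$. By \cite[Thm.~4.35]{CEGGPS1}, the locally algebraic vectors satisfy $\Pi_{\infty}^{R_{\infty}-\an}[\fm_x]^{\lalg}\cong \pi_{\alg}(\phi,\textbf{h})=\pi_{\sm}(\phi)\otimes_E L(\lambda)$. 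The algebraic factor $L(\lambda)$ determines $\textbf{h}$ via $\lambda=\textbf{h}-\theta^{[K:\Q_p]}$, while $\pi_{\sm}(\phi)$ determines the unordered multiset $\{\phi_1,\dots,\phi_n\}$ by classical local Langlands; since the construction of $\pi_1(\phi,\textbf{h})$ in \S\ref{Sps} is invariant under permutations of the $\phi_i$, both $\pi_{\alg}(\phi,\textbf{h})$ and $\pi_1(\phi,\textbf{h})$ are intrinsically recovered.

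Second, by the hypothesis already invoked above the statement (namely \cite[Thm.~5.3.3]{BHS3} in the crystalline case and \cite[Thm.~1.3]{BD3} in the crystabelline non-crystalline case), the non-criticality of $D=D_{\rig}(\rho)$ is visible from $\Pi_{\infty}^{R_{\infty}-\an}[\fm_x]$, so we know a priori that $D\in \Phi\Gamma_{\nc}(\phi,\textbf{h})$. Hence Corollary \ref{Cmaxmin} applies, and characterizes the subrepresentation $\pi_{\min}(D)\subset \Pi_{\infty}^{R_{\infty}-\an}[\fm_x]$ purely in terms of the data $(\pi_{\alg}(\phi,\textbf{h}),\pi_1(\phi,\textbf{h}))$ already recovered: it is the maximal subrepresentation expressible as an extension of $\pi_{\alg}(\phi,\textbf{h})$ by $\pi_1(\phi,\textbf{h})$. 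Thus $\pi_{\min}(D)$ is determined, up to canonical isomorphism, by $\Pi_{\infty}^{R_{\infty}-\an}[\fm_x]$.

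Finally, Theorem \ref{Thodgeauto} asserts that the isomorphism class of $\pi_{\min}(D)$ determines the collection $\{D_{\sigma}\}_{\sigma\in\Sigma_K}$. Concatenating the three steps yields that $\Pi_{\infty}^{R_{\infty}-\an}[\fm_x]$ determines $\{D_{\sigma}\}_{\sigma\in\Sigma_K}$. When $K=\Q_p$, the set $\Sigma_K$ is a singleton and $\fT_{\sigma}(D)\cong D$, so $D=D_{\rig}(\rho)$ is determined, and hence so is $\rho$ by Fontaine's equivalence.

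The only non-formal point in this chain is the canonical identification in the first paragraph: one must check that the ambiguity in ordering the $\phi_i$ does not enlarge the extracted subrepresentation, which is clear because $\pi_1(\phi,\textbf{h})$ is built by amalgamating over all $w\in S_n$. The rest is a direct assembly of Corollary \ref{Cmaxmin}, Theorem \ref{Thodgeauto}, and the cited non-criticality results, with no further input required.
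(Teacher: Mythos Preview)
Your proposal is correct and follows essentially the same route as the paper: the paper's argument is precisely the combination of the non-criticality input from \cite[Thm.~5.3.3]{BHS3} and \cite[Thm.~1.3]{BD3}, Corollary \ref{Cmaxmin}, and Theorem \ref{Thodgeauto}. You have simply made explicit the (implicit) first step of recovering $(\phi,\textbf{h})$ from the locally algebraic vectors, which is a helpful elaboration but not a different strategy.
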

\subsection{Some other cases}\label{S42}
We discuss the local-global compatibility in the space of $p$-adic automorphic representations for certain definite unitary groups (with fewer global hypotheses than \S \ref{S41}). 

\subsubsection{Some formal results}We first discuss some corollaries of the results in \S~\ref{S32}.  Let $D\in \Phi\Gamma_{\nc}(\phi, \textbf{h})$  and $\Ext^1_U(D,D)$ be a certain subspace of $\Ext^1(D,D)$. For $w\in S_n$, set $\Ext^1_{U,w}(D,D):=\Ext^1_U(D,D) \cap \Ext^1_w(D,D)$. We assume the following hypotheses.
\begin{hypothesis}\label{Hglo1}
(1)  $\Ext^1_U(D,D) \cap \Ext^1_g(D,D)=0$.

(2) For $w\in S_n$, $\dim_E\Ext^1_{U,w}(D,D)=nd_K$. 
\end{hypothesis} 
\begin{corollary}\label{CdimU}
	The natural map $\oplus_{w\in S_n} \Ext^1_{U,w}(D,D) \ra \Ext^1_U(D,D)$ is surjective, and  $\dim_E \Ext^1_U(D,D)=\frac{n(n+1)}{2} d_K$.
\end{corollary}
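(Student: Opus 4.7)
The plan is to leverage Hypothesis~\ref{Hglo1}~(1) to identify $\Ext^1_U(D,D)$ with the quotient $\ol{\Ext}^1(D,D)/\ol{\Ext}^1_g(D,D)$, and then deduce the surjectivity of the amalgamating map from Proposition~\ref{Pinffern}.

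First I would observe that since $\Ext^1_0(D,D) \subset \Ext^1_g(D,D)$, Hypothesis~\ref{Hglo1}~(1) yields $\Ext^1_U(D,D) \cap \Ext^1_0(D,D) = 0$, so the projection induces an injection
\begin{equation*}
\Ext^1_U(D,D) \hooklongrightarrow \ol{\Ext}^1(D,D)/\ol{\Ext}^1_g(D,D).
\end{equation*}
By (\ref{bardim}) and the isomorphism $\ol{\Ext}^1_g(D,D) \xrightarrow[\sim]{\kappa_w} \Hom_{\sm}(T(K),E)$ from (\ref{Ekappaw000}), the target has dimension $\frac{n(n+1)}{2}d_K + n - n = \frac{n(n+1)}{2} d_K$. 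It therefore suffices to show that the composition $\oplus_{w\in S_n} \Ext^1_{U,w}(D,D) \to \Ext^1_U(D,D) \hookrightarrow \ol{\Ext}^1(D,D)/\ol{\Ext}^1_g(D,D)$ is surjective.

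Next I would analyse each summand $\Ext^1_{U,w}(D,D)$ through the isomorphism $\kappa_w : \ol{\Ext}^1_w(D,D) \xrightarrow{\sim} \Hom(T(K),E)$ of (\ref{Ekappaw000}). By Hypothesis~\ref{Hglo1}~(1) together with Proposition~\ref{PDef1}~(3), the image $\kappa_w(\Ext^1_{U,w}(D,D))$ meets $\Hom_{\sm}(T(K),E)$ trivially. Since $\dim_E \Ext^1_{U,w}(D,D) = nd_K$ by Hypothesis~\ref{Hglo1}~(2), and $\dim_E \Hom(T(K),E) = n + nd_K$ while $\dim_E \Hom_{\sm}(T(K),E) = n$, a dimension count forces a direct-sum decomposition
\begin{equation*}
\Hom(T(K),E) = \kappa_w(\Ext^1_{U,w}(D,D)) \oplus \Hom_{\sm}(T(K),E).
\end{equation*}
Equivalently, the induced map $\Ext^1_{U,w}(D,D) \to \ol{\Ext}^1_w(D,D)/\ol{\Ext}^1_g(D,D)$ is an isomorphism.

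Finally, since $\ol{\Ext}^1_g(D,D) \subset \ol{\Ext}^1_w(D,D)$ for every $w$, Proposition~\ref{Pinffern} descends to a surjection
\begin{equation*}
\oplus_{w\in S_n} \bigl(\ol{\Ext}^1_w(D,D)/\ol{\Ext}^1_g(D,D)\bigr) \twoheadlongrightarrow \ol{\Ext}^1(D,D)/\ol{\Ext}^1_g(D,D).
\end{equation*}
Combining with the isomorphisms of the previous step, the composition $\oplus_w \Ext^1_{U,w}(D,D) \to \Ext^1_U(D,D) \hookrightarrow \ol{\Ext}^1(D,D)/\ol{\Ext}^1_g(D,D)$ is surjective; hence both the natural map $\oplus_w \Ext^1_{U,w}(D,D) \twoheadrightarrow \Ext^1_U(D,D)$ and the inclusion $\Ext^1_U(D,D) \hookrightarrow \ol{\Ext}^1(D,D)/\ol{\Ext}^1_g(D,D)$ are surjective. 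The latter then becomes an isomorphism and gives $\dim_E \Ext^1_U(D,D) = \frac{n(n+1)}{2} d_K$. No real obstacle is expected: the content is bundled into Hypothesis~\ref{Hglo1}, and the proof is purely a dimension-chasing exercise built on Proposition~\ref{PDef1}, Proposition~\ref{Pinffern} and the identifications $\kappa_w$.
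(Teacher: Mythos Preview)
Your proof is correct and follows essentially the same approach as the paper: both arguments inject $\Ext^1_U(D,D)$ into $\Ext^1(D,D)/\Ext^1_g(D,D)$ via Hypothesis~\ref{Hglo1}~(1), identify each $\Ext^1_{U,w}(D,D)$ with $\Ext^1_w(D,D)/\Ext^1_g(D,D)$ by a dimension count (the paper cites Proposition~\ref{PDef1}~(1) directly, you route through $\kappa_w$), and then invoke Proposition~\ref{Pinffern} for surjectivity. The only cosmetic difference is that the paper packages this as a single commutative square with $\Ext^1_?/\Ext^1_g$, whereas you pass through the $\ol{\Ext}^1$ notation; since $\ol{\Ext}^1(D,D)/\ol{\Ext}^1_g(D,D)\cong \Ext^1(D,D)/\Ext^1_g(D,D)$ canonically, the two presentations are interchangeable.
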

\begin{proof}
We have a commutative diagram
	\begin{equation*}
		\begin{tikzcd}
		&	\oplus_{w\in S_n} \Ext^1_{U,w}(D,D) \arrow[r] \arrow[d,"\sim"] & \Ext^1_U(D,D)  \arrow[d, hook]\\
	&	\oplus_{w\in S_n} \Ext^1_w(D,D)/\Ext^1_g(D,D) \arrow[r, two heads] &\Ext^1(D,D)/\Ext^1_g(D,D)
		\end{tikzcd}
	\end{equation*}
	where the vertical maps are injective  by Hypothesis \ref{Hglo1} hence the left one is bijective by comparing dimensions (cf. Proposition \ref{PDef1} (1)), the surjectivity of the bottom map follows from Proposition \ref{Pinffern}. We deduce the top and right maps are  also surjective. The second part follows then by Proposition \ref{PDef1} (1).
\end{proof}
Denote by $\Ext^1_{U,w}(\pi_{\alg}(\phi, \textbf{h}), \pi_1(\phi, \textbf{h}))$ the image of the composition (see (\ref{Ekappaw000}) and (\ref{Eisomzetaw})): 
$\Ext^1_{U,w}(D,D) \hookrightarrow \ol{\Ext}^1_w(D,D) \xrightarrow[\sim]{\zeta_w\circ \kappa_w} \Ext^1_{w}(\pi_{\alg}(\phi, \textbf{h}), \pi_1(\phi, \textbf{h}))$, 
where the injectivity of the first map follows from  Hypothesis \ref{Hglo1} (1) (recalling $\Ext^1_0(D,D)\subset \Ext^1_g(D,D)$). Denote by $\Ext^1_U(\pi_{\alg}(\phi, \textbf{h}), \pi_1(\phi, \textbf{h}))$ the subspace of $\Ext^1_{\GL_n(K)}(\pi_{\alg}(\phi, \textbf{h}), \pi_1(\phi, \textbf{h}))$ generated by $\Ext^1_{U,w}(\pi_{\alg}(\phi, \textbf{h}), \pi_1(\phi, \textbf{h}))$ for all $w\in S_n$. 

\begin{corollary}
(1) \ The \ map \   
$	t_{\phi, \textbf{h}}: \oplus_{w\in S_n} \Ext^1_{U,w}(\pi_{\alg}(\phi, \textbf{h}), \pi_1(\phi, \textbf{h})) \cong \oplus_{w\in S_n} \Ext^1_{U,w}(D,D) \twoheadrightarrow \Ext^1_U(D,D)
$
(uniquely) factors through a surjection $$	t_{D,U}: \Ext^1_U(\pi_{\alg}(\phi, \textbf{h}), \pi_1(\phi, \textbf{h})) \twoheadlongrightarrow \Ext^1_U(D,D).$$ 

(2) We have $\Ker t_{D,U}\xrightarrow{\sim} \Ker t_D$ (cf. (\ref{EtD000})). 
\end{corollary}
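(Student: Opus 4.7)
The plan is to deduce both assertions from Theorem \ref{TtD} together with a direct-sum decomposition at the level of each $w$ that mirrors the Galois-side decomposition $\ol{\Ext}^1_w(D,D)=\Ext^1_{U,w}(D,D)\oplus\ol{\Ext}^1_g(D,D)$. The guiding idea is that Hypothesis \ref{Hglo1} forces both $\Ext^1_U$ and $\Ext^1_g$ to occupy complementary positions inside $\ol{\Ext}^1_w(D,D)$, and this complementarity is preserved by the isomorphism $\zeta_w\circ\kappa_w$ of Corollary \ref{CtD}.

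For (1), I will restrict $t_D$ to $\Ext^1_U(\pi_{\alg}(\phi,\textbf{h}),\pi_1(\phi,\textbf{h}))$. By Corollary \ref{CtD}, the restriction of $t_D$ to $\Ext^1_w(\pi_{\alg},\pi_1)$ coincides via $\zeta_w\circ\kappa_w$ with the inclusion $\ol{\Ext}^1_w(D,D)\hookrightarrow\ol{\Ext}^1(D,D)$. Under Hypothesis \ref{Hglo1}~(1), $\Ext^1_{U,w}(D,D)$ injects into $\ol{\Ext}^1_w(D,D)$ (since $\Ext^1_0(D,D)\subset\Ext^1_g(D,D)$), so $t_D$ identifies $\Ext^1_{U,w}(\pi_{\alg},\pi_1)$ with $\Ext^1_{U,w}(D,D)$. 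By Corollary \ref{CdimU} the subspaces $\Ext^1_{U,w}(D,D)$ span $\Ext^1_U(D,D)$, whence $t_D$ sends $\Ext^1_U(\pi_{\alg},\pi_1)$ surjectively onto $\Ext^1_U(D,D)$. Uniqueness is automatic because the $\Ext^1_{U,w}(\pi_{\alg},\pi_1)$ generate $\Ext^1_U(\pi_{\alg},\pi_1)$.

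For (2), I first establish the key decomposition
\begin{equation*}
\Ext^1_w(\pi_{\alg}(\phi,\textbf{h}),\pi_1(\phi,\textbf{h}))=\Ext^1_{U,w}(\pi_{\alg},\pi_1)\oplus\Ext^1_g(\pi_{\alg},\pi_1)
\end{equation*}
for each $w\in S_n$. On the Galois side, Hypothesis \ref{Hglo1}~(1) gives $\Ext^1_{U,w}(D,D)\cap\ol{\Ext}^1_g(D,D)=0$, and a dimension count using Proposition \ref{PDef1}~(1), \eqref{Ext0dim}, and Hypothesis \ref{Hglo1}~(2) yields $nd_K+n=\dim\ol{\Ext}^1_w(D,D)$, giving $\ol{\Ext}^1_w(D,D)=\Ext^1_{U,w}(D,D)\oplus\ol{\Ext}^1_g(D,D)$; transporting via $\zeta_w\circ\kappa_w$ produces the automorphic decomposition. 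Summing over $w$ and invoking Proposition \ref{Pextpi1}~(2), I obtain $\Ext^1_{\GL_n(K)}(\pi_{\alg},\pi_1)=\Ext^1_U(\pi_{\alg},\pi_1)+\Ext^1_g(\pi_{\alg},\pi_1)$. Given $v\in\Ker t_D$, write $v=v_U+v_g$; since $t_D(v_U)\in\Ext^1_U(D,D)$, $t_D(v_g)\in\ol{\Ext}^1_g(D,D)$, and these intersect trivially, both summands lie in $\Ker t_D$. Finally Lemma \ref{Lzerointer} gives $\Ker t_D\cap\Ext^1_g(\pi_{\alg},\pi_1)=0$ (as $\Ext^1_g\subset\Ext^1_w$ for any $w$), forcing $v_g=0$ and hence $v\in\Ext^1_U(\pi_{\alg},\pi_1)$. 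This proves $\Ker t_D\subset\Ext^1_U(\pi_{\alg},\pi_1)$, so $\Ker t_{D,U}=\Ker t_D\cap\Ext^1_U(\pi_{\alg},\pi_1)=\Ker t_D$.

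The main obstacle is securing the per-$w$ direct sum decomposition cleanly: one must carefully match dimensions using Hypothesis \ref{Hglo1} and verify, via the compatibility in Corollary \ref{CtD}, that $\Ext^1_g(\pi_{\alg},\pi_1)$ really is the $\zeta_w\circ\kappa_w$-image of $\ol{\Ext}^1_g(D,D)$ (independently of $w$, by Lemma \ref{Lint1}). Once this is in place, the trivial intersection $\Ker t_D\cap\Ext^1_g(\pi_{\alg},\pi_1)=0$ from Lemma \ref{Lzerointer} finishes the argument with essentially no further computation.
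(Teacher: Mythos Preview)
Your proof is correct and takes a genuinely different route from the paper's for part (2). Both arguments begin the same way, defining $t_{D,U}$ as the restriction of $t_D$ and noting $\Ker t_{D,U}=\Ker t_D\cap \Ext^1_U(\pi_{\alg}(\phi,\textbf{h}),\pi_1(\phi,\textbf{h}))$. From there the paper proceeds by a dimension count: it uses the d\'evissage exact sequences \eqref{Edivips} and \eqref{Edivi}, restricted to the $U$-subspaces, to bound $\dim_E W\le d_K$ (where $W$ sits inside $\Ext^1_{\GL_n(K)}(\pi_{\alg},\pi_{\alg})$ and has trivial intersection with the locally algebraic extensions by Hypothesis~\ref{Hglo1}(1)), forcing the maps to the $\sC(I,s_{i,\sigma})$-components to be surjective; this yields $\dim_E\Ext^1_U(\pi_{\alg},\pi_1)\ge d_K+(2^n-2)d_K$, hence $\dim_E\Ker t_{D,U}\ge(2^n-\tfrac{n(n+1)}{2}-1)d_K=\dim_E\Ker t_D$, and the inclusion finishes it. Your argument instead establishes the per-$w$ splitting $\ol{\Ext}^1_w(D,D)=\Ext^1_{U,w}(D,D)\oplus\ol{\Ext}^1_g(D,D)$, transports it via $\zeta_w\circ\kappa_w$, sums over $w$ to write any $v\in\Ker t_D$ as $v_U+v_g$, and then uses the trivial intersection $\Ext^1_U(D,D)\cap\ol{\Ext}^1_g(D,D)=0$ in $\ol{\Ext}^1(D,D)$ together with Lemma~\ref{Lzerointer} to force $v_g=0$. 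This is cleaner and avoids the $\sC(I,s_{i,\sigma})$-level bookkeeping. The paper's approach, however, extracts along the way the surjectivity of the last map in \eqref{Edivglob}, which is reused just after Corollary~\ref{CpiDUuni} to identify $I_{B^-(K)}^{\GL_n(K)}\widetilde{\delta}_{U,w}^{\univ}$ as the universal extension of $\pi_{\alg}(\phi,\textbf{h})\otimes_E\Ext^1_U(\pi_{\alg}(\phi,\textbf{h}),\PS_1(w(\phi),\textbf{h}))$ by $\PS_1(w(\phi),\textbf{h})$; with your route that fact would need a separate (though easy) justification.
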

\begin{proof}
We have a commutative diagram
\begin{equation}\label{EdiagU}\small
	\begin{tikzcd}&	\oplus_{w} \Ext^1_{U,w}(D,D) \arrow[r, hook] \arrow[d, two heads] &\oplus_{w} \ol{\Ext}^1_w(D,D) \arrow[r, "\sim"', "(\zeta_w\circ \kappa_w)"] \arrow[d, two heads] & \oplus_w \Ext^1_w(\pi_{\alg}(\phi, \textbf{h}), \pi_1(\phi, \textbf{h})) \arrow[d, two heads] \\
	&	\Ext^1_{U}(D,D) \arrow[r, hook] &\ol{\Ext}^1(D,D) &\Ext^1_{\GL_n(K)}(\pi_{\alg}(\phi, \textbf{h}), \pi_1(\phi, \textbf{h})).\arrow[l, "t_D"', "(\ref{EtD000})"] 
	\end{tikzcd}
\end{equation}
By (\ref{EdiagU}), $t_{D,U}:=t_D|_{\Ext^1_U(\pi_{\alg}(\phi, \textbf{h}), \pi_1(\phi, \textbf{h}))}$ satisfies the property in (1).
We have 
\begin{equation}\label{EKertDU}\Ker t_{D,U}=\Ker t_D \cap \Ext^1_U(\pi_{\alg}(\phi, \textbf{h}), \pi_1(\phi, \textbf{h}))\subset \Ker t_D.
	\end{equation}  
Denote \ by \ $\Ext^1_U(\pi_{\alg}(\phi, \textbf{h}), \PS_1(w(\phi), \textbf{h}))$ \ the \ image \ of \ the \ composition $\Ext^1_{U,w}(D,D)  {\buildrel {\kappa_w} \over \hookrightarrow} \Hom(T(K),E) \xrightarrow[\sim]{(\ref{Eind1})} \Ext^1_{\GL_n(K)}(\pi_{\alg}(\phi, \textbf{h}), \PS_1(w(\phi), \textbf{h}))$, which has dimension $nd_K$ by Hypothesis \ref{Hglo1} (2). By (\ref{Edivips}), we have  an exact sequence
\begin{equation}\label{Edivglob}\small
	0 \lra W \lra \Ext^1_U(\pi_{\alg}(\phi, \textbf{h}), \PS_1(w(\phi), \textbf{h})) \lra  \oplus_{\substack{i=1,\cdots, n-1\\ \sigma\in \Sigma_K}} \Ext^1(\pi_{\alg}(\phi, \textbf{h}), \sC(w(\phi), s_{i,\sigma})),
\end{equation}
where $W$ is a subspace of $\Ext^1_{\GL_n(K)}(\pi_{\alg}(\phi, \textbf{h}), \pi_{\alg}(\phi, \textbf{h}))$. By Hypothesis \ref{Hglo1} (1) ((\ref{Ekappaw000}) and Proposition \ref{Pextalg} (1)), $W\cap \Ext^1_{\lalg}(\pi_{\alg}(\phi, \textbf{h}), \pi_{\alg}(\phi, \textbf{h}))=0$. So $\dim_E W\leq (n+d_K)-n=d_K$.  By comparing dimensions, the last map in (\ref{Edivglob}) must be  surjective and $\dim_E W=d_K$. Similarly by (\ref{Edivi}), $\Ext^1_U(\pi_{\alg}(\phi, \textbf{h}), \pi_1(\phi, \textbf{h}))$ lies in an exact sequence 
\begin{equation}\label{Edivglob2}\small
	0 \lra W' \lra \Ext^1_U(\pi_{\alg}(\phi, \textbf{h}), \pi_1(\phi, \textbf{h})) \lra \oplus_{\substack{i=1,\cdots, n-1, \sigma\in \Sigma_K\\ I \subset \{1,\cdots, n-1\}, \# I =i}} \Ext^1_{\GL_n(K)}(\pi_{\alg}(\phi, \textbf{h}), \sC(I,s_{i,\sigma})),
\end{equation}
where $W'\supset W$ is a subspace of $\Ext^1_{\GL_n(K)}(\pi_{\alg}(\phi, \textbf{h}), \pi_{\alg}(\phi, \textbf{h}))$. By the surjectivity of the last map in (\ref{Edivglob}) and varying $w$ (see also the proof of Proposition \ref{Pextpi1}), the last map of (\ref{Edivglob2}) is surjective as well. Hence $\dim_E \Ext^1_U(\pi_{\alg}(\phi, \textbf{h}), \pi_1(\phi, \textbf{h}))\geq d_K+ (2^n-2)d_K$. As $\dim_E \Ext^1_U(D,D)=\frac{n(n+1)}{2}d_K$ by Corollary \ref{CdimU},  we see $\dim_E  \Ker t_{D,U}\geq (2^n-\frac{n(n+1)}{2}-1) d_K=\dim_E \Ker t_D$. By (\ref{EKertDU}), (2) follows.
\end{proof}

We set  $\pi_1(\phi, \textbf{h})^{\univ}_U$ (resp. $\pi_1(\phi, \textbf{h})^{\univ}_{U,w}$) to be the tautological extension of  
$\Ext^1_U(\pi_{\alg}(\phi, \textbf{h}), \pi_1(\phi, \textbf{h})) \otimes_E \pi_{\alg}(\phi,\textbf{h}) \text{ \big(resp. }\Ext^1_{U,w}(\pi_{\alg}(\phi, \textbf{h}), \pi_1(\phi, \textbf{h})) \otimes_E \pi_{\alg}(\phi,\textbf{h})\text{\big)}$
by $\pi_1(\phi, \textbf{h})$ (cf. \S~\ref{S311}). Let $A_{D,U}$ (resp. $A_{D,U,w}$)  be the quotient of $R_D/\fm^2$ (resp. $R_{D,w}/\fm^2$) associated to $\Ext^1_U(D,D)$ (resp. $\Ext^1_{U,w}(D,D)$). Let $A_{U,w}$ be the quotient of $R_{w(\phi)z^{\textbf{h}}}/\fm^2$ associated to the image $\Ext^1_U(w(\phi) z^{\textbf{h}}, w(\phi) z^{\textbf{h}})$ of $\kappa_w: \Ext^1_{U,w}(D,D) \hookrightarrow \Ext^1_{T(K)}(w(\phi) z^{\textbf{h}}, w(\phi) z^{\textbf{h}})\big(\cong \Hom(T(K),E)\big)$. The map $\kappa_w$ then induces  a natural isomorphism of Artinian $E$-algebras: 
\begin{equation}\label{Ewtring}A_{U,w}\xlongrightarrow{\sim} A_{D,U,w}.
\end{equation} 
Recall $\delta_w=w(\phi)z^{\textbf{h}}(\varepsilon^{-1} \circ \theta)$. We equip $A_{U,w}$ with the $T(K)$-action via
\begin{equation*}
	T(K) \lra R_{\delta_w}/\fm^2 \xlongrightarrow{\sim} R_{w(\phi)z^{\textbf{h}}}/\fm^2 \twoheadlongrightarrow A_{U,w}
\end{equation*}
where the middle isomorphism is induced by twisting by $\varepsilon^{-1} \circ \theta$. The $T(K)$-representation $A_{U,w}^{\vee}$ is  isomorphic to the tautological extension $\widetilde{\delta}_{U,w}^{\univ}$ of $\Ext^1_U(\delta_w, \delta_w) \otimes_E \delta_w$ by $\delta_w$, where $\Ext^1_U(\delta_w,\delta_w)$ consists of characters $\widetilde{\delta}_w$ over $E[\epsilon]/\epsilon^2$ such that $\widetilde{\delta}_w (\varepsilon \circ \theta) \in \Ext^1_U(w(\phi)z^{\textbf{h}}, w(\phi) z^{\textbf{h}})$. Reciprocally, the $T(K)$-representation $\widetilde{\delta}_{U,w}^{\univ}$ is equipped with a natural $A_{U,w}$-action (hence an $A_{D,U,w}$-action via (\ref{Ewtring})) as in the discussion below (\ref{Edeltauniv}) (identifying the tangent space of $A_{U,w}$ with a subspace of that of $R_{\delta_w}$).  Note the natural map $E[T(K)] \ra R_{\delta_w}/\fm^2$ is surjective. Thus the action of $T(K)$ and $A_{U,w}$ actually determine each other. 

Consider 	$I_{B^-(K)}^{\GL_n(K)} \widetilde{\delta}_{U,w}^{\univ}$. By similar arguments as in the proof of Lemma \ref{Lunivw} and (the surjectivity of the last map in) (\ref{Edivglob}),  $I_{B^-(K)}^{\GL_n(K)} \widetilde{\delta}_{U,w}^{\univ}$ is isomorphic to the universal extension of $\pi_{\alg}(\phi, \textbf{h}) \otimes_E \Ext^1_U(\pi_{\alg}(\phi, \textbf{h}), \PS_1(w(\phi), \textbf{h}))$ by $\PS_1(w(\phi),\textbf{h})$. Moreover, similarly as in the discussion below (\ref{Edeltauniv}), we have a $\GL_n(K)\times A_{D,U,w}$-equivariant injection
$I_{B^-(K)}^{\GL_n(K)} \widetilde{\delta}_{U,w}^{\univ} \hookrightarrow \pi_1(\phi, \textbf{h})^{\univ}_{U,w}$,
where the $A_{D,U,w}$-action on the left hand side is induced by its action on $\widetilde{\delta}_{U,w}^{\univ}$ as discussed in the precedent paragraph and the $A_{D,U,w}$-action on the right hand side is given in a similar way as in (\ref{EactAw}) (using also (\ref{Ewtring})).
The following corollary follows by similar arguments as in Theorem \ref{Tuniv} and Corollary \ref{Cpimin}.
\begin{corollary}\label{CpiDUuni}
(1) There is a unique $A_{D,U}$-action on $\pi_1(\phi, \textbf{h})^{\univ}_U$ such that for all $w\in S_n$, there is a $\GL_n(K)\times A_{D,U,w}$-equivariant injection  $\pi_1(\phi, \textbf{h})^{\univ}_{U,w}\hookrightarrow \pi_1(\phi, \textbf{h})^{\univ}_U[\cI_w]$.

(2) We have $\pi_1(\phi, \textbf{h})_U^{\univ}[\fm_{A_{D,U}}]\cong \pi_{\min}(D)$.
\end{corollary}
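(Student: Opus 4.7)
The plan is to mimic the proofs of Theorem \ref{Tuniv} and Corollary \ref{Cpimin}, with every object replaced by its $U$-variant, using as the essential new ingredient the factorisation $t_{D,U} : \Ext^1_U(\pi_{\alg}(\phi, \textbf{h}), \pi_1(\phi, \textbf{h})) \twoheadrightarrow \Ext^1_U(D,D)$ together with the isomorphism $\Ker t_{D,U} \xrightarrow{\sim} \Ker t_D$ from the preceding corollary. First, for (1), I would construct the $A_{D,U}$-action on $\pi_1(\phi, \textbf{h})^{\univ}_U$ explicitly via its tangent space. Identifying $\fm_{A_{D,U}}/\fm_{A_{D,U}}^2$ with $\Ext^1_U(D,D)^{\vee}$, the (surjective) map $t_{D,U}$ gives an injection $\Ext^1_U(D,D)^{\vee} \hookrightarrow \Ext^1_U(\pi_{\alg}(\phi, \textbf{h}), \pi_1(\phi, \textbf{h}))^{\vee}$, and for $x$ in the tangent space I let $x$ act by
\begin{equation*}
  x: \pi_1(\phi, \textbf{h})^{\univ}_U \twoheadrightarrow \Ext^1_U(\pi_{\alg}(\phi, \textbf{h}), \pi_1(\phi, \textbf{h})) \otimes_E \pi_{\alg}(\phi, \textbf{h}) \xrightarrow{x\circ t_{D,U}} \pi_{\alg}(\phi, \textbf{h}) \hookrightarrow \pi_1(\phi, \textbf{h})^{\univ}_U,
\end{equation*}
exactly as in the proof of Theorem \ref{Tuniv}.

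Next I would verify that this $A_{D,U}$-action restricts on each $\pi_1(\phi, \textbf{h})^{\univ}_{U,w}$ to the natural $A_{D,U,w}$-action coming from the tautological extension, reduced through $A_{D,U} \twoheadrightarrow A_{D,U,w}$. This compatibility amounts to checking that the composition $\fm_{A_{D,U,w}}/\fm_{A_{D,U,w}}^2 \cong \Ext^1_{U,w}(D,D)^{\vee}\hookrightarrow \Ext^1_{U,w}(\pi_{\alg}(\phi, \textbf{h}), \pi_1(\phi, \textbf{h}))^{\vee}$ obtained by dualising $t_{D,U,w}:=t_{D,U}|_{\Ext^1_{U,w}}$ agrees, modulo (\ref{Ewtring}), with the one induced by $\zeta_w\circ \kappa_w$; but this is by the very definition of $t_{D,U}$ and reduces to the statement for $t_D$ already established in the proof of Theorem \ref{Tuniv}. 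Consequently $\cI_w := \Ker(A_{D,U}\twoheadrightarrow A_{D,U,w})$ annihilates $\pi_1(\phi, \textbf{h})^{\univ}_{U,w}$ and we obtain the claimed injections. Uniqueness follows because $\Ext^1_U(\pi_{\alg}(\phi, \textbf{h}), \pi_1(\phi, \textbf{h}))$ is by definition spanned by $\Ext^1_{U,w}(\pi_{\alg}(\phi, \textbf{h}), \pi_1(\phi, \textbf{h}))$ for $w\in S_n$, so the cosocle of $\pi_1(\phi, \textbf{h})^{\univ}_U$ is the sum of the images of the cosocles of the $\pi_1(\phi, \textbf{h})^{\univ}_{U,w}$, which pins down the action on all of $\pi_1(\phi, \textbf{h})^{\univ}_U$.

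For (2), the subrepresentation $\pi_1(\phi, \textbf{h})^{\univ}_U[\fm_{A_{D,U}}]$ is, by construction of the action, the preimage in $\pi_1(\phi, \textbf{h})^{\univ}_U$ of the subspace $\Ker(t_{D,U})\otimes_E \pi_{\alg}(\phi, \textbf{h})\subset \Ext^1_U(\pi_{\alg}(\phi, \textbf{h}), \pi_1(\phi, \textbf{h}))\otimes_E \pi_{\alg}(\phi, \textbf{h})$; hence it is the tautological extension of $\Ker(t_{D,U})\otimes_E \pi_{\alg}(\phi, \textbf{h})$ by $\pi_1(\phi, \textbf{h})$. Invoking the isomorphism $\Ker(t_{D,U})\xrightarrow{\sim}\Ker(t_D)$ and the defining description of $\pi_{\min}(D)$ as the tautological extension of $\Ker(t_D)\otimes_E \pi_{\alg}(\phi, \textbf{h})$ by $\pi_1(\phi, \textbf{h})$ (cf.\ the paragraph after Lemma \ref{Lzerointer} and Corollary \ref{Cpimin}), we obtain $\pi_1(\phi, \textbf{h})^{\univ}_U[\fm_{A_{D,U}}]\cong \pi_{\min}(D)$.

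No step in this argument should present a genuine obstacle: the construction of the action is formal once $t_{D,U}$ is in hand, and the identification of the fixed vectors reduces immediately to the equality of kernels. The only mildly delicate point is the compatibility check at each $w$, but it is a direct consequence of the commutative diagram defining $t_D$ restricted to the $U$-subspaces, so it is essentially a bookkeeping verification.
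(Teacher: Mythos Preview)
Your proposal is correct and follows essentially the same approach as the paper, which simply states that the corollary follows by similar arguments as in Theorem \ref{Tuniv} and Corollary \ref{Cpimin}. You have accurately filled in the details: the construction of the $A_{D,U}$-action via the dual of $t_{D,U}$, the compatibility check at each $w$ reducing to the one already done for $t_D$, uniqueness from the fact that the $\Ext^1_{U,w}$ span $\Ext^1_U$, and the identification of the $\fm_{A_{D,U}}$-torsion with $\pi_{\min}(D)$ via the equality $\Ker(t_{D,U})=\Ker(t_D)$ established in the preceding corollary.
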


\subsubsection{Local-global compatibility}\label{S422}
We prove a local-global compatibility result in a non-patched setting. We briefly introduce  the setup and some notation.

Let $F/F^+$ be a CM extension and $G/F^+$ be a unitary group attached to the quadratic extension $F/F^+$ (e.g. see \cite[\S~6.2.1]{BCh}) such that $G\times_{F^+} F\cong \GL_n$ ($n\geq 2$) and $G(F^+\otimes_{\Q} \bR)$ is compact. For a finite place $v$ of $F^+$ which is split in $F$ and $\widetilde{v}$ a place of $F$ dividing $v$, we have isomorphisms $\iota_{\widetilde{v}}: G(F^+_v)\xrightarrow{\sim} G(F_{\widetilde{v}})\xrightarrow{\sim} \GL_n(F_{\widetilde{v}})$. We let $S_p$ denote the set of places of $F^+$ dividing $p$ and we assume that each place in $S_p$ is split in $F$. For each $v\in S_p$, we fix a place $\widetilde{v}$ of $F$ dividing $v$. 

We fix a place $\wp$ of $F^+$ above $p$, and set $K:=F^+_\wp=F_{\widetilde{\wp}}$.  We have thus an isomorphism $G(F^+_\wp)\xrightarrow{\sim} \GL_n(K)$. For each $v\in S_p$, $v\neq \wp$, let $\xi_v$ be a dominant weight of $\Res_{\Q_p}^{F^+_v} \GL_n$, and $\tau_v: I_{F_v^+} \ra \GL_n(E)$ be an inertial type. To $\tau_v$, one can associate a smooth irreducible representation $\sigma(\tau_v)$ of $\GL_n(\co_{F_v^+})$ over $E$ (see for example \cite[Thm~3.7]{CEGGPS1}). Let $W_{\xi,\tau}$ be a $\GL_n(\co_{F_v^+})$-invariant $\co_E$-lattice of the locally algebraic representation $\sigma(\tau_v) \otimes_E L(\xi_v)$ (see also \cite[\S~2.3]{CEGGPS1}). 

Let $U^{\wp}=U^{p} U^{\wp}_p=\prod_{v\nmid p} U_v \times \prod_{v\in S_p \setminus \{\wp\}} U_v$ be a sufficiently small (cf. \cite{CHT}) compact open subgroup of $G(\bA_{F^+}^{\infty, \wp})$ with $\iota_{\widetilde{v}}(U_v)= \GL_n(\co_{F_v^+})$ for $v\in S_p\setminus \{\wp\}$.  We also assume that $U_v$ is hyperspecial if $v$ is inert in $F$. Let $S$ be the union of $S_p$ and of the places $v\notin S_p$ such that $U_v$ is not hyperspecial. 

For  $k\in \Z_{\geq 1}$ and a compact open subgroup $U_{\wp}$ of $G(\co_{F^+_{\wp}})$, consider the $\co_E/\varpi_E^k$-module 
$S_{\xi,\tau}(U^{\wp} U_{\wp}, \co_E/\varpi_E^k)
=\{f: G(F^+)\backslash G(\bA_{F^+}^{\infty}) \ra W_{\xi,\tau}/\varpi_E^k\ |\ f(gu)=u^{-1} f(g), \ \forall g\in G(\bA_{F^+}^{\infty}), u\in U^{\wp}U_{\wp}\}$
where $U^{\wp}U_{\wp}$ acts on $W_{\xi,\tau}/\varpi_E^k$ via  $U^{\wp}U_{\wp} \twoheadrightarrow \prod_{v\in S_p \setminus \{\wp\}} U_v$.  Put 
\[\widehat{S}_{\xi,\tau}(U^{\wp}, \co_E):=\varprojlim_k  S_{\xi,\tau}(U^{\wp}, \co_E/\varpi_E^k):=\varprojlim_k \varinjlim_{U_{\wp}} S_{\xi,\tau}(U^{\wp}U_{\wp}, \co_E/\varpi_E^k),\] 
and $\widehat{S}_{\xi,\tau}(U^{\wp},E):=\widehat{S}_{\xi,\tau}(U^{\wp}, \co_E) \otimes_{\co_E} E$. Then  $\widehat{S}_{\xi,\tau}(U^{\wp},E)$ is an admissible unitary Banach representation of $\GL_n(K)$. 
Recall that 
$\widehat{S}_{\xi,\tau}(U^{\wp},E)$ is equipped  with a natural action of $\bT(U^{\wp})$ commuting with $\GL_n(K)$, where $\bT(U^{\wp})$ is the polynomial $\co_E$-algebra generated by Hecke operators: $T_{\widetilde{v}}^{(j)}=\bigg[U_v \iota_{\widetilde{v}}^{-1}\begin{pmatrix}
	1_{n-j} & 0 \\ 0 &\varpi_{\widetilde{v}}1_j
\end{pmatrix} U_v\bigg]$,  for $v\notin S$ which splits to $\widetilde{v}\widetilde{v}^c$ in $F$ and $j=1, \cdots, n$, where $\varpi_{\widetilde{v}}$ is a uniformizer of $F_{\widetilde{v}}$.

Using Emerton's method \cite[(2.3)]{Em04}, one can  construct an eigenvariety $\cE(U^{\wp})$ from $J_B(\widehat{S}_{\xi,\tau}(U^{\wp},E)^{\Q_p-\an})$. There is a natural morphism of rigid spaces $\kappa: \cE(U^{\wp}) \ra \widehat{T}$. The strong dual $J_B(\widehat{S}_{\xi, \tau}(U^{\wp},E)^{\Q_p-\an})^{\vee}$ gives rise to a coherent sheaf $\cM(U^{\wp})$ over $\cE(U^{\wp})$.  An $E$-point of $\cE(U^{\wp})$ can be parametrized by $(\delta, \omega)$ where $\delta$ is a continuous character of $T(K)$, and $\omega$ is a morphism of $E$-algebras $\bT(U^{\wp})\twoheadrightarrow E$ which corresponds to a maximal ideal $\fm_{\omega}$ of $\bT(U^{\wp})$. Moreover, $(\delta, \omega)\in \cE(U^{\wp})$ if and only if $\Hom_{T(K)}(\delta, J_B(\widehat{S}_{\xi, \tau}(U^{\wp},E)^{\Q_p-\an}[\fm_{\omega}]))\neq 0$. Recall a point $(\delta, \omega)\in \cE(U^{\wp})$ is called \textit{classical} if $\Hom_{T(K)}(\delta, J_B(\widehat{S}_{\xi, \tau}(U^{\wp},E)^{\lalg}[\fm_{\omega}]))\neq 0$. In fact, by \cite[(6.3)]{BD1}, such points are associated to classical automorphic representations. Note also if $(\delta, \omega)$ is classical, then $\delta$ is locally algebraic hence has the form $\delta_{\sm} \delta_{\alg}$, where $\delta_{\alg}$ is an algebraic character of $T(K)$. We call a classical point $(\delta, \omega)$ generic if $\delta_{\sm} \delta_B^{-1} |\cdot|_K\circ \theta=:(\phi_i')$ is generic, i.e. $\phi_i'(\phi_j')^{-1}\neq 1, |\cdot|_K$ for $i\neq j$.

The following proposition is well-known.
\begin{proposition}\label{Peigen}
	(1) $\cE(U^{\wp})$ is equidimensional of dimension $nd_K$.
	
	(2) The coherent sheaf $\cM(U^{\wp})$ is Cohen-Macaulay over $\cE(U^{\wp})$.

	(3) $\cE(U^{\wp})$ is reduced.
\end{proposition}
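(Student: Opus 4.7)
My plan is to deduce the three assertions from the standard structural properties of the module of $p$-adic automorphic forms on a definite unitary group, combined with Emerton's construction of eigenvarieties via the Jacquet functor $J_B$.

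The starting point will be to exploit the fact that $\widehat{S}_{\xi,\tau}(U^{\wp},\co_E)$ is ``co-free'' as a representation of any compact open uniform pro-$p$ subgroup $H$ of $\GL_n(\co_K)$: more precisely, it is non-canonically isomorphic to $\cC(H,\co_E)^{\oplus N}$ (tensored with $W_{\xi,\tau}$) for some finite $N$ determined by the double-coset space $G(F^+)\backslash G(\bA_{F^+}^{\infty})/U^{\wp}H$, which is finite because $G$ is definite and $U^{\wp}$ is sufficiently small. Passing to locally $\Q_p$-analytic vectors and applying Emerton's Jacquet functor $J_B$ (and using \cite[Thm.~0.7]{Em2} together with the essentially admissible framework of \cite[\S~2]{Em04}), I expect this co-freeness to upgrade to the assertion that $J_B(\widehat{S}_{\xi,\tau}(U^{\wp},E)^{\Q_p-\an})^{\vee}$ is, locally over $\widehat{T}$, finite and Cohen-Macaulay as a module over the structure sheaf, and in particular of pure dimension equal to $\dim \widehat{T} = nd_K$.

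To obtain (1) and (2) I will then enrich this coherent sheaf on $\widehat{T}$ with the action of the Hecke algebra $\bT(U^{\wp})$: by construction, $\cE(U^{\wp})$ is the relative $\bT(U^{\wp})$-spectrum of the endomorphism algebra acting on this sheaf, and $\cM(U^{\wp})$ is the corresponding coherent sheaf. Since the structural morphism $\cE(U^{\wp}) \to \widehat{T}$ is locally finite and $\cM(U^{\wp})$ is, as a sheaf on $\widehat{T}$, Cohen-Macaulay of pure dimension $nd_K$, the pushforward-pullback argument gives that $\cM(U^{\wp})$ is Cohen-Macaulay on $\cE(U^{\wp})$, and its support $\cE(U^{\wp})$ is equidimensional of dimension $nd_K$.

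For (3), I will argue reducedness by the standard density argument: classical points (in fact, generic classical points, in the sense defined above Proposition \ref{Peigen}) are Zariski dense in $\cE(U^{\wp})$ via Emerton's classicality criterion applied after translating by sufficiently dominant algebraic weights. At each such classical point the localization of $\cM(U^{\wp})$ is semisimple under $\bT(U^{\wp})$ (because the Hecke algebra acts by scalars on each classical automorphic constituent) and the fibre of the weight map is generically \'etale; combined with the Cohen-Macaulay property (which prohibits embedded components), this generic reducedness at a Zariski-dense set of regular points upgrades to reducedness of the coherent sheaf $\cO_{\cE(U^{\wp})}$. The main obstacle will be cleanly extracting the Cohen-Macaulay property of the Jacquet-module dual from the co-freeness of $\widehat{S}_{\xi,\tau}(U^{\wp},\co_E)$: this requires checking that the passage through $(-)^{\Q_p-\an}$ and $J_B$ preserves the relevant finite-freeness over a suitable distribution algebra, for which I will follow the strategy of \cite{BHS1} (or the  analogue for definite unitary groups in the literature).
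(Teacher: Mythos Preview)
Your proposal is correct and follows essentially the same approach as the paper: the paper also starts from the co-freeness $\widehat{S}_{\xi,\tau}(U^{\wp},E)|_H\cong \cC(H,E)^{\oplus s}$ (citing \cite[Lem.~6.1]{BD1}), then deduces (1) and (2) verbatim from \cite[Lem.~3.10, Prop.~3.11, Cor.~3.12]{BHS1} and \cite[Lem.~3.8]{BHS2}, and obtains (3) by the density-of-classical-points plus Cohen--Macaulay argument of \cite[Prop.~3.9]{Che05} (cf.\ \cite[Cor.~3.20]{BHS1}). The only cosmetic discrepancy is that the $W_{\xi,\tau}$-tensor is already absorbed into the multiplicity $s$, so there is no separate tensor factor.
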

\begin{proof}By \cite[Lem.~6.1]{BD1}, for a compact open subgroup $H$ of $\GL_n(\co_K)$, we have $\widehat{S}_{\xi,\tau}(U^{\wp},E)|_H\cong \cC(H,E)^{\oplus s}$ for some $s\geq 1$ (where $\cC(H,E)$ denotes the space of continuous functions on $H$).
	(1) (2) then  follows verbatim from \cite[Lem.~3.10, Prop.~3.11, Cor.~3.12]{BHS1}\cite[Lem.~3.8]{BHS2}, applying \cite[\S~5.2]{BHS1} to $\Pi:=\widehat{S}_{\xi,\tau}(U^{\wp},E)^{\Q_p-\an}$. (3) follows by the same argument as in \cite[Prop. 3.9]{Che05} (see also \cite[Cor.~3.20]{BHS1}).
\end{proof}
Let $F^{S}$ be the maximal algebraic extension of $F$ unramified outside the places dividing those in $S$, and $\Gal_{F,S}:=\Gal(F^S/F)$. Let $\rho: \Gal_{F,S}	\ra \GL_n(E)$ be a continuous representation satisfying $\rho^c\cong \rho^{\vee} \otimes_E \varepsilon^{1-n}$ where $\rho^c(g):=\rho(cgc)$ for $g\in \Gal_{F,S}$ with $c$ being the complex conjugation. To $\rho$, one naturally associates a maximal ideal $\fm_{\rho}$ of $\bT(U^{\wp})$ generated by $((-1)^j (\#k_{\widetilde{v}})^{j(j-1)/2}T_{\widetilde{v}}^{(j)}-a_{\widetilde{v}}^{(j)})$, where $k_{\widetilde{v}}$ is the residue field of $F_{\widetilde{v}}$, and the characteristic polynomial of $\rho(\Frob_{\widetilde{v}})$ (for a geometric Frobenius $\Frob_{\widetilde{v}}$ at $\widetilde{v}$) is given by $X^n+a_{\widetilde{v}}^{(1)} X^{n-1}+\cdots+a_{\widetilde{v}}^{(n-1)} X+a_{\widetilde{v}}^{(n)}$. Let $\omega_{\rho}$ denote the morphism $\bT(U^{\wp})\twoheadrightarrow  \bT(U^{\wp})/\fm_{\rho}\cong E$.  Assume $\widehat{S}_{\xi,\tau}(U^{\wp},E)^{\lalg}[\fm_{\rho}]\neq 0$ and $D:=D_{\rig}(\rho_{\widetilde{\wp}})\in \Phi\Gamma_{\nc}(\phi, \textbf{h})$ (with $\phi$ generic,  and $\textbf{h}$ strictly dominant, cf. \S~\ref{S2.1}), where $\rho_w:=\rho|_{\Gal_{F_w}}$ for a place $w$ of $F$. There exists hence $r\in \Z_{\geq 1}$ such that (e.g. by \cite[Thm.~1.1]{BLGGTII} and \cite[(6.3)]{BD1})
\begin{equation}\label{Elalg2}
	\pi_{\alg}(\phi, \textbf{h})^{\oplus r} \xlongrightarrow{\sim} \widehat{S}_{\xi,\tau}(U^{\wp},E)[\fm_{\rho}]^{\lalg}.
\end{equation}
Taking Jacquet-Emerton modules, we see $z_w:=(\delta_w\delta_B, \omega_{\rho})\in \cE(U^{\wp})$ for $w\in S_n$ (with $\delta_w=w(\phi) z^{\textbf{h}}(\varepsilon^{-1}\circ \theta)$). Moreover, similarly as in Lemma \ref{Lnocompa}, using the global triangulation theory (cf. \cite{KPX}\cite{Liu}), $(\delta,  \omega_{\rho})\in \cE(U^{\wp})$ if and only if $\delta=\delta_w\delta_B$ for some $w\in S_n$ (cf. \cite[Prop.~9.2]{Br13II}).

Let $\Ext^1_U(\rho,\rho)$ be the subspace of $\Ext^1_{\Gal_{F,S}}(\rho,\rho)$ consisting of $\widetilde{\rho}$ such that $\widetilde{\rho} ^c \cong \widetilde{\rho}^{\vee} \otimes_E \varepsilon^{1-n}$.
For $v\in S_p$, we have a natural map $\Ext^1_{U}(\rho, \rho) \ra \Ext^1_{\Gal_{F_{\widetilde{v}}}}(\rho_{\widetilde{v}}, \rho_{\widetilde{v}})$. Set $$\Ext^1_{g, S_p\setminus\{\wp\}}(\rho,\rho):=\Ker\big[\Ext^1_{U}(\rho, \rho) \ra \prod_{v\in S_p\setminus \{\wp\}} \Ext^1_{\Gal_{F_{\widetilde{v}}}}(\rho_{\widetilde{v}}, \rho_{\widetilde{v}})/\Ext^1_{g}(\rho_{\widetilde{v}}, \rho_{\widetilde{v}})\big]. $$
Let $\Ext^1_U(D,D)$ be the image of the subspace $\Ext^1_{g, S_p\setminus\{\wp\}}(\rho,\rho)$ via $\Ext^1_{U}(\rho, \rho) \ra \Ext^1_{\Gal_{F_{\widetilde{\wp}}}}(\rho_{\widetilde{\wp}}, \rho_{\widetilde{\wp}})\xrightarrow{\sim} \Ext^1(D,D)$. 

 We make the following vanishing hypothesis on the adjoint Selmer group:
\begin{hypothesis}\label{Hselmer}
	Suppose the composition  $\Ext^1_{g, S_p\setminus\{\wp\}}(\rho,\rho) \ra \Ext^1(D,D) \ra \Ext^1(D,D)/\Ext^1_g(D,D)$ is injective. In particular, $\Ext^1_U(D,D)\cap \Ext^1_g(D,D)=0$.
\end{hypothesis} 
 \begin{remark}
 	Hypothesis \ref{Hselmer} is known to hold in many cases, see \cite[Thm.~A]{All16} \cite[Cor.~4.12]{BHS1} \cite[Thm.~A]{NT}.
 \end{remark}
Let $R_{\rho,U}$ be the universal deformation ring of deformations $\rho_A$ of the $\Gal_{F,S}$-representation $\rho$ over local Artinian $E$-algebras $A$ satisfying $\rho_A^c \cong \rho_A^{\vee}\otimes_E \varepsilon^{1-n}$. Note $R_{\rho,U}$ exists as $\End_{\Gal_{F,S}}(\rho)\hookrightarrow \End(D)\cong E$.  Let $\fa_{\rho}\supset \fm_{R_{\rho,U}}^2$ (resp. $\fa_D \supset \fm_{R_D}^2$) be the ideal associated to $\Ext^1_{g,S_p\setminus\{\wp\}}(\rho,\rho)$ (resp. $\Ext^1_U(D,D)$). By Hypothesis \ref{Hselmer}, $\Ext^1_{g, S_p\setminus\{\wp\}}(\rho,\rho)\xrightarrow{\sim} \Ext^1_U(D,D)$. The natural morphism $R_D \ra R_{\rho,U}$ induces hence an isomorphism (of local Artinian $E$-algebras) $A_{D,U}=R_D/\fa_D \xrightarrow{\sim} R_{\rho,U}/\fa_{\rho}$. Let $\widetilde{\rho}_{R_{\rho,U}/\fa_{\rho}}$ be the universal deformation of $\rho$ over $R_{\rho,U}/\fa_{\rho}$. We have a natural morphism $\bT(U^{\wp}) \ra  R_{\rho,U}/\fa_{\rho}$ sending $T_{\widetilde{v}}^{(j)}$ to $(-1)^j (\#k_{\widetilde{v}})^{-j(j-1)/2}\widetilde{a}_{\widetilde{v}}^{(j)}$ where $\widetilde{a}_{\widetilde{v}}^{(j)}\in R_{\rho,U}/\fa_{\rho}$, $j=1, \cdots, n$,  satisfy that the characteristic polynomial of $\widetilde{\rho}_{R_{\rho,U}/\fa_{\rho}}(\Frob_{\widetilde{v}})$ is given by $X^n+\widetilde{a}_{\widetilde{v}}^{(1)} X^{n-1}+\cdots+\widetilde{a}_{\widetilde{v}}^{(n-1)} X+\widetilde{a}_{\widetilde{v}}^{(n)}$. Let $\fa_T$ be its kernel. The induced morphism $\bT(U^{\wp})/\fa_T \ra R_{\rho,U}/\fa_{\rho}$ is an isomorphism. Indeed, it suffices to show the morphism sends $\fm_{\rho}$ onto $\fm_{R_{\rho,U}}/\fa_{\rho}$. Consider the universal representation $\widetilde{\rho}$ of $\Gal_{F,S}$ over $R_{\rho,U}/\fm_{\rho}$. By the definition of the morphism, we deduce the associated pseudo-character $\tr(\widetilde{\rho})$ takes values  in $E$. This implies $\tr(\widetilde{\rho})$ is a trivial deformation of $\tr(\rho)$. As $\rho$ is absolutely irreducible, deforming $\rho$ is equivalent to deforming $\tr(\rho)$ (cf. \cite[Thm.~1]{Nys96}). We deduce $\widetilde{\rho}$ is a trivial deformation of $\rho$ hence $\fm_{\rho}(R_{\rho,U}/\fa_{\rho})=\fm_{R_{\rho,U}}/\fa_{\rho}$.
\begin{proposition}\label{Psmooth}
	Assume Hypothesis \ref{Hselmer} and $\rho$ is absolutely irreducible. Then $\cE(U^{\wp})$ is smooth at $z_w$ for all $w\in S_n$. Moreover, $\Ext^1_U(D,D)$ satisfies Hypothesis \ref{Hglo1} (1) (2). 
\end{proposition}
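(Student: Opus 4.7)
The plan is to prove both parts in tandem, extracting the dimension identity in Hypothesis \ref{Hglo1}(2) from the smoothness of $\cE(U^{\wp})$ at $z_w$, which in turn will be established by sandwiching $\dim T_{z_w}\cE(U^{\wp})$ between $nd_K$ on both sides.

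First I would dispose of Hypothesis \ref{Hglo1}(1) and obtain the easy half of (2). Hypothesis \ref{Hselmer} asserts injectivity of $\Ext^1_{g,S_p\setminus\{\wp\}}(\rho,\rho) \to \Ext^1(D,D)/\Ext^1_g(D,D)$; unwinding the definition of $\Ext^1_U(D,D)$ as the image of this map immediately gives $\Ext^1_U(D,D)\cap \Ext^1_g(D,D)=0$, which is (1). For the upper bound in (2), note that $\Ext^1_0(D,D)=\Ker\kappa_w \subset \Ext^1_g(D,D)$ and $\kappa_w$ induces a surjection $\Ext^1_g(D,D)\twoheadrightarrow \Hom_{\sm}(T(K),E)$ by Proposition \ref{PDef1}(3). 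Combining with (1), the composition
$$\Ext^1_{U,w}(D,D) \xlongrightarrow{\kappa_w} \Hom(T(K),E) \twoheadlongrightarrow \Hom(T(K),E)/\Hom_{\sm}(T(K),E)$$
is injective, and the target has dimension $nd_K$; hence $\dim_E \Ext^1_{U,w}(D,D)\leq nd_K$.

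For the matching lower bound, I would construct a tangent map $\tau: T_{z_w}\cE(U^{\wp}) \hookrightarrow \Ext^1_{U,w}(D,D)$. An element $\widetilde{z}\in T_{z_w}\cE(U^{\wp})$ gives a pair $(\widetilde{\delta},\widetilde{\omega})$ lifting $(\delta_w\delta_B,\omega_\rho)$, with $\widetilde{\delta}=\delta_w\delta_B(1+\psi\epsilon)$ for some $\psi\in \Hom(T(K),E)$, and $\widetilde{\omega}:\bT(U^{\wp})\to E[\epsilon]/\epsilon^2$. Absolute irreducibility of $\rho$ together with the isomorphism $\bT(U^{\wp})/\fa_T\xrightarrow{\sim} R_{\rho,U}/\fa_\rho$ (and its refinement at the level of tangent spaces via \cite[Thm.~1]{Nys96}) converts $\widetilde{\omega}$ into a unique deformation $\widetilde{\rho}\in \Ext^1_U(\rho,\rho)$. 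The local structure of $\widehat{S}_{\xi,\tau}(U^{\wp},E)$ at each $v\in S_p\setminus\{\wp\}$, via classical local–global compatibility in families (in the spirit of \cite[Thm.~4.35]{CEGGPS1}), forces $\widetilde{\rho}_{\widetilde{v}}$ to be potentially crystalline of the same Hodge–Tate weights and inertial type as $\rho_{\widetilde{v}}$, placing $\widetilde{\rho}\in \Ext^1_{g,S_p\setminus\{\wp\}}(\rho,\rho)$. At $\widetilde{\wp}$, global triangulation for $\cM(U^{\wp})$ on $\cE(U^{\wp})$ (applying \cite[Thm.~5.3]{Bergd17} over $\cE(U^{\wp})$ and restricting to the infinitesimal neighbourhood of $z_w$, exactly as in the proof of Lemma \ref{Clg1}) shows $D_{\rig}(\widetilde{\rho}_{\widetilde{\wp}})$ is trianguline of parameter $\delta_w(1+\psi\epsilon)$, so $D_{\rig}(\widetilde{\rho}_{\widetilde{\wp}})\in \Ext^1_{U,w}(D,D)$ and $\kappa_w(D_{\rig}(\widetilde{\rho}_{\widetilde{\wp}}))=\psi$. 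Set $\tau(\widetilde{z}):=D_{\rig}(\widetilde{\rho}_{\widetilde{\wp}})$. Injectivity of $\tau$ is then automatic: if $\tau(\widetilde{z})=0$, Hypothesis \ref{Hselmer} forces $\widetilde{\rho}=0$, hence $\widetilde{\omega}$ is trivial, and the relation $\psi=\kappa_w(\tau(\widetilde{z}))=0$ forces $\widetilde{\delta}$ trivial.

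Combining Proposition \ref{Peigen}(1) (equidimensionality of dimension $nd_K$) with the injection $\tau$ and the upper bound yields
$$nd_K \leq \dim_E T_{z_w}\cE(U^{\wp}) \leq \dim_E \Ext^1_{U,w}(D,D) \leq nd_K,$$
so all three are equal. Since $\cE(U^{\wp})$ is reduced (Proposition \ref{Peigen}(3)), equality of dimension of the tangent space with the local dimension gives smoothness at $z_w$, and the middle equality is precisely Hypothesis \ref{Hglo1}(2). The main obstacle I anticipate is making the classicality argument at places $v\in S_p\setminus\{\wp\}$ fully rigorous in the $E[\epsilon]/\epsilon^2$-coefficient setting: one must verify that infinitesimal Hecke eigenclasses in $\widehat{S}_{\xi,\tau}(U^{\wp},E)^{\Q_p\text{-}\an}[\fm_{z_w}^{(1)}]$ still satisfy the potentially-crystalline condition of prescribed type $(\xi_v,\tau_v)$ at the level of the Galois tangent vector—this is typically handled either by a direct study of locally algebraic vectors in tangent spaces or by appealing to the relevant local-global compatibility in families. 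The global triangulation step at $\widetilde{\wp}$ over $E[\epsilon]/\epsilon^2$ is also delicate but follows the now-standard template already invoked in \S\ref{S41}.
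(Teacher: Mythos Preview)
Your overall strategy coincides with the paper's: sandwich $\dim_E T_{z_w}\cE(U^{\wp})$ between $nd_K$ from below (equidimensionality) and $\dim_E\Ext^1_{U,w}(D,D)\leq nd_K$ from above, via an injection $T_{z_w}\cE(U^{\wp})\hookrightarrow\Ext^1_{U,w}(D,D)$. The upper bound and Hypothesis~\ref{Hglo1}(1) are handled identically. The substantive difference lies in how the Galois deformation $\widetilde{\rho}$ attached to a tangent vector is constructed and controlled. You work directly at the $E[\epsilon]/\epsilon^2$-level, producing $\widetilde{\rho}$ from the Hecke datum $\widetilde{\omega}$ via Nyssen, then arguing separately for de Rhamness at each $v\in S_p\setminus\{\wp\}$ and triangulability at $\widetilde{\wp}$. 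The paper instead constructs a \emph{family} $\rho_X$ over an entire affinoid neighbourhood $X$ of $z_w$ (via Chenevier's pseudo-character and \cite[Lem.~5.5]{Bergd13}), chosen so that generic classical points are Zariski-dense; properties (i) polarization, (ii) de Rhamness at $v\in S_p\setminus\{\wp\}$, and (iii) global triangulation at $\widetilde{\wp}$ are then obtained by interpolation from classical points, with (ii) handled by Berger--Colmez \cite[Thm.~B]{BC08}. Specializing the family to the tangent direction then yields the desired injection.

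The family approach buys exactly what you flagged as your main obstacle: the de Rhamness of $\widetilde{\rho}_{\widetilde{v}}$ for $v\in S_p\setminus\{\wp\}$ is immediate once one knows $\rho_{X,\widetilde{v}}$ is de Rham, and Berger--Colmez gives this cleanly from density of classical points (you also glossed over the polarization condition $\widetilde{\rho}^c\cong\widetilde{\rho}^\vee\otimes\varepsilon^{1-n}$, which the paper likewise obtains by interpolation). Your infinitesimal argument would need either an ad~hoc study of locally algebraic vectors over $E[\epsilon]/\epsilon^2$ at the auxiliary places, or essentially reproving Berger--Colmez in that setting; the paper's route avoids this. Conversely, your presentation of the injectivity of $\tau$ is slightly cleaner than the paper's: once $\tau(\widetilde{z})=0$, you invoke Hypothesis~\ref{Hselmer} directly to kill $\widetilde{\rho}$, whereas the paper first argues $\widetilde{\omega}$ and $\kappa_{z_w}$ separately vanish and then appeals to density of $\bT(U^{\wp})\otimes_E E[T(K)]$ in $\co(X)$---but your argument implicitly uses the same density fact to conclude $\widetilde{z}=0$ from $(\widetilde{\omega},\widetilde{\delta})$ trivial, so this difference is cosmetic.
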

\begin{proof}There is a natural  family of $\Gal_{F,S}$-representations on $\cE(U^{\wp})$. We quickly recall some of its properties that we need.
	Let $X\subset \cE(U^{\wp})$ be a sufficiently small affinoid neighbourhood of $x_w$ such that $x_{w'}\notin X$ for $w'\neq w$, and that the generic classical points are Zariski dense in $X$. Recall to each classical point $z\in X$, we can associate an $n$-dimensional continuous representation $\rho_z$ of $\Gal_{F,S}$, hence an $n$-dimension pseudo-character of $\Gal_{F,S}$. By \cite[Prop.~7.1.1]{Che04}, there is a pseudo-character $\cT_X: \Gal_{F,S}\ra \co(X)$ interpolating those associated to the classical points. By \cite[Lem.~5.5]{Bergd13} and the assumption $\rho$ is absolutely irreducible, shrinking $X$ if needed, $\cT_X$ gives rise to a continuous representation $\rho_{X}: \Gal_{F,S} \ra \GL_n(\co(X))$ satisfying that for all points $z\in X$, $\rho_z=z^* \rho_X$ is absolutely irreducible. Let $\delta_{X}=(\delta_{X,1},\cdots, \delta_{X,n}): T(K) \ra \co(X)^{\times}$ be the natural character associated to $\kappa$. Let $\cR_{K,X}$ be the relative Robba ring over $\co(X)$ for $K$ (cf. \cite[Def.~2.2.2]{KPX}). By shrinking $X$ if needed, $\rho_X$ has the following properties:
	\begin{enumerate}
		\item[(i)] $\rho_{X}^c \cong \rho_{X} \otimes_E \varepsilon^{1-n}$. 
		\item[(ii)] For $v\in S_p\setminus \{\wp\}$, $\rho_{X, \widetilde{v}}$ is de Rham of Hodge-Tate weights $\xi_v-\theta^{F_{v}^+}$.
		\item[(iii)] The $(\varphi, \Gamma)$-module $D_{\rig}(\rho_{X,\widetilde{\wp}})$ over $\cR_{K,X}$ is isomorphic to a successive extension of the rank one $(\varphi, \Gamma)$-modules  $\cR_{K,X}(\delta_{X,i}|\cdot|_K^{2i-(n+1)}\varepsilon^{1-i})$ (cf. \cite[Def.~6.2.1]{KPX}) for $i=1,\cdots, n$. 
	\end{enumerate} 
	(i) follows easily from the fact that for all the classical points, $\rho_z^c \cong \rho_z \otimes_E \varepsilon^{1-n}$. For a place $v\in S_p\setminus \{\wp\}$, $\rho_{z,\widetilde{v}}$ is de Rham (of inertial type $\tau_v$) of Hodge-Tate weights $\xi_v-\theta^{F_{v}^+}$ for all classical points $z$. (ii)  follows then by \cite[Thm.~B]{BC08}. Finally, as $D:=D_{\rig}(\rho_{x_w})$ is non-critical and ($\varphi$-)generic, by \cite[Thm.~5.3]{Bergd17} (and an easy induction argument), (iii) follows (by shrinking $X$ if needed).

		Let $T_{z_w}$ be the tangent space of $\cE(U^{\wp})$ at the point $z_w$. By (i), we have a map $T_{z_w} \ra \Ext^1_U(\rho, \rho)$, sending $t: \Spec E[\epsilon]/\epsilon^2 \ra X$ to $t^* \rho_{X}$.  Denote by $\kappa_{z_w}: T_{z_w} \ra \Ext^1_{T(K)}(\delta_w\delta_B, \delta_w\delta_B)$ the tangent map of $\cE(U^{\wp}) \ra \widehat{T}$ at $z_w$. 

	\noindent \textbf{Claim.} The induced map $f_{z_w}: T_{z_w} \ra \Ext^1_U(\rho, \rho)$ is injective and has image in $\Ext^1_{g, S_p\setminus \{\wp\}}(\rho,\rho)$. 
	
	Let $\nu$ be in the kernel. So $\nu^* \rho_{X}\cong \rho \oplus \rho$. This implies the composition $\bT(U^{\wp}) \ra \co(X)\xrightarrow{\nu} E[\epsilon]/\epsilon^2$ factors through $\fm_{\rho}$. However, by (iii),  $\kappa_{z_w}(\nu)$ (as a character of $T(K)$ over $E[\epsilon]/\epsilon^2$) is a trianguline parameter of the $(\varphi, \Gamma)$-module $D_{\rig}(\nu^*\rho_{X})$ over $\cR_{K,E[\epsilon]/\epsilon^2}$. Hence $\kappa_{z_w}(\nu)=0$. But by the construction of $\cE(U^{\wp})$, $\bT(U^{\wp})\otimes_E E[T(K)]$ is dense in $\co(X)$, hence the map $T_{z_w} \xrightarrow{(f_{z_w}, \kappa_{z_w})} \Ext^1_U(\rho, \rho) \times \Ext^1_{T(K)}(\delta_w\delta_B, \delta_w\delta_B)$ is injective.  We deduce $\nu$ is zero. The second part of the claim follows from (ii).

	By (iii), the composition  $f_D: T_{z_w} \ra \Ext^1_U(\rho,\rho)\ra \Ext^1(D,D)$ has image in $\Ext^1_w(D,D)$ hence (by the claim) in $\Ext^1_{U,w}(D,D)$. Together with Hypothesis \ref{Hselmer}, we deduce $\dim_E T_{z_w} \leq \dim \Ext^1_{U,w}(D,D)\leq \dim_E \Ext^1_w(D,D)/\Ext^1_g(D,D)=nd_K$. As $\dim \cE(U^{\wp})=nd_K$, we see $z_w$ is a smooth point and $\dim_E \Ext^1_{U,w}(D,D)=nd_K$. This finishes the proof. 
\end{proof}

By Proposition \ref{Peigen} (2), Proposition \ref{Psmooth} and (\ref{Elalg2}), $\cM$ is locally free of rank $r$ at each $z_w$ for $w\in S_n$. Let $X$ be a sufficiently small smooth affinoid neighbourhood of $z_w$, and $\fm_{z_w}\subset \co(X)$ be  the maximal ideal associated to $z_w$. We use the notation of \S~\ref{S42}: $A_{D,U}$, $A_{D,U,w}$, $A_{U,w}$ etc. By the proof of Proposition \ref{Psmooth}, the composition $A_{D,U,w}  \hookrightarrow A_{D,U}=R_{D}/\fa_D \xrightarrow{\sim} R_{\rho,U}/\fa_{\rho} \twoheadrightarrow \co(X)/\fm_{z_w}^2$ is an isomorphism (where the last map is obtained by sending an element in the tangent space of $X$ at $z_w$ to the associated $\Gal_{F,S}$-representation). The map $A_{U,w}\xrightarrow{\sim} A_{D,U,w} \xrightarrow{\sim} \co(X)/\fm_{z_w}^2$ coincides with the natural map induced by $\kappa$, and the map $\bT(U^{\wp})/\fa_T\xrightarrow{\sim}R_{\rho,U}/\fa_{\rho}\twoheadrightarrow \co(X)/\fm_{z_w}^2$ coincides with the one induced by $\bT(U^{\wp}) \ra \co(X)$.
We deduce a
$T(K)\times A_{D,w,U}$-equivariant injection
\begin{equation}\label{Ejac2}
\widetilde{\delta}_{U,w}^{\univ,\oplus r}\cong 	(\cM/\fm_{z_w}^2)^{\vee} \hooklongrightarrow J_B(\widehat{S}_{\xi,\tau}(U^{\wp},E)_{\overline{\rho}}^{\Q_p-\an})[\fa_T][\cI_w]\{T(K)=\delta_w \delta_B\},
\end{equation}
where the $A_{D,w,U}$-action on the left hand side is given as in the discussion below (\ref{Ewtring}) and it acts on the right hand side via $A_{D,w,U} \cong  A_{D,U}/\cI_w\twoheadleftarrow A_{D,U}\cong R_{D,U}/\fa_{\rho}\cong \bT(U^{\wp})/\fa_T$. Note  as in the discussion below (\ref{Ewtring}), the action of  $A_{D,w,U}$ and $T(K)$ determine each other. 
Similarly in Lemma \ref{Lbala}, the map (\ref{Ejac2}) is balanced and induces (by \cite[Thm.~0.13]{Em2}) a $\GL_n(K) \times R_{D,w,U}$-equivariant injection
\begin{equation}\label{Eiotawg}
\iota_w: 	(I_{B^-(K)}^{\GL_n(K)} 	\widetilde{\delta}_{U,w}^{\univ})^{\oplus r}  \hooklongrightarrow \widehat{S}_{\xi,\tau}(U^{\wp},E)_{\overline{\rho}}^{\Q_p-\an}[\fa_T].
\end{equation}
Let $\widetilde{\pi}$ be the closed subrepresentation of $\widehat{S}_{\xi,\tau}(U^{\wp},E)_{\overline{\rho}}^{\Q_p-\an}[\fa_T]$ generated by $\Ima \iota_w$ for all $w$. Note $\widetilde{\pi}$ inherits from $\widehat{S}_{\xi,\tau}(U^{\wp},E)_{\overline{\rho}}^{\Q_p-\an}[\fa_T]$ a (global) $A_{D,U}$ ($\cong \bT(U^{\wp})/\fa_T$)-action. 
\begin{theorem}\label{TlocgloU}Suppose Hypothesis \ref{Hselmer} and $\rho$ is absolutely irreducible. 
We have a $\GL_n(K)\times A_{D,U}$-equivariant isomorphism $\widetilde{\pi}\cong \pi_1(\phi, \textbf{h})_U^{\univ, \oplus r}$ (cf. Corollary \ref{CpiDUuni}). Consequently, we have $\pi_{\min}(D)^{\oplus r} \hookrightarrow \widehat{S}_{\xi,\tau}(U^{\wp},E)^{\Q_p-\an}[\fm_{\rho}]$.
\end{theorem}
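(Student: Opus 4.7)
The strategy will mirror the proof of Theorem \ref{Tlg} in the patched setting, adapted to accommodate the multiplicity $r$ coming from (\ref{Elalg2}) and to replace the Bernstein-center argument of \cite[Lem.~4.17, Thm.~4.19]{CEGGPS1} by arguments based on the eigenvariety $\cE(U^{\wp})$ and its local freeness. First, I would establish the analog of Lemma \ref{Lfax}: prove that $\widehat{S}_{\xi,\tau}(U^{\wp},E)_{\overline{\rho}}^{\Q_p-\an}[\fa_T]^{\lalg} \cong \pi_{\alg}(\phi,\textbf{h})^{\oplus r}$. The inclusion ``$\supseteq$'' is (\ref{Elalg2}). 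For ``$\subseteq$'', observe that $\fa_T+\fm_{A_{D,U}}=\fm_{\rho}$ (analogous to the identity $\fa_x+\fm_{A_D}=\fm_x$ in the patched case) and that $A_{D,U}$ acts on the locally algebraic part via its crystabelline quotient; the extra generators beyond $\fm_{\rho}$ annihilate any classical vector by the compatibility between the $T(K)$-action on the Jacquet module and the $\bT(U^{\wp})$-action (via the map $\bT(U^{\wp})/\fa_T \xrightarrow{\sim} R_{\rho,U}/\fa_{\rho} \xrightarrow{\sim} A_{D,U}$ constructed before Proposition \ref{Psmooth}).

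Second, I would analyze the socle structure of $\widetilde{\pi}$. By construction $\widetilde{\pi}$ is generated by the images of $\iota_w$ (\ref{Eiotawg}); hence all its irreducible constituents are among $\pi_{\alg}(\phi,\textbf{h})$ and the $\sC(I,s_{i,\sigma})$ appearing in $\pi_1(\phi,\textbf{h})^{\univ}_U$. To show the socle is $\pi_{\alg}(\phi,\textbf{h})^{\oplus r}$, I would argue: any nonzero map $\sC(I,s_{i,\sigma}) \hookrightarrow \widehat{S}_{\xi,\tau}(U^{\wp},E)_{\overline{\rho}}^{\Q_p-\an}[\fa_T]$ would produce, via Lemma \ref{Ljacsi} (1) and the Jacquet-Emerton functor, a point of $\cE(U^{\wp})$ of the form $(z^{-s_{i,\sigma}\cdot\lambda}w(\phi)\eta\delta_B,\omega_{\rho})$, contradicting the classification (after Proposition \ref{Peigen}) of points above $\fm_{\rho}$ as exactly the $z_w$. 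Combined with the locally algebraic computation and Lemma \ref{Lextps} (1), each $\sC(I,s_{i,\sigma})$ appears in $\widetilde\pi/\pi_{\alg}(\phi,\textbf{h})^{\oplus r}$ with multiplicity exactly one per copy, via the universality statement below. Next, mimicking the proof of Lemma \ref{Clg1}, I would prove $(\Ima \iota_w)[\fm_{\rho}] \cong \PS_1(w(\phi),\textbf{h})^{\oplus r}$ by using the smoothness of $\cE(U^{\wp})$ at $z_w$ (Proposition \ref{Psmooth}) together with the local freeness of $\cM(U^{\wp})$ of rank $r$ at $z_w$, which yields the companion-point-free statement $J_B(\widehat{S}_{\xi,\tau}(U^{\wp},E)^{\Q_p-\an}[\fm_{\rho}])[T(K)=\delta_w\delta_B] \xrightarrow{\sim} J_B(\widehat{S}_{\xi,\tau}(U^{\wp},E)^{\Q_p-\an}[\fm_{\rho}])\{T(K)=\delta_w\delta_B\}$ of dimension $r$.

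Third, from these structural results and Lemma \ref{Lunivw} (applied to the universal extension $\widetilde{\delta}^{\univ}_{U,w}$ relative to $\Ext^1_U(w(\phi)z^{\textbf{h}},w(\phi)z^{\textbf{h}})$), I would deduce that $\widetilde{\pi}$ is an extension of $\pi_{\alg}(\phi,\textbf{h})^{\oplus r \dim_E \Ext^1_U(\pi_{\alg}(\phi,\textbf{h}),\pi_1(\phi,\textbf{h}))}$ by $\pi_1(\phi,\textbf{h})^{\oplus r}$ which contains each $I_{B^-(K)}^{\GL_n(K)} \widetilde{\delta}^{\univ}_{U,w} \otimes_E E^r$, and whose socle is $\pi_{\alg}(\phi,\textbf{h})^{\oplus r}$. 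The uniqueness part of Corollary \ref{CpiDUuni} (1) then forces $\widetilde\pi \cong \pi_1(\phi,\textbf{h})^{\univ,\oplus r}_U$ as $\GL_n(K)$-representations. I then transport the $A_{D,U}$-action from $\widetilde{\pi}$ (via $A_{D,U} \cong \bT(U^{\wp})/\fa_T$) to $\pi_1(\phi,\textbf{h})^{\univ,\oplus r}_U$; by Lemma \ref{Lbala} (or rather its analog, whose proof goes through using the $T(K)\times A_{D,U,w}$-equivariance in (\ref{Ejac2})), the restriction of this global $A_{D,U}$-action to each $\Ima \iota_w$ is $A_{D,U,w}$-equivariant, so the uniqueness in Corollary \ref{CpiDUuni} (1) identifies it with the action defined there. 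The final statement then follows from Corollary \ref{CpiDUuni} (2) upon taking $\fm_{A_{D,U}}$-torsion, since $\widetilde{\pi}[\fm_{A_{D,U}}] \hookrightarrow \widehat{S}_{\xi,\tau}(U^{\wp},E)^{\Q_p-\an}[\fa_T][\fm_{A_{D,U}}] = \widehat{S}_{\xi,\tau}(U^{\wp},E)^{\Q_p-\an}[\fm_{\rho}]$.

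The main obstacle I anticipate is keeping careful track of the multiplicities: in particular, verifying that the injection from $\pi_1(\phi,\textbf{h})^{\univ,\oplus r}_U$ has exactly $r$ copies of each $\sC(I,s_{i,\sigma})$ (and not more) requires ruling out ``spurious'' constituents in $\widetilde{\pi}$ coming from accidental coincidences in Jacquet modules at non-classical directions. This is where the role of Hypothesis \ref{Hselmer} and the irreducibility of $\rho$ become essential (via Proposition \ref{Psmooth}): they guarantee that the tangent space $T_{z_w}\cE(U^{\wp})$ matches $\Ext^1_{U,w}(D,D)$, so that the universal extension over $A_{U,w}$ captures all deformations of $\delta_w\delta_B$ that can appear in $J_B(\widehat{S}_{\xi,\tau}(U^{\wp},E)_{\overline{\rho}}^{\Q_p-\an}[\fa_T])$, preventing over-counting of constituents in $\widetilde{\pi}$.
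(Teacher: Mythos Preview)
Your proposal is essentially correct and follows the same overall strategy as the paper's proof. A few differences worth flagging:

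For the $\GL_n(K)$-isomorphism, the paper does \emph{not} prove an analog of Lemma \ref{Lfax} (2). Instead, it first establishes $\pi_1(\phi,\textbf{h})^{\oplus r}\hookrightarrow\widehat{S}_{\xi,\tau}(U^{\wp},E)^{\Q_p-\an}[\fm_{\rho}]$ (by the argument of Corollary \ref{Clg1} or \cite[Thm.~5.12]{BH2}), observes $\Ima\iota_w\cap\pi_1(\phi,\textbf{h})^{\oplus r}=\PS_1(w(\phi),\textbf{h})^{\oplus r}$, and then concludes $\widetilde{\pi}\cong\pi_1(\phi,\textbf{h})_U^{\univ,\oplus r}$ directly from (\ref{Elalg2}), (\ref{Eiotawg}) and the explicit structure of $\pi_1(\phi,\textbf{h})_U^{\univ}$. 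Your route via $\widehat{S}_{\xi,\tau}(U^{\wp},E)^{\Q_p-\an}[\fa_T]^{\lalg}\cong\pi_{\alg}(\phi,\textbf{h})^{\oplus r}$ is also valid, but your sketched argument (``$A_{D,U}$ acts on the locally algebraic part via its crystabelline quotient'') needs one more ingredient you did not name: semisimplicity of the $\bT(U^{\wp})$-action on $\widehat{S}_{\xi,\tau}(U^{\wp},E)^{\lalg}$ (a classical fact from the automorphic decomposition), so that $[\fa_T]^{\lalg}=[\fm_{\rho}]^{\lalg}$. Without this, the crystabelline statement only controls eigensystems, not generalized eigenvectors.

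For the $A_{D,U}$-equivariance, the paper's argument is slightly sharper than yours: rather than transporting the global action and invoking the uniqueness in Corollary \ref{CpiDUuni} (1), it shows that \emph{any} $\GL_n(K)$-equivariant injection $\pi_1(\phi,\textbf{h})_U^{\univ}\hookrightarrow\widehat{S}_{\xi,\tau}(U^{\wp},E)^{\Q_p-\an}[\fa_T]$ is automatically $A_{D,U}$-equivariant. The key point (which you gloss over) is that on $\widetilde{\delta}_{U,w}^{\univ}\delta_B$ the $A_{D,U,w}$-action is \emph{determined} by the $T(K)$-action, so any $T(K)$-equivariant map $\widetilde{\delta}_{U,w}^{\univ}\delta_B\hookrightarrow(\widetilde{\delta}_{U,w}^{\univ}\delta_B)^{\oplus r}$ is automatically $A_{D,U,w}$-equivariant. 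This cleanly handles the ambiguity of the $r$-fold identification that your transport argument leaves implicit.

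Finally, your invocation of ``the uniqueness part of Corollary \ref{CpiDUuni} (1)'' to conclude the $\GL_n(K)$-isomorphism is misplaced: that corollary concerns the $A_{D,U}$-action, not the underlying $\GL_n(K)$-structure. Once you know $\soc\widetilde{\pi}=\pi_{\alg}(\phi,\textbf{h})^{\oplus r}$ and that the extension classifier has image $\Ext^1_U(\pi_{\alg}(\phi,\textbf{h}),\pi_1(\phi,\textbf{h}))$ in each coordinate, the identification with the universal extension is immediate.
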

\begin{proof}
We first show $\widetilde{\pi}\cong \pi_1(\phi, \textbf{h})_U^{\univ, \oplus r}$ as $\GL_n(K)$-representation. By (\ref{Eiotawg}) and similar arguments as in  the proof of Corollary \ref{Clg1} (or using the same argument as in the proof of \cite[Thm.~5.12]{BH2}), the injection $\pi_{\alg}(\phi, \textbf{h})^{\oplus r} \hookrightarrow\widehat{S}_{\xi,\tau}(U^{\wp},E)^{\Q_p-\an}[\fm_{\rho}]$ extends uniquely to an injection $\pi_1(\phi, \textbf{h})^{\oplus r} \hookrightarrow \widehat{S}_{\xi,\tau}(U^{\wp},E)^{\Q_p-\an}[\fm_{\rho}]$. Note that  $\Ima \iota_w \cap \pi_1(\phi, \textbf{h})^{\oplus r} \cong \PS_1(w(\phi), \textbf{h})^{\oplus r}$ (by the same argument as in the proof of Corollary \ref{Clg1}). As in the proof of Theorem \ref{Tlg}, $\widetilde{\pi}$ is isomorphic to an extension of certain copies of $\pi_{\alg}(\phi, \textbf{h})$ by $\pi_1(\phi, \textbf{h})^{\oplus r}$. Using (\ref{Elalg2}) (\ref{Eiotawg}) (and the structure of $\pi_1(\phi, \textbf{h})_U^{\univ}$), it is not difficult to see $\widetilde{\pi}$ has to be isomorphic to $\pi_1(\phi, \textbf{h})_U^{\univ, \oplus r}$. 

For the compatibility of the $A_{D,U}$-action, it suffices to show any injection
$\iota: \pi_1(\phi, \textbf{h})_U^{\univ} \hookrightarrow\widehat{S}_{\xi,\tau}(U^{\wp},E)^{\Q_p-\an}[\fa_T]$ (extending $\pi_1(\phi, \textbf{h})\hookrightarrow\widehat{S}_{\xi,\tau}(U^{\wp},E)^{\Q_p-\an}[\fm_{\rho}]$) is $A_{D,U}$-equivariant. As $\pi_1(\phi, \textbf{h})_U^{\univ}$ is generated by $I_{B^-(K)}^{\GL_n(K)} \widetilde{\delta}_{U,w}^{\univ}$, it suffices to show the restriction of $\iota$ to $I_{B^-(K)}^{\GL_n(K)} \widetilde{\delta}_{U,w}^{\univ}$ is $A_{D,U}$-equivariant. The restriction of $\iota$ to $I_{B^-(K)}^{\GL_n(K)} \widetilde{\delta}_{U,w}^{\univ}$ corresponds to a unique $T(K)$-equivariant injection 
\begin{equation}\label{Eiotaglob}
	\widetilde{\delta}_{U,w}^{\univ}\delta_B \hooklongrightarrow J_B(\widehat{S}_{\xi,\tau}(U^{\wp},E)^{\Q_p-\an}[\fa_{\rho}]).
\end{equation}
Its  image is clearly contained in the image of (\ref{Ejac2}) (by the definition of $\cM$). However, any $T(K)$-equivariant injection $\widetilde{\delta}_{U,w}^{\univ} \delta_B\hookrightarrow  (\widetilde{\delta}_{U,w}^{\univ} \delta_B)^{\oplus r}$ has to be $A_{D,U,w}$-equivariant (by the discussion below (\ref{Ewtring})), so is (\ref{Eiotaglob}). Thus $\iota|_{I_{B^-(K)}^{\GL_n(K)} \widetilde{\delta}_{U,w}^{\univ}}$ is $A_{D,U,w}$-equivariant for all $w$, so $\iota$ is  $A_{D,U}$-equivariant. The second part follows from Corollary \ref{CpiDUuni} (2).
\end{proof}
\begin{remark}By the same argument as in the proof of \cite[Thm.~5.12]{BH2}, the injection $\pi_{\alg}(\phi, \textbf{h})^{\oplus r} \hookrightarrow \widehat{S}_{\xi,\tau}(U^{\wp},E)^{\Q_p-\an}[\fm_{\rho}]$ uniquely extends to $\pi(\phi, \textbf{h})^{\oplus r}\hookrightarrow \widehat{S}_{\xi,\tau}(U^{\wp},E)^{\Q_p-\an}[\fm_{\rho}]$. \ Similarly \ as \ in \ Remark \ \ref{Rpifs2}, \ we \ see \ the \ injection $\pi_{\min}(D)^{\oplus r} \hookrightarrow \widehat{S}_{\xi,\tau}(U^{\wp},E)^{\Q_p-\an}[\fm_{\rho}]$ (in Theorem \ref{TlocgloU}) extends uniquely to $\pi_{\fss}(D)^{\oplus r} \hookrightarrow \widehat{S}_{\xi,\tau}(U^{\wp},E)^{\Q_p-\an}[\fm_{\rho}]$.
\end{remark}
By similar arguments as in Corollary \ref{Cmaxmin} (replacing Lemma \ref{Lfax} (2) by (\ref{Elalg2})), we have:
\begin{corollary}\label{Cmaxmin1}
The representation $\pi_{\min}(D)^{\oplus r}$ is the maximal subrepresentation of $\widehat{S}_{\xi,\tau}(U^{\wp},E)^{\Q_p-\an}[\fm_{\rho}]$, which is  generated by extensions of $\pi_{\alg}(\phi, \textbf{h})$ by $\pi_1(\phi, \textbf{h})$.
\end{corollary}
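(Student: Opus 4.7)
The plan is to mirror the argument of Corollary~\ref{Cmaxmin}, replacing the patched universal object $\widetilde{\pi}$ of \S\ref{S41} by its non-patched analogue $\widetilde{\pi}\cong\pi_1(\phi,\textbf{h})_U^{\mathrm{univ},\oplus r}$ produced by Theorem~\ref{TlocgloU}, and replacing the one-dimensionality of $\Hom_{\GL_n(K)}(\pi_{\alg}(\phi,\textbf{h}),\Pi_\infty^{R_\infty-\an}[\fa_x])$ (Lemma~\ref{Lfax}(2)) by the $r$-dimensional version furnished by (\ref{Elalg2}). The only subrepresentations in sight (on either side of the desired equality) having all Jordan-H\"older factors of the form $\pi_{\alg}(\phi,\textbf{h})$ or $\sC(I,s_{i,\sigma})$, the matching of subobjects reduces to a single $\Hom$/$\Ext$ count inside $\widehat{S}_{\xi,\tau}(U^{\wp},E)^{\Q_p-\an}[\fm_\rho]$.

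I would first record two multiplicity bounds. The isomorphism (\ref{Elalg2}) together with the local algebraicity of $\pi_{\alg}(\phi,\textbf{h})$ forces $\Hom_{\GL_n(K)}(\pi_{\alg}(\phi,\textbf{h}),\widehat{S}_{\xi,\tau}(U^{\wp},E)^{\Q_p-\an}[\fm_\rho])=r$; combining this with the vanishing $\Hom_{\GL_n(K)}(\sC(I,s_{i,\sigma}),\widehat{S}_{\xi,\tau}(U^{\wp},E)^{\Q_p-\an}[\fm_\rho])=0$ (which follows from Lemma~\ref{Ljacsi}(1) and the absence of non-dominant companion points attached to $\omega_\rho$ in $\cE(U^{\wp})$, by the non-criticality of $D$) also gives $\Hom_{\GL_n(K)}(\pi_1(\phi,\textbf{h}),\widehat{S}_{\xi,\tau}(U^{\wp},E)^{\Q_p-\an}[\fm_\rho])=r$. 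The same argument, applied to the larger subspace annihilated by $\fa_T$ rather than by $\fm_\rho$, yields $\Hom_{\GL_n(K)}(\pi_{\alg}(\phi,\textbf{h}),\widehat{S}_{\xi,\tau}(U^{\wp},E)^{\Q_p-\an}[\fa_T])=r$ by the same Bernstein-centre mechanism used in the proof of Lemma~\ref{Lfax}(2), now conducted through the isomorphism $\bT(U^{\wp})/\fa_T\xrightarrow{\sim}R_{\rho,U}/\fa_\rho\xrightarrow{\sim}A_{D,U}$.

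The heart of the proof is then the following: any subrepresentation $V\subset\widehat{S}_{\xi,\tau}(U^{\wp},E)^{\Q_p-\an}[\fm_\rho]$ which is an extension of $\pi_{\alg}(\phi,\textbf{h})$ by $\pi_1(\phi,\textbf{h})$ must in fact sit inside $\widetilde{\pi}$. Because $\Hom(\pi_1(\phi,\textbf{h}),\widehat{S}_{\xi,\tau}(U^{\wp},E)^{\Q_p-\an}[\fm_\rho])=r$ and $\widetilde{\pi}$ already realises all $r$ independent such embeddings, the sub $\pi_1(\phi,\textbf{h})\subset V$ lies in $\widetilde{\pi}$. Supposing $V\not\subset\widetilde{\pi}$, the irreducibility of $V/\pi_1(\phi,\textbf{h})$ forces $V\cap\widetilde{\pi}=\pi_1(\phi,\textbf{h})$, whence the amalgamated sum $V\oplus_{\pi_1(\phi,\textbf{h})}\widetilde{\pi}$ embeds into $\widehat{S}_{\xi,\tau}(U^{\wp},E)^{\Q_p-\an}[\fa_T]$. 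Writing $M:=\dim_E\Ext^1_U(\pi_{\alg}(\phi,\textbf{h}),\pi_1(\phi,\textbf{h}))$ so that $\widetilde{\pi}/\pi_1(\phi,\textbf{h})^{\oplus r}\cong\pi_{\alg}(\phi,\textbf{h})^{\oplus rM}$, the pushout adds one further quotient copy of $\pi_{\alg}$, so
\begin{equation*}
\dim_E\Hom_{\GL_n(K)}\big(\pi_{\alg}(\phi,\textbf{h}),\widehat{S}_{\xi,\tau}(U^{\wp},E)^{\Q_p-\an}[\fa_T]/\pi_1(\phi,\textbf{h})^{\oplus r}\big)\ \geq\ rM+1.
\end{equation*}
On the other hand, the long exact sequence of $\Hom_{\GL_n(K)}(\pi_{\alg}(\phi,\textbf{h}),-)$ applied to $0\to\pi_1(\phi,\textbf{h})^{\oplus r}\to\widehat{S}[\fa_T]\to\widehat{S}[\fa_T]/\pi_1(\phi,\textbf{h})^{\oplus r}\to 0$, combined with the $r$-dimensional bound from the previous paragraph, injects this $\Hom$-space into $\Ext^1_{\GL_n(K)}(\pi_{\alg}(\phi,\textbf{h}),\pi_1(\phi,\textbf{h}))^{\oplus r}$; that the image lands in the $U$-type subspace $\Ext^1_U(\pi_{\alg}(\phi,\textbf{h}),\pi_1(\phi,\textbf{h}))^{\oplus r}$ of dimension $rM$ is a consequence of the compatibility of the map $t_D$ of Theorem~\ref{TtD} with the Galois-automorphic dictionary $\bT(U^{\wp})/\fa_T\cong A_{D,U}$ plus the containment $\Ker(t_D)\subset\Ext^1_U(\pi_{\alg},\pi_1(\phi,\textbf{h}))$ built into \S\ref{S42}. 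This yields $rM+1\leq rM$, the desired contradiction.

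Once $V\subset\widetilde{\pi}$ is forced, the inclusion $V\subset\widehat{S}[\fm_\rho]$ together with $\fm_\rho=\fa_\rho+\fm_{A_{D,U}}$ gives $V\subset\widetilde{\pi}[\fm_{A_{D,U}}]\cong\pi_{\min}(D)^{\oplus r}$ by Corollary~\ref{CpiDUuni}(2) and the $A_{D,U}$-equivariance in Theorem~\ref{TlocgloU}; summing over all such $V$ yields one containment of the claimed maximality, while the reverse containment — that $\pi_{\min}(D)^{\oplus r}$ is itself generated by extensions of $\pi_{\alg}$ by $\pi_1(\phi,\textbf{h})$ — is built into its construction. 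The main obstacle is the step in the third paragraph showing that extensions of $\pi_{\alg}$ by $\pi_1(\phi,\textbf{h})$ inside $\widehat{S}_{\xi,\tau}(U^{\wp},E)^{\Q_p-\an}[\fa_T]$ give rise exclusively to $U$-type classes in $\Ext^1_{\GL_n(K)}(\pi_{\alg},\pi_1(\phi,\textbf{h}))$; this is where Hypothesis~\ref{Hselmer} and the fine control of the Hecke/deformation identification $\bT(U^{\wp})/\fa_T\cong R_{\rho,U}/\fa_\rho\cong A_{D,U}$ from \S\ref{S42} enter essentially.
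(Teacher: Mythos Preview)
Your overall strategy—mirror Corollary~\ref{Cmaxmin}, with $\widetilde\pi\cong\pi_1(\phi,\textbf{h})_U^{\univ,\oplus r}$ from Theorem~\ref{TlocgloU} in place of $\pi_1(\phi,\textbf{h})^{\univ}$ and the $r$-dimensional bound from~(\ref{Elalg2}) in place of Lemma~\ref{Lfax}(2)—is precisely what the paper indicates. Your second paragraph is fine (for $\Hom(\pi_{\alg},\widehat{S}[\fa_T])=r$ one only needs semisimplicity of the Hecke action on $\widehat{S}^{\lalg}$: since $\fm_\rho^2\subset\fa_T$, already $\widehat{S}[\fa_T]^{\lalg}\subset\widehat{S}[\fm_\rho^2]^{\lalg}=\widehat{S}[\fm_\rho]^{\lalg}$), and your final reduction $V\subset\widetilde\pi\Rightarrow V\subset\widetilde\pi[\fm_{A_{D,U}}]=\pi_{\min}(D)^{\oplus r}$ is correct.

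The gap is exactly where you place it, and your proposed justification does not close it. In Corollary~\ref{Cmaxmin} the contradiction $\dim\Hom(\pi_{\alg},\Pi_\infty[\fa_x])\geq 2$ rests on the fact that $\widetilde\pi$ is the \emph{full} universal extension: adjoining any further class in $\Ext^1(\pi_{\alg},\pi_1)$ forces a split copy of $\pi_{\alg}$. Here $\widetilde\pi$ realizes only the proper subspace $\Ext^1_U(\pi_{\alg},\pi_1)^{\oplus r}\subsetneq\Ext^1_{\GL_n(K)}(\pi_{\alg},\pi_1)^{\oplus r}$, so a class outside $\Ext^1_U^{\oplus r}$ can be adjoined to $\widetilde\pi$ without enlarging the socle, and no contradiction with $\Hom(\pi_{\alg},\widehat{S}[\fa_T])=r$ results. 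Your assertion that the connecting image of $\Hom(\pi_{\alg},\widehat{S}[\fa_T]/\pi_1^{\oplus r})$ lands in $\Ext^1_U^{\oplus r}$ is therefore the entire content of the corollary, and your justification is circular: the inclusion $\Ker(t_D)\subset\Ext^1_U$ from \S\ref{S42} is a statement about abstract subspaces of $\Ext^1_{\GL_n(K)}(\pi_{\alg},\pi_1)$, while the identification $\bT(U^\wp)/\fa_T\cong A_{D,U}$ controls the Hecke-module structure of $\widehat{S}[\fa_T]$, not the $\GL_n(K)$-extension class of an arbitrary $V$ inside it. Neither input constrains the connecting map without further argument.

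The paper's proof is a one-line pointer and does not spell this out either. A way to proceed is to observe, from the surjectivity of the last map in~(\ref{Edivglob2}), that $\Ext^1_{\GL_n(K)}(\pi_{\alg},\pi_1)=\Ext^1_U+\Ext^1_{g'}$, so it suffices to rule out nonzero classes coming from $\Ext^1_{\GL_n(K)}(\pi_{\alg},\pi_{\alg})$ inside $\widehat{S}[\fm_\rho]$: the locally algebraic ones are excluded by~(\ref{Elalg2}) directly, and the remaining ones (twists by $1+\psi_0\circ\dett\,\epsilon$) would contribute a nonsplit $\delta_w\delta_B(1+\psi_0\circ\dett\,\epsilon)$ to $J_B(\widehat{S}[\fm_\rho])$, contradicting the $T(K)$-semisimplicity at $\delta_w\delta_B$ obtained from Proposition~\ref{Psmooth} exactly as in the proof of Corollary~\ref{Clg1}. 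You should carry this out rather than treating the step as a black box.
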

The information that $D$ is non-critical can be read out from $\widehat{S}_{\xi,\tau}(U^{\wp},E)[\fm_{\rho}]$ by \cite[Thm.~9.3]{Br13II}. Together with Corollary \ref{Cmaxmin1} and Theorem \ref{Thodgeauto}, we get:
\begin{corollary}
For $D'\in \Phi\Gamma_{\nc}(\phi, \textbf{h})$, $\pi_{\min}(D')\hookrightarrow \widehat{S}_{\xi,\tau}(U^{\wp},E)[\fm_{\rho}]$ if and only if $D'_{\sigma}\cong D_{\sigma}$ for all $\sigma\in \Sigma_K$. In particular, when $K=\Q_p$, the $\GL_n(\Q_p)$-representation $\widehat{S}_{\xi,\tau}(U^{\wp},E)^{\Q_p-\an}[\fm_{\rho}]$ determines $\rho_{\widetilde{\wp}}$.
\end{corollary}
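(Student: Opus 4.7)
The plan is to combine the structural result on $\widehat{S}_{\xi,\tau}(U^{\wp},E)^{\Q_p-\an}[\fm_{\rho}]$ from Corollary \ref{Cmaxmin1} with the characterization of $\pi_{\min}(-)$ in Theorem \ref{Thodgeauto}.

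First, for the \emph{if} direction, suppose $D'_{\sigma}\cong D_{\sigma}$ for all $\sigma\in \Sigma_K$. By Theorem \ref{Thodgeauto}, the collection $\{D_{\sigma}\}_{\sigma\in\Sigma_K}$ determines $\pi_{\min}$, so $\pi_{\min}(D')\cong \pi_{\min}(D)$. The desired embedding then follows immediately from Theorem \ref{TlocgloU}, which provides an injection $\pi_{\min}(D)^{\oplus r}\hookrightarrow \widehat{S}_{\xi,\tau}(U^{\wp},E)^{\Q_p-\an}[\fm_{\rho}]$. This direction uses only what has been established; the global hypotheses (Hypothesis \ref{Hselmer} and absolute irreducibility of $\rho$) enter via Theorem \ref{TlocgloU}.

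Next, for the \emph{only if} direction, one must first verify that $D'\in\Phi\Gamma_{\nc}(\phi,\textbf{h})$ is automatic (or at least, that the data $(\phi,\textbf{h})$ on the side of $D'$ must coincide with those of $D$). Concretely, if $\pi_{\min}(D')\hookrightarrow \widehat{S}_{\xi,\tau}(U^{\wp},E)^{\Q_p-\an}[\fm_{\rho}]$, then by comparing socles and using the locally algebraic constituents in $\widehat{S}_{\xi,\tau}(U^{\wp},E)^{\Q_p-\an}[\fm_{\rho}]^{\lalg}\cong \pi_{\alg}(\phi,\textbf{h})^{\oplus r}$ (cf.\ (\ref{Elalg2})), one reads off $(\phi,\textbf{h})$, so $D'\in\Phi\Gamma_{\nc}(\phi,\textbf{h})$ (the non-criticality being granted by \cite[Thm.~9.3]{Br13II} applied to $\widehat{S}_{\xi,\tau}(U^{\wp},E)[\fm_{\rho}]$). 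Once $(\phi,\textbf{h})$ agree, $\pi_{\min}(D')$ is by construction an extension of copies of $\pi_{\alg}(\phi,\textbf{h})$ by $\pi_1(\phi,\textbf{h})$, so Corollary \ref{Cmaxmin1} forces $\pi_{\min}(D')\hookrightarrow \pi_{\min}(D)^{\oplus r}$. Because $\End_{\GL_n(K)}(\pi_{\min}(D))\cong E$ (since $\End_{\GL_n(K)}(\pi_{\alg}(\phi,\textbf{h}))\cong E$) and $\pi_{\min}(D')$ has simple socle $\pi_{\alg}(\phi,\textbf{h})$, any such injection factors through a direct summand isomorphic to $\pi_{\min}(D)$; hence $\pi_{\min}(D')\hookrightarrow \pi_{\min}(D)$. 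Comparing Jordan--H\"older constituents (both sides have the same multiplicities as an extension of $(2^n-\tfrac{n(n+1)}{2}-1)d_K$ copies of $\pi_{\alg}(\phi,\textbf{h})$ by $\pi_1(\phi,\textbf{h})$), this injection must be an isomorphism, so $\pi_{\min}(D')\cong \pi_{\min}(D)$. Theorem \ref{Thodgeauto} then yields $D'_{\sigma}\cong D_{\sigma}$ for every $\sigma\in\Sigma_K$.

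For the final ``In particular" statement, when $K=\Q_p$ the set $\Sigma_K$ has a single element $\sigma$ and the construction $\fT_{\sigma}$ of (\ref{Ecow}) is the identity, so $D_{\sigma}=D=D_{\rig}(\rho_{\widetilde{\wp}})$. Hence knowing the isomorphism class of $\pi_{\min}(D')$ as a subobject of $\widehat{S}_{\xi,\tau}(U^{\wp},E)^{\Q_p-\an}[\fm_{\rho}]$ recovers $D_{\rig}(\rho_{\widetilde{\wp}})$, and thus $\rho_{\widetilde{\wp}}$ itself; equivalently, if $\rho'$ is another Galois representation giving rise to the same local system with the same $\widehat{S}_{\xi,\tau}(U^{\wp},E)^{\Q_p-\an}[\fm_{\rho}]$, applying the equivalence above to $D'=D_{\rig}(\rho'_{\widetilde{\wp}})$ forces $D'\cong D$.

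The main obstacle is the extraction step $\pi_{\min}(D')\hookrightarrow \pi_{\min}(D)^{\oplus r}\Rightarrow \pi_{\min}(D')\cong \pi_{\min}(D)$: one has to argue carefully that the simple socle forces the image to land inside a single summand, and then use the equality of composition multiplicities (rather than a dimension count, as the representations are infinite dimensional) to promote the injection to an isomorphism. Everything else is a direct application of the local results of Section \ref{S32} together with Corollary \ref{Cmaxmin1}.
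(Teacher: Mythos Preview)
Your proof is correct and follows essentially the same approach as the paper, which simply invokes Corollary \ref{Cmaxmin1} and Theorem \ref{Thodgeauto} (together with \cite[Thm.~9.3]{Br13II} for the ``In particular'' statement). Your added detail on why an injection $\pi_{\min}(D')\hookrightarrow \pi_{\min}(D)^{\oplus r}$ forces $\pi_{\min}(D')\cong \pi_{\min}(D)$ is a useful elaboration; note however that the paragraph arguing that $(\phi,\textbf{h})$ must agree is redundant for the biconditional, since $D'\in\Phi\Gamma_{\nc}(\phi,\textbf{h})$ is already part of the hypothesis (this argument is only needed for the final ``In particular'' clause, where the goal is to reconstruct $\rho_{\widetilde{\wp}}$ from the automorphic side alone).
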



\begin{thebibliography}{10}

\bibitem{All16}
Patrick Allen.
\newblock Deformations of polarized automorphic {Galois} representations and
adjoint {Selmer} groups.
\newblock {\em Duke Math. Journal}, 165(13):2407--2460, 2016.

\bibitem{BLGGTII}
Barnet-Lamb, Thomas; Gee, Toby; Geraghty, David; Taylor, Richard. 
\newblock Local-global compatibility for $l=p$, II. 
\newblock{\em Annales scientifiques de l'\'Ecole Normale Sup\'erieure}, S\'erie 4, Tome 47, no. 1, 165--179, 2014. 

\bibitem{BCh}
Jo{\"e}l Bella{\"i}che and Ga{\"e}tan Chenevier.
\newblock Families of {Galois} representations and {Selmer} groups.
\newblock {\em Ast{\'e}risque}, 324:1--314, 2009.


\bibitem{Bergd13}
John Bergdall, 
\newblock On the variation of $(\varphi, \Gamma)$-modules over $p$-adic families of automorphic forms, 
\newblock{\em thesis, Brandies University}, 2013.

\bibitem{Bergd17}
John Bergdall, 
\newblock Paraboline variation over $p$-adic families of $(\varphi, {\Gamma})$-modules. 
\newblock {\em Compositio Mathematica},  153(1):1320--174, 2017.

\bibitem{Ber08a}
Laurent Berger.
\newblock {\'E}quations diff{\'e}rentielles $p$-adiques et $(\varphi,
{N})$-modules filtr{\'e}s.
\newblock {\em Ast{\'e}risque}, 319:13--38, 2008.


\bibitem{Ber08II}
Laurent Berger.
\newblock Construction de {$(\varphi, \Gamma)$}-modules: repr{\'e}sentations
$p$-adiques et {$B$}-paires.
\newblock {\em Algebra \& Number Theory}, 2(1):91--120, 2008.

\bibitem{BC08}
 Laurent Berger and Pierre Colmez. 
 \newblock Familles de repr\'esentations de de Rham et monodromie $p$-adique.
 \newblock {\em Ast\'erisque}, no. 319, 303--337, 2008. 
 
 
\bibitem{Br04}
Christophe Breuil.
\newblock Invariant $\mathcal{L}$ et s{\'e}rie sp{\'e}ciale $p$-adique.
\newblock {\em Annales scientifiques de l'\'Ecole normale sup{\'e}rieure},
37(4):559--610, 2004.

\bibitem{Br}
Christophe Breuil.
\newblock Remarks on some locally $\mathbb{Q}_p$-analytic representations of
$\mathrm{GL}_2({F})$ in the crystalline case.
\newblock {\em Non-abelian fundamental groups and Iwasawa theory},
393:212--238, 2010.

\bibitem{Br10}
Christophe Breuil.
\newblock S{\'e}rie sp{\'e}ciale $p$-adique et cohomologie {\'e}tale
compl{\'e}t{\'e}e.
\newblock {\em Ast{\'e}risque}, 331:65--115, 2010.

\bibitem{Br13II}
Christophe Breuil.
\newblock Vers le socle localement analytique pour $\mathrm{GL}_n$, {II}.
\newblock {\em Mathematische Annalen}, 361:741--785, 2015.

\bibitem{Br13I}
Christophe Breuil.
\newblock Vers le socle localement analytique pour $\mathrm{GL}_n$, {I}.
\newblock {\em Annales de l'Institut Fourier}, 66:633--685, 2016.

\bibitem{Br16}
Christophe Breuil.
\newblock $\mathrm{Ext}^1$ localement analytique et compatibilit\'e
local-global.
\newblock {\em American J. of Math}, 141:611--703, 2019.

\bibitem{BD1}
Christophe Breuil and Yiwen Ding.
\newblock Higher $\mathcal{L}$-invariants for $\mathrm{GL}_3(\mathbb{Q}_p)$ and
local-global compatibility.
\newblock {\em Cambridge Journal of Mathematics}, 8(4):775--951, 2020.

\bibitem{BD2}
Christophe Breuil and Yiwen Ding.
\newblock Sur un probl\`eme de compatibilit\'e local-global localement
analytique.
\newblock {\em Memoirs of the Amer. Math. Soc.}, 290(1442), 2023.

\bibitem{BD3}
Christophe Breuil and Yiwen Ding.
\newblock Bernstein eigenvarieties.
\newblock {\em Peking Mathematical Journal}, pages 1--172, 2023.


\bibitem{BHS2}
Christophe Breuil, Eugen Hellmann, and Benjamin Schraen.
\newblock Smoothness and classicality on eigenvarieties.
\newblock {\em Inventiones mathematicae}, 209(1):197--274, 2017.

\bibitem{BHS1}
Christophe Breuil, Eugen Hellmann, and Benjamin Schraen.
\newblock Une interpr{\'e}tation modulaire de la vari{\'e}t{\'e} trianguline.
\newblock {\em Mathematische Annalen}, 367(3-4):1587--1645, 2017.

\bibitem{BHS3}
Christophe Breuil, Eugen Hellmann, and Benjamin Schraen.
\newblock A local model for the trianguline variety and applications.
\newblock {\em Publications math{\'e}matiques de l'{I}{H}{\'E}{S}},
130:299--412, 2019.

\bibitem{BH2}
Christophe Breuil and Florian Herzig.
\newblock Towards the finite slope part for $\mathrm{GL}_n$.
\newblock {\em International Mathematics Research Notices}, pages 10495--10552, 2020.


\bibitem{BQ}
Christophe Breuil and Zicheng Qian.
\newblock Splitting and expliciting the de Rham complex of the Drinfeld space.
\newblock {\em arXiv preprint arXiv:2407.06651}, 2024.
	

\bibitem{BS07}
Christophe Breuil and Peter Schneider.
\newblock First steps towards $p$-adic {Langlands} functoriality.
\newblock {\em Journal f{\"u}r die reine und angewandte Mathematik (Crelles
	Journal)}, 2007(610):149--180, 2007.


\bibitem{CEGGPS1}
Ana Caraiani, Matthew Emerton, Toby Gee, David Geraghty, Vytautas
Pa{\v{s}}k{\=u}nas, and Sug~Woo Shin.
\newblock Patching and the $p$-adic local {Langlands} correspondence.
\newblock {\em Cambridge journal of mathematics}, 4(2):197--287, 2016.

\bibitem{Che04}
Ga{\"e}tan Chenevier.
\newblock Familles $p$-adiques de formes automorphes pour $\mathrm{GL}_n$.
\newblock {\em Journal f\"ur die reine und angewandte Mathematik}, 
vol. 2004, no. 570, 143--217, 2004.

\bibitem{Che05}
Ga{\"e}tan Chenevier,
\newblock Une correspondance de Jacquet-Langlands $p$-adique.
\newblock {\em Duke Mathematical Journal},
vol. 126, no. 1, 161--194, 2005.

\bibitem{Che11}
Ga{\"e}tan Chenevier.
\newblock On the infinite fern of {Galois} representations of unitary type.
\newblock {\em  Annales scientifiques de l'\'Ecole Normale Sup\'erieure}, 44(6):963--1019, 2011.

\bibitem{CHT}
Laurent Clozel, Michael Harris, and Richard Taylor.
\newblock Automorphy for some $\ell$-adic lifts of automorphic mod $\ell$
{Galois} representations.
\newblock {\em Publications math{\'e}matiques de l'IH{\'E}S}, 108(1):1--181,
2008.

\bibitem{Colm10a}
Pierre Colmez.
\newblock Repr{\'e}sentations de $\mathrm{GL}_2(\mathbb{Q}_p)$ et
{$(\varphi,\Gamma)$}-modules.
\newblock {\em Ast{\'e}risque}, 330:281--509, 2010.


\bibitem{Colm14}
Pierre Colmez.
\newblock La s\'erie principale unitaire de $\mathrm{GL}_2(\mathbb{Q}_p)$: vecteurs localement analytiques. 
\newblock {\em London Mathematical Society Lecture Note Series}, 414, 286--358, 2014.
	\bibitem{Dat99}
J-F Dat.
\newblock Caract{\`e}res {\`a} valeurs dans le centre de {Bernstein}.
\newblock {\em Journal f{\"u}r die reine und angewandte Mathematik},
508:61--83, 1999.


\bibitem{Ding4}
Yiwen Ding.
\newblock $\mathcal{L}$-invariants, partially de {Rham} families, and
local-global compatibility.
\newblock {\em Annales de l'Institut Fourier}, 67(4):1457--1519, 2017.

\bibitem{Ding6}
Yiwen Ding.
\newblock Companion points and locally analytic socle for $\mathrm{GL}_2({L})$.
\newblock {\em Israel Journal of Mathematics}, 231:47--122, 2019.

\bibitem{Ding7}
Yiwen Ding.
\newblock Simple $\mathcal{L}$-invariants for $\mathrm{GL}_n$.
\newblock {\em Transactions of the Amer. Math. Soc.}, 372(11):7993--8042, 2019.

\bibitem{Ding16}
Yiwen Ding.
\newblock $p$-adic {Hodge} parameters in the crystabelline representations of
$\mathrm{GL}_3$.
\newblock {\em arXiv preprint arXiv: 2404.16657}, 2024.

\bibitem{Ding15}
Yiwen Ding.
\newblock Towards the wall-crossing of the locally $\mathbb{Q}_p$-analytic
representations of $\mathrm{GL}_n(k)$ for a $p$-adic field {$K$} (with an
appendix by {Yiwen} {Ding}, {Yongquan} {Hu}, {Haoran} {Wang}).
\newblock {\em arXiv preprint arXiv:2404.06315}, 2024.

\bibitem{Em1}
Matthew Emerton.
\newblock On the interpolation of systems of eigenvalues attached to automorphic Hecke eigenforms. 
\newblock{\em  Inventiones mathematicae}
 164, 1–84, 2006.

	\bibitem{Em11}
Matthew Emerton.
\newblock Jacquet modules of locally analytic representations of $p$-adic
reductive groups {I}. {Construction} and first properties.
\newblock {\em Annales scientifiques de l'{\'E}cole normale sup{\'e}rieure},
39(5):775--839, 2006.
\bibitem{Em2}
Matthew Emerton.
\newblock Jacquet modules of locally analytic representations of $p$-adic
reductive groups {II}. {The} relation to parabolic induction.
\newblock 2007.
\newblock to appear in \emph{J. Institut Math. Jussieu}.

	\bibitem{Em04}
Matthew Emerton.
\newblock Locally analytic vectors in representations of locally $p$-adic
analytic groups.
\newblock {\em Memoirs of the Amer. Math. Soc.}, 248(1175), 2017.


\bibitem{EG14}
Matthew Emerton and Toby Gee.
\newblock A geometric perspective on the {Breuil}-{M\'ezard} conjecture.
\newblock {\em Journal of the Institute of Mathematics of Jussieu},
13(1):183--223, 2014.

\bibitem{Fon94}
Jean-Marc Fontaine.
\newblock Le corps des p\'eriodes $p$-adiques.
\newblock {\em Ast\'erisque}, (223):59--111, 1994.
\newblock With an appendix by Pierre Colmez.

\bibitem{Fo04}
Jean-Marc Fontaine.
\newblock Arithm\'etique des repr\'esentations galoisiennes $p$-adiques.
\newblock {\em Ast\'erisque}, (295):115 p., 2004.

\bibitem{GoMa}
Fernando Gouv{\^e}a and Barry Mazur.
\newblock On the density of modular representations.
\newblock {\em Computational perspectives on number theory (Chicago, IL,
	1995)}, pages 127--142, 1998.

\bibitem{HHS}
Eugen Hellmann, Valentin Hernandez, and Benjamin Schraen.
\newblock Patching and multiplicities of $p$-adic eigenforms.
\newblock {\em arXiv preprint arXiv: 2406.01129}, 2024.


\bibitem{Hum08}
James~E Humphreys.
\newblock {\em Representations of semisimple Lie algebras in the BGG category
	$\mathcal{O}$}, volume~94.
\newblock American Mathematical Soc., 2008.

\bibitem{KPX}
Kiran Kedlaya, Jonathan Pottharst, and Liang Xiao.
\newblock Cohomology of arithmetic families of {$(\varphi, \Gamma)$}-modules.
\newblock {\em Journal of the American Mathematical Society}, 27(4):1043--1115,
2014.

\bibitem{Kis09}
Mark Kisin.
\newblock Moduli of finite flat group schemes, and modularity.
\newblock {\em Annals of Mathematics}, pages 1085--1180, 2009.

\bibitem{Kis10}
Mark Kisin.
\newblock Deformations of {$G_{\mathbb{Q}_p}$} and $\mathrm{GL}_2
(\mathbb{Q}_p)$ representations.
\newblock {\em Ast\'erisque}, 330:511--528, 2010.

\bibitem{Liu07}
Ruochuan Liu.
\newblock Cohomology and duality for ({$\varphi$}, {$\Gamma$})-modules over the
{Robba} ring.
\newblock {\em International Mathematics Research Notices}, vol. 2007, rnm 150, 2007.

\bibitem{Liu12}
Ruochuan Liu.
\newblock  Locally analytic vectors of some crystabelian representations of $\mathrm{GL}_2(\mathbb{Q}_p)$.
\newblock {\em Compositio Mathematica},  148(1):28--64, 2012. 


\bibitem{Liu}
Ruochuan Liu.
\newblock Triangulation of refined families.
\newblock {\em Commentarii Mathematici Helvetici}, 90(4):831--904, 2015.

\bibitem{Na}
Nakamura K., {\em Classification of two-dimensional split trianguline representations of $p$-adic field}, Compos. Math. 145, 2009, no.
4, 865-914.

\bibitem{Na2}
Kentaro Nakamura.
\newblock Deformations of trianguline {$B$}-pairs and {Zariski} density of two
dimensional crystalline representations.
\newblock {\em J. Math. Sci. Univ. Tokyo}, 20:461--568, 2013.

\bibitem{NT}
James Newton and Jack Thorne.
\newblock Adjoint {Selmer} groups of automorphic {Galois} representations of
unitary type.
\newblock {\em J. Eur. Math. Soc.}, 25(5):1919--1967, 2023.


\bibitem{Nys96}
Louise Nyssen.
\newblock Pseudo-repr\'esentations. 
\newblock {\em Math. Ann.}, 306, 257--283, 1996.



\bibitem{Or18}
Sascha Orlik.
\newblock On some properties of the functors $\mathcal{F}_{P}^{G}$ from {Lie}
algebra to locally analytic representations.
\newblock {\em arXiv preprint arXiv: 1802.07514}, 2018.

\bibitem{OS}
Sascha Orlik and Matthias Strauch.
\newblock On {Jordan}-{H{\"o}lder} series of some locally analytic
representations.
\newblock {\em Journal of the American Mathematical Society}, 28(1):99--157,
2015.

	\bibitem{ST03}
Peter Schneider and Jeremy Teitelbaum.
\newblock Algebras of $p$-adic distributions and admissible representations.
\newblock {\em Inventiones mathematicae}, 153(1):145--196, 2003.

	\bibitem{ST-dual}
Peter Schneider and Jeremy Teitelbaum.
\newblock Duality for admissible locally analytic representations.
\newblock {\em Representation Theory of the American Mathematical Society},
9(10):297--326, 2005.


\bibitem{Sch10}
Benjamin Schraen.
\newblock Repr{\'e}sentations $p$-adiques de $\mathrm{GL}_2({L})$ et
cat{\'e}gories d{\'e}riv{\'e}es.
\newblock {\em Israel Journal of Mathematics}, 176(1):307--361, 2010.

\bibitem{Sch11}
Benjamin Schraen.
\newblock Repr{\'e}sentations localement analytiques de
$\mathrm{GL}_3(\mathbb{Q}_p)$.
\newblock {\em Annales Scientifiques de l'{\'E}cole Normale Sup{\'e}rieure},
44(1):43--145, 2011.

\end{thebibliography}
\end{document}